\newlength{\enumerateparindent}
\renewcommand{\citepunct}{;\ }
\newtheorem{theorem}{Theorem}[section]
\newtheorem{corollary}[theorem]{Corollary}
\newtheorem{lemma}[theorem]{Lemma}
\newtheorem{keypoint}{Key point}
\newtheorem{proposition}[theorem]{Proposition}
\newtheorem{alphthm}{Theorem}
\newenvironment{customthm}[1]
  {\innercustomthm}
  {\endinnercustomthm}
\theoremstyle{definition}
\newtheorem{definition}[theorem]{Definition}
\newtheorem{convention}[theorem]{Convention}
\newtheorem{example}[theorem]{Example}
\newtheorem{setup}[theorem]{Setup}
\theoremstyle{remark}
\newtheorem{remark}[theorem]{Remark}
\newtheoremstyle{cited}{.5\baselineskip\@plus.2\baselineskip\@minus.2\baselineskip}{.5\baselineskip\@plus.2\baselineskip\@minus.2\baselineskip}{\itshape}{}{\bfseries}{\bfseries .}{5pt plus 1pt minus 1pt}{\thmname{#1}\thmnumber{ #2}\thmnote{ \normalfont#3}}
\theoremstyle{cited}
\newtheorem{citedprop}[theorem]{Proposition}
\newtheoremstyle{citeddef}{.5\baselineskip\@plus.2\baselineskip\@minus.2\baselineskip}{.5\baselineskip\@plus.2\baselineskip\@minus.2\baselineskip}{}{}{\bfseries}{\bfseries .}{5pt plus 1pt minus 1pt}{\thmname{#1}\thmnumber{ #2}\thmnote{ \normalfont#3}}
\theoremstyle{citeddef}
\newtheorem{citeddef}[theorem]{Definition}
\newtheoremstyle{step}{.25\baselineskip\@plus.1\baselineskip\@minus.1\baselineskip}{.25\baselineskip\@plus.1\baselineskip\@minus.1\baselineskip}{\itshape}{}{\bfseries}{\bfseries .}{5pt plus 1pt minus 1pt}{\thmname{#1}\thmnumber{ #2}\thmnote{ \normalfont(#3)}}
\theoremstyle{step}
\newtheorem{claim}{Claim}[theorem]
\DeclareMathOperator{\Coh}{\mathsf{Coh}}
\DeclareMathOperator{\Mod}{\mathsf{Mod}}
\DeclareMathOperator{\Amp}{Amp}
\DeclareMathOperator{\Ass}{Ass}
\DeclareMathOperator{\Bs}{Bs}
\DeclareMathOperator{\Char}{char}
\DeclareMathOperator{\Cl}{Cl}
\DeclareMathOperator{\Cone}{Cone}
\DeclareMathOperator{\Cont}{Cont}
\DeclareMathOperator{\Div}{Div}
\DeclareMathOperator{\Exc}{Exc}
\DeclareMathOperator{\EExt}{\mathscr{E}\kern-2pt\mathit{xt}}
\DeclareMathOperator{\Ext}{Ext}
\DeclareMathOperator{\Fix}{Fix}
\DeclareMathOperator{\FFix}{\mathbf{Fix}}
\DeclareMathOperator{\Frac}{Frac}
\DeclareMathOperator{\Hom}{Hom}
\DeclareMathOperator{\HHom}{\mathscr{H}\kern-3pt\mathit{om}}
\DeclareMathOperator{\RRHHom}{\mathbf{R}\mathscr{H}\kern-3pt\mathit{om}}
\DeclareMathOperator{\RRHom}{\mathbf{R}Hom}
\DeclareMathOperator{\JG}{JG}
\DeclareMathOperator{\Max}{Max}
\DeclareMathOperator{\MaxSpec}{MaxSpec}
\DeclareMathOperator{\Mob}{Mob}
\DeclareMathOperator{\NE}{\mathit{NE}}
\DeclareMathOperator{\NEbar}{\overline{\mathit{NE}}}
\DeclareMathOperator{\Nef}{Nef}
\DeclareMathOperator{\Pic}{Pic}
\DeclareMathOperator{\Proj}{Proj}
\DeclareMathOperator{\PProj}{\underline{Proj}}
\DeclareMathOperator{\Princ}{Princ}
\DeclareMathOperator{\SB}{\mathbf{B}}
\DeclareMathOperator{\Sing}{Sing}
\DeclareMathOperator{\Sp}{\mathfrak{Sp}}
\DeclareMathOperator{\Spa}{Spa}
\DeclareMathOperator{\Spec}{Spec}
\DeclareMathOperator{\Spf}{Spf}
\DeclareMathOperator{\Supp}{Supp}
\DeclareMathOperator{\WDiv}{WDiv}
\DeclareMathOperator{\cent}{center}
\DeclareMathOperator{\coeff}{coeff}
\DeclareMathOperator{\cok}{coker}
\DeclareMathOperator{\prdiv}{div}
\DeclareMathOperator*{\hocolim}{\underrightarrow{\mathrm{hocolim}}}
\DeclareMathOperator{\Int}{int}
\DeclareMathOperator{\im}{im}
\DeclareMathOperator{\Length}{length}
\DeclareMathOperator{\mult}{mult}
\DeclareMathOperator{\nonsnc}{non-snc}
\DeclareMathOperator{\res}{res}
\DeclareMathOperator{\sgn}{sgn}
\DeclareMathOperator{\Span}{span}
\DeclareMathOperator{\supp}{supp}
\DeclareMathOperator{\vol}{vol}
\newcommand{\bA}{\mathbf{A}}
\newcommand{\CC}{\mathbf{C}}
\newcommand{\DD}{\mathbf{D}}
\newcommand{\FF}{\mathbf{F}}
\newcommand{\LL}{\mathbf{L}}
\newcommand{\NN}{\mathbf{N}}
\newcommand{\PP}{\mathbf{P}}
\newcommand{\QQ}{\mathbf{Q}}
\newcommand{\RR}{\mathbf{R}}
\newcommand{\ZZ}{\mathbf{Z}}
\newcommand{\kk}{\mathbf{k}}
\newcommand{\cA}{\mathcal{A}}
\newcommand{\cB}{\mathcal{B}}
\newcommand{\cC}{\mathcal{C}}
\newcommand{\cE}{\mathcal{E}}
\newcommand{\cG}{\mathcal{G}}
\newcommand{\cH}{\mathcal{H}}
\newcommand{\cI}{\mathcal{I}}
\newcommand{\cL}{\mathcal{L}}
\newcommand{\cO}{\mathcal{O}}
\newcommand{\cP}{\mathcal{P}}
\newcommand{\cS}{\mathcal{S}}
\newcommand{\cX}{\mathcal{X}}
\newcommand{\cY}{\mathcal{Y}}
\newcommand{\cZ}{\mathcal{Z}}
\newcommand{\fR}{\mathfrak{R}}
\newcommand{\fb}{\mathfrak{b}}
\newcommand{\fm}{\mathfrak{m}}
\newcommand{\fn}{\mathfrak{n}}
\newcommand{\fp}{\mathfrak{p}}
\newcommand{\sF}{\mathscr{F}}
\newcommand{\sG}{\mathscr{G}}
\newcommand{\sH}{\mathscr{H}}
\newcommand{\sI}{\mathscr{I}}
\newcommand{\sJ}{\mathscr{J}}
\newcommand{\sK}{\mathscr{K}}
\newcommand{\sL}{\mathscr{L}}
\newcommand{\sM}{\mathscr{M}}
\newcommand{\al}{\textup{al}}
\newcommand{\an}{\textup{an}}
\newcommand{\coh}{\textup{c}}
\newcommand{\qc}{\textup{qc}}
\newcommand{\cyc}{\textup{cyc}}
\newcommand{\eff}{\textup{eff}}
\newcommand{\gp}{\textup{gp}}
\newcommand{\id}{\textup{id}}
\newcommand{\op}{\mathrm{op}}
\newcommand{\sm}{\textup{sm}}
\newcommand{\hooklongrightarrow}{\lhook\joinrel\longrightarrow}
\newcommand{\longtwoheadrightarrow}{\mathrel{\text{\longtwo@rightarrow}}}
\newcommand{\longtwo@rightarrow}{%
  \sbox0{$\m@th\longrightarrow$}%
  \smash{\rlap{\kern0.175\wd0 \clipbox{{.3\width} {-\height} 0pt {-\height}}{$\m@th\longrightarrow$}}}%
  $\m@th\longrightarrow$%
}
\newcommand*{\da@rightarrow}{\mathchar"0\hexnumber@\symAMSa 4B }
\newcommand*{\da@leftarrow}{\mathchar"0\hexnumber@\symAMSa 4C }
\newcommand*{\xdashrightarrow}[2][]{%
  \mathrel{%
    \mathpalette{\da@xarrow{#1}{#2}{}\da@rightarrow{\,}{}}{}%
  }%
}
\newcommand*{\da@xarrow}[7]{%
  \sbox0{$\ifx#7\scriptstyle\scriptscriptstyle\else\scriptstyle\fi#5#1#6\m@th$}%
  \sbox2{$\ifx#7\scriptstyle\scriptscriptstyle\else\scriptstyle\fi#5#2#6\m@th$}%
  \sbox4{$#7\dabar@\m@th$}%
  \dimen@=\wd0 %
  \ifdim\wd2 >\dimen@
    \dimen@=\wd2 %
  \fi
  \count@=2 %
  \def\da@bars{\dabar@\dabar@}%
  \@whiledim\count@\wd4<\dimen@\do{%
    \advance\count@\@ne
    \expandafter\def\expandafter\da@bars\expandafter{%
      \da@bars
      \dabar@ 
    }%
  }%
  \mathrel{#3}%
  \mathrel{%
    \mathop{\da@bars}\limits
    \ifx\\#1\\%
    \else
      _{\copy0}%
    \fi
    \ifx\\#2\\%
    \else
      ^{\copy2}%
    \fi
  }%
  \mathrel{#4}%
}
\providecommand\given{}
\newcommand\SetSymbol[1][]{\nonscript\:#1\vert\allowbreak\nonscript\:\mathopen{}}
\DeclarePairedDelimiterX\Set[1]\{\}{\renewcommand\given{\SetSymbol[\delimsize]}#1}
\def\hyph{-\penalty0\hskip0pt\relax}
\begin{document}

\title[The relative minimal model program in equal
characteristic zero]{The
relative minimal model program for\\excellent algebraic
spaces and analytic spaces\\in equal characteristic zero}

\author{Shiji Lyu}
\address{Department of Mathematics, Statistics, and Computer Science\\University of Illinois at Chicago\\Chicago, IL
60607-7045\\USA}
\email{\href{mailto:slyu@alumni.princeton.edu}{slyu@alumni.princeton.edu}}
\urladdr{\url{https://homepages.math.uic.edu/~slyu/}}

\thanks{Lyu was supported by an AMS-Simons travel grant}

\author{Takumi Murayama}
\address{Department of Mathematics\\Purdue University\\West Lafayette, IN
47907-2067\\USA}
\email{\href{mailto:murayama@purdue.edu}{murayama@purdue.edu}}
\urladdr{\url{https://www.math.purdue.edu/~murayama/}}

\thanks{Murayama was supported by the National Science
Foundation under Grant Nos.\ DMS-1902616 and DMS-2201251}
\subjclass[2020]{Primary 14E30; Secondary 13F40, 14G22, 32C15, 14J30}

\keywords{minimal model program, excellent scheme, algebraic spaces, analytic spaces, threefolds}

\makeatletter
  \hypersetup{
    pdfauthor={Shiji Lyu and Takumi Murayama},
    pdftitle={The relative minimal model program for excellent algebraic spaces and analytic spaces in equal characteristic zero},
    pdfsubject=\@subjclass,pdfkeywords=\@keywords
  }
\makeatother

\begin{abstract}
  We establish the relative minimal model program with scaling for locally projective
  morphisms of quasi-excellent algebraic spaces admitting dualizing complexes,
  quasi-excellent
  formal schemes admitting dualizing complexes, semianalytic germs of complex
  analytic spaces, rigid analytic spaces, Berkovich spaces, and adic spaces
  locally of weakly finite type over a field,
  all in equal
  characteristic zero.
  To do so, we prove finite generation of
  relative adjoint rings associated to projective morphisms of
  such spaces
  using the strategy of Cascini and Lazi\'c and the
  generalization of the Kawamata--Viehweg vanishing theorem to the scheme
  setting recently established by the second author.
  To prove these results uniformly, we prove
  GAGA theorems for Grothendieck duality and dualizing
  complexes to reduce to the algebraic case.
  In addition, we apply our methods to establish the relative minimal model
  program with scaling for spaces of the form above in dimensions $\le 3$ in
  positive and mixed characteristic, and to show that one can run
  the relative minimal model program with scaling for complex analytic spaces without
  shrinking the base at each step.
\end{abstract}

\maketitle

\setcounter{tocdepth}{1}
{\hypersetup{hidelinks}\tableofcontents}

\section{Introduction}
In \cite{BCHM10,HM10}, Birkar, Cascini, Hacon, and M\textsuperscript{c}Kernan
established the relative minimal model program with scaling for
projective morphisms of complex quasi-projective varieties.
Recently,
Villalobos-Paz \cite{VP} established the analogue of this result for
algebraic spaces of finite type over a field of characteristic zero, and
Fujino \cite{Fuj} and Das--Hacon--P\u{a}un \cite{DHP}
established the analogue for complex analytic spaces.\medskip
\par The goal of this paper is to prove the following theorem.
This shows one can give a unified proof of the relative minimal model program
with scaling established in \cite{BCHM10,VP,Fuj,DHP} that
simultaneously applies to other, larger categories of spaces, with appropriate
choices of scaling divisors $A$.
Note that projective morphisms occur naturally in all categories considered:
For example, if $X$ is a space in one of the categories below, then resolutions
of singularities $\tilde{X} \to X$ as produced in
\cite{Hir64,AHV77,Sch99,Tem08,Tem12,Tem18} are projective morphisms.
Together with these results on resolutions of singularities, the vanishing
theorems in \cite{KMM87,Nak87,Mur}, and the weak factorization theorems in
\cite{Wlo03,AKMW02,AT19}, Theorem \ref{thm:introrelativemmp} shows that we now have many of
the key tools of complex birational geometry available in these other categories
of spaces.
\par For the statement below, following Definition \ref{def:qfactorialoverz},
we say that \(X\) is \textsl{\(\mathbf{Q}\)-factorial over
\(Z\)} if for every affinoid subdomain\footnote{In\label{note:affinoidsubdomain} cases
$(\ref{setup:introalgebraicspaces})$ for schemes or in case
$(\ref{setup:introformalqschemes})$, we mean ``affine open.''
In case $(\ref{setup:introalgebraicspaces})$, we mean ``affine \'etale over
\(Z\).''
In case \((\ref{setup:introadicspaces})\), we mean ``affinoid open.''
See \cite[\S B.6]{AT19}, \citeleft\citen{Ber90}\citemid Definition
2.2.1\citepunct \citen{Ber93}\citemid p.\ 21\citeright, and \cite[Definition
7.2.2/2]{BGR84} for the notion of an affinoid subdomain in the
other cases.} \(U \subseteq Z\), the cycle map
\[
  \cyc_\QQ\colon \Div_\QQ\bigl(\pi^{-1}(U)^\al\bigr) \longrightarrow
  \WDiv_\QQ\bigl(\pi^{-1}(U)^\al\bigr)
\]
is surjective.
Here, \(\pi^{-1}(U)^\al\) is the algebraization of the space \(\pi^{-1}(U)\).
\begin{alphthm}[The relative minimal model program with scaling
  in equal characteristic zero]
  \label{thm:introrelativemmp}
Let $\pi\colon X \to Z$ be a locally projective morphism in one of the following
categories, where $X$ and $Z$ are integral and $X$ is normal:
\begin{enumerate}[label=$(\textup{\Roman*})$,ref=\textup{\Roman*}]
  \item[$(0)$]
  \makeatletter
  \protected@edef\@currentlabel{0}
  \phantomsection
  \label{setup:introalgebraicspaces}
  \makeatother
    The category of excellent Noetherian algebraic spaces of equal
    characteristic zero
    over a scheme $S$ admitting dualizing complexes.
  \item\label{setup:introformalqschemes}
    The category of quasi-excellent Noetherian formal schemes of equal
    characteristic zero
    admitting $c$-dualizing complexes.
  \item\label{setup:introcomplexanalyticgerms} The category of semianalytic
    germs of complex analytic spaces.
  \item\label{setup:introberkovichspaces} The category of $k$-analytic spaces
    over a complete non-Archimedean field $k$ of characteristic zero.
  \makeatletter
  \item[{$(\ref*{setup:introberkovichspaces}')$}]
  \protected@edef\@currentlabel{\ref*{setup:introberkovichspaces}'}
  \phantomsection
  \label{setup:introrigidanalyticspaces}
  \makeatother
    The category of rigid
    $k$-analytic spaces over a complete non-trivially valued
    non-Archimedean field $k$ of characteristic zero.
  \item\label{setup:introadicspaces}
    The category of adic spaces locally of weakly finite type over a complete
    non-trivially valued non-Archimedean field $k$ of characteristic zero.
\end{enumerate}
Let $K_X$ be a canonical divisor on $X$ chosen
compatibly with a dualizing complex on $Z$.\footnote{For example, when $Z$ is a
variety over $k$ or in cases $(\ref{setup:introcomplexanalyticgerms})$,
$(\ref{setup:introberkovichspaces})$,
$(\ref{setup:introrigidanalyticspaces})$, and
$(\ref{setup:introadicspaces})$,
we can choose $K_X$ so that
$\cO_{X_\sm}( (K_X)_{\vert X_\sm})
= \det(\Omega_{X_\sm/k})$ where $X_\sm$ is the smooth locus of $X$.}
  \par Suppose $X$ is $\QQ$-factorial over $Z$ (or $\QQ$-factorial in case
  $(\ref{setup:introalgebraicspaces})$)
  and let $\Delta$ be a $\QQ$-divisor such that
  $(X,\Delta)$ is klt.
  Let $A$ be a $\QQ$-invertible sheaf on $X$ such that the following conditions
  hold:
  \begin{enumerate}[label=$(\roman*)$,ref=\roman*]
    \item $A$ is $\pi$-ample.
    \item $K_X+\Delta+A$ is $\pi$-nef.
  \end{enumerate}
  Then, the relative minimal model program with scaling of $A$ over $Z$ exists.
  Moreover, we have the following properties.
  \begin{enumerate}[label=$(\arabic*)$,ref=\arabic*]
    \item\label{thm:mainwhenterm}
      The relative minimal model program with scaling of $A$ over $Z$ terminates after
      a finite sequence of flips and divisorial contractions
      over every affinoid subdomain
      $U \subseteq Z$ for which
      there exists a rational number $c \in (-\infty, 1]$ such that
      $(cK_{X}+\Delta)_{\vert U}$ is $\pi_{\vert \pi^{-1}(U)}$-big.
    \item If there exists an affinoid covering $Z = \bigcup_j U_j$ such that
      each $U_j$ satisfies the condition in $(\ref{thm:mainwhenterm})$, then
      the relative minimal model program with scaling of $A$ over $Z$ yields
      a commutative diagram
      \begin{equation}\label{eq:merodiag}
        \begin{tikzcd}[column sep=0.4em]
          (X,\Delta) \arrow[dashed]{rr}\arrow{dr}[swap]{\pi} & & (X_m,\Delta_m)
          \arrow{dl}{\pi_m}\\
          & Z
        \end{tikzcd}
      \end{equation}
      where $X \dashrightarrow X_m$ is a rational/meromorphic map and for each $j$,
      $\pi_m^{-1}(U_j) \to U_j$ is either the relative analytification of a minimal
      model over $U_j$ (when $(K_X+\Delta)_{\rvert\pi^{-1}(U_j)}$ is
      $\pi$-pseudoeffective) or the relative analytification of a Mori fibration
      over $U_j$ (when $(K_X+\Delta)_{\rvert\pi^{-1}(U_j)}$ is not
      $\pi$-pseudoeffective).
    \item If $(K_X+\Delta)_{\rvert\pi^{-1}(U)}$ is not $\pi$-pseudoeffective for
      every affinoid subdomain $U \subseteq Z$, then 
      the relative minimal model program with scaling of $A$
      yields a commutative diagram \eqref{eq:merodiag}
      where $X \dashrightarrow X_m$ is a meromorphic map and $\pi_m$ is
      the relative analytification of a Mori fibration over every affinoid
      subdomain $U \subseteq Z$.
  \end{enumerate}
\end{alphthm}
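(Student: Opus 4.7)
The plan is to unify all six categories by reducing to the algebraic case $(\ref{setup:introalgebraicspaces})$ via a GAGA theorem for Grothendieck duality. Over every affinoid subdomain $U \subseteq Z$, the morphism $\pi^{-1}(U) \to U$ can be algebraized to a projective morphism of excellent algebraic spaces, and one verifies that the dualizing complex, canonical divisor $K_X$, the pair $(X,\Delta)$, and divisor $A$ all transfer across this algebraization in a way compatible with the divisor-theoretic data used in the MMP. Once this framework is set up, it suffices to prove the theorem in the algebraic setting.

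In the algebraic case, I would adopt the strategy of Cascini and Laz\'ic: rather than constructing the MMP step by step, prove directly the finite generation of an appropriate relative adjoint ring---the multi-section ring associated to $K_X+\Delta$ and $K_X+\Delta+A$---as an $\cO_Z$-algebra, under the bigness hypothesis of $(\ref{thm:mainwhenterm})$. The essential vanishing input is the Kawamata--Viehweg vanishing theorem for excellent schemes in equal characteristic zero proved in \cite{Mur}, which in turn powers the base-point free theorem, the non-vanishing theorem, and the cone theorem in the algebraic-space setting. Finite generation then transfers to the other five categories because the relative $\Proj$ construction commutes with analytification, given the GAGA framework from the first step.

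Given finite generation, the local minimal model $\pi_m^{-1}(U) \to U$ over an affinoid $U$ is obtained as the relative $\Proj$ of the adjoint ring, and standard arguments decompose the rational map $X \dashrightarrow X_m$ into a finite sequence of flips and divisorial contractions, all of which arise as steps of an MMP with scaling of $A$; this yields termination in $(\ref{thm:mainwhenterm})$. For part $(3)$, when $K_X+\Delta$ is not $\pi$-pseudoeffective on any $\pi^{-1}(U)$, the non-vanishing theorem forces the scaling parameter to reach zero in finitely many steps, producing a Mori fibration. For part $(2)$, the local minimal models and Mori fibrations glue over an affinoid cover of $Z$ because they are all functorially determined by the adjoint ring, yielding the global commutative diagram \eqref{eq:merodiag}.

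The main obstacle will be the GAGA step for Grothendieck duality: proving that dualizing complexes, canonical divisors, and finite generation of graded $\cO_Z$-algebras all transfer faithfully between the algebraic and analytic settings, simultaneously across all six categories. This is especially delicate in the formal case $(\ref{setup:introformalqschemes})$ and the non-Archimedean cases $(\ref{setup:introberkovichspaces})$--$(\ref{setup:introadicspaces})$, where the existing GAGA literature for dualizing complexes is less developed and where one must track completions carefully to ensure that the algebraized adjoint ring on $U^{\al}$ genuinely recovers the analytic one on $U$.
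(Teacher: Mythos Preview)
Your overall architecture matches the paper's: reduce the analytic categories to the algebraic case via GAGA for Grothendieck duality, then prove the algebraic case by the Cascini--Lazi\'c finite-generation strategy powered by the Kawamata--Viehweg vanishing of \cite{Mur}. You also correctly identify the GAGA step for dualizing complexes as a major technical hurdle.

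However, your gluing step is too vague, and this is precisely where the paper locates one of its main contributions. Saying that the local models ``glue because they are all functorially determined by the adjoint ring'' is not enough: the transition maps between rings of sections of affinoid subdomains are \emph{neither \'etale nor of finite type}, so naive compatibility of $\PProj$ does not immediately apply. The paper's solution (following Koll\'ar and Villalobos-Paz, but extended to this more general base-change setting) is that the scaling divisor forces each step of the MMP to be \emph{uniquely characterized} by numerical conditions (Lemma~\ref{lem:vp21}), and then one proves a new compatibility result (Theorem~\ref{thm:vp22alt}) showing this characterization is preserved under universally open base change that is quasi-finite over closed points. This is the content of ``Key point~\ref{keypoint:scaling}: scaling has two roles, termination and gluing.'' Your proposal does not engage with this, and without it the reduction to the local algebraic case does not assemble into a global statement.

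Two smaller points. First, your treatment of part~(3) is backwards: when $K_X+\Delta$ is not $\pi$-pseudoeffective, the scaling parameter does \emph{not} reach zero---it stays bounded below by some $\mu>0$, and one absorbs $\mu A$ into the boundary to reduce to the big-$\Delta$ case (Corollary~\ref{cor:cl1367}). Second, you do not mention Bertini theorems; the paper treats these as a genuine obstacle, since the base is not quasi-projective over a field, and develops relative Bertini theorems over local rings (\S\ref{sect:bertini}) to perturb klt pairs locally before spreading out.
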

\par We note the hypotheses on the scaling divisor $A$ can be weakened in case
$(\ref{setup:introalgebraicspaces})$.
See Theorems \ref{rem:MMP} and \ref{thm:cl1365} and Corollaries \ref{cor:cl1366}
and \ref{cor:cl1367}.
In case $(\ref{setup:introalgebraicspaces})$,
the partially defined map $X \dashrightarrow X_m$ is rational in the sense of
\cite[\href{https://stacks.math.columbia.edu/tag/0EMM}{Tag
0EMM}]{stacks-project}.
In case $(\ref{setup:introcomplexanalyticgerms})$,
the partially defined map $X \dashrightarrow X_m$ is meromorphic in
the sense of Remmert \cite[Def.\ 15]{Rem57}
(see also \cite[Definition 1.7]{Pet94}).
In cases $(\ref{setup:introberkovichspaces})$,
$(\ref{setup:introrigidanalyticspaces})$, and $(\ref{setup:introadicspaces})$,
the partially defined map $X \dashrightarrow X_m$ is meromorphic in the sense of
Morrow and Rosso \cite[Definition 3.2]{MR23}.
\par In addition to the results in \cite{BCHM10,VP,Fuj,DHP} mentioned above,
as far as we are aware, the only known case of
$(\ref{setup:introformalqschemes})$,
$(\ref{setup:introberkovichspaces})$,
$(\ref{setup:introrigidanalyticspaces})$, and $(\ref{setup:introadicspaces})$
is the case when $X$ is a rigid analytic
surface.
In this case, the relative minimal model program is known
\cite{Uen87,Mit11}, and also holds when $\Char(k) > 0$.
For case $(\ref{setup:introalgebraicspaces})$,
the relative minimal model program for schemes
holds without the
assumption on characteristic in dimension $2$ \cite{Sha66,Lic68,Lip69,Tan18} and
for residue characteristics
$\notin \{2,3,5\}$ in dimension $3$
\cite{Kaw94,Kol21qfac,TY,BMPSTWW,Sti}.
The relative minimal model program for morphisms $X \to Z$ where $X$ is either a
three-dimensional algebraic space over an algebraically closed field of
characteristic zero or a three-dimensional complex analytic space that is
Moishezon locally over $Z$ is proved in \cite{Sho96}.
\subsection{Key points in the proof of Theorem
\ref{thm:introrelativemmp}}\label{sect:keypoints}
\par We now discuss three key elements in the proof of Theorem
\ref{thm:introrelativemmp}.
The first key point is the following:
\begin{keypoint}
  All rings appearing in Theorem \ref{thm:introrelativemmp} are excellent.
  Thus, it suffices to prove our results on the
  minimal model program in the algebraic setting for schemes or algebraic spaces
  and then use GAGA-type theorems.
\end{keypoint}
\par Theorem \ref{thm:introrelativemmp} therefore
illustrates the power of working in the
general context of excellent rings and schemes:
All rings appearing in
these different contexts are excellent \cite{Fri67,Mat73,Kie69,Con99,Duc09}, and
hence we can use the GAGA theorems from
\cite{Ser56,EGAIII1,Kop74,Ber93,Hub07,Poi10,AT19} to move 
between the algebraic and analytic settings.
To implement this strategy
in this paper, we prove GAGA-type theorems for dualizing complexes and
Grothendieck duality in \S\ref{sect:dualityandgaga}, which allow us to move from
settings $(\ref{setup:introformalqschemes})$,
$(\ref{setup:introcomplexanalyticgerms})$,
$(\ref{setup:introberkovichspaces})$,
$(\ref{setup:introrigidanalyticspaces})$, and
$(\ref{setup:introadicspaces})$ to the algebraic setting.
This strategy using GAGA was previously used by
Schoutens \cite{Sch99} (in the rigid analytic case)
and Temkin \cite{Tem12,Tem18} for resolutions of singularities,
by Mitsui \cite{Mit11} for the bimeromorphic geometry of rigid analytic
surfaces, and by Abramovich and Temkin \cite{AT19}
for weak factorization of birational maps.
However, as far as we are aware, our GAGA-type theorems for Grothendieck duality
and dualizing complexes are new in cases 
$(\ref{setup:introcomplexanalyticgerms})$,
$(\ref{setup:introberkovichspaces})$,
$(\ref{setup:introrigidanalyticspaces})$, and $(\ref{setup:introadicspaces})$
(the case for formal schemes is proved in
\cite{ATJLL99}).
\par The special case of $(\ref{setup:introalgebraicspaces})$
when $X$ and $Z$ are schemes
answers a question of Koll\'ar \cite[(23)]{Kolhol} and is
of particular interest separate from its role described above.
This is because of the important role (quasi-)excellent schemes play in the
birational geometry of algebraic varieties, for example in proving resolutions
of singularities \cite{Hir64}, the theory of generic limits \cite{dFM09,Kolhol}
and the proof of the ACC conjecture for log canonical thresholds in the smooth
case or the case when the singularities lie in a bounded family
\cite{dFEM10,dFEM11}, and cases of the ACC
conjecture for minimal log discrepancies in dimension three
\cite{Kaw15}.\medskip
\par While the GAGA theorems described above work over every affinoid subdomain
of the base space $Z$, they cannot be applied globally on $Z$.
Thus, we require a new ingredient that will allow us to glue steps of the
relative minimal model program with scaling together that are constructed over
each member of an affinoid covering.
The solution to this gluing problem is the following:
\begin{keypoint}\label{keypoint:scaling}
  Scaling has two roles: Termination and Gluing.
\end{keypoint}
\par One of the key insights in \cite{BCHM10} is that although it is unknown
whether the relative minimal model program always terminates, one can show that
it terminates as long as one assumes the boundary divisor is big, and
one keeps track of an appropriate scaling divisor $A$
and uses it to choose contraction morphisms at each step of the
relative minimal model program.
A more recent insight originating in the uniqueness results due to Koll\'ar
\cite{Kol21qfac} and utilized by Villalobos-Paz
in \cite{VP}
for algebraic spaces
is that
scaling has another role: Scaling enables one to choose steps of the relative
minimal model program \emph{uniquely,} and hence one can glue together steps of the
relative minimal model program constructed locally on affinoid subdomains of the
base.
The new insight in this paper is that we can adapt this idea
to all categories stated in Theorem \ref{thm:introrelativemmp},
even though the transition maps between rings of sections over affinoid
subdomains of the base are not \'etale or even of finite type.
The approach we take in this paper is based on recent work of Enokizono and
Hashizume \cite{EH}, who solved this gluing problem for projective morphisms of
quasi-excellent algebraic stacks with dualizing
complexes.
See Theorem \ref{thm:gluemmp}.
\par These gluing methods apply outside of equal characteristic zero as well.
Using recent progress on the minimal model program for excellent schemes for
surfaces and threefolds in positive and mixed characteristic
\cite{Kaw94,Tan18,Kol21qfac,TY,BMPSTWW}, we can show that Theorem
\ref{thm:introrelativemmp} extends to positive and mixed characteristics, as
long as we assume that $\dim(X) = 2$ or that $\dim(X) = 3$, $\dim(\pi(X)) > 0$,
and the residue characteristics of local rings of $Z$ are not in $\{2,3,5\}$.
Note that the special case $(\ref{setup:introalgebraicspaces})$ when $X$
and $Z$ are schemes is already interesting since \cite[Theorem G]{BMPSTWW} assumes that
$Z$ is quasi-projective over an excellent domain admitting a dualizing complex.
\begin{customthm}{\ref*{thm:introrelativemmp}\textsuperscript{\textit{p}}}[The
  relative minimal model program with scaling in dimensions $\le 3$ in
  positive and mixed characteristic]\label{thm:introrelativemmpcharp}
  Fix notation as in the first paragraph of Theorem \ref{thm:introrelativemmp}
  with the words ``of (equal) characteristic zero'' and with case
  $(\ref{setup:introcomplexanalyticgerms})$ omitted.
  \par Let $\Delta$ be a $\QQ$-Weil divisor on $X$ such that one of the following
  conditions holds:
  \begin{itemize}
    \item $\dim(X) \le 2$ and either $(X,\Delta)$ is log canonical or $X$ is
      $\QQ$-factorial over $Z$ and the coefficients of $\Delta$ lie in $[0,1]$.
    \item $\dim(X) = 3$, $X$ is $\QQ$-factorial over $Z$, $(X,\Delta)$ is
      klt, and one of the following additional conditions holds:
\begin{enumerate}[label=$(\alph*)$,ref=\alph*]
  \item\label{thm:introrelativemmpcharptanbmpstww}
    $\dim(\pi(X)) > 0$ and none of the residue fields of $Z$
    at closed points (in cases $(\ref{setup:introalgebraicspaces})$ and
    $(\ref{setup:introformalqschemes})$), at rigid points in $Z$ (in cases 
    $(\ref{setup:introberkovichspaces})$ and
    $(\ref{setup:introrigidanalyticspaces})$), or at points in $\JG(Z)$ (in
    case $(\ref{setup:introadicspaces})$)
    are of characteristic $2$, $3$, or $5$.
  \item\label{thm:introrelativemmpcharpty} $\dim(\pi(X)) = 1$.
  \item\label{thm:introrelativemmpcharpkol}
    $\pi\colon X \to Z$ is a log resolution of a pair $(Z,\Gamma)$ where
    $\Gamma$ is a $\QQ$-Weil divisor such that $K_Z+\Gamma$ is $\RR$-Cartier
    for which $A$ below is a $\pi$-ample exceptional divisor.
\end{enumerate}
  \end{itemize}
  Let $A$ be a $\QQ$-invertible sheaf on $X$ such that the following conditions
  hold:
  \begin{enumerate}[label=$(\roman*)$,ref=\roman*]
    \item $A$ is $\pi$-ample.
    \item $K_X+\Delta+A$ is $\pi$-nef.
  \end{enumerate}
  Then, the relative minimal model program with scaling of $A$ over $Z$ exists.
  Moreover, we have the following properties.
  \begin{enumerate}[label=$(\arabic*)$,ref=\arabic*]
    \item
      The relative minimal model program with scaling of $A$ over $Z$ terminates after
      a finite sequence of flips and divisorial contractions over every affinoid
      subdomain $U \subseteq Z$ starting from $(\pi^{-1}(U),\Delta_{\vert
      \pi^{-1}(U)})$.
    \item The relative minimal model program with scaling of $A$ over $Z$ yields
      a commutative diagram
      \[
        \begin{tikzcd}[column sep=0.4em]
          (X,\Delta) \arrow[dashed]{rr}\arrow{dr}[swap]{\pi} & & (X_m,\Delta_m)
          \arrow{dl}{\pi_m}\\
          & Z
        \end{tikzcd}
      \]
      where $X \dashrightarrow X_m$ is a meromorphic map and over every affinoid
      subdomain $U \subseteq Z$, the morphism
      $\pi_m^{-1}(U) \to U$ is either the relative analytification of a minimal
      model over $U$ (when $(K_X+\Delta)_{\rvert\pi^{-1}(U)}$ is
      $\pi$-pseudoeffective) or the relative analytification of a Mori fibration
      over $U$ (when $(K_X+\Delta)_{\rvert\pi^{-1}(U)}$ is not
      $\pi$-pseudoeffective).
  \end{enumerate}
\end{customthm}
Here, $\JG(X)$ denotes the \textsl{Jacobson--Gelfand spectrum} of a Jacobson
adic space $X$ as defined in \cite{Lou} (see Definition \ref{def:jacobsongelfand}).
Note that even though termination (without scaling) of flips is known in the
situation of Theorem \ref{thm:introrelativemmpcharp} for
schemes that are quasi-projective over an excellent domain admitting a dualizing
complex \cite{Tan18,BMPSTWW,Sti}, the gluing
procedure described above requires scaling.
Thus, the relative minimal model program we
construct still uses scaling.\smallskip
\par For complex analytic spaces, where the relative minimal model program with
scaling is due to Fujino \cite{Fuj} and
Das--Hacon--P\u{a}un \cite{DHP}, Key point \ref{keypoint:scaling} allows us
to adapt these existing results to base spaces that are not necessarily Stein or
compact.
Compared to the results in \cite{Fuj,DHP}, our results hold for
all semianalytic germs $Z$ of complex analytic spaces (instead of Stein spaces
that may have to be replaced by smaller subsets at each step of the relative
minimal model program), but require
stronger assumptions on the scaling divisor $A$.
These stronger assumptions enable us to glue together
each step of the relative minimal model program that is constructed on an
affinoid cover.
By using the results in \cite{Fuj} as input, we obtain a version of these results
that do not require replacing the base by a smaller subset at each step.
See Theorem \ref{thm:complexmmpwithoutshrinking} and
compare \citeleft\citen{Fuj}\citemid Theorem 1.7, \S13, and \S22\citepunct
\citen{DHP}\citemid Theorem 1.4\citeright.\medskip
\par A key difference between the relative minimal model program with scaling in
\cite{BCHM10} and the relative minimal model program with scaling established in
this paper and in previous work of Villalobos-Paz \cite{VP} is that we can now
work with schemes that are not necessarily quasi-projective varieties.
This restriction in \cite{BCHM10} is necessary for two reasons: (1) existing
vanishing theorems and existing versions of the fundamental theorems of the
minimal model program require working with varieties and (2) applying Bertini
theorems globally on $X$ requires working with \emph{quasi-projective}
varieties.
We resolve these issues by using the following:
\begin{keypoint}
  We can work with spaces that are not quasi-projective varieties by using
  the vanishing theorems proved by the second author in \emph{\cite{Mur}} and by
  applying Bertini theorems locally over every local ring of the base space.
\end{keypoint}
Thus,
one surprising aspect of the proof of Theorem \ref{thm:introrelativemmp} is that
even after reducing to the special case $(\ref{setup:introalgebraicspaces})$
when $X$ and $Z$ are schemes or algebraic spaces, proving the necessary
vanishing theorems ultimately require one to leave the world of schemes and use
the Zariski--Riemann spaces from \cite{Nag63}.
See \cite{Mur}.
\par For Bertini theorems, the key idea is that Bertini theorems for relatively
generated invertible sheaves can be shown
locally over each local ring of the base space $Z$, as long as the local rings
have residue characteristic zero.
Previously, this was done for relatively very ample invertible sheaves in
\cite{BMPSTWW}.
We can then extend the divisors constructed over each local ring to an
affine cover using the excellence of $X$, and work over each member of this
affine cover separately.
See \S\ref{sect:bertini}.
These Bertini theorems are especially important when lifting sections from
subschemes in \S\ref{sect:cl12s3}, where Bertini theorems are used repeatedly to
perturb log regular pairs, and when running the relative minimal model program
with scaling in \S\ref{sect:mmpexistsandterminates}.
\subsection{Finite generation of relative adjoint rings}
One of the key results shown in \cite{BCHM10} to establish
Theorem \ref{thm:introrelativemmp}$(\ref{setup:introalgebraicspaces})$ for complex
varieties is the
finite generation of relative adjoint rings \cite[Theorem 1.2(3)]{BCHM10}.
We show the following finite generation result, following the approach of
Cascini--Lazi\'c \cite[Theorem A]{CL12} and Corti--Lazi\'c \cite[Theorem
2]{CL13} for complex varieties.
Case $(\ref{setup:introcomplexanalyticgerms})$ below gives a new proof of
\citeleft\citen{Fuj}\citemid Theorem F(1)\citepunct
\citen{DHP}\citemid Theorem 1.3\citeright\ (note that \cite{DHP} also uses the
strategy in \cite{CL12,CL13} in the complex analytic setting).
\begin{alphthm}[Finite generation of relative adjoint rings]\label{thm:introfinitegen}
  Fix notation as in the first paragraph of Theorem \ref{thm:introrelativemmp}.
Let $\Delta_i$ be effective $\mathbf Q$-Weil divisors on $X$ for $i \in
\{1,2,\ldots,\ell\}$ such that $K_X+\Delta_i$ is $\QQ$-Cartier and $(X,\Delta_i)$ is klt for each $i$. 
Let $A_i$ be $\pi$-nef $\QQ$-invertible sheaves for $i \in
\{1,2,\ldots,\ell\}$.
Assume that for each $i$, either $A_i$ is $\pi$-ample, or that there exists a rational number $c_i\in (-\infty,1]$ such that $c_iK_X+\Delta_i$ is $\QQ$-Cartier and $\pi$-big.
Then, the relative adjoint ring  
\[
  \bigoplus_{(m_1,m_2,\ldots,m_\ell) \in \NN^\ell} \pi_*\cO_X\Biggl(
  \Biggl\lfloor \sum_{i=1}^\ell m_i(K_X+\Delta_i+A_i)\Biggr\rfloor \Biggr)
\]
is of finite type over every affinoid subdomain in $Z$.
In particular, if $Z$ has a finite cover by affinoid subdomains, then the
relative adjoint ring is generated by finitely many summands.
\end{alphthm}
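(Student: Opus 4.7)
The plan is to reduce to the algebraic case using the GAGA-type theorems for Grothendieck duality developed in \S\ref{sect:dualityandgaga}, and then to prove the resulting algebraic statement for excellent noetherian algebraic spaces admitting dualizing complexes by adapting the Cascini--Lazi\'c and Corti--Lazi\'c strategy, with the Kawamata--Viehweg vanishing theorem of \cite{Mur} in place of its complex-variety analogue.

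First, I would fix an affinoid subdomain $U \subseteq Z$ and work with the base change $\pi_U\colon \pi^{-1}(U) \to U$. In each of the categories $(\ref{setup:introalgebraicspaces})$--$(\ref{setup:introadicspaces})$, the ring of functions $R$ on $U$ is excellent and noetherian, and by the GAGA theorems of \S\ref{sect:dualityandgaga} there is a projective morphism $X^{\al} \to U^{\al} := \Spec R$ (or an algebraic space over it in case $(\ref{setup:introalgebraicspaces})$) together with an equivalence of coherent sheaf categories that commutes with $\pi_*$ and with dualizing complexes. This equivalence transports $\Delta_i$, the canonical divisor $K_X$, and the $\QQ$-invertible sheaves $A_i$ to analogous data on $X^{\al}$, and it preserves the klt condition, $\QQ$-Cartier property, $\pi$-ampleness, $\pi$-nefness, and $\pi$-bigness (each of which is characterized by behavior of push/pullback of line bundles and of dualizing complexes). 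Since the sections of the relative adjoint ring over $U$ are identified with the algebraic sections on $U^{\al}$, it suffices to prove the statement when $\pi\colon X \to \Spec R$ is projective with $R$ an excellent noetherian ring admitting a dualizing complex.

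Second, in this algebraic setting, I would follow the induction-on-dimension strategy of Cascini--Lazi\'c. After replacing each $A_i$ by a general element of a sufficiently divisible multiple so that it is a prime divisor (using the Bertini theorems applied locally over each local ring of $\Spec R$ as in Key point 3 and \S\ref{sect:bertini}, and then spreading out via excellence of $X$), one restricts the adjoint ring to the common support $S$, applies the inductive hypothesis on $S$, and lifts sections from $S$ to $X$ using a Hacon--M\textsuperscript{c}Kernan-type extension theorem whose vanishing input is Kawamata--Viehweg on excellent schemes \cite{Mur}. To handle the weakened hypothesis that $A_i$ is merely $\pi$-nef but $c_iK_X+\Delta_i$ is $\pi$-big and $\QQ$-Cartier, I would perturb: write $K_X+\Delta_i+A_i$ as a $\QQ$-linear combination $K_X+\Delta_i'+A_i'$ with $A_i'$ $\pi$-ample and $(X,\Delta_i')$ still klt, using a Kodaira-type decomposition of the big class $c_iK_X+\Delta_i$, and then appeal to the $\pi$-ample case, exactly as in \cite[Theorem 2]{CL13}.

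The main obstacle is making the Cascini--Lazi\'c strategy go through outside the setting of complex quasi-projective varieties. The Bertini step cannot be performed globally on $X$, because $X$ is not assumed quasi-projective over a field; the remedy is the local-over-the-base approach of Key point 3, which yields general members of $\lvert mA_i\rvert$ over each local ring of $Z$ and then glues them on an affine cover via excellence. The extension/lifting step requires vanishing theorems in a setting where resolutions are at best excellent, not of finite type over a field, and must draw on the scheme-theoretic Kawamata--Viehweg vanishing of \cite{Mur}, whose proof passes through Zariski--Riemann spaces. Once these two ingredients are in hand, the induction and the lifting argument of \cite{CL12,CL13} transfer essentially verbatim, and the final assertion about finite generation after a finite affinoid cover follows because a direct sum of finitely many finitely generated $R$-algebras is finitely generated.
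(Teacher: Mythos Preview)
Your proposal is correct and follows essentially the same approach as the paper: reduce to the scheme case over $\Spec(\cO_U(U))$ via the GAGA theorems of \S\ref{sect:dualityandgaga} (checking that klt, $\QQ$-Cartier, nef, ample, and big are preserved under algebraization, as in Lemma~\ref{lem:kltgaga}), then prove the scheme statement by adapting Cascini--Lazi\'c/Corti--Lazi\'c with Kawamata--Viehweg vanishing from \cite{Mur} and the local Bertini theorems of \S\ref{sect:bertini}. The only minor inaccuracy is your justification of the ``in particular'' clause: it is not about direct sums of algebras, but rather that finite generation over each member of a finite affinoid cover yields a uniform bound on the degrees of generators.
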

Theorem \ref{thm:introfinitegen} does not hold without the assumption
that the \(\Delta_i\) have rational coefficients.
For example, even if \(X = \bA^1_\CC\) and \(\pi\) is the identity map, the ring
\(\bigoplus_m \cO_X(\lfloor m\cdot rP \rfloor)\) is not finitely generated
when \(P \in \bA^1_\CC\) is a point and \(r\) is irrational.\medskip
\par An interesting aspect of our proof is that our version of \cite[Theorem
B]{CL12} (which states that $\mathcal{E}_A(V)$ is a rational polytope)
holds when $Z$ is a scheme of mixed characteristic. See Theorem \ref{thm:cl12b}.
This is because we can deduce it from \cite[Theorem B]{CL12} by passing to
generic fibers.
We note that Theorem \ref{thm:introfinitegen} in cases
$(\ref{setup:introformalqschemes})$,
$(\ref{setup:introcomplexanalyticgerms})$,
$(\ref{setup:introberkovichspaces})$,
$(\ref{setup:introrigidanalyticspaces})$, and
$(\ref{setup:introadicspaces})$
is not used to prove the corresponding
cases of Theorem \ref{thm:introrelativemmp}.
\subsection{Some aspects of the proofs of Theorems \ref{thm:introrelativemmp}
and \ref{thm:introfinitegen}}
\par As described in \S\ref{sect:keypoints},
Theorems \ref{thm:introrelativemmp} and \ref{thm:introfinitegen} unify
the aforementioned results in \cite{BCHM10,VP,Fuj,DHP} since we are able to
deduce them all from the case of excellent schemes.
There are several key new inputs compared to
\cite{KMM87,BCHM10,HM10,CL12,CL13}, which we summarize here.
\begin{enumerate}
  \item The Kawamata--Viehweg vanishing
    theorem for proper
    morphisms of schemes of equal characteristic zero, which was
    recently established by the second author in \cite{Mur}.
    In arbitrary dimension, the necessary vanishing theorems were previously
    only known for
    morphisms of varieties \cite{KMM87} and for morphisms of
    complex analytic spaces \cite{Nak87}.
  \item New, relative versions of Bertini theorems for globally generated
    invertible sheaves (see \S\ref{sect:bertini}).
    These relative Bertini theorems are necessary since the usual
    Bertini theorems for quasi-projective varieties do not apply.
    Similar Bertini theorems for very ample invertible sheaves
    were shown in \cite{BMPSTWW}.
  \item GAGA theorems for Grothendieck duality and
    dualizing complexes (see \S\ref{sect:dualityandgaga}).
    As mentioned above, these GAGA theorems are necessary to establish the
    minimal model program in other categories.
    As mentioned before, the case for formal schemes is proved in \cite{ATJLL99}.
  \item Uniqueness results for steps of the relative
    minimal model program with scaling (see
    \S\ref{sect:gluing}), which are variants of results in \cite{VP,EH}.
    These results show that steps of the relative minimal model program with
    scaling are compatible with base change along
    flat morphisms with geometrically normal fibers (see Remarks
    \ref{rem:amplemodelbasechange} and \ref{rem:mmpbasechange}).
    These gluing results are used to glue steps of the minimal model
    program together after constructing them over affinoid subdomains in
    $Z$.
\end{enumerate}
\par To prove Theorem \ref{thm:introrelativemmp}, we also need versions of 
the Basepoint-free, Contraction, Rationality, and Cone theorems for schemes and
algebraic spaces.
We give two proofs of these results: One by by adapting strategy in \cite{KMM87}
for complex varieties (see \S\ref{sect:bpfcontrratcone}), and another by
adapting the strategy in \cite{CL13} for complex varieties (see
\S\ref{sect:fundrevisit}).
We have included the results proved by adapting the strategy in \cite{KMM87}
because they apply more generally to divisorially log terminal (dlt) pairs, and this version
of the Rationality theorem (Theorem \ref{thm:rationality}) also yields
information on the denominators that can appear.
However, we will use some of our versions of the results in \cite{CL13} to deduce
termination with scaling.\medskip
\par Finally, we mention that one can consider other
generalizations of the minimal model program to other categories of
spaces.
For example, for complex analytic spaces (case
$(\ref{setup:introcomplexanalyticgerms})$), the minimal
model program for K\"ahler threefolds
\cite{CP97,Pet98,Pet01,HP15,HP16,CHP16,DO22,DH20,DOb} (see also \cite{DH}),
classes of K\"ahler fourfolds \cite{DHP}, and log surfaces in
Fujiki's class $\mathcal{C}$ \cite{Fuj21} are known.
For formal schemes (case $(\ref{setup:introformalqschemes})$),
Smith initiated the study of a
minimal model program for pseudo-proper formal schemes over a field in
\cite{Smi17}.
A major difficulty for this class of formal schemes is that Smith showed there are counterexamples
to Kodaira-type vanishing theorems
for smooth formal schemes that are pseudo-projective over fields of
characteristic zero \cite[Proposition 4.3.1]{Smi17}.
\subsection*{Outline}
This paper consists of six parts.
For readers who are primarily interested in our results for other
categories, Parts \ref{part:othercats} and \ref{part:additionalresults} can
largely be read independently from the previous parts as long as one accepts the
validity of Theorem \ref{thm:introrelativemmp} for schemes and refers
back to the necessary definitions and results earlier in the paper as needed.\medskip
\par In Part \ref{part:prelim}, we establish the necessary preliminaries for the
minimal model program for schemes and algebraic spaces.
Compared to the case of varieties, there are subtleties working with
divisors on algebraic spaces and having to do with $\QQ$-factoriality.
We also prove many fundamental results on relative nefness and bigness for
morphisms of algebraic spaces, for example the theorem of the base (Theorem
\ref{thm:RelNSFinite}) and Kleiman's criterion for relative ampleness (Theorem
\ref{thm:ampisinterior}), which we need to establish theorems of the
minimal model program for algebraic spaces in our setting.
\par We note that to prove Theorem \ref{thm:introrelativemmp}, it suffices to
prove Theorem \ref{thm:introrelativemmp}$(\ref{setup:introalgebraicspaces})$ for
schemes.
This is because one can deduce Theorem
\ref{thm:introrelativemmp}$(\ref{setup:introalgebraicspaces})$ for algebraic
spaces from the scheme-theoretic case using the framework in \cite{VP}, and
cases
$(\ref{setup:introformalqschemes})$,
$(\ref{setup:introcomplexanalyticgerms})$,
$(\ref{setup:introberkovichspaces})$,
$(\ref{setup:introrigidanalyticspaces})$, and
$(\ref{setup:introadicspaces})$
only use the scheme case of Theorem
\ref{thm:introrelativemmp}$(\ref{setup:introalgebraicspaces})$.
However, we have included the foundational results necessary to prove Theorem
\ref{thm:introrelativemmp} directly for algebraic spaces because we can prove
more general results on the relative minimal model program for
algebraic spaces by proceeding directly (see Theorems
\ref{rem:MMP} and \ref{thm:cl1365}) that we could not show using the strategy in
\cite{VP}.
Moreover, when verifying the necessary foundational results for schemes that we
could not locate in the literature,
we realized that we could prove the same statements for algebraic spaces.
We believe these statements to be of independent interest and hope they will be
useful for future reference.
Part \ref{part:prelim} also illustrates what foundational results would be necessary
to prove Theorem \ref{thm:introrelativemmp} directly in cases
$(\ref{setup:introformalqschemes})$,
$(\ref{setup:introberkovichspaces})$,
$(\ref{setup:introrigidanalyticspaces})$, and
$(\ref{setup:introadicspaces})$ (see \cite{Fuj,DHP} for case
$(\ref{setup:introcomplexanalyticgerms})$).\medskip
\par In Part \ref{part:bertiniandfund}, we prove our new relative versions of
Bertini theorems for schemes.
These theorems will become necessary later to perturb klt pairs without having
global Bertini theorems available as would be the case for quasi-projective
varieties over a
field.
We also show the fundamental theorems of the minimal model program (the
Basepoint-freeness, Contraction, Rationality, and Cone theorems) for algebraic
spaces adapting the strategy in \cite{KMM87} for complex varieties.
While we also prove dual versions of these theorems for klt pairs using the
method in \cite{CL13}
(see \S\ref{sect:fundrevisit}), we have
included these results because they hold more generally for divisorially log
terminal (dlt)
pairs, and the Rationality Theorem \ref{thm:rationality}
provides some more information about the
denominators that appear.\medskip
\par In Part \ref{part:fingen}, we prove Theorem \ref{thm:introfinitegen} for
schemes using the strategy of Cascini--Lazi\'c.
A key input is the version of the Kawamata--Viehweg vanishing theorem proved
by the second author \cite[Theorem A]{Mur}.
Because of the lack of Bertini theorems, however, we need to formulate many of
our results in terms of restriction maps on global sections instead of linear
systems as is done in \cite{CL12}.
This allows us to reduce to the case when the base scheme $Z$ is the spectrum of
an excellent local $\QQ$-algebra.
We conclude the part by proving finite generation for klt pairs
and giving alternative proofs of the Contraction, Rationality, and
Cone theorems by adapting the strategy in \cite{CL13} for complex varieties.
These results will be used in Part \ref{part:relativemmpforschemes} to prove
termination with scaling.\medskip
\par In Part \ref{part:relativemmpforschemes}, we establish the existence of
flips and termination with scaling for schemes and algebraic spaces, using
Theorem \ref{thm:introfinitegen}.
This completes the proof of Theorem
\ref{thm:introrelativemmp}$(\ref{setup:introalgebraicspaces})$.
We then give some applications of these results by showing that
$\QQ$-factorializations and terminalizations exist, which for simplicity we
prove only for schemes.\medskip
\par In Part \ref{part:othercats}, we setup the necessary preliminaries for
Theorem \ref{thm:introrelativemmp} in cases $(\ref{setup:introformalqschemes})$,
$(\ref{setup:introcomplexanalyticgerms})$,
$(\ref{setup:introberkovichspaces})$,
$(\ref{setup:introrigidanalyticspaces})$,
and $(\ref{setup:introadicspaces})$.
We then prove our GAGA-type results for dualizing complexes and Grothendieck
duality in \S\ref{sect:dualityandgaga}.
In \S\ref{sect:setupforothercats}, we set our notation for different categories
of spaces and check that the hypotheses in Theorems \ref{thm:introrelativemmp}
and \ref{thm:introfinitegen} are preserved under algebraization.
Finally, we prove Theorems \ref{thm:introrelativemmp},
\ref{thm:introrelativemmpcharp},
and \ref{thm:introfinitegen} in \S\ref{sect:mmpforothercats}.
The proof of Theorem \ref{thm:introrelativemmpcharp} utilizes recent progress on
the minimal model program for excellent schemes in dimensions $\le3$
\cite{Kaw94,Tan18,Kol21qfac,TY,BMPSTWW}.
\par We note that our assumptions on adic spaces in $(\ref{setup:introadicspaces})$
are necessary to even make sense of the normality and irreducibility assumptions
in Theorem \ref{thm:introrelativemmp}.
Normality and irreducibility of adic spaces locally of weakly finite type over a field
are defined in \cite{Man23}.
We will also use these assumptions to utilize excellence results from
\cite{Kie69,Con99,Duc09} in the proofs of
our statements on GAGA and Grothendieck duality.\medskip
\par Finally, in Part \ref{part:additionalresults}, we prove some additional
results in other categories utilizing the gluing techniques we developed to prove
Theorem \ref{thm:introrelativemmp}.
First, using as input the results in \cite{Fuj} for complex analytic
spaces and our methods in Part \ref{part:relativemmpforschemes} of this paper,
we show that one can run the relative minimal model program with
scaling for complex analytic spaces without shrinking the base space $Z$ at each
step (Theorem \ref{thm:complexmmpwithoutshrinking}).
Second, we discuss how the versions of the Basepoint-free and Contraction
theorems in this paper can be reformulated to avoid
the assumption that a dualizing complex exists on the base space $Z$ (Theorems
\ref{thm:bpfnoomega} and \ref{thm:contractionnoomega}).

\addtocontents{toc}{\protect\medskip}
\section*{Acknowledgments}
We are grateful to Dan Abramovich for helpful conversations about \cite{AT19},
to Jack Hall for helpful conversations on the GAGA theorems in \cite{Hal},
to James M\textsuperscript{c}Kernan for pointing out the reference \cite{Smi17},
and
to Peter Scholze for discussing his joint work with Clausen on condensed mathematics
\cite{CS19,CS22} with us (see Theorem \ref{thm:clausenscholze})
that is used in the proof of Theorem
\ref{thm:dualizingcomplexcompatadic}.
We are grateful to Makoto Enokizono and Kenta Hashizume for pointing out a
gap in the original gluing argument that appeared in \S\ref{sect:mmpforothercats}
and for subsequent discussions on their gluing
results from \cite{EH}.
We would like to thank
Olivier Benoist,
J\'anos Koll\'ar,
Joaqu\'in Moraga,
Santai Qu,
David Villalobos-Paz,
and
Chenyang Xu
for helpful discussions.
Finally, we thank the referees for their valuable comments and suggestions.

\addtocontents{toc}{\protect\medskip}
\section*{Notation and conventions}
All rings are commutative with identity, and all ring maps are unital.\medskip
\par For a ringed space or ringed site $X$, $\DD_{\coh}(X)$ denotes the derived
category of $\cO_X$-modules with coherent cohomology sheaves.
We can then define $\DD^+_{\coh}(X)$, $\DD^-_{\coh}(X)$, and $\DD^b_{\coh}(X)$
bounded-below, bounded-above, and bounded derived categories of $\cO_X$-modules
with coherent cohomology sheaves, respectively.
When the notion of quasi-coherent $\cO_X$-modules is defined, we define
$\DD_{\qc}(X)$, $\DD^+_{\qc}(X)$, $\DD^-_{\qc}(X)$, and $\DD^b_{\qc}(X)$ similarly.\medskip
\par Let $X$ be an algebraic space over a scheme $S$.
We say that a quasi-coherent sheaf $\cA$ of $\cO_X$-algebras is
\textsl{of finite type} if for every affine scheme $U = \Spec(R)$ \'etale over
$X$, we have $\cA_{\vert U} \cong \widetilde{A}$ where $A$ is an $R$-algebra of
finite type (see \citeleft\citen{EGAInew}\citemid (2.2.5)\citepunct
\citen{stacks-project}\citemid
\href{https://stacks.math.columbia.edu/tag/07V8}{Tag 07V8}\citeright).\medskip
\par A \textsl{non-trivially valued non-Archimedean field} is a topological field
whose topology is induced by a rank 1 valuation.
These are called \textsl{non-Archimedean fields} in \cite[Definition
1.1.3]{Hub96}.\medskip
\par For \(\kk \in \{\ZZ,\QQ,\RR\}\), we will use both the language of
\(\kk\)-invertible sheaves and \(\kk\)-Weil divisors as described in
\S\ref{subsect:divisors} for schemes and algebraic spaces and in
\S\ref{subsect:divqfac} otherwise. 
For algebraic spaces, we prefer to work with \(\kk\)-invertible
sheaves since there is no preexisting notion of Cartier divisors on algebraic
spaces in general.

\section*{Where assumptions are used}
While we will restate the assumptions used in each result, we point out where
the major assumptions are used in the paper.
These assumptions interact in a subtle way.
\begin{enumerate}[label=\((\arabic*)\),ref=\arabic*]
  \item \textbf{Dualizing complexes.}
    Dualizing complexes \(\omega_X^\bullet\) are used to define canonical
    sheaves \(\omega_X\), canonical divisors \(K_X\), and singularities of pairs
    \((X,\Delta)\) in \S\ref{sect:candiv}.
    Canonical divisors and singularities of pairs are necessary to make sense of
    the minimal model program.
    Canonical sheaves are necessary for the formulation of vanishing theorems using
    higher direct images in \cite[Theorem A]{Mur}, as opposed to the formulation
    using local cohomology modules of the form
    \[
      H^i_{f^{-1}(\{\fm\})}(\sL^{-1})
    \]
    in \cite[Theorem 8.2]{Mur}.
    \par See \S\ref{sect:noomega} for versions of the basepoint-freeness and
    contraction theorems that we can state and prove in the absence of dualizing
    complexes.
  \item\label{assum:eq} \textbf{Quasi-excellence.}
    We assume that our schemes, algebraic spaces, and formal schemes are
    quasi-excellent for multiple reasons.
    \begin{enumerate}[label=\((\alph*)\)]
      \item (Resolution of singularities)
        By \cite[Proposition 7.9.5]{EGAIV2}, if \(X\) is a locally
        Noetherian scheme for which every integral scheme \(Y\) finite over
        \(X\) has a resolution of singularities, then \(X\) is quasi-excellent.
        Grothendieck and Dieudonn\'e conjectured that all reduced locally
        Noetherian quasi-excellent schemes have resolutions of singularities
        \cite[Remark 7.9.6]{EGAIV2}.
        This conjecture is known in the following cases:
        \begin{enumerate}[label=\((\roman*)\)]
          \item (Equal characteristic zero)
            Resolutions of singularities exist for quasi-excellent schemes of
            equal characteristic zero by \cite{Hir64} (for schemes essentially
            of finite type over quasi-excellent local \(\QQ\)-algebras) and
            \cite{Tem08} (in general).
          \item (Dimensions \(\le 3\))
            Resolutions of singularities exist for quasi-excellent surfaces by
            \cite{Lip78} and for quasi-excellent threefolds by
            \cite{CP19,CJS20,BMPSTWW}.
        \end{enumerate}
      \item (Negativity Lemma) The Negativity Lemma \ref{lem:Negativity} and its
        consequences
        use
        quasi\hyph{}excellence because the proof ultimately reduces to the case of
        quasi-excellent surfaces, where one has resolutions of singularities by
        \cite{Lip78}.
      \item (Bertini theorems) In \S\ref{sect:bertini}, we prove Bertini
        theorems for proper morphisms \(\pi\colon X \to Z\) of locally
        Noetherian schemes of equal characteristic zero and
        \(\pi\)-generated invertible sheaves on \(X\).
        Quasi-excellence assumptions are not necessary when \(Z\) is the
        spectrum of a Noetherian local \(\QQ\)-algebra (Theorem
        \ref{thm:bertini} and Remark \ref{rem:bertinisnc}).
        When \(Z\) is not local, we extend divisors constructed over the
        local rings of \(Z\) to an open cover of \(Z\) using the quasi-excellent
        assumption (or more precisely, the J-2 condition which is part of the
        definition of quasi-excellence; see Definition
        \ref{def:excellent}\((\ref{def:excellentj2})\)).
        See Corollary \ref{cor:bertinionopencover}.
        Passing to an open cover in this way allows us to perturb klt pairs in
        Corollary \ref{cor:BertiniKLTOpenCover}.
        \par\hspace{\enumerateparindent}
        We use these Bertini theorems repeatedly throughout the paper, especially
        when lifting sections from subschemes in \S\ref{sect:cl12s3} and when
        running the relative minimal model program with scaling in
        \S\ref{sect:mmpexistsandterminates}.
    \end{enumerate}
  \item \textbf{Equal characteristic zero.}
    For most of this paper, we work with spaces of equal characteristic zero for
    multiple reasons.
    \begin{enumerate}[label=\((\alph*)\)]
      \item (Vanishing theorems)
        We heavily use the relative Kawamata--Viehweg vanishing theorem for
        proper morphisms of locally Noetherian schemes of equal characteristic
        zero, due to the second author \cite{Mur}.
        For complex varieties, this vanishing theorem is due to Kawamata
        \cite{Kaw82} and Viehweg \cite{Vie82} in the absolute setting and to
        Kawamata, Matsuda, and Matsuki \cite{KMM87} in the relative setting.
        These vanishing theorems are false in positive characteristic
        \cite{Ray78} and mixed characteristic \cite{Tot}.
      \item (Resolutions of singularities)
        As mentioned above in \((\ref{assum:eq})\), resolutions of singularities
        in dimensions \(\ge 4\) are known to exist only in equal characteristic
        zero.
      \item (Bertini theorems for relatively generated invertible sheaves)
        Bertini's theorem for globally generated invertible sheaves is false in
        positive and mixed characteristic \cite[p.\ 140]{Zar44}.
        We need Bertini's theorem for relatively generated invertible sheaves
        when lifting sections from subschemes in \S\ref{sect:cl12s3}.
        We only need the case for relatively very ample invertible sheaves
        in
        \S\ref{sect:mmpexistsandterminates} (see Lemma
        \ref{lem:AmpleIsGoodScaling}).
        Bertini's theorem for relatively very ample invertible sheaves
        is true in arbitrary characteristic \cite[Theorem 2.15]{BMPSTWW}.
    \end{enumerate}
  \item \textbf{Projectivity.}
    We work with projective morphisms \(\pi\colon X \to Z\) for two
    reasons.
    \begin{enumerate}[label=\((\alph*)\)]
      \item (Scaling) To make sense of the ``scaling'' part of the relative
        minimal model program with scaling in Theorem
        \ref{thm:introrelativemmp}, we need a relatively ample invertible sheaf.
        See Key point \ref{keypoint:scaling} above for more discussion on the
        importance of scaling for proving termination and gluing steps of the
        minimal model program.
      \item (GAGA) For formal schemes and analytic spaces, we need projectivity
        assumptions to ensure our morphisms are algebraizable.
        This allows us to reduce to the scheme-theoretic case using the GAGA-type
        theorems from \cite{Ser56,EGAIII1,Kop74,Ber93,Hub07,Poi10,AT19} and
        \S\ref{sect:dualityandgaga} of this paper.
    \end{enumerate}
\end{enumerate}

\begingroup
\makeatletter
\renewcommand{\@secnumfont}{\bfseries}
\part{Preliminaries for schemes and algebraic spaces}\label{part:prelim}
\makeatother
\endgroup
In this part, we establish preliminary definitions and results
that will be used throughout the paper.
For the reader's convenience, we have tried to provide references for
corresponding material in \cite{KMM87}, \cite{CL12}, and \cite{CL13}.
We use the definition of algebraic spaces over a scheme $S$ from
\cite[\href{https://stacks.math.columbia.edu/tag/025Y}{Tag
025Y}]{stacks-project}.\bigskip

\section{Quasi-excellence, excellence, and dualizing complexes}\label{sect:excellentschemes}
\subsection{Quasi-excellence and excellence}
We will mostly work with quasi-excellent or excellent schemes.
\begin{citeddef}[{\citeleft\citen{EGAIV2}\citemid D\'efinition 7.8.2 and
  (7.8.5)\citepunct \citen{Mat80}\citemid (34.A) Definition\citeright}]\label{def:excellent}
  Let $R$ be a ring.
  We say that $R$ is \textsl{excellent} if the
  following conditions are
  satisfied.
  \begin{enumerate}[label=$(\roman*)$,ref=\roman*]
    \item\label{def:excellentnoeth} $R$ is Noetherian.
    \item\label{def:excellentuc} $R$ is universally catenary.
    \item\label{def:excellentgring}
      $R$ is a \textsl{G-ring}, i.e., for every prime ideal $\fp \subseteq R$,
      the $\fp$-adic completion map $R_\fp \to \widehat{R}_\fp$ has
      geometrically regular fibers.
    \item\label{def:excellentj2}
      $R$ is \textsl{J-2}, i.e., for every $R$-algebra $S$ of finite type, the
      regular locus in $\Spec(S)$ is open.
  \end{enumerate}
  We say that $R$ is \textsl{quasi-excellent} if $(\ref{def:excellentnoeth})$,
  $(\ref{def:excellentgring})$, and $(\ref{def:excellentj2})$ are satisfied.
  A locally Noetherian scheme $X$ is \textsl{excellent} (resp.\
  \textsl{quasi-excellent}) if it admits an open
  affine covering $X = \bigcup_i
  \Spec(R_i)$ such that every $R_i$ is excellent (resp.\ quasi-excellent).
\end{citeddef}
Since quasi-excellence is an \'etale local property by
\cite[Theorem 32.2]{Mat89}, 
we can define quasi-excellence as follows.
\begin{definition}[see {\citeleft\citen{CT20}\citemid \S2.1\citeright}]
  Let $X$ be a locally Noetherian algebraic space over a scheme $S$.
  We say that $X$ is \textsl{quasi-excellent} if for
  every \'etale morphism $U \to X$ from a scheme $U$, the scheme $U$ is
  quasi-excellent.
\end{definition}
\subsection{Dualizing complexes}
We will need the notion of a dualizing complex to make sense of canonical
sheaves and divisors, which we will define in \S\ref{sect:candiv}.
\begin{citeddef}[{\citeleft\citen{Har66}\citemid Chapter V, Definition on p.\
  258\citepunct \citen{Con00}\citemid p.\ 118\citepunct
  \citen{stacks-project}\citemid
  \href{https://stacks.math.columbia.edu/tag/0A87}{Tag 0A87}\citeright}]
  \label{def:dualizingcomplexschemes}
  Let $X$ be a locally Noetherian scheme.
  A \textsl{dualizing complex} on $X$ is an object $\omega_X^\bullet$ in
  $\DD^b_{\mathrm{c}}(X)$ that has finite injective dimension, such that the
  natural morphism
  \[
    \id \longrightarrow
    \RRHHom_{\cO_X}\bigl(\RRHHom_{\cO_X}(-,\omega_X^\bullet),
    \omega_X^\bullet\bigr)
  \]
  of $\delta$-functors on $\DD_{\textup{c}}(X)$ is an isomorphism.
\end{citeddef}
\begin{remark}
  Locally Noetherian schemes admitting dualizing complexes have finite Krull
  dimension and are universally catenary
  \citeleft\citen{Har66}\citemid Chapter V, Corollary 7.2\citepunct
  \citen{stacks-project}\citemid
  \href{https://stacks.math.columbia.edu/tag/0A80}{Tag 0A80}\citeright.
  Thus, quasi-excellent schemes admitting dualizing complexes are excellent.
\end{remark}
\begin{remark}
  All excellent Henselian local rings admit a dualizing complex \cite[p.\
  289]{Hin93}.
  A recent result of the first author shows that every finite-dimensional
  quasi-excellent scheme has an \'etale cover that admits a dualizing complex
  \cite[Theorem 6.5]{Lyu}.
\end{remark}
\par We can define dualizing complexes for algebraic spaces \'etale-locally.
\begin{citeddef}[{\citeleft\citen{AB10}\citemid Definition 2.16\citepunct
  \citen{stacks-project}\citemid
  \href{https://stacks.math.columbia.edu/tag/0E4Z}{Tag 0E4Z}\citeright}]
  Let $X$ be a locally Noetherian algebraic space over a scheme $S$.
  A \textsl{dualizing complex} on $X$ is a complex $\omega_X^\bullet$ in
  $\DD^b_{\mathrm{qc}}(X)$ for which there exists a surjective \'etale morphism
  $U \to X$ from a scheme $U$ such that the pullback of $\omega_X^\bullet$ to
  $U$ is a dualizing complex
  on $U$ in the sense of Definition \ref{def:dualizingcomplexschemes}.
\end{citeddef}
We will frequently use the following fact:
\begin{lemma}[cf.\ {\citeleft\citen{Har66}\citemid (2) on p.\ 299\citepunct
  \citen{AB10}\citemid Proposition 2.18 and Remark on p.\ 14\citepunct
  \citen{stacks-project}\citemid
  \href{https://stacks.math.columbia.edu/tag/0AA3}{Tag
  0AA3}\citeright}]\label{lem:dualizingcomplexpullback}
  Let $f\colon X \to Y$ be a separated morphism of finite type between
  Noetherian algebraic spaces over a scheme $S$.
  Consider a Nagata compactification
  \[
    \begin{tikzcd}[column sep=0.75em]
      X \arrow[hook]{rr}\arrow{dr}[near start,swap]{f} & &
      \bar{X}\arrow{dl}[near start]{\vphantom{f}\smash{\bar{f}}}\\
      & Y
    \end{tikzcd}
  \]
  of $f$.
  If $\omega_Y^\bullet$ is a dualizing complex on $Y$, then
  \[
    f^!\omega_Y^\bullet \coloneqq \bigl( \bar{a}(\omega_Y^\bullet)
    \bigr)_{\vert X}
  \]
  is a dualizing complex on $X$, where $\bar{a}$ is the right adjoint of the
  derived pushforward $\RR \bar{f}_*$.
\end{lemma}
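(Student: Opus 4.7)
The plan is to reduce the algebraic-space statement to two ingredients that are already available in the literature: the proper case of the analogue of Hartshorne's theorem for algebraic spaces, and the fact that restriction of a dualizing complex to an open subspace is again dualizing.

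First, I would apply the proper version of the result (which, for Noetherian algebraic spaces, appears in \cite{AB10} and in \cite[Tag 0AA3]{stacks-project}) to the proper morphism $\bar{f}\colon \bar{X}\to Y$: since $\bar{f}$ is proper of finite type between Noetherian algebraic spaces and $\omega_Y^\bullet$ is a dualizing complex on $Y$, the complex $\bar{a}(\omega_Y^\bullet)$ is a dualizing complex on $\bar{X}$. Note that $\bar{a}$ is well-defined in this setting because $\bar{f}$ is proper, so the right adjoint to $\mathbf{R}\bar{f}_*$ exists at the level of the relevant derived categories of quasi-coherent complexes by Neeman--Lipman-style adjoint-functor arguments that are extended to Noetherian algebraic spaces in the references above.

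Second, observe that $X\hookrightarrow \bar{X}$ is an open immersion of algebraic spaces, so the claim reduces to showing that the restriction of a dualizing complex to an open subspace is dualizing. This is an \'etale-local statement on $\bar{X}$: by the definition of a dualizing complex on an algebraic space, choose a surjective \'etale morphism $U\to \bar{X}$ from a scheme $U$ such that $\bar{a}(\omega_Y^\bullet)_{\vert U}$ is a dualizing complex on the scheme $U$ in the sense of Definition~\ref{def:dualizingcomplexschemes}. Restricting to the open subscheme $U\times_{\bar{X}}X\subseteq U$ preserves the dualizing property by the scheme-theoretic fact (Hartshorne, Residues and Duality, Chapter V, and \cite[Tag 0A7U]{stacks-project}) that the restriction of a dualizing complex to an open subscheme of a Noetherian scheme is dualizing. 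Since $U\times_{\bar{X}}X\to X$ is a surjective \'etale morphism from a scheme, the pulled-back complex on this cover is dualizing, which by definition means $(\bar{a}(\omega_Y^\bullet))_{\vert X}$ is dualizing on $X$.

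The only potentially subtle point is the independence of the construction from the choice of Nagata compactification; this is subsumed by the standard theory of the twisted inverse image and is not required for the bare statement of the lemma, which fixes one compactification $\bar{X}$. Consequently, I do not anticipate any genuine obstacle: the proof is essentially the combination of the proper case of the algebraic-space result with the \'etale-local nature of the dualizing condition and the elementary openness stability, together totalling only a few lines.
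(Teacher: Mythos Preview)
Your proof is correct but takes a slightly different route from the paper's. The paper's argument takes a surjective \'etale morphism $U \to Y$ from a scheme, uses the compatibility of the right adjoint $\bar a$ with \'etale base change (together with \cite[\href{https://stacks.math.columbia.edu/tag/0E6E}{Tag 0E6E}]{stacks-project}) to identify the pullback of $f^!$ over $U$ with the scheme-theoretic exceptional pullback, and then invokes the full scheme statement \cite[\href{https://stacks.math.columbia.edu/tag/0AA3}{Tag 0AA3}]{stacks-project} in one step---so the compactification and the open restriction are handled simultaneously on the scheme side. You instead split the problem into the proper case for algebraic spaces (taken as a black box from \cite{AB10}) followed by a separate open-restriction argument via an \'etale cover of $\bar{X}$. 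Both decompositions are valid; yours is more modular, while the paper's reduces everything to the scheme world at once. One small correction: Tag 0AA3 is a statement for schemes, not algebraic spaces, so for the proper algebraic-space input you should rely on \cite{AB10} alone (or the corresponding tags in the Stacks Project's chapter on duality for spaces) rather than citing Tag 0AA3 there.
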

The right adjoint of the derived pushforward is constructed in
\cite[\href{https://stacks.math.columbia.edu/tag/0E55}{Tag
0E55}]{stacks-project}.
Nagata compactifications exist for separated morphisms of finite type between
quasi-compact quasi-separated algebraic spaces
\cite[Theorem 1.2.1]{CLO12} (see also \citeleft\citen{FK06}\citemid pp.\
355--356\citepunct \citen{Ryd}\citemid Theorem F\citeright).
\begin{proof}
  Let $U \to Y$ be an \'etale surjective morphism from a scheme $U$ such that
  the pullback of $\omega_Y^\bullet$ to $U$ is a dualizing complex.
  Next, we note that
  restriction and the right adjoint $a$ are compatible with \'etale base
  change by definition, where we use the fact that the right
  adjoint does not depend on whether we
  consider a scheme as an actual scheme or the algebraic space it represents
  by
  \cite[\href{https://stacks.math.columbia.edu/tag/0E6E}{Tag
  0E6E}]{stacks-project}.
  We therefore see that
  the pullback of $f^!$ to $U$ is the exceptional pullback for schemes
  constructed in \cite[\href{https://stacks.math.columbia.edu/tag/0A9Y}{Tag
  0A9Y}]{stacks-project}.
  The statement now follows from the scheme case (after replacing $U$ by an open
  affine cover) in
  \citeleft\citen{Har66}\citemid (2) on p.\ 299\citepunct
  \citen{stacks-project}\citemid
  \href{https://stacks.math.columbia.edu/tag/0AA3}{Tag 0AA3}\citeright.
\end{proof}
\section{Divisors and linear systems}
\label{sect:divisorsetc}
\subsection{Divisors}\label{subsect:divisors}
We will use the definition of the group $\Div(X)$ of Cartier
divisors for ringed spaces from \cite[D\'efinition 21.1.2]{EGAIV4}, and the
group $\WDiv(X)$ of Weil divisors for locally Noetherian schemes
from \cite[(21.6.2)]{EGAIV4}.
Here, we recall that a \textsl{Weil divisor} is a locally finite \(\ZZ\)-linear
combination of codimension 1 integral subschemes in \(X\).
\par See \cite[p.\ 204]{Kle79} for the definition of the sheaf $\sK_X$ of meromorphic
functions.
The group of Weil divisors is denoted by $\mathfrak{Z}^1(X)$ in
\cite[(21.6.2)]{EGAIV4} and by $\Div(X)$ in
\cite[\href{https://stacks.math.columbia.edu/tag/0ENJ}{Tag
0ENJ}]{stacks-project}.
The subgroup of principal Cartier divisors is denoted by $\Princ(X)$.
\par Instead of developing the theory of Cartier divisors and cycle maps for
algebraic spaces, we will only
work with the monoid of effective Cartier divisors $\Div^\eff(X)$
on algebraic spaces in the sense of
\cite[\href{https://stacks.math.columbia.edu/tag/083B}{Tag
083B}]{stacks-project} (denoted by $\operatorname{EffCart}(X)$ in
\cite[\href{https://stacks.math.columbia.edu/tag/0CPG}{Tag
0CPG}]{stacks-project}) and Weil divisors on integral locally Noetherian
algebraic spaces in the sense of
\cite[\href{https://stacks.math.columbia.edu/tag/0ENJ}{Tag
0ENJ}]{stacks-project}.
Note that the definition of Cartier divisors on algebraic spaces in
\cite[Chapter II, Definition 8.11]{Knu71} assumes the algebraic space is separated.\medskip
\par We now define Cartier and Weil divisors with $\QQ$- and $\RR$-coefficients.
\begin{definition}[see {\citeleft\citen{KMM87}\citemid Definitions
  0-1-3 and 0-1-8\citepunct \citen{BCHM10}\citemid Definition 3.1.1\citeright}]
  \label{def:kcartdiv}
  Let $X$ be a ringed space,
  and let $\kk \in \{\ZZ,\QQ,\RR\}$.
  A \textsl{$\kk$-Cartier divisor} on $X$ is an element of the group
  \begin{align*}
    \Div_\kk(X) &\coloneqq \Div(X) \otimes_\ZZ \kk.
    \intertext{If $X$ is a locally Noetherian scheme or an integral locally
    Noetherian algebraic space over a scheme $S$,
    a \textsl{$\kk$-Weil divisor} on $X$ is an
    element of the group}
    \WDiv_\kk(X) &\coloneqq \WDiv(X) \otimes_\ZZ \kk.
  \intertext{A $\kk$-Cartier divisor
  is \textsl{integral}
  if it lies in the image of the map}
    \Div(X) &\longrightarrow \Div_\kk(X)
    \intertext{and a $\kk$-Weil divisor is \textsl{integral} if it lies in the
    image of the map}
    \WDiv(X) &\longrightarrow \WDiv_\kk(X).
  \end{align*}
  A $\kk$-Cartier divisor (resp.\ \textsl{$\kk$-Weil divisor}) is
  \textsl{effective} if it can be written as a $\kk_{\ge0}$-linear combination
  of effective Cartier divisors (resp.\ effective Weil divisors).
  The set of effective $\kk$-Cartier (resp.\ $\kk$-Weil) divisors on $X$ is
  denoted $\Div_\kk^\eff(X)$ (resp.\ $\WDiv_\kk^\eff(X)$).
  We drop the prefix ``$\ZZ$-'' if $\kk = \ZZ$.
\par If $A = \sum_{i=1}^r a_iC_i$ is an
  $\RR$-Weil divisor on $X$ where the $C_i$ are distinct prime Weil divisors,
  then the \textsl{round-up} and \textsl{round-down} of $A$ are the Weil
  divisors
  \begin{align*}
    \lceil A \rceil &\coloneqq \sum_{i=1}^r \lceil a_i \rceil C_i \qquad\\
    \lfloor A \rfloor &\coloneqq \sum_{i=1}^r \lfloor a_i \rfloor C_i
  \intertext{respectively, and the \textsl{fractional part} of $A$ is}
    \{A\} &\coloneqq \sum_{i=1}^r \{a_i\} C_i,
    \intertext{where $\{a_i\} \coloneqq a_i - \lfloor a_i \rfloor$ is the
    fractional part of $a_i$ for every $i$.
    If $B = \sum_{i=1}^r b_iC_i$ is another $\RR$-Weil divisor on $X$, then we
    also set}
    A \wedge B &\coloneqq \sum_{i=1}^r \min\{a_i,b_i\}\,C_i.
  \end{align*}
\end{definition}
When $X$ is a locally Noetherian scheme, there is a commutative
diagram
\begin{equation}\label{eq:divwdivdiag}
  \begin{tikzcd}[baseline=(cyc.base)]
    \Div(X) \rar\dar{\cyc\vphantom{\QQ}}
    & \Div_\QQ(X) \rar[hook]\dar{\cyc\otimes_\ZZ\QQ}
    & \Div_\RR(X) \ar[d,"\cyc\otimes_\ZZ\RR"{name=cyc}]\\
    \WDiv(X) \rar[hook] & \WDiv_\QQ(X) \rar[hook] & \WDiv_\RR(X)
  \end{tikzcd}
\end{equation}
of Abelian groups, where the left vertical map is the \textsl{cycle map} from
\cite[(21.6.5.1)]{EGAIV4}, and the other vertical maps are obtained via
extension of scalars.
The cycle map preserves effective divisors \cite[Proposition
21.6.6]{EGAIV4}.
\begin{convention}
  Let $X$ be a locally Noetherian scheme.
  Then, the cycle map $\cyc$ is bijective if and only if $X$ is locally
  factorial \cite[Th\'eor\`eme
  21.6.9$(ii)$]{EGAIV4}.
  In this case, we can identify Cartier and Weil divisors, as well as their
  corresponding versions with $\QQ$- or $\RR$-coefficients.
  On such schemes, we omit the word ``Cartier'' or ``Weil.''
\end{convention}
\par Even if $X$ is not locally factorial, as long as $X$ is normal, we can pass
from Cartier divisors to Weil divisors.
\begin{definition}[see {\citeleft\citen{KMM87}\citemid Remark 0-1-6(2)\citepunct
  \citen{Laz04a}\citemid Remarks 1.1.4 and 1.3.8\citeright}]\label{def:kcartier}
  Let $X$ be a normal locally Noetherian scheme.
  Then, the cycle map
  \begin{align*}
    \cyc\colon& {\Div}(X) \longrightarrow \WDiv(X)
  \intertext{is injective \cite[Th\'eor\`eme 21.6.9$(i)$]{EGAIV4}, as
  are the maps
  \[
    \Div(X) \longrightarrow \Div_\kk(X)
  \]
  for $\kk \in \{\QQ,\RR\}$
  by the commutativity of the diagram
  \eqref{eq:divwdivdiag}.
  A Weil divisor $D$ \textsl{is Cartier} if $D$ lies in the image of $\Div(X)$
  under the cycle map $\cyc$.
  For $\kk \in \{\QQ,\RR\}$, a $\kk$-Weil divisor $D$ \textsl{is $\kk$-Cartier}
  if $D$ lies in the image of the map}
    \cyc \otimes_\ZZ \kk \colon& {\Div}_\kk(X) \longrightarrow \WDiv_\kk(X).
  \end{align*}
\end{definition}
\begin{convention}[see {\citeleft\citen{KMM87}\citemid Definition 0-1-7\citeright}]
  Let $X$ be a normal locally Noetherian scheme.
  We say that $X$ is \textsl{$\QQ$-factorial} if every $\QQ$-Weil divisor
  is $\QQ$-Cartier.
  In this case, we will say ``$\QQ$-divisor'' instead of ``$\QQ$-Cartier
  divisor'' or ``$\QQ$-Weil divisor.''
\end{convention}
\begin{remark}
  In the minimal model program, it is standard to say ``\(\QQ\)-divisor'' for a
  $\QQ$-Weil divisor that is not necessarily \(\QQ\)-Cartier.
  We avoid this terminology because the cycle map may not be
  injective when \(X\) is not necessarily normal.
  The terminology ``\(\QQ\)-Weil divisor'' appears for example in
  \cite[Definitions 16.1 and 16.2]{Cor92} to make sense of divisors with
  fractional coefficients on semi-log canonical schemes.
\end{remark}
To make analogous definitions for algebraic spaces, we will only work with Weil
divisors.
We recall that for ringed
spaces $X$, there is an exact sequence
\begin{align}
  0 \longrightarrow \Princ(X) \longrightarrow \Div(X)
  &\overset{l}{\longrightarrow} \Pic(X)\nonumber
\intertext{by \cite[Proposition 21.3.3$(i)$]{EGAIV4}.
For $\kk \in \{\QQ,\RR\}$, we will also consider its extension of scalars}
  0 \longrightarrow \Princ_\kk(X) \longrightarrow \Div_\kk(X)
  &\xrightarrow{\,l\otimes_\ZZ\kk\,} \Pic_\kk(X)\label{eq:divtopic}
\end{align}
to $\kk$, where
\begin{align*}
  \Princ_\kk(X) &\coloneqq \Princ(X) \otimes_\ZZ \kk,\\
  \Pic_\kk(X) &\coloneqq \Pic(X) \otimes_\ZZ \kk.
\intertext{For algebraic spaces $X$, we also have maps
\begin{equation}\label{eq:effcarttopic}
  \bigl(\Div^\eff(X)\bigr)^\gp_\kk \longrightarrow \Pic_\kk(X)
\end{equation}
for $\kk \in \{\ZZ,\QQ,\RR\}$ obtained
from \cite[\href{https://stacks.math.columbia.edu/tag/0CPG}{Tag
0CPG}]{stacks-project} via extension of scalars, where \((-)^\gp\) denotes the
Grothendieck group associated to a monoid and}
  \bigl(\Div^\eff(X)\bigr)^\gp_\kk &\coloneqq
  \bigl(\Div^\eff(X)\bigr)^\gp \otimes_\ZZ \kk.
\end{align*}
\begin{definition}[{see \citeleft\citen{FM}\citemid Definition 1.1\citepunct
  \citen{KMM87}\citemid Definition 0-1-3\citeright}]
  Let $X$ be a ringed site.
  For $\kk \in \{\QQ,\RR\}$, a \textsl{$\kk$-invertible sheaf} is an element of
  $\Pic_\kk(X)$.
  We will usually write $\Pic_\kk(X)$ additively, in which case we denote
  the invertible sheaves associated to elements $D \in \Pic_\ZZ(X) = \Pic(X)$
  and elements $D \in \Div_\ZZ(X)$ (for ringed spaces $X$) or $D \in
  \Div_\ZZ^\eff(X)$ (for algebraic spaces $X$) by $\cO_X(D)$.
  We say that $D,D' \in \Div_\kk(X)$ are
  \textsl{$\kk$-linearly equivalent} if their images in $\Pic_\kk(X)$
  are equal.
\end{definition}
When $X$ is a locally Noetherian scheme, these exact sequences fit into the
commutative diagram
\begin{equation}\label{eq:princses}
  \begin{tikzcd}
    0 \rar & \Princ_\kk(X) \rar\dar[equal] & \Div_\kk(X)
    \rar{l \otimes_\ZZ \kk}\dar{\cyc \otimes_\ZZ \kk}
    & \Pic_\kk(X) \dar\\
    & \Princ_\kk(X) \rar & \WDiv_\kk(X) \rar & \Cl_\kk(X) \rar & 0
  \end{tikzcd}
\end{equation}
for $\kk \in \{\ZZ,\QQ,\RR\}$
by definition of the divisor class group $\Cl(X)$ in \cite[(21.6.7)]{EGAIV4},
where
\[
  \Cl_\kk(X) \coloneqq \Cl(X) \otimes_\ZZ \kk.
\]
\begin{definition}[see {\cite[Definition 0-1-3]{KMM87}}]
  Let $X$ be a locally Noetherian scheme or an integral locally Noetherian
  algebraic space over a scheme $S$.
  For $\kk \in \{\QQ,\RR\}$, we say that $D,D' \in \WDiv_\kk(X)$ are
  \textsl{$\kk$-linearly equivalent} if their images in $\Cl_\kk(X)$
  are equal.
\end{definition}
We will need to know when the exact sequence in the top row of
\eqref{eq:princses} is also right exact.
\begin{remark}\label{rem:lbcomefromcartdiv}
  In the top exact sequence of \eqref{eq:princses}, the map \(l \otimes_\ZZ
  \kk\) is surjective in the following cases for
  $\kk = \ZZ$, and hence also for $\kk \in \{\QQ,\RR\}$ by flatness.
  \begin{enumerate}[label=$(\roman*)$]
    \item $X$ is a locally Noetherian scheme and $\Ass(\cO_X)$ is contained in
      an open affine subscheme of $X$ \cite[Proposition 21.3.4$(a)$]{EGAIV4}.
      This holds for example when $X$ is Noetherian and has an ample invertible
      sheaf, in particular when $X$ is quasi-projective over a Noetherian ring
      \cite[Corollaire 21.3.5]{EGAIV4}.
    \item $X$ is a reduced scheme whose set of irreducible components is locally
      finite \cite[Proposition 21.3.4$(b)$]{EGAIV4}.
  \end{enumerate}
\end{remark}
\begin{lemma}\label{lem:kcartierdefs}
  Let $X$ be a locally Noetherian scheme satisfying one of the hypotheses in
  Remark \ref{rem:lbcomefromcartdiv}.
  For $\kk \in \{\ZZ,\QQ,\RR\}$, a $\kk$-Weil divisor $D$ lies in the image
  of $\cyc \otimes_\ZZ \kk$ if and
  only if the class of $D$ in $\Cl_\kk(X)$ lies in the image of
  the map
  \[
    \Pic_\kk(X) \longrightarrow \Cl_\kk(X).
  \]
\end{lemma}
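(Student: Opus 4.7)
The plan is to prove this by a straightforward diagram chase in \eqref{eq:princses}, using the hypothesis from Remark \ref{rem:lbcomefromcartdiv} to ensure the top row is a short exact sequence (not merely left exact) after tensoring with $\kk$.

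First I would dispatch the forward direction: if $D = (\cyc\otimes_\ZZ\kk)(E)$ for some $E \in \Div_\kk(X)$, then commutativity of the middle square of \eqref{eq:princses} shows the class of $D$ in $\Cl_\kk(X)$ is the image of the class of $E$ in $\Pic_\kk(X)$, so it lies in the image of $\Pic_\kk(X) \to \Cl_\kk(X)$.

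For the reverse direction, suppose the class of $D$ in $\Cl_\kk(X)$ lies in the image of some $L \in \Pic_\kk(X)$. By Remark \ref{rem:lbcomefromcartdiv} combined with flatness of $\kk$ over $\ZZ$, the map $l \otimes_\ZZ \kk\colon \Div_\kk(X) \to \Pic_\kk(X)$ is surjective, so I can choose $E \in \Div_\kk(X)$ with $(l\otimes_\ZZ\kk)(E) = L$. Commutativity of the middle square and the right-hand square of \eqref{eq:princses} then shows that $(\cyc\otimes_\ZZ\kk)(E)$ and $D$ have the same class in $\Cl_\kk(X)$, so their difference $D - (\cyc\otimes_\ZZ\kk)(E)$ lies in $\Princ_\kk(X) \subseteq \WDiv_\kk(X)$.

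The final step is to observe that every element of $\Princ_\kk(X) \subseteq \WDiv_\kk(X)$ already lies in the image of $\cyc \otimes_\ZZ \kk$: this is precisely the commutativity of the leftmost square of \eqref{eq:princses}, whose left vertical arrow is the identity on $\Princ_\kk(X)$, so the composite $\Princ_\kk(X) \hookrightarrow \Div_\kk(X) \xrightarrow{\cyc\otimes_\ZZ\kk} \WDiv_\kk(X)$ hits every principal Weil divisor. Writing $D - (\cyc\otimes_\ZZ\kk)(E) = (\cyc\otimes_\ZZ\kk)(F)$ for some $F \in \Princ_\kk(X)$ gives $D = (\cyc\otimes_\ZZ\kk)(E+F)$, as desired. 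There is no real obstacle here; the only subtle point is invoking Remark \ref{rem:lbcomefromcartdiv} and flatness of $\kk$ to justify right-exactness of the top row, so I would be careful to state this reduction explicitly at the start of the argument.
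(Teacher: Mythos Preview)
Your proof is correct and follows essentially the same approach as the paper: both use surjectivity of $l\otimes_\ZZ\kk$ (from Remark \ref{rem:lbcomefromcartdiv}) to lift to $\Div_\kk(X)$, then correct by a principal divisor coming from exactness of the bottom row. The only difference is notational (the paper writes $\tilde{D}$ and $D'$ where you write $E$ and $F$) and that you spell out the role of the leftmost square a bit more explicitly.
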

\begin{proof}
  The implication $\Rightarrow$ holds by the commutativity of the diagram in
  \eqref{eq:princses}.
  Conversely, suppose the class of $D$ in $\Cl_\kk(X)$ lies in
  the image of $\Pic_\kk(X)$.
  Since $l \otimes_\ZZ \kk$ is surjective, there exists an element
  $\tilde{D} \in \Div_\kk(X)$ such that
  \[
    (\cyc \otimes_\ZZ \kk)\bigl(\tilde{D}\bigr) \sim_\kk D.
  \]
  By the exactness of the bottom row in \eqref{eq:princses}, we therefore have
  an element $D' \in \Princ_\kk(X)$ such that
  \[
    (\cyc \otimes_\ZZ \kk)\bigl(\tilde{D} + D'\bigr) = D,
  \]
  and hence $D$ is $\kk$-Cartier.
\end{proof}
If $X$ is an integral locally Noetherian algebraic space, then
by \cite[\href{https://stacks.math.columbia.edu/tag/0ENV}{Tag
0ENV}]{stacks-project}, there is a map
\begin{equation}\label{eq:stacks0EPW}
  \Pic(X) \longrightarrow \Cl(X)
\end{equation}
that coincides with the corresponding map in \eqref{eq:princses} when $X$ is a
scheme.
We will use this map to define what it means for a $\kk$-Weil divisor to be
$\kk$-Cartier on an algebraic space.
\begin{definition}[see {\cite[Definition 1.3.4]{VPthesis}}]
  Let $X$ be an integral normal locally Noetherian algebraic space over a
  scheme $S$, in which case
  the map \eqref{eq:stacks0EPW} is injective
  \cite[\href{https://stacks.math.columbia.edu/tag/0EPX}{Tag
  0EPX}]{stacks-project}.
  A Weil divisor $D$ \textsl{is Cartier} if $D$ lies in the image of the map
  \eqref{eq:stacks0EPW}.
  For $\kk \in \{\QQ,\RR\}$, a $\kk$-Weil divisor $D$ \textsl{is $\kk$-Cartier}
  if $D$ lies in the image of the map
  \[
    \Pic_\kk(X) \longrightarrow \Cl_\kk(X)
  \]
  obtained from \eqref{eq:stacks0EPW} via extension of scalars.
  By Lemma \ref{lem:kcartierdefs}, this definition matches that in
  Definition \ref{def:kcartier} when $X$ is a scheme.
\end{definition}
\begin{convention}[see {\cite[Definition 1.3.4]{VPthesis}}]
  \label{convention:factorialityforspaces}
  Let $X$ be an integral normal locally Noetherian algebraic space over a scheme
  $S$.
  We say that $X$ is \textsl{locally factorial} (resp.\ is
  \textsl{$\QQ$-factorial})
  if $\Pic(X) \to \Cl(X)$ (resp.\ $\Pic_\QQ(X) \to \Cl_\QQ(X)$)
  is an isomorphism.
  In this case, we will say ``divisor'' (resp.\ $\QQ$-divisor) instead of
  ``Weil divisor'' (resp.\ ``$\QQ$-Weil divisor'').
\end{convention}
\begin{remark}
  Convention \ref{convention:factorialityforspaces} is chosen
  to work around the fact that the
  property of being locally factorial or $\QQ$-factorial is not
  \'etale local.
  See \citeleft\citen{Kaw88}\citemid p.\ 104\citepunct \citen{SGA2}\citemid
  Expos\'e XIII, note de l'\'editeur (15) on p.\ 150\citepunct
  \citen{BGS}\citemid p.\ 1\citeright.
\end{remark}

\subsection{Linear systems}
We now define
linear systems and their corresponding notions for $\QQ$- and
$\RR$-coefficients.
\begin{definition}[see
  {\citeleft\citen{KMM87}\citemid Definition p.\ 298\citepunct
  \citen{CL12}\citemid p.\ 2419\citepunct
  \citen{McK17}\citemid Definition 2.2\citeright}]
  \label{def:linearsystem}
  Let $X$ be a normal locally Noetherian scheme or an integral normal locally
  Noetherian algebraic space over a scheme $S$.
  We then define linear equivalence and $\kk$-linear equivalence for
  Weil divisors and $\kk$-Weil divisors using the cycle map and its extensions
  of scalars in \eqref{eq:divwdivdiag}.
  The \textsl{linear system} associated to a Weil divisor $D$ is
  \begin{align*}
    \lvert D \rvert &\coloneqq \Set[\big]{C \in \WDiv(X) \given C \ge 0\
    \text{and}\ C \sim D},
    \intertext{and for $\kk \in \{\QQ,\RR\}$, the \textsl{$\kk$-linear system}
    associated to a $\kk$-Weil divisor $D$ is}
    \lvert D \rvert_\kk &\coloneqq \Set[\big]{C \in \WDiv_\kk(X) \given C \ge 0\
    \text{and}\ C \sim_\kk D}.
  \end{align*}
\end{definition}
We can now state the main result that allows us to pass between sheaf-theoretic
language and the language of linear systems on schemes.
\begin{citedprop}[{\citeleft\citen{Har94}\citemid Proposition
  2.9\citepunct\citen{Har07}\citemid Remark 2.9\citeright}]\label{prop:har29}
Let $X$ be a normal Noetherian scheme, and
  consider a Weil divisor $D$ on $X$.
  Then, there is a bijection
  \[
    \lvert D \rvert \longleftrightarrow
    \biggl\{
      \begin{tabular}{@{}c@{}}
        nondegenerate global sections\\
        $s \in H^0\bigl(X,\cO_X(D)\bigr)$
      \end{tabular}
    \biggr\}\bigg/H^0(X,\cO_X^*).
  \]
\end{citedprop}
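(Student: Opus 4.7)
The plan is to unwind the reflexive sheaf $\cO_X(D)$ into the language of rational functions and then exhibit an explicit bijection. Since a normal Noetherian scheme is a finite disjoint union of its (integral normal) connected components, I would first reduce to the integral case by working componentwise: a section is nondegenerate by definition iff its restriction to each component is nonzero, and $\lvert D\rvert$ decomposes along components in the obvious way.

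Assuming $X$ integral, I would use the standard description of $\cO_X(D)$ as the subsheaf of $\sK_X$ whose sections over an open $U$ are
\[
  \Gamma(U,\cO_X(D)) = \Set[\big]{f \in K(X)^* \given (\prdiv(f)+D)_{\vert U} \ge 0} \cup \{0\}.
\]
This identification is verified by first checking it at each codimension-one point $\eta$, where $\cO_{X,\eta}$ is a DVR and the statement is immediate from the definition of the coefficient of $D$ along $\overline{\{\eta\}}$, and then extending to all of $X$ using the reflexivity of $\cO_X(D)$ and the normality identity $\cO_X = \bigcap_{\mathrm{ht}(\fp)=1}\cO_{X,\fp}$.

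With this description in hand, I would define a map from nondegenerate global sections to $\lvert D\rvert$ by $s \mapsto \prdiv(f_s)+D$, where $f_s \in K(X)^*$ is the nonzero rational function corresponding to $s$; this lands in $\lvert D\rvert$ because $\prdiv(f_s)+D\ge 0$ and $\prdiv(f_s)+D \sim D$. Surjectivity follows from the definition of linear equivalence via \eqref{eq:princses}: any effective $C \sim D$ has the form $\prdiv(f)+D$ for some $f \in K(X)^*$, which then defines a nondegenerate section by the description above. For the quotient by $H^0(X,\cO_X^*)$, two nondegenerate sections $s,s'$ yielding the same effective divisor satisfy $\prdiv(f_s/f_{s'})=0$, and on a normal Noetherian scheme a rational function with no zeros or poles is a global unit (again via $\cO_X = \bigcap_{\mathrm{ht}(\fp)=1}\cO_{X,\fp}$), so $s' = us$ for a global unit $u$.

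The main technical point is verifying the explicit description of $\cO_X(D)$ above, which is where normality enters essentially; once this is available the bijection itself is a routine unwinding. The only other care needed is in the reduction to connected components, so that the quotient by $H^0(X,\cO_X^*)$ is interpreted correctly when $X$ has several components (there $H^0(X,\cO_X^*)$ is a product of the $H^0(X_i,\cO_{X_i}^*)$ and acts componentwise).
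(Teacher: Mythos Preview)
The paper does not give its own proof of this statement: it is presented as a cited result from \cite[Proposition 2.9]{Har94} and \cite[Remark 2.9]{Har07}, with only a brief remark afterward recalling the definitions of $\cO_X(D)$ and of a nondegenerate section. Your proposal is correct and is precisely the standard argument one finds in those references, so there is nothing substantive to compare.
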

Here, $\cO_X(D)$ is the sheaf associated to the Weil divisor $D$, which can be
defined as $j_*\cO_U(D_{\vert U})$, where $U$
is the open subset where $D_{\vert U}$ is Cartier, and $j\colon U
\hookrightarrow X$ is the canonical open embedding (see
\cite[Definition on p.\ 301 and Proposition 2.7]{Har94}).
A global section $s \in H^0(X,\cO_X(D))$ is \textsl{nondegenerate} if it
is nonzero after localizing at the generic points of irreducible components of
$X$ \cite[Definition on p.\ 304]{Har94}.\medskip

\par We also prove the following result about the relationship between $\QQ$- and
$\RR$-linear systems of a $\QQ$-Weil divisor.
\begin{lemma}
  \label{lem:RatlIsDense}
  Let $X$ be a normal locally Noetherian scheme or an integral normal locally
  Noetherian algebraic space over a scheme $S$,
  and consider a $\QQ$-Weil divisor $D$ on
  $X$.
  Then, $\lvert D\rvert_{\mathbf Q}$ is dense in $\lvert D\rvert_{\mathbf R}$ in the following sense: 
For each $\sum a_iE_i\in \lvert D\rvert_{\mathbf R}$ where $a_i$ are real numbers and $E_i$ are prime divisors, 
there exist sequences of rational numbers $(a^j_i)_j$ such that
\[
  \lim_{j\to\infty} a_i^j=a_i \qquad \text{and} \qquad \sum_i a_i^jE_i\in \lvert D\rvert_{\mathbf Q}
\]
for all $i$.
\end{lemma}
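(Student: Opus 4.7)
The plan is to reduce the problem to elementary finite-dimensional linear algebra over $\QQ$ and $\RR$ inside the $\RR$-vector space spanned by the relevant prime divisors. Writing $C = \sum_i a_i E_i$, the condition $C \sim_\RR D$ means $C - D \in \Princ_\RR(X) = \Princ(X) \otimes_\ZZ \RR$, so we may express $C - D = \sum_{k=1}^m r_k \prdiv(f_k)$ as a \emph{finite} $\RR$-linear combination of principal divisors $\prdiv(f_k)$ for some $r_k \in \RR$ and some rational functions $f_k$ on $X$. After enlarging the index set of the $E_i$ (adding the coefficient $a_i = 0$ for each new divisor) so that it contains every prime divisor appearing in $D$ or in any $\prdiv(f_k)$, everything takes place in the finite-dimensional $\RR$-vector space $V = \bigoplus_i \RR \cdot E_i$ with its natural $\QQ$-structure $V_\QQ = \bigoplus_i \QQ \cdot E_i$.

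Let $W_\QQ \subseteq V_\QQ$ be the $\QQ$-span of $\prdiv(f_1), \ldots, \prdiv(f_m)$, and set $W_\RR = W_\QQ \otimes_\QQ \RR$. Then $A \coloneqq D + W_\RR$ is a $\QQ$-affine subspace of $V$ containing $C$, and every element of $A$ is $\RR$-linearly equivalent to $D$ via the chosen relation. The key refinement is to further cut $A$ down by forcing the coordinates at which $C$ already vanishes to stay zero: letting $I^0 = \{i : a_i = 0\}$, define
\[
  A_0 \coloneqq A \cap \bigl\{v \in V : v_i = 0 \text{ for all } i \in I^0\bigr\},
\]
where $v_i$ denotes the coefficient of $E_i$ in $v = \sum_i v_i E_i$. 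Because $D \in V_\QQ$ (as $D$ is a $\QQ$-Weil divisor) and $W_\QQ$ is a rational subspace, the defining equations of $A_0$ have rational coefficients; $A_0$ contains $C$, so it is nonempty; and by elementary Gaussian elimination the set $A_0 \cap V_\QQ$ of rational points is then dense in $A_0$ for the Euclidean topology on $V$.

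We then pick a sequence $C^j \in A_0 \cap V_\QQ$ with $C^j \to C$ and write $C^j = \sum_i a_i^j E_i$. By construction $a_i^j \in \QQ$ and $a_i^j \to a_i$ for every $i$; non-negativity is automatic for indices $i \in I^0$ (where $a_i^j = 0$) and holds for the remaining indices (where $a_i > 0$) once we pass to a tail of the sequence. Since $C^j - D$ is a rational element of $W_\RR$, it lies in $W_\RR \cap V_\QQ = W_\QQ \subseteq \Princ_\QQ(X)$, so $C^j \sim_\QQ D$ and hence $C^j \in \lvert D \rvert_\QQ$. The step requiring the most care is precisely the introduction of $A_0$: a naive density argument in $A$ itself would produce rational approximations of $C$ with possibly negative coefficients at $E_i$ for $i \in I^0$, and cutting $A_0$ out of $A$ by the rational equations $v_i = 0$ is exactly what avoids this. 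The argument applies uniformly to normal locally Noetherian schemes and to integral normal locally Noetherian algebraic spaces, since $\Princ(X)$ and $\kk$-linear equivalence on $\WDiv_\kk(X)$ are defined analogously in both settings.
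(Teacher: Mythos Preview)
Your proof is correct and follows essentially the same approach as the paper: both reduce to a finite-dimensional $\QQ$-linear algebra problem and use density of rational points in a rationally-defined set. The paper phrases this slightly more compactly, taking $V = \QQ\cdot D + \Span\{E_i\}$, letting $V_0 = V \cap \Princ_\QQ(X)$, and observing that $\{\sum_i b_i E_i \in V_\RR : b_i \ge 0,\ \sum_i b_i E_i \equiv D \bmod (V_0)_\RR\}$ is cut out by rational hyperplanes and half-spaces; your version is a bit more explicit, fixing specific principal divisors $\prdiv(f_k)$ and introducing $A_0$ to pin the zero coefficients, which neatly handles the boundary case $a_i = 0$ that the paper absorbs into the rational-polyhedron argument.
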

\begin{proof}
We adapt the proofs of
\citeleft\citen{BCHM10}\citemid Lemma 3.5.3\citepunct \citen{CL12}\citemid Lemma
2.3\citeright.
Let
\[
  V=\mathbf{Q} \cdot D+\Span\{E_i\}\subseteq \WDiv_{\mathbf Q}(X),
\]
and let $V_0$ be the subspace of $V$ consisting of rational combinations of principal divisors. 
Then, $V_{\mathbf R} \coloneqq V \otimes_\QQ \RR$ is a (finite-dimensional)
subspace of $\WDiv_{\mathbf R}(X)$, and $(V_0)_{\mathbf R} \coloneqq V_0
\otimes_\QQ \RR$ is the subspace of $V_{\mathbf R}$ consisting of real combinations of principal divisors.
Let $\pi\colon V_{\mathbf R}\to V_{\mathbf R}/(V_0)_{\mathbf R}$ be the canonical projection map. The subset
\[
  \Set[\bigg]{\sum_i b_iE_i\in V_{\mathbf R} \given b_i\geq 0\ \text{and}\ 
  \pi\biggl(\sum_i b_iE_i\biggr)=\pi(D)}
\]
is cut out from $V$ by rational hyperplanes and half-spaces, and it contains the real point $\sum_i a_iE_i$.
The result now follows.
\end{proof}
\section{Positivity, the theorem of the base, cones,
and\texorpdfstring{\except{toc}{\\}}{} Kleiman's criterion
for ampleness}
\subsection{Relative positivity conditions}\label{sect:relativeampleness}
We define relative ampleness conditions for $\kk$-invertible sheaves and
$\kk$-Cartier divisors for $\kk \in
\{\ZZ,\QQ,\RR\}$.
\begin{definition}[see {\citeleft\citen{EGAII}\citemid D\'efinition
  4.6.1\citepunct \citen{KMM87}\citemid Definition
  0-1-4\citepunct \citen{BCHM10}\citemid Definition 3.1.1\citepunct
  \citen{CT20}\citemid \S2.1.1\citepunct
  \citen{FM}\citemid Definition 2.1\citepunct \citen{stacks-project}\citemid
  \href{https://stacks.math.columbia.edu/tag/0D31}{Tag 0D31}\citeright}]
  \label{def:relativeampleness}
  Let $\pi\colon X \to Z$ be a morphism of schemes (resp.\ algebraic spaces
  over a scheme $S$), and let $\sL$ be an invertible
  sheaf on $X$.
  \begin{enumerate}[label=$(\roman*)$,ref=\roman*]
    \item Suppose $\pi$ is quasi-compact (resp.\ representable).
      We say that $\sL$ is \textsl{$\pi$-ample} if 
      there exists an affine open cover $Z = \bigcup_i U_i$ such that
      $\sL\rvert_{\pi^{-1}(U_i)}$ is ample for all $i$ (resp.\ if for every
      morphism $Z' \to Z$ where $Z'$ is a scheme, the pullback of $\sL$ to $Z'
      \times_Z X$ is $\pi$-ample).
    \item We say that $\sL$ is \textsl{$\pi$-generated} if the
      adjunction morphism $\pi^*\pi_*\sL \to \sL$ is surjective.
    \item We say that $\sL$ is \textsl{$\pi$-semi-ample} if there exists an integer $n> 0$ such that $\sL^{\otimes n}$ is $\pi$-generated.
  \end{enumerate}
  When $X$ is a scheme,
  we can extend these definitions to Cartier divisors $L$ on $X$ by asking
  that their associated invertible sheaves $\cO_X(L)$ satisfy these conditions.
  \par Now suppose that $D$ is a $\kk$-invertible sheaf
  on $X$ for $\kk \in \{\QQ,\RR\}$.
  We say that $D$ is \textsl{$\pi$-ample}
  if $D$ is a
  finite nonzero $\kk_{>0}$-linear combination of $\pi$-ample
  invertible sheaves on $X$.
  We say that $D$ is \textsl{$\pi$-semi-ample} if $D$ is
  a finite $\kk_{\ge0}$-linear combination of $\pi$-semi-ample
  invertible sheaves on $X$.
  We extend these definitions to elements $D \in \Div_\kk(X)$ (resp.\
  $\Div_\kk^\eff(X)$) by asking that their images in $\Pic_\kk(X)$ satisfy these
  conditions.
\end{definition}

To define $\pi$-numerically trivial and $\pi$-nef
$\kk$-invertible sheaves or $\kk$-Cartier divisors, we recall
some background on intersection theory for proper morphisms.
Let $\pi\colon X \to Z$ be a proper morphism of
locally Noetherian algebraic spaces over a scheme $S$.
Let $z \in \lvert Z \rvert$ be a point, and consider a subspace $Y
\subseteq \pi^{-1}(z)$ that is closed in $\pi^{-1}(z)$.
We can consider \(Y\) as a scheme proper over \(\kappa(z)\) and
define the intersection
numbers
\[
  \bigl(\sL_1 \cdot \sL_2 \cdots \sL_m \cdot Y\bigr) \in \ZZ
\]
to be the coefficient of \(n_1n_2\cdots n_m\) in the numerical polynomial
\begin{align*}
  \MoveEqLeft[5]
  \chi\Bigl(Y,\sL_1^{\otimes n_1} \otimes_{\cO_X} \sL_2^{\otimes n_2}
  \otimes_{\cO_X} \cdots \otimes_{\cO_X} \sL_m^{\otimes n_m}\bigr\rvert_Y\Bigr)\\
  &\coloneqq \sum_{i=0}^{\dim(Y)} h^i\Bigl(Y,\sL_1^{\otimes n_1} \otimes_{\cO_X}
  \sL_2^{\otimes n_2} \otimes_{\cO_X} \cdots \otimes_{\cO_X}
  \sL_m^{\otimes n_m}\bigr\rvert_Y\Bigr)
\end{align*}
for invertible sheaves $\sL_i$ on $X$, where $m \ge \dim(Y)$.
See
\citeleft\citen{stacks-project}\citemid
\href{https://stacks.math.columbia.edu/tag/0EDF}{Tag 0EDF}\citeright.
By linearity
\citeleft\citen{stacks-project}\citemid
\href{https://stacks.math.columbia.edu/tag/0EDH}{Tag 0EDH}\citeright,
we can extend this definition to $\kk$-invertible sheaves
for $\kk\in\{\ZZ,\QQ,\RR\}$
(see \cite[Chapter VI, Definition-Corollary 2.7.4]{Kol96}).
When $X$ is a scheme, we can also extend this definition to $\kk$-Cartier
divisors using the group maps
\[
  l \otimes_\ZZ \kk\colon \Div_\kk(X) \longrightarrow \Pic_\kk(X)
\]
from \eqref{eq:divtopic}.
In this case, we denote the intersection product by
$(D_1 \cdot D_2 \cdots D_n \cdot Y)$, where $D_i \in
\Div_\kk(X)$ for all $i$.
\par We use this intersection product to define $\pi$-nef and
$\pi$-numerically trivial $\kk$-invertible sheaves or $\kk$-Cartier divisors.
\begin{definition}[see {\citeleft\citen{Kle66}\citemid pp.\ 334--335\citepunct
  \citen{KMM87}\citemid Definition 0-1-1\citepunct
  \citen{Kol90}\citemid p.\ 236\citepunct
  \citen{Kee03}\citemid Definition 2.9\citepunct
  \citen{BCHM10}\citemid Definition 3.1.1\citepunct
  \citen{CT20}\citemid \S2.1.1\citepunct
  \citen{VPthesis}\citemid Definition 1.3.8\citeright}]
  \label{def:RelNeronSeveri}
Let $\pi\colon X\to Z$ be a proper morphism of
algebraic spaces over a scheme $S$.
Let $\kk\in\{\ZZ,\QQ,\RR\}$.
\begin{enumerate}[label=$(\roman*)$]
  \item An element $D\in\Pic_{\kk}(X)$ is \textsl{$\pi$-nef} if, for every point
    $z \in \lvert Z \rvert$ and for every
    integral one-dimensional subspace $C \subseteq \pi^{-1}(z)$
    that is closed in $\pi^{-1}(z)$, we have $(D \cdot C)\geq
    0$.
    If $Z = \Spec(k)$ for a field $k$, we just say $D$ is \textsl{nef}.
  \item An element $D \in \Pic_\kk(X)$ is \textsl{$\pi$-numerically trivial} if
    both $D$ and $-D$ are $\pi$-nef.
    We denote by $N^1(X/Z)$ the quotient of $\Pic(X)$ by the subgroup of
    numerically trivial elements, and set
    \[
      N^1(X/Z)_\kk \coloneqq N^1(X/Z) \otimes_\ZZ \kk
    \]
    for $\kk \in \{\QQ,\RR\}$.
    If $Z = \Spec(k)$ for a field $k$, we just say $D$ is \textsl{numerically
    trivial}.
\end{enumerate}
If $X$ is a scheme, we extend these definitions to elements $D \in \Div_\kk(X)$
by asking that their images in $\Pic_\kk(X)$ satisfy these conditions.
By definition, this only depends on the class $[D] \in N^1(X/Z)_\kk$.
\end{definition}
We note that if $Z$ is not decent in the sense of 
\cite[\href{https://stacks.math.columbia.edu/tag/03I8}{Tag 03I8}]{stacks-project},
then the residue field of $z \in \lvert Z \rvert$ is not necessarily well-defined \cite[\href{https://stacks.math.columbia.edu/tag/02Z7}{Tag
02Z7}]{stacks-project}.
However, the condition that $(D \cdot C) \ge 0$ does not depend on the choice of
the representative $\Spec(K) \to Z$ of the point $z \in \lvert Z \rvert$ as
defined in \cite[\href{https://stacks.math.columbia.edu/tag/03BT}{Tag
03BT}]{stacks-project} by flat base change
\cite[\href{https://stacks.math.columbia.edu/tag/073K}{Tag
073K}]{stacks-project}.\medskip
\par We now prove some fundamental properties of nefness and numerical triviality.
Many of these results are known for proper morphisms of schemes or for algebraic
spaces that are proper over a field, but as far as we are aware they are new for
proper morphisms of algebraic spaces.
\begin{lemma}[cf.\ {\citeleft\citen{Kle66}\citemid Chapter I, \S4, Proposition
  1\citepunct \citen{Kee03}\citemid Lemma 2.17\citepunct
  \citen{CLM}\citemid Lemma 3.3\citeright}]\label{lem:nefpullback}
  Let $S$ be a scheme.
  Let
  \[
    \begin{tikzcd}[column sep=0.75em]
      X' \arrow{rr}{f}\arrow{dr}[near start,swap]{\pi'}
      & & X\arrow{dl}[near start]{\vphantom{\pi'}\smash{\pi}}\\
      & Z
    \end{tikzcd}
  \]
  be a commutative diagram of algebraic spaces over $S$, where $\pi$ and $\pi'$
  are proper.
  Let $D \in \Pic_\kk(X)$ for $\kk \in \{\ZZ,\QQ,\RR\}$.
  \begin{enumerate}[label=$(\roman*)$,ref=\roman*]
    \item\label{lem:nefpullbackalways}
      If $D$ is $\pi$-nef (resp.\ $\pi$-numerically trivial), then $f^*D$ is
      $\pi'$-nef (resp.\ $\pi'$-numerically trivial).
    \item\label{lem:nefpullbacksurjective}
      If $f$ is surjective and $f^*D$ is $\pi'$-nef (resp.\
      $\pi$-numerically trivial), then $D$ is $\pi$-nef (resp.\
      $\pi$-numerically trivial).
  \end{enumerate}
\end{lemma}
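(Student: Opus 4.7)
The plan is to verify both directions pointwise over $z \in \lvert Z \rvert$, with the key preliminary step being to show that $f$ itself is proper. I would argue separatedness by observing that $\Delta_f$ factors the closed immersion $\Delta_{\pi'}$ through the closed immersion $X' \times_X X' \hookrightarrow X' \times_Z X'$ (obtained by base change from $\Delta_\pi$, which is a closed immersion because $\pi$ is separated), and universal closedness via the factorization $f = \mathrm{pr}_X \circ \Gamma_f$, where the graph $\Gamma_f \colon X' \to X' \times_Z X$ is a closed immersion (being a section of the separated projection $\mathrm{pr}_{X'}$) and $\mathrm{pr}_X$ is a base change of the proper $\pi'$. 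Throughout the rest of the argument I would freely use the projection formula for intersection numbers under proper pushforward of $1$-cycles, extended $\kk$-linearly.

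For \textup{(i)}, I would fix $z \in \lvert Z \rvert$ and an integral one-dimensional closed subspace $C' \subseteq (\pi')^{-1}(z)$. Properness of $f$ forces $f(C')$ to be an integral closed subspace of $\pi^{-1}(z)$ of dimension $0$ or $1$. If $\dim f(C') = 0$, then $f_*[C'] = 0$ as a $1$-cycle and the projection formula yields $(f^*D \cdot C') = (D \cdot f_*[C']) = 0$. Otherwise, setting $C = f(C')$, we have $f_*[C'] = [k(C') : k(C)] \cdot [C]$, so $(f^*D \cdot C') = [k(C') : k(C)] \cdot (D \cdot C) \geq 0$ under the $\pi$-nef hypothesis. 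The $\pi$-numerically trivial case follows by applying the argument to both $D$ and $-D$.

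For \textup{(ii)}, I would fix an integral one-dimensional closed subspace $C \subseteq \pi^{-1}(z)$ with generic point $\eta$. Surjectivity and finite type of $f$ ensure that $f^{-1}(\eta)$ is a nonempty algebraic space of finite type over $k(\eta)$; I would pick a closed point $\xi$ of this fiber, whose residue field is necessarily finite over $k(\eta)$, and take $C' \subseteq (\pi')^{-1}(z)$ to be the reduced closure of $\xi$ inside the fiber. Then $C'$ is integral and one-dimensional, $f\rvert_{C'} \colon C' \to C$ is proper, dominant, and generically finite of degree $[k(\xi) : k(\eta)]$, so $f_*[C'] = [k(\xi) : k(\eta)] \cdot [C]$, and the projection formula gives
\[
  [k(\xi) : k(\eta)] \cdot (D \cdot C) = (f^*D \cdot C') \geq 0,
\]
whence $(D \cdot C) \geq 0$. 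The numerically trivial case is handled again by applying the argument to $\pm D$.

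The main obstacle I anticipate is verifying that the intersection-theoretic tools used here---the projection formula, the formula $f_*[C'] = [k(C') : k(C)] \cdot [C]$ for a proper generically finite morphism of integral one-dimensional algebraic spaces, and the existence of closed points with finite residue field in fibers of finite-type morphisms of algebraic spaces over a field---are all available in the algebraic-space setting without additional decency hypotheses on $X'$, $X$, or $Z$. These facts are standard for schemes and should transfer essentially formally, but will require careful sourcing within \cite{stacks-project}.
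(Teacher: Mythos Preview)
Your proof is correct and takes essentially the same approach as the paper's: both reduce to curves in fibers over points of $Z$, apply the projection formula \cite[\href{https://stacks.math.columbia.edu/tag/0EDJ}{Tag 0EDJ}]{stacks-project} (which already absorbs the case $\dim f(C')=0$), and handle numerical triviality by applying the nef case to $\pm D$. The only cosmetic difference is that for (ii) the paper cites \cite[Lemma 3.2]{CLM} for the existence of a one-dimensional $C'$ mapping onto $C$ rather than constructing it by hand via a closed point of the generic fiber; your sourcing concerns are answered precisely by these two references.
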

\begin{proof}
  By definition, it suffices to consider the nefness (resp.\ numerical
  triviality) of $D$ when $Z$ is the spectrum of a field.
  The statements for numerical triviality follow from those for nefness applied
  to $D$ and $-D$, and hence it suffices to show $(\ref{lem:nefpullbackalways})$
  and $(\ref{lem:nefpullbacksurjective})$ for nefness.
  \par For $(\ref{lem:nefpullbackalways})$, let $C' \subseteq X'$ be an integral
  one-dimensional closed subspace.
  By the projection formula
  \cite[\href{https://stacks.math.columbia.edu/tag/0EDJ}{Tag
  0EDJ}]{stacks-project}, we have
  \begin{align*}
    (f^*D \cdot C') &= \deg\bigl(C' \to f(C')\bigr)\bigl(D \cdot f(C')\bigr)
    \ge 0.
  \intertext{\endgraf For $(\ref{lem:nefpullbacksurjective})$, let $C \subseteq X$ be an
  be an integral one-dimensional closed subspace.
  By \cite[Lemma 3.2]{CLM}, there is an integral one-dimensional closed subspace
  $C' \subseteq X'$ such that $C = f(C')$.
  By the projection formula again
  \cite[\href{https://stacks.math.columbia.edu/tag/0EDJ}{Tag
  0EDJ}]{stacks-project}, we have}
    (D \cdot C) &= \bigl(\deg(C' \to C)\bigr)^{-1}(f^*D \cdot C') \ge 0.\qedhere
  \end{align*}
\end{proof}
We show that nefness and numerical triviality behave well under base
change.
\begin{lemma}[cf.\ {\citeleft\citen{Kee03}\citemid Lemma
  2.18\citeright}]\label{lem:nefbasechange}
  Let $S$ be a scheme.
  Consider a Cartesian diagram
  \[
    \begin{tikzcd}
      X' \rar{f}\dar[swap]{\pi'} & X\dar{\pi}\\
      Z' \rar{g} & Z
    \end{tikzcd}
  \]
  of algebraic spaces over $S$ where $\pi$ is proper.
  Let $D \in \Pic_\kk(X)$.
  \begin{enumerate}[label=$(\roman*)$,ref=\roman*]
    \item\label{lem:nefbasechangepullback}
      If $D$ is $\pi$-nef (resp.\ $\pi$-numerically trivial), then $f^*D$ is
      $\pi'$-nef (resp.\ $\pi'$-numerically trivial).
    \item\label{lem:nefbasechangeconverse}
      Suppose that $g$ is surjective.
      If $\pi^*D$ is $\pi'$-nef (resp.\ $\pi'$-numerically trivial), then $D$ is
      $\pi$-nef (resp.\ $\pi$-numerically trivial).
  \end{enumerate}
\end{lemma}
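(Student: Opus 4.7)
The plan is to reduce both parts to a single statement about stability of nefness under arbitrary field extensions on proper algebraic spaces over a field. Since a class $D$ is $\pi$-numerically trivial if and only if both $D$ and $-D$ are $\pi$-nef, it suffices to treat the nefness assertions throughout.

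Because $\pi$-nefness is tested on integral one-dimensional closed subspaces of the fibers of $\pi$ by Definition~\ref{def:RelNeronSeveri}, fix a point $z' \in \lvert Z' \rvert$ and set $z = g(z') \in \lvert Z \rvert$. The Cartesian hypothesis gives a canonical identification $(\pi')^{-1}(z') \cong X_z \times_{\Spec \kappa(z)} \Spec \kappa(z')$ under which $f^*D$ restricts to the pullback of $D|_{X_z}$ along the flat base change morphism. For part~$(\ref{lem:nefbasechangeconverse})$, surjectivity of $g$ guarantees every $z \in \lvert Z \rvert$ occurs as $g(z')$ for some $z' \in \lvert Z' \rvert$. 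Both parts thereby reduce to the following claim: for a proper algebraic space $Y$ over a field $k$, a class $L \in \Pic_\kk(Y)$, and a field extension $K/k$, the class $L$ is nef on $Y$ if and only if its pullback $L_K$ is nef on $Y_K \coloneqq Y \times_{\Spec k} \Spec K$.

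For the ``if'' direction, which yields part~$(\ref{lem:nefbasechangeconverse})$: given an integral one-dimensional closed subspace $C \subseteq Y$, the base change $C_K \subseteq Y_K$ is a proper one-dimensional closed subspace that decomposes as a cycle $\sum_j m_j C'_j$, with the $C'_j$ integral and $m_j > 0$. Flat base change for Euler characteristics yields $(L \cdot C) = (L_K \cdot C_K) = \sum_j m_j (L_K \cdot C'_j) \ge 0$, since each summand is nonnegative by hypothesis. For the ``only if'' direction, which yields part~$(\ref{lem:nefbasechangepullback})$: given an integral one-dimensional closed subspace $C' \subseteq Y_K$, the closure of its image under the projection $Y_K \to Y$ is an integral closed subspace $C \subseteq Y$ of dimension zero or one. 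If $\dim C = 0$, then $L_K|_{C'}$ is trivial and $(L_K \cdot C') = 0$; if $\dim C = 1$, then $C' \subseteq C_K$ is an irreducible component, and the projection formula combined with the finite function-field extension $k(C) \hookrightarrow \kappa(C')$ gives $(L_K \cdot C') = e\,(L \cdot C) \ge 0$ for a positive rational number $e$.

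The main technical obstacle is justifying the intersection-theoretic manipulations in the setting of algebraic spaces rather than schemes: specifically, flat base change compatibility for intersection numbers and the cycle decomposition of the possibly non-integral proper one-dimensional base change $C_K$. These follow from the development of intersection theory for proper algebraic spaces in \cite[\href{https://stacks.math.columbia.edu/tag/0EDF}{Tag~0EDF}, \href{https://stacks.math.columbia.edu/tag/0EDH}{Tag~0EDH}, and \href{https://stacks.math.columbia.edu/tag/0EDJ}{Tag~0EDJ}]{stacks-project} together with flat base change for coherent cohomology; alternatively one can reduce to the scheme case via Chow's lemma for algebraic spaces (compare \cite{CLO12}).
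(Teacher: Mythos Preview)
Your reduction to a field extension $K/k$ and your argument for part~$(\ref{lem:nefbasechangeconverse})$ are correct and essentially coincide with the paper's proof. The gap is in part~$(\ref{lem:nefbasechangepullback})$. Given an integral curve $C' \subseteq Y_K$, you take $C \subseteq Y$ to be the closure of its image under the projection $Y_K \to Y$ and assert $\dim C \le 1$, but this fails once $K/k$ is transcendental. For instance, with $Y = \PP^2_k$, $K = k(t)$, and $C' \subseteq \PP^2_K$ the line $y = tx$ in the affine chart $z = 1$, the ring map $k[x,y] \to K[x]$ sending $y \mapsto tx$ is injective, so the generic point of $C'$ hits the generic point of $\PP^2_k$ and $C = \PP^2_k$. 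Even when $\dim C = 1$ does hold, your appeal to a ``finite function-field extension $k(C) \hookrightarrow \kappa(C')$'' is unjustified: for $C = \PP^1_k$, $K = k(t)$, and $C' = C_K = \PP^1_K$, this extension is $k(x) \hookrightarrow k(t)(x)$, which is not finite, so the projection formula does not apply in the form you invoke.

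The paper avoids pushing curves down entirely. It applies the weak Chow lemma to obtain a proper surjection $\mu\colon Y' \to Y$ from a projective $k$-scheme $Y'$, notes that $\mu^*L$ is nef by Lemma~\ref{lem:nefpullback}$(\ref{lem:nefpullbackalways})$, and then uses Kleiman's criterion on projective schemes to write $\mu^*L$ as a limit of amples $\mu^*L + \varepsilon A$. Since ampleness on a projective $k$-scheme is preserved under field extension, each $(\mu^*L + \varepsilon A)_K$ is ample (hence nef) on $Y'_K$, and letting $\varepsilon \to 0$ (via \cite[Theorem~3.9]{Kee03}) shows $(\mu^*L)_K$ is nef; finally Lemma~\ref{lem:nefpullback}$(\ref{lem:nefpullbacksurjective})$ descends nefness along the surjection $Y'_K \to Y_K$. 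Your mention of Chow's lemma at the end is thus in the right direction, but its role is not to justify intersection-theoretic manipulations---it is to sidestep the curve-pushing problem altogether. Your direct approach can be salvaged by first spreading $C'$ out over a finite-type $k$-subalgebra of $K$ and specializing to a closed fiber, but that is not the argument you wrote.
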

\begin{proof}
  As in the proof of Lemma \ref{lem:nefpullback}, it suffices to show the
  statement for nefness.
  By transitivity of fibers, it
  suffices to consider the case when $Z = \Spec(k)$ and $Z' = \Spec(k')$ for
  a field extension $k \subseteq k'$.\smallskip
  \par We first show $(\ref{lem:nefbasechangepullback})$.
  By the weak version of Chow's lemma in
  \cite[\href{https://stacks.math.columbia.edu/tag/089J}{Tag
  089J}]{stacks-project} (see Lemma \ref{lem:weakchow}),
  there exists a proper surjective morphism $\mu\colon
  Y \to X$ from a scheme $Y$ that is projective over $k$.
  We then consider the following Cartesian diagram:
  \[
    \begin{tikzcd}
      Y' \rar{f'}\dar[swap]{\mu'} & Y\dar{\mu}\\
      X' \rar{f} & X\mathrlap{.}
    \end{tikzcd}
  \]
  Then, we know that $\mu^*D$ is
  nef by Lemma \ref{lem:nefpullback}$(\ref{lem:nefpullbackalways})$.
  Now since $Y$ is a projective scheme over $k$, we know that choosing an
  ample invertible sheaf $A$ on $Y$, the $\RR$-invertible sheaf
  $\mu^*D + \varepsilon A$ is ample for every $\varepsilon >
  0$ by Kleiman's criterion for ampleness for projective schemes
  \cite[Chapter VI, Theorem 2.19]{Kol96}.
  Then,
  \[
    (\mu \circ f')^*D+\varepsilon\,f^{\prime*}A = (f \circ
    \mu')^*D+\varepsilon\,f^{\prime*}A
  \]
  nef for every $\varepsilon > 0$.
  Taking the limit as $\varepsilon \to 0$, we see that 
  $(\mu \circ f')^*D = (f \circ \mu')^*D$ is nef by
  \cite[Theorem 3.9]{Kee03}.
  Finally, we see that $f^*D$ is nef by Lemma
  \ref{lem:nefpullback}$(\ref{lem:nefpullbacksurjective})$.\smallskip
  \par For $(\ref{lem:nefbasechangeconverse})$, let $C \subseteq X$
  be an integral one-dimensional subspace.
  Let $C'_i$ be the irreducible components of $C' \coloneqq C \otimes_k k'$
  with reduced structure and
  geometric generic point $\bar{x}_i$, and let
  \[
    m_i = \Length_{\cO_{X \otimes_k k',\bar{x}_i}}\bigl( \cO_{C'_i,\bar{x}_i}
    \bigr).
  \]
  Then, we have
  \[
    (D \cdot C) = (f^*D \cdot C') = \sum_i m_i (f^*D \cdot C'_i) \ge 0
  \]
  where the first equality follows from flat base change
  \cite[\href{https://stacks.math.columbia.edu/tag/073K}{Tag
  073K}]{stacks-project}, the second equality is
  \cite[\href{https://stacks.math.columbia.edu/tag/0EDI}{Tag
  0EDI}]{stacks-project}, and the last inequality is by the assumption that
  $f^*D$ is nef.
\end{proof}

We note that nefness can be detected at closed points
$z \in \lvert Z \rvert$ under some additional assumptions.
Below, the decency assumption in $Z$ allows us to make sense of the residue
field $\kappa(z)$ at a point $z \in \lvert Z \rvert$.
See \cite[\href{https://stacks.math.columbia.edu/tag/0EMW}{Tag
0EMW}]{stacks-project} for the definition and see
\cite[\href{https://stacks.math.columbia.edu/tag/02Z7}{Tag
02Z7}]{stacks-project} for an example where the residue field at a point cannot
be defined.
\begin{lemma}[cf.\ {\citeleft\citen{Kee03}\citemid Lemma 2.18(1)\citepunct
  \citen{CT20}\citemid Lemma 2.6\citeright}]\label{lem:NefAgainstNonClosedContracted}
Let $\pi\colon X\to Z$ be a proper morphism of
algebraic spaces over a scheme $S$.
Suppose that $Z$ is quasi-compact and decent,
or that $Z$ is a locally
Noetherian scheme.
Let $D \in \Pic_\kk(X)$.
Then, the following conditions are equivalent.
\begin{enumerate}[label=$(\roman*)$,ref=\roman*]
  \item\label{lem:NefAgainstNonClosedContracteddef} $D$ is $\pi$-nef (resp.\
    $\pi$-numerically trivial).
  \item \label{lem:NefAgainstNonClosedContractedclosed}
    For every closed point $z \in \lvert Z \rvert$ and every integral
    one-dimensional subspace $C \subseteq \pi^{-1}(z)$ that is
    closed in $\pi^{-1}(z)$, we have $(D \cdot C) \ge 0$ (resp. $(D \cdot C) = 0$).
\end{enumerate}
\end{lemma}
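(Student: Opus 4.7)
The implication $(\ref{lem:NefAgainstNonClosedContracteddef})\Rightarrow(\ref{lem:NefAgainstNonClosedContractedclosed})$ is immediate from Definition \ref{def:RelNeronSeveri}, and the numerical triviality statement reduces to the nef statement applied to both $D$ and $-D$. For the nontrivial direction, fix $z\in\lvert Z\rvert$ and an integral one-dimensional closed subspace $C\subseteq\pi^{-1}(z)$; the plan is to specialize $C$ to an effective one-cycle in the fiber of $\pi$ over a closed point $w$ of $Z$, where the hypothesis applies, by transporting everything along a discrete valuation ring dominating $\cO_{Z,w}$ with fraction field $\kappa(z)$.

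First I replace $Z$ by $\overline{\{z\}}$ with its reduced structure, which remains quasi-compact decent or locally Noetherian, and then $X$ by the reduced preimage followed by the reduced closure $\bar{C}$ of $C$; these replacements preserve properness, the intersection $(D\cdot C)$, and the hypothesis on closed fibers. Now $Z$ is integral with generic point $z$, $X\to Z$ is proper surjective, and $C$ is an irreducible component of the generic fiber. By the quasi-compactness and decency or scheme hypothesis on $Z$, the space $\lvert Z\rvert$ has a closed point $w$; passing to an \'etale scheme chart around $w$ in the algebraic space case, the local ring $\cO_{Z,w}$ is a Noetherian local integral domain with fraction field $\kappa(z)$. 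By a standard chain of blow-ups of $\Spec(\cO_{Z,w})$ followed by the Krull--Akizuki theorem, there exists a DVR $R$ with $\cO_{Z,w}\subseteq R\subseteq\kappa(z)$, $\Frac(R)=\kappa(z)$, and $\fm_R\cap\cO_{Z,w}=\fm_{Z,w}$, yielding a morphism $\varphi\colon\Spec(R)\to Z$ sending the generic point to $z$ and the closed point to $w$.

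Base-change $\pi$ along $\varphi$ to a proper morphism $\pi_R\colon X_R\to\Spec(R)$ of algebraic spaces, and apply Chow's lemma (\cite[\href{https://stacks.math.columbia.edu/tag/088U}{Tag 088U}]{stacks-project}) to obtain a proper surjective morphism $\eta\colon Y\to X_R$ from a scheme $Y$ projective over $\Spec(R)$. Since $Y_z\to X_z$ is proper surjective, \cite[Lemma 3.2]{CLM} produces an integral one-dimensional closed subscheme $C'\subseteq Y_z$ with $\eta(C')=C$, and the projection formula (\cite[\href{https://stacks.math.columbia.edu/tag/0EDJ}{Tag 0EDJ}]{stacks-project}) reduces the claim to $(\eta^*D)\cdot C'\geq 0$. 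Let $\overline{C'}\subseteq Y$ be the schematic closure of $C'\subseteq Y_z$; since $R$ is a DVR and $\overline{C'}$ is not supported in the special fiber, $\overline{C'}\to\Spec(R)$ is flat. Constancy of degrees of line bundles on fibers of a flat projective family over a DVR then gives
\[
(\eta^*D)\cdot C'=\deg_{\eta^*D\rvert_{Y_w}}\bigl((\overline{C'})_w\bigr),
\]
and the right-hand side is nonnegative because $(\overline{C'})_w$ is an effective one-cycle in $Y_w$ while $\eta^*D\rvert_{Y_w}$ is nef by Lemma \ref{lem:nefpullback}(\ref{lem:nefpullbackalways}) applied to the proper morphism $Y_w\to X_w$, together with the hypothesis on $D\rvert_{X_w}$.

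The principal obstacle is the construction of the DVR $R$ and the morphism $\varphi$: in the scheme case this is a routine valuation-theoretic argument once a closed point has been located, but in the decent algebraic space case one must first pass to an \'etale chart around $w$ in order to make sense of $\cO_{Z,w}$ and apply the domination result. The remaining ingredients---Chow's lemma for algebraic spaces, flatness of the schematic closure over a DVR, and flat invariance of fiber degrees in projective families---are standard.
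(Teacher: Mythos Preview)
Your strategy—specialize $C$ to a closed fiber via a valuation-ring argument, take a schematic closure to get a flat family, and invoke constancy of intersection numbers—is the same as the paper's. The one genuine gap is your assertion that, after passing to an \'etale chart, the local ring at $w$ is \emph{Noetherian}: in the quasi-compact decent case no local Noetherianity is assumed on $Z$, so your blow-up/Krull--Akizuki construction of a DVR is not available. The paper instead uses an arbitrary valuation ring $(R,\fm)$ dominating the local ring, which exists for any local domain by \cite[Chapter VI, \S1, no.\ 2, Corollary to Theorem 2]{BouCA}; the schematic closure is then flat over $\Spec(R)$ because torsion-free modules over valuation rings are flat \cite[Chapter VI, \S4, no.\ 1, Lemma 1]{BouCA}, and the paper invokes \cite[Proposition B.18]{Kle05} for invariance of intersection numbers in flat proper families. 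In the locally Noetherian scheme case your argument is fine as written; to cover the other case, replace the DVR by a general valuation ring throughout.

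There is also a smaller imprecision in your handling of the \'etale chart. After choosing $U\to Z$ with a point $u_0\mapsto w$, you must also lift the specialization $z\rightsquigarrow w$ to $u\rightsquigarrow u_0$ in $U$ (the paper cites \cite[\href{https://stacks.math.columbia.edu/tag/0BBP}{Tag 0BBP} and \href{https://stacks.math.columbia.edu/tag/03IL}{Tag 03IL}]{stacks-project} for this), and the fraction field of $\cO_{U,u_0}$ is $\kappa(u)$, a possibly nontrivial extension of $\kappa(z)$ rather than $\kappa(z)$ itself. This is harmless for the conclusion—nefness over $\kappa(z)$ follows from nefness over $\kappa(u)$ by Lemma~\ref{lem:nefbasechange}$(\ref{lem:nefbasechangeconverse})$—but should be made explicit. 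A related organizational point: the paper applies Chow's lemma over the \'etale chart $U$ \emph{before} passing to the valuation ring, whereas you apply it over $\Spec(R)$; your order is fine when $R$ is a DVR (Noetherian), but would need care if you switch to a general valuation ring.
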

\begin{proof}
  We have $(\ref{lem:NefAgainstNonClosedContracteddef})
  \Rightarrow
  (\ref{lem:NefAgainstNonClosedContractedclosed})$ by definition, and hence it
  suffices to show
  $(\ref{lem:NefAgainstNonClosedContractedclosed}) \Rightarrow
  (\ref{lem:NefAgainstNonClosedContracteddef})$.
  As in the proof of Lemma \ref{lem:nefpullback}, it suffices to show the
  statement for nefness.\smallskip
\par We want to show that for every $z \in \lvert Z \rvert$, the pullback
$D\rvert_{\pi^{-1}(z)}$ is nef over $K$.
We first show that $z$ specializes to a closed point $z_0 \in \lvert Z \rvert$.
If $Z$ is quasi-compact and decent, $\lvert Z \rvert$ is quasi-compact
and Kolmogorov \cite[\href{https://stacks.math.columbia.edu/tag/03K3}{Tag
03K3}]{stacks-project}, and hence every point $z \in \lvert Z \rvert$
specializes to a closed point \cite[(2.1.2)]{EGAInew}.
When $Z$ is a locally Noetherian scheme, then every point $z \in Z$ specializes
to a closed point as well
\cite[\href{https://stacks.math.columbia.edu/tag/02IL}{Tag
02IL}]{stacks-project}.
\par Now let $z \rightsquigarrow z_0$ be a specialization to a closed point in
$\lvert Z \rvert$, which exists by the previous paragraph.
By \cite[\href{https://stacks.math.columbia.edu/tag/0BBP}{Tag 0BBP} and
\href{https://stacks.math.columbia.edu/tag/03IL}{Tag
03IL}]{stacks-project}, there is an \'etale morphism $U \to Z$ from an affine
scheme $U$ with points $u \rightsquigarrow u_0$ mapping to $z \rightsquigarrow
z_0$ such that the field extension $\kappa(z_0) \hookrightarrow \kappa(u_0)$ is
an isomorphism.
We note that $\kappa(x) \subseteq \kappa(u)$ is a field extension, and hence by
Lemma \ref{lem:nefbasechange}$(\ref{lem:nefbasechangeconverse})$ it suffices to
show that the pullback of $D$ to $X \times_Z \Spec(\kappa(u))$ is nef over
$\Spec(\kappa(u))$.
By Lemma \ref{lem:nefpullback} and the weak version of Chow's lemma in
\cite[\href{https://stacks.math.columbia.edu/tag/089J}{Tag
089J}]{stacks-project} (see Lemma \ref{lem:weakchow}),
we may replace $X$ by a proper surjective cover $Y \to
X$ that is a projective scheme over $U$.
\par By \cite[Chapter VI, \S1, no.\ 2,
Corollary to Theorem 2]{BouCA}, we can find a valuation ring
$(R,\fm)$ and
a morphism $\Spec(R) \to U$ such that the generic point of $\Spec(R)$ maps to
$u$ and the closed point of $\Spec(R)$ maps to $u_0$, and such that the field
extension $\kappa(u) \hookrightarrow \Frac(R)$ is an isomorphism.
Let
\[
  C \subseteq X \times_Z \Spec\bigl(\Frac(R)\bigr)
\]
be an integral closed
one-dimensional subspace.
Taking the scheme-theoretic closure
\[
  \overline{C} \subseteq X \times_Z \Spec(R)
\]
of $C$ in $X \times_Z \Spec(R)$, we obtain
a flat family of
closed one-dimensional subspaces in $X \times_Z \Spec(R)$
over $R$ because the pushforward of the structure sheaf of $\overline{C}$ to
$\Spec(R)$
is torsion-free \citeleft\citen{EGAInew}\citemid Proposition 8.4.5\citepunct
\citen{BouCA}\citemid Chapter VI, \S4, no.\ 1, Lemma
1\citeright.
Since the residue field of $R$ is a field extension of $\kappa(z_0) \cong
\kappa(u_0)$, we see that the restriction of $D$ to $X \times_Z \Spec(R/\fm)$ is
nef over $\Spec(R/\fm)$ by Lemma
\ref{lem:nefbasechange}$(\ref{lem:nefbasechangepullback})$.
Thus, we have $(D' \cdot C) \geq 0$ by the
invariance of intersection numbers in flat families \cite[Proposition
B.18]{Kle05}.
\end{proof}
On the other hand, nefness cannot be checked at closed points without some
assumptions on \(Z\).
\begin{example}
  Let \(Z\) be an integral scheme without closed points, let \(X = \PP^1_Z\) with
  projection morphism \(\pi\colon X \to Z\), and let \(D = \cO(-1)\).
  Then, \(D\) is not \(\pi\)-nef over the generic point \(\eta\) of \(X\), but
  satisfies the condition in Lemma
  \ref{lem:NefAgainstNonClosedContracted}\((\ref{lem:NefAgainstNonClosedContractedclosed})\).
  \par For an explicit example of such a scheme \(Z\), we recall Schwede's
  construction in \cite[pp.\ 169--170]{Sch05}.
  Let \(k\) be a field, consider the ring
  \[
    A' = k[x_1,x_2,\ldots]\biggl[\frac{x_1}{x_2},\frac{x_1}{x_2^2},\ldots\biggr]
    \biggl[\frac{x_2}{x_3},\frac{x_2}{x_3^2},\ldots\biggr]\ldots,
  \]
  and set \(A = A'_{(x_1,x_2,\ldots)}\).
  The ring \(A\) is a valuation ring with order group \(G = \ZZ^{\oplus
  \omega}\), the direct sum of countably many copies of \(\ZZ\), considered
  with the lexicographic order.
  By \cite[Theorem 4.6]{Sch05}, the scheme
  \[
    Z = \Spec(A) - \bigl\{(x_1,x_2,\ldots)\bigr\}
  \]
  has no closed points.
\end{example}

Since nefness can be detected over closed points in many cases,
we define the following.
\begin{definition}[see {\citeleft\citen{Kle66}\citemid p.\ 335\citepunct
  \citen{KMM87}\citemid p.\ 291\citepunct
  \citen{Kee03}\citemid Definition 2.8\citepunct
  \citen{VPthesis}\citemid Definitions 1.3.19 and 1.3.20 and p.\ 15\citeright}]
  \label{def:intersectionWithContractedCurves}
Let $\pi\colon X\to Z$ be a proper morphism of
algebraic spaces over a scheme $S$, such that $Z$ is either quasi-compact
and decent,
or a locally Noetherian scheme.
A closed subspace $Y \subseteq X$ is \textsl{$\pi$-contracted} if $\pi(Y)$ is a
zero-dimensional (closed) subspace of $Z$.
A \textsl{$\pi$-contracted curve} is a $\pi$-contracted closed subspace that is
integral and of dimension one.
\par Now suppose that $X$ is quasi-compact.
We denote by $Z_1(X/Z)$ the free Abelian group generated by $\pi$-contracted
curves, and set
\begin{align*}
  Z_1(X/Z)_\kk &\coloneqq Z_1(X/Z) \otimes_\ZZ \kk
  \intertext{for
  $\kk\in\{\QQ,\RR\}$.
  An element $\beta\in Z_1(X/Z)_{\kk}$ is
  \textsl{$\pi$-numerically trivial} if $(D\cdot\beta)=0$ for all
  $D\in\Pic_\kk(X)$.
  We denote by $N_1(X/Z)$ the quotient of
  $Z_1(X/Z)$ by the subgroup of
  numerically trivial elements, and set}
  N_1(X/Z)_{\kk} &\coloneqq N_1(X/Z) \otimes_\ZZ \kk
\end{align*}
for
$\kk\in\{\QQ,\RR\}$.
\end{definition}
\subsection{Theorem of the base}
As in the absolute case, the modules $N^1(X/Z)_\kk$ and $N_1(X/Z)_\kk$ are
finitely generated.
This statement is called the theorem of the base.
This theorem allows us to define cones in $N^1(X/Z)_\kk$ and $N_1(X/Z)_\kk$
corresponding to the various positivity notions in
\S\ref{sect:relativeampleness}.\medskip
\par To prove the theorem of the base, we start with the following.
\begin{lemma}[cf.\ {\citeleft\citen{Kle66}\citemid Chapter IV, \S4, Proposition
  1\citepunct \citen{Kee03}\citemid Lemma
  2.20\citeright}]\label{lem:nefbasechangenotcartesian}
  Let $S$ be a scheme.
  Consider a commutative diagram
  \[
    \begin{tikzcd}
      X'\arrow{dr}{h}\arrow[bend right=15]{ddr}[swap]{\rho}
      \arrow[bend left=15]{drr}{f}\\
      & X \times_Z Z' \rar[swap]{g'}\dar{\vphantom{\pi}\smash{\pi'}} & X\dar{\pi}\\
      & Z' \rar{g} & Z
    \end{tikzcd}
  \]
  of algebraic spaces over $S$ where the square is Cartesian and
  $\pi$ and $\rho$ are proper.
  Let $D \in \Pic_\kk(X)$.
  \begin{enumerate}[label=$(\roman*)$,ref=\roman*]
    \item\label{lem:nefbasechangenotcartesianpullback}
      If $D$ is $\pi$-nef (resp.\ $\pi$-numerically trivial), then $f^*D$ is
      $\rho$-nef (resp.\ $\rho$-numerically trivial).
    \item\label{lem:nefbasechangenotcartesianconverse}
      Suppose that for every $z \in
      \lvert Z \rvert$ with representative $\Spec(K) \to Z$
      and every integral one-dimensional subspace $C \subseteq
      \pi^{-1}(z)$ that is closed in $\pi^{-1}(z)$, there exists a point $z' \in
      \lvert Z' \rvert$ with representative $\Spec(K') \to Z'$ mapping to $z$
      such that for every irreducible component
      \[
        C'_i \subseteq C' \coloneqq C \otimes_{K} K'
      \]
      with reduced structure,
      there exists an integral 
      one-dimensional subspace $C''_i \subseteq
      \rho^{-1}(z')$ that is closed in $\rho^{-1}(z')$ such that
      $h(C''_i) = C'_i$.
      If $f^*D$ is $\rho$-nef (resp.\ $\rho$-numerically trivial), then $D$ is
      $\pi$-nef (resp.\ $\pi$-numerically trivial).
    \item\label{lem:nefbasechangeconversenotcartesianclosed}
      Suppose that $Z$ either is quasi-compact and decent or is a locally Noetherian
      scheme.
      Suppose that for every $\pi$-contracted curve $C \subseteq X$, setting
      $z \in \lvert Z \rvert$ to be the image of $C$,
      there exists a point $z' \in \lvert Z' \rvert$ mapping to $z$
      such that for every irreducible component
      \[
        C'_i \subseteq C' \coloneqq C \otimes_{\kappa(z)} \kappa(z')
      \]
      with reduced structure,
      there exists an integral 
      one-dimensional subspace $C''_i \subseteq
      \rho^{-1}(z')$ that is closed in $\rho^{-1}(z')$ such that
      $h(C''_i) = C'_i$.
      If $f^*D$ is $\rho$-nef (resp.\ $\rho$-numerically trivial), then $D$ is
      $\pi$-nef (resp.\ $\pi$-numerically trivial).
  \end{enumerate}
\end{lemma}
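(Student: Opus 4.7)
My plan is to reduce each part to the Cartesian base change and pullback lemmas already proven, by factoring $f = g' \circ h$ through the fibre product $X \times_Z Z'$ and observing that the composition $\rho = \pi' \circ h$ is proper since both $\rho$ and $\pi'$ are, which legitimises applying Lemma~\ref{lem:nefpullback} to $h$. For $(\ref{lem:nefbasechangenotcartesianpullback})$, if $D$ is $\pi$-nef, then Lemma~\ref{lem:nefbasechange}$(\ref{lem:nefbasechangepullback})$ shows $g^{\prime*}D$ is $\pi'$-nef on $X \times_Z Z'$, and then Lemma~\ref{lem:nefpullback}$(\ref{lem:nefpullbackalways})$ applied to the morphism $h \colon X' \to X \times_Z Z'$ over $Z'$ yields that $f^*D = h^*(g^{\prime*}D)$ is $\rho$-nef. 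The numerical triviality statement follows by applying the nefness statement to $\pm D$.

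For $(\ref{lem:nefbasechangenotcartesianconverse})$, it suffices to show $(D \cdot C) \geq 0$ for every representative $\Spec(K) \to Z$ of a point $z \in \lvert Z \rvert$ and every integral one-dimensional $C \subseteq \pi^{-1}(z)$ that is closed in $\pi^{-1}(z)$. Choose $z' \in \lvert Z' \rvert$ with representative $\Spec(K') \to Z'$ lying over $z$ as in the hypothesis, and form $C' = C \otimes_K K'$ with irreducible components $C'_i$ taken with reduced structure and with multiplicities
\[
  m_i = \Length_{\cO_{C',\bar{x}_i}}\bigl(\cO_{C'_i,\bar{x}_i}\bigr)
\]
at their generic points. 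Flat base change \cite[\href{https://stacks.math.columbia.edu/tag/073K}{Tag 073K}]{stacks-project} together with \cite[\href{https://stacks.math.columbia.edu/tag/0EDI}{Tag 0EDI}]{stacks-project} gives
\[
  (D \cdot C) = (g^{\prime*}D \cdot C') = \sum_i m_i \bigl(g^{\prime*}D \cdot C'_i\bigr).
\]
By hypothesis, each $C'_i$ is the image under $h$ of an integral one-dimensional $C''_i \subseteq \rho^{-1}(z')$ that is closed in $\rho^{-1}(z')$, so the induced map $h\rvert_{C''_i} \colon C''_i \to C'_i$ is a proper surjection of integral one-dimensional spaces and hence finite. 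The projection formula \cite[\href{https://stacks.math.columbia.edu/tag/0EDJ}{Tag 0EDJ}]{stacks-project} then yields
\[
  \bigl(f^*D \cdot C''_i\bigr) = \deg(C''_i/C'_i)\cdot \bigl(g^{\prime*}D \cdot C'_i\bigr),
\]
and since $f^*D$ is $\rho$-nef the left-hand side is $\geq 0$, so $(g^{\prime*}D \cdot C'_i) \geq 0$ for each $i$. Summing over $i$ with the nonnegative multiplicities $m_i$ shows $(D \cdot C) \geq 0$ as desired; the numerical triviality case follows by applying the argument to $\pm D$.

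For $(\ref{lem:nefbasechangeconversenotcartesianclosed})$, the assumption on $Z$ lets us invoke Lemma~\ref{lem:NefAgainstNonClosedContracted}, so it suffices to verify the inequality $(D \cdot C) \geq 0$ only on integral one-dimensional subspaces $C \subseteq \pi^{-1}(z)$ for $z \in \lvert Z \rvert$ a closed point; such $C$ are precisely certain $\pi$-contracted curves, for which the hypothesis of $(\ref{lem:nefbasechangeconversenotcartesianclosed})$ supplies the required lifts $C''_i$. We then repeat verbatim the flat base change, decomposition, and projection formula argument of the previous paragraph.

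The main genuine obstacle is the bookkeeping in part $(\ref{lem:nefbasechangenotcartesianconverse})$: one has to justify that the multiplicity formula of \cite[\href{https://stacks.math.columbia.edu/tag/0EDI}{Tag 0EDI}]{stacks-project} applies to the possibly non-reduced base change $C \otimes_K K'$ over a field extension that may be transcendental or inseparable, and that the restrictions $h\rvert_{C''_i}$ are genuinely finite so that the projection formula applies with a well-defined positive degree. Both points are routine but need to be articulated carefully so the numerical identity $(D \cdot C) = \sum_i m_i (g^{\prime*}D \cdot C'_i)$ is unambiguous.
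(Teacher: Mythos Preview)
Your proof is correct and follows essentially the same approach as the paper: factor $f = g' \circ h$ and apply Lemma~\ref{lem:nefbasechange}$(\ref{lem:nefbasechangepullback})$ followed by Lemma~\ref{lem:nefpullback}$(\ref{lem:nefpullbackalways})$ for part $(\ref{lem:nefbasechangenotcartesianpullback})$, then for parts $(\ref{lem:nefbasechangenotcartesianconverse})$ and $(\ref{lem:nefbasechangeconversenotcartesianclosed})$ use flat base change, the component decomposition from \cite[\href{https://stacks.math.columbia.edu/tag/0EDI}{Tag 0EDI}]{stacks-project}, and the projection formula \cite[\href{https://stacks.math.columbia.edu/tag/0EDJ}{Tag 0EDJ}]{stacks-project}, with Lemma~\ref{lem:NefAgainstNonClosedContracted} reducing $(\ref{lem:nefbasechangeconversenotcartesianclosed})$ to closed points. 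The paper's argument is organized identically and cites the same Stacks tags; your closing paragraph on bookkeeping is additional commentary but the substance of the two proofs is the same.
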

\begin{remark}\label{rem:nefbasechange}
  The condition on curves in $(\ref{lem:nefbasechangenotcartesianconverse})$ and
  $(\ref{lem:nefbasechangeconversenotcartesianclosed})$ hold for example when $g = \id_S$ and
  $f$ is proper and surjective, which is the case proved in
  Lemma \ref{lem:nefpullback}, or when $g$ is surjective and $h = \id_{X'}$,
  which is the case proved in Lemma \ref{lem:nefbasechange}.
\end{remark}
\begin{proof}[Proof of Lemma \ref{lem:nefbasechangenotcartesian}]
  As in the proof of Lemma \ref{lem:nefpullback}, it suffices to show the
  statements for nefness.\smallskip
  \par For $(\ref{lem:nefbasechangenotcartesianpullback})$, we know that $g^{\prime*}D$ is
  $\pi'$-nef by Lemma
  \ref{lem:nefbasechange}$(\ref{lem:nefbasechangepullback})$.
  Thus, $f^*D$ is $\rho$-nef by Lemma
  \ref{lem:nefpullback}$(\ref{lem:nefpullbackalways})$.\smallskip
  \par For $(\ref{lem:nefbasechangenotcartesianconverse})$ (resp.\
  $(\ref{lem:nefbasechangeconversenotcartesianclosed})$),
  let $C \subseteq \pi^{-1}(z)$ be an integral one-dimensional subspace, where
  $z \in \lvert Z \rvert$ is a point (resp.\ a closed point).
  By definition (resp.\ by Lemma \ref{lem:NefAgainstNonClosedContracted}), it
  suffices to show that $(D \cdot C) \ge 0$.
  Let $C'_i$ be the irreducible components of $C'$ with reduced structure and
  geometric generic point $\bar{x}_i$, and let
  \begin{align*}
    m_i &= \Length_{\cO_{X \times_Z Z',\bar{x}_i}}\bigl( \cO_{C'_i,\bar{x}_i}
    \bigr).
  \intertext{Then, we have}
    (D \cdot C) = (g^{\prime*}D \cdot C')
    &= \sum_i m_i (g^{\prime*}D \cdot C'_i)\\
    &= \sum_i m_i \bigl( \deg(C''_i \to C'_i) \bigr)^{-1}
    (f^*D \cdot C''_i) \ge 0
  \end{align*}
  where the first equality follows from flat base change
  \cite[\href{https://stacks.math.columbia.edu/tag/073K}{Tag
  073K}]{stacks-project}, the second equality is
  \cite[\href{https://stacks.math.columbia.edu/tag/0EDI}{Tag
  0EDI}]{stacks-project}, the third equality is the projection formula
  \cite[\href{https://stacks.math.columbia.edu/tag/0EDJ}{Tag
  0EDJ}]{stacks-project}, and the last inequality is by the assumption that
  $\pi^*D$ is nef.
\end{proof}
We show that $N^1$ is compatible with pullbacks.
\begin{proposition}[cf.\ {\citeleft\citen{Kle66}\citemid Chapter IV, \S4,
  Proposition 1\citepunct
  \citen{Kee03}\citemid Lemma 3.1\citeright}]\label{lem:n1pullsback}
  Consider a commutative diagram
  \[
    \begin{tikzcd}
      X' \rar{f} \dar[swap]{\rho} & X \dar{\pi}\\
      Z' \rar{g} & Z
    \end{tikzcd}
  \]
  of algebraic spaces over a scheme $S$
  where $\pi$ and $\pi'$ are proper.
  \begin{enumerate}[label=$(\roman*)$,ref=\roman*]
    \item\label{lem:n1pullsbackmap}
      The pair $(f/g)$ induces a group map
      \[
        (f/g)^*\colon N^1(X/Z) \longrightarrow N^1(X'/Z').
      \]
    \item\label{lem:n1pullsinjective}
      The map $(f/g)^*$ is injective either if the condition in Lemma
      \ref{lem:nefbasechangenotcartesian}$(\ref{lem:nefbasechangenotcartesianconverse})$ holds, or if
      $Z$ is quasi-compact and decent or is a locally Noetherian scheme and the condition
      in Lemma
      \ref{lem:nefbasechangenotcartesian}$(\ref{lem:nefbasechangeconversenotcartesianclosed})$ holds.
  \end{enumerate}
\end{proposition}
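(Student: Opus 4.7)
The plan is that both parts follow essentially formally from Lemma \ref{lem:nefbasechangenotcartesian}, which has already done the real work of relating nefness/numerical triviality across the square.

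For part $(\ref{lem:n1pullsbackmap})$, I would start with the ordinary pullback $f^*\colon \Pic(X) \to \Pic(X')$ of invertible sheaves. To show this descends to a well-defined map on $N^1$, it suffices to check that the subgroup of $\pi$-numerically trivial elements is sent into the subgroup of $\rho$-numerically trivial elements. But this is exactly the content of Lemma \ref{lem:nefbasechangenotcartesian}$(\ref{lem:nefbasechangenotcartesianpullback})$ applied to both $D$ and $-D$. By the universal property of the quotient, $f^*$ therefore induces the desired group homomorphism $(f/g)^*$.

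For part $(\ref{lem:n1pullsinjective})$, suppose $[D] \in N^1(X/Z)$ lies in the kernel of $(f/g)^*$. Lifting to $\Pic(X)$, this says that $f^*D$ is $\rho$-numerically trivial. Under the hypothesis in Lemma \ref{lem:nefbasechangenotcartesian}$(\ref{lem:nefbasechangenotcartesianconverse})$, we conclude directly that $D$ is $\pi$-numerically trivial, i.e.\ $[D] = 0$ in $N^1(X/Z)$. Under the alternative hypothesis (with $Z$ quasi-compact and decent, or a locally Noetherian scheme), we apply Lemma \ref{lem:nefbasechangenotcartesian}$(\ref{lem:nefbasechangeconversenotcartesianclosed})$ in place of $(\ref{lem:nefbasechangenotcartesianconverse})$, using the fact that testing numerical triviality against $\pi$-contracted curves is enough in this situation (Lemma \ref{lem:NefAgainstNonClosedContracted}).

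There is essentially no obstacle here: the proposition is a formal consequence of the lemma on base change of nefness, and the main bookkeeping is just to apply the appropriate clause of that lemma in each hypothesis of $(\ref{lem:n1pullsinjective})$. If anything requires care, it is only the verification that the decency/Noetherian hypothesis permits one to restrict attention to $\pi$-contracted curves, which was already established in Lemma \ref{lem:NefAgainstNonClosedContracted} and is invoked implicitly in the statement of Lemma \ref{lem:nefbasechangenotcartesian}$(\ref{lem:nefbasechangeconversenotcartesianclosed})$.
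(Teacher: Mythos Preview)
Your proposal is correct and follows essentially the same approach as the paper: both parts are deduced formally from Lemma~\ref{lem:nefbasechangenotcartesian}, with part~$(\ref{lem:n1pullsbackmap})$ using clause~$(\ref{lem:nefbasechangenotcartesianpullback})$ to show numerically trivial classes pull back to numerically trivial classes, and part~$(\ref{lem:n1pullsinjective})$ using clause~$(\ref{lem:nefbasechangenotcartesianconverse})$ or~$(\ref{lem:nefbasechangeconversenotcartesianclosed})$ to push numerical triviality back down. The only cosmetic difference is that the paper invokes the ``resp.\ numerically trivial'' version of the lemma directly rather than applying the nef version to $D$ and $-D$ separately.
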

\begin{proof}
  We first show $(\ref{lem:n1pullsbackmap})$.
  By \cite[\href{https://stacks.math.columbia.edu/tag/0B8P}{Tag
  0B8P}]{stacks-project}, pulling back invertible sheaves
  induces a map $\Pic(X) \to \Pic(X')$.
  It therefore suffices to show that the composition
  \[
    \Pic(X) \longrightarrow \Pic(X') \longrightarrow N^1(X'/Z')
  \]
  factors through $N^1(X/Z)$.
  This holds since $\pi$-numerically trivial elements pull back to
  $\rho$-numerically trivial elements by Lemma
  \ref{lem:nefbasechangenotcartesian}$(\ref{lem:nefbasechangenotcartesianpullback})$.
  \par For $(\ref{lem:n1pullsinjective})$, it suffices to note that if the
  pullback of $\sL \in \Pic(X)$ to $X'$ is $\rho$-numerically trivial, then
  $\sL$ is $\pi$-numerically trivial by Lemma
  \ref{lem:nefbasechangenotcartesian}$(\ref{lem:nefbasechangenotcartesianconverse})$ or Lemma
  \ref{lem:nefbasechangenotcartesian}$(\ref{lem:nefbasechangeconversenotcartesianclosed})$.
\end{proof}
We can now show the theorem of the base.
We note that Noetherian algebraic spaces are quasi-compact, quasi-separated, and
locally Noetherian (see the definition in
\cite[\href{https://stacks.math.columbia.edu/tag/03EA}{Tag
03EA}]{stacks-project}), and hence are automatically decent (see
\cite[\href{https://stacks.math.columbia.edu/tag/03I7}{Tag
03I7}]{stacks-project}).
\begin{theorem}[Theorem of the base; cf.\ {\citeleft\citen{Kle66}\citemid
  Chapter IV, \S4, Proposition 3\citepunct\citen{Kee03}\citemid Theorem
  3.6\citepunct \citen{Kee18}\citemid Theorem E2.2\citeright}]
  \label{thm:RelNSFinite}
  Let $\pi\colon X\to Z$ be a proper morphism of Noetherian algebraic
  spaces over a scheme $S$, and let
  $\kk\in\{\ZZ,\QQ,\RR\}$.
  Then, the $\kk$-modules $N^1(X/Z)_{\kk}$ and $N_1(X/Z)_{\kk}$ are finitely generated.
  Consequently, the intersection pairing 
  \[
  N^1(X/Z)_{\kk}\times N_1(X/Z)_{\kk}\longrightarrow \kk
  \]
  is a perfect pairing.
\end{theorem}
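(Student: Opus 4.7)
\par The plan is to reduce to the case of schemes via Chow's lemma. By the weak version of Chow's lemma for algebraic spaces \cite[\href{https://stacks.math.columbia.edu/tag/089J}{Tag 089J}]{stacks-project}, applied to $\pi\colon X \to Z$ (note $Z$ Noetherian is quasi-compact and quasi-separated), there exists a proper surjective morphism $\mu\colon Y \to X$ from a scheme $Y$. Composition with $\pi$ then exhibits $Y \to Z$ as a proper morphism of Noetherian schemes. For such morphisms, the classical theorem of the base, due to Kleiman \cite{Kle66} and, in the relative form needed here, to Keel \cite{Kee03} and Keeler \cite{Kee18}, asserts that $N^1(Y/Z)$ is a finitely generated Abelian group.

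\par Now apply Proposition \ref{lem:n1pullsback}$(\ref{lem:n1pullsinjective})$ with $g = \id_Z$ and $f = \mu$: by Remark \ref{rem:nefbasechange}, the curve-lifting hypothesis is automatic when $f$ is proper and surjective, so the pullback
\[
  \mu^*\colon N^1(X/Z) \hooklongrightarrow N^1(Y/Z)
\]
is injective. As a subgroup of a finitely generated Abelian group, $N^1(X/Z)$ is finitely generated, and extending scalars by $\kk$ gives the statement for $\kk \in \{\QQ,\RR\}$.

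\par For $N_1(X/Z)$, the non-degeneracy of the intersection pairing is built into Definition \ref{def:intersectionWithContractedCurves}: a $1$-cycle is numerically trivial precisely when it pairs to zero against every class in $\Pic(X)$, hence against every class in $N^1(X/Z)$. Therefore the natural map
\[
  N_1(X/Z) \hooklongrightarrow \Hom_{\ZZ}\bigl(N^1(X/Z),\ZZ\bigr)
\]
is injective, so $N_1(X/Z)$ embeds into a finitely generated Abelian group and is itself finitely generated. Extending scalars yields the remaining cases. The perfect pairing property for $\kk \in \{\QQ,\RR\}$ follows formally: both induced maps $N^1(X/Z)_\kk \to \Hom_\kk(N_1(X/Z)_\kk,\kk)$ and $N_1(X/Z)_\kk \to \Hom_\kk(N^1(X/Z)_\kk,\kk)$ are injective by construction, and combined with the finite-dimensionality just proved, this forces both vector spaces to have the same dimension and both maps to be isomorphisms.

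\par The main obstacle is verifying injectivity of $\mu^*$, which depends on the curve-lifting hypothesis of Proposition \ref{lem:n1pullsback}$(\ref{lem:n1pullsinjective})$. By Remark \ref{rem:nefbasechange} this reduces to showing that every $\pi$-contracted curve in $X$ is the image of some integral one-dimensional closed subspace of $Y$, which follows from $\mu$ being proper and surjective via \cite[Lemma 3.2]{CLM} (the same mechanism used in the proof of Lemma \ref{lem:nefpullback}$(\ref{lem:nefpullbacksurjective})$). A secondary check is that one may invoke the scheme-theoretic theorem of the base in the form of Keel and Keeler over a general Noetherian base rather than only over a field, which is exactly the content of their cited extensions of Kleiman's original result.
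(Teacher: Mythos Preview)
Your approach via Chow's lemma and the injectivity in Proposition~\ref{lem:n1pullsback}$(\ref{lem:n1pullsinjective})$ is sound in spirit, but there is a gap: after applying Chow's lemma you obtain a scheme $Y$ mapping properly and surjectively to $X$, yet the base $Z$ remains a Noetherian \emph{algebraic space}, not a scheme. Thus $Y \to Z$ is not ``a proper morphism of Noetherian schemes'' as you claim, and the cited results of Kleiman and Keeler---which are formulated for morphisms of schemes---do not directly yield that $N^1(Y/Z)$ is finitely generated. (Incidentally, both \cite{Kee03} and \cite{Kee18} are due to Keeler, not Keel.)

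The paper fixes the base rather than the source: it takes a surjective \'etale cover $g\colon Z' \to Z$ by a quasi-compact scheme $Z'$, forms the Cartesian square $X' = X \times_Z Z' \to Z'$, and uses Proposition~\ref{lem:n1pullsback}$(\ref{lem:n1pullsinjective})$ (via Remark~\ref{rem:nefbasechange}) to embed $N^1(X/Z) \hookrightarrow N^1(X'/Z')$. Since $Z'$ is now a scheme, the cited references apply. Your Chow's-lemma route can be repaired by combining both reductions: take $Z' \to Z$ \'etale as in the paper, so that $Y \times_Z Z' \to Z'$ is genuinely a proper morphism of Noetherian schemes, and then two applications of Proposition~\ref{lem:n1pullsback}$(\ref{lem:n1pullsinjective})$ give $N^1(X/Z) \hookrightarrow N^1(Y/Z) \hookrightarrow N^1(Y \times_Z Z'/Z')$. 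Your treatment of $N_1$ and the perfect pairing is essentially the same as the paper's.
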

\begin{proof}
Since $N_1(X/Z)_{\kk}$ is a submodule of $\Hom_{\kk}(N^1(X/Z)_{\kk},\kk)$, it suffices to show $N^1(X/Z)_{\kk}$ finitely generated.
The cases $\kk = \QQ$ and $\kk = \RR$ follow from the case $\kk = \ZZ$ by
extending scalars.
The case when $Z$ is a scheme is proved in
\citeleft\citen{Kle66}\citemid Chapter IV,
\S4, Proposition 3\citepunct\citen{Kee03}\citemid Theorem
3.6\citepunct \citen{Kee18}\citemid Theorem E2.2\citeright.
It therefore suffices to consider the case when $Z$ is an algebraic space.
\par Let $Z' \to Z$ be an \'etale cover by a quasi-compact scheme $Z'$.
Note that $Z'$ is a Noetherian scheme.
We then consider the Cartesian diagram
\[
  \begin{tikzcd}
    X' \rar{f} \dar[swap]{\rho} & X \dar{\pi}\\
    Z' \rar{g} & Z\mathrlap{.}
  \end{tikzcd}
\]
By Proposition \ref{lem:n1pullsback} (see Remark \ref{rem:nefbasechange}), we
have an injection $N^1(X/Z) \hookrightarrow N^1(X'/Z')$.
Since $N^1(X'/Z')$ is finitely generated by the scheme case, we see that $N^1(X/Z)$
is finitely generated.
\end{proof}

\begin{remark}\label{rem:NefAgainstNonClosedContracted}
  With notation as in Definition \ref{def:intersectionWithContractedCurves},
  if $z\in \lvert Z \rvert$ is not closed, then a closed subspace $C$ of
  $\pi^{-1}(z)$ is not a closed subspace of $X$, and thus is not covered by
  Definition \ref{def:intersectionWithContractedCurves}.
However, if $\dim(C)=1$, the intersection number $(\sL\cdot C)$
is still well-defined and extends linearly to $\Div_{\kk}(X)$ for $\kk \in
\{\QQ,\RR\}$ as before (cf.\ the proof of Lemma
\ref{lem:NefAgainstNonClosedContracted}).
Consequently, if $D\in\Pic_{\kk}(X)$ for $\kk \in
\{\ZZ,\QQ,\RR\}$ and $C$ is a one-dimensional integral closed
subspace of $\pi^{-1}(z)$ for a point $z \in \lvert Z \rvert$, then $(D\cdot
C)=0$ whenever $[D]=0\in N^1(X/Z)_{\kk}$.
These subspaces $C \subseteq \pi^{-1}(z)$ define classes
\[
  [C]\in N_1(X/Z)_{\kk} = \Hom_{\kk}\bigl(N^1(X/Z)_{\kk},\kk\bigr),
\]
for $\kk\in\{\ZZ,\QQ,\RR\}$.
\end{remark}
\subsection{Cones and Kleiman's criterion for ampleness}
The theorem of the base allows us to define the relative ample and relative nef
cones for proper morphisms of Noetherian algebraic spaces.
\begin{definition}[see {\citeleft\citen{Kle66}\citemid p.\ 335\citepunct
  \citen{KMM87}\citemid p.\ 291\citepunct
  \citen{VPthesis}\citemid Definitions 1.3.21 and 1.3.24\citeright}]
  \label{def:AmpleConeAndNefCone}
Let $\pi\colon X\to Z$ be a proper morphism of Noetherian algebraic spaces over
a scheme $S$.
The \textsl{relative nef cone} is
\begin{align*}
  \Nef(X/Z)&\coloneqq\Set[\big]{[D]\in N^1(X/Z)_{\RR} \given D\in \Pic_\RR(X)\
  \text{is $\pi$-nef}},
  \intertext{and the \textsl{relative ample cone} is}
  \Amp(X/Z)&\coloneqq\Set[\big]{[D]\in N^1(X/Z)_{\RR} \given D\in \Pic_\RR(X)\
  \text{is $\pi$-ample}}.
\end{align*}

In the space $N_1(X/Z)_{\RR}$, we define the cone
$\NE(X/Z)$
to be the set of $\RR_{\geq 0}$-combinations of $\pi$-contracted curves, and let
$\NEbar(X/Z)$ be its closure.
By definition, it is clear that an $\RR$-invertible sheaf $D$ on $X$ is $\pi$-nef if and only if for all
$\gamma\in \NEbar(X/Z)$, we have $(D\cdot\gamma)\geq 0$.
For an $\RR$-invertible sheaf $D$ on $X$, we also define
\[
  \NEbar_{D\ge0}(X/Z) \coloneqq \Set[\big]{\gamma \in \NEbar(X/Z) \given (D \cdot
  \gamma) \ge 0}.
\]
Since $\NEbar(X/Z)$ is a closed convex subset of $N_1(X/Z)$, it is an
intersection of half-spaces.
Thus, we have
\begin{align}\label{NEBARisNEFgeq0}
  \NEbar(X/Z)=\Set[\bigg]{\gamma\in N_1(X/Z)_{\RR}\given
    \begin{tabular}{@{}c@{}}
      \((\beta\cdot\gamma)\ge0\) for all\\
      \(\beta\in \Nef(X/Z)\)
    \end{tabular}}.
\end{align}
\end{definition}

We now want to prove the relative version of Kleiman's criterion for ampleness
for proper morphisms of algebraic spaces.
We start with the following definition.
\begin{definition}[cf.\ {\citeleft\citen{Kle66}\citemid Chapter IV, \S4,
  Definition 1\citepunct \citen{Kee03}\citemid Definition
  3.8\citepunct \citen{FS11}\citemid Lemma 4.12\citeright}]
  \label{def:relquasidiv}
  Let $\pi\colon X \to Z$ be a proper morphism of Noetherian algebraic spaces
  over a scheme $S$.
  We say that $X$ is \textsl{relatively quasi-divisorial for $\pi$} if, for
  every $\pi$-contracted integral subspace $V$ of positive dimension, there
  exist an invertible sheaf $\sH$ on $X$ and a nonzero effective Cartier
  divisor $H$ on $V$ such that $\sH_{\vert V} \cong \cO_V(H)$.
\end{definition}
\begin{remark}
  With notation as in Definition \ref{def:relquasidiv}, $X$ is relatively
  quasi-divisorial for $\pi$ in the following cases:
  \begin{enumerate}[label=$(\roman*)$]
    \item When $\pi$ is projective (let $\sH$ be $\pi$-very ample in the sense
      of \cite[\S2.1.1]{CT20}; see \cite[p.\ 257]{Kee03}).
    \item When $X$ is a regular scheme, or more generally a $\QQ$-factorial
      scheme \cite[Chapter VI, Proof of Theorem 2.19]{Kol96}.
  \end{enumerate}
\end{remark}
We can now show that the ample cone is the interior of the nef cone.
\begin{theorem}[cf.\
  {\citeleft\citen{Kle66}\citemid Chapter IV, \S4, Theorem 2\citepunct
  \citen{Kee03}\citemid Theorem 3.9\citepunct \citen{Kee18}\citemid
  Theorem E2.2\citeright}]\label{thm:ampisinterior}
  Let $\pi\colon X \to Z$ be a proper morphism of Noetherian algebraic spaces
  over a scheme $S$.
  Then, we have
  \begin{align}
    \Amp(X/Z) &\subseteq \Int\bigl(\Nef(X/Z)\bigr).\label{eq:ampininterior}
    \intertext{If $X$ is relatively quasi-divisorial for $\pi$ and
    \(\Int(\Nef(X/Z))\) is nonempty, then we have}
    \Amp(X/Z) &= \Int\bigl(\Nef(X/Z)\bigr).\label{eq:ampequalsinterior}
  \end{align}
\end{theorem}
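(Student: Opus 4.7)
The plan is to adapt Kleiman's classical proof of his relative ampleness criterion, as in \citeleft\citen{Kle66}\citepunct \citen{Kee03}\citeright, to the algebraic space setting, reducing to schemes via Chow's lemma and the intersection-theoretic tools developed in this section.

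For the first inclusion \eqref{eq:ampininterior}, ampleness implies nefness directly from the definitions. For the interior containment, since $N^1(X/Z)_\RR$ is finite-dimensional by Theorem \ref{thm:RelNSFinite}, it suffices to show that for any $\pi$-ample class $D$ and any $E \in \Pic(X)$, the class $D + tE$ is $\pi$-ample for all sufficiently small $t \in \RR$. Working locally on $Z$ (which we may take affine Noetherian), I would replace $D$ by an integer multiple so that it is represented by a $\pi$-very ample invertible sheaf $\sH$. A standard argument then yields that $\sH^{\otimes m} \otimes \sL$ is $\pi$-very ample for all $m \gg 0$ and any $\sL \in \Pic(X)$, which after rescaling gives the desired openness. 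Applying this to finitely many invertible sheaves whose classes span $N^1(X/Z)_\RR$ produces an open neighborhood of $D$ contained in $\Amp(X/Z)$.

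For the reverse inclusion \eqref{eq:ampequalsinterior} under the quasi-divisorial hypothesis, I would follow Kleiman's strategy. Given $D \in \Int(\Nef(X/Z))$ and a $\pi$-ample class $A$, the class $D - \varepsilon A$ is $\pi$-nef for $\varepsilon > 0$ sufficiently small, so by \eqref{NEBARisNEFgeq0} we obtain $(D \cdot \gamma) \ge \varepsilon\,(A \cdot \gamma)$ for every $\gamma \in \NEbar(X/Z)$. I would then reduce to verifying a relative Nakai--Moishezon criterion: namely, that $(D^{\dim V} \cdot V) > 0$ for every $\pi$-contracted integral closed subspace $V \subseteq X$. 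The proof is by induction on $d = \dim V$. The base case $d = 1$ is immediate from the inequality above. For $d \geq 2$, the quasi-divisorial hypothesis produces an effective Cartier divisor $H$ on $V$ coming from an invertible sheaf $\sH$ on $X$; writing $H = \sum m_i W_i$ as a cycle of $(d-1)$-dimensional integral closed subspaces of $V$, a polynomial-in-$t$ argument applied to $((D + tA)^d \cdot V)$, combined with the inductive hypothesis applied to the $W_i$, yields $(D^d \cdot V) > 0$.

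The main obstacle is establishing the relative Nakai--Moishezon criterion itself for proper morphisms of Noetherian algebraic spaces, which does not appear explicitly in the scheme-theoretic references. I would handle this by using the weak Chow's lemma \cite[\href{https://stacks.math.columbia.edu/tag/089J}{Tag 089J}]{stacks-project} to produce a proper surjective $\mu\colon Y \to X$ from a scheme $Y$ that is projective over each affine piece of $Z$, applying the classical criterion on $Y$ (as in \cite{Kee03,Kee18}), and transferring positivity back to $X$ via the projection formula \cite[\href{https://stacks.math.columbia.edu/tag/0EDJ}{Tag 0EDJ}]{stacks-project} together with Lemmas \ref{lem:nefpullback} and \ref{lem:nefbasechange}. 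Care is needed here since ampleness of $\mu^*D$ does not in general imply ampleness of $D$; the quasi-divisorial hypothesis is used precisely to enable the induction to be carried out on subspaces of $X$ itself rather than only on $Y$.
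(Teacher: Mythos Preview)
Your overall strategy matches the paper's: both argue openness of the ample cone via the ``$\sL^{\otimes m}\otimes\sM$ is ample for $m\gg 0$'' mechanism, and both prove the reverse inclusion by verifying a Nakai--Moishezon-type positivity $(\sL^d\cdot V)>0$ by induction on $d=\dim V$, using the quasi-divisorial hypothesis to peel off an effective divisor at each step.

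The differences are in how the reduction to known cases is handled. First, since $Z$ is an algebraic space, ``working locally on $Z$ (affine Noetherian)'' is not quite right; the paper instead passes to a surjective \'etale cover $Z'\to Z$ by a quasi-compact scheme and uses \cite[\href{https://stacks.math.columbia.edu/tag/0D36}{Tag 0D36}]{stacks-project} to descend ampleness back to $X$. Second, and more importantly, the paper does not attempt to establish relative Nakai--Moishezon for algebraic spaces from scratch via Chow's lemma. Instead it invokes the already-known Nakai--Moishezon criterion for proper algebraic spaces over a field \citeleft\citen{PG85}\citemid Theorem 1.4\citepunct \citen{Kol90}\citemid Theorem 3.11\citeright, combined with \cite[\href{https://stacks.math.columbia.edu/tag/0D3A}{Tag 0D3A}]{stacks-project} (ampleness can be checked fibrewise). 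This sidesteps exactly the difficulty you flag: one never needs to transfer ampleness upward along a proper surjection $\mu\colon Y\to X$. The induction is then carried out on $\pi'$-contracted subspaces $V\subseteq X'$, using the quasi-divisorial hypothesis on $X$ applied to the image $f(V)$ and pulling the resulting divisor back to $V$; a finite surjective cover of $V$ by a scheme is used only to justify one nonnegativity of intersection numbers, not to deduce ampleness.

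Your Chow's lemma route could plausibly be made to work, but as written it is incomplete at precisely the point you identify, and the paper's approach is both shorter and avoids that obstacle entirely.
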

\begin{proof}
  We show $\Amp(X/Z) \subseteq \Nef(X/Z)$.
  Let $D \in \Amp(X/Z)$, and write $D = \sum_i a_iH_i$, where $H_i$ are
  $\pi$-ample invertible sheaves.
  We have $D \in \Nef(X/Z)$ since the restriction of each $H_i$ to the fibers of
  $\pi$ are ample, and hence nef by \cite[Proposition B.14]{Kle05}.
  \par For the statements involving interiors, as in
  \cite[Chapter IV, \S 1, Remarks 4 and 5]{Kle66}, the cone generated by
  $\Int(\Nef(X/Z)) \cap N^1(X/Z)$ is equal to $\Int(\Nef(X/Z))$, and hence it
  suffices to prove both statements for invertible sheaves $\sL$.
  Note that this reduction uses the fact that $N^1(X/Z)$ is finitely generated
  (Theorem \ref{thm:RelNSFinite}).
  Let $g\colon Z' \to Z$ be a surjective \'etale cover by a quasi-compact
  scheme $Z'$, and consider the
  associated Cartesian diagram
  \[
    \begin{tikzcd}
      X' \rar{f}\dar[swap]{\pi'} & X\dar{\pi}\\
      Z' \rar{g} & Z\mathrlap{.}
    \end{tikzcd}
  \]
  \par To show \eqref{eq:ampininterior}, let $\sL \in \Amp(X/Z)$.
  It suffices to show that for every $\sM \in \Pic(X)$, we have
  \[
    \sL^{\otimes m} \otimes_{\cO_X} \sM \in \Amp(X/Z)
  \]
  for $m \gg 0$.
  Since $\sL$ is $\pi$-ample, we know $X \to Z$ is representable, and hence
  $X'$ is a scheme.
  Since $f^*\sL$ is $\pi'$-ample, we know that $f^*\sL^{\otimes m} \otimes_{\cO_X}
  f^*\sM$ is $\pi'$-ample for all $m \gg 0$ by \cite[Corollaire
  4.6.12]{EGAII}.
  We therefore see that $\sL^{\otimes m} \otimes_{\cO_X} \sM$ is $\pi$-ample by
  \cite[\href{https://stacks.math.columbia.edu/tag/0D36}{Tag
  0D36}]{stacks-project}.\smallskip
  \par It remains to show \eqref{eq:ampequalsinterior} when $X$ is
  quasi-divisorial for $\pi$.
  Let $\sL \in \Int(\Nef(X/Z))$.
  It suffices to show that $f^*\sL$ is $\pi'$-ample and that $X'$ is a scheme by
  \cite[\href{https://stacks.math.columbia.edu/tag/0D36}{Tag
  0D36}]{stacks-project}.
  By \cite[\href{https://stacks.math.columbia.edu/tag/0D3A}{Tag
  0D3A}]{stacks-project} and the Nakai--Moishezon criterion for proper
  algebraic spaces over fields \citeleft\citen{PG85}\citemid Theorem
  1.4\citepunct \citen{Kol90}\citemid Theorem 3.11\citeright, it suffices to
  show that for every $\pi'$-contracted integral closed subspace $V \subseteq
  X'$ of dimension $d > 0$, we have $((f^*\sL)^d \cdot V) > 0$.
  \par We proceed by induction on $d$.
  Since $X$ is relatively quasi-divisorial for $\pi$, there exists $\sH \in
  \Pic(X)$ such that $\sH_{\vert f(V)} \cong \cO_{f(V)}(H)$ for some nonzero
  effective Cartier divisor $H$ on $X$, and hence $f^*\sH_{\vert V} \cong
  \cO_V(f^*H)$, where the pullback of $H$ is defined by
  \cite[\href{https://stacks.math.columbia.edu/tag/083Z}{Tag
  083Z}(1)]{stacks-project}.
  Since $\sL \in \Int(\Nef(X/Z))$, there exists $m > 0$ such that
  $\sL^{\otimes m} \otimes_{\cO_X} \sH^{-1}$ is $\pi$-nef, and hence
  \[
    f^*\sL^{\otimes m} \otimes_{\cO_{X'}} f^*\sH^{-1}
  \]
  is $\pi'$-nef by Lemma
  \ref{lem:nefbasechange}$(\ref{lem:nefbasechangepullback})$.
  We claim we have the following chain of equalities and inequalities:
  \begin{align*}
    (f^*\sL^d \cdot V) &= \frac{1}{m}\bigl( (f^*\sL)^{d-1} \cdot f^*\sL^{\otimes
    m} \cdot V\bigr)\\
    &= \frac{1}{m}\Bigl(\bigl( (f^*\sL)^{d-1} \cdot (f^*\sL^{\otimes
    m} \otimes_{\cO_{X'}} f^*\sH^{-1}) \cdot V\bigr) + \bigl( (f^*\sL)^{d-1} \cdot \sH
    \cdot V\bigr)\Bigr)\\
    &\ge \frac{1}{m}\bigl( (f^*\sL)^{d-1} \cdot f^*\sH \cdot
    V\bigr)\\
    &>0.
  \end{align*}
  The first two equalities follow from linearity of the intersection product
  \cite[\href{https://stacks.math.columbia.edu/tag/0EDH}{Tag
  0EDH}]{stacks-project}.
  To show the inequality in the third line, let $\mu\colon V' \to V$ be a finite
  surjective morphism from a scheme $V'$,
  which exists by \cite[\href{https://stacks.math.columbia.edu/tag/09YC}{Tag
  09YC}]{stacks-project}.
  Then, $(f_{\vert V} \circ \mu)^*\sL$ and
  \begin{align*}
    \bigl(f_{\vert V} \circ
    \mu\bigr)^*&\sL^{\otimes m} \otimes_{\cO_{V'}} \bigl(f_{\vert V}
    \circ \mu\bigr)^*\sH^{-1}
    \intertext{are nef on $V'$, and hence}
    \Bigl( (f^*\sL)^{d-1} \cdot{} &\bigl(f^*\sL^{\otimes
    m} \otimes_{\cO_{X'}} \sH^{-1}\bigr) \cdot V\Bigr) \ge 0
  \end{align*}
  by the projection formula
  \cite[\href{https://stacks.math.columbia.edu/tag/0EDJ}{Tag
  0EDJ}]{stacks-project} and \cite[Lemma 2.12]{Kee03}.
  For the last inequality, if $d = 1$, we see that 
  $V$ is a scheme by
  \cite[\href{https://stacks.math.columbia.edu/tag/0ADD}{Tag
  0ADD}]{stacks-project}, and hence
  \begin{align*}
    (&f^*\sH \cdot V) = \deg(f^*\sH) > 0
  \intertext{by \cite[\href{https://stacks.math.columbia.edu/tag/0B40}{Tag
  0B40}(2)]{stacks-project}.
  If $d \ge 2$, then we have}
    \bigl((f^*\sL)^{d-1} \cdot{} &f^*\sH
    \cdot f(V)\bigr) = \bigl((f^*\sL)^{d-1} \cdot f^*H\bigr) > 0
  \end{align*}
  by \cite[\href{https://stacks.math.columbia.edu/tag/0EDK}{Tag
  0EDK}]{stacks-project} and the inductive hypothesis.
\end{proof}

\begin{remark}\label{rem:ampspansn1}
    As seen in the proof of \eqref{eq:ampininterior},
    the ample cone is always open in $N^1(X/Z)_\RR$.
    In particular, if \(\Amp(X/Z)\) is nonempty, then
    the ample cone $\Amp(X/Z)$ $\RR$-linearly spans $N^1(X/Z)_\RR$.
\end{remark}

Next, we show that 
the relative ampleness of an $\RR$-Cartier divisor $D$ only depends on its class
$[D]$.
This is a consequence of the following relative version of Kleiman's criterion for ampleness stated
in terms of the cone $\NEbar(X/Z)$.
This result also implies that
$[D]\in\Amp(X/Z)$ if and only if $D$ is $\pi$-ample.
See \cite[Lemma 4.12]{FS11} for the case when $Z = \Spec(k)$, where $k$ is a
field.
See also \cite[Lemma 21]{Kol21def} and \cite[Corollary 1.4]{VP} for other
versions of Kleiman's criterion for algebraic spaces.
\begin{proposition}[see {\citeleft\citen{Kle66}\citemid Chapter IV, \S4,
  Proposition 4\citepunct \citen{FS11}\citemid Lemma 4.12\citeright}]
  \label{lem:AmpleIsPositiveOnNE} 
Let $\pi\colon X\to Z$ be a proper morphism of Noetherian algebraic spaces over
a scheme $S$.
Suppose that $X$ is relatively quasi-divisorial for $\pi$.
Then, $D\in\Pic_\RR(X)$ is $\pi$-ample if and only if for all nonzero
$\gamma\in \NEbar(X/Z)$, we have $(D\cdot\gamma)>0$.
\end{proposition}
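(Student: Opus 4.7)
The plan is to deduce the proposition from Theorem \ref{thm:ampisinterior} together with the theorem of the base (Theorem \ref{thm:RelNSFinite}) via finite-dimensional convex duality. By Theorem \ref{thm:ampisinterior} and the hypothesis that $X$ is relatively quasi-divisorial for $\pi$, we have $\Amp(X/Z) = \Int(\Nef(X/Z))$, so it suffices to show that $[D] \in \Int(\Nef(X/Z))$ if and only if $(D \cdot \gamma) > 0$ for every nonzero $\gamma \in \NEbar(X/Z)$. The key supporting facts are that $N^1(X/Z)_\RR$ and $N_1(X/Z)_\RR$ are finite-dimensional with perfect intersection pairing (Theorem \ref{thm:RelNSFinite}) and that $[D] \in \Nef(X/Z)$ is characterized by $(D \cdot \gamma) \ge 0$ for all $\gamma \in \NEbar(X/Z)$ (Definition \ref{def:AmpleConeAndNefCone}).

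For the forward direction, suppose $[D] \in \Int(\Nef(X/Z))$. For any $[E] \in N^1(X/Z)_\RR$ we have $[D] \pm \epsilon [E] \in \Nef(X/Z)$ for all sufficiently small $\epsilon > 0$. Testing against any $\gamma \in \NEbar(X/Z)$ gives $\bigl((D \pm \epsilon E) \cdot \gamma\bigr) \ge 0$, hence $\epsilon\,\lvert (E \cdot \gamma) \rvert \le (D \cdot \gamma)$. Thus if $(D \cdot \gamma) = 0$, then $(E \cdot \gamma) = 0$ for every $[E] \in N^1(X/Z)_\RR$, and perfectness of the pairing forces $\gamma = 0$.

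For the converse, assume $(D \cdot \gamma) > 0$ for every nonzero $\gamma \in \NEbar(X/Z)$; in particular $[D] \in \Nef(X/Z)$. Fix any norm $\lVert \cdot \rVert$ on the finite-dimensional space $N_1(X/Z)_\RR$. The slice $K \coloneqq \NEbar(X/Z) \cap \{\gamma : \lVert \gamma \rVert = 1\}$ is compact, so by continuity there exists $\delta > 0$ with $(D \cdot \gamma) \ge \delta$ on $K$. Given $[E] \in N^1(X/Z)_\RR$ and $M \coloneqq \sup_{\lVert \gamma \rVert = 1} \lvert (E \cdot \gamma) \rvert$ (which we may assume positive, for otherwise $[E] = 0$ by the perfect pairing and there is nothing to prove), any $0 < \epsilon < \delta/M$ satisfies $\bigl((D - \epsilon E) \cdot \gamma\bigr) \ge \delta - \epsilon M > 0$ on $K$, and by homogeneity in $\gamma$ we obtain $\bigl((D - \epsilon E) \cdot \gamma\bigr) \ge 0$ on all of $\NEbar(X/Z)$. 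Thus $[D] - \epsilon [E] \in \Nef(X/Z)$, and applying the same argument to $\pm [E]$ exhibits an open neighborhood of $[D]$ inside $\Nef(X/Z)$.

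The argument is essentially soft once the main ingredients are in place. The substantive input is Theorem \ref{thm:ampisinterior}, which is where the relative quasi-divisoriality of $X$ is used; everything else is formal convex geometry in the finite-dimensional spaces guaranteed by Theorem \ref{thm:RelNSFinite}. One minor subtlety worth flagging is that $\NEbar(X/Z)$ need not be pointed (for instance when $\pi$ contracts no curves), but the argument above does not require pointedness and relies only on the compactness of the unit sphere slice $K$.
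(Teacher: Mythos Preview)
Your proof is correct and takes essentially the same approach as the paper: both reduce to the identity $\Amp(X/Z)=\Int(\Nef(X/Z))$ from Theorem~\ref{thm:ampisinterior} and then argue by finite-dimensional convex duality via a compactness argument on the unit-sphere slice of $\NEbar(X/Z)$. The paper phrases the forward direction as a contradiction (choosing $E$ with $(E\cdot\gamma)<0$ and using that $mD+E$ is ample for $m\gg0$) and the converse as ``$mD+D'$ is nef for $m\gg0$'', but these are the same compactness/duality argument you give, just reparametrized. One small remark: your phrase ``applying the same argument to $\pm[E]$ exhibits an open neighborhood'' implicitly uses the convexity of $\Nef(X/Z)$ (apply your estimate to a basis and take the convex hull of the $2r$ points $[D]\pm\epsilon[E_i]$); also, your aside that $\NEbar(X/Z)$ can fail to be pointed ``when $\pi$ contracts no curves'' is not a good example, since then $N_1(X/Z)_\RR=0$ and $\NEbar(X/Z)=\{0\}$ is pointed.
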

\begin{proof}
  For $\Rightarrow$, we proceed by contradiction as in \cite[Chapter II,
  Proposition 4.8]{Kol96}.
  Suppose $(D\cdot\gamma)\le0$.
  Let $E \in \Pic(X)$ be such that $(E \cdot \gamma) < 0$.
  We have that $mD+E$ is $\pi$-ample for
  $m \gg 0$ by Theorem \ref{thm:ampisinterior}, and hence
  \[
    0 \le \bigl((mD+E) \cdot \gamma\bigr) = m(D \cdot \gamma) + (E \cdot \gamma)
    < 0,
  \]
  a contradiction.
  \par For $\Leftarrow$, by Theorem \ref{thm:ampisinterior},
  we need to show that $D \in \Int(\Nef(X/Z))$.
  We need to show that for arbitrary $D' \in \Pic_\kk(X)$, we have $mD+D' \in
  \Nef(X/Z)$ for all $m \gg 0$.
  We adapt the proof in \cite[Theorem 1.4.29]{Laz04a}.
  By Lemma \ref{lem:NefAgainstNonClosedContracted}, it suffices to show that
  there exists an $m$ such that $((mD+D') \cdot C) \ge 0$ for all
  $\pi$-contracted curves $C$.
  Consider the linear functionals
  \begin{align*}
    \phi_D\colon N_1(X/Z)_\RR &\longrightarrow \RR\\
    \phi_{D'}\colon N_1(X/Z)_\RR &\longrightarrow \RR
  \end{align*}
  defined by
  intersecting with $D$ and $D'$, respectively.
  Fix a norm $\lVert \cdot \rVert$ on $N_1(X/Z)_\RR$, and let
  \[
    S = \Set[\big]{\gamma \in N_1(X/Z)_\RR \given \lVert \gamma \rVert = 1}.
  \]
  Since $\NEbar(X/Z) \cap S$ is compact, there exists $\varepsilon \in
  \RR_{>0}$ such
  that $\phi_D(\gamma) \ge \varepsilon$ for all $\gamma \in \NEbar(X/Z) \cap S$.
  Similarly, there exists $\varepsilon' \in \RR$ such that $\phi_{D'}(\gamma)
  \ge
  \varepsilon'$ for all $\gamma \in \NEbar(X/Z) \cap S$.
  Thus, $(D \cdot C) \ge \varepsilon \cdot \lVert C \rVert$ and $(D' \cdot C)
  \ge \varepsilon' \cdot \lVert C \rVert$ for every
  $\pi$-contracted curve $C \subseteq X$.
  We then have
  \[
    \bigl((mD+E) \cdot C\bigr) = m(D \cdot C) + (E \cdot C) \ge (m\,\varepsilon
    + \varepsilon') \cdot \lVert C \rVert,
  \]
  and hence it suffices to choose $m \gg 0$ such that
  $m\,\varepsilon + \varepsilon' > 0$.
\end{proof}

Next, we consider the behavior of cones under localization on the base.
\begin{lemma}\label{lem:NumericalClassOpenSubset}
Let $\pi\colon X\to Z$ be a proper morphism of Noetherian algebraic spaces over
a scheme $S$.
Let $V$ be an open subspace of $Z$. Restriction of invertible sheaves gives a $\kk$-linear map
\begin{align}
  \Pic_{\kk}(X)&\longrightarrow \Pic_{\kk}\bigl(\pi^{-1}(V)\bigr)\nonumber
  \intertext{for $\kk \in \{\ZZ,\QQ,\RR\}$,
  and the construction in Remark \ref{rem:NefAgainstNonClosedContracted} gives a
  $\kk$-linear map}
  Z_1(\pi^{-1}(V)/V)_{\kk}&\longrightarrow N_1(X/Z)_{\kk}.\nonumber
\intertext{These maps are compatible with intersection products and thus give
$\kk$-linear maps}
  \begin{split}
    N^1(X/Z)_{\kk}&\longrightarrow N^1(\pi^{-1}(V)/V)_{\kk}\\
    N_1\bigl(\pi^{-1}(V)/V\bigr)_{\kk}&\longrightarrow N_1(X/Z)_{\kk}
  \end{split}\label{eq:NumericalClassOpenSubset}
\end{align}
that preserve $\Nef$, $\Amp$, and $\NEbar$.
\end{lemma}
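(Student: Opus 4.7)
The plan is to first construct the underlying maps on $\Pic_\kk$ and $Z_1$, then verify the intersection-product compatibility that lets them descend to $N^1$ and $N_1$, and finally check cone preservation. The restriction map $\Pic(X) \to \Pic(\pi^{-1}(V))$ on invertible sheaves is the standard one, and extends $\kk$-linearly after tensoring. For the second map, any $\pi\rvert_{\pi^{-1}(V)}$-contracted curve $C$ is an integral one-dimensional closed subspace of $\pi^{-1}(v)$ for some $v \in \lvert V \rvert \subseteq \lvert Z \rvert$, so by Remark \ref{rem:NefAgainstNonClosedContracted} it yields a class $[C] \in N_1(X/Z)_\kk$; extending $\kk$-linearly gives the map $Z_1(\pi^{-1}(V)/V)_\kk \to N_1(X/Z)_\kk$.

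The key observation is the intersection-product compatibility: for $\sL \in \Pic_\kk(X)$ and a curve $C \subseteq \pi^{-1}(V)$ as above, we have $(\sL \cdot C)_X = (\sL\rvert_{\pi^{-1}(V)} \cdot C)_{\pi^{-1}(V)}$, since both sides are computed via $\chi(c_1(\sL\rvert_C))$ on the curve $C$ itself and thus only depend on $\sL\rvert_C$. This immediately implies that the restriction sends $\pi$-numerically trivial elements to $\pi\rvert_{\pi^{-1}(V)}$-numerically trivial elements (every $\pi\rvert_{\pi^{-1}(V)}$-contracted test curve is already a $\pi$-contracted test curve), yielding the map on $N^1$. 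Conversely, if $\beta \in Z_1(\pi^{-1}(V)/V)_\kk$ is $\pi\rvert_{\pi^{-1}(V)}$-numerically trivial, then for every $\sM \in \Pic_\kk(X)$ the same compatibility gives $(\sM \cdot \beta)_X = (\sM\rvert_{\pi^{-1}(V)} \cdot \beta)_{\pi^{-1}(V)} = 0$, so $\beta$ maps to zero in $N_1(X/Z)_\kk$ and the map on $Z_1$ descends to $N_1$.

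Cone preservation then follows almost at once. For $\Nef$, the intersection compatibility shows that a $\pi$-nef class restricts to a $\pi\rvert_{\pi^{-1}(V)}$-nef class. For $\Amp$, note that $\pi\rvert_{\pi^{-1}(V)}$ is the base change of $\pi$ along the open immersion $V \hookrightarrow Z$, and $\pi$-ampleness is preserved under base change (by the representable case of Definition \ref{def:relativeampleness}, and in general by \cite[\href{https://stacks.math.columbia.edu/tag/0D36}{Tag 0D36}]{stacks-project}); semiample and generated versions can be handled analogously by observing that adjunction morphisms and $\pi_*$ are compatible with restriction to $\pi^{-1}(V)$. For $\NEbar$, the induced linear map $N_1(\pi^{-1}(V)/V)_\kk \to N_1(X/Z)_\kk$ carries $\NE(\pi^{-1}(V)/V)$ into $\NE(X/Z)$ by construction, and because both spaces are finite-dimensional by Theorem \ref{thm:RelNSFinite}, this map is continuous and hence also sends $\NEbar(\pi^{-1}(V)/V)$ into $\NEbar(X/Z)$.

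The main substantive point is the intersection-product compatibility that underlies all of these descents; everything else is formal functoriality once that compatibility is in place, and no step is expected to be genuinely hard.
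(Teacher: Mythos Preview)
Your proposal is essentially the same approach as the paper's for the construction of the maps, the intersection-product compatibility, and the preservation of $\Nef$ and $\Amp$. However, there is a genuine gap in your argument for $\NEbar$.

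You claim that the map $N_1(\pi^{-1}(V)/V)_\kk \to N_1(X/Z)_\kk$ carries $\NE(\pi^{-1}(V)/V)$ into $\NE(X/Z)$ ``by construction.'' This is not justified. A $\pi\rvert_{\pi^{-1}(V)}$-contracted curve $C$ lies over a closed point $v$ of $V$, but $v$ need not be closed in $Z$ (e.g., take $Z = \Spec(\ZZ_{(p)})$ and $V$ the generic point). In that case $C$ is not a closed subspace of $X$, hence is not a $\pi$-contracted curve in the sense of Definition~\ref{def:intersectionWithContractedCurves}, and its image in $N_1(X/Z)_\kk$ is produced only as a linear functional via Remark~\ref{rem:NefAgainstNonClosedContracted}. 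That remark does not assert that the resulting class lies in $\NE(X/Z)$ or even in $\NEbar(X/Z)$; it merely places it in $N_1(X/Z)_\kk$. So your continuity step, which relies on first landing in $\NE(X/Z)$, does not get off the ground.

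The paper fixes this by appealing to the duality description \eqref{NEBARisNEFgeq0}: since you have already shown that $\Nef(X/Z)$ restricts into $\Nef(\pi^{-1}(V)/V)$, any $\gamma \in \NEbar(\pi^{-1}(V)/V)$ pairs nonnegatively with every restricted nef class, and by intersection compatibility this means its image in $N_1(X/Z)_\RR$ pairs nonnegatively with all of $\Nef(X/Z)$, hence lies in $\NEbar(X/Z)$. This duality argument sidesteps the closed-point issue entirely and is the cleanest route; you should replace your $\NEbar$ paragraph with it.
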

\begin{proof}
That these maps are compatible with intersection products is a consequence of
the construction of $[C]$ as in 
Lemma \ref{lem:NefAgainstNonClosedContracted}
and Remark \ref{rem:NefAgainstNonClosedContracted}.
Therefore they induce the $\kk$-linear maps in
\eqref{eq:NumericalClassOpenSubset}.
Under these maps, $\Nef(X/Z)$ is mapped into $\Nef(\pi^{-1}(V)/V)$ by Lemma \ref{lem:NefAgainstNonClosedContracted},
and $\Amp(X/Z)$ is mapped into $\Amp(\pi^{-1}(V)/V)$ by definition, since a $\pi$-ample line bundle $\sL$ restricts to a $\pi_{|\pi^{-1}(V)}$-ample line bundle.
By \eqref{NEBARisNEFgeq0}, $\NEbar(\pi^{-1}(V)/V)$ is mapped into $\NEbar(X/Z)$.
\end{proof}
Finally, we will use the following terminology to describe our cones.
\begin{definition}[see {\cite[Definition 3-2-3]{KMM87}}]\label{def:SuppPlaneAndDualRay}
We say a subspace $W\subseteq N^1(X/Z)_{\RR}$ is a \textsl{supporting subspace of $\Nef(X/Z)$} if $W$ is the span of $W\cap\Nef(X/Z)$ and $W\cap \Amp(X/Z)=\emptyset$. We say a supporting subspace $W$ of $\Nef(X/Z)$ a \textsl{supporting hyperplane of $\Nef(X/Z)$} if
\[
  \dim W=\dim\bigl(N^1(X/Z)_{\RR}\bigr)-1.
\]

Let $W$ be a supporting subspace of $\Nef(X/Z)$.
The \textsl{extremal face dual to $W$} is
\[
  R=\Set[\big]{\gamma\in \NEbar(X/Z) \given (W\cdot\gamma)=0}.
\]
When $W$ is a supporting hyperplane, we call $R$ the \textsl{extremal ray dual to $W$}.

Note that $R$ is an extremal face of $\NEbar(X/Z)$ in the sense that if $\beta_1,\beta_2\in \NEbar(X/Z)$ satisfy $\beta_1+\beta_2\in R$, then $\beta_1,\beta_2\in R$.
\end{definition}

\begin{remark}\label{rem:DualToOneVector}
There always exist a single $[D_0]\in W\cap\Nef(X/Z)$ such that
\[R=\Set[\big]{\gamma\in \NEbar(X/Z) \given (D_0\cdot\gamma)=0}.\]
Indeed, by assumptions (and by Theorem \ref{thm:RelNSFinite}) $W$ is spanned by
several
\[
  [D_1],[D_2],\ldots,[D_n]\in\Nef(X/Z).
\]
Since \(D\cdot\gamma \ge 0\) for all \(D \in \Nef(X/Z)\) and \(\gamma \in
\NEbar(X/Z)\), we see that
\[
  D_0=D_1+D_2+\cdots +D_n
\]
is a valid choice.
If $W$ has a basis consisting of rational elements of $\Nef(X/Z)$, then by the above we may take $D_0$ rational. 
\end{remark}

\begin{remark}\label{rem:DualRay}
When $W$ is a supporting hyperplane, the extremal ray $R$ dual to $W$ is a ray in the $\RR$-vector space $N_1(X/Z)_{\RR}$. 
Indeed, $R\neq\{0\}$ by Proposition \ref{lem:AmpleIsPositiveOnNE}, and the span of $R$ has dimension at most one since $W$ has codimension one.
\end{remark}

\section{Relatively big \texorpdfstring{$\RR$}{R}-invertible sheaves}
In this section, we
define the ``birational'' variants of the relative ampleness conditions defined
in the previous section, i.e., relative bigness and relative pseudoeffectivity.
As far as we are aware, these results are new for algebraic spaces, even for
proper algebraic spaces over a field.
\subsection{A weak version of Chow's lemma}
We will use the following lemma repeatedly to reduce to the scheme case.
While it is a special case of the version of Chow's lemma in
\citeleft\citen{Knu71}\citemid Chapter IV, Theorem 3.1\citepunct
\citen{stacks-project}\citemid
\href{https://stacks.math.columbia.edu/tag/088U}{Tag
088U}\citeright, we will use this weak version of Chow's lemma because it is
easier to prove than the full statement.
\begin{lemma}[{see
  \cite[Proof of \href{https://stacks.math.columbia.edu/tag/0DN4}{Tag 0DN4}]{stacks-project}}]\label{lem:weakchow}
  Let \(A\) be a ring.
  Let \(X\) be an algebraic space over \(A\) that is separated and of finite
  type over \(A\).
  Then, there exists a generically finite
  proper surjective morphism \(X' \to X\) where \(X'\) is a
  scheme that is H-quasi-projective over \(A\).
\end{lemma}
\begin{proof}
  By the weak version of Chow's lemma in
  \cite[\href{https://stacks.math.columbia.edu/tag/089J}{Tag
  089J}]{stacks-project}, there exists a proper surjective morphism $\mu\colon
  X' \to X$ from a scheme $X'$ that is H-quasi-projective over $A$.
  By \cite[\href{https://stacks.math.columbia.edu/tag/0DMN}{Tag
  0DMN}]{stacks-project}, after replacing $X'$ by a closed integral subspace, we
  may assume that $\mu$ is generically finite.
\end{proof}
\subsection{Growth of cohomology and volume}
We will need the following estimate on the growth of cohomology of twists.
\begin{proposition}[cf.\ {\citeleft\citen{Deb01}\citemid Proposition
  1.31$(a)$\citeright}]
  \label{prop:deb131a}
  Let $X$ be a proper algebraic space over a field $k$ of dimension $d$, and
  let $\sL$ be an invertible sheaf on $X$.
      For every coherent sheaf $\sF$ on $X$, we have
      \begin{equation}\label{eq:dimcounttwists}
        h^i(X,\sF \otimes_{\cO_X} \sL^{\otimes m}) = O(m^d)
      \end{equation}
      for all $i$.
      Here, the dimension
      \[
        h^i(X,-) \coloneqq \dim_k\bigl(H^i(X,-)\bigr)
      \]
      is computed over \(k\).
\end{proposition}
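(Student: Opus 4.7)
The strategy is Noetherian induction on $d = \dim X$, combined with dévissage on $\sF$ and Chow's lemma to reduce to the classical case of a projective scheme. By dévissage, every coherent sheaf has a finite filtration with graded pieces of the form $i_{Y*}\cO_Y$ for integral closed subspaces $Y \subseteq X$; additivity of $h^i$ in short exact sequences then reduces \eqref{eq:dimcounttwists} to the case $\sF = \cO_X$ with $X$ integral of dimension $d$. The base $d = 0$ is immediate, as $X$ is then a finite scheme over $k$, $\sL$ is trivial on $X$, and $h^i(X,\sL^{\otimes m})$ is independent of $m$.

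For the inductive step, I would apply a birational version of Chow's lemma for algebraic spaces (refining \cite[\href{https://stacks.math.columbia.edu/tag/089J}{Tag 089J}]{stacks-project} using that an integral Noetherian algebraic space contains a dense open subscheme) to produce a proper birational morphism $\mu\colon X' \to X$ with $X'$ a projective scheme over $k$ and $\mu$ an isomorphism over a dense open $U \subseteq X$. Let $C^\bullet$ be the cone of the natural morphism $\cO_X \to \RR\mu_*\cO_{X'}$; every cohomology sheaf $\cH^q(C^\bullet)$ is supported on $X \setminus U$, hence on a proper algebraic space of dimension at most $d - 1$. The projection formula and Leray give $\HH^i(X, \RR\mu_*\cO_{X'} \otimes_{\cO_X} \sL^{\otimes m}) = H^i(X', \mu^*\sL^{\otimes m})$, so tensoring the distinguished triangle $\cO_X \to \RR\mu_*\cO_{X'} \to C^\bullet$ with $\sL^{\otimes m}$ and taking the long exact hypercohomology sequence yields
$$h^i(X,\sL^{\otimes m}) \le h^i(X', \mu^*\sL^{\otimes m}) + \HH^{i-1}\bigl(X, C^\bullet \otimes_{\cO_X} \sL^{\otimes m}\bigr),$$
where the error term is $O(m^{d-1})$ by the hypercohomology spectral sequence and the induction hypothesis applied to each $\cH^q(C^\bullet)$.

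It then remains to prove $h^i(X', \mu^*\sL^{\otimes m}) = O(m^d)$ for the projective scheme $X'$ of dimension $d$, which is \cite[Proposition 1.31$(a)$]{Deb01}. This can be shown by induction on $d$: write $\mu^*\sL \cong M_1 \otimes M_2^{-1}$ with $M_1, M_2$ very ample, pick a general $H \in \lvert M_2 \rvert$ of dimension $d - 1$, and iterate the short exact sequence $0 \to M_2^{-1} \to \cO_{X'} \to \cO_H \to 0$ tensored with $M_1^m \otimes M_2^{-k}$ for $k = 0,1,\ldots,m-1$ to reduce the bound to one on $h^i(X', M_1^m)$ (controlled by Serre vanishing in positive degrees and polynomial growth of the Euler characteristic in degree zero) together with restricted twists on $H$, handled by induction. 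The main obstacle is arranging for $\mu$ to be birational so that $C^\bullet$ has support of strictly lower dimension, allowing the Noetherian induction to absorb the error term; if instead one only has a surjective proper $\mu$ from a projective scheme (as given directly by \cite[\href{https://stacks.math.columbia.edu/tag/089J}{Tag 089J}]{stacks-project}), the cone need not be supported in codimension $\ge 1$ and this comparison argument breaks down.
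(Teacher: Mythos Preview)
Your approach is correct and is a genuinely different route from the paper's. Both proofs use d\'evissage and Chow's lemma to reduce to the projective scheme case \cite[Proposition 1.31$(a)$]{Deb01}, but they organize the reduction differently.

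You reduce via d\'evissage to the structure sheaf on an integral $X$, then run a Noetherian induction on $d$: a \emph{birational} Chow modification $\mu\colon X'\to X$ lets you compare $\cO_X$ with $\RR\mu_*\cO_{X'}$, and the cone (supported in dimension $\le d-1$) is absorbed by the inductive hypothesis. This is sound; the birational form of Chow's lemma for separated algebraic spaces of finite type over a Noetherian base is available (Knutson, Theorem IV.3.1, or one can extract it from \cite[\href{https://stacks.math.columbia.edu/tag/089J}{Tag 089J}]{stacks-project} and \cite[\href{https://stacks.math.columbia.edu/tag/0DMN}{Tag 0DMN}]{stacks-project} together with a dense open subscheme). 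One minor imprecision: standard d\'evissage gives graded pieces $i_{Y*}\sG$ with $\sG$ a rank-$1$ coherent sheaf on integral $Y$, not necessarily $\cO_Y$; this is harmless since your argument works equally well with $\sF$ a line bundle.

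The paper instead avoids induction on dimension entirely by exploiting the more flexible form of d\'evissage \cite[\href{https://stacks.math.columbia.edu/tag/08AN}{Tag 08AN}]{stacks-project}: rather than reducing to $\cO_V$, it suffices to exhibit \emph{some} coherent $\sG$ with support exactly $V$ satisfying \eqref{eq:dimcounttwists}. Given a generically finite $\mu\colon V'\to V$ with $V'$ projective (from weak Chow plus \cite[\href{https://stacks.math.columbia.edu/tag/0DMN}{Tag 0DMN}]{stacks-project}), the paper takes $\sG = \iota_*\mu_*\cO_{V'}(n)$ for $n\gg 0$ so that $R^p\mu_*\cO_{V'}(n)=0$ for $p>0$; then Leray and the projection formula identify $h^i(X,\sG\otimes\sL^{\otimes m})$ with a cohomology group on the projective scheme $V'$, and the bound follows immediately from the scheme case. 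This sidesteps both the birationality requirement you flag and the inductive bookkeeping on the cone.
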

\begin{proof}
  By d\'evissage \cite[\href{https://stacks.math.columbia.edu/tag/08AN}{Tag
  08AN}]{stacks-project}, it suffices to show the following:
  \begin{enumerate}[label=$(\alph*)$,ref=\alph*]
    \item\label{prop:dimcounttwistsdevissage1}
      For every short exact sequence
      \[
        0 \longrightarrow \sF_1 \longrightarrow \sF_2 \longrightarrow \sF_3
        \longrightarrow 0
      \]
      of coherent sheaves on $X$, if \eqref{eq:dimcounttwists}
      holds for two out
      of three of $\sF_1$, $\sF_2$, and $\sF_3$, then
      \eqref{eq:dimcounttwists}
      holds for the third.
    \item\label{prop:dimcounttwistsdevissage2}
      If \eqref{eq:dimcounttwists}
      holds for $\sF^{\oplus r}$ for some $r
      \ge 1$, then \eqref{eq:dimcounttwists}
      holds for $\sF$.
    \item\label{prop:dimcounttwistsdevissage3}
      For every integral closed subspace $\iota\colon V \hookrightarrow X$,
      there exists a
      coherent sheaf $\sG$ on $X$ whose scheme-theoretic support is $V$ such
      that \eqref{eq:dimcounttwists}
      holds for $\sG$.
  \end{enumerate}
  \par First, $(\ref{prop:dimcounttwistsdevissage1})$ follows from the
  inequalities
  \begin{alignat*}{3}
    h^i(X,\sF_1\otimes_{\cO_X}\sL^{\otimes m}) &{}\le{}&
    h^{i-1}(X,\sF_3\otimes_{\cO_X}\sL^{\otimes m}) &{}+{}&
    h^i(X,\sF_2\otimes_{\cO_X}\sL^{\otimes m})\\
    h^i(X,\sF_2\otimes_{\cO_X}\sL^{\otimes m}) &{}\le{}&
    h^i(X,\sF_1\otimes_{\cO_X}\sL^{\otimes m}) &{}+{}&
    h^i(X,\sF_3\otimes_{\cO_X}\sL^{\otimes m})\\
    h^i(X,\sF_3\otimes_{\cO_X}\sL^{\otimes m}) &{}\le{}&
    h^i(X,\sF_2\otimes_{\cO_X}\sL^{\otimes m}) &{}+{}&
    h^{i+1}(X,\sF_1\otimes_{\cO_X}\sL^{\otimes m})
  \end{alignat*}
  obtained by twisting the given exact sequence by $\sL^{\otimes m}$ and
  using the long exact sequence on sheaf cohomology.
  \par Second, $(\ref{prop:dimcounttwistsdevissage2})$ follows since
  \[
    h^i(X,\sF^{\oplus r} \otimes_{\cO_X} \sL^{\otimes m}) = r \cdot
    h^i(X,\sF \otimes_{\cO_X} \sL^{\otimes m}).
  \]
  \par Third, $(\ref{prop:dimcounttwistsdevissage3})$ follows from the scheme
  case of \eqref{eq:dimcounttwists} as follows.
  By the weak version of Chow's lemma (Lemma \ref{lem:weakchow}),
  there exists a generically finite proper
  surjective morphism $\mu\colon V' \to V$ from a scheme $V'$ that is a closed
  subscheme of $\PP^N_k$ for some $N$.
  Let $\cO_{V'}(n) = \cO_{\PP^N_k}(n)_{\vert V'}$.
  Choose $n > 0$ such that $R^p\mu_*\cO_{V'}(n) = 0$ for all $p > 0$
  \cite[\href{https://stacks.math.columbia.edu/tag/08AQ}{Tag
  08AQ}]{stacks-project}.
  We claim that $\sG = \iota_*\mu_*\cO_{V'}(n)$ satisfies
  \eqref{eq:dimcounttwists}.
  We have
  \begin{align*}
    h^i(X,\sG \otimes_{\cO_X} \sL^{\otimes m})
    &= h^i\Bigl(V',\cO_{V'}(n) \otimes_{\cO_{V'}} \mu^*\bigl(\sL^{\otimes
    m}_{\vert V}\bigr) \Bigr)\\
    &= O(m^{\dim(V)})
  \end{align*}
  by the projection formula
  \cite[\href{https://stacks.math.columbia.edu/tag/0944}{Tag
  0944}]{stacks-project}, the Leray spectral sequence
  \cite[\href{https://stacks.math.columbia.edu/tag/0733}{Tag
  0733}]{stacks-project}, and
  the scheme case of the proposition \cite[Proposition
  1.31$(a)$]{Deb01}.
\end{proof}
Next, we define volumes.
\begin{definition}[see {\cite[Definition 2.2.31]{Laz04a}}]
  Let $X$ be an integral proper algebraic space of dimension $d$ over a field
  $k$.
  The \textsl{volume} of an invertible sheaf $\sL$ on $X$ is
  \[
    \vol_X(\sL) \coloneqq \limsup_{m \to \infty}
    \frac{h^0(X,\sL^{\otimes m})}{m^d/d!},
  \]
  where the dimension
  \[
    h^0(X,-) \coloneqq \dim_k\bigl(H^0(X,-)\bigr)
  \]
  is computed over $k$.
\end{definition}
We also define the semigroup and the exponent of an invertible sheaf.
Following \cite{Iit71}, we use the convention that \(0 \notin \NN(\sL)\), in
contrast with \cite[Definition 2.4.7]{Laz04a}.
\begin{citeddef}[{\cite[pp.\ 356--357]{Iit71}}]
  Let \(X\) be a proper algebraic space over a field \(k\) and let \(\sL\) be an
  invertible sheaf on \(X\).
  The \textsl{semigroup} of \(\sL\) is
  \[
    \NN(\sL) \coloneqq \Set[\big]{m \ge 1 \given H^0(X,\sL^{\otimes m}) \ne 0}.
  \]
  The \textsl{exponent} \(m_0(\sL)\) of \(\sL\) is the greatest common divisor
  of all elements in \(\NN(\sL)\).
\end{citeddef}
We show that the volume is computed by a limit in some cases.
\begin{lemma}[cf.\ {\citeleft\citen{Laz04b}\citemid Example 11.4.7\citepunct
  \citen{Cut14}\citemid Theorem 8.1\citeright}]\label{lem:whenvollimit}
  Let $X$ be an integral proper algebraic space of dimension $d$ over a field
  $k$.
  Let \(\sL\) be an invertible sheaf on \(X\) and set \(m_0 \coloneqq
  m_0(\sL)\).
  Then, the limit
  \[
    \lim_{n \to \infty} \frac{h^0(X,\sL^{\otimes nm_0})}{(nm_0)^d/d!}
  \]
  exists.
  Moreover, if \(X\) is projective or normal, then \(m_0 = 1\), and
  \[
    \vol_X(\sL) = \lim_{m \to \infty}
    \frac{h^0(X,\sL^{\otimes m})}{m^d/d!}.
  \]
\end{lemma}
\begin{proof}
  By the weak version of Chow's lemma (Lemma \ref{lem:weakchow}),
  there exists a generically finite proper surjective morphism \(f\colon X' \to
  X\) from a projective variety \(X'\) over \(k\).
  We consider the graded linear system \(V_\bullet\) on \(X'\)
  defined by setting
  \[
    V_m \coloneqq \im\Bigl(H^0(X,\sL^{\otimes m}) \hooklongrightarrow
    H^0\bigl(X',f^*\sL^{\otimes m}\bigr)\Bigr).
  \]
  We then have
  \[
    \limsup_{n \to \infty} \frac{h^0(X,\sL^{\otimes nm_0})}{(nm_0)^d/d!}
    = \limsup_{n \to \infty} \frac{\dim_k(V_{nm_0})}{(nm_0)^d/d!}
  \]
  and the limit supremum on the right-hand side is a limit by \cite[Theorem
  8.1]{Cut14}.
  \par For the ``moreover'' statement, it remains to show that \(m_0 = 1\).
  The case when \(X\) is projective follows from \cite[Theorem 10.7]{Cut14}.
  For the case when \(X\) is normal, we use the full version of
  Chow's lemma \citeleft\citen{Knu71}\citemid Chapter IV, Theorem 3.1\citepunct
  \citen{stacks-project}\citemid \href{https://stacks.math.columbia.edu/tag/088U}{Tag
  088U}\citeright, which says that there exists a blowup \(f\colon X' \to
  X\) where \(X'\) is a projective variety over \(k\).
  Then, the pullback map
  \[
    H^0(X,\sL^{\otimes m}) \longrightarrow
    H^0\bigl(X',f^*\sL^{\otimes m}\bigr)
  \]
  is a bijection because the normality of \(X\) implies that \(\cO_X \to
  f_*\cO_{X'}\) is an isomorphism (see the proof of
    \cite[\href{https://stacks.math.columbia.edu/tag/0A19}{Tag
  0A19}(3)]{stacks-project}).
\end{proof}
We show that the volume behaves well with respect to
generically finite morphisms.
\begin{proposition}[cf.\ {\citeleft\citen{Hol}\citemid Lemma 4.3\citepunct
  \citen{Cut24}\citemid Lemma 2.9\citeright}]\label{prop:volumegenfin}
  Let $f\colon Y \to X$ be a surjective generically finite morphism of integral
  proper algebraic spaces over a field $k$.
  Consider an invertible sheaf $\sL$ on $X$.
  Then, we have
  \[
    \vol_Y(f^*\sL) = \deg(f) \cdot \vol_X(\sL).
  \]
\end{proposition}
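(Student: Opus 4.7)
Plan. By the projection formula together with the equality $H^0(Y,-) = H^0(X, f_*-)$, I reduce to the equality
\[
h^0(Y, f^*\sL^{\otimes m}) = h^0(X, \sF \otimes_{\cO_X} \sL^{\otimes m}),
\]
where $\sF := f_*\cO_Y$ is a coherent $\cO_X$-module. Since $f$ is proper and generically finite of degree $r := \deg(f)$, the stalk of $\sF$ at the generic point $\eta$ of $X$ is identified with $K(Y)$, which is a $K(X)$-vector space of dimension $r$. The proposition follows once I establish the asymptotic estimate
\[
h^0(X, \sF \otimes_{\cO_X} \sL^{\otimes m}) = r \cdot h^0(X, \sL^{\otimes m}) + O(m^{d-1}),
\]
upon dividing by $m^d/d!$ and taking $\limsup$.

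To produce this estimate, I compare $\sF$ with $\cO_X^{\oplus r}$ modulo sheaves supported in dimension at most $d - 1$. Choose a $K(X)$-basis $s_1, \ldots, s_r \in K(Y) = \sF_\eta$; viewed as rational sections of $\sF$, they define an isomorphism $\cO_U^{\oplus r} \xrightarrow{\sim} \sF|_U$ on some dense open $U \subseteq X$. Twisting by an effective Cartier divisor $D$ that absorbs the poles of the $s_i$, I extend this to a coherent-sheaf injection
\[
\iota\colon \cO_X(-D)^{\oplus r} \hookrightarrow \sF
\]
with cokernel $\mathcal{Q}$ supported on the closed subspace $X \setminus U$, which has dimension at most $d - 1$.

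Tensoring $\iota$ with the flat sheaf $\sL^{\otimes m}$ and taking cohomology, Proposition \ref{prop:deb131a} applied to $\mathcal{Q}$ (viewed as a coherent sheaf on its support, of dimension at most $d - 1$) gives $h^0(X, \mathcal{Q} \otimes \sL^{\otimes m}) = O(m^{d-1})$, whence
\[
h^0(X, \sF \otimes \sL^{\otimes m}) = r \cdot h^0\bigl(X, \sL^{\otimes m} \otimes \cO_X(-D)\bigr) + O(m^{d-1}).
\]
A second application of Proposition \ref{prop:deb131a} to the divisor $D$ (of dimension $d - 1$), using the short exact sequence $0 \to \cO_X(-D) \to \cO_X \to \cO_D \to 0$ tensored with $\sL^{\otimes m}$, shows that $h^0(X, \sL^{\otimes m} \otimes \cO_X(-D))$ differs from $h^0(X, \sL^{\otimes m})$ by $O(m^{d-1})$, completing the proof.

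The principal difficulty is the construction of the effective Cartier divisor $D$ on an integral algebraic space $X$. For schemes one clears poles by multiplying by a suitable rational function, but on an algebraic space, divisor-theoretic arguments require more care since Weil divisors need not be Cartier and rational sections of a coherent sheaf do not in general extend to an ambient divisorial bundle. One direct approach uses the Noetherianity and coherence of $\sF$ to take a scheme-theoretic closure of the subsheaf generated by the $s_i$; alternatively, one can reduce to the case of projective schemes via the weak Chow's lemma (\cite[\href{https://stacks.math.columbia.edu/tag/088U}{Tag 088U}]{stacks-project}) applied to both $X$ and $Y$, after which the classical argument as in \cite[Lemma 4.3]{Hol} applies.
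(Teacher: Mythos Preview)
Your overall strategy matches the paper's: reduce to $h^0(X,\sF\otimes\sL^{\otimes m})$ with $\sF=f_*\cO_Y$ via the projection formula, then compare $\sF$ with $\cO_X^{\oplus r}$ modulo sheaves supported in dimension $\le d-1$ and invoke Proposition~\ref{prop:deb131a}. The difference lies in how that comparison is carried out, and the gap you flag is real.

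Your construction requires an effective Cartier divisor $D$ on $X$ that clears the poles of the chosen rational sections $s_i\in\sF_\eta$. On an arbitrary integral proper algebraic space there is no reason such a $D$ exists: the coherent ideal sheaf $\{a\in\cO_X : a\cdot s_i\in\sF\}$ need not be invertible, and $X$ may have no nontrivial effective Cartier divisors supported where you need them. Your first proposed fix (``scheme-theoretic closure of the subsheaf generated by the $s_i$'') is too vague to produce a Cartier divisor. Your second (reduce to projective schemes via Chow's lemma) is circular as stated: comparing $\vol_X(\sL)$ to $\vol_{X'}(\mu^*\sL)$ for the cover $\mu\colon X'\to X$ is exactly the proposition being proved. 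One could try to salvage this by arranging $\mu$ birational and handling the degree-$1$ case by hand, but you do not carry that out.

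The paper sidesteps the issue entirely. Rather than twisting $\cO_X^{\oplus r}$ down to embed it in $\sF$, it embeds both $\sF=f_*\cO_Y$ and $\cO_X^{\oplus r}$ into $\sK_X^{\oplus r}$ (using the generic isomorphism $\sF_\eta\cong K(X)^{\oplus r}$) and takes the common coherent subsheaf $\sG=\sF\cap\cO_X^{\oplus r}$. No Cartier divisor is needed. Both quotients $\sG_1=\sF/\sG$ and $\sG_2=\cO_X^{\oplus r}/\sG$ are supported on $X\setminus U$, hence in dimension $\le d-1$, so Proposition~\ref{prop:deb131a} bounds their contributions by $O(m^{d-1})$; the two short exact sequences then give $h^0(Y,f^*\sL^{\otimes m})=r\cdot h^0(X,\sL^{\otimes m})+O(m^{d-1})$ directly. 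The symmetric intersection inside $\sK_X^{\oplus r}$ is precisely the device that replaces your divisor $D$.
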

\begin{proof}
  Since $f$ is generically finite, we know that $f_*\cO_Y$ has rank $r =
  \deg(f)$.
  Thus, there exists a dense open subspace $U \subseteq X$ such that
  $(f_*\cO_Y)_{\vert U} \cong \cO_U^{\oplus r}$, which yields an injection
  $f_*\cO_Y \hookrightarrow \sK_X^{\oplus r}$, where $\sK_X$ is the sheaf of
  meromorphic functions as defined in
  \cite[\href{https://stacks.math.columbia.edu/tag/0EN3}{Tag
  0EN3}]{stacks-project}.
  Consider the intersection $\sG = f_*\cO_Y \cap \cO_X^{\oplus r}$ as subsheaves
  of $\sK_X^{\oplus r}$, and the short exact sequences
  \[
    \begin{tikzcd}[sep=scriptsize]
      0 \rar & \sG \rar\dar[equal] & f_*\cO_Y \rar & \sG_1 \rar & 0\\
      0 \rar & \sG \rar & \cO_X^{\oplus r} \rar & \sG_2 \rar & 0\mathrlap{.}
    \end{tikzcd}
  \]
  Since $\sG_1$ and $\sG_2$ are supported in $X - U$, we see that
  \begin{align*}
    h^1(X,\sG_1 \otimes_{\cO_X} \sL^{\otimes m}) &= O(m^{d-1})\\
    h^1(X,\sG_2 \otimes_{\cO_X} \sL^{\otimes m}) &= O(m^{d-1})
  \end{align*}
  by Proposition \ref{prop:deb131a}.
  Twisting by $\sL^{\otimes m}$,
  the long exact sequence on sheaf cohomology and the projection formula
  \cite[\href{https://stacks.math.columbia.edu/tag/0944}{Tag
  0944}]{stacks-project} imply
  \begin{alignat*}{4}
    h^0(Y,f^*\sL^{\otimes m}) &{}-{}&
    h^0(X,\sG \otimes_{\cO_X} \sL^{\otimes m})
    &{}\le{}& 
    h^1(X,\sG_1 \otimes_{\cO_X} \sL^{\otimes m})
    &= O(m^{d-1}),\\
    r \cdot h^0(X,\sL^{\otimes m}) &{}-{}&
    h^0(X,\sG \otimes_{\cO_X} \sL^{\otimes m})
    &{}\le{}& 
    h^1(X,\sG_2 \otimes_{\cO_X} \sL^{\otimes m})
    &= O(m^{d-1}).
  \end{alignat*}
  We therefore see that $\vol_Y(f^*\sL) = r \cdot
  \vol_X(\sL)$.
\end{proof}
We also prove that volumes are homogeneous with respect to taking powers.
\begin{proposition}[cf.\ {\cite[Proposition
  2.2.35$(a)$]{Laz04a}}]\label{prop:volhomog}
  Let $X$ be an integral proper algebraic space of dimension $d$ over a field
  $k$.
  Then, for every integer $n > 0$, we have
  \[
    \vol_X(\sL^{\otimes n}) = n^d\vol_X(\sL).
  \]
\end{proposition}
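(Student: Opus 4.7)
The plan is to reduce the statement to the case of integral projective schemes over $k$, where it is classical, by combining the weak form of Chow's lemma with the compatibility of volumes under generically finite covers just established in Proposition \ref{prop:volumegenfin}.

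Concretely, I would first invoke \cite[\href{https://stacks.math.columbia.edu/tag/089J}{Tag 089J}]{stacks-project} to produce a proper surjective morphism to $X$ from a scheme that is projective over $k$, and then pass to a suitable integral closed subspace using \cite[\href{https://stacks.math.columbia.edu/tag/0DMN}{Tag 0DMN}]{stacks-project} to arrange a proper, surjective, generically finite morphism $\mu\colon Y \to X$ from an integral projective scheme $Y$ over $k$. Since $\mu$ is surjective and generically finite, we have $\dim(Y) = d$ and $\deg(\mu)$ is a positive integer.

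Next, I would apply Proposition \ref{prop:volumegenfin} to both $\sL$ and $\sL^{\otimes n}$, obtaining
\[
  \vol_Y(\mu^*\sL) = \deg(\mu) \cdot \vol_X(\sL),
  \qquad
  \vol_Y\bigl(\mu^*(\sL^{\otimes n})\bigr) = \deg(\mu) \cdot \vol_X(\sL^{\otimes n}).
\]
Combining these with the known homogeneity $\vol_Y((\mu^*\sL)^{\otimes n}) = n^d \vol_Y(\mu^*\sL)$ for integral projective schemes (see \cite[Proposition 2.2.35$(a)$]{Laz04a}, applied after base change to an algebraic closure of $k$ if necessary, noting that dimensions of global sections are preserved by flat base change) together with the identification $\mu^*(\sL^{\otimes n}) = (\mu^*\sL)^{\otimes n}$, I would divide by the positive integer $\deg(\mu)$ to conclude $\vol_X(\sL^{\otimes n}) = n^d \vol_X(\sL)$.

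The main obstacle in this plan is the scheme case itself: the homogeneity of the volume ultimately rests on the fact that the $\limsup$ defining it is an honest limit, equivalently on Fujita's approximation theorem, which is non-trivial. A direct approach from the definition, writing $m = nq + r$ and comparing $h^0(X, \sL^{\otimes m})$ with $h^0(X, \sL^{\otimes nq})$ via multiplication by a fixed global section of a positive power of $\sL$, would have to separately handle the case $\vol_X(\sL) = 0$ (where the identity becomes $0 = 0$ and is an easy consequence of $\limsup a_{nm} \le \limsup a_m$) and the existence of such sections when $\vol_X(\sL) > 0$. For this reason, the Chow-lemma reduction to the scheme case, which is also consistent with the proof strategy of Proposition \ref{prop:volumegenfin} just above, is cleaner.
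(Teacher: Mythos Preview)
Your proposal is correct and follows essentially the same route as the paper: reduce via weak Chow's lemma (Tags 089J and 0DMN) to a generically finite cover by an integral projective scheme, apply Proposition~\ref{prop:volumegenfin} on both sides, and invoke the scheme case. The only cosmetic difference is the justification of the scheme case: the paper cites \cite[Theorem 8.1]{Cut14} (the $\limsup$ is a limit) directly over $k$, whereas you cite \cite[Proposition 2.2.35$(a)$]{Laz04a} after base change to $\bar{k}$; both are valid.
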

\begin{proof}
  By the proof of \cite[Lemma 2.2.38]{Laz04a}, we have
  \begin{align*}
    \vol_X(\sL) &= \limsup_{m \to \infty}
    \frac{h^0(X,\sL^{\otimes m})}{m^d/d!}\\
    &= \limsup_{m \to \infty}
    \frac{h^0(X,\sL^{\otimes nm})}{(nm)^d/d!}\\
    &= \frac{1}{n^d} \cdot \vol_X(\sL^{\otimes n})
  \end{align*}
  for all integers \(n>0\).
\end{proof}

\subsection{Relatively big and pseudoeffective
\texorpdfstring{$\RR$}{R}-invertible sheaves}
We now define $\pi$-big
and $\pi$-pseudoeffective
$\kk$-invertible sheaves and $\kk$-Cartier divisors.
In the definition below, we recall if $X$ is an integral algebraic space, then
it is decent by definition
\cite[\href{https://stacks.math.columbia.edu/tag/0AD4}{Tag
0AD4}]{stacks-project}, and hence codimension zero points in $X$ are the same
thing as generic points of irreducible components in $\lvert X \rvert$
\cite[\href{https://stacks.math.columbia.edu/tag/0ABV}{Tag
0ABV}]{stacks-project}.
\begin{definition}[see {\citeleft\citen{Nak04}\citemid Chapter II, Definition
  5.16\citepunct \citen{BCHM10}\citemid Definition
  3.1.1(7)\citepunct \citen{Fuj14}\citemid Definition A.20\citeright}]
  \label{def:fbig}
  Let $\pi\colon X \to Z$ be a proper surjective morphism between integral
  algebraic spaces over a scheme $S$.
  Let $\eta \in \lvert Z \rvert$ be the generic point of $\lvert Z \rvert$.
  Let $\sL$ be an invertible sheaf on $X$.
  We say that $\sL$ is \textsl{$\pi$-big} if
  \begin{equation}\label{eq:bigvolcond}
    \vol_{X_\eta}\bigl(\sL_{\vert X_\eta}\bigr) \coloneqq
    \limsup_{m \to \infty} \frac{h^0
    \Bigl(X_\eta,\sL^{\otimes m}_{\vert X_\eta}
    \Bigr)}{m^{\dim(X_\eta)}/\bigl(\dim\bigl(X_\eta\bigr)\bigr)!} > 0,
  \end{equation}
  where $_\eta = (\pi^{-1}(\eta))$ is the generic fiber, and
  the dimension
  \[
    h^0 \Bigl(X_\eta,\sL^{\otimes m}_{\vert X_\eta} \Bigr)
    \coloneqq \dim_{\kappa(\eta)}\biggl(
      H^0 \Bigl(X_\eta,\sL^{\otimes m}_{\vert X_\eta} \Bigr)
    \biggr)
  \]
  is computed over \(\kappa(\eta)\).
  We note that
  $\lvert X_\eta \rvert$ is irreducible by \cite[Chapitre 0, Proposition
  2.1.13]{EGAInew}, and hence $X_\eta$ is integral.
  \par Now suppose $D$ is a $\kk$-invertible sheaf on $X$ for $\kk \in
  \{\QQ,\RR\}$.
  We say that $D$ is \textsl{$\pi$-big} if $D$ is a finite nonzero
  $\kk_{>0}$-linear combination of $\pi$-big invertible sheaves on $X$.
  If $Z = \Spec(k)$ for a field $k$, we just say that $\sL$ or $D$ is
  \textsl{big}.
  We use the same terminology for $\kk$-Cartier divisors when $X$ is a locally
  Noetherian scheme.
\end{definition}
\begin{remark}
  If $X_\eta$ is a scheme in Definition \ref{def:fbig}, the condition
  \eqref{eq:bigvolcond} holds if and only if for 
  $m \gg 0$, the rational map
  \[
    \begin{tikzcd}[column sep=large]
      X_\eta
      \rar[dashed]{\bigl\lvert \sL_{\vert X_\eta}^{\otimes m} \bigr\rvert}
      & \PP\Bigl(H^0\Bigl(X_\eta,\sL^{\otimes m}_{\vert X_\eta}
      \Bigr)\Bigr)
    \end{tikzcd}
  \]
  is generically finite onto its image by \cite[Theorem 8.2]{Cut14}.
\end{remark}
\begin{definition}[see {\citeleft\citen{Nak04}\citemid Chapter II, Definitions
  5.5 and 5.16\citepunct \citen{BCHM10}\citemid Definition
  3.1.1(9)\citeright}]
  Let $\pi\colon X \to Z$ be a proper surjective morphism between integral
  algebraic spaces over a scheme $S$.
  Let $D$ be a $\kk$-invertible sheaf on $X$ for $\kk \in \{\QQ,\RR\}$.
  We say that $D$ is \textsl{$\pi$-pseudoeffective} if the restriction
  $D_{\vert X_\eta}$ of $D$ to the generic fiber of $\pi$ is the limit of
  $\QQ$-invertible sheaves associated to effective $\QQ$-Cartier divisors under
  the map \eqref{eq:effcarttopic}.
  If $Z = \Spec(k)$ for a field $k$, we just say that $D$ is
  \textsl{pseudoeffective}.
\end{definition}
We now show a relative version of Kodaira's lemma.
\begin{lemma}[Relative Kodaira's lemma; cf.\ {\citeleft\citen{KMM87}\citemid
  Lemma 0-3-3 and Corollary 0-3-4\citepunct
  \citen{Fuj17}\citemid Lemma 2.1.27\citepunct
  \citen{CLM}\citemid Lemma 1.18\citeright}]\label{lem:BigIsAmpleAddEffective}
Let $\pi\colon X \to Z$ be a proper surjective morphism between integral
algebraic spaces over a scheme $S$.
Let $\sL$ be a $\pi$-big invertible sheaf on $X$.
Let $V \subseteq X$ be a proper closed subspace.
For infinitely many $m > 0$, we have
\[
  f_*\bigl( \cI_V \otimes_{\cO_X} \sL^{\otimes m} \bigr) \ne 0.
\]
If the generic fiber $X_\eta$ is projective or normal, then this holds
for all $m \gg 0$.
\end{lemma}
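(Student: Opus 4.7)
The plan is to reduce to the absolute case on the generic fiber $X_\eta$ and then use the growth estimates from Proposition \ref{prop:deb131a} together with the definition of bigness.

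First, since $\pi$ is proper (hence quasi-compact and quasi-separated), the pushforward $\pi_*\bigl(\cI_V \otimes_{\cO_X} \sL^{\otimes m}\bigr)$ is a quasi-coherent sheaf on $Z$, so it is nonzero as soon as its stalk at the generic point $\eta$ of $Z$ is nonzero. Flat base change along $\Spec \kappa(\eta) \to Z$ identifies this stalk with
\[
H^0\bigl(X_\eta,\cI_{V_\eta}\otimes_{\cO_{X_\eta}}\sL_\eta^{\otimes m}\bigr),
\]
where $\sL_\eta\coloneqq \sL_{\vert X_\eta}$ and $V_\eta\coloneqq V\times_X X_\eta$. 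Since $V\subsetneq X$ is a proper closed subspace of the integral space $X$, it does not contain the generic point of $X$, which lies over $\eta$; hence $V_\eta \subsetneq X_\eta$, and in particular $\dim V_\eta < \dim X_\eta \eqqcolon d$.

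Second, from the short exact sequence
\[
0 \longrightarrow \cI_{V_\eta}\otimes_{\cO_{X_\eta}}\sL_\eta^{\otimes m} \longrightarrow \sL_\eta^{\otimes m} \longrightarrow \sL_\eta^{\otimes m}\otimes_{\cO_{X_\eta}}\cO_{V_\eta} \longrightarrow 0
\]
on $X_\eta$, taking global sections yields
\[
h^0\bigl(X_\eta,\cI_{V_\eta}\otimes \sL_\eta^{\otimes m}\bigr)
\ge
h^0\bigl(X_\eta,\sL_\eta^{\otimes m}\bigr) - h^0\bigl(V_\eta,\sL_\eta^{\otimes m}_{\vert V_\eta}\bigr).
\]
The first term grows like $m^d$ along a subsequence tending to infinity, because $\vol_{X_\eta}(\sL_\eta) > 0$ by the bigness hypothesis. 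When $X_\eta$ is a scheme, the limit supremum defining the volume is in fact a limit by \cite[Theorem 8.1]{Cut14} (as invoked in the proof of Proposition \ref{prop:volhomog}), so this growth holds for every sufficiently large $m$. The second term is $O(m^{\dim V_\eta}) = O(m^{d-1})$ by applying Proposition \ref{prop:deb131a} to the proper algebraic space $V_\eta$ of dimension $\dim V_\eta < d$ over $\kappa(\eta)$ with the coherent sheaf $\cO_{V_\eta}$ and the invertible sheaf $\sL_\eta{}_{\vert V_\eta}$.

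Combining these two estimates, the right-hand side is strictly positive for infinitely many $m$ (and for all $m \gg 0$ when $X_\eta$ is a scheme), which gives the conclusion. The main subtlety is the flat base change step identifying the generic stalk of $\pi_*$ with sections on $X_\eta$, but this is standard for quasi-coherent sheaves under proper (in particular quasi-compact quasi-separated) morphisms of algebraic spaces. Everything else is a routine comparison of growth rates, where the essential input is the dimension drop $\dim V_\eta < \dim X_\eta$ coming from the integrality of $X$.
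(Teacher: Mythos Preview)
Your proof is correct and follows essentially the same approach as the paper: reduce to the generic fiber via flat base change, then compare the growth of $h^0(X_\eta,\sL_\eta^{\otimes m})$ (which has order $m^{\dim X_\eta}$ along a subsequence by bigness, and for all $m\gg 0$ when $X_\eta$ is a scheme) against $h^0(V_\eta,\sL_\eta^{\otimes m}\rvert_{V_\eta}) = O(m^{\dim X_\eta - 1})$ via Proposition~\ref{prop:deb131a}. The only cosmetic difference is that the paper cites \cite[Theorem~10.7]{Cut14} rather than \cite[Theorem~8.1]{Cut14} for the limit statement in the scheme case, but both suffice.
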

\begin{proof}
  By restricting to the generic fiber of $\pi$, it suffices to consider the case
  when $Z = \Spec(k)$ for a field $k$.
  \par Consider the short exact sequence
  \begin{align*}
    0 \longrightarrow \cI_V \otimes_{\cO_X} \sL^{\otimes m}
    \longrightarrow{}&
    \sL^{\otimes m} \longrightarrow \sL^{\otimes m}_{\vert V} \longrightarrow 0.
    \intertext{Taking global sections, we have the exact sequence}
    0 \longrightarrow H^0\bigl(X,\cI_V \otimes_{\cO_X} \sL^{\otimes m}\bigr)
    \longrightarrow{}& H^0\bigl(X,\sL^{\otimes m}\bigr) \longrightarrow
    H^0\Bigl(V,\sL^{\otimes m}_{\vert V}\Bigr).
    \intertext{Since $\sL$ is big, we see that}
    \dim_k\Bigl(H^0\bigl(X,\sL^{\otimes m}\bigr)\Bigr) >{}& {\dim_k
    \Bigl(H^0\Bigl(V,\sL^{\otimes m}_{\vert V}\Bigr)\Bigr)}
  \end{align*}
  for some $m$ by Proposition \ref{prop:deb131a}, and hence $H^0(X,\cI_V
  \otimes_{\cO_X} \sL^{\otimes m}) \ne 0$.
  \par The last statement when $X_\eta$ is a projective or normal
  holds because in this case,
  the limit supremum in \eqref{eq:bigvolcond} is a limit by Lemma
  \ref{lem:whenvollimit}.
\end{proof}
We obtain the following characterization of $\pi$-big $\kk$-invertible sheaves.
\begin{corollary}[cf.\ {\citeleft\citen{Laz04a}\citemid Corollary 2.2.7 and
  Proposition 2.2.22\citepunct
  \citen{Fuj17}\citemid Lemma 2.1.29\citeright}]\label{lem:kodairachar}
Let $\pi\colon X \to Z$ be a projective surjective morphism between integral
Noetherian schemes, such that $Z$ is affine.
Let $D$ be a $\kk$-invertible sheaf on $X$ for $\kk \in \{\QQ,\RR\}$.
The following are equivalent:
\begin{enumerate}[label=$(\roman*)$,ref=\roman*]
  \item\label{lem:kodairacharbig} $D$ is $\pi$-big.
  \item\label{lem:kodairacharapluse} We have $D = A+E$ in $\Pic_\kk(X)$ for
    $\kk$-invertible sheaves $A$
    and $E$ such that $A$ is a $\pi$-ample $\kk$-invertible sheaf
    and $E$ is the $\kk$-invertible sheaf
    associated to an effective $\kk$-Cartier divisor.
  \item\label{lem:kodairacharapluserational} We have $D = A+E$ in $\Pic_\kk(X)$
    for $\kk$-invertible sheaves $A$
    and $E$ such that $A$ is a $\pi$-ample $\kk$-invertible sheaf
    and $E$ is the $\kk$-invertible sheaf
    associated to an effective $\kk$-Cartier divisor, where $A$
    is in fact a $\QQ$-invertible sheaf.
  \item\label{lem:kodairacharapluserationaleff}
    We have $D = A+E$ in $\Pic_\kk(X)$
    for $\kk$-invertible sheaves $A$
    and $E$ such that $A$ is a $\pi$-ample $\kk$-invertible sheaf
    and $E$ is the $\kk$-invertible sheaf
    associated to an effective $\kk$-Cartier divisor, where $E$
    is in fact a $\QQ$-invertible sheaf.
\end{enumerate}
Moreover, if $D$ is $\pi$-big and $\pi$-nef, then writing $D = A+E$ as above,
we can make the coefficients on $E$ arbitrarily small without changing the
invertible sheaves that appear when expressing $E$ as a $\kk$-linear combination
of invertible sheaves.
\end{corollary}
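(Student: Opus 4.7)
The plan is to prove $(i)\Rightarrow(iii)$, $(i)\Rightarrow(iv)$, and $(ii)\Rightarrow(i)$; the implications $(iii)\Rightarrow(ii)$ and $(iv)\Rightarrow(ii)$ are immediate from $\Pic_\QQ(X)\hookrightarrow\Pic_\RR(X)$. The moreover statement will follow from a convex-combination identity together with Kleiman's criterion for ampleness.

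For $(i)\Rightarrow(iii)$ and $(i)\Rightarrow(iv)$, since $\pi$ is projective, I fix a $\pi$-very ample $A_0\in\Pic(X)$; because $Z$ is affine and $A_0$ is $\pi$-generated, $A_0$ admits a global section whose zero scheme is an effective Cartier divisor $H_0$ with $\cO_X(H_0)\cong A_0$. Writing $D=\sum_i r_i\sL_i$ with $r_i\in\kk_{>0}$ and each $\sL_i\in\Pic(X)$ a $\pi$-big invertible sheaf, I apply the relative Kodaira lemma (Lemma \ref{lem:BigIsAmpleAddEffective}) to each $\sL_i$ with $V=H_0$: because the generic fiber $X_\eta$ is a scheme, for some $m_i>0$ we have $\pi_*(\cI_{H_0}\otimes\sL_i^{\otimes m_i})\ne 0$, and since $Z$ is affine this yields an effective Cartier divisor $F_i$ with $m_i\sL_i\sim H_0+F_i$ in $\Pic(X)$. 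Summing gives
\[
  D \;=\; \alpha\,A_0 \;+\; \sum_i \tfrac{r_i}{m_i}\,\cO_X(F_i),\qquad \alpha\coloneqq\sum_i\tfrac{r_i}{m_i}\in\kk_{>0}.
\]
For $(iii)$, I pick any rational $0<\alpha'<\alpha$ and rewrite $D=\alpha'A_0+\bigl((\alpha-\alpha')A_0+\sum_i\tfrac{r_i}{m_i}\cO_X(F_i)\bigr)$, whose first summand is $\QQ$-ample and whose second summand is effective $\kk$-Cartier (using $A_0=\cO_X(H_0)$ with $H_0$ effective). For $(iv)$, I approximate each $r_i/m_i$ from below by a rational $\delta_i$; choosing the gaps $r_i/m_i-\delta_i$ small enough so that, by the openness of $\Amp(X/Z)$ in $N^1(X/Z)_\RR$ (Theorem \ref{thm:ampisinterior} and the subsequent remark), the class $\alpha A_0+\sum_i(r_i/m_i-\delta_i)\cO_X(F_i)$ remains $\pi$-ample, the decomposition $D=\bigl(\alpha A_0+\sum_i(r_i/m_i-\delta_i)\cO_X(F_i)\bigr)+\sum_i\delta_i\cO_X(F_i)$ expresses $D$ as $\kk$-ample plus effective $\QQ$-Cartier.

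For $(ii)\Rightarrow(i)$, write $A=\sum_i a_iA_i$ with $a_i\in\kk_{>0}$ and $A_i\in\Pic(X)$ $\pi$-ample, and $E=\sum_j e_j\cO_X(E_j)$ with $e_j\in\kk_{>0}$ and $E_j$ effective Cartier. Fix an index $i_0$, take positive integers $n_j$ large enough, and set $c_j\coloneqq e_j/n_j$ so that $\sum_j c_j<a_{i_0}$; then
\[
  D \;=\; \sum_{i\ne i_0} a_iA_i \;+\; \Bigl(a_{i_0}-\sum_j c_j\Bigr)A_{i_0} \;+\; \sum_j c_j\,(A_{i_0}+n_jE_j)
\]
is a $\kk_{>0}$-linear combination of $\pi$-big invertible sheaves: each $A_i$ is $\pi$-ample, hence $\pi$-big, and each $A_{i_0}+n_jE_j$ is $\pi$-ample plus effective, hence $\pi$-big via the injection $A_{i_0}^{\otimes m}\hookrightarrow(A_{i_0}+n_jE_j)^{\otimes m}$ on the generic fiber together with Proposition \ref{prop:deb131a}.

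For the moreover statement, given $D=A+E=A+\sum_j e_j\cO_X(E_j)$ and any rational $t>0$, the identity
\[
  D \;=\; \tfrac{1}{1+t}(tD+A) \;+\; \sum_j \tfrac{e_j}{1+t}\,\cO_X(E_j)
\]
exhibits a new decomposition using exactly the same invertible sheaves $\cO_X(E_j)$ with coefficients scaled by $1/(1+t)$. Since $\pi$ is projective, $X$ is relatively quasi-divisorial for $\pi$, so Kleiman's criterion (Proposition \ref{lem:AmpleIsPositiveOnNE}) applies: for every $\gamma\in\NEbar(X/Z)\setminus\{0\}$ one has $(tD+A)\cdot\gamma=t(D\cdot\gamma)+(A\cdot\gamma)>0$ because $D$ is $\pi$-nef and $A$ is $\pi$-ample, so $\tfrac{1}{1+t}(tD+A)$ is $\kk$-ample; the $\QQ$-Cartier property of the effective part in case $(iv)$ is preserved by rational $t$, and letting $t\to\infty$ drives the coefficients on $E$ to zero. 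The main obstacle throughout is $(i)\Rightarrow(iv)$: the rational perturbations $\delta_i$ must sit below the real $r_i/m_i$ while leaving an ample remainder, a balance made possible only by the openness of the ample cone, which is also the precise input that keeps the scaling trick for the moreover statement inside the ample cone.
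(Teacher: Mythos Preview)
Your proof is correct and follows essentially the same strategy as the paper for $(i)\Rightarrow(iii),(iv)$ and for the ``moreover'' statement (the paper writes $D=\tfrac{1}{k+1}(kD+A)+\tfrac{1}{k+1}E$ and invokes Theorem \ref{thm:ampisinterior} rather than Proposition \ref{lem:AmpleIsPositiveOnNE}, but for projective $\pi$ these are interchangeable).

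Your route for $(ii)\Rightarrow(i)$ is genuinely different and cleaner. The paper instead proves $(iii)\Rightarrow(i)$: it first handles $\kk=\QQ$ by clearing denominators and comparing global sections, and then for $\kk=\RR$ runs an induction on the number of effective summands in $E$, at each step splitting off one real coefficient into a rational convex combination. Your trick of absorbing each $E_j$ into an integral ample-plus-effective sheaf $A_{i_0}+n_jE_j$ and redistributing the coefficients produces a $\kk_{>0}$-combination of $\pi$-big invertible sheaves in one stroke, avoiding the induction entirely. The paper's approach has the mild advantage of isolating the $\QQ$-case as an independent statement; yours is shorter and treats $\QQ$ and $\RR$ uniformly. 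One small correction: in justifying that $A_{i_0}+n_jE_j$ is $\pi$-big, the relevant input is that an ample sheaf on the generic fiber has positive volume (asymptotic Riemann--Roch, as the paper cites \cite[Chapter VI, Theorem 2.15]{Kol96}), not Proposition \ref{prop:deb131a}, which only gives upper bounds.
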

\begin{proof}
  We first show $(\ref{lem:kodairacharbig}) \Rightarrow
  (\ref{lem:kodairacharapluse})$.
  Write $D = \sum_{i=1}^n a_iD_i$ for $a_i \in \kk_{>0}$.
  Let $A_0$ be a $\pi$-very ample effective Cartier divisor.
  Applying Lemma \ref{lem:BigIsAmpleAddEffective} to each invertible sheaf
  $\cO_X(D_i)$, we have
  \[
    H^0\bigl(X,\cO_X(m_iD_i-A_0)\bigr) \ne 0
  \]
  for some $m_i > 0$.
  We can then find an effective Cartier divisor $E_i \in \lvert m_iD_i - A_0
  \rvert$, and hence
  \[
    D = \sum_{i=1}^n a_iD_i \sim_\kk 
    \sum_{i=1}^n \frac{a_i}{m_i}A_0 + \sum_{i=1}^n \frac{a_i}{m_i}E_i.
  \]
  Setting $A = \sum_{i=1}^n \frac{a_i}{m_i}A_0$ and $E = \sum_{i=1}^n
  \frac{a_i}{m_i}E_i$, we are done.\smallskip
  \par Next, we show $(\ref{lem:kodairacharapluse}) \Rightarrow
  (\ref{lem:kodairacharapluserational})$ and 
  $(\ref{lem:kodairacharapluse}) \Rightarrow
  (\ref{lem:kodairacharapluserationaleff})$.
  If $\kk = \QQ$, there is nothing to show.
  If $A = \sum_{i=1}^m b_iA_i$ for $b_i \in \RR_{\ge0}$ and
  $E = \sum_{j=1}^n c_jE_j$ for $c_j \in \RR_{\ge0}$, then we can write
  \[
    D = \sum_{i=1}^m b_i'A_i + \sum_{j=1}^n (c_j-c_j')E_j
    + \sum_{i=1}^m (b_i-b_i')A_i + \sum_{j=1}^n c_j'E_j
  \]
  where $b_i',c_j' \in \QQ$.
  To obtain a decomposition $D = A+E$ where $A \in \Pic_\QQ(X)$, we
  choose $c_j = c_j'$ and choose $b_i'$ such that $0 \le b_i - b_i' \ll 1$.
  To obtain a decomposition $D = A+E$ where $E \in \Pic_\QQ(X)$, we
  choose $b_i = b_i'$ and choose $c_j'$ such that $\lvert c_j-c_j' \rvert \ll 1$
  and use the openness of the ample cone (Theorem
  \ref{thm:ampisinterior}).\smallskip
  \par Clearly $(\ref{lem:kodairacharapluserational}) \Rightarrow
  (\ref{lem:kodairacharapluse})$ and $(\ref{lem:kodairacharapluserationaleff})
  \Rightarrow (\ref{lem:kodairacharapluse})$.
  It therefore suffices to show 
  $(\ref{lem:kodairacharapluserational}) \Rightarrow
  (\ref{lem:kodairacharbig})$ to complete the proof.
  We first show the statement when $\kk = \QQ$.
  Writing $D = A+E$, we can clear denominators to reduce to the case when
  $D = A + E$ in $\Pic(X)$.
  In this case, we have
  \[
    H^0\bigl(X,\cO_X(mA)\bigr) \hooklongrightarrow
    H^0\bigl(X,\cO_X(mA+mE)\bigr) \cong H^0\bigl(X,\cO_X(mD)\bigr)
  \]
  for all $m > 0$, and hence the claim follows from asymptotic Riemann--Roch
  \cite[Chapter VI, Theorem
  2.15]{Kol96}.
  \par We now show $(\ref{lem:kodairacharapluserational}) \Rightarrow
  (\ref{lem:kodairacharbig})$ when $\kk = \RR$.
  Write $E = \sum_{j=1}^n c_jE_j$.
  We induce on $n$.
  If $n = 0$, there is nothing to show.
  If $n \ge 1$, write
  \[
    D = \biggl(A + \sum_{j=1}^{n-1} c_jE_j\biggr) + c_nE_n.
  \]
  By the inductive hypothesis, we know that $D' = A + \sum_{j=1}^{n-1}
  c_jE_j$ is $\pi$-big, and hence we can write $D' = \sum_{i=1}^m a_iD_i$ for
  $\pi$-big invertible sheaves $D_i$ and $a_i \in \RR_{>0}$.
  Choose $s_1,s_2 \in \QQ_{>0}$ such that $s_1 < c_n/a_m < s_2$ and $t
  \in [0,1]$ such that $c_n/a_m = ts_1 + (1-t)s_2$.
  We then have
  \begin{align*}
    D &= \sum_{i=1}^{m-1} a_iD_i + a_mD_m + c_nE_n\\
    &= \sum_{i=1}^{m-1} a_iD_i + a_m\biggl(D_m + \frac{c_n}{a_m}E_n \biggr)\\
    &= \sum_{i=1}^{m-1} a_iD_i + a_m\bigl( t(D_m + s_1E_n) + (1-t)(D_m+s_2E_n)
    \bigr).
  \end{align*}
  Since $D_m + s_1E_n$ and $D_m + s_2E_n$ are $\pi$-big by the implication
  $(\ref{lem:kodairacharapluserational}) \Rightarrow
  (\ref{lem:kodairacharbig})$ for $\kk = \QQ$, we see that
  $D$ is an $\RR_{>0}$-linear combination of $\pi$-big invertible
  sheaves.\smallskip
  \par Finally, if $D$ is $\pi$-nef and $\pi$-big, then $kD+A$ is $\pi$-ample
  for any positive integer $k$ by Theorem \ref{thm:ampisinterior}.
  If we have a decomposition $D = A+E$ as above, we then have
  \[
    D = \frac{1}{k+1}(kD+A) + \frac{1}{k+1}E.
  \]
  Replacing $A$ and $E$ by $\frac{1}{k+1}(kD+A)$ and $\frac{1}{k+1}E$,
  respectively, we can make the coefficients on $E$ arbitrarily small without
  changing the invertible sheaves that appear when writing $E$ as a $\kk$-linear
  combination of invertible sheaves.
\end{proof}
We show that bigness behaves well with respect to pulling back by generically finite morphisms.
\begin{lemma}[cf.\ {\citeleft\citen{Nak04}\citemid Chapter II, Lemma 5.6 and
  Remark on p.\ 69\citepunct \citen{Fuj14}\citemid Lemmas A.5 and
  A.18\citeright}]\label{lem:bigpullback}
  Let $S$ be a scheme.
  Let
  \[
    \begin{tikzcd}[column sep=0.75em]
      X' \arrow{rr}{f}\arrow{dr}[near start,swap]{\pi'}
      & & X\arrow{dl}[near start]{\vphantom{\pi'}\smash{\pi}}\\
      & Z
    \end{tikzcd}
  \]
  be a commutative diagram of integral algebraic spaces over $S$, where $\pi$
  and $\pi'$ are proper and $f$ is surjective.
  Let $D \in \Pic_\kk(X)$ for $\kk \in \{\ZZ,\QQ,\RR\}$.
  \begin{enumerate}[label=\((\roman*)\),ref=\roman*]
    \item\label{lem:bigpullbackbig} Suppose that \(f\) is genericaly finite.
      Then, $D$ is $\pi$-big if and only if $f^*D$
      is $\pi'$-big.
    \item\label{lem:bigpullbackpseff}
      If \(D\) is \(\pi\)-pseudoeffective, then $f^*D$
      is \(\pi'\)-pseudoeffective.
  \end{enumerate}
\end{lemma}
We start with the case when \(\kk \in \{\ZZ,\QQ\}\).
\begin{proof}[Proof of Lemma \ref{lem:bigpullback} when \(\kk \in \{\ZZ,\QQ\}\)]
  Replacing $Z$ by the spectrum of the generic point of \(\pi(X)\), we
  may assume that $Z =
  \Spec(k)$ for a field $k$.\smallskip
  \par We first show that if $D$ is big or pseudoeffective, then $f^*D$ is also.
  For bigness,
  working one term of $D$ at a time, it suffices to consider the case when $\kk
  = \ZZ$.
  The statement for bigness now follows from Proposition
  \ref{prop:volumegenfin}.
  The statement for pseudoeffectivity follows from taking limits,
  since the pullback of an effective $\QQ$-Cartier divisor is an effective $\QQ$-Cartier divisor.\smallskip
  \par We now show the direction \(\Leftarrow\) in \((\ref{lem:bigpullbackbig})\).
  If $D\in \Pic_\QQ(X)$,
  since the volume is homogeneous (Proposition \ref{prop:volhomog}),
  we can clear denominators and reduce to the case $D\in \Pic(X)$,
  and the statement follows from
  Proposition
  \ref{prop:volumegenfin}.
\end{proof}
To prove Lemma \ref{lem:bigpullback} for \(\RR\) coefficients, we need the
following lemma for \(\kk = \QQ\).
This lemma says that the sum of a $\pi$-big and
$\pi$-nef or $\pi$-pseudoeffective $\kk$-invertible sheaf is $\pi$-big.
\begin{lemma}\label{lem:bigplusnef}
Let $\pi\colon X \to Z$ be a proper surjective morphism between integral
algebraic spaces over a scheme $S$.
Let $D$ be a $\pi$-big $\kk$-invertible sheaf on $X$ for $\kk \in
\{\ZZ,\QQ,\RR\}$.
If $D'$ is a $\pi$-nef (resp.\ $\pi$-pseudoeffective) $\kk$-invertible sheaf on
$X$, then $D+D'$ is $\pi$-big.
\end{lemma}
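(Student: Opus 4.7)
The plan is to reduce successively to the case $Z = \Spec(k)$ for a field $k$, and then to the case when $X$ is a projective variety over $k$. The first reduction restricts to the generic fiber $X_\eta \to \Spec(\kappa(\eta))$: bigness and pseudoeffectivity are conditions on $X_\eta$ by definition, while $\pi$-nefness of $D'$ restricts to nefness over $\Spec(\kappa(\eta))$ by Lemma \ref{lem:nefbasechange}$(\ref{lem:nefbasechangepullback})$. For the second reduction, I apply the weak form of Chow's lemma \cite[\href{https://stacks.math.columbia.edu/tag/089J}{Tag 089J}]{stacks-project} combined with \cite[\href{https://stacks.math.columbia.edu/tag/0DMN}{Tag 0DMN}]{stacks-project} to obtain a proper generically finite surjection $f\colon X' \to X$ from a projective variety $X'/k$. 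By Lemmas \ref{lem:bigpullback} and \ref{lem:nefpullback}, pulling back preserves nefness, pseudoeffectivity, and (in both directions, via generic finiteness) bigness, so I may assume $X$ is itself a projective variety over $k$. The case $\kk = \ZZ$ reduces to $\kk \in \{\QQ,\RR\}$ by passing to $\Pic_\QQ$.

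On the projective variety $X$, I apply Kodaira's lemma (Corollary \ref{lem:kodairachar}) to write $D = A + E$ with $A$ a $\pi$-ample $\kk$-invertible sheaf and $E$ an effective $\kk$-Cartier divisor. In the nef case, Kleiman's criterion (Proposition \ref{lem:AmpleIsPositiveOnNE}, applicable since $\pi$ is projective and hence $X$ is relatively quasi-divisorial for $\pi$) yields that $A + D'$ is $\pi$-ample, because
\[
  \bigl((A + D') \cdot \gamma\bigr) = (A \cdot \gamma) + (D' \cdot \gamma) > 0
\]
for every nonzero $\gamma \in \NEbar(X/k)$. Then $D + D' = (A + D') + E$ expresses $D + D'$ as ample plus effective, which is $\pi$-big by the other direction of Corollary \ref{lem:kodairachar}.

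In the pseudoeffective case, by definition $D'$ is a limit of effective $\QQ$-Cartier divisors $D'_i$ in $\Pic_\kk(X)$, and the continuous map $\Pic_\RR(X) \to N^1(X/k)_\RR$ forces the classes $[A + D' - D'_i]$ in the finite-dimensional space $N^1(X/k)_\RR$ (finiteness from Theorem \ref{thm:RelNSFinite}) to converge to $[A]$. Since $[A]$ lies in the open cone $\Amp(X/k) \subseteq \Int(\Nef(X/k))$ (Theorem \ref{thm:ampisinterior}), we obtain $[A + D' - D'_i] \in \Amp(X/k)$ for all $i \gg 0$. Because relative ampleness of $\RR$-Cartier divisors on a projective variety depends only on the numerical class (the remark following Theorem \ref{thm:ampisinterior}), the $\kk$-invertible sheaf $A + D' - D'_i$ is $\pi$-ample for such $i$. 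Then
\[
  D + D' = (A + D' - D'_i) + (D'_i + E)
\]
is ample plus effective, hence $\pi$-big by Corollary \ref{lem:kodairachar}.

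The main obstacle is the pseudoeffective case: one must verify that the perturbation $A + D' - D'_i$ is genuinely $\pi$-ample (not merely numerically equivalent to an ample divisor), which is why the passage to $N^1(X/k)_\RR$ and the use of the numerical characterization of ampleness on a projective variety are essential. The nef case, by contrast, is routine once Kodaira's lemma and Kleiman's criterion are in hand.
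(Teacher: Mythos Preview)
Your proposal is correct and follows essentially the same approach as the paper's proof: reduce to a projective variety over a field via the generic fiber and Chow's lemma, apply Kodaira's lemma to split $D$, then use Kleiman's criterion for the nef case and openness of the ample cone (Theorem \ref{thm:ampisinterior}) for the pseudoeffective case. The only cosmetic difference is that the paper works on the Chow cover $X'$ throughout and descends at the end via Lemma \ref{lem:bigpullback}, whereas you replace $X$ by $X'$ outright; the content is identical.
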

\begin{proof}[Proof of Lemma \ref{lem:bigplusnef} when \(\kk \in \{\ZZ,\QQ\}\)]
  Replacing $Z$ by the spectrum of its generic point,
  we may assume that $Z =
  \Spec(k)$ for a field $k$.
  By the weak version of Chow's lemma (Lemma \ref{lem:weakchow}),
  there exists a generically finite morphism
  $\mu\colon X' \to X$ from a projective
  variety over $k$.
  We then see that $\mu^*D$ is big by Lemma \ref{lem:bigpullback}, and that
  $\mu^*D'$ is nef by Lemma \ref{lem:nefpullback} (resp.\
  pseudoeffective by Lemma \ref{lem:bigpullback} for \(\kk = \QQ\)).
  By Kodaira's lemma (Corollary \ref{lem:kodairachar}) applied to \(\mu^*D\) on
  \(X'\), we can write $\mu^*D
  = A+E$ in $\Pic_\kk(X')$ where $A$ is ample and $E$ is effective.
  Thus, we have
  \[
    \mu^*(D + D') = A + \mu^*D' + E.
  \]
  If $D'$ is nef, then $A+\mu^*D'$ is ample by Kleiman's criterion
  (Proposition \ref{lem:AmpleIsPositiveOnNE}), and hence $\mu^*(D+D')$ is big
  by Kodaira's lemma (Corollary \ref{lem:kodairachar}).
  If $D'$ is pseudoeffective, then $\mu^*D'$ can be written as a limit of
  effective $\QQ$-Cartier divisors $F_i$ as $i \to \infty$.
  Writing
  \[
    \mu^*(D+D') = A + (\mu^*D' - F_i) + F_i + E,
  \]
  we see that $A + (\mu^*D' - F_i)$ is ample for $i \gg 0$ by Theorem
  \ref{thm:ampisinterior}, and hence $\mu^*(D+D')$ is big by Kodaira's lemma
  (Corollary \ref{lem:kodairachar}).
  Finally, we conclude that $D+D'$ is big by Lemma \ref{lem:bigpullback} for
  \(\kk = \QQ\).
\end{proof}
We can now prove Lemmas \ref{lem:bigpullback} and \ref{lem:bigplusnef} for
\(\RR\) coefficients.
\begin{proof}[Proof of Lemma \ref{lem:bigpullback} when \(\kk = \RR\)]
  Replacing $Z$ by the spectrum of the generic point of \(\pi(X)\), we
  may assume that $Z =
  \Spec(k)$ for a field $k$.
  The proof of the implications \(\Rightarrow\) is the same as the \(\kk \in
  \{\ZZ,\QQ\}\) case.
  It therefore remains to show that for $D\in \Pic_\RR(X)$,
  if \(f\) is generically finite and \(f^*D\)
  is big, then \(D\) is big.\smallskip
  \par Write
  \begin{align*}
    D =& \sum_i a_i D_i,
  \intertext{where the $D_i$ are distinct elements of $\Pic_{\ZZ}(X)$
  and $a_i$ are real numbers.
  By the weak version of Chow's lemma (Lemma \ref{lem:weakchow}),
  there exists a generically finite morphism
  $\mu\colon X'' \to X'$ from a projective
  variety over $k$.
  Since $f^*D$ is big, the previous paragraph implies
  $\mu^*f^*D$ is big.
  By Kodaira's Lemma (Corollary \ref{lem:kodairachar}) 
  and the openness of the ample cone (Theorem \ref{thm:ampisinterior}) applied
  on \(X''\),
  we see that letting}
    D_0 =& \sum_i a_i'D_i
  \intertext{be a sufficiently close approximation
  of \(D\) such that \(a_i' \in \QQ\), the pullback $\mu^*f^*D_0$ is also big.
  Thus, the rational case of \((\ref{lem:bigpullbackbig})\) shown above implies
  \(D_0\) is big.
  Taking the limits \(a_i' \to a_i\), we see that \(D\) is pseudoeffective.
  Thus, we can write}
    D =& \sum_{j=1}^r b_jD_j'
  \intertext{where \(b_j \in \RR_{>0}\) and the \(D_j'\) are effective for every \(j\).
  Repeating the argument above, we know that for sufficiently close rational
  approximations \(b'_j < b_j\), the \(\QQ\)-invertible sheaf}
    D_0' =& \sum_{j=1}^r b_j'D_j'
  \intertext{is big.
  We can therefore write}
  D = D_0' \mathop{+}& \sum_{j=1}^r (b_j-b_j')D_j'
  \end{align*}
  as the sum of a big \(\QQ\)-invertible sheaf and an \(\RR\)-linear combination of
  effective invertible sheaves.
  \par We claim that \(D_0' + \sum_{j=1}^r (b_j-b_j')D_j'\) is big by induction
  on \(r\).
  If \(r = 0\), then there is nothing to show.
  If \(r > 0\), we adapt the proof in \cite[Lemma A.16]{Fuj14}.
  Let \(c_1,c_2 \in \QQ_{>0}\) such that \(c_1 < b_r - b_r' < c_2\).
  Let \(t \in (0,1)\) be a real number such that \(tc_1+(1-t)c_2 = b_r-b_r'\).
  Then, we have 
  \begin{align*}
    D_0' + \sum_{j=1}^{r} (b_j-b_j')D_j'
    \MoveEqLeft[5]=
    t\Biggl(\biggl(D_0' + \sum_{j=1}^{r-1} (b_j-b_j')D_j'\biggr) +
    c_1D_r'\Biggr)\\
    &+
    (1-t)\Biggl(\biggl(D_0' + \sum_{j=1}^{r-1} (b_j-b_j')D_j'\biggr) +
    c_2D_r'\Biggr).
  \end{align*}
  By Lemma \ref{lem:bigplusnef} for \(\kk = \QQ\), the two terms above are big.
  We therefore see that \(D_0' + \sum_{j=1}^{r} (b_j-b_j')D_j'\)
  is an \(\RR\)-linear combination of big \(\QQ\)-invertible sheaves, and is
  therefore big.
\end{proof}
\begin{proof}[Proof of Lemma \ref{lem:bigplusnef} when \(\kk = \RR\)]
  Now that we have established Lemma \ref{lem:bigpullback} for \(\RR\)
  coefficients,
  the proof of Lemma \ref{lem:bigplusnef} for \(\kk \in \{\ZZ,\QQ\}\) also
  appplies to the case \(\kk = \RR\).
\end{proof}
We want to show that 
bigness and pseudoeffectivity are well-behaved under birational transforms.
To do so, we need a suitable version of the negativity lemma.
See \cite[\href{https://stacks.math.columbia.edu/tag/0ED7}{Tag
0ED7}]{stacks-project} for the definition of universally catenary algebraic
spaces that appears in the version of the negativity lemma below.
\begin{lemma}[{Negativity Lemma; cf.\ \cite[Lemma 2.16]{BMPSTWW}}]\label{lem:Negativity}
  Let $h\colon X \to Y$ be a proper birational morphism of integral normal
  quasi-excellent Noetherian algebraic spaces over a scheme $S$
  that are universally catenary or
  have dualizing complexes.
  Let $B$ be a Weil divisor on $X$ such that $[B]$ is the class of an invertible
  sheaf $\sL$. 
  Assume that $\sL^{-1}$ is $h$-nef and that $h_*B$ is effective. 
  Then $B$ is effective.
\end{lemma}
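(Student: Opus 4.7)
The plan is to reduce to a statement about surfaces and then invoke the classical negative-definiteness of the intersection form on an exceptional fiber.

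First I would reduce to the case where $X$ and $Y$ are schemes. Effectiveness of $B$ is \'etale-local, so after pulling back along a surjective \'etale cover $U \to Y$ by a scheme I may assume $Y$ is a Noetherian scheme; the hypotheses of quasi-excellence and of universal catenarity or existence of a dualizing complex descend to an \'etale cover, and $h$-nefness of $\sL^{-1}$ is preserved by Lemma~\ref{lem:nefbasechange}. Using Chow's lemma for algebraic spaces \cite[\href{https://stacks.math.columbia.edu/tag/088U}{Tag 088U}]{stacks-project} together with a dominating proper birational modification of $X$, I may further assume that $X$ is a scheme; here I use that effectiveness can be tested after a dominating modification together with the assumption $h_*B \geq 0$. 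I then decompose $B = B^+ - B^-$ into effective Weil divisors with no common components. Since $h_*B \ge 0$ and $B^+, B^-$ share no components, every prime component of $B^-$ must be $h$-exceptional; the goal becomes to show $B^- = 0$.

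The key intermediate claim is that for any nonzero effective $h$-exceptional $\QQ$-Cartier divisor $F$ on $X$, there exists an integral curve $C \subseteq \Supp F$ with $h(C)$ a point and $(F \cdot C) < 0$. Granting the claim applied to $F = B^-$, we obtain such a curve $C$; since $C \subseteq \Supp B^-$, the no-common-components assumption gives $C \not\subseteq \Supp B^+$ and hence $(B^+ \cdot C) \ge 0$, so
\[
  (B \cdot C) \;=\; -\,(B^- \cdot C) + (B^+ \cdot C) \;>\; 0,
\]
contradicting $(B \cdot C) \le 0$, which follows from the $h$-nefness of $\sL^{-1}$ via the curve-test characterization of $h$-nefness (Lemma~\ref{lem:NefAgainstNonClosedContracted} together with passage to the fiber containing $C$).

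To prove the intermediate claim, I would localize $Y$ at a generic point $y$ of $h(\Supp F)$ so that $\Supp F$ lies over the closed point of the new $Y$, and induct on $\dim X$. The base case $\dim X = 2$ is the classical statement that, for a proper birational morphism between normal surfaces (the excellence and universal catenary/dualizing complex assumptions guarantee the needed well-behaved intersection theory via \cite[\href{https://stacks.math.columbia.edu/tag/0EDF}{Tag 0EDF}]{stacks-project}), the intersection matrix of exceptional components of a fiber is negative-definite; writing $F = \sum a_i E_i$ with $a_i > 0$ and all $E_i$ exceptional, negative-definiteness yields $(F \cdot F) < 0$, hence $(F \cdot E_j) < 0$ for some $j$, so we may take $C = E_j$. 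For the inductive step, I would cut $X$ by a sufficiently general relatively very ample Cartier divisor (available after a further projective modification provided by Chow's lemma) meeting every component of $\Supp F$, restrict $h$ and $F$ to the intersection, and apply the inductive hypothesis to the resulting proper birational morphism of lower-dimensional normal schemes.

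The main obstacle will be the cutting step in the inductive reduction, where we need a general very ample Cartier divisor whose intersection with $X$ is again normal and integral without relying on the characteristic-zero Bertini statements of Section~\ref{sect:bertini} (since the statement of the lemma is characteristic-free). I expect this to be handled either by general excellence-style Bertini theorems for schemes over excellent bases exploiting the J-2 property, or by an alternative direct construction via Seshadri-type general members of very ample linear systems on a projective modification; either approach will require care to preserve the exceptional structure of $F$ under restriction.
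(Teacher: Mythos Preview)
The paper's proof is much shorter than your proposal: after the \'etale reduction to $Y$ a scheme (which you also do), the paper simply cites \cite[Lemma 2.16]{BMPSTWW} for the case where $Y$ is a scheme and $h$ is projective, and then invokes Chow's lemma \cite[Th\'eor\`eme 5.6.1]{EGAII} to reduce the proper case to the projective one. Your proposal instead unwinds the proof of that cited lemma via the classical $B = B^+ - B^-$ decomposition, the intermediate curve claim, and induction to surfaces; this is the standard route and is essentially how \cite[Lemma 2.16]{BMPSTWW} and \cite[Lemma 3.39]{KM98} are proved, so what you gain is self-containedness at the cost of redoing work already in the literature.

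Your direct argument has one genuine imprecision worth flagging: you apply the intermediate claim to $F = B^-$, but $B^-$ is only a Weil divisor and need not be $\QQ$-Cartier on $X$, so neither $(B^- \cdot C)$ nor $(B^+ \cdot C)$ is defined in general. The usual fix is to cut down to a normal surface \emph{first} and only then split $B$, using Mumford's intersection pairing (which is defined for arbitrary $\QQ$-Weil divisors on a normal excellent surface and is negative-definite on exceptional fibers); equivalently, one argues directly that $(B \cdot E_i) > 0$ for some exceptional component $E_i$ of $B^-$ on the surface, never intersecting against $B^\pm$ on $X$ itself. You are right that the hyperplane-cutting step in positive or mixed characteristic is the delicate point; \cite[Lemma 2.16]{BMPSTWW} handles it over quasi-excellent bases with dualizing complexes, and since the paper simply cites that result, the obstacle you anticipate does not arise in the paper's own proof.
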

\begin{proof}
  After replacing $Y$ by an \'etale cover $Y' \to Y$, we may reduce to the case of schemes.
  Note that $Y'$ is quasi-excellent by definition, and is moreover excellent
  either because $Y$ is universally catenary, or because $Y'$ has a dualizing
  complex.
  Nefness of $\sL^{-1}$ is preserved by Lemma
  \ref{lem:nefpullback}$(\ref{lem:nefpullbackalways})$.
  The effectivity of $B$ can be checked after flat base change.
\par When $Y$ is a scheme and $h$ is projective, this is \cite[Lemma 2.16]{BMPSTWW}. 
The general case follows from Chow's Lemma \cite[Th\'eor\`eme 5.6.1]{EGAII}, since we may pass to an affine open cover of $Y$ and pullback along a birational morphism $X'\to X$.
\end{proof}
We can now show that 
bigness and pseudoeffectivity are well-behaved under birational transforms.
\begin{lemma}[cf.\ {\cite[Chapter II, Lemma 5.6(1), Remark on p.\ 69]{Nak04}}]
  \label{lem:ContrPreservesBig}
  Let $S$ be a scheme.
  Let
  \[
    \begin{tikzcd}[column sep=0.75em]
      X \arrow{rr}{f}\arrow{dr}
      & & Y\arrow{dl}[near start]{g}\\
      & Z
    \end{tikzcd}
  \]
  be a commutative diagram of integral quasi-excellent algebraic spaces over $S$ where \(X\) and
  \(Y\) are normal, \(g\) is proper, and \(f\) is proper and birational.
Let $D$ be a $\QQ$-Weil divisor that is $\QQ$-Cartier on $X$ such that the birational transform $f_*D$ is $\QQ$-Cartier.
If $D$ is big over $Z$, so is $f_*D$.
The same statement holds for \(\RR\) coefficients if \(Y\) is \(\QQ\)-factorial,
or more generally, if \(f_*D_i\) is
\(\RR\)-Cartier for every invertible sheaf \(D_i\) appearing in \(D\).
\par Moroever, suppose that \(f\) is an isomorphism in codimension one.
If \(f_*D\) is big over \(Z\), then \(D\) is big over \(Z\).
\end{lemma}
\begin{proof}
By Definition \ref{def:fbig}, we may take the fiber over the generic point of
$(g \circ f)(X)$ to assume that $Z$ is the spectrum of a field.\smallskip
\par If $m\in\ZZ_{>0}$ is sufficiently divisible, then $mD$ and $m\,f_*D$ are Cartier. 
Then, the difference \(f^*f_*D-D\) is effective by the Negativity Lemma \ref{lem:Negativity}.
We therefore see that
\[
  f^*f_*D = (f^*f_*D-D) + D 
\]
is big by Lemma \ref{lem:bigplusnef}.
We conclude that \(f_*D\) is big by Lemma
\ref{lem:bigpullback}\((\ref{lem:bigpullbackbig})\).
For \(\RR\) coefficients, we can prove that \(f^*f_*D - D\) is effective by
applying the Negativity Lemma \ref{lem:Negativity} to each term \(D_i\)
appearing in \(D\).\smallskip
\par For the ``Moreover'' statement, we know that \(f^*f_*D=D\) because
\(f\) is an isomorphism in codimension one.
We conclude that $f_*D$ is big if and only if $D$ is big by Lemma
\ref{lem:bigpullback}\((\ref{lem:bigpullbackbig})\).
\end{proof}
For pseudoeffectivity, we have the following result.
The condition that \(Y\) has a \(g\)-big invertible sheaf in
\((\ref{lem:ContrPreservesPsEffPushForward})\) below holds if \(Y\) is
projective over \(Z\).
However, there exist normal complete toric varieties that do not have any big
invertible sheaves \cite[Example A.17]{Fuj14}.
\begin{lemma}[cf.\ {\cite[Chapter II, Lemma 5.6(2), Remark on p.\ 69]{Nak04}}]
  \label{lem:ContrPreservesPsEff}
  Let $S$ be a scheme.
  Let
  \[
    \begin{tikzcd}[column sep=0.75em]
      X \arrow{rr}{f}\arrow{dr}
      & & Y\arrow{dl}[near start]{g}\\
      & Z
    \end{tikzcd}
  \]
  be a commutative diagram of integral quasi-excellent algebraic spaces over $S$ where \(X\) and
  \(Y\) are normal, \(g\) is proper, and \(f\) is proper and birational.
Let \(\kk \in \{\QQ,\RR\}\).
Let $D$ be a $\kk$-Weil divisor that is $\kk$-Cartier on $X$ such that the birational transform $f_*D$ is $\kk$-Cartier.
\begin{enumerate}[label=\((\roman*)\),ref=\roman*]
  \item\label{lem:ContrPreservesPsEffPushForward}
    Suppose that one of the following conditions hold:
    \begin{itemize}
      \item \(\kk = \QQ\) and \(Y\) has a \(g\)-big invertible sheaf.
      \item \(\kk = \RR\) and \(Y\) is \(\QQ\)-factorial, or more generally,
        \(D\) is the limit of \(\RR\)-invertible sheaves \(F_i\) associated to
        effective \(\RR\)-Cartier divisors such
        that \(f_*F_i\) is \(\RR\)-Cartier for every \(i\).
    \end{itemize}
    If $D$ is pseudoeffective over $Z$, so is $f_*D$.
  \item\label{lem:ContrPreservesPsEffConverse}
    Let \(\kk = \RR\).
    Suppose that \(f\) is an isomorphism in codimension one.
    If \(f_*D\) is pseudoeffective over \(Z\), then \(D\) is pseudoeffective over \(Z\).
\end{enumerate}
\end{lemma}
\begin{proof}
By Definition \ref{def:fbig}, we may take the fiber over the generic point of
$(g \circ f)(X)$ to assume that $Z$ is the spectrum of a field.\smallskip
\par We first show \((\ref{lem:ContrPreservesPsEffPushForward})\) when \(\kk =
\QQ\) and \(Y\) has a big invertible sheaf.
For every big \(\QQ\)-Cartier divisor \(B\) on \(Y\), we have
\[
  f_*D+B=f_*(D+f^*B).
\]
The sum $D+f^*B$ is big by Lemma \ref{lem:bigplusnef} since $f^*B$ is big.
By Lemma \ref{lem:ContrPreservesBig}, $f_*D+B$ is big, and hence $f_*D$ is pseudoeffective.
\par We now show \((\ref{lem:ContrPreservesPsEffPushForward})\) when \(\kk =
\RR\) and \(D\) is the limit of \(\RR\)-invertible sheaves \(F_i\) associated to
effective \(\RR\)-Cartier divisors such
that \(f_*F_i\) is \(\RR\)-Cartier for every \(i\).
Then, \(f_*D\) is the limit of the \(f_*F_i\), and is therefore pseudoeffective.\smallskip
\par Finally, we show \((\ref{lem:ContrPreservesPsEffConverse})\).
Write \(f_*D\) as a limit of \(\RR\)-invertible sheaves \(F_i\) associated to
effective \(\RR\)-Cartier divisors.
Then, \(f^*f_*D\) is the limit of the \(f^*F_i\).
Since \(f\) is an isomorphism in codimension one, we have \(D = f^*f_*D\), and
hence \(D\) is pseudoeffective.
\end{proof}

\subsection{Linear systems and generic fibers}
Relative bigness and relative
pseudoeffectivity only depend on the generic fiber, and hence we
describe how linear systems behave when passing to the generic fiber of a
morphism.
\begin{lemma}[cf.\ {\cite[Lemma 3.2.1]{BCHM10}}]\label{lem:LinearSysLocalizes}
  Let $\pi\colon X\to Z$ be a proper surjective morphism of integral Noetherian
  schemes, where
  $X$ is normal and $Z$ is affine.
  Consider a point $z \in Z$, and set $R \coloneqq \cO_{Z,z}$ and $X_R \coloneqq
  X \times_Z \Spec(R)$.
  Let $D$ be a $\kk$-Weil divisor on $X$ and let $E$ an effective $\kk$-Weil divisor on
  $X_R$ such that $E_{\vert X_R} \sim_\kk D_{\vert X_R}$
  for some $\kk \in
  \{\ZZ,\QQ,\RR\}$.
  Then, there exists an effective $\kk$-Weil divisor $F$ on $X$ such that $F
  \sim_\kk D$ and
  $F_{\vert X_R}=E$.
\end{lemma}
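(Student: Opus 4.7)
The plan is to adapt the proof of \cite[Lemma 3.2.1]{BCHM10}. First, because $\Spec(R) \to Z$ is a cofiltered limit of affine open immersions (the limit over affine open neighborhoods of $z$ in $Z$), the base change $X_R \to X$ is a cofiltered limit of open immersions, each containing the generic point of $X$. Hence $X_R$ is integral with $K(X_R) = K(X)$, and restriction of Weil divisors from $X$ to $X_R$ coincides with flat pullback. Using the hypothesis $E \sim_\kk D_{\vert X_R}$, choose $a_i \in \kk$ and $f_i \in K(X)^*$ with
\[
  E - D_{\vert X_R} = \sum_i a_i\,\prdiv_{X_R}(f_i).
\]
Setting $F_0 \coloneqq D + \sum_i a_i\,\prdiv_X(f_i) \in \WDiv_\kk(X)$, we have $F_0 \sim_\kk D$ and $(F_0)_{\vert X_R} = E$, but $F_0$ need not be effective on $X$.

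Second, decompose $F_0 = F_0^+ - F_0^-$ into its positive and negative parts, which are effective $\kk$-Weil divisors with disjoint supports. The restriction of a prime divisor of $X$ to $X_R$ is either empty or a prime divisor on $X_R$ (with multiplicity one), and distinct prime divisors of $X$ restrict to distinct (or empty) prime divisors of $X_R$; so the restrictions $(F_0^\pm)_{\vert X_R}$ still have disjoint supports. Since $(F_0)_{\vert X_R} = E \ge 0$, we conclude $(F_0^-)_{\vert X_R} = 0$, i.e., every prime divisor $P$ appearing in $F_0^-$ satisfies $P \cap X_R = \emptyset$. As $\Spec(R) \subset Z$ is the set of generizations of $z$ and $\pi$ is proper, this is equivalent to $z \notin \pi(P)$. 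Hence $T \coloneqq \pi(\supp(F_0^-))$ is a finite union of closed subsets of $Z$ not containing $z$.

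Third, write $Z = \Spec(A)$ and let $\fp \subset A$ be the prime ideal corresponding to $z$. Since $z \notin T$, we may choose $g \in A$ vanishing on $T$ with $g \notin \fp$. Then $g \circ \pi \in \Gamma(X,\cO_X)$ is a nonzerodivisor, the principal Weil divisor $\prdiv_X(g \circ \pi)$ is effective, and each prime divisor $P$ in $\supp(F_0^-)$ satisfies $\pi(P) \subseteq V(g)$, so $P$ appears with positive multiplicity in $\prdiv_X(g \circ \pi)$. Choose $N \in \ZZ_{>0}$ large enough that
\[
  N \cdot \mult_P\bigl(\prdiv_X(g \circ \pi)\bigr) \ge \mult_P(F_0^-)
\]
for each of the (finitely many) prime divisors $P$ in $\supp(F_0^-)$, and set $F \coloneqq F_0 + N\,\prdiv_X(g \circ \pi)$. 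Then $F \sim_\kk D$, $F$ is effective on $X$ by construction, and $F_{\vert X_R} = E$ because the image of $g$ in $R = A_\fp$ is a unit, so $\prdiv_{X_R}((g \circ \pi)_{\vert X_R}) = 0$.

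The main technical points are the closedness of $T$, which is the only place where properness of $\pi$ enters essentially, and the finiteness of $\supp(F_0^-)$; the argument is uniform in $\kk \in \{\ZZ,\QQ,\RR\}$ because the sums defining $F_0$ and $F_0^\pm$ are all finite and the correcting divisor $N\,\prdiv_X(g \circ \pi)$ is integral.
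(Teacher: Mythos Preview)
Your proof is correct. It diverges from the paper's argument in how the non-effective part is corrected. Both proofs begin the same way: using $K(X_R)=K(X)$ to produce an $F_0\sim_\kk D$ with $(F_0)_{\vert X_R}=E$, and observing that the obstruction to effectivity is supported on finitely many primes $P$ with $P\cap X_R=\emptyset$. At this point the paper reduces to a single such prime $S_k$ and uses flat base change $H^0(X,\cO_X(\pm S_k))\otimes_A R\cong H^0(X_R,\cO_{X_R})$ together with torsion-freeness of the pushforward to extract a global section whose divisor of zeros restricts trivially to $X_R$. You instead handle all of $\supp(F_0^-)$ simultaneously by choosing one function $g\in A$ vanishing on the closed set $\pi(\supp(F_0^-))$ but not at $z$, and adding a large multiple of the pulled-back principal divisor $\prdiv_X(g\circ\pi)$. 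Your route is more explicit and avoids any cohomological bookkeeping; the paper's route is slightly more sheaf-theoretic but uses equally elementary inputs. One minor remark: properness is convenient for making $T=\pi(\supp(F_0^-))$ closed, but your argument only needs a $g$ vanishing at each $\pi(\eta_P)$ and not at $z$, which can be arranged prime-by-prime (taking a product) without properness.
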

\begin{proof}
  Let $E=\sum_{i=1}^n a_iE_i$ where $a_i\in\mathbf{k}$ and $E_i$ are prime divisors on $X_R$. There exist rational functions $f_1,f_2,\ldots,f_m$ on $X_R$ and numbers $b_1,b_2,\ldots,b_m\in\mathbf{k}$ such that
\begin{align*}
D_{|X_R}=\sum_{i=1}^n a_iE_i&+\sum_{j=1}^m b_j\prdiv_{X_R}(f_j).
\intertext{Since the function fields of $X$ and $X_R$ are the same, the functions $f_j$
define principal divisors $\prdiv_{X}(f_j)$ on $X$.
For each $i$, we also obtain a prime divisor $\overline{E}_i$ on $X$ as the closure of $E_i$. 
Let}
  D'=D-\sum_{i=1}^n a_i\overline{E}_i&-\sum_{j=1}^m b_j\prdiv_{X}(f_j).
\end{align*}
Then, $D'$ is a $\mathbf{k}$-linear combination of prime divisors that avoid $X_R$.
In other words, we have $D'=\sum_k c_kS_k$ where $(S_k)_{\vert X_R}=0$ and
$c_k\in\mathbf{k}$ for every $k$. 
If we can prove the result for $\sgn(c_k)S_k$ for each $k$ (and
$\mathbf{k}=\mathbf Z$) then we are done.

Let $\mathcal{F}=\mathcal O_X(\sgn(c_k)S_k)$.
By flat base change \cite[Proposition 1.4.15]{EGAIII1}, we have
\begin{align*}
  H^0(X,\mathcal{F})\otimes_{H^0(Z,\cO_Z)}R
  &= H^0(X_R,\mathcal{F}_R)\\
  &\simeq H^0(X_R,\cO_{X_R}).
\end{align*}
Since $H^0(X,\mathcal{F})$ is torsion-free as an $H^0(Z,\cO_Z)$-module
\cite[Proposition 8.4.5]{EGAInew},
there exists a section $s\in H^0(X,\mathcal{F})$ such that $s$ maps to a nonzero
section of $\mathcal{F}_R$.
We then have $\prdiv(s) \sim \sgn(c_k)S_k$ while $\prdiv(s)_{\vert X_R}=0$, and
hence we are done.
\end{proof}

\begin{corollary}\label{cor:EffIffEffAtGenFiber}
Let $\pi\colon X\to Z$ be a proper surjective morphism of integral
  Noetherian schemes with $X$ normal and $Z$ affine.
  Consider a point $z \in Z$, and set $R \coloneqq \cO_{Z,z}$ and $X_R \coloneqq
  X \times_Z \Spec(R)$.
  Let $D$ be a $\kk$-Weil divisor on $X$ where $\kk \in
  \{\ZZ,\QQ,\RR\}$. 
  Then $\lvert D \rvert_{\kk}\neq\emptyset$ if and only if $\lvert D_{\vert X_R}
  \rvert_{\kk}\neq\emptyset$.
\end{corollary}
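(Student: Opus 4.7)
The plan is to derive the corollary directly from Lemma \ref{lem:LinearSysLocalizes} combined with the observation that restriction to $X_R$ is well-behaved on linear systems. The forward direction $(\Rightarrow)$ is trivial and the backward direction $(\Leftarrow)$ is essentially a restatement of the preceding lemma, so the proof will be very short.

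For the forward implication, I would take any $F \in \lvert D \rvert_\kk$, so $F \ge 0$ and $F \sim_\kk D$ on $X$. The key point is that restriction to the flat base change $X_R$ preserves both effectivity and $\kk$-linear equivalence: each prime divisor on $X$ either restricts to a prime divisor on $X_R$ or is contained in $\pi^{-1}(Z \setminus \Spec(R))$ and restricts to zero, and the equality of function fields $K(X) = K(X_R)$ ensures that a principal divisor $\prdiv_X(f)$ restricts to $\prdiv_{X_R}(f)$. Extending by $\kk$-linearity, we conclude $F_{\vert X_R} \in \lvert D_{\vert X_R} \rvert_\kk$.

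For the backward implication, given an effective $\kk$-Weil divisor $E$ on $X_R$ with $E \sim_\kk D_{\vert X_R}$, I would feed $D$ and $E$ directly into Lemma \ref{lem:LinearSysLocalizes} to produce an effective $\kk$-Weil divisor $F$ on $X$ with $F \sim_\kk D$ and $F_{\vert X_R} = E$. This gives $F \in \lvert D \rvert_\kk$, as required.

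There is no genuine obstacle in the proof; all of the technical work has already been absorbed into Lemma \ref{lem:LinearSysLocalizes}, whose proof uses normality of $X$ to extend prime divisors from $X_R$ to $X$ via closure, and uses flat base change for $\pi_*\cO_X(\sgn(c_k) S_k)$ to lift the required principal sections. The corollary itself is simply the translation of that lemma into the language of linear systems.
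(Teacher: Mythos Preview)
Your proposal is correct and matches the paper's approach: the paper states this result as an immediate corollary of Lemma~\ref{lem:LinearSysLocalizes} without a separate proof, and your argument is precisely the intended one-line deduction (trivial restriction for $\Rightarrow$, the lemma for $\Leftarrow$).
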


\subsection{Relatively big \texorpdfstring{$\RR$}{R}-Weil divisors}
We now define $\pi$-bigness
for $\QQ$- or $\RR$-Weil divisors when \(\pi\) is projective (which is also
assumed when in \cite{CU15}), or more generally,
when the generic fiber \(X_\eta\) is projective.
\begin{definition}[cf.\ {\cite[Definition 2.16]{CU15}}]\label{def:BigIsAmpleAddEffective}
Let $\pi\colon X\to Y$ be a proper surjective morphism of integral locally Noetherian
algebraic spaces over a scheme $S$.
Let $X_\eta$ be
the generic fiber of $\pi$
and assume $X_\eta$ projective over $\kappa(\eta)$.
Let $D$ be a $\kk$-Weil divisor on $X$ where $\kk\in\{\QQ,\RR\}$. We say that $D$ is \emph{$\pi$-big} 
if $D_{\vert X_\eta} \sim_\kk A+E$ for an ample $\kk$-invertible sheaf $A$ on
$X_\eta$ and an effective $\kk$-Weil divisor $E$ on $X_\eta$.
\end{definition}

If $\pi$ is birational, then clearly every $\kk$-Weil divisor is
$\pi$-big.\medskip

Definition \ref{def:BigIsAmpleAddEffective} is equivalent to Definition \ref{def:fbig} for $\kk$-invertible sheaves or $\kk$-Cartier divisors.
The characterization below for \(Z\) affine and \(\pi\) projective
is the definition taken in
\cite[Definition 2.16]{CU15}.
\begin{lemma}\label{lem:BigWeilIsAmplePlusEffective}
  Let $\pi\colon X\to Z$ be a proper morphism of locally Noetherian
  schemes, such that $X$ is normal and $X_\eta$ is
  projective over $\kappa(\eta)$.
  Let $\kk\in\{\QQ,\RR\}$ and let $D$ be a $\kk$-Weil divisor on $X$.
  If $D$ is $\kk$-Cartier, $D$ is $\pi$-big in the sense of Definition \ref{def:BigIsAmpleAddEffective} if and only if $D$ is $\pi$-big in the sense of Definition \ref{def:fbig}.
  
  If $Z$ is affine and $\pi$ is projective, $D$ is $\pi$-big in the sense of Definition \ref{def:BigIsAmpleAddEffective} if and only if there exists a $\pi$-ample $\kk$-Cartier
divisor $A$ and an effective $\kk$-Weil divisor $E$ with $D\sim_\kk A+E$.
\end{lemma}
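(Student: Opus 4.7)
The proof splits into the two claims of the lemma; I would reduce both to Corollary \ref{lem:kodairachar} and Lemma \ref{lem:LinearSysLocalizes}, combined with the openness of the ample cone on the projective generic fiber.

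For the first equivalence, both definitions depend only on $D_{\vert X_\eta}$, so I would reduce to the case $Z = \Spec(k)$ with $X = X_\eta$ normal and projective over a field $k$. The direction Definition \ref{def:fbig} $\Rightarrow$ Definition \ref{def:BigIsAmpleAddEffective} is immediate from Corollary \ref{lem:kodairachar}$(\ref{lem:kodairacharbig})\Rightarrow(\ref{lem:kodairacharapluse})$, since an effective $\kk$-Cartier divisor is effective as a $\kk$-Weil divisor via the cycle map \eqref{eq:divwdivdiag}. For the converse, given $D \sim_\kk A + E$ with $A$ an ample $\kk$-invertible sheaf and $E$ an effective $\kk$-Weil divisor, I would write
\[
  E = (D - A) - \sum_l p_l\, \prdiv(f_l)
\]
as $\kk$-Weil divisors, for some $p_l \in \kk$ and nonzero rational functions $f_l$ on $X$. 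The right-hand side is $\kk$-Cartier because $D - A$ is (by the hypothesis that $D$ is $\kk$-Cartier) and every principal divisor is Cartier; hence $E$, which is effective as a $\kk$-Weil divisor and lies in the image of the injective cycle map on the normal scheme $X$, in fact comes from an effective $\kk$-Cartier divisor (by clearing denominators when $\kk = \QQ$, and by rational approximation in the finite-dimensional $\QQ$-subspace of $\WDiv_\RR(X)$ spanned by the relevant prime components when $\kk = \RR$). Corollary \ref{lem:kodairachar}$(\ref{lem:kodairacharapluse})\Rightarrow(\ref{lem:kodairacharbig})$ then yields Definition \ref{def:fbig}.

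For the second equivalence, the backward direction follows by restricting $D \sim_\kk A + E$ to $X_\eta$: here $A_{\vert X_\eta}$ is ample $\kk$-Cartier because $A$ is $\pi$-ample, and $E_{\vert X_\eta}$ is effective $\kk$-Weil because restriction to the dense open $X_\eta$ preserves effectivity. For the forward direction, projectivity of $\pi$ together with affineness of $Z$ provides a $\pi$-ample Cartier divisor $H$ on $X$. Given a decomposition $D_{\vert X_\eta} \sim_\kk A_0 + E_0$ on the generic fiber as in Definition \ref{def:BigIsAmpleAddEffective}, I would pick $\varepsilon \in \QQ_{>0}$ small enough that $A_0 - \varepsilon H_{\vert X_\eta}$ remains an ample $\kk$-invertible sheaf on $X_\eta$, using openness of the ample cone (Theorem \ref{thm:ampisinterior}). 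Since $X_\eta$ is projective, this ample class admits an effective $\QQ$-Cartier representative $\tilde A_\varepsilon$, so that
\[
  (D - \varepsilon H)_{\vert X_\eta} \sim_\kk \tilde A_\varepsilon + E_0
\]
is $\kk$-linearly equivalent to an effective $\kk$-Weil divisor on $X_\eta$. Applying Lemma \ref{lem:LinearSysLocalizes} at the generic point $z = \eta$ produces an effective $\kk$-Weil divisor $F$ on $X$ with $F \sim_\kk D - \varepsilon H$ and $F_{\vert X_\eta} = \tilde A_\varepsilon + E_0$. Setting $A = \varepsilon H$ (a $\pi$-ample $\kk$-Cartier divisor) and $E = F$ then yields the desired decomposition $D \sim_\kk A + E$ on $X$.

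The delicate step is the forward direction of the second claim: one must use the $\pi$-ample divisor $H$ on $X$ to absorb just enough of the generic-fiber ampleness of $A_0$ (via openness of the ample cone) so that what remains is an effective $\kk$-Weil divisor on $X_\eta$ that can be lifted to all of $X$ through Lemma \ref{lem:LinearSysLocalizes}; without the $\pi$-ample divisor on $X$ or the openness step, one would be stuck trying to directly lift an ample $\kk$-Cartier class from the generic fiber.
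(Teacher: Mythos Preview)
Your proposal is correct and follows essentially the same approach as the paper: for the first equivalence the paper simply cites Corollary~\ref{lem:kodairachar} (you unpack the step showing that an effective $\kk$-Weil divisor which happens to be $\kk$-Cartier is effective $\kk$-Cartier on a normal scheme), and for the second the paper scales $H$ so that $A^\eta - H_{\vert X_\eta}$ is ample and then invokes Corollary~\ref{cor:EffIffEffAtGenFiber}, which is exactly your use of Lemma~\ref{lem:LinearSysLocalizes} at the generic point. One small slip: when $\kk = \RR$, the ample class $A_0 - \varepsilon H_{\vert X_\eta}$ need not admit an effective \emph{$\QQ$-Cartier} representative, only an effective $\RR$-Weil one---but that is all Lemma~\ref{lem:LinearSysLocalizes} requires, so the argument goes through unchanged.
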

\begin{proof}
  The first statement follows from Corollary \ref{lem:kodairachar}.

  Now assume that $Z$ affine and $\pi$ is projective.
The implication $\Leftarrow$ is trivial, so we assume that $D$ is $\pi$-big in the sense of Definition \ref{def:BigIsAmpleAddEffective}. 
Let $A^\eta$ and $E^\eta$ be divisors on the generic fiber $X_\eta$ as in Definition \ref{def:BigIsAmpleAddEffective}. 
Let $H$ be a $\pi$-ample $\QQ$-Cartier divisor on $X$. After scaling, we may
assume $A^\eta-H_{|X_\eta}$ ample, so we see that
\[
  \bigl\lvert (D-H)_{\rvert X_\eta}\bigr\rvert_\kk\neq \emptyset.
\]
By Corollary
\ref{cor:EffIffEffAtGenFiber}, $\lvert D-H\rvert_\kk\neq \emptyset$, as desired.
\end{proof}

\section{Canonical sheaves, canonical divisors,\texorpdfstring{\except{toc}{\\}}{}
and singularities of pairs}\label{sect:candiv}
\subsection{Canonical sheaves and divisors}
We define canonical sheaves.
\begin{definition}[cf.\ {\citeleft\citen{KMM87}\citemid Remark
  0-2-2(2)\citepunct \citen{Cor92}\citemid
  (16.3.3)\citepunct \citen{Kov12}\citemid
  \S5\citeright}]\label{def:canonicalsheaf}
  Let $X$ be an equidimensional and connected locally Noetherian
  algebraic space over a scheme $S$.
  Suppose that $X$ has a dualizing complex $\omega_X^\bullet$.
  The \textsl{canonical sheaf} $\omega_X$ associated to $\omega_X^\bullet$
  is the cohomology sheaf of
  $\omega_X^\bullet$ in lowest cohomological degree.
\end{definition}
We can also often make sense of $\omega_X$ as a Weil divisor.
\begin{definition}[cf.\ {\citeleft\citen{KMM87}\citemid Remark
  0-2-2(2)\citepunct \citen{Cor92}\citemid (16.3.3)\citepunct
  \citen{Kov12}\citemid \S5\citeright}]\label{def:kx}
  Let $X$ be an equidimensional and connected locally Noetherian
  algebraic space over a scheme $S$.
  Suppose that $X$ has a dualizing complex $\omega_X^\bullet$ with associated
  canonical sheaf $\omega_X$.
  The sheaf $\omega_X$ is invertible on an open subspace $U \subseteq X$, since
  it is the complement of the closed subspace where
  \[
    \omega_X \otimes_{\cO_X}
    \HHom_{\cO_X}(\omega_X,\cO_X) \longrightarrow \cO_X
  \]
  is not an isomorphism by
  \cite[\href{https://stacks.math.columbia.edu/tag/0B8N}{Tag
  0B8N}]{stacks-project}.
  \par Now suppose that $X$ is integral and normal.
  Since $X$ is normal, $U$ contains all codimension one points of $X$.
  A \textsl{canonical divisor} $K_X$ on $X$ is a Weil divisor whose class in
  $\Cl(X)$ restricts to the image of $\omega_U$
  under the map $\Pic(U) \to
  \Cl(U)$ from \eqref{eq:stacks0EPW}.
\end{definition}
\begin{convention}\label{convention:kx}
  Let \(X\) be an equidimensional and connected locally Noetherian algebraic
  space over a scheme \(S\) and suppose that \(X\) has a dualizing complex
  \(\omega_X^\bullet\).
  We call the canonical divisor \(K_X\) constructed in Definition \ref{def:kx} 
  a \textsl{canonical divisor associated to \(\omega_X^\bullet\)}.
\end{convention}
\subsection{Singularities of pairs}
We can now define pairs and singularities of pairs in our setting.
\begin{definition}[see
  \citeleft\citen{Kol13}\citemid Definition 1.5 and (2.20)\citeright]
  \label{def:rpairs}
  Let $X$ be
  an integral normal locally
  Noetherian algebraic space over a scheme $S$.
  Suppose that $X$ has a
  dualizing complex $\omega_X^\bullet$ with associated canonical divisor $K_X$.
  Let $\kk \in \{\QQ,\RR\}$.
  A \textsl{$\kk$-pair} $(X,\Delta)$ is the combined data of $X$ together
  with an effective $\kk$-Weil divisor $\Delta$ such that $K_X + \Delta$ is
  $\kk$-Cartier.
\end{definition}
We will also use the following definition.
For algebraic spaces, we take the characterization in
\cite[\href{https://stacks.math.columbia.edu/tag/0BIA}{Tag
0BIA}(2)]{stacks-project} as our definition for a simple normal crossings
divisor.
\begin{definition}[see {\cite[p.\ 2418]{CL12}}]
  Let $(X,\Delta)$ be a $\kk$-pair for $\kk \in \{\QQ,\RR\}$.
  We say that
  $(X,\Delta)$ is \textsl{log regular} if $X$ is regular and $\Delta$
  has simple normal crossings support.
\end{definition}
For the definition below, we note that \cite{Kol13} works over a regular scheme
$B$ throughout (see \cite[Definition 1.5]{Kol13}), but this is not necessary for the
following definition to make sense, since we are assuming the existence of a
dualizing complex $\omega_X^\bullet$.
\begin{definition}[{see \citeleft\citen{KMM87}\citemid Definitions 0-2-6 and
  0-2-10\citepunct
  \citen{Kol13}\citemid Definitions
  2.4 and 2.8\citeright}]\label{def:singpairs}
  Let $(X,\Delta)$ be a $\kk$-pair for $\kk \in \{\QQ,\RR\}$.
  For a separated birational morphism $f\colon Y \to X$ of finite type
  from an integral normal locally Noetherian algebraic space $Y$ over $S$, we
  can write
  \[
    K_Y + f_*^{-1}\Delta \sim_\kk f^*(K_X+\Delta) + \sum_{\text{$f$-exceptional
    $E$}} a(E,X,\Delta)E
  \]
  for some $a(E,X,\Delta) \in \kk$,
  where the $E$ are $f$-exceptional prime
  Weil divisors and $f_*^{-1}\Delta$ is the birational transform of $\Delta$.
  \par For each $f$-exceptional
  prime Weil divisor $E$ on $Y$, the number $a(E,X,\Delta) \in
  \kk$ is called the \textsl{discrepancy of $E$} with respect to $(X,\Delta)$.
  For nonexceptional prime Weil divisors $D \subseteq X$, we set
  \begin{alignat*}{3}
    a(D,X,\Delta) &\coloneqq {-{\coeff_D(\Delta)}}.
    \intertext{If $f'\colon Y' \to X$ is another birational morphism and $E'
    \subseteq Y'$ is the birational transform of $E$, then}
    a(E,X,\Delta) &= a(E',X,\Delta),
  \end{alignat*}
  and hence the discrepancy of $E$ only depends on $E$ and not
  on $Y$.
  The \textsl{center} $\cent_X(E)$ of $E$ is the image of $E$ in $X$.
  \par Now suppose that $\Delta$ has coefficients in $[0,1]$.
  We say that $(X,\Delta)$ is
  \begin{center}
    \begin{tabular}{c@{}c@{}l}
      \textsl{terminal}
      & \rdelim\}{4}{*} \multirow{4}{*}{\;if $a(E,X,\Delta)$ is}
      \ldelim\{{4}{*} & $> 0$ for every exceptional $E$,\\
      \textsl{canonical} & & $\ge 0$ for every exceptional $E$,\\
      \textsl{klt} & & $> -1$ for every $E$,\\
      \textsl{dlt} & & $> -1$ for every $E$ such that $\cent_X(E)
      \subseteq \nonsnc(X,\Delta)$.
    \end{tabular}
  \end{center}
  Here, the divisors $E$ range over all prime Weil divisors on schemes $Y$
  birational over $X$ as above.
\end{definition}
We will also state some results using the notion of weakly log terminal
singularities from \cite{KMM87}, which is class of singularities of pairs that
is larger than the class of dlt singularities.
\begin{definition}[see {\cite[Definition 0-2-10]{KMM87}}]\label{def:wlt}
  Let $(X,\Delta)$ be a $\kk$-pair for $\kk \in \{\QQ,\RR\}$
  such that $X$ is quasi-excellent
  of equal characteristic zero and such that $\Delta$ has coefficients in
  $[0,1]$.
  We say that $(X,\Delta)$ is \textsl{weakly log terminal} if the following
  conditions hold:
  \begin{enumerate}[label=$(\roman*)$]
    \item There exists a resolution of singularities $f\colon Y \to X$ such that
      \[
        \Supp\bigl(f_*^{-1}\Delta\bigr) \cup \Exc(f)
      \]
      has normal crossings support (in the
      sense of \cite[\href{https://stacks.math.columbia.edu/tag/0BSF}{Tag
      0BSF}]{stacks-project}) and
      $a(E,X,\Delta) > -1$ for every $f$-exceptional $E$.
    \item There exists an $f$-ample invertible sheaf $\sH$ whose image in
      $\Cl(Y)$ is equal to the class of a Weil divisor whose support equals
      $\Exc(f)$.
  \end{enumerate}
\end{definition}
\begin{remark}[see {\citeleft\citen{Sza94}\citemid Divisorial log terminal
  theorem\citepunct
  \citen{Fuj17}\citemid Remark 2.3.22\citeright}]\label{rem:dltiswlt}
  Let $X$ be as in Definition \ref{def:wlt}.
  Since thrifty log resolutions exist in this setting by
  \cite[Theorems 1.1.6 and 1.1.13]{Tem18}, we see that dlt pairs are weakly
  log terminal.
\end{remark}
\begin{remark}\label{rem:singsetalelocal}
  Since terminal, canonical, and klt
  are \'etale-local conditions \cite[(2.14) and Proposition 2.15]{Kol13}, one
  can also define these notions for algebraic spaces by pulling back to an
  \'etale cover of $X$.
  Note that dlt is not an \'etale-local condition because of the simple normal
  crossing condition \cite[Warning on p.\ 47]{Kol13}.
\end{remark}
We will use the following lemma.
\begin{lemma}\label{lem:kltFacts}
  Let $(X,\Delta)$ be a $\kk$-pair for $\kk \in \{\QQ,\RR\}$,
and let $\Delta'$ be an effective $\kk$-Weil divisor on $X$.
Then, we have the following:
\begin{enumerate}[label=$(\roman*)$,ref=\roman*]
    \item \label{lem:kltSmaller}
    If $\Delta'$ is $\kk$-Cartier and $(X,\Delta+\Delta')$ is klt, then $(X,\Delta)$ is klt.
    
    \item \label{lem:kltConvex}
      Suppose $K_X+\Delta'$ is $\kk$-Cartier.
    If \((X,\Delta)\) and $(X,\Delta')$ are klt, then
    \[
      \bigl(X,t\Delta+(1-t)\Delta'\bigr)
    \]
    is klt for all
    $t\in [0,1] \cap \kk$.
    
    \item \label{lem:kltContinuous}
      Assume that $(X,\Delta)$ has a log resolution, that $(X,\Delta)$ is klt, and that $\Delta'$ is $\kk$-Cartier. 
    Then, for all sufficiently small $\varepsilon\in \kk_{>0}$, the pair
    $(X,\Delta+\varepsilon\Delta')$ is klt.
    
    \item \label{lem:kltContinuousConvex}
      Suppose $K_X+\Delta'$ is $\kk$-Cartier.
      Assume that $(X,\Delta)$ has a log resolution, that $(X,\Delta)$ is klt.
    Then, the pair
    \[
      \bigl(X,(1-\varepsilon)\Delta+\varepsilon\Delta'\bigr)
    \]
    is klt for all sufficiently small $\varepsilon\in \kk_{>0}$.
\end{enumerate}
\end{lemma}
\begin{proof}
  Items $(\ref{lem:kltSmaller})$ and $(\ref{lem:kltConvex})$ follow immediately from the definition of discrepancy. 
  For $(\ref{lem:kltContinuous})$ and  $(\ref{lem:kltContinuousConvex})$, it suffices to note
that klt-ness is detected by a single log resolution \cite[Corollary 2.13]{Kol13}. 
\end{proof}
\section{Base loci and restricted linear systems}\label{subsection:baseloci}
We define base loci and some of their asymptotic invariants, which we use to
define restricted linear systems.
\begin{definition}[see {\citeleft\citen{KMM87}\citemid p.\ 299\citepunct
  \citen{CL12}\citemid p.\ 2419\citepunct
  \citen{McK17}\citemid Definition 2.2\citeright}]
  Let $X$ be a normal locally Noetherian scheme or an integral normal locally
  Noetherian algebraic space over a scheme $S$.
  The \textsl{base locus} of a Weil divisor $D$ is the closed set
  \begin{align*}
    \Bs\lvert D \rvert &\coloneqq \bigcap_{D' \in \lvert D \rvert} \Supp(D').
    \intertext{We set $\Bs\lvert D \rvert = X$ if $\lvert D \rvert = \emptyset$.
    The \textsl{stable base locus} of an $\RR$-Weil divisor $D$ is the
    closed set}
    \SB(D) &\coloneqq \bigcap_{D' \in \lvert D \rvert_\RR} \Supp(D').
  \end{align*}
  We set $\SB(D) = X$ if $\lvert D \rvert_\RR = \emptyset$.
\end{definition}
We can now define restricted linear systems.
\begin{definition}[see {\citeleft\citen{ELMNP09}\citemid p.\ 612\citepunct
  \citen{CL12}\citemid p.\
  2420 and Definition 2.23\citeright}]\label{def:restrictedlinearsystem}
  Let $X$ be an algebraic space over a scheme $S$, and let $T \subseteq X$ be a
  closed subspace.
  For an invertible sheaf $\sL$ on $X$, we set
  \[
    H^0\bigl(X \vert T,\sL\bigr) \coloneqq \im\Bigl(
    H^0\bigl(X,\sL\bigr) \longrightarrow H^0\bigl(T,\sL_{\vert T}\bigr)\Bigr),
  \]
  which is denoted $\res_T(H^0(X,\sL))$ in \cite[Definition 2.23]{CL12}.
  \par Now suppose $X$ is a normal Noetherian scheme, $T$ is normal, and $D$ is a Cartier
  divisor intersecting $T$ properly.
  The \textsl{restricted linear system} $\lvert D \rvert_T$ is the subset of
  $\lvert D_{\vert T} \rvert$ corresponding to nondegenerate sections in
  $H^0(X \vert T,\cO_X(D))$ under the bijection in Proposition \ref{prop:har29}.
  The restriction map
  \[
    H^0\bigl(X,\cO_X(D)\bigr) \longrightarrow
    H^0\bigl(T,\cO_T(D_{\vert T})\bigr)
  \]
  induces a
  map $\lvert D \rvert \to \lvert D \rvert_T$ if $T$ is integral and $T
  \not\subseteq \Bs\lvert D \rvert$, since $H^0(X,\cO_X^*)$ maps to
  $H^0(T,\cO_T^*)$ and nondegenerate sections of $\cO_X(D)$ map to nondegenerate
  sections of $\cO_T(D_{\vert T})$.
\end{definition}
We now want to define the fixed and stable fixed parts of a linear system.
To do so, we need the following result, which shows that the definition of
$\SB(D)$ is compatible with the usual definition for $\QQ$-Cartier divisors in
\cite[Definition 2.1.20]{Laz04a}.
\begin{lemma}[see \citeleft\citen{BCHM10}\citemid Lemma 3.5.3\citepunct
  \citen{CL12}\citemid Lemma 2.3\citepunct \citen{McK17}\citemid Lemma
  2.4\citeright]\label{lem:cl23}
  Let $X$ be a normal locally Noetherian scheme or an integral normal locally
  Noetherian algebraic space over a scheme $S$.
  Consider a $\QQ$-Weil divisor $D$ 
  on $X$.
  Then, we have
  \[
    \SB(D) = \bigcap_{D' \in \lvert D \rvert_\QQ} \Supp(D').
  \]
\end{lemma}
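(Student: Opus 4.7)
The inclusion $\SB(D) \subseteq \bigcap_{D' \in \lvert D \rvert_\QQ} \Supp(D')$ is immediate, since every $\QQ$-effective divisor $\QQ$-linearly equivalent to $D$ is in particular an $\RR$-effective divisor $\RR$-linearly equivalent to $D$, so $\lvert D \rvert_\QQ \subseteq \lvert D \rvert_\RR$. The plan is therefore to focus on the reverse inclusion.

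For the reverse direction, I will take any point $x \notin \SB(D)$, choose a witness $D' \in \lvert D \rvert_\RR$ with $x \notin \Supp(D')$, and then produce an element $D'' \in \lvert D \rvert_\QQ$ whose support is contained in $\Supp(D')$. Concretely, I will write $D' = \sum_{i \in I} a_i E_i$ with $I$ finite, the $E_i$ distinct prime Weil divisors, and all $a_i > 0$, so that $\Supp(D') = \bigcup_{i \in I} E_i$ and $x \notin E_i$ for every $i \in I$. I will then invoke Lemma \ref{lem:RatlIsDense} applied to this decomposition to obtain rational sequences $a_i^j \to a_i$ with $\sum_{i \in I} a_i^j E_i \in \lvert D \rvert_\QQ$ for every $j$. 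Since $a_i > 0$, continuity forces $a_i^j > 0$ for all $i \in I$ once $j$ is large, so setting $D'' \coloneqq \sum_{i \in I} a_i^j E_i$ gives an element of $\lvert D \rvert_\QQ$ with $\Supp(D'') \subseteq \bigcup_{i \in I} E_i$, hence $x \notin \Supp(D'')$.

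The only subtlety, and essentially the whole content of the argument, is that Lemma \ref{lem:RatlIsDense} must be applied to the decomposition of $D'$ supported exactly on the prime divisors appearing with nonzero coefficient, not to a larger enumeration. Otherwise the rational approximation could perturb some zero coefficient to a positive rational number, introducing a new component into the support of $D''$ that might pass through $x$ and destroy the control we need. Beyond choosing the enumeration carefully, no other step presents any difficulty.
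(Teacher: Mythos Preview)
Your proposal is correct and matches the paper's approach exactly: the paper's entire proof is the single sentence ``This is immediate from Lemma~\ref{lem:RatlIsDense},'' and you have correctly unpacked what that sentence means. One minor point: you do not actually need $a_i^j > 0$ to conclude $\Supp(D'') \subseteq \bigcup_i E_i$, since the support of $\sum_i a_i^j E_i$ is automatically contained in $\bigcup_i E_i$ regardless of which coefficients vanish; the careful choice of enumeration you flag is, however, genuinely necessary.
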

\begin{proof}
This is immediate from Lemma \ref{lem:RatlIsDense}.
\end{proof}
Finally, we define fixed and mobile parts of linear systems, together with the
asymptotic variant of the fixed part.
\begin{definition}[{see \citeleft\citen{CL12}\citemid Definition
  2.5\citeright}]
  Let $X$ be a normal locally Noetherian scheme or an integral normal locally
  Noetherian algebraic space over a scheme $S$.
  Consider a Weil divisor $D$ on $X$.
  The \textsl{fixed part} $\Fix\lvert D \rvert$ of $D$ is the
  largest effective Weil divisor $F$ on $X$ such that $F \le D'$ for all $D'
  \in \lvert D \rvert$.
  We can then write
  \[
    \lvert D \rvert = \bigl\lvert \Mob(D) \bigr\rvert + \Fix\lvert D \rvert,
  \]
  where $\Mob(D)$ is the \textsl{mobile part} of $\lvert D \rvert$.
  If $T \subseteq X$ is a normal closed subscheme, we use the same definition for the
  restricted linear system $\lvert D \rvert_T$ to define the fixed part
  $\Fix\lvert D \rvert_T$. 
  \par Now consider a $\QQ$-Weil divisor $D$ on $X$.
  The \textsl{stable fixed part} of $D$ is
  \begin{align*}
    \FFix(D) &\coloneqq \liminf_{k \to \infty} \frac{1}{k} \Fix\lvert kD
    \rvert,\\
    \intertext{which by Lemma \ref{lem:cl23} is the divisorial part of the
    stable base locus $\SB(D)$.
    Similarly, we set}
    \FFix_T(D) &\coloneqq \liminf_{k \to \infty} \frac{1}{k} \Fix\lvert kD
    \rvert_T.
  \end{align*}
\end{definition}

\section{Convex sets in \texorpdfstring{$\Div_\RR(X)$}{Div\_R(X)} and relative
divisorial graded rings}\label{subsection:relativedivring}
We define some convex subsets of $\Div_\RR(X)$ associated to
finite-dimensional subspaces in $\Div_\RR(X)$, following \cite[\S2.1]{CL12}.
We will restrict to the scheme case in this section.
In the definition below,
$\cL(V)$ is a version of Shokurov's polytope $\mathcal{P}$ from
\cite[(1.3.2)]{Sho93} (see also \cite[First Main Theorem 6.2]{Sho96}),
and $\cE_A(V)$ is a version of Shokurov's polytope $\mathcal{M}$ from
\cite[Second Main Theorem 6.20]{Sho96}.
\begin{definition}[cf.\ {\cite[Definition 2.4]{CL12}}]\label{def:cl1224}
  Let $X$ be a regular locally Noetherian scheme with a dualizing complex
  $\omega_X^\bullet$.
  Denote by $K_X$ a canonical divisor on \(X\) associated to $\omega_X^\bullet$ (see
  Convention \ref{convention:kx}).
  Let $S_1,S_2,\ldots,S_p$ be distinct prime divisors on $X$ such that
  $(X,\sum_{i=1}^p S_i)$ is log regular.
  Let
  \[
    V = \sum_{i=1}^p \RR \cdot S_i \subseteq \Div_\RR(X),
  \]
  and let $A$ be a $\QQ$-divisor on $X$.
  We set
  \begin{align*}
    \cL(V) &\coloneqq \Set[\Big]{B = \sum b_iS_i \in V \given 0 \le b_i \le 1\
    \text{for all}\ i},\\
    \cE_A(V) &\coloneqq \Set[\big]{B \in \cL(V) \given \lvert K_X+A+B \rvert_\RR
    \ne \emptyset}.
\intertext{Let $S$ be a prime divisor on $X$ different from each $S_i$ such that 
$(X,S+\sum_{i=1}^p S_i)$ is log regular.
We set}
    \cB_A^S(V) &\coloneqq \Set[\big]{B \in \cL(V) \given S \not\subseteq
    \SB(K_X+S+A+B)}.
  \end{align*}
\end{definition}
We now define relative divisorial graded rings and establish some basic
properties about them, following \cite[\S2.4]{CL12}.
\begin{definition}[cf.\ {\citeleft\citen{KMM87}\citemid Definitions
  0-3-7 and 0-3-11\citepunct
  \citen{CL12}\citemid Definition 2.22\citepunct
  \citen{CL13}\citemid p.\ 620\citeright}]\label{def:cl12222}
  Let $\pi\colon X \to Z$ be a proper morphism of integral Noetherian schemes,
  where $X$ is regular and $Z$ is affine.
  Let $\cS \subseteq \Div_\QQ(X)$ be a finitely generated monoid.
  The \textsl{relative divisorial graded ring associated to $\cS$} is the
  $\cS$-graded $H^0(Z,\cO_Z)$-algebra
  \[
    R\bigl(X/Z;\cS\bigr) \coloneqq \bigoplus_{D \in \cS}
    H^0\bigl(X,\cO_X\bigl(\lfloor D \rfloor\bigr)\bigr).
  \]
  \par Now suppose that $Z$ has a dualizing complex $\omega_Z^\bullet$, and
  denote by $K_X$ a canonical divisor on \(X\) associated to $\omega_X^\bullet =
  \pi^!\omega_Z^\bullet$.
  If divisors $D_1,D_2,\ldots,D_\ell$ are generators of $\cS$ and if $D_i
  \sim_\QQ k_i(K_X+\Delta_i)$ for effective $\QQ$-divisors $\Delta_i$ and for
  $k_i \in \QQ_{\ge0}$, the algebra $R(X/Z;\cS)$ is called the \textsl{relative
  adjoint ring associated to $\cS$}, and the \textsl{relative adjoint ring
  associated to the sequence $D_1,D_2,\ldots,D_\ell$} is the $\NN^\ell$-graded
  $H^0(Z,\cO_Z)$-algebra
  \[
    R\bigl(X/Z;D_1,D_2,\ldots,D_\ell\bigr) \coloneqq
    \bigoplus_{(m_1,m_2,\ldots,m_\ell) \in \NN^\ell}
    H^0\Biggl(X,\cO_X\Biggl(\Biggl\lfloor \sum_{i=1}^\ell m_iD_i \Biggr\rfloor
    \Biggr)\Biggr).
  \]
  Note that there is a natural projection map $R(X/Z;D_1,D_2,\ldots,D_\ell) \to
  R(X/Z;\cS)$.
  The \textsl{support} of $R(X/Z;D_1,D_2,\ldots,D_\ell)$ is
  \[
    \Supp\Bigl(R\bigl(X/Z;D_1,D_2,\ldots,D_\ell\bigr)\Bigr)
    \coloneqq \biggl( \sum_{i=1}^\ell \RR_{\ge0} \cdot D_i \biggr)
    \cap \Div_\RR^\eff(X) \subseteq \Div_\RR(X).
  \]
  If $\cC \subseteq \Div_\RR(X)$ is a rational polyhedral cone, then
  Gordan's lemma \cite[\S1.2, Proposition 1]{Ful93} implies that $\cS = \cC \cap
  \Div(X)$ is a finitely generated monoid, and we define the \textsl{adjoint
  ring associated to $\cC$} to be
  \[
    R\bigl(X/Z;\cC\bigr) \coloneqq R\bigl(X/Z;\cS\bigr).
  \]
\end{definition}
\begin{definition}[cf.\ {\cite[Definition 2.23]{CL12}}]
  Let $\pi\colon X \to Z$ be a proper morphism of integral Noetherian schemes,
  where $X$ is regular and $Z$ is affine.
  Let $S$ be a regular prime divisor on $X$ and let $D$ be an effective divisor
  on $X$.
  Using Proposition \ref{prop:har29} (see also
  \cite[\href{https://stacks.math.columbia.edu/tag/01X0}{Tag
  01X0}]{stacks-project}), we fix $1_S \in H^0(X,\cO_X(S))$ such
  that $Z(1_S) = S$.
  Consider the exact sequence
  \begin{equation}\label{eq:restrictionexactseq}
    0 \longrightarrow H^0\bigl(X,\cO_X(D-S)\bigr)
    \longrightarrow
    H^0\bigl(X,\cO_X(D)\bigr) \xrightarrow{\,\rho_S\,}
    H^0\bigl(S,\cO_S(D)\bigr),
  \end{equation}
  where the middle map is obtained via twisting the map $\cO_X(-S)
  \hookrightarrow \cO_X$ corresponding to $1_S$ and applying global sections.
  For $\sigma \in H^0(X,\cO_X(D))$, we denote by $\sigma_{\vert S} \in
  H^0(X\vert S,\cO_X(D))$ the image of $\sigma$ under $\rho_S$, where
  $H^0(X\vert S,\cO_X(D))$ is the image of $\rho_S$ as defined in Definition
  \ref{def:restrictedlinearsystem}.
  \par If $\cS \subseteq \Div_\QQ(X)$ is a monoid generated by divisors
  $D_1,D_2,\ldots,D_\ell$, the \textsl{restriction of $R(X/Z;\cS)$ to $S$} is
  the $\cS$-graded $H^0(Z,\cO_Z)$-algebra
  \begin{align*}
    \res_S\bigl(R(X/Z;\cS)\bigr) &\coloneqq \bigoplus_{D \in \cS} H^0\bigl(X
    \vert S,\cO_X\bigl(\lfloor D \rfloor\bigr)\bigr),
    \intertext{and the \textsl{restriction of $R(X/Z;D_1,D_2,\ldots,D_\ell)$ to
    $S$} is the $\NN^\ell$-graded $H^0(Z,\cO_Z)$-algebra}
    \res_S\bigl(R(X/Z;D_1,D_2,\ldots,D_\ell)\bigr) &\coloneqq
    \bigoplus_{(m_1,m_2,\ldots,m_\ell) \in \NN^\ell}
    H^0\Biggl(X\vert S,\cO_X\Biggl(\Biggl\lfloor \sum_{i=1}^\ell m_iD_i
    \Biggr\rfloor \Biggr)\Biggr).
  \end{align*}
\end{definition}
We give two lemmas about finite generation of relative divisorial graded rings.
\begin{lemma}[cf.\ {\cite[Corollary 2.26]{CL12}}]\label{lem:cl12226}
  Let $\pi\colon X \to Z$ be a proper morphism of integral Noetherian schemes,
  where $X$ is regular and $Z$ is affine.
  Let $f\colon Y \to X$ be a proper birational morphism, where $Y$ is regular.
  Let $D_1,D_2,\ldots,D_\ell \in \Div_\QQ(X)$, let $D_1',D_2',\ldots,D_\ell' \in
  \Div_\QQ(X)$, and assume there exist positive rational numbers $r_i$ and
  $f$-exceptional $\QQ$-divisors $E_i \ge 0$ such that
  \[
    D_i' \sim_\QQ r_if^*D_i + E_i
  \]
  for every $i$.
  Then, the ring
  \begin{align*}
    R &= R\bigl(X/Z;D_1,D_2,\ldots,D_\ell\bigr)
    \intertext{is finitely generated over
    $H^0(Z,\cO_Z)$ if and only if the ring}
    R' &= R\bigl(Y/Z;D_1',D_2',\ldots,D_\ell'\bigr)
  \end{align*}
  is finitely generated over $H^0(Z,\cO_Z)$.
  Similarly, suppose $S$ is a regular prime divisor on $X$, and let $T =
  f_*^{-1}S$.
  Then, the ring $\res_S(R)$ is finitely generated over $H^0(Z,\cO_Z)$ if
  and only if the ring $\res_T(R')$ is finitely generated over $H^0(Z,\cO_Z)$.
\end{lemma}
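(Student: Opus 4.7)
The plan is to compare suitable Veronese subrings of $R$ and $R'$ using the standard fact that for a proper birational morphism $f\colon Y\to X$ of normal Noetherian schemes, any Cartier divisor $D$ on $X$, and any effective $f$-exceptional integral divisor $E$ on $Y$, one has
\[
  H^0\bigl(X,\cO_X(D)\bigr) \overset{\sim}{\longrightarrow} H^0\bigl(Y,\cO_Y(f^*D+E)\bigr),
\]
via the projection formula combined with the identity $f_*\cO_Y(E)=\cO_X$ (which uses normality of $X$ together with effectivity and $f$-exceptionality of $E$, so that $\cO_X\hookrightarrow f_*\cO_Y(E)$ is an isomorphism off a codimension $\ge 2$ subset of the normal scheme $X$). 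First I would choose a positive integer $N$ so that $Nr_i\in\NN$ and the divisors $ND_i$, $ND_i'$, and $NE_i$ are all integral for every $i$; this lets us drop the floor functions in all the computations below.

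The hypothesis $D_i'\sim_\QQ r_if^*D_i+E_i$ then gives an integral linear equivalence $\sum_i Nm_iD_i' \sim \sum_i Nm_ir_if^*D_i + \sum_i Nm_iE_i$ on $Y$ for every $(m_1,\ldots,m_\ell)\in\NN^\ell$. Combining this with the displayed isomorphism (applied to the effective $f$-exceptional integral divisor $\sum_i Nm_iE_i$) yields
\[
  H^0\Bigl(Y,\cO_Y\Bigl(\textstyle\sum_i Nm_iD_i'\Bigr)\Bigr) \;=\; H^0\Bigl(X,\cO_X\Bigl(\textstyle\sum_i Nm_ir_iD_i\Bigr)\Bigr).
\]
Thus the $N$-th Veronese $\bigoplus_{(m_i)}R'_{Nm_1,\ldots,Nm_\ell}$ of $R'$ is isomorphic, as an $\NN^\ell$-graded $H^0(Z,\cO_Z)$-algebra, to the sub-$\NN^\ell$-graded ring $\bigoplus_{(m_i)}R_{Nr_1m_1,\ldots,Nr_\ell m_\ell}$ of $R$. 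By the standard Veronese principle in the multigraded setting (which reduces to the $\NN$-graded case using Gordan's lemma as in Definition \ref{def:cl12222}), $R'$ is finitely generated over $H^0(Z,\cO_Z)$ iff its $N$-th Veronese is, iff this Veronese of $R$ is, iff $R$ is. This settles the first claim.

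For the restriction statement, regularity of $X$ and $Y$ makes $S$ and $T$ Cartier, and we may write $f^*S = T + E_0$ with $E_0$ an effective $f$-exceptional Cartier divisor on $Y$. Then
\[
  \sum_i Nm_iD_i' - T \;\sim\; f^*\Bigl(\textstyle\sum_i Nm_ir_iD_i - S\Bigr) + \Bigl(\textstyle\sum_i Nm_iE_i + E_0\Bigr),
\]
with $\sum_i Nm_iE_i + E_0$ again effective and $f$-exceptional. The same displayed identification gives $H^0(Y,\cO_Y(\sum_i Nm_iD_i' - T)) = H^0(X,\cO_X(\sum_i Nm_ir_iD_i - S))$. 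Comparing the exact sequence \eqref{eq:restrictionexactseq} on $X$ (for $S$) with its analogue on $Y$ (for $T$) then identifies $H^0(Y\vert T,\cO_Y(\sum_i Nm_iD_i'))$ with $H^0(X\vert S,\cO_X(\sum_i Nm_ir_iD_i))$, and the same Veronese argument as above concludes.

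The only real difficulty is the bookkeeping with multi-gradings and confirming the multigraded Veronese principle for finite generation; no geometric input beyond the identity $f_*\cO_Y(E)=\cO_X$ for effective $f$-exceptional $E$ (already used implicitly in, e.g., Lemmas \ref{lem:ContrPreservesBig} and \ref{lem:ContrPreservesPsEff}) is needed.
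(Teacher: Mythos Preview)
Your proof is correct and follows essentially the same approach as the paper's: clear denominators, use the identification $H^0(X,\cO_X(D))\cong H^0(Y,\cO_Y(f^*D+E))$ for effective $f$-exceptional $E$, and conclude via the multigraded Veronese principle (the paper cites \cite[Propositions 1.2.2 and 1.2.4]{ADHL15} for this). Your treatment of the restriction statement via $f^*S=T+E_0$ and comparison of the two exact sequences \eqref{eq:restrictionexactseq} is slightly more explicit than the paper's, which simply asserts the corresponding isomorphism of restricted Veronese subrings, but the underlying argument is the same.
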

\begin{proof}
  The proof of \cite[Corollary 2.26]{CL12} works after replacing absolute
  divisorial rings with relative divisorial graded rings.
  For completeness, we write down the proof below.
  \par Let $k$ be a positive integer such that for all $i$, we have that
  $kD_i$, $kr_iD_i'$, and $kE_i$ are all integral and
  \[
    kD_i' \sim kr_if^*D_i + kE_i.
  \]
  Then, the rings
  \begin{align*}
    R\bigl(X/Z&;kD_1,kD_2,\ldots,kD_\ell\bigr)\\
    R\bigl(Y/Z&;kD_1',kD_2',\ldots,kD_\ell'\bigr)
  \end{align*}
  are Veronese subrings of finite index in $R$ and $R'$, respectively, and both
  rings are isomorphic to
  \[
    R\bigl(Y/Z;kr_1f^*D_1 + kE_1,kr_2f^*D_2 + kE_2,\ldots,kr_\ell f^*D_\ell +
    kE_\ell\bigr).
  \]
  Similarly, the rings
  \begin{align*}
    \res_S\Bigl(R\bigl(X/Z&;kD_1,kD_2,\ldots,kD_\ell\bigr)\Bigr)\\
    \res_T\Bigl(R\bigl(Y/Z&;kD_1',kD_2',\ldots,kD_\ell'\bigr)\Bigr)
  \end{align*}
  are Veronese subrings of finite index in $\res_S(R)$ and $\res_T(R')$,
  respectively, and both rings are isomorphic to
  \[
    \res_T\Bigl(R\bigl(Y/Z;kr_1f^*D_1 + kE_1,kr_2f^*D_2 + kE_2,\ldots,kr_\ell
    f^*D_\ell + kE_\ell\bigr)\Bigr).
  \]
  In either case, the conclusion follows from
  \cite[Propositions 1.2.2 and 1.2.4]{ADHL15}.
\end{proof}
\begin{lemma}[cf.\ {\cite[Lemma 2.27]{CL12}}]\label{lem:cl12227}
  Let $\pi\colon X \to Z$ be a proper morphism of integral Noetherian schemes,
  where $X$ is regular and $Z$ is affine.
  Let $D_1,D_2,\ldots,D_\ell \in \Div_\QQ(X)$, and set
  \[
    \cC = \sum_{i=1}^\ell \RR_{\ge0} \cdot D_i \subseteq \Div_\RR(X).
  \]
  Then, we have the following:
  \begin{enumerate}[label=$(\roman*)$,ref=\roman*]
    \item\label{lem:cl12227i}
      If $R(X/Z;\cC)$ is finitely generated as an $H^0(Z,\cO_Z)$-algebra, then
      $R(X/Z;D_1,D_2,\ldots,D_\ell)$ is finitely generated as an
      $H^0(Z,\cO_Z)$-algebra.
    \item\label{lem:cl12227ii}
      Let $S$ be a regular prime divisor on $X$.
      If $\res_S(R(X/Z;\cC))$ is finitely generated as an
      $H^0(Z,\cO_Z)$-algebra, then
      \[
        \res_S\Bigl(R\bigl(X/Z;D_1,D_2,\ldots,D_\ell\bigr)\Bigr)
      \]
      is
      finitely generated as an $H^0(Z,\cO_Z)$-algebra.
  \end{enumerate}
\end{lemma}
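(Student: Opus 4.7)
The plan is to adapt the argument of \cite[Lemma 2.27]{CL12} to the relative setting, using the Veronese-subring characterization of finite generation from Propositions 1.2.2 and 1.2.4 of \cite{ADHL15}. Since the functor $\res_S(-)$ is exact and commutes with the $\NN^\ell$-grading and Veronese constructions, the argument for $(\ref{lem:cl12227ii})$ will run in parallel with $(\ref{lem:cl12227i})$, so I will focus on $(\ref{lem:cl12227i})$.

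First, I would choose $N \in \NN_{>0}$ such that $ND_i \in \Div(X)$ for every $i$. A direct computation of graded pieces identifies the $N$-Veronese
\[
R(X/Z;D_1,\ldots,D_\ell)^{(N)} = \bigoplus_{\vec{m} \in \NN^\ell} H^0\bigl(X,\cO_X\bigl(\textstyle\sum_i m_iND_i\bigr)\bigr)
\]
with $R(X/Z;ND_1,\ldots,ND_\ell)$. By Propositions 1.2.2 and 1.2.4 of \cite{ADHL15}, it therefore suffices to show this Veronese subring is finitely generated over $H^0(Z,\cO_Z)$. Next, I would observe that the monoid homomorphism $\psi\colon \NN^\ell \to \cS$, $\vec{m} \mapsto \sum_i m_iND_i$, has image $\cS_0 = \sum_i \NN \cdot ND_i$ whose group completion has finite index in $\cS^{\gp}$, since both $\cS_0$ and $\cS = \cC \cap \Div(X)$ span the same $\QQ$-vector subspace $\sum_i \QQ \cdot D_i$ of $\Div_\QQ(X)$. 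A second application of Propositions 1.2.2 and 1.2.4 of \cite{ADHL15} then deduces finite generation of the $\cS_0$-graded sub-algebra $R(X/Z;\cS_0) \subseteq R(X/Z;\cS) = R(X/Z;\cC)$ from finite generation of $R(X/Z;\cC)$.

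The final step is to lift finite generation from the $\cS_0$-grading to the $\NN^\ell$-grading. I would take a finite set of homogeneous algebra generators of $R(X/Z;\cS_0)$, lift each along $\psi$ to an element of $R(X/Z;ND_1,\ldots,ND_\ell)$ of matching $\NN^\ell$-degree, and then augment by finitely many ``kernel shifts'' of the form $\sigma_u = 1 \in H^0(X,\cO_X)$ placed at the generators $\vec{k}_1,\ldots,\vec{k}_t$ of the sub-monoid $\ker(\psi^{\gp}) \cap \NN^\ell \subseteq \NN^\ell$, which is finitely generated by Gordan's lemma. The main obstacle lies precisely in this combinatorial lifting: even though $\ker(\psi^{\gp})$ may be nontrivial and its non-negative part infinite as a set, the finite generation afforded by Gordan's lemma guarantees that only finitely many shift generators are required to recover the full $\NN^\ell$-graded structure from the coarser $\cS_0$-graded one. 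The analogous construction applied to $\res_S(-)$ handles $(\ref{lem:cl12227ii})$, completing the proof.
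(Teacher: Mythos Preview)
Your overall strategy matches the paper's: clear denominators to make the $D_i$ integral, pass from $R(X/Z;\cC)$ to the submonoid $\cS_0 = \sum_i \NN \cdot ND_i$ using \cite[Proposition 1.2.2]{ADHL15}, and then lift the grading from $\cS_0$ to $\NN^\ell$. The paper dispatches this last step by invoking \cite[Proposition 1.2.6]{ADHL15}; you attempt it by hand with ``kernel shifts'', and that attempt has a genuine gap.

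The problem is that $\ker(\psi^{\gp}) \cap \NN^\ell$ can be trivial even when $\psi$ is not injective. Take $\ell = 2$ with $D_1 = D_2$ integral (so $N = 1$). Then $\psi(m,n) = (m+n)D_1$, so $\ker(\psi^{\gp}) = \{(a,-a) : a \in \ZZ\}$ and $\ker(\psi^{\gp}) \cap \NN^2 = \{0\}$: your procedure adds no kernel shifts at all. If $R(X/Z;\cS_0) = \bigoplus_d H^0(X,\cO_X(dD_1))$ is generated by a single element $s \in H^0(X,\cO_X(D_1))$ and you lift $s$ to $\NN^2$-degree $(1,0)$, the subalgebra generated by your proposed set lives entirely in degrees of the form $(m,0)$; yet $R(X/Z;D_1,D_2)$ has the nonzero graded piece $H^0(X,\cO_X(D_1))$ sitting in degree $(0,1)$. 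What is actually needed is to lift each generator $s_j$ to \emph{all} preimages of its $\cS_0$-degree in $\NN^\ell$ (with additional care when those fibers are infinite, e.g.\ if some $ND_i + ND_j = 0$). Sorting out this bookkeeping is exactly the content of \cite[Proposition 1.2.6]{ADHL15}, and the paper simply cites it rather than reproving it.
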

\begin{proof}
  The proof of \cite[Lemma 2.27]{CL12} works after replacing absolute divisorial
  rings with relative divisorial graded rings.
  For completeness, we write down the proof below.
  \par Let $k$ be a positive integer such that $D_i' = k D_i \in \Div(X)$ for all
  $i$.
  The monoid
  \[
    \cS = \sum_{i=1}^\ell \NN \cdot D_i'
  \]
  is a submonoid of $\cC \cap
  \Div(X)$.
  If $R(X/Z;\cC)$ (resp.\ $\res_S(R(X/Z;\cC))$) is finitely generated, then
  $R(X/Z;\cS)$ (resp.\ $\res_S(R(X/Z;\cS))$) is also
  finitely generated by \cite[Proposition 1.2.2]{ADHL15}.
  Then, $R(X/Z;D_1',D_2',\ldots,D_\ell')$ (resp.\
  $\res_S(R(X/Z;D_1',D_2',\ldots,D_\ell'))$) is finitely generated by
  \cite[Proposition 1.2.6]{ADHL15}, which implies that
  $R(X/Z;D_1,D_2,\ldots,D_\ell)$ (resp.\
  $\res_S(R(X/Z;D_1,D_2,\ldots,D_\ell))$) is finitely generated by
  \cite[Proposition 1.2.4]{ADHL15}.
\end{proof}
\section{Asymptotic order of vanishing}\label{section:asymporder}
Following \cite[\S3 and \S8]{CL13}, we define the asymptotic order of vanishing in our
setting.
We will not need this in the proof of our analogue of \cite[Theorem B]{CL12},
since we are able to derive it from the result in \cite{CL12}.
On the other hand, we will need to use the asymptotic order of vanishing when
running the minimal model program, as in \cite{CL13}.\medskip
\par We work over an affine base and work with absolute linear systems as
in Definition \ref{def:linearsystem}.

\begin{definition}[see {\citeleft\citen{ELMNP06}\citemid p.\ 1713\citepunct
  \citen{CL13}\citemid p.\ 620\citeright}]
  \label{def:GeomVal}
Let $X$ be an integral normal separated scheme.
Let $v$ be a discrete valuation on the function field $K(X)$ of $X$ given by a
morphism $\Spec(R) \to X$, which is
uniquely determined by $v$ up to isomorphism. 
The \textsl{center} of $v$ is the image of the closed point of $\Spec(R)$.
We say $v$ is a \textsl{geometric valuation on $X$} if $v$ is given by the order
of vanishing at the generic point $\eta$ of a prime divisor $\Gamma$ on some
birational model $f\colon Y \to X$ of $X$.
In this case, the valuation is given by
the composition $\Spec(\cO_{Y,\eta}) \to Y \to X$.
\end{definition}
We now define the asymptotic order of vanishing for $\RR$-Weil divisors such
that $\lvert D\rvert_\mathbf{R}\neq \emptyset$.
When $D$ is a big $\RR$-Cartier divisor and $Z$ is a point, this notion
coincides with the invariant $v(\lVert D \rVert)$ defined in \cite[Definition
2.2]{ELMNP06}, and when $v$ is futhermore a geometric valuation given by a prime
divisor $\Gamma$, this notion coincides with the invariant $\sigma_\Gamma(D)$
from \cite[Chapter III, Definition 1.1]{Nak04}.
See also Remark \ref{rem:OIsNotSigma}.
\begin{definition}[see {\cite[p.\ 632]{CL13}; cf.\
  \citeleft\citen{ELMNP06}\citemid Lemma 3.3\citemid \citen{CDB13}\citemid
  Remark 2.16\citeright}]
  \label{def:AsymptoticOrder}
  Let $\pi\colon X \to Z$ be a proper morphism of integral Noetherian
  schemes, where $Z$ is affine.
  Let $D$ be an $\RR$-Weil divisor on $X$ such that
 $\lvert D\rvert_\mathbf{R}\neq \emptyset$. 
 For each discrete valuation $v$ on $K(X)$, the \textsl{asymptotic order of
 vanishing} of $D$ is
\[
  o_{v}(D)\coloneqq\inf_{E \in \lvert D \rvert_\RR}
  \bigl\{v(E)\bigr\}.
\]
For every positive real number $a$, we have $o_v(aD)=a\cdot o_v(D)$. 
For every pair of elements $D,D' \in \Div_\RR(X)$, we have
\[
  o_v(D+D')\leq o_v(D)+o_v(D')
\]
by \cite[Proposition 2.4]{ELMNP06}.
When $v$ comes from a prime divisor $S$ we write $o_S$ for $o_v$.
\end{definition}

\begin{remark}\label{rem:OIsNotSigma}
Let $D$ be an $\RR$-Weil divisor on a complex projective variety $X$.
If $\lvert D \rvert_{\RR}\neq\emptyset$, then $D$ is $\pi$-pseudoeffective.
However, the asymptotic order of vanishing $o_v(D)$ and the invariant $v(\lVert
D \rVert)$ defined in \cite{ELMNP06} are not necessarily equal.
See \cite[Remark 2.16]{CDB13}.
\end{remark}

\begingroup
\makeatletter
\renewcommand{\@secnumfont}{\bfseries}
\part{Bertini theorems and fundamental theorems of the MMP}
\label{part:bertiniandfund}
\makeatother
\endgroup
In this part, we prove our new relative versions of
Bertini theorems for schemes.
These theorems will become necessary later to perturb klt pairs without having
global Bertini theorems available as would be the case for quasi-projective
varieties over a
field.
We also show the fundamental theorems of the minimal model program (the
Basepoint-freeness, Contraction, Rationality, and Cone theorems) for algebraic
spaces adapting the strategy in \cite{KMM87} for complex varieties.\bigskip
\section{Bertini theorems}\label{sect:bertini}
As in the mixed characteristic case considered in \cite{BMPSTWW}, we will need
Bertini theorems that work for schemes that are finite type over a Noetherian
local domain of containing $\QQ$.
\begin{theorem}[cf.\ {\cite[Theorem 2.15]{BMPSTWW}}]\label{thm:bertini}
  Let $(R,\fm,k)$ be a Noetherian local domain containing $\QQ$.
  Fix an integer $N \ge 1$.
  Let $f\colon X \to \PP^N_R$ be a separated
  morphism of finite type from a regular Noetherian scheme $X$.
  Assume that every closed point of $X$ lies over the unique closed point of $\Spec(R)$.
  
  Let $T_0,T_1,\ldots,T_N$ be a basis of $H^0(\PP^N_R,\cO(1))$ as a free $R$-module.
  Then, there exists a nonempty Zariski open subset $W\subseteq \mathbf{A}^{N+1}_k$ with the following property:
  For all $a_0,a_1,\ldots,a_N\in R$, if
  \[
    (\bar{a}_0,\bar{a}_1,\ldots,\bar{a}_N)\in W(k),
  \]
  then the section
  \[
    h = a_0T_0+a_1T_1+\ldots+a_NT_N \in H^0\bigl(\PP^N_R,\cO(1)\bigr)
  \]
  is such that
  $f^{-1}(V(h))$ is regular.
\end{theorem}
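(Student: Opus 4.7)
The plan is to adapt the proof of \cite[Theorem 2.15]{BMPSTWW} via a universal incidence construction, combined with generic smoothness in characteristic zero. Form the universal hyperplane family $\tilde{X} := X \times_R \mathbf{A}^{N+1}_R$, where $\mathbf{A}^{N+1}_R = \Spec R[a_0, \ldots, a_N]$, and consider the universal section $H := \sum_{i=0}^N a_i\, p_1^*(f^*T_i)$, where $p_1\colon \tilde X \to X$ is the first projection. Set $\tilde Z := V(H) \subseteq \tilde X$; for an $R$-point $a \in \mathbf A^{N+1}_R(R) = R^{N+1}$, the fiber over $a$ of the second projection $p_2\colon \tilde Z \to \mathbf A^{N+1}_R$ is precisely $f^{-1}(V(h))$, with $h = \sum a_i T_i$.

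First, I would show $\tilde Z$ is regular. The projection $p_1$ is smooth, being a base change of the smooth structure morphism $\mathbf A^{N+1}_R \to \Spec R$, so $\tilde X$ is regular since $X$ is. At any $(x, a) \in \tilde Z$, the $T_i$ have no common zero on $\PP^N_R$, so some $f^*T_i$ is a unit in $\cO_{X, x}$, and $\partial H/\partial a_i = p_1^*(f^*T_i)$ is a unit at $(x, a)$; hence $H \notin \fm_{(x, a)}^2$ and $\tilde Z$ is regular.

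Next, I would reduce regularity of the fiber to a lift-independent condition on $\bar a$. Since every closed point of $X$ lies over $\fm$ and $p_1$ is flat, every closed point of $\tilde Z$ lies in $\tilde Z \times_R k$. Working at a closed point $(x, a) \in \tilde Z$ in a chart where $f^*T_0$ is a unit, with $s_i := T_i/T_0$, a direct computation shows that the image of $f^*h/f^*T_0 \in \fm_x$ under the surjection $\fm_x/\fm_x^2 \twoheadrightarrow \bar\fm_x/\bar\fm_x^2$ (onto the cotangent space of $X_\fm$ at $x$, whose kernel is the image of $\fm \cdot \cO_{X,x}$) equals $\sum_{i \geq 1} \bar a_i\, d(\bar f^*s_i)\rvert_x$. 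Nonvanishing of this image implies $f^*h \notin \fm_x^2$ for \emph{every} lift $a$, hence $f^{-1}(V(h))$ is regular at $x$.

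Finally, to construct $W$, consider the closed subvariety
\[
  J := \Set[\big]{(x, \bar a) \in X_\fm \times_k \mathbf{A}^{N+1}_k \given \bar f(x) \in V(\bar h)\ \text{and}\ \textstyle\sum_{i \geq 1} \bar a_i\, d(\bar f^* s_i)\rvert_x = 0\ \text{in}\ \bar\fm_x/\bar\fm_x^2}
\]
and set $W := \mathbf{A}^{N+1}_k \setminus \overline{q(J)}$, where $q\colon J \to \mathbf{A}^{N+1}_k$ is the second projection. Applying generic smoothness in characteristic zero on the smooth locus $X_\fm^{\sm} \subseteq X_\fm$ (dense open by characteristic zero), combined with a Bertini-type argument in $\PP^N_k$ to control the contributions from $\Sing(X_\fm)$ and from the critical locus of $\bar f$, a dimension count shows that $\overline{q(J)}$ is a proper closed subset of $\mathbf{A}^{N+1}_k$. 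Then for $\bar a \in W(k)$ and any lift $a \in R^{N+1}$, the sufficient condition holds at every closed point of $\bar f^{-1}(V(\bar h))$, and since closed points of $f^{-1}(V(h))$ all lie in $\bar f^{-1}(V(\bar h))$ by hypothesis, $f^{-1}(V(h))$ is regular at its closed points, hence everywhere by openness of the regular locus together with specialization to closed points. The main obstacle is this final dimension count: verifying that $q(J)$ is not dominant requires carefully handling both $\Sing(X_\fm)$ and the critical locus of $\bar f$ via characteristic-zero Bertini on $\bar f\colon X_\fm \to \PP^N_k$.
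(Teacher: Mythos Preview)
Your reduction to the special fiber and the lifting step via the surjection $\fm_x/\fm_x^2 \twoheadrightarrow \bar\fm_x/\bar\fm_x^2$ are correct, and this is the same core mechanism the paper uses. However, the two proofs diverge sharply in how they produce the open set $W$, and your route leaves the hardest step unfinished.

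The paper does not use an incidence variety or a dimension count at all. Instead, it \emph{stratifies} the special fiber $X_s$ into finitely many connected regular locally closed pieces $\{U_j\}$ and applies Jouanolou's Bertini theorem \cite[Theorem~6.10(2)]{Jou83} to each $\bar f\rvert_{U_j}\colon U_j \to \PP^N_k$, which immediately yields a nonempty open $W$ such that $f_s^{-1}(V(\bar h)) \cap U_j$ is regular for every $j$ and every $\bar a \in W(k)$. The lifting then uses the cotangent space of the \emph{stratum} $U_j$: since both $U_j$ and $f_s^{-1}(V(\bar h)) \cap U_j$ are regular at $x$, the image of $g$ in $\cO_{U_j,x}/\fm_x^2$ is nonzero, hence so is its image in $\cO_{X,x}/\fm_x^2$. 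No dimension count is needed because Jouanolou's theorem is invoked as a black box on each regular stratum.

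Your approach, by contrast, works with the (possibly singular) $X_\fm$ globally and asks for nonvanishing in $\bar\fm_x/\bar\fm_x^2$, which is a different sufficient condition. You then defer the existence of $W$ to a dimension count on $q\colon J \to \mathbf A^{N+1}_k$, which you correctly flag as ``the main obstacle'' but do not carry out. The construction of $\tilde Z$ and the verification that it is regular are not used anywhere in the rest of your argument. More importantly, the proposed dimension count is genuinely delicate: the naive bound $\dim J \le \dim X_\fm + (N - r_x)$ can equal $N+1$ (e.g.\ when $\dim X_\fm > N$ or when $\bar f$ has positive-dimensional fibers), so one must control $\dim q(J)$ rather than $\dim J$, accounting for the fibers of $\bar f$. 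Handling $\Sing(X_\fm)$ by a ``Bertini-type argument'' would itself require passing to regular strata, at which point you have reinvented the paper's stratification. In short, your approach is not wrong, but the step you defer is essentially a reproof of Jouanolou's theorem, whereas the paper's stratification lets one cite it directly in two lines.
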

\begin{proof}
  Denote by
  \[
    f_s\colon X_s \longrightarrow \PP^N_k
  \]
  the special fiber of \(f\) over the closed point \(\{s\} = \Spec(k)\) of \(\Spec(R)\).
  Choose a stratification $\{U_j\}_{j \in J}$ of $X_s$ by locally closed
  subschemes such that each $U_j$ is connected and regular.
  By Jouanolou's Bertini theorem \cite[Theorem 6.10(2)]{Jou83}, since $k$ is of
  characteristic zero, there exists a
  Zariski open subset $W \subseteq \mathbf{A}^{n+1}_k$ such that for all
  $(\bar{a}_0,\bar{a}_1,\ldots,\bar{a}_N) \in
  W(k)$, the section
  \[
    \bar{h} = \bar{a}_0T_0+\bar{a}_1T_1+\ldots+
    \bar{a}_NT_N \in H^0\bigl(\PP^N_k,\cO(1)\bigr)
  \]
  is such that $f_s^{-1}(V(\bar{h})) \cap U_j$ is regular for all $j$.
  \par We claim that this choice of $W$ satisfies the conclusion of the theorem.
  Since the regular locus is stable under generization, it suffices to show that
  $f^{-1}(V(h))$ is regular at every closed point $x \in f^{-1}(V(h))$.
  Let $0 \ne g \in \cO_{X,x}$ be the local equation defining
  $f^{-1}(V(h))$ at such a closed point $x$.
  By assumption, the image of $x$ in $\Spec(R)$
  is $\fm$, and hence there exists a member $U_j$ our stratification of $X_s$
  containing $x$.
  We now consider the image of $g$ under the composition
  \[
    \cO_{X,x}/\fm_x^2 \longrightarrow \cO_{X_s,x}/\fm_x^2 \longrightarrow
    \cO_{U_j,x}/\fm_x^2.
  \]
  By \cite[Chapitre 0, Proposition 17.1.7]{EGAIV1}, since $U_j$ and
  $f_s^{-1}(V(\bar{h})) \cap U_j$ are regular, we know that the image of $g$ in
  $\cO_{U_j,x}/\fm_x^2$ is nonzero.
  Thus, the image of $g$ in $\cO_{X,x}/\fm_x^2$ is also nonzero.
  Applying \cite[Chapitre 0, Proposition 17.1.7]{EGAIV1} again, we therefore see that
  $f^{-1}(V(h))$ is regular at $x$.
\end{proof}
\begin{remark}[cf.\ {\cite[Remark 2.16]{BMPSTWW}}]\label{rem:bertinisnc}
  Let
  $f\colon X \to \Spec(R)$ be a separated morphism of
  finite type mapping closed points to the unique closed point that
  factors through $\PP^N_R$ for some $N \ge 1$, and let $B$ be an
  effective divisor on $X$ with simple normal crossings.
  Applying Theorem \ref{thm:bertini} to $X$ and the finitely many strata of $B$,
  we obtain a divisor $H = g^{-1}(V(h))$ such that $(X,H+B)$ and $(H,B \cap H)$
  are log regular,
  where $g:X\to \PP^N_R$ is a factorization of $f$.
  We may also require $H$ to avoid finitely many given points, for example the generic points of the components of $B$.
  We will use this version of Bertini's theorem when working with linear systems
  associated to $f$-generated Cartier divisors.
\end{remark}
When $X$ is proper over a non-local base, we can still find semi-ample
regular divisors after passing to an affine open cover of the
base.
This provides a method to work around the fact that global Bertini theorems are
unavailable in our setting.
Below, a scheme is \textsl{J-2} if it admits an open affine covering $X =
\bigcup_i \Spec(R_i)$ such that every $R_i$ is J-2 in the sense of Definition
\ref{def:excellent}$(\ref{def:excellentj2})$ (see
\cite[\href{https://stacks.math.columbia.edu/tag/07R3}{Tag
07R3} and \href{https://stacks.math.columbia.edu/tag/07R4}{Tag
07R4}]{stacks-project}).
\begin{corollary}\label{cor:bertinionopencover}
  Let $R$ be a Noetherian domain containing $\QQ$.
  Fix an integer $N \ge 1$.
  Let
  \[
    \Set[\big]{f_i\colon X_i \to \PP^N_R}_{i}
  \]
  be a finite collection of closed separated
  morphisms of finite type 
  from regular Noetherian schemes $X_i$ that are J-2.
  Let $\Spec(R) = \bigcup_k V_k$ be a finite affine open cover of $\Spec(R)$.
  Then, there exists a finite affine open cover
  \[
    \Spec(R) = \bigcup_{j} U_j
  \]
  refining $\Spec(R) = \bigcup_k V_k$,
  such that for each $j$, there exists a section $h_j \in
  H^0(\PP^N_R,\cO(1))$ whose preimage $f_i^{-1}(V(h_j))$ is regular along the
  preimage of $U_j$ in $X_i$.
\end{corollary}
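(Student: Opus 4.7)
The strategy is to work one closed point of $\Spec(R)$ at a time: apply Theorem \ref{thm:bertini} over each local ring $R_\fm$, spread out the resulting section to an affine neighborhood of $\fm$ using the J-2 hypothesis, and conclude by quasi-compactness of $\Spec(R)$.

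Fix a closed point $\fm \in \Spec(R)$, and pick an index $k(\fm)$ with $\fm \in V_{k(\fm)}$. The base changes $f_{i,R_\fm}\colon X_{i,R_\fm} \to \PP^N_{R_\fm}$ satisfy the hypotheses of Theorem \ref{thm:bertini}: each $X_{i,R_\fm}$ is Noetherian of finite type over $R_\fm$, and regular because its local rings are localizations of the regular local rings of $X_i$ (using that a point $x \in X_i$ with image a prime $\fp \subseteq \fm$ has $\cO_{X_{i,R_\fm},x} \cong \cO_{X_i,x}$). Moreover, all of its closed points lie over $\fm R_\fm$, since the composition $X_i \to \Spec(R)$ is proper (it factors through the proper $\PP^N_R \to \Spec(R)$ via the closed $f_i$), and properness is stable under base change. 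Applying Theorem \ref{thm:bertini} to each $f_{i,R_\fm}$ using the base-changed basis $T_0,\ldots,T_N$, and intersecting the resulting nonempty Zariski opens yields a nonempty open $W_\fm \subseteq \mathbf{A}^{N+1}_{\kappa(\fm)}$. Since $\QQ \subseteq \kappa(\fm)$ the residue field is infinite, so we may pick $(\bar a_0,\ldots,\bar a_N) \in W_\fm(\kappa(\fm))$. Lifting to $(a_0,\ldots,a_N) \in R_\fm^{N+1}$ and clearing denominators, write $a_\ell = b_\ell/s$ with $b_\ell \in R$ and $s \in R \setminus \fm$; the section
\[
  \tilde h_\fm \coloneqq \sum_{\ell=0}^N b_\ell T_\ell \in H^0\bigl(\PP^N_R,\cO(1)\bigr)
\]
satisfies $V(\tilde h_\fm) = V(\sum_\ell a_\ell T_\ell)$ over $\Spec(R_\fm)$, so $f_{i,R_\fm}^{-1}(V(\tilde h_\fm))$ is regular for all $i$.

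To spread out, use the J-2 hypothesis: the regular locus of the finite-type $X_i$-scheme $f_i^{-1}(V(\tilde h_\fm))$ is open, so its complement is a closed subset whose image $Z_{i,\fm} \subseteq \Spec(R)$ is closed (since $X_i \to \Spec(R)$ is closed) and disjoint from the set of primes contained in $\fm$, and in particular avoids $\fm$. Let $U_\fm$ be an affine open neighborhood of $\fm$ contained in $V_{k(\fm)} \cap D(s) \cap \bigcap_i (\Spec(R) \setminus Z_{i,\fm})$. Every prime of $\Spec(R)$ specializes to some closed point $\fm$, and since open subsets of $\Spec(R)$ are stable under generalization, such a prime lies in the corresponding $U_\fm$; thus $\{U_\fm\}_\fm$ covers $\Spec(R)$, and by quasi-compactness finitely many $U_\fm$ form the required refinement $\{U_j\}$ with sections $h_j = \tilde h_{\fm(j)}$. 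The main obstacle is the spreading-out step, which requires combining two ingredients that do not individually suffice: J-2 to guarantee openness of the regular locus of the preimage, and the closedness of $X_i \to \Spec(R)$ to convert the resulting open into a Zariski open in the base $\Spec(R)$.
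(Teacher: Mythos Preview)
Your proof takes essentially the same approach as the paper's: apply Theorem~\ref{thm:bertini} pointwise, lift the resulting section to $H^0(\PP^N_R,\cO(1))$ by clearing denominators, spread out using J-2 and closedness of $\pi_i\colon X_i\to\Spec(R)$, and conclude by quasi-compactness. The paper runs this over \emph{all} primes $\fp$, while you restrict to maximal ideals $\fm$ and recover the remaining primes by specialization; both variants work.

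There is, however, a genuine gap in your justification for invoking Theorem~\ref{thm:bertini}. You assert that the composition $X_i\to\Spec(R)$ is proper because $f_i$ is ``closed'' and $\PP^N_R\to\Spec(R)$ is proper, and then use stability of properness under base change. But ``closed'' in the hypothesis means that $f_i$ is a closed map of topological spaces, not a closed immersion (note that ``separated'' would be redundant for a closed immersion), and a closed, separated, finite-type morphism composed with a proper one need not be proper. What you actually need is only that closed points of $X_{i,R_\fm}$ lie over $\fm R_\fm$, and this does follow from closedness of $\pi_i\colon X_i\to\Spec(R)$ alone: if $x\in X_{i,R_\fm}$ is closed with image $\fq\subseteq\fm$, then since $\pi_i$ is closed we have $\pi_i(\overline{\{x\}}^{X_i})=V(\fq)\ni\fm$, so some specialization $x'$ of $x$ in $X_i$ satisfies $\pi_i(x')=\fm$; but then $x'\in X_{i,R_\fm}$ as well, and since $x$ is closed there, $x'=x$, forcing $\fq=\fm$. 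With this correction your argument goes through. (The paper's proof also invokes Theorem~\ref{thm:bertini} here without spelling out this verification, so this is a shared omission rather than a divergence in strategy.)
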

\begin{proof}
  For each prime ideal $\fp \subseteq R$, we can construct sections $h_\fp \in
  H^0(\PP^N_{R_\fp},\cO(1))$ such that the preimage of $V(h_\fp)$ in
  $X_{i} \otimes_R R_\fp$ is regular for every $i$ by Theorem \ref{thm:bertini}.
  Since $R$ is a domain, we can lift the sections $h_\fp$ to sections
  $\tilde{h}_\fp \in H^0(\PP^N_R,\cO(1))$ by clearing denominators.
  For each $\fp$ and $i$, denote by $\Sing(f_i^{-1}(V(\tilde{h}_\fp)))$ the
  singular locus of $f_i^{-1}(V(\tilde{h}_\fp))$, which is closed by the
  J-2 condition.
  Then, denoting by $\pi_i\colon X_i \to \Spec(R)$ the composition of $f_i$ with
  the projection morphism $\PP^N_R \to \Spec(R)$, we have
  \[
    \fp \in \Spec(R) - \pi_i\biggl( \bigcup_i
    \Sing\Bigl(f_i^{-1}\bigl(V(\tilde{h}_\fp)\bigr)\Bigr) \biggr)
  \]
  since $f_i^{-1}(V(\tilde{h}_\fp))$ is regular along the preimage of $\fp$ by
  construction, and hence
  \[
    \Spec(R) = \bigcup_{\fp \in \Spec(R)} \Biggl(\Spec(R) - \pi_i\biggl(
    \bigcup_i \Sing\Bigl(f_i^{-1}\bigl(V(\tilde{h}_\fp)\bigr)\Bigr)
    \biggr)\Biggr)
  \]
  is an open cover.
  Each of the members of this open cover contains an affine open $U_\fp$ such
  that $\fp \in U_\fp \subseteq V_k$ for some $k$,
  and since $\Spec(R)$ is quasi-compact, there is a finite
  subset $\{U_{\fp_j}\}\subseteq\{U_\fp\}$ that forms an affine open cover of
  $\Spec(R)$.
  Setting $U_j \coloneqq U_{\fp_j}$ and $h_j \coloneqq \tilde{h}_{\fp_j}$, we
  are done.
\end{proof}

Corollary \ref{cor:bertinionopencover} allows us to perturb klt pairs up to
replacing the base by an affine open cover.
This allows us to run inductive proofs like in the
classical setting for complex varieties after passing to an affine open cover of
the base.

\begin{corollary}\label{cor:BertiniKLTOpenCover}
Let $\pi\colon X \to Z$ be a proper morphism of excellent locally Noetherian
  schemes of equal characteristic zero.
  Suppose that $X$ is integral and normal, and that $Z$ has a dualizing complex $\omega_Z^\bullet$.
  Denote by $K_X$ a canonical divisor on $X$ associated to $\omega_X^\bullet =
  \pi^!\omega_Z^\bullet$.
  
  Let $(X,\Delta)$ be a klt $\kk$-pair for $\kk \in \{\QQ,\RR\}$.
  Let $A$ be a $\pi$-semi-ample $\kk$-Cartier divisor on $X$.
  Then, there exists an open covering $Z=\bigcup_a V_a$ and
  \[
    A_a\in \bigl\lvert A_{|\pi^{-1}(V_a)}\bigr\rvert_{\kk}
  \]
  such that $(\pi^{-1}(V_a),\Delta_{|\pi^{-1}(V_a)}+A_a)$ is klt.
\end{corollary}
\begin{proof}
The $\pi$-semi-ample $\kk$-Cartier divisor
$A$ is a $\kk_{\geq 0}$-linear combination of $\pi$-semi-ample Cartier divisors on
$X$, so it suffices to treat the case $A=rH$ where $r\in\kk$, $0<r<1$, and $H$
is a $\pi$-generated Cartier divisor.

We may assume $Z=\Spec(R)$ affine and integral.
Let $f\colon Y\to X$ be a log resolution of $(X,\Delta)$, which exists by
\cite[Theorem 2.3.6 and Lemma 4.2.4]{Tem08}.
Write
\[
  K_Y+\sum_E a(E)\,E\sim_{\kk}f^*(K_X+\Delta),
\]
where $a(E)\coloneqq a(E,X,\Delta)$ is the discrepancy.
The divisor $\Delta_Y\coloneqq \sum_E a(E)\,E$ is effective and satisfies $\lfloor\Delta_Y\rfloor=0$ since $(X,\Delta)$ is klt.
Since $H$ is $\pi$-generated,
it defines a morphism $h\colon X\to \PP^N_R$.

Applying Corollary \ref{cor:bertinionopencover}, there exists an affine open
cover \(Z = \bigcup_a V_a\) such that denoting by \(\pi_a \colon X_a \to V_a\) and
\(f_a\colon Y_a \to X_a\) the restrictions of \(\pi\) and \(f\) to \(V_a\) and
its preimages,
we can find
\[
H'_a\in \bigl\lvert H_{\rvert X_a} \bigr\rvert
\]
for every \(a\)
such that $f_a^*H'_a$ is reduced, does not share a component with $\Delta_{Y_a}
\coloneqq \Delta_{Y\rvert Y_a}$,
and is such that $(Y_a,\Delta_{Y_a}+f_a^*H'_a)$ is log regular. 
We have
\begin{gather*}
  A'_a\coloneqq rH'_a\in\bigl\lvert A_{\rvert X_a}\bigr\rvert_{\kk},\\
  f_{a*}\bigl(\Delta_{Y_a}+r\,f_a^*H'_a\bigr)=\Delta_{\rvert X_a}+A'_a,
\end{gather*}
and
\begin{align*}
  K_{Y_a}+\Delta_{Y_a}+r\,f_a^*H'_a &\sim_{\kk}f_a^*\bigl(K_{X_a}+\Delta_{\rvert
  X_a}+A'_a\bigr)
  \intertext{for every \(a\), and so}
  a\bigl(E,X_a,\Delta_{\rvert X_a}+A'_a\bigr)&=
  a\bigl(E,Y_a,\Delta_{Y_a}+r\,f_a^*H'_a\bigr)
\end{align*}
for all divisors $E$ over $X_a$ (cf.\
\cite[Lemma 2.30]{KM98}).
Since $r<1$, we have
\[
  \bigl\lfloor\Delta_{Y_a}+r\,f_a^*H'_a\bigr\rfloor=0
\]
and since $(Y_a,\Delta_{Y_a}+r\,f_a^*H'_a)$ is log regular, we see that
$(Y_a,\Delta_{Y_a}+r\,f_a^*H'_a)$ is klt for every \(a\) by
\cite[Corollary 2.11]{Kol13}.
Thus $(X_a,\Delta_{\rvert X_a}+A'_a)$ is klt for every \(a\), as desired.
\end{proof}
When $X$ is projective over an affine base, we can find ample
divisors avoiding finitely many points in $X$, even without passing to an affine
open cover of the base.
\begin{lemma}\label{lem:AmpleAvoid}
  Let $\pi\colon X \to Z$ be a projective morphism of integral Noetherian
  schemes, where $Z$ is affine.
  Let $\kk \in \{\ZZ,\QQ,\RR\}$.
For a $\pi$-ample $\kk$-Cartier divisor $A$ on $X$ and finitely many points $x_i\in X$, there exist a positive integer $n$ and a divisor $A'\in
\lvert nA\rvert_{\mathbf{k}}$ with $\mult_{x_i}(A')=0$ for all $i$.
\end{lemma}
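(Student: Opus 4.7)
The plan is to reduce to the case $\kk = \ZZ$ and then construct the desired divisor as the divisor of zeros of a global section produced by Serre vanishing.

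\emph{Reduction to $\kk = \ZZ$.} Since $A$ is $\pi$-ample, by Definition \ref{def:relativeampleness} we may write $A = \sum_{j=1}^{\ell} a_j A_j$ in $\Div_\kk(X)$ with $a_j \in \kk_{>0}$ and each $A_j$ a $\pi$-ample Cartier divisor. Granting the integer case, for each $j$ we obtain a positive integer $n_j$ and $A_j' \in \lvert n_j A_j \rvert$ with $\mult_{x_i}(A_j') = 0$ for all $i$. Setting $n = \mathrm{lcm}(n_1, \ldots, n_\ell)$ and $A' = \sum_j a_j (n/n_j) A_j'$ produces an effective $\kk$-Cartier divisor with $A' \sim_\kk nA$ and, by additivity and non-negativity of multiplicity, $\mult_{x_i}(A') = 0$ for every $i$.

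\emph{The integer case.} Suppose $A$ is a $\pi$-ample Cartier divisor. Since $X$ is Noetherian, each $x_i$ admits a closed specialization $x_i^* \in \overline{\{x_i\}}$. As open subsets of $X$ are stable under generization, any section $s \in H^0(X, \cO_X(nA))$ with $s(x_i^*) \ne 0$ for all $i$ is automatically nonvanishing at each $x_i$, so its divisor of zeros $V(s) \in \lvert nA \rvert$ satisfies $\mult_{x_i}(V(s)) = 0$. It therefore suffices to produce such an $s$. Let $T \subseteq X$ be the reduced closed subscheme with underlying set $\{x_1^*, \ldots, x_r^*\}$, a finite disjoint union of spectra of residue fields. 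Since $\pi$ is projective and $A$ is $\pi$-ample, Serre vanishing gives $R^q\pi_*(\cI_T \otimes_{\cO_X} \cO_X(nA)) = 0$ for all $q > 0$ and $n$ sufficiently large, and the affineness of $Z$ then forces $H^1(X, \cI_T \otimes_{\cO_X} \cO_X(nA)) = 0$. The long exact sequence of cohomology attached to
\[
0 \longrightarrow \cI_T \otimes_{\cO_X} \cO_X(nA) \longrightarrow \cO_X(nA) \longrightarrow \cO_X(nA)_{\vert T} \longrightarrow 0
\]
yields a surjection
\[
H^0\bigl(X, \cO_X(nA)\bigr) \longtwoheadrightarrow H^0\bigl(T, \cO_X(nA)_{\vert T}\bigr) \cong \prod_{i=1}^r \kappa(x_i^*),
\]
the final isomorphism arising after choosing a trivialization of the line bundle $\cO_X(nA)_{\vert T}$ on the disjoint union of points. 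Lifting the element $(1, \ldots, 1) \in \prod_i \kappa(x_i^*)$ to a section $s$ gives the desired divisor $A' = V(s)$.

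\emph{Main obstacle.} There is no serious obstacle; the essential technical point is that for arbitrary residue fields at the $x_i^*$ (possibly small or finite), a single ample Cartier divisor $A$ need not admit any effective representative in its linear system avoiding all of the $x_i$. Instead, one passes to a sufficiently high power $nA$ at which Serre vanishing on the projective morphism $\pi$ supplies enough global sections to surject onto the zero-dimensional scheme $T$, and once one has surjectivity the avoidance property is automatic.
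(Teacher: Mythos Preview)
Your proof is correct. The reduction to $\kk=\ZZ$ matches the paper's exactly. For the integer case, however, the paper takes a different and shorter route: rather than invoking Serre vanishing to produce a section surjecting onto the reduced zero-dimensional scheme $T$, it simply cites the graded version of prime avoidance (Bourbaki, \textit{Commutative Algebra}, III, \S1, no.~4, Proposition~8). Concretely, after replacing $A$ by a very ample multiple one has $X\hookrightarrow\PP^N_R$, the points $x_i$ correspond to homogeneous primes not containing the irrelevant ideal, and graded prime avoidance furnishes a homogeneous form of some positive degree lying outside all of them; its zero locus cut out on $X$ is the desired $A'$.

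Your cohomological argument has the mild advantage of being self-contained within the sheaf-theoretic framework already in play and of giving an explicit mechanism (surjectivity onto $\prod_i\kappa(x_i^*)$) for why the section exists, whereas the paper's approach is more elementary and avoids any appeal to vanishing theorems or the projectivity hypothesis beyond the embedding into $\PP^N_R$. Both arguments require passing to some unspecified multiple $nA$, so neither yields effective control on $n$.
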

\begin{proof}
  Since $\pi$-ample $\kk$-Cartier divisors are $\kk_{>0}$-linear combination of $\pi$-ample Cartier divisors, we may assume that $\kk=\ZZ$. 
  The statement now follows by the graded version of prime avoidance
  \cite[Chapter III, \S1, no.\ 4, Proposition 8]{BouCA}.
\end{proof}

\section{Basepoint-free, Contraction, Rationality, and Cone
theorems}\label{sect:bpfcontrratcone}
In this section, we prove that the Basepoint-free and Contraction, Rationality, and
Cone theorems
hold for projective morphisms of quasi-excellent algebraic
spaces of equal characteristic zero with
dualizing complexes by adapting the proofs in \cite{KMM87}.
Later, in \S\ref{sect:fundrevisit},
we will prove dual versions of these statements in the vein of
\cite{Kaw11} using our finite generation result (Theorem
\ref{thm:cl12a}), as is done for varieties in \cite{CL13}.\medskip
\par We have stated these results using the notion of weakly log terminal pairs (see
Definition \ref{def:wlt}). 
Dlt pairs are weakly log terminal by Remark \ref{rem:dltiswlt}.
\subsection{Basepoint-free theorem}
\par We start with the Basepoint-free theorem.
A version of the statement for schemes below appeared in
\cite[Proposition 2.48]{BMPSTWW}.
The statement for algebraic spaces when $Z = \Spec(k)$ for a field $k$ (resp.\
when $X$ is three-dimensional and of finite type over an algebraically closed
field of characteristic zero) is proved
in \cite[Basepoint-free theorem 1.4.4]{Kol91} (resp.\ \cite[Base Point Free
Theorem 6.16]{Sho96}).
\par We have included the statement for $\RR$-pairs to illustrate that for
schemes that are not necessarily quasi-projective over a field, one cannot simply
perturb boundary divisors directly at the beginning because we do not have
Bertini theorems available.
If $\pi$ is projective, one could instead replace $Z$ by an affine cover and use
an appropriate version of Corollary \ref{cor:BertiniKLTOpenCover}.
\begin{theorem}[Basepoint-free theorem; cf.\ {\cite[Theorem 3-1-1 and Remark
  3-1-2(1)]{KMM87}}]\label{thm:bpf}
  Let $\pi\colon X \to Z$ be a proper surjective morphism of integral
  quasi-excellent Noetherian algebraic spaces of equal characteristic
  zero over a scheme $S$.
  Suppose that $X$ is normal and that $Z$ admits a dualizing complex
  $\omega_Z^\bullet$.
  Denote by $K_X$ a canonical divisor on $X$ associated to $\omega_X^\bullet =
  \pi^!\omega_Z^\bullet$.
  \par Let $(X,\Delta)$ be an $\RR$-pair,
  and let $H \in
  \Pic(X)$ be $\pi$-nef.
  Suppose one of the following holds:
  \begin{enumerate}[label=$(\roman*)$,ref=\roman*]
    \item\label{thm:bpfdlt} $(X,\Delta)$ is dlt (or more generally, weakly log
      terminal) and
      $aH - (K_X+\Delta)$ is $\pi$-ample for some $a \in \ZZ_{>0}$.
    \item\label{thm:bpfklt}
      $(X,\Delta)$ is klt and $aH - (K_X+\Delta)$ is $\pi$-big and $\pi$-nef
      for some $a \in \ZZ_{>0}$.
  \end{enumerate}
  Then, there exists $m_0 \in \ZZ_{>0}$ such that $mH$ is $\pi$-generated
  for all $m \ge m_0$.
\end{theorem}
\begin{proof}
  After replacing $\pi$ by its Stein factorization
  \cite[\href{https://stacks.math.columbia.edu/tag/0A1B}{Tag
  0A1B}]{stacks-project}, we may assume that $Z$ is normal and that $\pi$ has
  geometrically connected fibers
  \cite[\href{https://stacks.math.columbia.edu/tag/0AYI}{Tag
  0AYI}]{stacks-project}.
  For $(\ref{thm:bpfklt})$, this does not change the $\pi$-bigness or the
  $\pi$-nefness of $aH -
  (K_X+\Delta)$ since it changes volumes and intersections on $\kappa(\eta)$ by
  the factor $[H^0(X_\eta,\cO_{X_\eta}) : \kappa(\eta)]$.\medskip
  \par We claim we may replace $Z$ by a scheme $Z'$ \'etale over $Z$.
  Let $Z' \to Z$ be an \'etale morphism where $Z'$ is a quasi-compact scheme,
  and consider the associated Cartesian diagram
  \[
    \begin{tikzcd}
      X' \rar{f'}\dar[swap]{\pi'} & X\dar{\pi}\\
      Z' \rar{f} & Z\mathrlap{.}
    \end{tikzcd}
  \]
  By flat base change \cite[\href{https://stacks.math.columbia.edu/tag/073K}{Tag
  073K}]{stacks-project}, it suffices to show that $m\,f^{\prime*}H$ is
  $\pi'$-generated for all $m \gg0$.
  Note that the assumptions on $(X,\Delta)$ are inherited by
  $(X',f^{\prime*}\Delta)$ by Remark \ref{rem:singsetalelocal}.
  Moreover, we have
  \[
    f^{\prime*}\bigl(aH - (K_X+\Delta)\bigr) = a\,f^{\prime*}H - (K_{X'}+
    f^{\prime*}\Delta),
  \]
  where $f^{\prime*}\Delta$ is the \'etale pullback of $\Delta$, since the
  formation of canonical divisors is compatible with \'etale base change (see
  the proof of Lemma \ref{lem:dualizingcomplexpullback}).
  This $\RR$-invertible sheaf is $\pi'$-nef by Lemma
  \ref{lem:nefbasechange}$(\ref{lem:nefbasechangepullback})$ and
  is $\pi'$-big by flat base change
  \cite[\href{https://stacks.math.columbia.edu/tag/073K}{Tag
  073K}]{stacks-project}.
  We can then replace $\pi$ by $\pi'$ to assume that $Z$ is a scheme.
  To assume that $X$ is integral, we work one connected component at a time and
  let $Z$ be the scheme theoretic images of these components.\medskip
  \par We now prove the theorem in the case $Z$ is a scheme.
  Let $f_1\colon Y_1 \to X$ be a projective log resolution of $(X,\Delta)$,
  where for $(\ref{thm:bpfdlt})$ we assume the hypotheses in Definition
  \ref{def:wlt},
  and for $(\ref{thm:bpfklt})$ we first apply Chow's lemma
  \cite[Th\'eor\`eme 5.6.1]{EGAII} then resolve using \cite[Theorem
  1.1.6]{Tem18} to assume that $Y_1$ is projective over $Z$.
  Then, we know that $f_1^*(aH-(K_X+\Delta))$ is $\pi$-big and $\pi$-nef by
  Lemmas \ref{lem:nefpullback} and \ref{lem:bigpullback}.
  By Kodaira's lemma (Corollary \ref{lem:kodairachar}), the $\RR$-divisor
  \[
    f_1^*\bigl(aH-(K_X+\Delta)\bigr) + \delta\,f_{1*}^{-1}\Delta - \sum_i
    \delta_{1i} G_i
  \]
  is $(\pi\circ f_1)$-ample for some $\delta,\delta_{1i} \in \RR$ with
  $0 < \delta \ll \min_{\delta_{1i} \ne 0}\{\delta_{1i}\} \ll 1$,
  where $\{G_i\}$ is a family of effective Cartier divisors on
  $X$ with normal crossings,
  $\Supp(\sum_i \delta_{1i} G_i)$ is $f_1$-exceptional, and
  \[
    K_{Y_1} + \delta\,f_{1*}^{-1}\Delta \sim_\RR
    f_1^*(K_X+\Delta) + \sum_i b_iG_i
  \]
  for $b_i \in \RR$ with $b_i > -1$.
  Let $C \coloneqq \sum_i (b_i-\delta_{1i})G_i$.
  After perturbing the $\delta_{1i}$ using Theorem \ref{thm:ampisinterior},
  we may assume that $C$ is a
  $\QQ$-divisor.
  Letting $\eta \in \lvert Z \rvert$ be the generic point, we can apply the
  Non-vanishing theorem \cite[Theorem 2-1-1]{KMM87} to a connected component
  of the geometric generic fiber $Y_{1\bar{\eta}}$ and the pullbacks of $f_1^*H$
  and $C$ to $Y_{1\bar{\eta}}$ to see that
  \[
    \bigl( (\pi \circ f_1)_*\cO_{Y_1}\bigl(m\,f_1^*H+\lceil C
    \rceil\bigr)\bigr)_\eta
    \cong H^0\bigl(Y_{1\eta},\cO_{Y_{1\eta}}(m\,f_1^*H_\eta+\lceil C_\eta
    \rceil\bigr)\bigr) \ne 0
  \]
  for $m \gg 0$
  by flat base change \cite[Proposition 1.4.15]{EGAIII1},
  since
  \begin{align*}
    a\,f_1^*H+C-K_{Y_1} &\sim_\RR
    a\,f_1^*H+C-\biggl(f_1^*(K_X+\Delta) + \sum_i b_iG_i -
    \delta\,f_{1*}^{-1}\Delta\biggr)\\
    &\sim_\RR
    f_1^*\bigl(aH-f_1^*(K_X+\Delta)\bigr) + \delta\,f_{1*}^{-1}\Delta
    - \sum_i \delta_{1i}G_i
  \end{align*}
  is $\pi$-ample.
  In particular, we have
  \[
    \pi_*\cO_X(mH) \cong (\pi \circ f_1)_*\cO_{Y_1}\bigl(m\,f_1^*H+\lceil C
    \rceil \bigr) \ne 0
  \]
  by the projection formula since $\lceil C \rceil$ is
  $g$-exceptional.\smallskip
  \par We now make the following claim:
  \begin{claim}\label{claim:pnbpf}
    For every prime number $p$,
    the divisor $p^nH$ is $\pi$-generated for $n \gg 0$.
  \end{claim}
  Showing Claim \ref{claim:pnbpf} would imply the theorem, since then the
  monoid of natural numbers $m \in \NN$ such that $mH$ is $\pi$-generated would
  contain all sufficiently large integers by \cite[Theorem 1.0.1]{RA05}.\smallskip
  \par Choose $n_0 > 0$ such that $\pi_*\cO_X(p^{n_0}H) \ne 0$ as above.
  If $p^{n_0}H$ is $\pi$-generated, there is nothing to show.
  We will therefore assume that $p^{n_0}H$ is not $\pi$-generated.
  \par First, let $f_1\colon Y_1 \to X$ be a projective log resolution of
  $(X,\Delta)$ as above.
  Taking successive blowups along regular centers (see
  \cite[Lemma 4.2.4]{Tem08}), there is a projective birational morphism $f_2\colon Y \to
  Y_1$ with a family of effective Cartier divisors $\{F_j\}$ with only
  \emph{simple} normal crossings such that setting $f \coloneqq f_1 \circ f_2$,
  the $\RR$-divisor
  \begin{align*}
    f_2^*\biggl( f_1^*\bigl(aH-(K_X+\Delta)\bigr) + \delta\,f_{1*}^{-1}\Delta -
    \sum_i \delta_{1i} G_i\biggr) &- \delta'A_2\\
    =
    f^*\bigl(aH-(K_X+\Delta)\bigr) + \delta\,f_2^*f_{1*}^{-1}\Delta &- \sum_j
    \delta_jF_j
  \end{align*}
  is $(\pi \circ f)$-ample for an $f_2$-exceptional $\RR$-divisor $A_2$ with $0
  < \delta' \ll \delta$, again using Kodaira's lemma (Corollary
  \ref{lem:kodairachar}).
  Moreover, we have
  \[
    K_Y +\delta\,f_2^*f_{1*}^{-1}\Delta \sim_\RR f^*(K_X+\Delta) + \sum_j a_jF_j
  \]
  for $a_j \in \RR$ with $a_j > -1$, and after possibly using \cite[Theorem
  1.1.6]{Tem18} to replace $f$ by a resolution that also resolves the
  $\pi$-base ideal of $\cO_X(p^{n_0}H)$, we have
  \[
    (\pi \circ f)^*(\pi \circ f)_*\cO_Y(f^*p^{n_0}H) \longtwoheadrightarrow
    \cO_Y\biggl(f^*p^{n_0}H - \sum_j r_jF_j\biggr) \subseteq \cO_Y(f^*p^{n_0}H)
  \]
  for some non-negative integers $r_j$ not all equal to zero.
  \par Next, since $0 < \delta' \ll \delta \ll
  \min_{\delta_{1i}\ne0}\{\delta_{1i}\} \ll 1$, we know that $a_j + 1 - \delta_j
  > 0$ for all $j$ by \cite[Corollary 2.11]{Kol13}.
  Set
  \[
    c \coloneqq \min_j \biggl\{ \frac{a_j+1-\delta_j}{r_j} \biggr\},
  \]
  where we set $(a_j+1-\delta_j)/r_j = \infty$ if $r_j = 0$.
  After possibly perturbing $A_2$ (and hence the $\delta_j$) slightly
  using Theorem \ref{thm:ampisinterior}, we may assume that the
  minimum $c$ is attained at a unique index $j$, which we relabel as $j = 0$,
  and that $a_j - \delta_j \in \QQ$ for all $j$.
  Set
  \begin{gather*}
    A \coloneqq \sum_{j \ne 0} (-cr_j + a_j - \delta_j)F_j,\\
    B \coloneqq F_0.
  \end{gather*}
  Then, the $\QQ$-divisor
  \begin{alignat*}{3}
    N &\coloneqq p^{n'}f^*H + A - B - K_Y\\
    &\sim_\RR c\biggl(f^*p^{n_0}H - \sum_j r_jF_j\biggr) &{}+{}& f^*\bigl(
    (p^{n'}-cp^{n_0})H - (K_X+\Delta)\bigr)\\
    &&{}+{}& \delta\,f_2^*f_{1*}^{-1}\Delta -
    \sum_j \delta_j F_j
  \end{alignat*}
  is $(\pi \circ f)$-ample for all $n' \in \NN$ such that $p^{n'} \ge
  cp^{n_0}+a$.
  Since
  \begin{align*}
    R^1(\pi\circ f)_*\cO_Y\bigl(p^{n'}f^*H &+ \lceil A \rceil - B \bigr) =
    R^1(\pi\circ f)_*\cO_Y\bigl(\lceil N \rceil + K_Y\bigr) = 0
  \intertext{by \cite[Theorem A]{Mur}, the morphism}
    (\pi \circ f)_*\cO_Y\bigl(p^{n'}f^*H &+ \lceil A \rceil\bigr) \longrightarrow
    (\pi \circ f)_*\cO_B\Bigl(\bigl(p^{n'}f^*H + \lceil A
    \rceil\bigr)_{\vert B}\Bigr)
  \end{align*}
  is surjective.
  Now by the Non-vanishing theorem \cite[Theorem 2-1-1]{KMM87}
  applied to a connected component of the
  geometric generic fiber $B_{\bar{\eta}}$ and the pullbacks of $p^{n'}f^*H$
  and $A$ to $B_{\bar{\eta}}$, we see that
  \[
    (\pi \circ f)_*\cO_B\Bigl(\bigl(p^{n'}f^*H + \lceil A
    \rceil\bigr)_{\vert B}\Bigr) \ne 0
  \]
  for $n' \gg 0$.
  Since $(\pi \circ f)_*\cO_Y(p^{n'}f^*H + \lceil A \rceil) \cong
  \pi_*\cO_X(p^{n'}H)$ by the projection formula and the fact that $\lceil A
  \rceil$ is $f$-exceptional, we have
  \[
    f(B) \not\subseteq \Supp\Bigl(\cok\bigl(\pi^*\pi_*\cO_X(p^{n'}H)
    \longrightarrow \cO_X(p^{n'}H)\bigr)\Bigr).
  \]
  Thus, we have
  \begin{align*}
    \MoveEqLeft[5]
    \Supp\Bigl(\cok\bigl(\pi^*\pi_*\cO_X(p^{n'}H)
    \longrightarrow \cO_X(p^{n'}H)\bigr)\Bigr)\\
    &\subsetneq \Supp\Bigl(\cok\bigl(\pi^*\pi_*\cO_X(p^{n_0}H)
    \longrightarrow \cO_X(p^{n_0}H)\bigr)\Bigr).
  \end{align*}
  By Noetherian induction, we therefore have
  \[
    \Supp\Bigl(\cok\bigl(\pi^*\pi_*\cO_X(p^{n}H)
    \longrightarrow \cO_X(p^{n}H)\bigr)\Bigr) = \emptyset,
  \]
  which is what we wanted to show in Claim \ref{claim:pnbpf}.
\end{proof}
\subsection{Contraction theorem}
Next, we consider the Contraction theorem.
Showing uniqueness of contraction morphisms is more involved than in the variety
case because we also need to consider integral one-dimensional closed subschemes
of non-closed fibers of $\pi$.
The following lemma fills this gap, and is pivotal when working with non-Jacobson schemes and
with algebraic spaces.
\begin{lemma}\label{lem:ContractsNonClosed}
  Let $Z$ be a Noetherian algebraic space over a scheme $S$
  and let $f\colon X\to Y$ and $f'\colon X\to Y'$
  be morphisms of proper algebraic spaces over $Z$.
  Suppose that for every integral one-dimensional closed subspace $C \subseteq X$ such
  that $f(C)$ is a point, we have that $f'(C)$ is a point.
  Then, for every $y\in \lvert Y \rvert$ and every connected component $W$ of $f^{-1}(y)$, we
  have that $f'(W)$ is a point.
\end{lemma}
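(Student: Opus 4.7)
The plan is to reduce, via Stein factorization, to showing a certain projection morphism is finite, and to verify this by a chain-of-curves argument at closed points combined with upper semi-continuity of fiber dimension. First, we replace $f$ by the first factor $h\colon X \to Y''$ of its Stein factorization \cite[\href{https://stacks.math.columbia.edu/tag/0A1B}{Tag 0A1B}]{stacks-project}, which has geometrically connected fibers. An integral one-dimensional closed subspace $C \subseteq X$ satisfies that $f(C)$ is a point if and only if $h(C)$ is a point, so the hypothesis is preserved; and since $Y'' \to Y$ is finite, the connected components of the original fibers $f^{-1}(y)$ are exactly the fibers $h^{-1}(y'')$ over $y'' \in g^{-1}(y)$, so it suffices to prove that $f'$ contracts every fiber of the new $f$. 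Writing $Y$ for $Y''$, we now assume $f\colon X \to Y$ has geometrically connected fibers.

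Form $(f,f')\colon X \to Y \times_Z Y'$ and let $\Gamma$ be its scheme-theoretic image, with projections $p\colon \Gamma \to Y$ and $q\colon \Gamma \to Y'$. Properness of $(f,f')$ gives $(f,f')(X) = \Gamma$, hence set-theoretically $q(p^{-1}(y)) = f'(f^{-1}(y))$. Since $f^{-1}(y)$ is connected, so is $f'(f^{-1}(y))$; thus to show $f'(f^{-1}(y))$ is a single point it suffices to show $p$ is a finite morphism. For $y \in \lvert Y \rvert$ closed, $f^{-1}(y)$ is closed in $X$ and is proper and connected over $\kappa(y)$. Apply Chow's lemma to obtain a proper surjection $\mu\colon X' \to f^{-1}(y)$ with $X'$ a projective $\kappa(y)$-scheme, and let $X'_1,\ldots,X'_r$ be its connected components. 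Within each (projective connected) $X'_i$, any two closed points are joined by a chain of integral curves. Each such curve $C$ is sent by $\mu$ to a closed integral subspace $\mu(C) \subseteq f^{-1}(y) \subseteq X$ of dimension at most one, which is closed in $X$ since $y$ is closed; if $\dim \mu(C) = 1$ the hypothesis (applied to $\mu(C)$, with $f(\mu(C)) = \{y\}$) yields that $f'(\mu(C))$ is a point, and if $\dim \mu(C) = 0$ then $f' \circ \mu$ is trivially constant on $C$. Hence $f' \circ \mu$ is constant on each $X'_i$ with some value $y'_i \in Y'$; the images $\mu(X'_i)$ cover the connected space $f^{-1}(y)$ and can be chained via pairwise nonempty intersections, forcing all $y'_i$ to agree (an intersection point $x = \mu(x'_i) = \mu(x'_j)$ gives $y'_i = f'(x) = y'_j$). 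Surjectivity of $\mu$ then gives that $f'(f^{-1}(y))$ is a single point, so $p^{-1}(y)$ is finite.

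Finally, by upper semi-continuity of fiber dimension for the proper morphism $p$, the locus $\{y \in \lvert Y \rvert : \dim p^{-1}(y) \geq 1\}$ is closed; by the previous step it contains no closed point of $\lvert Y \rvert$. Since Noetherian algebraic spaces are decent \cite[\href{https://stacks.math.columbia.edu/tag/03I7}{Tag 03I7}]{stacks-project} and every nonempty closed subset of the Noetherian sober space $\lvert Y \rvert$ contains a closed point, this locus is empty. Hence $p$ has zero-dimensional fibers everywhere and, being proper, is finite; thus $p^{-1}(y)$ is finite for every $y \in \lvert Y \rvert$, and combined with the connectedness of $f'(f^{-1}(y))$ this gives that $f'(f^{-1}(y))$ is a single point, which is the desired conclusion. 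The main obstacle is the closed-point case: combining Chow's lemma for the algebraic space $f^{-1}(y)$ with the standard chain-of-curves fact for projective connected $\kappa(y)$-schemes, and propagating constancy across the possibly reducible projective cover $X'$.
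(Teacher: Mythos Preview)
Your proof is correct and takes a genuinely different route from the paper's.

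The paper's argument first reduces to a single integral curve $\Gamma$ lying in a (possibly non-closed) fiber $f^{-1}(y)$, replaces $X$ by the closure of $\Gamma$ and $Y,Y',Z$ by scheme-theoretic images so that $\Gamma$ becomes the generic fiber over $Z$, and then runs an induction on the dimension of the local ring of $Z$ at a closed point. After passing to an \'etale neighborhood, this reduces to the case where $Z$ is a one-dimensional affine local scheme, where one checks directly that $Y'\to Z$ is finite.

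You instead organize the problem around the graph $\Gamma\subseteq Y\times_Z Y'$ and reduce to finiteness of its projection $p\colon\Gamma\to Y$. Closed fibers are handled by a chain-of-curves argument on a Chow cover of $f^{-1}(y)$, and the result is propagated to all points by upper semi-continuity of fiber dimension for the proper morphism $p$ together with the fact that every nonempty closed subset of the Noetherian decent space $\lvert Y\rvert$ contains a closed point. The paper's induction on $\dim\cO_{Z,z}$ is thus replaced by a single use of Chevalley semicontinuity plus closedness of $p$.

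One minor streamlining: the closed-point case can be done without chains of curves. The set $f'(f^{-1}(y))$ is closed (proper image) and connected; if it had dimension $\ge 1$ it would contain an integral curve $D$, and since proper surjections lift curves (as in the paper's use of \cite[Lemma 3.2]{CLM} in the proof of Lemma~\ref{lem:nefpullback}), there would be an integral closed curve $C\subseteq f^{-1}(y)\subseteq X$ with $f'(C)=D$, contradicting the hypothesis. This sidesteps any delicacy about connecting points by curves over arbitrary residue fields.
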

\begin{proof}
  We fix the following notation for the structure morphisms of $X$, $Y$, and
  $Y'$:
  \[
    \begin{tikzcd}
      Y \arrow{dr}[swap]{h} & \lar[swap]{f} X \dar{\pi} \rar{f'} & Y'
      \arrow{dl}{h'}\\
      & Z\mathrlap{.}
    \end{tikzcd}
  \]
  Let $y \in \lvert Y \rvert$.
  It suffices to show that for each integral one-dimensional closed subspace
  $\Gamma$ of $f^{-1}(y)$, the image $f'(\Gamma)$ is a point.
  We may replace $X$ by the closure of $\Gamma$ equipped with the reduced
  induced structure, in which case $X$ is integral.
  After replacing $Y$, $Y'$, and $Z$ by the scheme-theoretic images of $X$, we
  may assume that $X$ maps surjectively onto $Y$, $Y'$, and $Z$, and that $Y$,
  $Y'$, and $Z$ are integral.
  In this case, we have $\pi^{-1}(\eta)=\Gamma$ where $\eta$ is the generic
  point of $Z$.
  \par Let $z\in \lvert Z \rvert$ be a closed point where the local ring of
  $Z$ at $z$ has minimal dimension $d$.
  We proceed by induction on $d$.
  If $d=0$ there is nothing to prove.
  If $d>0$, pick $\eta_1\in \lvert Z \rvert$ such that the local ring of $Z$ at $\eta_1$ is
  one-dimensional,
  $\eta_1\leadsto z$, and the dimension of $\overline{\{\eta_1\}}$ at $z$ is $<
  d$.
  By the inductive hypothesis, we see that the conclusion holds for the base
  change of $X$, $Y$, and $Y'$ to $\overline{\{\eta_1\}}$.
  The assumptions also hold for the base change of $X$, $Y$, and $Y'$ to an
  elementary \'etale neighborhood of $\eta_1$, and hence
  we may
  assume that $Z$ is an affine local scheme of dimension $1$.
  \par Since $f$ is surjective, we have $f(\Gamma)=h^{-1}(\eta)$, which
  means $h^{-1}(\eta)=\{y\}$ is (set-theoretically) a point.
  Thus $Y\to Z$ is generically finite, so $\dim(Y)\leq 1$. 
  Since $Y$ is integral,
  the closed fiber $h^{-1}(z)$ must be finite.
  Now each integral one-dimensional closed subscheme $C
  \subseteq X$ such that $\pi(C)$ is a point is also such that $f(C)$ is a
  point, and hence $f'(C)$ is a point by assumption.
  Thus $f'(\pi^{-1}(z))$ is finite, and this set is just
  $h^{\prime-1}(z)$.
  Therefore $h'$ is finite and we see that
  \[
    \dim\bigl(f'(\Gamma)\bigr) \leq \dim\bigl(h^{\prime-1}(\eta)\bigr)=0,
  \]
  as desired.
\end{proof}
We can now prove the Contraction theorem.
When $X$ is of finite type over an algebraically closed
field of characteristic zero, the case when $\dim(X) = 3$ is proved in
\cite[Contraction Theorem 6.15]{Sho96}, and the general
case follows from \cite[Theorem 2.6]{VP}.
We note that Lemma \ref{lem:ContractsNonClosed} is crucial in the proof of the
statement below to
allow us to characterize contractions in terms of contractions of closed
subspaces.
\begin{theorem}[Contraction theorem; cf.\ {\cite[Theorem 3-2-1]{KMM87}}]
  \label{thm:contraction}
  Let $\pi\colon X \to Z$ be a projective surjective morphism of integral
  quasi-excellent Noetherian algebraic spaces of equal characteristic zero over
  a scheme $S$.
  Suppose that $X$ is normal and that $Z$ admits a dualizing complex
  $\omega_Z^\bullet$.
  Denote by $K_X$ a canonical divisor on $X$ associated to $\omega_X^\bullet =
  \pi^!\omega_Z^\bullet$.
  \par Let
  $(X,\Delta)$ be a dlt (or more generally, weakly log terminal) $\RR$-pair,
  and let $H \in \Pic(X)$ be $\pi$-nef and such
  that
  \[
    F \coloneqq \bigl(H^\perp \cap \NEbar(X/Z)\bigr) - \{0\}
    \subseteq
    \Set[\big]{\beta \in N_1(X/Z) \given \bigl( (K_X+\Delta) \cdot \beta \bigr) < 0 },
  \]
  where $H^\perp \coloneqq \Set{\beta \in N_1(X/Z) \given (H \cdot \beta) = 0}$.
  Then, the morphism $\varphi$ in the Stein factorization
  \[
    X \overset{\varphi}{\longrightarrow} Y
    \longrightarrow \Proj_Z\Biggl( \bigoplus_{m=0}^\infty
    \pi_*\cO_X(mH)\biggr)
  \]
  is a projective and surjective morphism to
  an integral normal quasi-excellent Noetherian algebraic space \(Y\)
  projective over $Z$.
  The morphism \(\varphi\) satisfies the following properties:
  \begin{enumerate}[label=$(\roman*)$,ref=\roman*]
    \item\label{thm:contractioni}
      For every integral one-dimensional closed subspace $C \subseteq X$ such that
      $\pi(C)$ is a point, the image $\varphi(C)$ is a point if
      and only if $(H \cdot C) = 0$, i.e., if and only if $[C] \in F$.
    \item\label{thm:contractionii}
      $\cO_Y \to \varphi_*\cO_X$ is an isomorphism.
  \end{enumerate}
  \par Moreover, consider a projective surjective morphism \(\varphi'\colon X \to Y'\)
  fitting into the commutative diagram
  \[
    \begin{tikzcd}[column sep=tiny]
      X \arrow{rr}{\varphi'}\arrow{dr}[swap]{\pi} & & Y' \arrow{dl}{\sigma'}\\
      & Z
    \end{tikzcd}
  \]
  where \(Y'\) is an integral normal quasi-excellent Noetherian algebraic
  space projective over \(Z\).
  Suppose that \(\varphi'\) satisfies properties $(\ref{thm:contractioni})$ and
  $(\ref{thm:contractionii})$.
  Then, \(\varphi'\) is isomorphic to \(\varphi\) over \(Z\), and \(\varphi'\) satisfies the
  following additional property:
  \begin{enumerate}[resume,label=$(\roman*)$,ref=\roman*]
    \item\label{thm:contractioniii}
      $H = \varphi^{\prime*}A$ for some $\sigma'$-ample $A \in \Pic(Y)$.
  \end{enumerate}
\end{theorem}
\begin{proof}
  By Kleiman's criterion for $\pi$-ampleness (Proposition
  \ref{lem:AmpleIsPositiveOnNE}), there exists $a \in \NN$ such that $aH -
  (K_X+\Delta)$ is $\pi$-ample.
  Thus, by the Basepoint-free theorem \ref{thm:bpf}, we know that $mH$ is
  $\pi$-generated for $m \gg 0$.
  \par We claim that the relative section ring
  \begin{align*}
    R\bigl(X/Z;H\bigr) \coloneqq{}& \bigoplus_{m=0}^\infty
    \pi_*\bigl(\cO_X(mH)\bigr)
    \intertext{is an $\cO_Z$-algebra of finite type.
    It suffices to show that for every affine scheme $U = \Spec(R)$ \'etale over
    $X$, the pullback of $R(X/Z;H)$ is an $R$-algebra of finite type.
    By flat base change
    \cite[\href{https://stacks.math.columbia.edu/tag/073K}{Tag
    073K}]{stacks-project}, we note that}
    R\bigl(X/Z;H\bigr)_{\vert U} \cong{}& \bigoplus_{m=0}^\infty
    H^0\bigl(U,\cO_U(mH_{\vert U})\bigr).
  \end{align*}
  Base changing along the morphism $U \to Z$, we reduce to the case when $Z$ is
  an affine scheme.
  We can also replace $\pi$ by its Stein factorization \cite[Th\'eor\`eme
  4.3.1]{EGAIII1} to assume that $H^0(X,\cO_X) = R$.
  \par Since $mH$ is globally generated, we have a surjection
  \[
    H^0\bigl(X,\cO_X(mH)\bigr) \otimes_R \cO_X \longtwoheadrightarrow \cO_X(mH),
  \]
  which induces a morphism
  \[
    \psi_m\colon
    X \xrightarrow{\lvert mH \rvert}
    \PP_Z\Bigl(H^0\bigl(X,\cO_X(mH)\bigr)\Bigr) \eqqcolon \PP_m
  \]
  such that $\psi_m^*\cO_{\PP_m}(1) \cong \cO_X(mH)$.
  Let $\phi_m\colon X \to Y_m$ be the Stein factorization of $\psi_m$, and
  denote by $\cO_{Y_m}(1)$ the pullback of $\cO_{\PP_m}(1)$ to $Y_m$.
  By the projection formula, we know that
  \[
    R(X;mH) \coloneqq
    \bigoplus_{m'=0}^\infty H^0\bigl(X,\cO_X(mm'H)\bigr) \cong
    \bigoplus_{m'=0}^\infty H^0\bigl(Y_m,\cO_{Y_m}(m')\bigr).
  \]
  Since the right-hand side is a finitely generated $R$-algebra by
  \cite[Proposition 2.3.4$(ii)$]{EGAIII1}, we see that $R(X;H)$ is a finitely
  generated $R$-algebra by \cite[Proposition 1.2.2]{ADHL15}.
  \par We now claim the morphism $\varphi$ in the Stein factorization
  \[
    X \overset{\varphi}{\longrightarrow} Y
    \longrightarrow \Proj_Z\Biggl( \bigoplus_{m=0}^\infty
    \pi_*\cO_X(mH)\biggr)
  \]
  satisfies $(\ref{thm:contractioni})$ and
  $(\ref{thm:contractionii})$,
  where the composition is the natural morphism from
  \cite[\href{https://stacks.math.columbia.edu/tag/0D2Z}{Tag
  0D2Z}]{stacks-project}.
  $(\ref{thm:contractioni})$ holds by the projection formula for
  intersection products
  \cite[\href{https://stacks.math.columbia.edu/tag/0EDJ}{Tag
  0EDJ}]{stacks-project}, and
  $(\ref{thm:contractionii})$ holds by construction of the Stein factorization
  in \cite[\href{https://stacks.math.columbia.edu/tag/0A1B}{Tag
  0A1B}]{stacks-project}.
  \par Next, we show that $(\ref{thm:contractioni})$ and
  $(\ref{thm:contractionii})$ characterize $\varphi$ after pulling back along
  every \'etale morphism $U \to Z$ from a scheme $U$.
  In this case,
  by Lemma \ref{lem:ContractsNonClosed}, $(\ref{thm:contractioni})$
  characterizes $\varphi$ topologically.
  The isomorphism $\cO_Y \overset{\sim}{\to} \varphi_*\cO_X$ characterizes
  $\varphi$ as a morphism of ringed spaces.
  \par Finally, we show that $(\ref{thm:contractioniii})$ holds for $\varphi$ as
  defined above.
  We have
  \[
    \psi_{m+1}^*\cO_{\PP_{m+1}}(1) \otimes_{\cO_X} \psi_m^*\cO_{\PP_m}(-1) \cong
    \cO_X\bigl((m+1)H - mH) = \cO_X(H).
  \]
  Since the respective Stein factorizations $\phi_m\colon X \to Y_m$ and $\phi_{m+1}\colon
  X \to Y_{m+1}$ of $\psi_m$ and $\psi_{m+1}$ 
  satisfy $(\ref{thm:contractioni})$ and
  $(\ref{thm:contractionii})$, they are both isomorphic to $\varphi$.
  Thus, setting
  \[
    \cO_Y(A) = \cO_{\PP_{m+1}}(1)_{\vert Y} \otimes_{\cO_Y}
    \cO_{\PP_m}(-1)_{\vert Y},
  \]
  we see $\cO_X(H)=\varphi^*\cO_X(A)$.
  Finally, since $\cO_X(mH)\cong\varphi^*\cO_{\PP_m}(1)_{\vert Y}$,
  we see that $\cO_Y(mA)\cong\cO_{\PP_m}(1)_{\vert Y}$ by $(\ref{thm:contractionii})$,
  so $A$ is ample.
\end{proof}
\begin{remark}
  Suppose $X$ is a scheme.
  Then, since both $X$ and $Y$ are normal, the condition in
  $(\ref{thm:contractionii})$ holds if and only if $K(Y)$ is algebraically
  closed in $K(X)$, which holds if and only if the fibers of $\varphi$ are
  geometrically connected by
  \cite[Remarque 4.3.4 and Corollaire 4.3.12]{EGAIII1}.
\end{remark}
We use Theorem \ref{thm:contraction} to define extremal faces and extremal rays.
\begin{definition}[cf.\ {\cite[Definition 3-2-3]{KMM87}}]\label{def:contraction}
  Fix notation as in Theorem \ref{thm:contraction}.
  Since $\varphi$ is characterized by properties which only depend on $F$ and
  not on $H$, we call $\varphi$ the \textsl{contraction} of $F$.
  If $H$ is a $\pi$-nef \(\ZZ\)-invertible sheaf on $X$ such that $F = (H^\perp \cap
  \NEbar(X/Z)) - \{0\}$, we say that $H$ is a \textsl{supporting function} of
  $F$.
  We then say that $F$ is an \textsl{extremal face of $\NEbar(X/Z)$ for
  $(X,\Delta)$} (or \textsl{for $K_X+\Delta$}).
  If $\dim_\RR(F) = 1$, we say that $F$ is an \textsl{extremal ray}.
\end{definition}
\begin{definition}\label{def:GoodContrOfR}
  Fix notation as in Definition \ref{def:contraction}.
We say a contraction $f\colon X\to Y$ is \textsl{small} if the exceptional locus of $f$ is of codimension at least 2 in $X$.
In particular, $f$ is birational when $X$ is integral.
\par Let $R\subseteq \NEbar(X/Z)$ be an extremal face. We say that a contraction $f\colon X\to Y$ is \textsl{a
contraction of $R$}, if a $\pi$-contracted curve $C$ is $f$-contracted when and only when $[C]\in R$.
A contraction of $R$ is an isomorphism if and only if $R$ does not contain the class of any $\pi$-contracted curve. 
If $f$ is not an isomorphism and $R$ is a ray, then $R=\RR_{\geq 0}\cdot[C]$ for any $f$-contracted curve $C$.
Therefore we see $R=\NEbar(X/Y)$.

We say a contraction $f\colon X\to Y$ of an extremal ray $R$ is \textsl{good} if, for all
$\sL\in\Pic(X)_{\QQ}$, we have
$(\sL\cdot R)=0$ if and only if there exists an element $\sK\in\Pic(Y)_{\QQ}$ such that $\sL=f^*\sK\in \Pic(X)_{\QQ}$.
In this case $\NE(X/Y)\subseteq R$ canonically, and $\sL\in \Pic(X)_{\QQ}$ is $f$-ample if $(\sL\cdot R)>0$.
In general, when $Y$ is projective over $Z$, we have $\NEbar(X/Y)= R$; see the proof of \cite[Lemma 3-2-4]{KMM87}. 

For a good contraction $f$ of an extremal ray $R$, we always have
\[
\dim (N^1(Y/Z)_{\RR})=\dim (N^1(X/Z)_{\RR})-1.
\]
See \cite[Lemma 3-2-5]{KMM87} and its proof.
\end{definition}
\subsection{Rationality theorem}
We now consider the Rationality theorem.
\begin{theorem}[Rationality theorem; cf.\ {\citeleft\citen{KMM87}\citemid
  Theorem 4-1-1\citepunct \citen{KM98}\citemid Theorem 3.5\citeright}]\label{thm:rationality}
  Let $\pi\colon X \to Z$ be a proper surjective morphism of integral
  quasi-excellent Noetherian algebraic spaces of equal characteristic zero over
  a scheme $S$.
  Suppose that $X$ is normal and that $Z$ admits a dualizing complex
  $\omega_Z^\bullet$.
  Denote by $K_X$ a canonical divisor on $X$ associated to $\omega_X^\bullet =
  \pi^!\omega_Z^\bullet$.
  \par Let $(X,\Delta)$ be a $\QQ$-pair, and let $H \in \Pic(X)$ such that one
  of the following holds:
  \begin{enumerate}[label=$(\roman*)$,ref=\roman*]
    \item\label{thm:ratdlt} $(X,\Delta)$ is dlt (or more generally, weakly log
      terminal) and $H$ is
      $\pi$-ample.
    \item\label{thm:ratklt}
      $(X,\Delta)$ is klt and $H$ is $\pi$-big and $\pi$-nef.
  \end{enumerate}
  If $K_X+\Delta$ is not $\pi$-nef, then
  \begin{align*}
    r &\coloneqq \max\Set[\big]{t \in \RR \given H+t(K_X+\Delta)\ \text{is
    $\pi$-nef}}
  \intertext{is a rational number.
  Moreover, expressing $r/a = u/v$ with $u,v \in \ZZ_{>0}$ and $(u,v) = 1$, we
  have $v \le a(b+1)$, where}
    a &\coloneqq \min\Set[\big]{e \in \ZZ_{>0} \given e(K_X+\Delta)\ \text{is
    Cartier}},\\
    b &\coloneqq \max_{\substack{z \in Z\\\text{closed}}}
    \Set[\big]{\dim_{\kappa(z)}\bigl(\pi^{-1}(z)\bigr)}.
  \end{align*}
\end{theorem}
\begin{proof}
  We claim we may replace $Z$ by a scheme $Z'$ \'etale over $Z$.
  Let $f\colon Z' \to Z$ be a surjective \'etale morphism where $Z'$ is a
  quasi-compact scheme, and consider the associated Cartesian diagram
  \[
    \begin{tikzcd}
      X' \rar{f'}\dar[swap]{\pi'} & X\dar{\pi}\\
      Z' \rar{f} & Z\mathrlap{.}
    \end{tikzcd}
  \]
  As in the proof of Theorem \ref{thm:bpf}, the conditions on $(X,\Delta)$ are preserved.
  Since $f$ is surjective, nefness is invariant under base change by Lemma
  \ref{lem:nefbasechange}.
  The number $b$ is invariant because $f$ is quasi-finite.
  The number $a$ is invariant because of the definition of $\Pic(X)$.\medskip
  \par We now prove the theorem when $Z$ is a scheme.
  We will derive a contradiction assuming that either $r \notin \QQ$, or that
  $r \in \QQ$ and $v > a(b+1)$.\smallskip
  \par We first claim that we may assume that $H$ is $\pi$-generated and that $H
  - (K_X+\Delta)$ is $\pi$-ample in case $(\ref{thm:ratdlt})$, and $\pi$-big and
  $\pi$-nef in case $(\ref{thm:ratklt})$.
  Let $c$ be sufficiently large such that $a < cr$ and $(c,v) = 1$.
  We then see that
  \[
    cH + a(K_X+\Delta)
  \]
  is $\pi$-nef since $a < cr$.
  Moreover, we claim that
  \[
    cH + (a-1)(K_X+\Delta) = \frac{c}{a}H
    + \frac{a-1}{a}\bigl(cH + a(K_X+\Delta)\bigr)
  \]
  is $\pi$-ample in case $(\ref{thm:ratdlt})$, and $\pi$-big and $\pi$-nef in
  case $(\ref{thm:ratklt})$.
  Case $(\ref{thm:ratdlt})$ is clear from Theorem \ref{thm:ampisinterior}, since
  it is the sum of a $\pi$-ample and a $\pi$-nef $\QQ$-invertible sheaf.
  In case $(\ref{thm:ratklt})$, we see that $cH + (a-1)(K_X+\Delta)$ is
  $\pi$-nef since it is the sum of two $\pi$-nef $\QQ$-invertible sheaves, and
  is $\pi$-big by Lemma \ref{lem:bigplusnef} since it is the sum of a $\pi$-big
  and a $\pi$-nef $\QQ$-invertible sheaf.
  Since $cH+(a-1)(K_X+\Delta)$ is $\pi$-big and $\pi$-nef, the Basepoint-free
  theorem \ref{thm:bpf} implies
  \[
    H' \coloneqq n\bigl(cH+a(K_X+\Delta)\bigr)
  \]
  is $\pi$-generated for $n \gg 0$.
  We moreover choose $n$ such that $(nc,v) = 1$.
  Setting
  \[
    r' \coloneqq \max\Set[\big]{t \in \RR\given H' + t(K_X+\Delta)\ \text{is
    $\pi$-nef}},
  \]
  we have $r'/a = ncr/a - n$.
  Thus, we have $r \in \QQ$ if and only if $r' \in \QQ$.
  In this case, writing $r'/a = u'/v'$ with $u',v' \in \NN$ and $(u',v') = 1$,
  we have $v = v'$ by the choice of $c$ and $n$.
  We therefore also have $v \le a(b+1)$ if and only if $v' \le a(b+1)$.
  We can therefore replace $H$ by $H'$ to assume that $H$ is
  $\pi$-generated.
  We also know that
  \[
    H' - (K_X+\Delta) = (n-1)\bigl(cH+a(K_X+\Delta)\bigr) + cH+(a-1)(K_X+\Delta)
  \]
  is $\pi$-ample in case $(\ref{thm:ratdlt})$, and $\pi$-big and $\pi$-nef in
  case $(\ref{thm:ratklt})$ by the same argument as above.\smallskip
  \par We can now proceed as in the proof of \cite[Theorem
  4-1-1]{KMM87} starting at \cite[Paragraph 2 on p.\ 324]{KMM87} with the
  following changes:
  \begin{itemize}
    \item In \cite[Paragraph 2 on p.\ 324]{KMM87},
      we can apply \cite[Lemma 4-1-2]{KMM87} to each connected component of
      the geometric generic fiber of $\pi \circ f$.
      This comment also applies to \cite[Bottom of p.\ 325]{KMM87}, where we can
      apply \cite[Lemma 4-1-2]{KMM87} to each connected component of
      the geometric generic fiber of $\pi \circ f$ restricted to \(B\) defined
      in \cite[Bottom of p.\ 324]{KMM87}.
    \item The necessary log resolutions in \cite[Paragraph 3 on p.\ 324]{KMM87}
      can be constructed as in the proof of Theorem \ref{thm:bpf}.
    \item In \cite[p.\ 325]{KMM87}, the Kawamata--Viehweg vanishing theorem \cite[Theorem
      1-2-3]{KMM87} should be replaced by \cite[Theorem A]{Mur}, and the
      Basepoint-free theorem \cite[Theorem 3-1-1]{KMM87} should be replaced
      by the Basepoint-free
      theorem \ref{thm:bpf}.\qedhere
  \end{itemize}
\end{proof}
\subsection{Cone theorem}
Finally, we consider the Cone theorem.
\begin{theorem}[Cone theorem; cf.\ {\cite[Theorem
  4-2-1]{KMM87}}]\label{thm:kmmcone}
  Let $\pi\colon X \to Z$ be a projective surjective morphism of integral
  quasi-excellent Noetherian algebraic spaces of equal characteristic zero over
  a scheme $S$.
  Suppose that $X$ is normal and that $Z$ admits a dualizing complex
  $\omega_Z^\bullet$.
  Denote by $K_X$ a canonical divisor on $X$ associated to $\omega_X^\bullet =
  \pi^!\omega_Z^\bullet$.
  \par Let $(X,\Delta)$ be a dlt (or more generally, weakly log terminal)
  $\QQ$-pair.
  Then,
  \[
    \NEbar(X/Z) = \NEbar_{K_X+\Delta\ge0}(X/Z) + \sum_j R_j,
  \]
  where $R_j$ are extremal rays of $\NEbar(X/Z)$ for $(X,\Delta)$.
  Moreover, if $C_j \subseteq X$ is an integral closed subscheme such that $R_j
  = \RR_{\ge0} \cdot [C_j]$, then for every $\pi$-ample $A\in \Pic(X)$,
  expressing
  \[
    \frac{(A\cdot C_j)}{a((K_X+\Delta)\cdot C_j)} = - \frac{u_j}{v_j}
  \]
  with $u_j,v_j \in \ZZ_{>0}$ and $(u_j,v_j) = 1$, we have $v_j \le a(b+1)$,
  where
  \begin{align*}
    a &\coloneqq \min\Set[\big]{e \in \ZZ_{>0} \given e(K_X+\Delta)\ \text{is
    Cartier}},\\
    b &\coloneqq \max_{\substack{z \in Z\\\text{closed}}}
    \Set[\big]{\dim_{\kappa(z)}\bigl(\pi^{-1}(z)\bigr)}.
  \end{align*}
  In particular, the $R_j$ are discrete in the half space
  \[
    \Set[\big]{\beta \in N_1(X/Z) \given \bigl( (K_X+\Delta) \cdot \beta \bigr) < 0}.
  \]
\end{theorem}
\begin{proof}
  The proof of \cite[Theorem 4-2-1]{KMM87} applies with the following changes:
  \begin{itemize}
    \item The proof of \cite[Lemma 4-2-2]{KMM87} applies in our setting.
      First, the Contraction theorem \cite[Theorem
      3-2-1]{KMM87} should be replaced with our Contraction theorem \ref{thm:contraction}.
      Second, the proof of \cite[Lemma 3-2-4]{KMM87} works in our setting since
      Kleiman's criterion for ampleness holds (Proposition
      \ref{lem:AmpleIsPositiveOnNE}).
      Third, the proof of \cite[Lemma 3-2-5]{KMM87} works in our setting by
      replacing the Basepoint-free theorem \cite[Theorem 3-1-1]{KMM87} with our
      Basepoint-free theorem \ref{thm:bpf}.
    \item In \cite[Step 1 on p.\ 327]{KMM87}, the Rationality theorem
      \cite[Theorem 4-1-1]{KMM87} should be replaced by our Rationality theorem
      \ref{thm:rationality}.
    \item In \cite[Step 2 on p.\ 327]{KMM87}, the preliminary result \cite[Lemma
      4-2-2]{KMM87} holds by the first item in this list.
    \item In \cite[Step 3 on p.\ 328]{KMM87}, the Contraction theorem \cite[Theorem
      3-2-1]{KMM87} should be replaced with our Contraction theorem
      \ref{thm:contraction}, and the Rationality theorem
      \cite[Theorem 4-1-1]{KMM87} should be replaced by our Rationality theorem
      \ref{thm:rationality}.
    \item In \cite[Step 4 on p.\ 328]{KMM87}, there exist a finite basis of
      \(N^1(X/Z)\) consisting of numerical classes of \(\pi\)-ample invertible
      sheaves by Remark \ref{rem:ampspansn1}.\qedhere
  \end{itemize}
\end{proof}

\begingroup
\makeatletter
\renewcommand{\@secnumfont}{\bfseries}
\part{Finite generation of relative adjoint rings}\label{part:fingen}
\makeatother
\endgroup
In this part, we prove Theorem \ref{thm:introfinitegen} for schemes and
algebraic spaces by adapting the strategy in \cite{CL12} that was used for
complex varieties.
We then prove dual versions of the Rationality, Cone, and Contraction theorems
in the vein of \cite{Kaw11} using our finite generation result (Theorem
\ref{thm:introfinitegen}), as is done for varieties in \cite{CL13}.
These versions of these results will be used later when showing termination with
scaling.\medskip
\par To summarize the general structure of this section, the main results and
the logical relationship between them are as follows:
\[
  \begin{tikzcd}[ampersand replacement=\&,column sep=-6em,row sep=2.4em]
    \mathllap{\begin{tabular}{@{}c@{}}
      \(\cE_A(V)\) is a rational\\
      polytope (Theorem \ref{thm:cl12b})
    \end{tabular}}
    \arrow[Rightarrow,end anchor=north,
    start anchor={[yshift=-0.25em]east},in=90,out=-5]{dr}
    \& \&
    \mathrlap{\begin{tabular}{@{}c@{}}
      Lifting sections\\
      (Theorem \ref{thm:cl1234})
    \end{tabular}}
    \arrow[Rightarrow,end anchor=north,
    start anchor={[yshift=-0.25em]west},in=90,out=185]{dl}
    \\
    \& \begin{tabular}{@{}c@{}}
      \(\cB_A^S(V)\) is a rational\\
      polytope (Theorem \ref{thm:cl1243})
    \end{tabular} \dar[Rightarrow]\\
    \& \begin{tabular}{@{}c@{}}
      Finite generation of relative adjoint rings\\
      for log regular pairs (Theorem \ref{thm:cl12a})
    \end{tabular} \dar[Rightarrow]\\
    \& \begin{tabular}{@{}c@{}}
      Finite generation of relative adjoint rings\\
      for klt pairs (Theorem \ref{thm:cl1332})
    \end{tabular} \dar[Rightarrow]\\
    \& \begin{tabular}{@{}c@{}}
      Dual versions of the Rationality, Cone,\\
      and Contraction theorems (Theorem \ref{thm:cl1342})
    \end{tabular}
  \end{tikzcd}\medskip
\]
\section{Statements of theorems}\label{sect:cl12statements}
\par We state our version of \cite[Theorem A]{CL12}, which is very close to
the original.
\begin{theorem}[{cf.\ \cite[Theorem A]{CL12}}]\label{thm:cl12a}
  Let $\pi\colon X \to Z$ be a projective morphism of integral Noetherian
  excellent schemes of equal characteristic zero,
  such that $X$ is regular of dimension $n$ and such
  that $Z$ is affine and has a dualizing complex $\omega_Z^\bullet$.
  Denote by $K_X$ a canonical divisor on $X$ associated to $\omega_X^\bullet =
  \pi^!\omega_Z^\bullet$.
  \par Let $B_1,B_2,\ldots,B_k$ be $\QQ$-divisors on $X$ such that $\lfloor B_i
  \rfloor = 0$ for all $i$, and such that $\sum_{i=1}^k B_i$ has simple normal
  crossings support.
  Let $A$ be a $\pi$-ample $\QQ$-divisor on $X$, and set $D_i = K_X+A+B_i$
  for every $i$.
  Then, the relative adjoint ring
  \[
    R\bigl(X/Z;D_1,D_2,\ldots,D_k\bigr)
    = \bigoplus_{(m_1,m_2,\ldots,m_k) \in \NN^k}
    H^0\bigl(X,\cO_X\bigl( \lfloor m_1D_1 + m_2D_2 + \cdots + m_kD_k\rfloor
    \bigr)\bigr)
  \]
  is finitely generated as an $H^0(Z,\cO_Z)$-algebra.
\end{theorem}

As in \cite{CL12}, we will prove Theorem \ref{thm:cl12a} by induction.
We will prove Theorem \ref{thm:cl12a} as part of Theorem \ref{thm:proofofcl12a} below.
In order to facilitate the induction, we adopt the following:
\begin{convention}
  In this paper, we write ``Theorem $\text{\ref{thm:cl12a}}_{n}$ holds'' 
  to mean ``Theorem
  \ref{thm:cl12a} holds when $\dim(X) = n$.'' 
\end{convention}
Next, we state our version of \cite[Theorem B]{CL12}.
Note that $Z$ does not necessarily have to be an excellent scheme of equal
characteristic zero in this statement.
\begin{theorem}[cf.\ {\cite[Theorem B]{CL12}}]\label{thm:cl12b}
  Let $\pi\colon X \to Z$ be a projective morphism of integral Noetherian
  schemes, such that $X$ is regular of dimension $n$ and such that $Z$ is
  affine and has a dualizing complex $\omega_Z^\bullet$.
  Assume that the function field of $X$ has characteristic zero. 
  \par Let $S_1,S_2,\ldots,S_p$ be distinct prime divisors on $X$ such that
  $(X,\sum_{i=1}^p S_i)$ is log regular, 
  and consider a $\pi$-ample $\QQ$-divisor $A$ on $X$.
  Then, setting
  \begin{align*}
    V &= \sum_{i=1}^p \RR \cdot S_i \subseteq \Div_\RR(X)\\
    \cL(V) = \biggl\{B &= \sum_{i=1}^p b_iS_i \in V
    \nonscript\:\bigg\vert\allowbreak\nonscript\:\mathopen{}
    0 \le b_i \le 1\ \text{for all}\ i\biggr\}
    \intertext{the set}
    \cE_A(V) = \bigl\{B &\in \cL(V)
    \nonscript\:\big\vert\allowbreak\nonscript\:\mathopen{}
    \lvert K_X+A+B \rvert_\RR \ne \emptyset\bigr\}
  \end{align*}
  is a rational polytope.
\end{theorem}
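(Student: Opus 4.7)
The plan is to deduce the theorem from the original \cite[Theorem B]{CL12} (for smooth projective varieties over an algebraically closed field of characteristic zero) by passing to the generic fiber of $\pi$ and then to an algebraic closure. Let $\eta$ denote the generic point of $Z$, set $k := \kappa(\eta)$, and observe that $k$ has characteristic zero by hypothesis. After replacing $Z$ by the scheme-theoretic image of $\pi$ (integral since $X$ is), we may assume $\pi$ is surjective, so the generic fiber $X_\eta$ is a regular integral projective scheme over $k$. I would split $\{S_1,\ldots,S_p\}$ into horizontal divisors (those dominating $Z$, each restricting to a single prime divisor $\tilde S_i$ on $X_\eta$, as the generic fiber of a dominant morphism between integral schemes is integral) and vertical divisors (each restricting to zero), inducing a decomposition $V = V_h \oplus V_v$. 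Since $X$ is regular, $K_X$ is Cartier and $K_X|_{X_\eta} \sim K_{X_\eta}$ by compatibility of $\pi^!$ with flat base change along $\Spec k \to Z$. Applying Corollary \ref{cor:EffIffEffAtGenFiber} to the $\RR$-Weil divisor $K_X + A + B$ on the normal scheme $X$, the condition $\lvert K_X + A + B \rvert_\RR \neq \emptyset$ is equivalent to $\lvert K_{X_\eta} + A|_{X_\eta} + B_h|_{X_\eta} \rvert_\RR \neq \emptyset$, and thus
\[
\cE_A(V) = \cE_{A_\eta}(\tilde V_h) \times \cL(V_v),
\]
where $\tilde V_h := \Span_\RR(\tilde S_i) \subseteq \WDiv_\RR(X_\eta)$. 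Since $\cL(V_v)$ is a coordinate hypercube, it suffices to show that $\cE_{A_\eta}(\tilde V_h)$ is a rational polytope over the field $k$.

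To finish, I would base change further to an algebraic closure $\bar k$ of $k$. Since $\Char(k) = 0$, $X_{\bar k}$ is smooth projective over $\bar k$, and each $\tilde S_i \otimes_k \bar k$ is reduced, decomposing as $\sum_j S'_{ij}$ with the $S'_{ij}$ distinct prime divisors on $X_{\bar k}$. Let $V' := \Span_\RR(S'_{ij})$ and let $\phi \colon \tilde V_h \to V'$ denote the induced $\ZZ$-linear map. Flat base change for $H^0$ of line bundles gives $\lvert D \rvert_\QQ \neq \emptyset$ iff $\lvert D_{\bar k} \rvert_\QQ \neq \emptyset$ for any $\QQ$-Cartier divisor $D$ on $X_\eta$, and by Lemma \ref{lem:RatlIsDense} this extends to $\RR$-linear systems. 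Hence
\[
\cE_{A_\eta}(\tilde V_h) = \phi^{-1}\bigl(\cE_{A_{\bar k}}(V')\bigr) \cap \cL(\tilde V_h).
\]
The original \cite[Theorem B]{CL12}, whose proof is purely algebraic and valid over any algebraically closed field of characteristic zero, implies $\cE_{A_{\bar k}}(V')$ is a rational polytope in $V'$. Its preimage under the $\QQ$-linear map $\phi$ is a rational polyhedron, and intersecting with the rational polytope $\cL(\tilde V_h)$ yields the required rational polytope.

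The argument is essentially a bookkeeping exercise with \cite[Theorem B]{CL12} as a black box. The main technical subtlety is the identification $K_X|_{X_\eta} \sim K_{X_\eta}$ in the dualizing-complex framework (via compatibility of $\pi^!$ with flat base change along $\Spec k \to Z$), together with verifying that the decomposition of each $\tilde S_i$ under base change to $\bar k$ is reduced (so $\phi$ is $\ZZ$-linear) and that horizontal $S_i$ indeed restrict to single prime divisors on $X_\eta$. These are all standard but necessary to preserve the rationality of the polytope structure at each reduction step.
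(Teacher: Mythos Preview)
Your overall strategy is the same as the paper's: reduce to the generic fiber, then to an algebraically closed field of characteristic zero, then invoke \cite[Theorem B]{CL12}. The paper packages the first reduction as Corollary~\ref{cor:LociLocalize}$(\ref{cor:EAVlocalize})$ (your use of Corollary~\ref{cor:EffIffEffAtGenFiber} is equivalent), and for the second reduction the paper goes to $\CC$ via the Lefschetz principle while you go to $\bar k$; these are interchangeable.

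There is, however, a gap in your field-extension step. You write that ``by Lemma~\ref{lem:RatlIsDense} this extends to $\RR$-linear systems,'' but Lemma~\ref{lem:RatlIsDense} only applies to a $\QQ$-Weil divisor $D$, whereas $K_{X_\eta}+A_\eta+B$ has genuinely irrational coefficients when $B\notin\cL(\tilde V_h)_\QQ$. So flat base change plus Lemma~\ref{lem:RatlIsDense} only yields the identity
\[
\cE_{A_\eta}(\tilde V_h)\cap \tilde V_{h,\QQ} \;=\; \bigl(\phi^{-1}\bigl(\cE_{A_{\bar k}}(V')\bigr)\cap \cL(\tilde V_h)\bigr)\cap \tilde V_{h,\QQ},
\]
not the equality of the full sets. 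The paper fills exactly this gap with Lemma~\ref{lem:LinearSystemFieldExtn}, which proves $\lvert D\rvert_\RR\neq\emptyset\Leftrightarrow\lvert D_L\rvert_\RR\neq\emptyset$ for arbitrary $\RR$-Weil divisors $D$ via a spreading-out and norm argument (then wraps this as Lemma~\ref{lem:EA(V)FIELDEXTENSION}). Alternatively, you can salvage your argument without that lemma by reordering: first apply \cite[Theorem B]{CL12} so that $P\coloneqq\phi^{-1}(\cE_{A_{\bar k}}(V'))\cap\cL(\tilde V_h)$ is known to be a rational polytope; then combine the easy inclusion $\cE_{A_\eta}(\tilde V_h)\subseteq P$, the agreement on $\QQ$-points, and the convexity of $\cE_{A_\eta}(\tilde V_h)$ to conclude $\cE_{A_\eta}(\tilde V_h)=P$, since $P$ is the convex hull of its rational vertices.
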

In \cite{CL12}, Cascini and Lazi\'c prove \cite[Theorems A and B]{CL12}
simultaneously by induction on $n$.
We will deduce Theorem \ref{thm:cl12b} directly from their work, which yields
this possibly mixed characteristic version of \cite[Theorem B]{CL12}.
We will prove Theorem \ref{thm:cl12b} at the end of \S\ref{section:eavpolytope}.

\section{\texorpdfstring{$\cE_A(V)$}{E\_A(V)}
is a rational polytope}\label{section:eavpolytope}
The goal of this section is to prove Theorem \ref{thm:cl12b}, which is our
version of \cite[Theorem B]{CL12}.
We can reduce Theorem \ref{thm:cl12b} to \cite[Theorem B]{CL12}.
To this end, we show some localization results for some asymptotic
loci of divisors.
Among those, only Corollary
  \ref{cor:LociLocalize}$(\ref{cor:EAVlocalize})$ is used in this section;
  other results will be needed later.\medskip

\par The following two results are quick corollaries of Lemma \ref{lem:LinearSysLocalizes} and Corollary \ref{cor:EffIffEffAtGenFiber}, so we call them corollaries.

\begin{corollary}\label{cor:BBsFixLocalize}
  Let $\pi\colon X\to Z$ be a projective surjective morphism of integral Noetherian
  schemes such that $X$ is regular and $Z$ is affine.
  Consider a point $z \in Z$.
  Set $R \coloneqq \cO_{Z,z}$ and
  \begin{align*}
    X_R \coloneqq{}& X \times_Z \Spec(R).
    \intertext{For divisors \(D\) on \(X\), we have}
    \Bs\bigl\lvert D_{\rvert X_R} \bigr\rvert ={}& \Bs\lvert D
    \rvert \times_Z \Spec(R),\\
    \Fix\bigl\lvert D_{\rvert X_R} \bigr\rvert ={}& \Fix\lvert D
    \rvert \times_Z \Spec(R).
  \intertext{For \(\RR\)-divisors \(D\) on \(X\),
  we have}
    \SB\bigl( D_{\rvert X_R} \bigr) ={}& \SB(D)
    \times_Z \Spec(R),\\
    \FFix\bigl( D_{\rvert X_R} \bigr)={}& \FFix(D)
    \times_Z \Spec(R).
  \end{align*}
\end{corollary}
\begin{proof}
  In all cases, the inclusion \(\subseteq\) holds trivially.
  The other inclusion follows from Lemma \ref{lem:LinearSysLocalizes}.
\end{proof}

\begin{corollary}\label{cor:LociLocalize}
Let $\pi\colon X\to Z$ be a projective surjective morphism of integral Noetherian
  schemes with $X$ regular and $Z$ affine with a dualizing complex $\omega^{\bullet}_Z$.
  Consider a point $z \in Z$, and set $R \coloneqq \cO_{Z,z}$ and $X_R \coloneqq
  X \times_Z \Spec(R)$.
  \par Let $S_1,S_2,\ldots,S_p$ be distinct prime divisors on $X$ such that
  $(X,\sum_{i=1}^p S_i)$ is log regular.
  Renumber the $S_i$ so that there exists $a \in \{1,2,\ldots,p\}$ such that $z
  \in \pi(S_i)$ for all $i \le a$ while $z \notin \pi(S_i)$ for all $i \ge a+1$.
  Let
  \[
    V_R=\sum_{i\leq a}{\mathbf R} \cdot (S_i)_R \subseteq \Div_\RR(X_R),
  \]
  and consider a $\pi$-ample $\QQ$-divisor $A$ on $X$.
  Define $\mathcal L(V_R)$ as in Definition \ref{def:cl1224} for the morphism
  $X_R \to \Spec(R)$, and identify $\mathcal L(V)$ with $\mathcal L(V_R)\times
  [0,1]^{p-a}$.
  \begin{enumerate}[label=$(\roman*)$,ref=\roman*]
    \item\label{cor:EAVlocalize}
      Define $\mathcal E_{A_R}(V_R)$ as in Definition \ref{def:cl1224} for
      the morphism $X_R \to \Spec(R)$.
      We then have
      \[
        \mathcal E_{A}(V)=\mathcal E_{A_R}(V_R)\times [0,1]^{p-a}.
      \]
    \item\label{cor:BSAVlocalize}
      Let $S$ be a prime divisor on $X$ distinct from the $S_i$ such that
      $(X,S+\sum_{i=1}^p S_i)$ is log regular and $z \in \pi(S)$.
      Define $\mathcal B^{S_R}_{A_R}(V_R)$ as in Definition \ref{def:cl1224}
      for the morphism $X_R \to \Spec(R)$.
      We then have
      \[
        \mathcal B^S_{A}(V)=\mathcal B^{S_R}_{A_R}(V_R)\times [0,1]^{p-a}.
      \]
  \end{enumerate}
\end{corollary}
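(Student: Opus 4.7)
Both parts will be deduced from the observation that the divisors $S_i$ with $i > a$ are invisible on $X_R$: since $z \notin \pi(S_i)$ for such $i$, the restriction $(S_i)_R$ is the zero divisor on $X_R$. Writing $B = \sum_{i=1}^p b_i S_i \in \mathcal{L}(V)$, the restriction $B_{|X_R}$ therefore equals $\sum_{i \leq a} b_i (S_i)_R$, which under the given identification $\mathcal{L}(V) = \mathcal{L}(V_R) \times [0,1]^{p-a}$ is precisely the first component; I will denote it $B_R$. Choosing the canonical divisor $K_{X_R}$ on the regular scheme $X_R$ to be the restriction of $K_X$, and using that $A_R \coloneqq A_{|X_R}$ remains relatively ample over $\Spec(R)$ by flat base change, the conditions defining $\mathcal{E}_A(V)$ and $\mathcal{B}_A^S(V)$ depend only on $B_R$ and are independent of the coordinates $b_{a+1},\ldots,b_p \in [0,1]$. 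Consequently the asserted product decompositions reduce to the two equivalences $B \in \mathcal{E}_A(V) \Leftrightarrow B_R \in \mathcal{E}_{A_R}(V_R)$ and $B \in \mathcal{B}_A^S(V) \Leftrightarrow B_R \in \mathcal{B}_{A_R}^{S_R}(V_R)$.

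For $(\ref{cor:EAVlocalize})$, I will apply Corollary \ref{cor:EffIffEffAtGenFiber} to the $\RR$-Weil divisor $D \coloneqq K_X + A + B$. The corollary gives $\lvert D \rvert_\RR \neq \emptyset$ if and only if $\lvert D_{|X_R} \rvert_\RR \neq \emptyset$; combined with $D_{|X_R} = K_{X_R} + A_R + B_R$, this is the desired equivalence.

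For $(\ref{cor:BSAVlocalize})$, set $D \coloneqq K_X + S + A + B$, so that $D_{|X_R} = K_{X_R} + S_R + A_R + B_R$. By Corollary \ref{cor:BBsFixLocalize}, $\SB(D)_R = \SB(D_{|X_R})$, where $\SB(D)_R$ denotes the preimage of $\SB(D)$ in $X_R$. It then remains to show $S \subseteq \SB(D) \Leftrightarrow S_R \subseteq \SB(D)_R$, which I will verify by testing containment at generic points. The generic point $\eta_S$ of $S$ lies in $X_R$: the hypothesis $z \in \pi(S)$ means that $\pi(\eta_S)$, being the generic point of $\pi(S)$, generalizes $z$ and hence lies in $\Spec(R) \subseteq Z$. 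Moreover $\eta_S$ is also the generic point of the prime divisor $S_R$ on $X_R$ (the flat map $X_R \to X$ identifies the generic points of the components of $S_R$ with $\eta_S$). Since $\SB(D)$ and $\SB(D)_R$ are closed subsets of $X$ and $X_R$ respectively, containment of the irreducible subsets $S$ and $S_R$ is equivalent to membership of $\eta_S$, and the equivalence follows.

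The only point requiring any care is the generic-point bookkeeping in $(\ref{cor:BSAVlocalize})$; the rest is a direct application of Corollaries \ref{cor:EffIffEffAtGenFiber} and \ref{cor:BBsFixLocalize}. The hypothesis $z \in \pi(S)$ (and the analogous $z \in \pi(S_i)$ for $i \leq a$) is exactly what ensures that the restriction map on divisors is nontrivial on the prime divisors we care about, while $z \notin \pi(S_i)$ for $i > a$ is exactly what makes the remaining coordinates free in $[0,1]$.
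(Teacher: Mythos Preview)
Your proof is correct and follows the same approach as the paper, which simply states that the two parts ``follow immediately from Corollary \ref{cor:EffIffEffAtGenFiber} and Corollary \ref{cor:BBsFixLocalize}, respectively.'' You have filled in the details the paper leaves implicit: the observation that $(S_i)_R = 0$ for $i > a$ makes the last $p-a$ coordinates free, and the generic-point argument showing $S \subseteq \SB(D) \Leftrightarrow S_R \subseteq \SB(D_R)$ once $\SB$ is known to localize.
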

\begin{proof}
  Follow immediately from Corollary \ref{cor:EffIffEffAtGenFiber} and Corollary \ref{cor:BBsFixLocalize}, respectively.
\end{proof}

We remark that the objects considered above also behave well with respect to field extensions.
This is mostly trivial with $\QQ$-coefficients, but we take extra caution here because we need to deal with $\RR$-coefficients.
We only record the results neccessary to the proof of our Theorem
\ref{thm:cl12b}; therefore we restrict our attention to $\lvert \,\cdot\,
\rvert_{\RR}$ and $\cE_A(V)$, whereas similar results hold for $\SB(\,\cdot\,)$, $\cB^S_A(V)$, etc.

\begin{lemma}\label{lem:LinearSystemFieldExtn}
Let $k$ be a field, and let $X$ be a normal geometrically connected scheme of finite type over $k$.
Let $L/k$ be a separable field extension. 
Let $D$ be an $\RR$-Weil divisor on $X$ and $D_L$ its pullback to $X_L$.
Then $\lvert D \rvert_{\RR}\neq \emptyset$ if and only if $\lvert D_L \rvert_{\RR}\neq \emptyset$.
\end{lemma}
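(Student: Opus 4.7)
The direction $(\Rightarrow)$ is immediate, so I focus on $(\Leftarrow)$. Note that $X_L$ is integral and normal (since $X$ is geometrically connected and normal, and $L/k$ is separable), so $\lvert D_L\rvert_{\RR}$ is well defined. My plan is to reduce to $L/k$ finitely generated, factor via a separating transcendence basis into a purely transcendental part and a finite separable part, and handle those two cases separately. Given $C\in\lvert D_L\rvert_{\RR}$ with $C - D_L = \sum_j b_j \prdiv_{X_L}(f_j)$, only finitely many elements of $L$ appear in the $f_j$ and in the defining data for the prime components of $C$, so replacing $L$ by a finitely generated subextension (chosen large enough that those components are already irreducible, possible by separability), we may assume $L/k$ is finitely generated. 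A separating transcendence basis $(t_1,\ldots,t_r)$ then factors $k\subseteq k(t_1,\ldots,t_r)\subseteq L$ with the first step purely transcendental and the second finite separable.

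For the finite separable case, replace $L$ by its Galois closure to reduce to $L/k$ finite Galois with group $G$. Averaging gives $\bar C \coloneqq |G|^{-1}\sum_{g\in G}g^*C$, which is effective, $G$-invariant, and $\RR$-linearly equivalent to $D_L$. A $G$-invariant effective $\RR$-Weil divisor on $X_L$ descends uniquely to an effective $\RR$-Weil divisor $\bar C'$ on $X$: $G$-orbits of prime divisors on $X_L$ biject with prime divisors on $X$, and by separability each prime divisor on $X$ base-changes to its orbit with every multiplicity one. Averaging the relation over $G$ and using $g^*\circ\prdiv = \prdiv\circ g^*$ produces $\bar C - D_L = \sum_j (b_j/|G|)\prdiv_{X_L}(N_{L/k}(f_j))$ with $N_{L/k}(f_j)\in K(X)^*$, and the injectivity of $\WDiv_{\RR}(X)\to\WDiv_{\RR}(X_L)$ (distinct prime divisors on $X$ have disjoint nonzero base changes) yields $\bar C' - D\in\Princ_{\RR}(X)$.

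For the purely transcendental case, induction on transcendence degree reduces to $L = k(t)$. Form $\widetilde C$ on $X\times_k\PP^1_k$ by taking the same $\RR$-combination of the closures of the prime components of $C$; then $\widetilde C$ is effective, has no vertical components, and restricts to $C$ on the generic fiber $X_L$. The relation on $X_L$ lifts, after absorbing vertical contributions, to
\[
  \widetilde C = p^*D + \sum_j b_j\prdiv_{X\times\PP^1}(f_j) + \sum_a w_a(X\times\{a\})
\]
on the normal scheme $X\times_k\PP^1_k$ (normality holds since $k\to k[t]$ is regular), where $p$ is the projection to $X$ and the outer sum is finite over closed points $a\in\PP^1_k$ with $w_a\in\RR$. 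Choose a closed point $a_0\in\PP^1_k$ with $\kappa(a_0)/k$ finite separable that avoids the finitely many fibers along which some $f_j$ has a zero or pole; such $a_0$ exists over any field. Restricting the identity to the Cartier divisor $X\times\{a_0\}\cong X_{\kappa(a_0)}$: fibers $X\times\{a\}$ with $a\neq a_0$ restrict to $0$; $(X\times\{a_0\})|_{X\times\{a_0\}}$ is principal since $\cO_{X\times\PP^1}(X\times\{a_0\})|_{X\times\{a_0\}}$ pulls back from $\Spec\kappa(a_0)$ and is hence trivial; each $\prdiv(f_j)$ restricts to $\prdiv(f_j|_{t=a_0})$; and $p^*D$ restricts to $D_{\kappa(a_0)}$. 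Therefore $\widetilde C|_{X\times\{a_0\}}$ is effective and $\RR$-linearly equivalent to $D_{\kappa(a_0)}$, so $\lvert D_{\kappa(a_0)}\rvert_{\RR}\neq\emptyset$, and applying the finite separable case to $\kappa(a_0)/k$ concludes.

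The main obstacle is the transcendental step: $\RR$-Weil divisors need not be Cartier, so one must extend them as Weil cycles to $X\times_k\PP^1_k$ and restrict them cleanly to a carefully chosen fiber. The key inputs are normality of $X\times_k\PP^1_k$, proper intersection of the horizontal components of $\widetilde C$ with closed fibers of $X\times_k\PP^1_k\to\PP^1_k$, and triviality of the self-intersection class of any such fiber.
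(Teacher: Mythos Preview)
Your proof is correct, but it takes a genuinely different route from the paper's.

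The paper does everything in a single spreading-out step: it realizes the finitely generated separable extension $L/k$ as the function field of a smooth affine $k$-scheme $\Spec S$, spreads the relation $D_L = F + \sum_j b_j\,\prdiv(g_j)$ out to $X_S$ (after localizing $S$), restricts to a closed fiber $X_{S/\mathfrak m}$, and then pushes forward along the finite flat map $h\colon X_{S/\mathfrak m}\to X$ using the norm, obtaining $D = \tfrac{1}{d}h_*(\mathfrak F_{S/\mathfrak m}) + \sum_j \tfrac{b_j}{d}\,\prdiv(\operatorname{Norm}(\overline{g_j}))$. No case division, no Galois closure, and no separate treatment of the transcendental part.

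Your argument instead factors $L/k$ through a separating transcendence basis and handles the two pieces separately: Galois averaging for the finite separable step, and a one-variable spreading argument over $\PP^1_k$ (iterated) for the purely transcendental step, the latter feeding back into the former via the residue field $\kappa(a_0)$. Your Galois averaging is essentially the paper's pushforward specialized to Galois covers (where $h^*h_* = \sum_{g\in G}g^*$), so the algebraic steps are close cousins; the structural difference is that the paper absorbs the transcendental and algebraic parts into a single choice of smooth model $S$. The paper's route is more economical and avoids the separate self-intersection bookkeeping for the fiber $X\times\{a_0\}$, while your route is more explicit and uses only classical tools (Galois descent and specialization over $\PP^1$) rather than invoking a smooth model of $L$ over $k$. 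One small remark: with your choice of $a_0$ avoiding the vertical components of the $\prdiv(f_j)$, the coefficient $w_{a_0}$ is automatically zero (since $\widetilde C$ and $p^*D$ have no vertical components), so the normal-bundle argument for $(X\times\{a_0\})|_{X\times\{a_0\}}$ is in fact unnecessary.
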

\begin{proof}
We denote by $K(-)$ the function field of an integral scheme.

Assume $\lvert D \rvert_{\RR}\neq \emptyset$, so $D=E+\sum_i a_i\,\prdiv_X(f_i)$ where $E$ is an effective $\RR$-Weil divisor and $f_i\in K(X)^\times$.
Then, $D_L=E_L+\sum_i a_i\,\prdiv_{X_L}(f_i)$, and thus $\lvert D_L \rvert_{\RR}\neq \emptyset$.

Conversely, assume $\lvert D_L \rvert_{\RR}\neq \emptyset$, so there exist an
effective $\RR$-Weil divisor $F$ on $X_L$ and $g_j\in K(X_L)^\times$ with
$D_L=F+\sum_j b_j\,\prdiv_{X_L}(g_j)$.
There exists a finitely generated subextension $L'/k$ of $L/k$ such that $F$ is
the pullback of an effective $F'$ on $X_{L'}$ and all $g_j\in K(X_{L'})^\times$,
so $D_{L'}=F'+\sum_j b_j\,\prdiv_{X_{L'}}(g_j)$.
Therefore we may assume $L/k$ of finite type, and since $L/k$ is separable, $L$ is the function field of an integral smooth $k$-algebra $S$; see for example \cite[\href{https://stacks.math.columbia.edu/tag/00TV}{Tag 00TV}]{stacks-project}.

Now $K(X_L)=K(X_S)$, so we have the divisor $\sum_j b_j\,\prdiv_{X_{S}}(g_j)$.
After possibly replacing $S$ by a localization, we have an effective $\RR$-Weil
divisor $\mathfrak{F}$ on $X_S$ with $D_S=\mathfrak{F}+\sum_j b_j\,\prdiv_{X_{S}}(g_j)$.
Therefore, for a suitable maximal ideal $\mathfrak{m}$ of $S$, we have a well-defined effective $\RR$-divisor  $\mathfrak{F}_{S/\mathfrak{m}}$
and well-defined elements $\overline{g_j}\in K(X_{S/\mathfrak{m}})^\times$ 
such that
\[
  D_{S/\mathfrak{m}}=\mathfrak{F}_{S/\mathfrak{m}}+\sum_j
  b_j\,\prdiv_{X_{S/\mathfrak{m}}}(\overline{g_j}).
\]
The degree $d$ of $S/\mathfrak{m}$ over $k$ is finite, thus $h\colon X_{S/\mathfrak{m}}\to X$ is finite flat of degree $d$. 
Thus the proper pushforward $h_*\colon \WDiv_{\RR}(X_{S/\mathfrak{m}})\to \WDiv_{\RR}(X)$ satisfies $h_*D_L=dD$, so we have
\[
  D=\frac{1}{d}h_*(\mathfrak{F}_{S/\mathfrak{m}})+\frac{1}{d}\sum_j
  b_j\,h_*(\prdiv_{X_{S/\mathfrak{m}}}\bigl(\overline{g_j})\bigr).
\]
Since $\mathfrak{F}_{S/\mathfrak{m}}$ is effective, so is $h_*(\mathfrak{F}_{S/\mathfrak{m}})$; 
and if $\operatorname{Norm}$ is the norm function for the field extension
$K(X_{S/\mathfrak{m}})/K(X)$, then
$h_*(\prdiv_{X_{S/\mathfrak{m}}}(\overline{g_j}))=\prdiv_{X}(\operatorname{Norm}(\overline{g_j}))$.
Therefore $\frac{1}{d}h_*(\mathfrak{F}_{S/\mathfrak{m}})\in\lvert D\rvert_{\RR}$ and $\lvert
D\rvert_{\RR}\neq\emptyset$ as desired.
\end{proof}
\begin{lemma}\label{lem:EA(V)FIELDEXTENSION}
Let $k$ be a field and let $X$ be a scheme of finite type over $k$.
Let $S_1,S_2,\ldots,S_p$ be distinct prime divisors on $X$ such that
  $(X,\sum_{i=1}^p S_i)$ is log regular.
  
Let $L/k$ be a separable extension of fields. 
Let $T_{i1},T_{i2},\ldots, T_{iq_i}$ be all the irreducible components of
$(S_i)_L\subseteq X_L$, so $(X_L,\sum_{i=1}^p \sum_{j=1}^{q_i} T_{ij})$ is log
regular, and consider $V$ and $\cL(V)$ as defined in Definition \ref{def:cl1224}. 
Set
\[
W=\sum_i\sum_{j\leq q_i}\RR\cdot T_{ij}\subseteq \Div_{\RR}(X_L),
\] 
so there is a canonical injective linear map $\varphi\colon V\to W$ sending $S_i$ to $\sum_{j=1}^{q_i} T_{ij}$.

Let $A$ be an ample $\QQ$-divisor on $X$, so $A_L$ is an ample $\QQ$-divisor on $X_L$.
Then, with notation as in Definition \ref{def:cl1224}, we have
\[
  \varphi\bigl(\cE_A(V)\bigr)=\cE_{A_L}(W)\cap \varphi(V).
\]
\end{lemma}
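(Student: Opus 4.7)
The plan is to identify the linear map $\varphi$ with divisor pullback along the base change $f\colon X_L\to X$ and then reduce the claim to Lemma \ref{lem:LinearSystemFieldExtn} applied to the $\RR$-divisor $K_X+A+B$.

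First, I would verify that $\varphi(B)=f^*B$ for every $B\in V$: since $L/k$ is separable and each $S_i$ is a regular prime divisor (being a component of the log-regular divisor $\sum_i S_i$), the pullback $(S_i)_L$ is reduced and decomposes as $\sum_j T_{ij}$ with each $T_{ij}$ appearing with multiplicity one. Simultaneously, I would check that $(X_L,\sum_{i,j}T_{ij})$ is log regular, using that regularity of finite-type $k$-schemes is preserved under separable base change (e.g.\ \cite[Proposition 6.8.3]{EGAIV2}), that $f^*K_X\sim K_{X_L}$ by flat base change for the dualizing complex on regular schemes, and that $A_L=f^*A$ remains ample. In particular, $(K_X+A+B)_L\sim K_{X_L}+A_L+\varphi(B)$ as $\RR$-divisors.

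Next, for each $B\in\cL(V)$, I would apply Lemma \ref{lem:LinearSystemFieldExtn} to $D=K_X+A+B$ to obtain the chain of equivalences
\[
  B\in\cE_A(V)\iff \lvert D\rvert_\RR\neq\emptyset\iff \lvert D_L\rvert_\RR\neq\emptyset\iff \varphi(B)\in\cE_{A_L}(W).
\]
The inclusion $\varphi(\cE_A(V))\subseteq\cE_{A_L}(W)\cap\varphi(V)$ is then immediate. For the reverse inclusion, if $B'=\varphi(B)\in\cE_{A_L}(W)\cap\varphi(V)$ with $B\in V$, then the condition $B'\in\cL(W)$ forces each coefficient of $B$ to lie in $[0,1]$ (as it equals the coefficient of every $T_{ij}$ in $\varphi(B)$), so $B\in\cL(V)$, and the equivalence above yields $B\in\cE_A(V)$.

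The main obstacle is that Lemma \ref{lem:LinearSystemFieldExtn} is stated for $X$ geometrically connected and normal, whereas here $X$ is merely regular of finite type over $k$. I would address this by decomposing $X$ into its (finitely many) connected components, each of which is integral since $X$ is regular, and reducing to the geometrically integral case by replacing $k$ with the field of constants of each component where necessary. The norm-map descent argument in the proof of Lemma \ref{lem:LinearSystemFieldExtn} then carries over with only notational adjustments, since it only requires the relevant residue field extensions to be finite separable, which is inherited from the separability of $L/k$.
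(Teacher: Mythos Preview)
Your proposal is correct and follows essentially the same approach as the paper: both arguments reduce immediately to Lemma~\ref{lem:LinearSystemFieldExtn} after identifying $\varphi(B)$ with $B_L$ via separability of $L/k$. You are more careful than the paper in flagging the geometric-connectedness hypothesis of Lemma~\ref{lem:LinearSystemFieldExtn}; the paper's proof applies that lemma without comment, relying implicitly on the fact that in its sole application (the proof of Theorem~\ref{thm:cl12b}) a prior Stein-factorization step has already arranged for $X$ to be geometrically connected over $K$.
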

\begin{proof}
Let $B\in \cL(V)$. 
Then $\varphi(B)=B_L$, since $L/k$ is separable. 
Since $\lvert K_X+A+B \rvert_{\RR}\neq \emptyset$, Lemma
\ref{lem:LinearSystemFieldExtn} implies $\lvert
K_{X_L}+A_L+\varphi(B)\rvert_{\RR}\neq \emptyset$.
Thus, $\varphi(\cE_A(V))\subseteq\cE_{A_L}(W)\cap \varphi(V)$.

Conversely, let $C\in \cE_{A_L}(W)\cap \varphi(V)$, so $C=\varphi(B)$ for some $B\in V$.
It is clear that $B\in\cL(V)$, and that $\lvert K_{X_L}+A_L+\varphi(B) \rvert_{\RR}\neq \emptyset$ by the definition of $\cE_{A_L}(W)$.
By Lemma \ref{lem:LinearSystemFieldExtn}, we conclude that $B\in \cE_A(V)$, as desired.
\end{proof}

With these results, we conclude that our Theorem \ref{thm:cl12b}
follows from \cite[Theorem B]{CL12}. 
\begin{proof}[Proof of Theorem \ref{thm:cl12b}]
  Since the $\RR$-linear system $\lvert K_X+A+B\rvert_\RR$ does not change
  when replacing $\pi\colon X \to Z$ by its Stein factorization,
  we may assume that $\pi$ is surjective with geometrically connected fibers.
  Let $K$ be the function field of $Z$. By Corollary
  \ref{cor:LociLocalize}$(\ref{cor:EAVlocalize})$, we may assume $Z=\Spec(K)$. 
If $K=\mathbf C$ this is exactly \cite[Theorem B]{CL12}, 
therefore we get the result from the Lefschetz Principle and Lemma \ref{lem:EA(V)FIELDEXTENSION}. 
\end{proof}

\section{Lifting sections}\label{sect:cl12s3}
The main result in this section is Theorem \ref{thm:cl1234}.
This result is a version of 
Cascini and Lazi\'c's lifting theorem
\cite[Theorem 3.4]{CL12}, which in turn is a version of Hacon and
M\textsuperscript{c}Kernan's lifting theorem \cite[Theorem 6.3]{HM10}.
To prove these results for schemes, we require the version of the Kawamata--Viehweg
vanishing theorem
for proper morphisms of schemes of equal characteristic zero proved by the second
author \cite[Theorem A]{Mur}.
In this context, log resolutions exist by \cite{Tem08,Tem12,Tem18}.
\par One additional difficulty unique to our situation
is the lack of Bertini theorems.
To use our version of Bertini theorems over local domains (Theorem
\ref{thm:bertini} and Remark \ref{rem:bertinisnc}), we need to rephrase
everything in terms of restriction maps on global sections and then reduce to
the case when we work over the spectrum of an excellent local $\QQ$-algebra
using flat base change.\medskip
\par We prove each result in \cite[\S3]{CL12}.
When the proof is not too different from that in \cite{CL12}, we indicate how
the proof therein can be adapted.
We start with the following consequences of the Kawamata--Viehweg
vanishing theorem for schemes of equal characteristic zero \cite[Theorem A]{Mur}.
\begin{lemma}[{cf.\ \cite[Lemma 3.1]{CL12}}]\label{lem:cl1231}
  Let $\pi\colon X \to Z$ be a proper morphism of integral Noetherian
  schemes of equal characteristic zero
  such that $X$ is regular of dimension $n$ and such that $Z$ is
  affine with a dualizing complex $\omega_Z^\bullet$.
  Denote by $K_X$ a canonical divisor on \(X\) associated to $\omega_X^\bullet =
  \pi^!\omega_Z^\bullet$.
  \par Let $B$ be an effective $\QQ$-divisor on $X$ such that $(X,B)$ is log
  regular and $\lfloor B \rfloor = 0$.
  Let $A$ be a $\pi$-nef and $\pi$-big $\QQ$-divisor.
  \begin{enumerate}[label=$(\roman*)$,ref=\roman*]
    \item\label{lem:cl1231i}
      Let $S \subseteq X$ be a prime divisor such that $S \not\subseteq
      \Supp(B)$.
      Consider a divisor $G$ on $X$ such that
      \[
        G \sim_\QQ K_X+S+A+B.
      \]
      Then, the restriction map
      \[
        H^0\bigl(X,\cO_X(G)\bigr) \longrightarrow H^0\bigl(S,\cO_S(G)\bigr)
      \]
      is surjective.
      In particular, we have $\lvert G_{\vert S} \rvert = \lvert G \rvert_S$.
    \item\label{lem:cl1231ii}
      Let $f\colon X \to Y$ be a birational morphism of integral excellent
      Noetherian schemes of equal characteristic zero such that the diagram
      \[
        \begin{tikzcd}[column sep=tiny]
          X \arrow{rr}{f}\arrow{dr}[swap]{\pi} & & Y\arrow{dl}\\
          & Z
        \end{tikzcd}
      \]
      commutes, where $Y \to Z$ is projective.
      Let $U \subseteq X$ be an open subset such that $f_{\vert U}$ is an
      isomorphism and such that $U$ intersects at most one irreducible component
      of $B$.
      Let $H'$ be a Cartier divisor on $Y$ that is very ample over $Z$, and let
      $H = f^*H'$.
      If $F$ is a divisor on $X$ such that
      \[
        F \sim_\QQ K_X+(n+1)H+A+B,
      \]
      then $\cO_X(F)$ is $\pi$-generated at every point of $U$.
      In particular, $\lvert F \rvert$ is basepoint-free
      at every point of $U$.
  \end{enumerate}
\end{lemma}
\begin{proof}
  The ``in particular'' statements follow from Proposition \ref{prop:har29},
  and hence it suffices to show the sheaf-theoretic
  statements in $(\ref{lem:cl1231i})$ and $(\ref{lem:cl1231ii})$.
  By flat base change, it suffices to show each statement after replacing $Z$
  with $\Spec(\cO_{Z,z})$ for every point $z \in Z$.
  This will allow us to use our version of the Bertini theorem (Theorem
  \ref{thm:bertini} and Remark \ref{rem:bertinisnc}).
  \par For $(\ref{lem:cl1231i})$, we consider the exact sequence
  \[
    0 \longrightarrow \cO_X(G-S) \longrightarrow
    \cO_X(G) \longrightarrow
    \cO_S(G) \longrightarrow 0.
  \]
  By Kawamata--Viehweg vanishing \cite[Theorem A]{Mur}, we have
  $H^1(X,\cO_X(G-S)) = 0$, and hence $H^0(X,\cO_X(G))
  \to H^0(S,\cO_S(G))$ is surjective.
  \par For $(\ref{lem:cl1231ii})$, we induce on $n = \dim(X)$.
  The case when $n = 0$ holds because in this case $X$ is affine.
  Now suppose $n > 0$.
  Since the locus where $\pi^*\pi_*\cO_X(F) \to \cO_X(F)$ is not surjective is
  closed, it suffices to show that for every closed point $x \in U$, the
  morphism $\pi^*\pi_*\cO_X(F) \to \cO_X(F)$ is surjective at $x$.
  We claim there exists a divisor $T \sim H$ such that $T$ is regular and passes
  through $x$.
  Consider the blowup $\mu\colon X' \to X$ of $X$ at $x$ with exceptional
  divisor $E$, and consider the divisor $\mu^*H - E$.
  The sheaf $\cO_{X'}(\mu^*H - E)$ is $(\pi \circ \mu)$-generated, and hence we
  can apply Theorem \ref{thm:bertini} and Remark \ref{rem:bertinisnc} to produce
  a divisor $T' \sim \mu^*H - E$ on $X'$ that is regular and intersects $E$ and
  the preimage of $B$ in $X'$ transversely, that also maps birationally onto its
  image in $Y$.
  The image of $T'$ in $X$ is then a divisor $T \sim H$ that is regular and
  passes through $x$ that intersects $B$ transversely, and hence $(X,T+B)$ is
  log regular.
  Since
  \[
    F_{\vert T} \sim_\QQ K_T + n\,H_{\vert T} + A_{\vert T} + B_{\vert T},
  \]
  by the inductive hypothesis we know that $\cO_T(F_{\vert T})$ is
  $\pi_{\vert T}$-generated at every point of $U \cap T$ (we note that $T$ may
  decompose into finitely many connected components, but the conclusion of
  $(\ref{lem:cl1231ii})$ still holds by working with each component
  separately).
  We now have the commutative diagram
  \[
    \begin{tikzcd}
      & \pi^*\pi_*\bigl(\cO_X(F-T)\bigr) \rar\dar
      & \pi^*\pi_*\bigl(\cO_X(F)\bigr) \rar\dar
      & (\pi_{\vert T})^*(\pi_{\vert T})_*
      \bigl(\cO_T(F_{\vert T})\bigr) \rar\dar & 0\\
      0 \rar & \cO_X(F-T) \rar
      & \cO_X(F) \rar
      & \cO_T(F_{\vert T}) \rar & 0
    \end{tikzcd}
  \]
  with exact rows,
  where the vertical arrows come from the counit of the adjunction $f^* \dashv
  f_*$, and the top row is exact since $H^1(X,\cO_X(F-T)) = 0$ by the
  Kawamata--Viehweg vanishing theorem \cite[Theorem A]{Mur}.
  By the inductive hypothesis, we see that the right vertical arrow is
  surjective at $x$.
  By the NAK lemma \cite[Theorem 2.3]{Mat89}, this implies that the middle
  vertical arrow is also surjective at $x$.
\end{proof}
\begin{lemma}[cf.\ {\cite[Lemma 3.2]{CL12}}]\label{lem:cl1232}
  Let $\pi\colon X \to Z$ be a proper morphism of integral Noetherian
  schemes of equal characteristic zero
  such that $X$ is regular and such that $Z$ is affine and
  excellent with a dualizing complex $\omega_Z^\bullet$.
  Denote by $K_X$ a canonical divisor on \(X\) associated to $\omega_X^\bullet =
  \pi^!\omega_Z^\bullet$.
  \par Let $S$ be a regular prime divisor on $X$ and let $B$ be an effective
  $\QQ$-divisor on $X$ such that $S \not\subseteq
  \Supp(B)$.
  Let $A$ be a $\pi$-nef and $\pi$-big $\QQ$-divisor on $X$.
  Assume that $D$ is a divisor on $X$ such that
  \[
    D \sim_\QQ K_X+S+A+B,
  \]
  and let $\sigma \in H^0(S,\cO_S(D_{\vert S}))$ be a nonzero global section
  with corresponding divisor $\Sigma$.
  Let $\Phi$ be an effective $\QQ$-divisor on $S$ such that the $\QQ$-pair $(S,\Phi)$
  is klt and such that $B_{\vert S} \le \Sigma+\Phi$.
  Then, $\sigma \in H^0(X \vert S,\cO_X(D))$.
  In particular, we have $\Sigma \in \lvert D \rvert_S$.
\end{lemma}
\begin{proof}
  The ``in particular'' statement follows from Proposition \ref{prop:har29},
  and hence it suffices to show the module-theoretic statement.

  By \cite[Theorem 2.3.6 and Lemma 4.2.4]{Tem08},
we get a log resolution $f\colon Y\to X$ of $(X,S+B)$. Write $T=f_*^{-1}S$.
Let $K_Y$ be the unique canonical divisor such that $K_Y-f^*K_X$ is $f$-exceptional. 
Then there are $f$-exceptional divisors $\Theta\geq 0$ and $E\geq 0$ on $Y$ with no common components such that
\begin{align*}
K_Y + T + \Theta &= f^*(K_X + S) + E\in \Div(Y).
\intertext{Let $g=f_{\vert R}\colon T\to S$. 
Restrict the corresponding invertible sheaves to $T$, we see that there exist canonical divisors $K_S$ of $S$ and $K_T$ of $T$ such that}
K_T + \Theta_{\vert T} &= g^*K_S + E_{\vert T}\in \Div(T).
\intertext{Therefore,}
K_T + \Theta_{\vert T} + g^*\Phi &= g^*(K_S +\Phi) + E_{\vert T}\in \Div_{\QQ}(T).
\end{align*}
Since $\Theta_{\vert T}$ and $E_{\vert T}$ are $g$-exceptional, the coefficients of $E_{\vert T}-\Theta_{\vert T}-g^*\Phi$ are the discrepancies of the klt pair $(S,\Phi)$, thus are greater than $-1$.
Therefore
\begin{equation}\label{MinusPhi32}
\lceil-g^*\Phi\rceil\geq \Theta_{\vert T}-E_{\vert T}.
\end{equation}
Now, by assumption $\Sigma\geq B_{\vert S}-\Phi$, so $g^*\Sigma\geq g^*B_{\vert S}-g^*\Phi$, 
thus $g^*\Sigma\geq \lceil -g^*\Phi\rceil+\lfloor g^*(B_{\vert S})\rfloor$ as $\Sigma$ is an integral divisor. 
Combining with the inequality \eqref{MinusPhi32}, we get
\begin{equation}\label{SigmaIneq32}
g^*\Sigma\geq \Theta_{\vert T}+\lfloor g^*(B_{\vert S})\rfloor-E_{\vert T}.
\end{equation}
Let $\Gamma=\Theta+f^*B\in \Div_{\QQ}(Y)$, so that $T\not\subseteq \Supp(\Gamma)$, $\Gamma$ and $E$ have no common components, and we have
\[
K_Y + T + \Gamma = f^*(K_X + S + B) + E\in \Div_{\QQ}(Y).
\]
Let $C = \Gamma -E$ and
\begin{equation}\label{DefineG32}
    G = f^*D -\lfloor C\rfloor = f^*D -\lfloor\Gamma\rfloor + E.
\end{equation}
Then, the $\QQ$-divisor
\[G -\bigl(K_Y + T + \{C\}\bigr) \sim_{\mathbf Q} f^*(K_X + S + A + B)-(K_Y + T + C) =
f^*A\]
is $(\pi\circ f)$-nef and $(\pi\circ f)$-big, and Lemma \ref{lem:cl1231}$(\ref{lem:cl1231i})$ implies that
\begin{equation}\label{SectionLiftG32}
    H^0\bigl(T,\cO_T(G_{\vert T})\bigr)=H^0\bigl(Y|T,\cO_Y(G)\bigr).
\end{equation}

We let $g=f_{\vert T}\colon T\to S$ and consider the composition
\[
  \cO_T\longrightarrow \cO_T(E_{\vert T})\longrightarrow
  \cO_T\bigl(E_{\vert T}+g^*(D_{\vert S})\bigr)
\]
where the second map is defined by $g^*\sigma$.
This gives a section 
\[
  \sigma'\in H^0\bigl(T,\cO_T(E_{\vert T}+g^*(D_{\vert S})\bigr)
  \]
  with divisor $E_{\vert T}+g^*\Sigma$. 
  By \eqref{SigmaIneq32}, $E_{\vert T}+g^*\Sigma\geq \Theta_{\vert T}+\lfloor g^*(B_{\vert S})\rfloor=\lfloor \Gamma\rfloor_{\vert T}$,
so the section $\sigma'$ comes from a section
\[
  \tau\in H^0\bigl(T,\cO_T\bigl(E_{\vert T}+g^*(D_{\vert S})-\lfloor
  \Gamma\rfloor_{\vert T}\bigr)\bigr)=H^0\bigl(T,\cO_T(G_{\vert T})\bigr),
\]
where the last equality holds by the definition of $G$ in
\eqref{DefineG32}. Therefore by \eqref{SectionLiftG32}, $\tau$ lifts to $\tilde\tau\in H^0(Y,\cO_Y(G))$, which in turn gives rise to an element
\[
  \rho\in H^0\bigl(Y,\cO_Y(G+\lfloor
  \Gamma\rfloor)\bigr)=H^0\bigl(Y,\cO_Y(f^*D+E)\bigr).
\]
By construction, we have $\rho_{\vert T}=\sigma'$. Since $E$ is
$f$-exceptional, pushing forward we see that $\sigma\in H^0(X|S,\cO_X(D))$ as
desired.
\end{proof}
\begin{lemma}[cf.\ {\cite[Lemma 3.3]{CL12}}]\label{lem:cl1233}
  Let $\pi\colon X \to \Spec(R)$ be a projective morphism of integral
  Noetherian schemes of equal characteristic zero
  such that $X$ is regular and $Z$ is affine and excellent with a dualizing complex $\omega_Z^\bullet$.
  Denote by $K_X$ a canonical divisor on \(X\) associated to $\omega_X^\bullet =
  \pi^!\omega_Z^\bullet$.
  \par Let $S$ be a prime divisor on $X$, let $B$ be an effective
  $\QQ$-divisor on $X$, and let $D$ be an effective $\QQ$-divisor on $X$
  such that the $\QQ$-pair $(X,S+B+D)$ is log regular, $S \not\subseteq
  \Supp(B)$, $\lfloor B \rfloor = 0$, and $D$ and $S+B$ have no common
  components.
  Let $P$ be a $\pi$-nef $\QQ$-divisor, and set $\Delta = S+B+P$.
  Assume that
  \[
    K_X+\Delta \sim_\QQ D.
  \]
  Let $k$ be a positive integer such that the divisors $kP$ and $kB$ are
  integral, and write $\Omega = (B+P)_{\vert S}$.
  Then, there is a $\pi$-very ample divisor $H$ on $X$ such that, for all
sections $\sigma \in H^0(S,\cO_S(k(K_S+\Omega)))$ and $u\in H^0(S,\cO_S(H_{\vert S})$ and all positive integers $l$, we have
  \[\sigma^lu\in H^0\bigl(X|S,\cO_X\bigl(lk(K_X+\Delta)+H\bigr)\bigr).\]
In particular, if $\Sigma$ (resp.\ $U$) is the divisor of $\sigma$ (resp.\ $u$),
we have $l\Sigma+U\in \lvert lk(K_X+\Delta)+H\rvert_S.$
\end{lemma}
\begin{proof}
For each $m \geq 0$, let $l_m = \bigl\lfloor\frac{m}{k}\bigr\rfloor$, let $r_m = m - l_m k \in \{0, 1,\ldots, k -1\}$, define
$B_m = \lceil mB\rceil - \lceil (m-1)B\rceil$, and set $P_m = kP$ if $r_m = 0$ and $P_m = 0$ otherwise. Let
\begin{equation}\label{DefineDm33}
    D_m =\sum_{i=1}^m(K_X + S + P_i + B_i) = m(K_X + S) + l_m kP + \lceil mB\rceil,
\end{equation}
and note that $D_m$ is integral and
\begin{equation}\label{DmANDDrm33}
D_m = l_m k(K_X + \Delta) + D_{r_m}.
\end{equation}
We choose a suitable $\pi$-very ample divisor $H$ as follows.
First, we choose an arbitrary $\pi$-very ample divisor $H'$ on $X$.
      Then, there exists an integer $n > 0$ such that
      $\cO_X(nH'+D_j)$ is $\pi$-generated for every $j \in \{0,1,\ldots,k-1\}$
      by \cite[Proposition 2.6.8$(i)$]{EGAII}.
      Now $\cO_X((n+m)H'+D_j)$ is $\pi$-very ample for every $j \in
      \{0,1,\ldots,k-1\}$ and every integer $m > 0$ by \cite[Proposition
      4.4.8]{EGAII}.
      Finally, by relative Serre vanishing \cite[Th\'eor\`eme 2.2.1$(ii)$]{EGAIII1},
      choosing $m$ large enough and setting $H = (n+m)H'$, we have
      $H^1(X, \cO_X(D_k + H-S))=0$.
      Therefore, our $H$ satisfies
      \begin{equation}\label{Casek33}
      H^0\bigl(X|S,\cO_X(D_k + H)\bigr) = H^0\bigl(S,\cO_S( (D_k+H)_{\vert
      S})\bigr)
      \end{equation}
      and $\cO_X(H+D_j)$ is $\pi$-very ample for every $j \in
      \{0,1,\ldots,k-1\}$.
\par We claim the following. For all $m\geq k$ and all sections $u_m\in
H^0(S,\cO_S((D_{r_m}+H)_{\vert S}))$, we have
\[\sigma^{l_m}u_m\in H^0\bigl(X|S,\cO_X(D_m+H)\bigr).\]
The case $r_m=0$ is what we want. 
The claim is local, so after replacing $Z$ with $\Spec(\cO_{Z,z})$ for every
point $z \in Z$, we may assume that $Z$ is local, in which case we may use
our version of the Bertini theorem (Theorem
\ref{thm:bertini} and Remark \ref{rem:bertinisnc}).
\par We prove the claim by induction on $m$. The case $m = k$ is covered by \eqref{Casek33}. Now
let $m > k$, and pick a small positive rational number $\delta$ such that $D_{r_m-1} + H + \delta B_m$ is $\pi$-ample. 
Note that $0 \leq B_m \leq \lceil B\rceil$, that $(X, S + B + D)$ is log regular, and that
$D$ and $S + B$ have no common components. Thus, there exists a small positive rational number $\varepsilon$ such that, if we define
\begin{equation}\label{DefineF33}
     F = (1 - \varepsilon\delta)B_m + l_{m-1}k\varepsilon D,
\end{equation}
then $(X, S + F)$ is log regular, $\lfloor F\rfloor = 0$ and $S \not\subseteq
\Supp(F)$. In particular, by Theorem \ref{thm:bertini} and Remark \ref{rem:bertinisnc} applied to $S\to Z$, there exists an element
$W$ of the $\pi$-generated (in fact $\pi$-very ample) linear system
$\lvert(D_{r_m-1} + H)_{|S}\rvert$ such that $W$ is reduced, does not share a component with $F_{|S}$, and that $(S,W+F_{|S})$ is log regular. Thus, if we let
\begin{equation}\label{DefinePhi33}
     \Phi = F_{|S} + (1 -\varepsilon)W,
\end{equation} 
then $(S, \Phi)$ is klt. 
By induction, there is a divisor $\Theta\in \lvert D_{m-1} + H\rvert$ whose
support does not contain $S$ and $\Theta_{|S} = l_{m-1}\Sigma + W$.
Note that the statement is about sections, but we get divisors from sections.
\par Denoting $C = (1 -\varepsilon)\Theta + F$, by \eqref{DefineF33} we have
\begin{equation}\label{DefineC33}
     C \sim_{\mathbf Q} (1 -\varepsilon)(D_{m-1} + H) + (1 - \varepsilon\delta)B_m + l_{m-1}k\varepsilon D,
\end{equation} 
and \eqref{DefinePhi33} yields
\begin{equation}\label{RestrictC33}
     C_{|S} = (1 -\varepsilon)\Theta_{|S} + F_{|S} = (1-\varepsilon)l_{m-1}\Sigma
     +\Phi \leq \bigl(l_m\Sigma + \prdiv(u_m)\bigr) + \Phi.
\end{equation}  
By the choice of $\delta$ and since $P_m= kP$ or $0$ is $\pi$-nef, the $\QQ$-divisor
\[ A = \varepsilon (D_{r_{m-1}} + H + \delta B_m) + P_m\]
is $\pi$-ample. Then by \eqref{DefineDm33}, \eqref{DmANDDrm33}, and
\eqref{DefineC33}, we have
\begin{align*}
    D_m + H &= K_X + S + D_{m-1} + B_m + P_m + H\\
&= K_X + S + (1-\varepsilon)D_{m-1} + l_{m-1}k\varepsilon(K_X + \Delta) + \varepsilon D_{r_{m-1}} + B_m + P_m + H\\
&= K_X + S + A + (1-\varepsilon)D_{m-1} + l_{m-1}k\varepsilon D + (1 -\varepsilon\delta)B_m + (1 -\varepsilon)H\\
&\sim_{\mathbf Q} K_X + S + A + C,
\end{align*}
and thus $\sigma^{l_m}u\in H^0(X|S, \cO_X(D_{r_{m}} + H))$ by \eqref{RestrictC33} and Lemma \ref{lem:cl1232}.
\end{proof}
\begin{theorem}[{cf.\ \cite[Theorem 3.4]{CL12}}]\label{thm:cl1234}
  Let $\pi\colon X \to Z$ be a projective morphism of integral
  Noetherian schemes of equal characteristic zero
  such that $X$ is regular and such that $Z$ is affine
  and excellent with a dualizing complex $\omega_Z^\bullet$.
  Denote by $K_X$ a canonical divisor on \(X\) associated to $\omega_X^\bullet =
  \pi^!\omega_Z^\bullet$.
  \par Let $S$ be a prime divisor on $X$ and let $B$ be an effective
  $\QQ$-divisor on $X$ such that $(X,S+B)$ is log regular, $S \not\subseteq
  \Supp(B)$, and $\lfloor B \rfloor = 0$.
  Let $A$ be a $\pi$-ample $\QQ$-divisor on $X$, and set $\Delta = S + A + B$.
  Let $C$ be an effective $\QQ$-divisor on $S$ such that $(S,C)$ is canonical,
  and let $m$ be a positive integer such that $mA$, $mB$, and $mC$ are
  integral.
  \par Assume there exists a positive integer $q > 0$ such that $qA$ is
  $\pi$-very ample, and we have
  \begin{gather*}
    S \not\subseteq \Bs\Bigl\lvert
    qm\Bigl(K_X+\Delta+\frac{1}{m}A\Bigr)\Bigr\rvert\\
    C \le B_{\vert S} - B_{\vert S} \wedge \frac{1}{qm} \Fix\Bigl\lvert
    qm\Bigl(K_X+\Delta+\frac{1}{m}A \Bigr) \Bigr\rvert_S
  \end{gather*}
  where $K_X$ is a canonical divisor on \(X\) associated to $\omega_X^\bullet =
  \pi^!\omega_Z^\bullet$.
  Then, for every nonzero global section $\sigma \in H^0(S,\cO_S(m(K_S+A_{\vert
  S}+C)))$, the image of $\sigma$ under the map
  \[
    H^0\bigl(S,\cO_S\bigl(m(K_S+A_{\vert S}+C)\bigr)\bigr) \xrightarrow{\cdot
    m(B_{\vert S} - C)} H^0\bigl(S,\cO_S\bigl(m(K_X+\Delta)_{\vert
    S}\bigr)\bigr)
  \]
  lies in $H^0(X\vert S,\cO_X(m(K_X+\Delta)))$.
  In particular, we have
  \[
    \bigl\lvert m(K_S+A_{\vert S}+C) \bigr\rvert + m(B_{\vert S}
    - C) \subseteq \bigl\lvert m(K_X+\Delta)\bigr\rvert_S,
  \]
  and if $\lvert m(K_S+A_{\vert S}+C) \rvert \ne \emptyset$, then
  $\lvert m(K_X+\Delta)\rvert_S \ne \emptyset$, and
  \[
    \Fix\bigl\lvert m(K_S+A_{\vert S}+C) \bigr\rvert + m(
    B_{\vert S} - C ) \ge \Fix\bigl\lvert m(K_X+\Delta)\bigr\rvert_S \ge m
    \FFix_S(K_X+\Delta).
  \]
\end{theorem}
\begin{proof}
  The ``in particular'' statements follow from Proposition \ref{prop:har29},
  and hence it suffices to show the module-theoretic statement.
  By flat base change and \cite[Chapter II, \S3, no.\ 3,
  Corollary 1 to
  Theorem 1]{BouCA}, it suffices to show the statement after replacing $Z$ with
  $\Spec(\cO_{Z,z})$ for every point $z \in Z$.
We may therefore assume $Z$ is local, in which case we may use
our version of the Bertini theorem (Theorem
\ref{thm:bertini} and Remark \ref{rem:bertinisnc}).
\par By \cite[Chapter I, \S3, Main Theorem I$(n)$]{Hir64}, 
we can find a simultaneous log resolution $f\colon Y\to X$ of $(X,S\cup
\Supp(B))$ and the base ideal $\fb(\lvert qm(K_X+\Delta+\frac{1}{m}A)\rvert)$. 
Then, for some choice of the canonical divisor $K_Y$, there are ${\mathbf Q}$-divisors
$B', E \geq 0$ on $Y$ with no common components, such that $E$ is $f$-exceptional and
\begin{align*}
  K_Y + T + B'&= f^*(K_X + S + B) + E,
  \intertext{where $T = f_*^{-1}S$.
  Note that this implies}
  K_T + B'_{|T} &= g^*(K_S + B_{|S}) + E_{|T}
\end{align*}
where $g=f_{\vert T}\colon T\to S$ and $K_T$ and $K_S$ are some choices of canonical divisors of $T$ and $S$ respectively.
Since $(Y, T + B' + E)$ is log regular and $B'$ and $E$ do not have common components, it follows that $B'_{\vert T}$
and $E_{|T}$ do not have common components.
In
particular, $E_{|T}$ is g-exceptional and $g_*B'_{|T} = B_{|S}$. 
Let $\Gamma = T + f^*A + B'$, and define
\[F_q =\frac{1}{qm}
\Fix \Bigl|qm\Bigl(K_Y + \Gamma + \frac{1}{m}f^*A\Bigr)\Bigr|.\]
We notice that $qm(K_Y + \Gamma + \frac{1}{m}f^*A)=f^*(qm(K_X+\Delta+\frac{1}{m}A))+qmE$ and that $E$ is $f$-exceptional.
Therefore, $\fb(\lvert qm(K_Y + \Gamma + \frac{1}{m}f^*A)\rvert)$ is the product of $\mathcal O_Y(-qmE)$ and $\fb(\lvert f^*(qm(K_X+\Delta+\frac{1}{m}A))\rvert)$, the latter being equal $f^*\fb(\lvert qm(K_X+\Delta+\frac{1}{m}A)\rvert)$.
Since we resolved $\fb(\lvert qm(K_X+\Delta+\frac{1}{m}A)\rvert)$, its pullback is an invertible ideal, hence so is $\fb(\lvert qm(K_Y + \Gamma + \frac{1}{m}f^*A)\rvert)$.
Therefore the mobile part
\[
  \Mob\Bigl(qm\Bigl(K_Y + \Gamma + \frac{1}{m}f^*A\Bigr)\Bigr)=qm\Bigl(K_Y+\Gamma
  +\frac{1}{m}f^*A-F_q\Bigr)
\]
is $(\pi\circ f)$-generated.  
By Theorem \ref{thm:bertini} and Remark \ref{rem:bertinisnc}, we may take $D^\circ\in \lvert K_Y+\Gamma
+\frac{1}{m}f^*A-F_q\rvert_{\mathbf Q}$ such that 
$(Y,T+B'+F_q+D^\circ)$ is log regular and that $D^\circ$ does not contain any component of $T+B'$.
Now define
\[B'_q = B'- B' \wedge F_q, \qquad \Gamma_q = T + B'_q + f^*A, \qquad
D=D^\circ+F_q-B' \wedge F_q.\]
Then, \[ D\sim_{\mathbf Q}K_Y+\Gamma_q+\frac{1}{m}f^*A,\]
the pair $(Y,T+B'_q+D)$ is log regular, and $D$ does not contain any component of $T+B'_q$.

Let $g=f_{\vert T}\colon T\to S$ and $C'=g^{-1}_*C$. We claim that $C'\leq B'_{q|T}$. Assuming the claim, let us show how it implies the theorem.
By Lemma \ref{lem:cl1233}, there exists a $\pi$-very ample divisor $H$ on $Y$
such that for all divisors $\Sigma'\in\lvert
K_T+(B'_q+(1+\frac{1}{m})f^*A)_{\vert T}\rvert$ and $U\in \lvert H_{\vert
T}\rvert$ and for all positive integers $p$, we have
\[p\Sigma'+U\in \Bigl|pqm\Bigl(K_X+\Delta+\frac{1}{m}A\Bigr)+H\Bigr|_T.\]
Since $f$ is constructed as a blowup of $X$ along regular centers, there exists
an effective $f$-exceptional divisor $G$ such that $-G$ is $f$-ample.
After possibly replacing $G$ by a small rational multiple, we therefore see that
$f^*A-G$ is ample, and $\lfloor B' + \frac{1}{m}G \rfloor = 0$, in which case
$(T,(B'+\frac{1}{m}G)_{\vert T})$ is klt. 
Now, we choose a positive integer $k$ so large such that for $l=kq$ the $\mathbf Q$-divisor
\begin{displaymath}
    A_0=\frac{1}{m}(f^*A-G)-\frac{m-1}{ml}H
\end{displaymath}
$(\pi\circ f)$-ample. This is possible because $f^*A-G$ is $(\pi\circ f)$-ample. 
By Theorem \ref{thm:bertini} and Remark \ref{rem:bertinisnc}, we may find
reduced divisors $W_1\in \lvert q(f^*A)_{|T}\rvert$ and $W_2\in\lvert
H_{|T}\rvert$ such that $(B'+\frac{1}{m}G)_{|T}, W_1$ and $W_2$ share no common components and that
$(T,(B'+\frac{1}{m}G)_{|T}+W_1+W_2)$ is log regular. For $W=kW_1+W_2$ and
\[
  \Phi=B'_{q|T}+\frac{1}{m}G_{|T}+\frac{1}{l}W=B'_{q|T}+\frac{1}{m}G_{|T}+\frac{1}{q}W_1+\frac{1}{l}W_2,
\]
the pair $(T,\Phi)$ is klt, since $\lfloor B'+\frac{1}{m}G\rfloor=0$.
Now the proof of \cite[Theorem 3.4]{CL12} applies verbatim, except \cite[Lemma
3.2]{CL12} should be replaced by Lemma \ref{lem:cl1232}.

It remains to verify the claim $C'\leq B'_{q|T}$. This is also identical to the
corresponding part of the proof of \cite[Theorem 3.4]{CL12}, except for the
word change ``basepoint-free'' to ``$(\pi\circ f)$-generated.''
\end{proof}
As in \cite{CL12}, we immediately obtain the following version of the lifting
theorem of Hacon and M\textsuperscript{c}Kernan \cite[Theorem 6.3]{HM10}.
\begin{corollary}[{cf.\ \citeleft\citen{CL12}\citemid Corollary 3.5\citeright}]
  \label{cor:cl1235}
  Let $\pi\colon X \to Z$ be a projective morphism of integral
  Noetherian schemes of equal characteristic zero
  such that $X$ is regular and such that $Z$ is affine
  and excellent with a dualizing complex $\omega_Z^\bullet$.
  Denote by $K_X$ a canonical divisor on \(X\) associated to $\omega_X^\bullet =
  \pi^!\omega_Z^\bullet$.
  \par Let $S$ be a prime divisor on $X$ and let $B$ be an effective
  $\QQ$-divisor on $X$ such that $(X,S+B)$ is log regular, $S \not\subseteq
  \Supp(B)$, and $\lfloor B \rfloor = 0$.
  Suppose that $(S,B_{\vert S})$ is canonical.
  Let $A$ be a $\pi$-ample $\QQ$-divisor on $X$, and set $\Delta = S + A + B$.
  Let $m$ be a positive integer such that $mA$ and $mB$ are integral and such
  that $S \not\subseteq \Bs\lvert m(K_X+\Delta)\rvert$.
  Set
  \[
    \Phi_m = B_{\vert S} - B_{\vert S} \wedge \frac{1}{m} \Fix\bigl\lvert
    m(K_X+\Delta) \bigr\rvert_S.
  \]
  Then, we have
  \[
    \bigl\lvert m(K_S+A_{\vert S}+\Phi_m) \bigr\rvert + m(B_{\vert S}
    - \Phi_m) \subseteq \bigl\lvert m(K_X+\Delta)\bigr\rvert_S.
  \]
\end{corollary}
\begin{proof}
  The proof of \cite[Corollary 3.5]{CL12} applies after replacing \cite[Theorem
  3.4]{CL12} with our Theorem \ref{thm:cl1234}.
\end{proof}
\begin{lemma}[cf.\ {\cite[Lemma 3.6]{CL12}}]\label{lem:cl1236}
  Let $\pi\colon X \to Z$ be a projective morphism of integral
  Noetherian schemes of equal characteristic zero
  such that $X$ is regular and such that $Z$ is affine
  and excellent with a dualizing complex $\omega_Z^\bullet$.
  Denote by $K_X$ a canonical divisor on \(X\) associated to $\omega_X^\bullet =
  \pi^!\omega_Z^\bullet$.
  \par Let $S$ be a regular prime divisor on $X$, let $D$ be a $\QQ$-divisor
  on $X$ such that $S \not\subseteq \SB(D)$, and let $A$ be a $\pi$-ample
  $\QQ$-divisor.
  Then, we have
  \[
    \frac{1}{q} \Fix\bigl\lvert q(D+A) \bigr\rvert_S \le \FFix_S(D)
  \]
  for all sufficiently divisible positive integers $q$.
\end{lemma}
\begin{proof}
The proof of \cite[Lemma 3.6]{CL12} carries word by word with the following changes:
\begin{itemize}
  \item All instances of the words ``ample'' and ``very ample'' become ``$\pi$-ample''
    and ``$\pi$-very ample,'' respectively.
  \item The sentence ``In particular, if $V\in \lvert F\rvert$ is a general element,
  then $P\not\subseteq \Supp f_*V$'' becomes ``In particular, for some $V\in
  \lvert F\rvert$ we have $P\not\subseteq \Supp f_*V$.''
\item The reference \cite[Lemma 3.1]{CL12} should be replaced by Lemma
  \ref{lem:cl1231}.
\end{itemize}
We note that the $\QQ$-divisor $D'$ in the proof of \cite[Lemma 3.6]{CL12} does
not come from Bertini's theorem, since the existence of a $\QQ$-divisor $D'
\sim_\QQ D$ satisfying $S \not\subseteq\Supp(D')$ and $\mult_P(D'_{\vert S}) <
1/q$ follows from the definition of $\FFix_S(D)$.
\end{proof}
\section{\texorpdfstring{$\cB_A^S(V)$}{B\_A\textasciicircum S(V)}
is a rational polytope}
Following \cite[\S4]{CL12}, we prove that the set $\cB_A^S(V)$ defined in
Definition \ref{def:cl1224} is a rational polytope.
Given the work we have done in \S\ref{sect:cl12s3}, the proof in
\cite[\S4]{CL12} applies almost verbatim.\medskip
\par We replace \cite[Setup 4.1]{CL12} with the following setup.
In the rest of this section, we write ``Setup $\text{\ref{setup:cl1241}}_n$'' to
mean ``Setup \ref{setup:cl1241} when $\dim(X) = n$.''
We have only written down the notation from \cite[Setup 4.1]{CL12} that will be
used in the statements in the rest of this section.
\begin{setup}[cf.\ {\cite[Setup 4.1]{CL12}}]\label{setup:cl1241}
  Let $\pi\colon X \to Z$ be a projective morphism of integral Noetherian
  schemes of equal characteristic zero, such that $X$ is regular of dimension $n$ and such that $Z$ is
  affine and excellent and has a dualizing complex $\omega_Z^\bullet$.
  Let $S,S_1,S_2,\ldots,S_p$ be distinct prime divisors on $X$ such that
  $(X,S+\sum_{i=1}^p S_i)$ is log regular.
  We assume that Theorem $\text{\ref{thm:cl12a}}_{n-1}$ holds.
  Note that we have already shown that Theorem \ref{thm:cl12b} holds.
  \par Consider a $\pi$-ample $\QQ$-divisor $A$ on $X$.
  Let 
  \[
    V = \sum_{i=1}^p \RR \cdot S_i \subseteq \Div_\RR(X),
  \]
  and let $W \subseteq \Div_\RR(S)$ be the subspace spanned by the
  components of $\sum_i (S_i)_{\vert S}$.
  By Theorem $\text{\ref{thm:cl12b}}$, the set
  \[
    \cE_{A_{\vert S}}(W)=\Set[\big]{E\in\mathcal L(W) \given \lvert K_S+A_{\vert
    S}+E\rvert_{\RR}\neq \emptyset}
  \]
  is a rational polytope. 
  If $E_1,E_2,\ldots,E_d$ are its extreme points, then the ring
  \[
    R\bigl(S/Z;K_{S}+A_{\vert S}+E_1,K_{S}+A_{\vert S}+E_2,\ldots,K_{S}+A_{\vert
    S}+E_d\bigr)
  \]
  is finitely generated as a $H^0(Z,\cO_Z)$-algebra by Theorem
  $\textup{\ref{thm:cl12a}}_{n-1}$.
  Therefore, if we set
  \[
    \FF(E) = \FFix(K_{S}+A_{\vert S}+E)
  \]
  for a $\QQ$-divisor $E \in \cE_{A_{\vert S}}(W)$, then
  \cite[Lemma 2.28]{CL12} 
  implies that $\FF$ extends to a rational piecewise
  affine function on $\cE_{A_{\vert S}}(W)$, and there exists a
  positive integer $k$ such that
  \[
    \FF(E) = \frac{1}{m} \Fix\bigl\lvert m(K_{S}+
    A_{\vert S}+E)\bigr\rvert
  \]
  for every $E \in \cE_{A_{\vert S}}(W)$ and every $m \in \NN$
  such that $mA/k$ and $mE/k$ are integral.
  \par For a $\QQ$-divisor $B \in \cB_{A}^{S}(V)$, set
  \[
    \FF_{S}(B) = \FFix_{S}(K_X+S+A+B),
  \]
  and for every positive integer $m$ such that $mA$ and $mB$ are integral and
  $S \not\subseteq \Bs\lvert m(K_X+S+A+B)\rvert$,
  denote
  \[
    \Phi_m(B) = B_{\vert S} - B_{\vert S} \wedge \frac{1}{m} \Fix\bigl\lvert
    m(K_X+S+A+B)\bigr\rvert_S.
  \]
  Let $\Phi(B) = B_{\vert S} - B_{\vert S} \wedge \FF_S(B)$, where we note that
  $\Phi(B) = \limsup_{m \to \infty} \Phi_m(B)$.
\end{setup}
The analogue of the main result in \cite[\S4]{CL12} is the following:
\begin{theorem}[cf.\ {\cite[Theorem 4.3]{CL12}}]\label{thm:cl1243}
  Let the assumptions of Setup $\text{\ref{setup:cl1241}}_n$ hold.
  Let $\cG$ be a rational polytope contained in the interior of $\cL(V)$, and
  assume that $(S,G_{\vert S})$ is terminal for every $G \in \cG$.
  Denote $\cP = \cG \cap \cB_A^S(V)$.
  We then have the following:
  \begin{enumerate}[label=$(\roman*)$,ref=\roman*]
    \item $\cP$ is a rational polytope.
    \item $\Phi$ extends to a rational piecewise affine function on $\cP$, and
      there exists a positive integer $\ell$ such that $\Phi(P) = \Phi_m(P)$ for
      every $P \in \cP$ and every positive integer $m$ such that $mP/\ell$ is
      integral.
  \end{enumerate}
\end{theorem}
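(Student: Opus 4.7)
The plan is to adapt the proof of \cite[Theorem 4.3]{CL12} to our relative setting, using the lifting theorem (Corollary \ref{cor:cl1235}) together with the inductive hypothesis Theorem $\textup{\ref{thm:cl12a}}_{n-1}$ applied to the morphism $S \to Z$. The two key observations we need are (a) that $\Phi(B)$ always lies in $\cE_{A_{\vert S}}(W)$ so that the piecewise affine function $\FF$ on $\cE_{A_{\vert S}}(W)$ guaranteed by Setup \ref{setup:cl1241}$_n$ can be composed with $\Phi$, and (b) that the map $B \mapsto \Phi(B)$ itself is rational piecewise affine on $\cP$.

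First, I would reduce to the case in which $Z = \Spec(R)$ is the spectrum of an excellent local $\QQ$-algebra. This is allowed because Corollary \ref{cor:LociLocalize}$(\ref{cor:BSAVlocalize})$ shows that $\cB_A^S(V)$ is compatible with localization on $Z$ (restricted to the face determined by which $S_i$ meet $z$), and Corollaries \ref{cor:BBsFixLocalize} and \ref{cor:EffIffEffAtGenFiber} show that the stable fixed divisors $\FF_S(B)$ and $\FF(E)$ are likewise compatible with localization. This reduction is essential because the lifting theorem (Corollary \ref{cor:cl1235}) and Theorem \ref{thm:cl1234} rely on Lemma \ref{lem:cl1231}, whose proof uses Theorem \ref{thm:bertini} and Remark \ref{rem:bertinisnc}; these Bertini statements are available only over a local base.

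Next, I would use the lifting theorem in both directions. For $B \in \cP$ and every sufficiently divisible $m$ with $S \not\subseteq \Bs\lvert m(K_X+S+A+B) \rvert$, the restriction map together with Corollary \ref{cor:cl1235} gives the chain of inclusions
\[
\bigl\lvert m\bigl(K_S+A_{\vert S}+\Phi_m(B)\bigr) \bigr\rvert
+ m\bigl(B_{\vert S}-\Phi_m(B)\bigr)
\subseteq \bigl\lvert m(K_X+S+A+B) \bigr\rvert_S
\subseteq \bigl\lvert m(K_S+A_{\vert S}+B_{\vert S}) \bigr\rvert + 0,
\]
where the second inclusion follows by restricting elements of $\lvert m(K_X+S+A+B) \rvert$ to $S$. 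Taking fixed parts, together with Lemma \ref{lem:cl1236} to control the error between $\frac{1}{m}\Fix\lvert m(K_X+S+A+B)\rvert_S$ and $\FF_S(B)$ after adding a small ample perturbation absorbed by $A$, shows that $\Phi_m(B) \in \cE_{A_{\vert S}}(W)$ for $m \gg 0$ sufficiently divisible, and hence $\Phi(B) \in \cE_{A_{\vert S}}(W)$ by passing to the limit. Moreover, from the chain above, the two piecewise affine quantities $\FF_S(B)$ and $\FF(\Phi(B)) + (B_{\vert S} - \Phi(B))$ are forced to agree; combined with the definition $\Phi(B) = B_{\vert S} - B_{\vert S} \wedge \FF_S(B)$ one obtains a fixed-point relation determining $\Phi(B)$. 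The hypothesis that $(S, G_{\vert S})$ is terminal for $G \in \cG$ ensures that the condition ``$C \le B_{\vert S} - B_{\vert S}\wedge \frac{1}{qm}\Fix\lvert\cdots\rvert_S$'' in Theorem \ref{thm:cl1234} is satisfied with $C = \Phi_m(B)$, so the lifting argument applies uniformly on $\cG$.

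Using the inductive hypothesis Theorem $\textup{\ref{thm:cl12a}}_{n-1}$ on the adjoint ring $R(S/Z;K_S+A_{\vert S}+E_1,\ldots,K_S+A_{\vert S}+E_d)$ over the rational polytope $\cE_{A_{\vert S}}(W)$, together with \cite[Lemma 2.28]{CL12}, the function $\FF$ is rational piecewise affine on $\cE_{A_{\vert S}}(W)$ with some denominator $k$. Feeding this into the fixed-point equation above, $\Phi$ is determined as the solution of a finite family of rational affine equations in $B$ on each subregion, and $\cP = \cG \cap \cB_A^S(V)$ is exactly the subset of $\cG$ where this system has a solution; this cuts out $\cP$ by finitely many rational linear inequalities and establishes (i). The statement (ii), including the existence of a single denominator $\ell$ with $\Phi(P) = \Phi_m(P)$ whenever $mP/\ell$ is integral, then follows by clearing denominators in the finitely many affine pieces and invoking the corresponding uniformity on the $S$-side supplied by Theorem $\textup{\ref{thm:cl12a}}_{n-1}$.

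The main obstacle is the circularity between $\Phi(B)$ and $\FF_S(B)$: the function $\FF$ on $\cE_{A_{\vert S}}(W)$ controls the fixed parts of lifted systems on $S$, while $\FF_S$ is defined on $X$ directly, and the bridge between them is the lifting theorem together with Lemma \ref{lem:cl1236}, both of which impose the sufficient divisibility, Bertini, and terminality hypotheses that must be tracked carefully. Disentangling this requires the local reduction above so that Bertini is available, and then a careful piecewise analysis on rational subdivisions of $\cG$ of the sort carried out in \cite[\S4]{CL12}.
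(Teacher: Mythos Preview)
Your approach is essentially the same as the paper's: both adapt \cite[\S4]{CL12} (Lemmas 4.2, 4.4, and Theorem 4.3) to the relative setting by replacing \cite[Theorem 3.4 and Lemma 3.6]{CL12} with Theorem \ref{thm:cl1234}, Corollary \ref{cor:cl1235}, and Lemma \ref{lem:cl1236}, and using the inductive hypothesis on $S$ via Setup \ref{setup:cl1241}$_n$.

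Two corrections, however. First, the reduction to a local base $Z$ is \emph{not} needed, and the paper does not make it: Theorem \ref{thm:cl1234}, Corollary \ref{cor:cl1235}, and Lemma \ref{lem:cl1236} are already stated and proved over an arbitrary affine excellent $\QQ$-base (the passage to $\Spec(\cO_{Z,z})$ to invoke Theorem \ref{thm:bertini} happens \emph{inside} their proofs, via flat base change, because their conclusions are module-theoretic). So your claim that this reduction ``is essential'' is incorrect; it is harmless but superfluous. Second, your ``fixed-point relation'' paragraph is morally the content of \cite[Lemma 4.2]{CL12}, but the real technical work---and the only place the paper flags changes---is the analogue of \cite[Lemma 4.4]{CL12}, a local perturbation argument on a small simplex around each $P\in\cP$ that uses the $\pi$-ampleness of $A$ to absorb errors. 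The paper records precisely which divisors must be taken $\pi$-ample there (the choices of $\varepsilon$ and the auxiliary divisors $H$, $G$). Your sketch glosses over this step by saying $\Phi$ is ``the solution of a finite family of rational affine equations,'' which hides exactly where the argument is delicate.
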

\begin{proof}
  We work through the proofs of \cite[Lemma 4.2]{CL12}, \cite[Lemma 4.4]{CL12},
  and \cite[Theorem 4.3]{CL12}.
  Throughout, \cite[Theorem 3.4]{CL12} and \cite[Lemma 3.6]{CL12} should
  be replaced by our Theorem \ref{thm:cl1234} and Lemma \ref{lem:cl1236},
  respectively.
  \par The proof of \cite[Lemma 4.2]{CL12} works with no changes.
  The proof of \cite[Lemma 4.4]{CL12} works with the following changes:
  \begin{itemize}
    \item In Step 2, the rational number $0 < \varepsilon \ll 1$ should be
      chosen such that the divisors $D+A/4$ and $\varepsilon(K_X+S+A+B)+A/4$ are
      $\pi$-ample.
    \item In the first paragraph of \cite[p.\ 2442]{CL12}, the divisors
      \[
        H = \Gamma - B_\delta + \frac{1}{4m}A \qquad \text{and} \qquad
        G = \frac{\varepsilon}{m} (K_X+S+A+B_\delta) +\frac{1}{4m}A
      \]
      are $\pi$-ample.
  \end{itemize}
  The proof of \cite[Theorem 4.3]{CL12} works with no changes.
\end{proof}
As a result, we obtain the following corollary.
\begin{corollary}[cf.\ {\cite[Corollary 4.6]{CL12}}]\label{cor:cl1246}
  Assume 
  Theorem $\text{\ref{thm:cl12a}}_{n-1}$ holds. 
  Let $\pi\colon X \to Z$ be a projective morphism of integral Noetherian
  schemes of equal characteristic zero, such that $X$ is regular of dimension $n$ and such that $Z$ is
  affine and excellent with a dualizing complex $\omega_Z^\bullet$.
  Let $S,S_1,S_2,\ldots,S_p$ be distinct prime divisors on $X$ such that
  $(X,S+\sum_{i=1}^p S_i)$ is log regular.
  \par Let 
  \[
    V = \sum_{i=1}^p \RR \cdot S_i \subseteq \Div_\RR(X),
  \]
  and let $A$ be a $\pi$-ample $\QQ$-divisor on $X$. Then, $\cB^S_A(V)$ is a rational polytope
  and
  \[
    \cB^S_A(V) = \Set{B \in \cL(V) \given \sigma_S(K_X+S+A+B) =
    0}.
  \]
\end{corollary}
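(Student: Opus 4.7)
The plan is to deduce both statements from Theorem \ref{thm:cl1243} by a local-to-global argument, after first rephrasing the definition of $\cB^S_A(V)$ in asymptotic terms.

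First, to establish the equality $\cB^S_A(V) = \{B \in \cL(V) \mid \sigma_S(K_X+S+A+B) = 0\}$, I would unwind the definitions. By Definition \ref{def:cl1224}, $B \in \cB^S_A(V)$ iff $S \not\subseteq \SB(K_X+S+A+B)$. Lemma \ref{lem:cl23} expresses
\[
  \SB(K_X+S+A+B) = \bigcap_{D' \in \lvert K_X+S+A+B \rvert_\QQ} \Supp(D'),
\]
so the condition is equivalent to the existence of elements of $\lvert K_X+S+A+B \rvert_\QQ$ with arbitrarily small multiplicity along $S$. By Lemma \ref{lem:RatlIsDense}, rational combinations are dense in real ones, so the infimum of $\mult_S(D')$ over $\lvert K_X+S+A+B \rvert_\QQ$ coincides with the one over $\lvert K_X+S+A+B \rvert_\RR$; by Definition \ref{def:AsymptoticOrder}, this infimum is precisely $\sigma_S(K_X+S+A+B)$.

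Second, to show that $\cB^S_A(V)$ is a rational polytope, I would prove that it is locally a rational polytope around every point and then invoke quasi-compactness of $\cL(V)$. Given $B_0 \in \cB^S_A(V)$, the plan is to construct a rational polytope neighborhood of $B_0$ on which Theorem \ref{thm:cl1243} may be applied. Using Theorem \ref{thm:ampisinterior}, split $A = A_0 + A_1$ with $A_0, A_1$ both $\pi$-ample $\QQ$-divisors and $A_1$ arbitrarily small. Fix small $\varepsilon \in \QQ_{>0}$, and seek a rational polytope $\cG \subseteq \cL(V)$ containing $(1-\varepsilon)B_0$ in its relative interior such that $(S, G_{\vert S})$ is terminal for every $G \in \cG$; this is achievable by taking $\varepsilon$ small enough, using that for $(X, S + \sum_i S_i)$ log regular, $(S, D_{\vert S})$ is terminal whenever the coefficients of $D$ in the $S_i$ are sufficiently small (cf.\ Lemma \ref{lem:kltContinuous} and log resolution). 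With this $\cG$, Theorem \ref{thm:cl1243} produces a rational polytope $\cG \cap \cB^S_{A_0}(V)$; the $\QQ$-linear equivalence
\[
  K_X+S+A+B \sim_\QQ K_X+S+A_0+\bigl((1-\varepsilon)B + \varepsilon B + A_1\bigr)
\]
and the choice of $A_1$ then identify $\cG \cap \cB^S_{A_0}(V)$ (after translation by $\varepsilon B_0$) with a rational polytope neighborhood of $B_0$ in $\cB^S_A(V)$.

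The main obstacle is arranging the two hypotheses of Theorem \ref{thm:cl1243} simultaneously around an arbitrary $B_0$: interiority of $\cG$ inside $\cL(V)$, and terminality of $(S, G_{\vert S})$ for $G \in \cG$. If $B_0$ lies on the boundary of $\cL(V)$ (some coefficient equals $0$ or $1$), the interiority issue is handled by enlarging $V$ to $V'$ by adjoining auxiliary prime divisors in an snc configuration avoiding the relevant components, placing $B_0$ in the relative interior of a rational face of $\cL(V')$ coinciding locally with $\cL(V)$. Once this local analysis is carried out, compactness of $\cL(V)$ produces a finite cover of $\cB^S_A(V)$ by rational polytopes of the form $\cG \cap \cB^S_{A_0}(V)$, and since any two such pieces intersect in a rational sub-polytope, their union is the desired rational polytope $\cB^S_A(V)$.
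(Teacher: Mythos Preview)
Your treatment of the equality $\cB^S_A(V)=\{B\in\cL(V)\mid\sigma_S(K_X+S+A+B)=0\}$ is essentially correct; the identification of $S\not\subseteq\SB(D)$ with $o_S(D)=0$ follows directly from the definition of $\SB$ for $\RR$-divisors, and the paper's $o_S$ agrees with $\sigma_S$ on the locus where $\lvert D\rvert_\RR\neq\emptyset$.

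The real problem is in the polytope argument. You assert that one can find a rational polytope $\cG$ around $(1-\varepsilon)B_0$ with $(S,G_{\vert S})$ terminal for every $G\in\cG$, justified by the claim that ``$(S,D_{\vert S})$ is terminal whenever the coefficients of $D$ in the $S_i$ are sufficiently small.'' But taking $\varepsilon$ small makes $(1-\varepsilon)B_0$ \emph{close to} $B_0$, not small. If $B_0=\sum b_iS_i$ has, say, $b_1+b_2\ge 1$ and $S_1\cap S_2\cap S\neq\emptyset$, then the blow-up of that codimension-two stratum on $S$ already has discrepancy $1-b_1-b_2\le 0$, so $(S,B_{0\vert S})$ is not terminal, and the same holds for $(S,((1-\varepsilon)B_0)_{\vert S})$ for all small $\varepsilon$. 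Thus the hypothesis of Theorem~\ref{thm:cl1243} is simply unavailable on $X$ near a general $B_0$, and your local-to-global scheme does not get off the ground.

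The paper, following \cite[Corollary 4.6]{CL12}, resolves this by passing to a log resolution $f\colon Y\to X$ chosen (via \cite[Lemma 2.2]{CL12}, i.e.\ \cite[Proposition 2.36(1)]{KM98}) so that the induced boundary $C$ on $Y$ has $(T,C_{\vert T})$ terminal, where $T=f_*^{-1}S$. One then perturbs by a small effective $f$-exceptional divisor $F$ so that $f^*A-F$ is $(\pi\circ f)$-ample while $(T,(C+F)_{\vert T})$ remains terminal; this is exactly the manoeuvre recorded in the second bullet of the paper's proof. Theorem~\ref{thm:cl1243} is then applied on $Y$, and the conclusion descends to $X$. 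The log-resolution step is the missing idea in your proposal.

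A secondary issue: your fix for the boundary of $\cL(V)$, ``adjoining auxiliary prime divisors in an snc configuration,'' presupposes a global Bertini-type statement that is not available on an arbitrary projective scheme over an excellent affine base---precisely the difficulty the paper works around throughout. The resolution approach above also handles this, since on $Y$ one has extra exceptional components to work with.
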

\begin{proof}
  The proof of \cite[Corollary 4.6]{CL12} applies with the following changes:
  \begin{itemize}
    \item In the first paragraph, \cite[Theorem 4.3]{CL12} should be replaced by
      our Theorem \ref{thm:cl1243}.
    \item In the second paragraph, \cite[Lemma 2.2]{CL12} holds for the pair
      $(X,S+B^G)$ since log resolutions exist \cite[Theorem 1.1.6]{Tem18}, and the
      proof of \cite[Proposition 2.36(1)]{KM98} works in this setting as well.
      Later, we choose $f^*A^G - F$ to be $(\pi\circ f)$-ample, where if $F$ is small
      enough, then $(T,(C+F)_{\vert T})$ is terminal. 
      Here, the choice of $F$ is exactly like the choice of $G$ in the proof of 
      Theorem \ref{thm:cl1234}, which works since Temkin's log resolutions are
      constructed by blowing up regular centers (see also \cite[Claim
      8.1]{Kol21qfac}).\qedhere
  \end{itemize}
\end{proof}

\section{Finite generation}
In this section, we prove Theorem $\textup{\ref{thm:cl12a}}_n$ assuming Theorem
$\textup{\ref{thm:cl12a}}_{n-1}$.
Again, we note that we have already shown Theorem \ref{thm:cl12b}.
\begin{lemma}[cf.\ {\cite[Lemma 6.1]{CL12}}]\label{lem:cl1261}
  Let $\pi\colon X \to Z$ be a proper morphism of integral
  Noetherian schemes such that $X$ is regular and such that $Z$ is affine.
  \par Let $S_1,S_2,\ldots,S_p$ be distinct prime divisors on $X$ such that
  $(X,\sum_{i=1}^p S_i)$ is log regular.
  Let
  \[
    \cC \subseteq \sum_{i=1}^p \RR_{\ge0}\cdot S_i \subseteq \Div_\RR(X)
  \]
  be a rational polyhedral cone, and let $\cC = \bigcup_{j=1}^q \cC_j$ be a
  rational polyhedral decomposition.
  Set $\cS = \cC \cap \Div(X)$ and $\cS_j = \cC_j \cap \Div(X)$ for all $j$.
  Assume the following:
  \begin{enumerate}[label=$(\roman*)$,ref=\roman*]
    \item\label{lem:cl1261condi}
      There exits a real number $M > 0$ such that if $\sum_i \alpha_iS_i \in
      \cC_j$ for some $j$ and for some $\alpha_i \in \NN$ where $\sum_i \alpha_i
      \ge M$, then $\sum_i \alpha_i S_i - S_j \in \cC$; and
    \item\label{lem:cl1261condii}
      The ring $\res_{S_j}(R(X/Z;\cS_j))$ is finitely generated as a $H^0(X
      \vert S_j,\cO_{S_j})$-algebra for every $j \in \{1,2,\ldots,p\}$.
  \end{enumerate}
  Then, the relative divisorial ring $R(X/Z;\cS)$ is finitely generated as an
  $H^0(Z,\cO_Z)$-algebra.
\end{lemma}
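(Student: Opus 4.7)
The plan is to proceed by induction on the total degree $|\alpha| \coloneqq \sum_i \alpha_i$ of a multidegree $D = \sum_i \alpha_i S_i \in \cS$, exhibiting a finite generating set $\Sigma \subseteq R(X/Z;\cS)$ for $R(X/Z;\cS)$ as an $H^0(Z,\cO_Z)$-algebra. Since there are only finitely many $D \in \cS$ with $|\alpha| < M$ (as $\cS$ lies in the monoid $\sum_i \NN\cdot S_i$) and each graded piece $H^0(X,\cO_X(D)) = H^0(Z,\pi_*\cO_X(D))$ is a finitely generated $H^0(Z,\cO_Z)$-module by properness of $\pi$ and Noetherianness of $Z$, I would choose a finite set $\Sigma_0$ of homogeneous elements generating $\bigoplus_{|\alpha|<M} H^0(X,\cO_X(D))$ as an $H^0(Z,\cO_Z)$-module.

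Next, hypothesis $(\ref{lem:cl1261condii})$ provides, for each $j$, finitely many homogeneous generators $g_{j,1},\ldots,g_{j,N_j}$ of $\res_{S_j}(R(X/Z;\cS_j))$ over $H^0(X\vert S_j,\cO_{S_j})$. By the definition of the restricted ring each $g_{j,k} = \rho_{S_j}(\tilde g_{j,k})$ for some $\tilde g_{j,k} \in H^0(X,\cO_X(D_{j,k}))$ with $D_{j,k} \in \cS_j$. Setting $\Sigma = \Sigma_0 \cup \{\tilde g_{j,k}\}_{j,k}$ and denoting by $R_\Sigma$ the $H^0(Z,\cO_Z)$-subalgebra it generates, I would prove $R_\Sigma = R(X/Z;\cS)$ by induction on $|\alpha|$, with the base case handled by construction of $\Sigma_0$.

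For the inductive step, take $\sigma \in H^0(X,\cO_X(D))$ with $D \in \cS_j$ and $|\alpha| \ge M$. Hypothesis $(\ref{lem:cl1261condi})$ yields $D-S_j \in \cS$, and the restriction sequence \eqref{eq:restrictionexactseq}
\[
  0 \longrightarrow H^0\bigl(X,\cO_X(D-S_j)\bigr) \xrightarrow{\,\cdot\, 1_{S_j}\,} H^0\bigl(X,\cO_X(D)\bigr) \xrightarrow{\rho_{S_j}} H^0\bigl(X\vert S_j,\cO_X(D)\bigr)
\]
realizes $\rho_{S_j}(\sigma)$ as an element of the degree-$D$ component of $\res_{S_j}(R(X/Z;\cS_j))$, so $\rho_{S_j}(\sigma) = \sum_\beta c_\beta \prod_k g_{j,k}^{\beta_k}$ with $c_\beta \in H^0(X\vert S_j,\cO_{S_j})$ and $\sum_k \beta_k D_{j,k} = D$. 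Lifting each $c_\beta$ along the surjection $H^0(X,\cO_X) \twoheadrightarrow H^0(X\vert S_j,\cO_{S_j})$, expressing the lift via $\Sigma_0$ over $H^0(Z,\cO_Z)$, and replacing each $g_{j,k}$ by $\tilde g_{j,k}$, I obtain $\sigma' \in R_\Sigma$ in degree $D$ with $\rho_{S_j}(\sigma') = \rho_{S_j}(\sigma)$. The difference equals $1_{S_j}\cdot\tau$ for some $\tau \in H^0(X,\cO_X(D-S_j))$; by induction applied to $D-S_j \in \cS$ (of total degree $|\alpha|-1$), $\tau \in R_\Sigma$, and since $S_j \in \cS$ with $|S_j|=1$ (whence $1_{S_j} \in \Sigma_0$ after taking $M \ge 2$), multiplication by $1_{S_j}$ is a ring operation in $R_\Sigma$, so $\sigma = \sigma' + 1_{S_j}\cdot\tau \in R_\Sigma$.

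The main obstacle I anticipate is the interplay between the global induction on $|\alpha|$ and the piecewise structure $\cS = \bigcup_j \cS_j$: the degree $D-S_j$ obtained in the descent step need not lie in $\cS_j$, only in $\cS$, so one cannot argue piecewise. Hypothesis $(\ref{lem:cl1261condi})$, with its uniform bound $M$ valid across all $\cC_j$ simultaneously, is precisely what is needed to carry out the induction on $\cS$ as a whole. A secondary technicality is ensuring that $1_{S_j}$ can be multiplied within $R(X/Z;\cS)$; this holds provided $S_j \in \cS$ (which we may assume in view of the intended application, or alternatively arrange by passing to the slightly larger monoid $\cS + \sum_j \NN \cdot S_j$).
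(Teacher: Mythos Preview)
Your proposal follows essentially the same inductive strategy as the paper's proof: induct on the total degree $|\alpha|=\sum_i\alpha_i$, use the short exact sequence \eqref{eq:restrictionexactseq} together with hypothesis $(\ref{lem:cl1261condi})$ to reduce a section in degree $D$ to one in degree $D-S_j$ plus a polynomial in the chosen generators, and handle the low degrees by finite generation of the individual graded pieces. This is exactly the paper's argument.

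The one place where your write-up is looser than the paper's is precisely the ``secondary technicality'' you flag. Your first fix (assume $S_j\in\cS$) does not prove the lemma as stated. Your second fix (enlarge the monoid to $\cS+\sum_j\NN\cdot S_j$) is the right idea but is not complete as written: once you adjoin the $1_{S_j}$'s, your induction shows that $R(X/Z;\cS)$ sits inside the finitely generated algebra $R_{\Sigma'}$ generated by $\Sigma\cup\{1_{S_j}\}$, but a subalgebra of a finitely generated algebra need not be finitely generated. The paper closes this gap cleanly: it works in the ambient ring $R(X/Z;S_1,\ldots,S_p)$, lets $\fR$ be the subalgebra generated by $R(X/Z;\cS)$ together with the canonical sections $\sigma_i=1_{S_i}$, observes that $R(X/Z;\cS)$ is the Veronese subring of $\fR$ over the submonoid $\cS$ (since $\fR_s=H^0(X,\cO_X(s))$ for all $s\in\cS$), proves $\fR$ finitely generated by exactly your induction, and then invokes \cite[Proposition 1.2.2]{ADHL15} to pass to the Veronese subring. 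The paper also first replaces $\pi$ by its Stein factorization so that $H^0(Z,\cO_Z)=H^0(X,\cO_X)$, which lets it write the polynomial $\varphi$ with coefficients in $H^0(Z,\cO_Z)$ directly and avoids your separate lifting step for the $c_\beta$. With these two adjustments your argument becomes the paper's proof.
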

\begin{proof}
  After replacing $\pi\colon X \to Z$ by its Stein factorization 
  \cite[Th\'eor\`eme 4.3.1]{EGAIII1}, we may assume that $H^0(Z,\cO_Z)$
  is the degree zero piece of $R(X/Z;\cS)$.
  We now follow the proof of \cite[Lemma 6.1]{CL12}.
  For every $i \in \{1,2,\ldots,p\}$, we use Proposition \ref{prop:har29} to
  choose sections $\sigma_i \in H^0(X,\cO_X(S_i))$ such that $\prdiv(\sigma_i) =
  S_i$.
  Let $\fR \subseteq R(X/Z;S_1,S_2,\ldots,S_p)$ be the
  $H^0(Z,\cO_Z)$-subalgebra generated by $R(X/Z;\cS)$ and
  $\sigma_1,\sigma_2,\ldots,\sigma_p$.
  Note that $\fR$ is graded by $\sum_{i=1}^p \NN \cdot S_i \subseteq
  \Div(X)$.
  By \cite[Proposition 1.2.2]{ADHL15}, since $R(X/Z;\cS)$ is a Veronese subring of
  $\fR$, it suffices to show that $\fR$ is finitely generated
  as an $H^0(Z,\cO_Z)$-algebra.
  \par For each $\alpha = (\alpha_1,\alpha_2,\ldots,\alpha_p) \in \NN^p$, set
  $D_\alpha = \sum_i \alpha_i S_i$ and $\deg(\alpha) = \sum_i \alpha_i$, and for
  a section $\sigma \in H^0(X,\cO_X(D_\alpha))$, set $\deg(\sigma) =
  \deg(\alpha)$.
  By $(\ref{lem:cl1261condii})$, for each $j \in \{1,2,\ldots,p\}$, there exists
  a finite set $\cH_j \subseteq R(X/Z;\cS_j)$ such that $\res_{S_j}(
  R(X/Z;\cS_j))$ is generated by the set
  \[
    \Set[\big]{\sigma_{\vert S_j} \given \sigma \in \cH_j}
  \]
  over $H^0(X \vert S_j,\cO_X)$.
  Since the $H^0(Z,\cO_Z)$-module $H^0(X,\cO_X(D_\alpha))$ is finitely generated
  for every $\alpha \in \NN^p$, there is a finite set $\cH \subseteq
  R(X/Z;\cS_j)$ such that
  \[
    \{\sigma_1,\sigma_2,\ldots,\sigma_p\} \cup \cH_1 \cup \cH_2 \cup \cdots \cup
    \cH_p \subseteq \cH
  \]
  and such that
  \[
    H^0\bigl(X,\cO_X(D_\alpha)\bigr) \subseteq \bigl(H^0(Z,\cO_Z)\bigr)[\cH]
  \]
  inside of $\fR$ for all $\alpha \in \NN^p$ with
  $D_\alpha \in \cS$ and $\deg(\alpha) \le M$, where
  $(H^0(Z,\cO_Z))[\cH] \subseteq \fR$ holds by definition of $\fR$ and
  $\cH$.
  To show that $\fR$ is finitely generated as an $H^0(Z,\cO_Z)$-algebra, it
  therefore suffices to show that $\fR \subseteq (H^0(Z,\cO_Z))[\cH]$.
  \par Let $\chi \in \fR$.
  By definition of $\fR$, we can write
  \[
    \chi = \sum_i \sigma_1^{\lambda_{1,i}}\sigma_2^{\lambda_{2,i}} \cdots
    \sigma_p^{\lambda_{p,i}} \chi_i,
  \]
  where $\chi_i \in H^0(X,\cO_X(D_{\alpha_i}))$ for some $D_{\alpha_i} \in \cS$
  and $\lambda_{j,i} \in \NN$.
  It therefore suffices to show that $\chi_i \in (H^0(Z,\cO_Z))[\cH]$.
  After replacing $\chi$ by $\chi_i$, we may assume that $\chi \in
  H^0(X,\cO_X(D_\alpha))$ for some $D_\alpha \in \cS$.
  We induce on $\deg(\chi)$.
  If $\deg(\chi) \le M$, then $\chi \in (H^0(Z,\cO_Z))[\cH]$ by the definition
  of $\cH$ in the previous paragraph.
  Now suppose $\deg(\chi) > M$.
  Then, there exists $j \in \{1,2,\ldots,p\}$ such that $D_\alpha \in \cS_j$,
  and hence there exist $\theta_1,\theta_2,\ldots,\theta_z \in \cH$ and a
  polynomial $\varphi \in (H^0(Z,\cO_Z))[X_1,X_2,\ldots,X_z]$ such that
  \[
    \chi_{\vert S_j} = \varphi\bigl(\theta_{1\vert S_j},\theta_{2\vert
    S_j},\ldots,\theta_{z\vert S_j}\bigr).
  \]
  By the exact sequence
  \[
    0 \longrightarrow H^0\bigl(X,\cO_X(D_\alpha - S_j)\bigr)
    \xrightarrow{\sigma_j\cdot}
    H^0\bigl(X,\cO_X(D_\alpha)\bigr) \longrightarrow
    H^0\bigl(S_j,\cO_{S_j}(D_\alpha)\bigr),
  \]
  we therefore obtain
  \[
    \chi - \varphi(\theta_1,\theta_2,\ldots,\theta_z) = \sigma_j \cdot \chi'
  \]
  for some $\chi' \in H^0(X,\cO_X(D_\alpha-S_j))$.
  Since $D_\alpha - S_j \in \cS$ by $(\ref{lem:cl1261condi})$ and since
  $\deg(\chi') < \deg(\chi)$, by the inductive hypotheses we see that $\chi' \in
  (H^0(Z,\cO_Z))[\cH]$.
  Thus, we have
  \[
    \chi = \sigma_j \cdot \chi' + \varphi(\theta_1,\theta_2,\ldots,\theta_z) \in
    \bigl(H^0(Z,\cO_Z)\bigr)[\cH]
  \]
  as desired.
\end{proof}
\begin{lemma}[cf.\ {\cite[Lemma 6.2]{CL12}}]\label{lem:cl1262}
  Assume Theorem $\text{\ref{thm:cl12a}}_{n-1}$ holds.
  Let $\pi\colon X \to Z$ be a projective morphism of integral
  Noetherian schemes of equal characteristic zero such that $X$ is regular of dimension $n$
  and such that $Z$ is affine
  and excellent with a dualizing complex $\omega_Z^\bullet$.
  Denote by $K_X$ a canonical divisor on \(X\) associated to $\omega_X^\bullet =
  \pi^!\omega_Z^\bullet$.
  Let $S,S_1,S_2,\ldots,S_p$ be distinct prime divisors on $X$ such that
  $(X,S+\sum_{i=1}^p S_i)$ is log regular.
  \par Let
  \[
    V = \sum_{i=1}^p \RR \cdot S_i \subseteq \Div_\RR(X),
  \]
  let $A$ be a $\pi$-ample $\QQ$-divisor on $X$, and let $B_1,B_2,\ldots,B_m \in
  \cE_{S+A}(V)$ be $\QQ$-divisors.
  Set $D_i = K_X+S+A+B_i$.
  Then, the ring
  \[
    \res_S\bigl(R\bigl(X/Z;D_1,D_2,\ldots,D_m\bigr)\bigr)
  \]
  is finitely generated as an $H^0(Z,\cO_Z)$-algebra.
\end{lemma}
\begin{proof}
  Following the proof of \cite[Lemma 6.2]{CL12},
  we first prove the lemma under the additional assumption that the $B_i$ lie in
  the interior of $\cL(V)$, and that the pairs $(S,B_{i\vert S})$ are all
  terminal.
  This part of the proof of \cite[Lemma 6.2]{CL12} applies with the following
  changes:
  \begin{itemize}
    \item In the second paragraph, \cite[Lemma 2.27]{CL12} should be replaced by
      our Lemma \ref{lem:cl12227}.
    \item In the third and fourth paragraphs, \cite[Setup 4.1]{CL12} and
      \cite[Theorem 4.3]{CL12} should be replaced by our Setup \ref{setup:cl1241}
      and Theorem \ref{thm:cl1243}, respectively.
    \item In the fourth paragraph, \cite[Corollary 3.5]{CL12} and \cite[Theorem
      $\textup{A}_{n-1}$]{CL12} should be replaced by
      our Corollary \ref{cor:cl1235} and Theorem
      $\text{\ref{thm:cl12a}}_{n-1}$, respectively.
  \end{itemize}
  \par We now prove the general case of the lemma.
  For every $i$, we choose a $\QQ$-divisor $G_i \in V$ such that $A - G_i$ is
  $\pi$-ample and such that $B_i+G_i$ is in the interior of $\cL(V)$.
  Let $A'$ be a $\pi$-ample $\QQ$-divisor such that every $A-G_i-A'$ is also
  ample.
  We claim that there exists a finite open affine cover $Z = \bigcup_j U_j$ and
  effective $\QQ$-divisors $A_{ij} \sim_\QQ A-G_i-A'$ such that
  setting $X_j = \pi^{-1}(U_j)$, we have the following:
  \begin{enumerate}[label=$(\roman*)$,ref=\roman*]
    \item For every $j$, $\lfloor A_{i\vert X_j} \rfloor = 0$;
    \item For every $j$, the pair $(X,S+\sum_{i=1}^p S_i + \sum_{i=1}^m
      A_{ij})$ is log regular along $X_j$; and
    \item For every $j$, the support of $\sum_{i=1}^m A_{ij\vert X_j}$ does not
      contain any of the divisors $S_{\vert X_j},S_{1 \vert X_j},\ldots,$ $S_{p
      \vert X_j}$.
  \end{enumerate}
  We induce on $m$.
  The case $m=0$ follows by assumption.
  Now suppose $m > 0$.
  By the inductive hypothesis, there exists a finite affine open cover
  $Z = \bigcup_k V_k$ and $\pi$-ample $\QQ$-divisors $B_{ik} \sim_\QQ A-G_1-A'$
  for $i \in \{1,2,\ldots,m-1\}$ such that for every $k$, setting $X_k =
  \pi^{-1}(V_k)$, we have $\lfloor A_{i\vert X_k} \rfloor = 0$, the pair
  $(X,S+\sum_{i=1}^p S_i + \sum_{i=1}^{m-1} B_{ik})$ is log regular along $X_k$,
  and the support of $\sum_{i=1}^{m-1} B_{ik\vert X_j}$ does not
  contain any of the divisors $S_{\vert X_k},S_{1 \vert X_k},\ldots,S_{p
  \vert X_k}$.
  We can now apply Corollary \ref{cor:bertinionopencover} to the strata of the
  pair $(X,S+\sum_{i=1}^p S_i + \sum_{i=1}^{m-1} B_{ik})$ to construct a finite
  affine open cover $Z = \bigcup_j U_j$ refining $Z = \bigcup_k V_k$ and
  effective $\QQ$-divisors $A_{mj} \sim_\QQ A-G_i-A'$ satisfying the
  requirements above.
  Finally, by \cite[Corollaire 6.3.9]{EGAInew} and flat base change, to show that
  $\res_S(R(X/Z;D_1,D_2,\ldots,D_m))$ is finitely generated as an
  $H^0(Z,\cO_Z)$-algebra, it suffices to show that
  \[
    \res_{S \vert X_j}\bigl(R\bigl(X_j/Z_j;D_{1 \vert X_j},D_{2 \vert X_j},
    \ldots,D_{m \vert X_j}\bigr)\bigr)
  \]
  is finitely generated as an $H^0(U_j,\cO_{U_j})$-algebra for every $j$.
  Replacing $\pi\colon X \to Z$ by $\pi_{\vert X_j}\colon X_j \to U_j$, we may
  assume that the open affine cover $Z = \bigcup_j U_j$ has only one member.
  We now proceed as in the proof of \cite[Lemma 6.2]{CL12} with the following
  changes in the last paragraph:
  \begin{itemize}
    \item In the first line, \cite[Lemma 2.2]{CL12} holds for the pair
      $(X,S+B)$ since log resolutions exist \cite[Theorem 1.1.6]{Tem18}, and the
      proof of \cite[Proposition 2.36(1)]{KM98} works in this setting as well.
    \item Later, the $\QQ$-divisor $A^\circ$ is $\pi$-ample.
    \item In the last line, \cite[Corollary 2.26]{CL12} should be replaced by our
      Lemma \ref{lem:cl12226}.\qedhere
  \end{itemize}
\end{proof}
\begin{theorem}[cf.\ {\cite[Theorem 6.3]{CL12}}]\label{thm:proofofcl12a}
  Theorem $\text{\ref{thm:cl12a}}_{n-1}$ implies Theorem
  $\text{\ref{thm:cl12a}}_{n}$.
  Thus, Theorem \ref{thm:cl12a} holds.
\end{theorem}
\begin{proof}
  The proof of \cite[Theorem 6.3]{CL12} applies with the following changes:
  \begin{itemize}
    \item In (69), the words ``log smooth'' should be replaced by
      ``log regular.''
    \item Throughout, the references to \cite[Corollary 2.26]{CL12} and \cite[Lemma
      2.27]{CL12} should be replaced by references to our Lemmas
      \ref{lem:cl12226} and \ref{lem:cl12227}, respectively.
    \item After $(iii)$ on p.\ 2463, \cite[Lemma 6.1]{CL12} should
      be replaced by our Lemma \ref{lem:cl1261}.
    \item At the bottom of p.\ 2464, \cite[Lemma 6.2]{CL12} should be replaced
      by our Lemma \ref{lem:cl1262}.
    \item In the second paragraph on p.\ 2465, \cite[Theorem $\text{B}_n$]{CL12}
      should be replaced by our Theorem \ref{thm:cl12b}, which we have already
      shown holds when $\dim(X)$ is arbitrary.
    \item In the last paragraph, the log resolution $f\colon Y \to X$ exists by
      \cite[Theorem 1.1.6]{Tem18}.
      Later, we choose $A^\circ = f^*A-H$ to be $(\pi \circ f)$-ample and
      $C_i^\circ = C_i+H$ such that $\lfloor C_i^\circ \rfloor = 0$ for all $i$,
      where the choice of $H$ is exactly like the choice of $G$ in the proof of
      Theorem \ref{thm:cl1234}, which works since Temkin's log resolutions are
      constructed by blowing up regular centers (see also \cite[Claim
      8.1]{Kol21qfac}).
  \end{itemize}
  \par Finally, to show Theorem \ref{thm:cl12a}, we need to prove the base case
  when $\dim(X) = 0$.
  Let $m$ be an integer such that $mD_1,mD_2,\ldots,mD_k$ are integral.
  Then, $R(X/Z;mD_1,mD_2,\ldots,mD_k)$ is finitely generated over
  $H^0(Z,\cO_Z)$, since it is isomorphic to a polynomial ring with variables
  $x_1,x_2,\ldots,x_k$ corresponding to $mD_1,mD_2,\ldots,mD_k$ in the direct
  sum decomposition in Definition \ref{def:cl12222}.
  Finally, $R(X/Z;D_1,D_2,\ldots,D_k)$ contains $R(X/Z;mD_1,mD_2,\ldots,mD_k)$
  as a Veronese subring of finite index, and hence $R(X/Z;D_1,D_2,\ldots,D_k)$
  is finitely generated by \cite[Proposition 1.2.2]{ADHL15}.
\end{proof}
 \section{Finite generation for klt pairs}\label{sect:cl13s3}

In this section, we prove finite generation of relative adjoint rings for klt
pairs, adapting corresponding results in \cite[\S3]{CL13} to our setting.
We also adapt other results from \cite[\S3]{CL13}, which
will be used in the proofs of other theorems but are of independent interest as
well.
In contrast to previous sections in Part \ref{part:fingen}, where we worked with
log regular pairs, we work with normal schemes and klt pairs.
We will frequently use the continuity of kltness (Lemma
\ref{lem:kltFacts}$(\ref{lem:kltContinuous})$) in this and the following sections.
We sometimes do not explicitly refer to the lemma and just say ``by continuity.''
We note that log resolutions exist for quasi-excellent schemes of equal characteristic
zero by \cite[Theorem
2.3.6 and Lemma 4.2.4]{Tem08}, and thus the lemma is applicable.
\begin{lemma}[{cf.\ \cite[Lemma 1]{CL13}}]\label{lem:cl1331}
  Let $\pi\colon X \to Z$ be a projective morphism of integral Noetherian schemes with $Z$ affine.
  Let $D_1, D_2,\ldots, D_\ell$ be $\QQ$-Cartier divisors on $X$. 
  The ring
  \[
    R = R\bigl(X/Z; D_1,D_2, \ldots, D_\ell\bigr)
  \]
  is finitely generated over $H^0(Z,\cO_Z)$ if and only
if one of its Veronese subrings of finite index is finitely generated over $H^0(Z,\cO_Z)$. In particular,
if $D'_i \sim_{\mathbf Q} e_iD_i$ for some $e_i\in\QQ_{>0}$
and if $R$ is finitely generated over $H^0(Z,\cO_Z)$, 
then the ring $R' = R(X/Z; D'_1,D'_2,\ldots,  D'_\ell)$ is
finitely generated over $H^0(Z,\cO_Z)$.
\end{lemma}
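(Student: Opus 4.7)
The plan is to reduce both assertions to two standard propositions about finitely generated multigraded algebras from \cite{ADHL15}, which have already been invoked elsewhere in this paper (for instance in Lemmas \ref{lem:cl12226} and \ref{lem:cl12227}). Specifically, \cite[Proposition 1.2.2]{ADHL15} says that a Veronese subring of finite index of a finitely generated multigraded ring is finitely generated, and \cite[Proposition 1.2.4]{ADHL15} gives the converse: if some Veronese subring of finite index is finitely generated, then so is the ambient ring. The first assertion of the lemma is simply the combination of these two facts, with no input specific to the geometric setup.

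For the ``in particular'' statement, the strategy is to exhibit a common Veronese subring of $R$ and $R'$. First I would choose a single positive integer $N$ with the following two properties: $Ne_i \in \NN$ for every $i$, and $ND_i' \sim Ne_iD_i$ as an equality in $\Div(X)$ (as opposed to merely $\sim_\QQ$) for every $i$. Such an $N$ exists by clearing denominators in the hypothesis $D_i' \sim_\QQ e_iD_i$ and taking a common multiple. Then for each $\ell$-tuple $(m_1,m_2,\ldots,m_\ell)\in \NN^\ell$, integrality of $Nm_ie_i$ and linear equivalence yield a canonical isomorphism
\[
  \cO_X\biggl(\sum_{i=1}^{\ell} Nm_iD_i'\biggr) \cong
  \cO_X\biggl(\sum_{i=1}^{\ell} Nm_ie_iD_i\biggr),
\]
which on global sections assembles into an isomorphism of $H^0(Z,\cO_Z)$-algebras between the Veronese subring of $R$ corresponding to the scaling vector $(Ne_1,Ne_2,\ldots,Ne_\ell)$ and the Veronese subring of $R'$ corresponding to the scaling vector $(N,N,\ldots,N)$.

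Given this identification, the conclusion is formal: if $R$ is finitely generated over $H^0(Z,\cO_Z)$, then the first Veronese is finitely generated by \cite[Proposition 1.2.2]{ADHL15}, hence so is the second Veronese, and therefore $R'$ is finitely generated by \cite[Proposition 1.2.4]{ADHL15}. There is no real obstacle here; the only point requiring any care is the simultaneous choice of $N$ that both clears the denominators of all the $e_i$ and witnesses each of the finitely many $\QQ$-linear equivalences as an honest linear equivalence. Everything else is a direct appeal to the cited results on multigraded algebras.
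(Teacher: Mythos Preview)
Your proof is correct and takes essentially the same approach as the paper: both reduce the principal statement to \cite[Propositions 1.2.2 and 1.2.4]{ADHL15}, and both deduce the ``in particular'' statement by exhibiting a common Veronese subring of finite index in $R$ and $R'$. The paper's proof is simply terser, stating only that such isomorphic Veronese subrings exist, whereas you spell out the construction via the integer $N$.
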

\begin{proof}
If $D'_i \sim_{\mathbf Q} e_iD_i$, then  $R'$ and $R$ have isomorphic Veronese
subrings of finite index, hence the ``in particular'' statement. The principal
statement follows from \cite[Propositions 1.2.2 and 1.2.4]{ADHL15}.
\end{proof}

We also notice the following fact.
\begin{lemma}\label{lem:BigDeltaIsDeltaPlusH}
Let $\pi\colon X \to Z$ be a projective morphism of integral Noetherian
schemes of equal characteristic zero, such that $X$ is normal and such that $Z$ is affine, excellent, and has a dualizing complex $\omega_Z^\bullet$.
  Denote by $K_X$ a canonical divisor on $X$ associated to $\omega_X^\bullet =
  \pi^!\omega_Z^\bullet$.

Let $\Delta$ be an effective $\mathbf Q$-Weil divisor on $X$ such that
$K_X+\Delta$ is $\QQ$-Cartier and $(X,\Delta)$ is klt.
Assume that there exists a rational number $c\in (-\infty,1]$ such that $cK_X+\Delta$ is $\QQ$-Cartier and $\pi$-big. 
Then there exists a rational number $e>0$, an effective $\mathbf Q$-Weil divisor $\Gamma$ on $X$ such that  
$(X,\Gamma)$ is klt, and a $\pi$-ample $\QQ$-Cartier divisor $A$ such that $K_X+\Delta\sim_{\QQ}e(K_X+\Gamma+A)$.
\end{lemma}
\begin{proof}
  By Kodaira's lemma (Corollary \ref{lem:kodairachar}), there exist
a $\pi$-ample $\QQ$-Cartier divisor $H$ 
and an effective $\QQ$-Weil divisor $E$ 
such that $cK_X+\Delta \sim_\QQ H+E$.
For a sufficiently
small $\varepsilon\in \QQ_{>0}$, we have
\begin{align*}
K_X+\Delta&\sim_\QQ (1-c)K_X+(1-\varepsilon)(cK_X+\Delta)+\varepsilon(H+E)\\
&=(1-c\varepsilon )K_X+(1-\varepsilon)\Delta+\varepsilon E+\varepsilon H\\
&=(1-c\varepsilon)\left(K_X+\frac{1-\varepsilon}{1-c\varepsilon}\Delta+\frac{\varepsilon}{1-c\varepsilon} E+\frac{\varepsilon}{1-c\varepsilon} H\right).
\end{align*}
By Lemma \ref{lem:kltFacts}$(\ref{lem:kltContinuousConvex})$ when $c<1$ (with $\Delta'$ there defined to be $\frac{1}{1-c}E$) and Lemma \ref{lem:kltFacts}$(\ref{lem:kltContinuous})$ when $c=1$, for sufficiently
small $\varepsilon\in \QQ_{>0}$, setting
$\Gamma=\frac{1-\varepsilon}{1-c\varepsilon}\Delta+\frac{\varepsilon}{1-c\varepsilon} E$,
the pair $(X,\Gamma)$ is klt. 
We may thus fix such an $\varepsilon$ and set $e=1-c\varepsilon,A=\frac{\varepsilon}{1-c\varepsilon} H$ to conclude.
\end{proof}
\begin{theorem}[{cf.\ \cite[Theorem 2]{CL13}}]\label{thm:cl1332}
Let $\pi\colon X \to Z$ be a projective morphism of integral Noetherian
schemes of equal characteristic zero, such that $X$ is normal and such that $Z$ is
affine and excellent and has a dualizing complex $\omega_Z^\bullet$.
  Denote by $K_X$ a canonical divisor on $X$ associated to $\omega_X^\bullet =
  \pi^!\omega_Z^\bullet$.
  
Let $\Delta_i$ be effective $\mathbf Q$-Weil divisors on $X$ for $i \in
\{1,2,\ldots,\ell\}$ such that $K_X+\Delta_i$ is $\QQ$-Cartier and $(X,\Delta_i)$ is klt for each $i$. 
Let $A_i$ be $\pi$-nef $\QQ$-Cartier divisors for $i \in
\{1,2,\ldots,\ell\}$.
Assume that for each $i$, either $A_i$ is $\pi$-ample, or that there exists a rational number $c_i\in (-\infty,1]$ such that $c_iK_X+\Delta_i$ is $\QQ$-Cartier and $\pi$-big.
Then 
    the relative adjoint ring  
      \[
      R\bigl(X/Z;K_X+\Delta_1+A_1,K_X+\Delta_2+A_2,\ldots,K_X+\Delta_\ell+A_\ell\bigr)
      \]
    is a finitely generated $H^0(Z,\cO_Z)$-algebra.
\end{theorem}
\begin{proof}
If there exists a rational number $c_i\in (-\infty,1]$ such that $c_iK_X+\Delta_i$ is $\QQ$-Cartier and $\pi$-big, 
then by Lemma \ref{lem:BigDeltaIsDeltaPlusH} we may write $K_X+\Delta_i\sim_{\QQ}e_i(K_X+\Theta_i+H_i)$ where $e_i\in\QQ_{>0},$
$H_i$ is $\QQ$-Cartier and $\pi$-ample, 
and $\Theta_i$ is effective with $(X,\Theta_i)$ klt.
Thus $K_X+\Delta_i+A_i\sim_{\QQ}e_i(K_X+\Theta_i+H_i+\frac{1}{e_i}A_i)$ and $H_i+\frac{1}{e_i}A_i$ is $\pi$-ample.
By Lemma \ref{lem:cl1331} we see that we may assume $A_i$ 
$\pi$-ample for all $i$.

Let $f\colon Y\to X$ be a log resolution of $(X,\sum_i\Delta_i)$, which exists
by \cite[Theorem 1.1.6]{Tem18}. 
Since Temkin's log resolutions are constructed by blowing up regular centers, we may assume that there exists an $f$-exceptional effective Cartier divisor $F$ such that $-F$ is $f$-ample (see also \cite[Claim 8.1]{Kol21qfac}).
Take a $\pi$-ample $\QQ$-Cartier divisor $A$ on $X$ such that $A_i-A$ are all $\pi$-ample.
Write 
\[
f^*(K_X+\Delta_i)+E_i\sim_{\QQ} K_Y+\Gamma_i
\]
where $E_i\geq 0$ is $f$-exceptional, all coefficients of $\Gamma_i$ are in $(0,1)$, and $E_i$ and $\Gamma_i$ do not share common components.
This is possible since $\Delta_i\geq 0$ and $(X,\Delta_i)$ is klt. 
By Lemma \ref{lem:cl12226}, it suffices to show
\[
R=R\bigl(Y/Z;K_Y+\Gamma_1+f^*A_1,K_Y+\Gamma_2+f^*A_2,\ldots,K_Y+\Gamma_\ell+f^*A_\ell\bigr)
\]
is finitely generated.

Let $r\in\QQ_{>0}$ be sufficiently small such that $H\coloneqq f^*A-rF$ is $(\pi\circ f)$-ample and such that all coefficients of $\Gamma'_i\coloneqq \Gamma_i+rF$ are less than 1. 
Let $H_i=f^*(A_i-A)$, which is $(\pi\circ f)$-semi-ample by our choice.
Then, we have
\[
R=R\bigl(Y/Z;K_Y+\Gamma'_1+H_1+H,K_Y+\Gamma'_2+H_2+H,\ldots,K_Y+\Gamma'_\ell+H_\ell+H\bigr).
\]
Let $q$ be a positive integer such that every $qH_i$ is integral and $(\pi\circ f)$-generated, and such that all coefficients of $\Gamma'_i$ are less than $1-\frac{1}{q}$.
By Corollary \ref{cor:bertinionopencover}, after replacing $Z$ by the scheme
theoretic image of $\pi$ (thus making it integral) and passing to an affine open
cover (allowed by \cite[Corollaire 6.3.9]{EGAInew} and flat base change), we may assume that there exists $H_i'\in \lvert qH_i \rvert$ such that
$H_i'$ is regular and such that $\sum_i H_i'+\sum_i\Gamma_i$ has simple normal
crossings support.
Since all coefficients of $\Gamma'_i$ are less than $1-\frac{1}{q}$, all coefficients of $\Gamma'_i+\frac{1}{q}H_i'$ are less than 1, regardless of possible shared components.
Therefore, the relative adjoint ring
\[
R\Bigl(Y/Z;K_Y+\Gamma'_1+\frac{1}{q}H_1'+H,K_Y+\Gamma'_2+\frac{1}{q}H_2'+H,\ldots,K_Y+\Gamma'_\ell+\frac{1}{q}H_\ell'+H\Bigr)
\]
is finitely generated by Theorem \ref{thm:cl12a}.
Since $\frac{1}{q}H_i'\sim_{\QQ}H_i$, Lemma \ref{lem:cl1331} gives the finite generation of $R$.
\end{proof}
We therefore obtain Theorem \ref{thm:introfinitegen} for algebraic spaces where
the base is no longer affine.
\begin{theorem}\label{thm:finitegenerationalgspaces}
  Let $\pi\colon X \to Z$ be a proper morphism of integral
  quasi-excellent locally Noetherian algebraic spaces of equal characteristic
  zero over a scheme $S$.
  Suppose that $X$ is normal and that $Z$ admits a dualizing complex
  $\omega_Z^\bullet$.
  Denote by $K_X$ a canonical divisor on $X$ associated to $\omega_X^\bullet =
  \pi^!\omega_Z^\bullet$.
  \par 
Let $\Delta_i$ be effective $\mathbf Q$-Weil divisors on $X$ for $i \in
\{1,2,\ldots,\ell\}$ such that $K_X+\Delta_i$ is $\QQ$-Cartier and $(X,\Delta_i)$ is klt for each $i$. 
Let $A_i$ be $\pi$-nef $\QQ$-invertible sheaves for $i \in
\{1,2,\ldots,\ell\}$.
Assume that for each $i$, either $A_i$ is $\pi$-ample, or that there exists a rational number $c_i\in (-\infty,1]$ such that $c_iK_X+\Delta_i$ is $\QQ$-Cartier and $\pi$-big.
Then, the relative adjoint ring  
\[
  \bigoplus_{(m_1,m_2,\ldots,m_\ell) \in \NN^\ell} \pi_*\cO_X\Biggl(
  \Biggl\lfloor \sum_{i=1}^\ell m_i(K_X+\Delta_i+A_i)\Biggr\rfloor \Biggr)
\]
is an $\cO_Z$-algebra locally of finite type.
\end{theorem}
\begin{proof}
  By definition and flat base change
  \cite[\href{https://stacks.math.columbia.edu/tag/073K}{Tag
  073K}]{stacks-project}, we can pullback along \'etale morphisms from affine schemes
  $\Spec(R) \to Z$ to reduce to the case proved in Theorem \ref{thm:cl1332}.
\end{proof}

For later use, we prove some other consequences of finite generation, adapting
the proofs from \cite{CL13} for complex varieties.
See Definition \ref{def:cl12222} for the definition of the support \(\Supp(R)\)
of a relative adjoint ring that appears in the statement below.
\begin{theorem}[{cf.\ \cite[Theorem 3]{CL13}}]\label{thm:cl1335}
Let $\pi\colon X \to Z$ be a projective morphism of integral Noetherian schemes
such that $Z$ is affine.

Let $D_1,D_2,\ldots, D_\ell$ be $\QQ$-Cartier divisors on $X$. Assume that the ring
\[
  R = R\bigl(X/Z; D_1,D_2,\ldots, D_\ell\bigr)
\]
is finitely generated over $H^0(Z,\cO_Z)$,
and let
\[
  \begin{tikzcd}[row sep=0,column sep=1.475em]
    \mathllap{\mathbf{D}\colon}\RR^\ell \rar & \Div_\RR(X)\\
    (\lambda_1,\lambda_2,\ldots, \lambda_\ell) \rar[mapsto]
    & \displaystyle \sum_{i=1}^\ell \lambda_iD_i
  \end{tikzcd}
\]
be the \textsl{tautological map} from \emph{\cite[p.\ 620]{CL13}}. 
We then have the following:
\begin{enumerate}[label=$(\roman*)$,ref=\roman*]
    \item\label{thm:cl13351} The support $\Supp(R)$
      of $R$ is a rational polyhedral cone.
    \item\label{thm:cl13352} Suppose that $\Supp(R)$ contains a $\pi$-big
      $\RR$-Cartier divisor. If $D \in
      \sum_i {\RR}_{\ge0}D_i$ is $\pi$-pseudo effective, then $D \in\Supp(R)$.

    \item\label{thm:cl13353}
      There is a finite rational polyhedral subdivision $\Supp(R)=\bigsqcup_i \cC_i $ such that $o_v$ is a linear function on $\cC_i$
for every geometric valuation $v$ of $X$. 
Furthermore,
there is a coarsest subdivision with this property in the sense that, if $i$ and
$j$ are distinct, there is at least one geometric valuation $v$ of $X$ such that
(the linear extensions of) $(o_v)_{|\cC_i}$ and $(o_v)_{|\cC_j}$ are different.
    \item\label{thm:cl13354}
    There is a finite index subgroup $L \subseteq \ZZ^\ell$
such that for all $\mathbf{n} \in \NN^\ell\cap L$, if $\mathbf{D}(\mathbf{n}) \in \Supp(R)$, then
\[
o_v\bigl(\mathbf{D}(\mathbf{n})\bigr)=\inf_{E\in\lvert\mathbf{D}(\mathbf{n})\rvert}\big\{\mult_v(E)\bigr\}
\]
for all geometric valuations $v$ of $X$.
\end{enumerate}
\end{theorem}
\begin{proof}
The proof of \cite[Theorem 3]{CL13} carries verbatim here, noting that the
external reference \cite[Proposition 4.7]{ELMNP06} holds for arbitrary Noetherian schemes.
\end{proof}

In the next result, for the same reason as the case of Theorem \ref{thm:cl12b}, we do not need to assume from the outset that $Z$ is of equal characteristic zero.

\begin{corollary}[{cf.\ \cite[Corollary 1]{CL13}}]\label{cor:cl1336}
  Let $\pi\colon X \to Z$ be a projective morphism of integral Noetherian
schemes, such that $X$ is normal and such that $Z$ is
affine and has a dualizing complex $\omega_Z^\bullet$.
  Denote by $K_X$ a canonical divisor on $X$ associated to $\omega_X^\bullet =
  \pi^!\omega_Z^\bullet$.
  Assume that the function field of $X$ has characateristic zero.

Let $\Delta$ be an effective $\QQ$-Weil divisor on $X$ such that $K_X+\Delta$ is $\QQ$-Cartier and $(X,\Delta)$ is klt. Let $A$ be a $\pi$-nef $\QQ$-Cartier divisor on $X$. 

Assume that either $A$ is $\pi$-ample or $\Delta$ is $\pi$-big, and assume that $K_X+\Delta+A$ is $\pi$-pseudoeffective.
Then $\lvert K_X+\Delta+A\rvert_{\QQ}\neq\emptyset$.

\end{corollary}
\begin{proof}
We may assume $\pi$ surjective.
Let $\eta$ be the generic point of $Z$.
We know (Definition \ref{def:fbig}) that $K_X+\Delta+A+H$ is $\pi$-big for all
$\pi$-ample Cartier divisors $H$ on $X$.
Since there exists such an $H$, it follows that $K_{X_\eta}+\Delta_{|X_\eta}+A_{|X_\eta}+H$
is $\pi_{|X_\eta}$-big for all
$\pi_{|X_\eta}$-ample Cartier divisors $H$ on $X_\eta$,
so $K_{X_\eta}+\Delta_{|X_\eta}+A_{|X_\eta}$ is $\pi_{|X_\eta}$-pseudoeffective.
By Corollary \ref{cor:EffIffEffAtGenFiber}, it suffices to show $|K_{X_\eta}+\Delta_{|X_\eta}+A_{|X_\eta}|_{\QQ}\neq\emptyset$,
so we may replace $Z$ by the spectrum of its function field and assume that $Z$ is an excellent scheme of equal characteristic zero.

Let $H$ be a $\pi$-ample Cartier divisor on $X$.
  By Theorem \ref{thm:cl1332}, 
the adjoint ring $R=R(X/Z;K_X+\Delta+A,K_X+\Delta+A+H)$ is finitely generated over $H^0(Z,\cO_Z)$. 
Its support contains the $\pi$-big $\QQ$-Cartier divisor $K_X+\Delta+A+H$.
Thus, Theorem \ref{thm:cl1335}$(\ref{thm:cl13352})$ applies and shows  
$K_X+\Delta+A\in\Supp(R)$, i.e., $\lvert K_X+\Delta+A \rvert_{\RR}\neq\emptyset$.
By Lemma \ref{lem:RatlIsDense} we are done.
\end{proof}
\begin{lemma}[{cf.\ \cite[Lemma 3]{CL13}}]\label{lem:cl1337}
Let $\pi\colon X \to Z$ be a projective morphism of integral Noetherian schemes with $Z$ affine.
Let $D$ be a $\QQ$-Cartier divisor on $X$.
We then have the following:
\begin{enumerate}[label=$(\roman*)$,ref=\roman*]
    \item\label{lem:cl13371} If $D$ is $\pi$-semi-ample, then $o_v(D)=0$ for all geometric valuations $v$ of $X$.
    \item\label{lem:cl13372} Assume that there exist $\QQ$-Cartier divisors
      $D_1,D_2,\ldots, D_\ell$ on $X$ such that the ring
      \[
        R = R\bigl(X/Z; D_1,D_2,\ldots, D_\ell\bigr)
      \]
      is finitely generated over $H^0(Z,\cO_Z)$, and suppose $D\in \Supp(R)$.
    If $o_v(D)=0$ for all geometric valuations $v$ of $X$, then $D$ is $\pi$-semi-ample.
\end{enumerate}

\end{lemma}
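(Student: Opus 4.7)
The plan is to handle part $(\ref{lem:cl13371})$ by a direct stalk-level argument and part $(\ref{lem:cl13372})$ by principalizing the base ideal of a carefully chosen integral multiple of $D$ so that Theorem \ref{thm:cl1335}$(\ref{thm:cl13354})$ can be applied.

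For $(\ref{lem:cl13371})$, choose $m>0$ with $mD$ being $\pi$-generated. Since $Z$ is affine, the surjection $\pi^*\pi_*\cO_X(mD)\twoheadrightarrow\cO_X(mD)$ is witnessed by global sections on $X$. Given a geometric valuation $v$ defined by a prime divisor $\Gamma$ on a proper birational morphism $f\colon Y\to X$, pick $\sigma\in H^0(X,\cO_X(mD))$ not vanishing at $x\coloneqq f(\eta_\Gamma)$; the inclusion $\mathfrak{m}_{X,x}\cO_{Y,\eta_\Gamma}\subseteq\mathfrak{m}_{Y,\eta_\Gamma}$ then implies that $f^*\sigma$ does not vanish at $\eta_\Gamma$ either, so $v(\prdiv(\sigma))=0$. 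This gives $o_v(mD)=0$, and hence $o_v(D)=0$ by homogeneity.

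For $(\ref{lem:cl13372})$, we first reduce to the case where $D$ is represented by a lattice point of $L$, with $L$ the sublattice from Theorem \ref{thm:cl1335}$(\ref{thm:cl13354})$. Since $\Supp(R)$ is a rational polyhedral cone (Theorem \ref{thm:cl1335}$(\ref{thm:cl13351})$) and $D$ is $\QQ$-Cartier, the fiber over $D$ of the $\QQ$-linear map $\QQ^\ell\to\Div_\QQ(X)$ sending $(n_i)$ to $\sum n_iD_i$ is a nonempty rational affine subspace of $\RR^\ell$, hence has a rational non-negative point. Multiplying by a suitable positive integer and invoking Lemma \ref{lem:cl1331}, we may thus assume $D=\sum n_iD_i$ for some $(n_i)\in\NN^\ell\cap L$, so that Theorem \ref{thm:cl1335}$(\ref{thm:cl13354})$ gives
\[
  o_v(D)=\inf_{E\in\lvert D\rvert}\mathrm{mult}_v(E)
\]
for every geometric valuation $v$ of $X$. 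Next, let $f\colon Y\to X$ be a proper birational morphism principalizing the base ideal $\mathfrak{b}(\lvert D\rvert)$, for example the normalization of $\mathrm{Bl}_{\mathfrak{b}(\lvert D\rvert)}X$. Then $f^*\mathfrak{b}(\lvert D\rvert)=\cO_Y(-F)$ for an effective Cartier divisor $F$, and $\cO_Y(f^*D-F)$ is $(\pi\circ f)$-generated. Applying the displayed equation to the valuation $v_\Gamma$ of each prime divisor $\Gamma\subseteq Y$ yields $\mathrm{mult}_\Gamma F=o_{v_\Gamma}(D)=0$, so $F=0$ and $\cO_Y(f^*D)$ itself is $(\pi\circ f)$-generated.

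To descend generation from $Y$ back to $X$, observe that $X$ is necessarily normal (which is implicit in the existence of geometric valuations, per Definition \ref{def:GeomVal}), so $f_*\cO_Y=\cO_X$ and the projection formula identifies $(\pi\circ f)_*\cO_Y(f^*D)=\pi_*\cO_X(D)$. Then the same stalk-level argument used in $(\ref{lem:cl13371})$ transfers generation across $f$: for $x\in X$ and any $y\in f^{-1}(x)$, a global section whose image generates $\cO_Y(f^*D)_y\otimes\kappa(y)$ must already generate $\cO_X(D)_x\otimes\kappa(x)$, since $\mathfrak{m}_{X,x}\cO_{Y,y}\subseteq\mathfrak{m}_{Y,y}$. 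The principal subtlety will be the lattice reduction, which combines the rational polyhedrality of $\Supp(R)$ with the $\QQ$-Cartier hypothesis on $D$; the remainder is a manipulation of base ideals governed by Theorem \ref{thm:cl1335}.
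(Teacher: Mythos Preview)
Your argument for $(\ref{lem:cl13371})$ matches the paper's. For $(\ref{lem:cl13372})$, your lattice reduction is more careful than the paper's treatment (which simply asserts that a suitable multiple $pD$ can be chosen so that Theorem \ref{thm:cl1335}$(\ref{thm:cl13354})$ applies), though your citation of Lemma \ref{lem:cl1331} is superfluous: replacing $D$ by $pD$ requires only the homogeneity of $o_v$ and the fact that $\Supp(R)$ is a cone.

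After the reduction, your route diverges from the paper's. Having established $o_v(D)=\inf_{E\in\lvert D\rvert}\mult_v(E)$ for all geometric $v$, the paper argues directly: every closed point $x\in X$ is the center of some geometric valuation $v$, and since any $E\in\lvert D\rvert$ through $x$ satisfies $\mult_v(E)\ge 1$, the hypothesis forces $x\notin\Bs\lvert D\rvert$, so $\lvert D\rvert$ is basepoint-free. You instead principalize the base ideal on a normal birational model $Y$, show the fixed divisor $F$ vanishes by testing against the divisorial valuations of $Y$, and descend generation along $f$ via a stalk argument. Both are valid; the paper's is shorter because it avoids the blowup entirely, while yours makes the geometry of the base locus explicit. (Once $F=0$ you could also conclude immediately that $\mathfrak{b}(\lvert D\rvert)=\cO_X$, since $f$ is surjective and local homomorphisms pull back proper ideals to proper ideals---this is equivalent to your section-level descent.) Your remark that normality of $X$ is implicit via Definition \ref{def:GeomVal} is a fair observation the paper leaves tacit.
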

\begin{proof}
Assume $D$ is $\pi$-semi-ample. 
Then, $\sL\coloneqq\cO_X(pD)$ is a $\pi$-generated line bundle for some $p>0$. 
Since $Z$ is affine, for each geometric valuation $v$ of $X$, there exists a section $s$ of $\sL$ that avoids the center of $v$. 
Then $\frac{1}{p}\prdiv(s)\in \lvert D\rvert_{\QQ}$ has order zero with respect
to $v$, and thus $o_v(D)=0$. 

Now suppose the assumptions in $(\ref{lem:cl13372})$ hold and suppose $o_v(D)=0$ for all geometric valuations $v$ on $X$. 
By Theorem \ref{thm:cl1335}$(\ref{thm:cl13354})$, there exists a positive
integer $p$ such that $pD$ Cartier and such that
\[
  o_v(pD)=\inf_{E\in \lvert pD\rvert}\bigl\{\mult_v(E)\bigr\}
\]
for all geometric valuations $v$ on $X$.
Since $o_v(pD)=p\cdot o_v(D)=0$, we see that the center of $v$ is not in
$\Bs\lvert pD\rvert$.
Since each closed point of $X$ is the center of a geometric valuation (unless
$\dim(X)=0$, in which case the result is trivially true), we see that $\Bs\lvert
pD\rvert=\emptyset$ and hence $pD$ is $\pi$-generated.
\end{proof}

\begin{corollary}[{cf.\ \cite[Corollary 2]{CL13}}]\label{cor:cl1338}
  Let $\pi\colon X \to Z$ be a projective morphism of integral Noetherian
schemes of equal characteristic zero, such that $X$ is normal and such that $Z$ is excellent and has a dualizing complex $\omega_Z^\bullet$.
  Denote by $K_X$ a canonical divisor on $X$ associated to $\omega_X^\bullet =
  \pi^!\omega_Z^\bullet$.

Let $\Delta$ be an effective $\QQ$-Weil divisor on $X$ such that $K_X+\Delta$ is $\QQ$-Cartier and $(X,\Delta)$ is klt. Let $A$ be a $\pi$-nef $\QQ$-Cartier divisor on $X$.

Assume that either $A$ is $\pi$-ample or $\Delta$ is $\pi$-big.
If $K_X+\Delta+A$ is $\pi$-nef, then it is $\pi$-semi-ample.
\end{corollary}
\begin{proof}
Being $\pi$-semi-ample is local on the base, so we may assume $Z$ affine.

Let $H$ be a $\pi$-ample Cartier divisor on $X$.
  By Theorem \ref{thm:cl1332}, the adjoint ring $R=R(X/Z;K_X+\Delta+A,K_X+\Delta+A+H)$ is finitely generated over $H^0(Z,\cO_Z)$. 
  By Corollary \ref{cor:cl1336}, we have $\lvert
  K_X+\Delta+A\rvert_{\QQ}\neq\emptyset$, and hence $\lvert K_X+\Delta+A+H\rvert_{\QQ}\neq\emptyset$.
Therefore 
\[
  \Supp(R)\supseteq \RR_{\geq 0}\cdot(K_X+\Delta+A)+\RR_{\geq 0}\cdot(K_X+\Delta+A+H).
\]
Since $K_X+\Delta+A$ is $\pi$-nef, we see $K_X+\Delta+A+\varepsilon H$ is $\pi$-ample for all $\varepsilon\in \QQ_{>0}$. 
Therefore, for each geometric valuation $v$ of $X$ and each $\varepsilon\in \QQ_{>0}$, we have $o_v(K_X+\Delta+A+\varepsilon H)=0$. 
Since $o_v$ is continuous on $\Supp(R)$ by Theorem \ref{thm:cl1335}$(\ref{thm:cl13353})$, we see that $o_v(K_X+\Delta+A)=0$ as well. 
By Lemma \ref{lem:cl1337}$(\ref{lem:cl13372})$, we conclude that $K_X+\Delta+A$ is $\pi$-semi-ample.
\end{proof}

\section{Rationality, Cone, and Contraction theorems
revisited}\label{sect:fundrevisit}
We now prove the rationality, cone, and contraction theorems, modeled after
Kawamata's reformulation \cite{Kaw11} of the statements that appear
in \cite{KMM87}.\medskip
\par We start with the following preliminary result.
\begin{lemma}[{cf.\ \cite[Corollary 3]{CL13}}]\label{lem:cl1339}
  Let $\pi\colon X \to Z$ be a projective morphism of integral Noetherian schemes with $Z$ affine.
Let $D_1,D_2,\ldots, D_\ell$ be $\QQ$-Cartier divisors on $X$. 
Let
\[
  \varphi\colon \sum_{i=1}^\ell \RR\cdot D_i\longrightarrow N^1(X/Z)_{\RR}
\]
be the natural projection map. 
Assume that the ring $R = R(X/Z; D_1,D_2,\ldots, D_\ell)$ is finitely generated over $H^0(Z,\cO_Z)$.
Let $\Supp(R)=\bigsqcup_j \cC_j$ be a finite rational polyhedral subdivision such that $o_v$ is a linear function on $\cC_j$
for every geometric valuation $v$ of $X$, as in Theorem
\ref{thm:cl1335}$(\ref{thm:cl13353})$.

Fix an index $k$. 
Assume that $\cC_k\cap \varphi^{-1}(\Amp(X/Z))\neq\emptyset$. 
Then $\cC_k\subseteq \varphi^{-1}(\Nef(X/Z))$. 
If additionally the decomposition $\Supp(R)=\bigsqcup_j \cC_j$ is the coarest
subdivision satisfying the hypotheses above, then $\cC_k=\Supp(R)\cap
\varphi^{-1}(\Nef(X/Z))$, in which case $\cC_k$ is convex.
\end{lemma}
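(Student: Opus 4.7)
The plan is to combine the equivalence between vanishing of $o_v$ and $\pi$-semi-ampleness provided by Lemma \ref{lem:cl1337} with the piecewise linearity of the $o_v$ given by Theorem \ref{thm:cl1335}(\ref{thm:cl13353}). Throughout, I regard each $\cC_i$ as a rational polyhedral cone inside $\Supp(R)$, which is consistent with the $o_v$ being linear and hence scale-invariant on each piece. The key observation underlying both claims is: for every $\RR$-Cartier divisor $D \in \sum_i \RR_{\geq 0} D_i$ with $\varphi(D) \in \Amp(X/Z)$, one has $o_v(D) = 0$ for every geometric valuation $v$ of $X$. Indeed, since $\pi$-ampleness for $\RR$-Cartier divisors depends only on the numerical class, such a $D$ is $\pi$-ample and therefore decomposes as $D = \sum_j a_j H_j$ with $a_j > 0$ and $H_j$ $\pi$-ample invertible sheaves; each $H_j$ is $\pi$-semi-ample, so Lemma \ref{lem:cl1337}(\ref{lem:cl13371}) gives $o_v(H_j) = 0$, and sub-additivity together with non-negativity of $o_v$ then force $o_v(D) = 0$.

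For Claim 1, I observe that $\cC_k \cap \varphi^{-1}(\Amp(X/Z))$ is a non-empty open subset of $\cC_k$ on which $o_v$ vanishes by the observation above; since a linear function on a cone vanishing on a non-empty open subset vanishes identically, $o_v \equiv 0$ on $\cC_k$ for every $v$. For each rational $D \in \cC_k$, Lemma \ref{lem:cl1337}(\ref{lem:cl13372}) then yields that $D$ is $\pi$-semi-ample, hence $\pi$-nef, so $\varphi(D) \in \Nef(X/Z)$; density of rational points in $\cC_k$, continuity of $\varphi$, and closedness of $\Nef(X/Z)$ upgrade this to $\varphi(\cC_k) \subseteq \Nef(X/Z)$.

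For Claim 2, the same reasoning shows that \emph{any} chamber $\cC_i$ meeting $\varphi^{-1}(\Amp(X/Z))$ must have $o_v \equiv 0$ on it for every $v$; the coarsest hypothesis then forces such a chamber to be unique, and hence equal to $\cC_k$. Now, given $D \in \Supp(R) \cap \varphi^{-1}(\Nef(X/Z))$ and $D_0 \in \cC_k \cap \varphi^{-1}(\Amp(X/Z))$, Kleiman's criterion (Proposition \ref{lem:AmpleIsPositiveOnNE}) gives $\Nef(X/Z) + \RR_{>0} \cdot \Amp(X/Z) \subseteq \Amp(X/Z)$, so $\varphi(D + t D_0) \in \Amp(X/Z)$ for every $t > 0$, while $D + t D_0$ lies in $\Supp(R)$ by convexity. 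The chamber containing $D + t D_0$ must therefore equal $\cC_k$ by the uniqueness just established, so $D + t D_0 \in \cC_k$ for all $t > 0$; letting $t \to 0^+$ and invoking the closedness of $\cC_k$ yields $D \in \cC_k$. Convexity of $\cC_k$ is then automatic from the identification $\cC_k = \Supp(R) \cap \varphi^{-1}(\Nef(X/Z))$. The chief technical subtlety is bridging $\QQ$- and $\RR$-coefficients when invoking Lemma \ref{lem:cl1337}, which is handled using sub-additivity of $o_v$ in one direction and density of rational points together with closedness of $\Nef(X/Z)$ in the other.
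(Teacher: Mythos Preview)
Your proof is correct and follows essentially the same approach as the paper's: both establish that $o_v$ vanishes on $\cC_k$ via its linearity and its vanishing on the ample locus, deduce nefness of rational points via Lemma~\ref{lem:cl1337}$(\ref{lem:cl13372})$ and extend by density, and then for the second claim use the coarseness hypothesis to conclude that the ample preimage lies entirely in $\cC_k$, after which every nef element of $\Supp(R)$ is reached as a limit along a segment toward an ample point. Your explicit use of sub-additivity to handle $\RR$-coefficients for the vanishing of $o_v$ on ample divisors is a nice touch; the paper leaves this implicit. One minor remark: your invocation of Proposition~\ref{lem:AmpleIsPositiveOnNE} requires $X$ to be relatively quasi-divisorial for $\pi$, which holds here since $\pi$ is projective.
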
  
\begin{proof}
  Note that by Theorem \ref{thm:cl1335}$(\ref{thm:cl13353})$, all asymptotic order functions $o_v$ are identically
zero on $\cC_k$, because they are identically zero on the subset $\cC_k\cap
\varphi^{-1}(\Amp(X/Z))$, which is nonempty and open in the relative topology of
$\cC_k$.
By Lemma \ref{lem:cl1337}$(\ref{lem:cl13372})$, all rational members of $\cC_k$ are $\pi$-semiample, thus $\pi$-nef, and thus all members of $\cC_k$ are $\pi$-nef since rational members are dense in the rational polyhedron $\cC_k$.

Now suppose that the decomposition $\Supp(R)=\bigsqcup_j \cC_j$ is coarsest in
the sense stated above.
Since all asymptotic order functions $o_v$ are identically
zero on every cell $\cC_j$ that touches $\varphi^{-1}(\Amp(X/Z))$, if the
decomposition is coarsest then $\varphi^{-1}(\Amp(X/Z))\subseteq \cC_k$. 
Since $\varphi^{-1}(\Amp(X/Z))\neq \emptyset$, every $\pi$-nef member of
$\Supp(R)$ is a limit of elements of $\varphi^{-1}(\Amp(X/Z))$, and is therefore
contained in the closed subset $\cC_k$.
Since the other inclusion is already established, we conclude that
\[\cC_k=\Supp(R)\cap \varphi^{-1}\bigl(\Nef(X/Z)\bigr).\]
The statement that $\cC_k$ is convex follows from the fact that both $\Supp(R)$ and $\Nef(X/Z)$ are convex.
\end{proof} 

\begin{theorem}[{cf.\ \cite[Theorem 4]{CL13}}]\label{thm:cl1342}
Let $\pi\colon X \to Z$ be a projective morphism of integral Noetherian
schemes of equal characteristic zero, such that $X$ is normal and such that $Z$ is excellent and has a dualizing complex $\omega_Z^\bullet$.

Let $\mathcal{A}=\mathcal{A}(X/Z)$ be the set of classes $\mathbf{u}\in N^1(X/Z)_{\RR}$ that satisfies the following condition. 
There exists an open covering $Z=\cup_a V_a$ such that for each index $a$, there exists
a $\mathbf Q$-Weil divisor $\Delta_a\geq 0$ on $\pi^{-1}(V_a)$ with $K_{\pi^{-1}(V_a)}+\Delta_a$ $\QQ$-Cartier and $(\pi^{-1}(V_a),\Delta_a)$ klt,
a positive real number $c_a$,
and a class $\mathbf{w}_a\in \Amp(\pi^{-1}(V_a)/V_a)$ such that the restriction
of $\mathbf{u}$ to $N^1(\pi^{-1}(V_a)/V_a)$ (Lemma \ref{lem:NumericalClassOpenSubset}) equals to $c_a[K_{\pi^{-1}(V_a)}+\Delta_a]+\mathbf{w}_a$. 

Let $V^\circ=\mathcal{A}\cap\partial\Nef(X/Z)$. 
We then have the following:
\begin{enumerate}[label=$(\roman*)$,ref=\roman*]
\item \label{thm:cl1342Precise} Let $\mathbf{u}\in \mathcal{A}\cap\Nef(X/Z)$. 
There exists a closed convex rational polytope $P$ containing $\mathbf{u}$ in its interior such that
$P\cap \Nef(X/Z)$ is a closed convex rational polytope with nonempty interior.

\item \label{thm:cl1342Formal} For $P$ as in $(\ref{thm:cl1342Precise})$, let $F_1,F_2,\ldots,F_m$ be all the codimension one faces of $P\cap \Nef(X/Z)$ that intersects the interior of $P$. 
Then each $F_i$ span a supporting hyperplane (Definition \ref{def:AmpleConeAndNefCone}) of $\Nef(X/Z)$,
and $\Int(P)\cap \partial \Nef(X/Z)=\Int(P)\cap (F_1\cup F_2\cup\cdots\cup F_m)$.

\item \label{thm:cl13421} Every compact subset of $V^\circ$ is contained in a finite union of supporting hyperplanes.

\item \label{thm:cl13422} Let $D$ be a $\QQ$-Cartier divisor on $X$ such that $[D]\in \mathcal{A}\cap\Nef(X/Z)$. Then $D$ is $\pi$-semi-ample.
\end{enumerate}
\end{theorem}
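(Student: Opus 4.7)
The argument follows the strategy of \cite[Theorem 4]{CL13}, adapted so that the base $Z$ need not be affine. We handle $(\ref{thm:cl13422})$ first since it is essentially immediate from what we have built; we then construct the polytopes required in $(\ref{thm:cl1342Precise})$--$(\ref{thm:cl1342Formal})$ by combining local finite generation with a gluing over the cover $Z = \bigcup_a V_a$; and $(\ref{thm:cl13421})$ falls out by a compactness argument at the end.

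For $(\ref{thm:cl13422})$, let $D$ be a $\QQ$-Cartier divisor with $[D] \in \mathcal{A} \cap \Nef(X/Z)$, witnessed by a cover $Z = \bigcup_a V_a$ with klt pairs $(\pi^{-1}(V_a),\Delta_a)$, positive reals $c_a$, and ample classes $\mathbf{w}_a$. Since $\pi$-semi-ampleness is local on $Z$, it suffices to prove the statement over each $V_a$. Openness of the ample cone (Theorem \ref{thm:ampisinterior}) lets us replace the real $c_a$ by a nearby rational $c'_a > 0$ so that $A_a \coloneqq D_{|\pi^{-1}(V_a)} - c'_a(K+\Delta_a)$ is a $\pi_{|V_a}$-ample $\QQ$-Cartier divisor. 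Then $D_{|\pi^{-1}(V_a)}/c'_a \sim_\QQ (K+\Delta_a) + A_a/c'_a$ is $\pi_{|V_a}$-nef, so Corollary \ref{cor:cl1338} applies and gives $\pi_{|V_a}$-semi-ampleness.

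For $(\ref{thm:cl1342Precise})$--$(\ref{thm:cl1342Formal})$, extract a finite subcover $Z = V_1 \cup \cdots \cup V_N$. Over each $V_a$, pick finitely many $\QQ$-ample $\QQ$-Cartier divisors $H_{a,1},\ldots,H_{a,m_a}$ on $\pi^{-1}(V_a)$ whose classes, together with $[K+\Delta_a]$, linearly span a neighborhood of the restriction $r_a(\mathbf{u})$ in $N^1(\pi^{-1}(V_a)/V_a)_\RR$, where $r_a$ is the map from Lemma \ref{lem:NumericalClassOpenSubset}. By Theorem \ref{thm:cl1332}, the adjoint ring $R_a \coloneqq R(\pi^{-1}(V_a)/V_a;\, K+\Delta_a+H_{a,1},\ldots,K+\Delta_a+H_{a,m_a})$ is finitely generated over $H^0(V_a,\cO_{V_a})$. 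Theorem \ref{thm:cl1335}$(\ref{thm:cl13353})$ then provides a coarsest rational polyhedral subdivision $\Supp(R_a) = \bigsqcup_j \cC_{a,j}$ on whose cells each asymptotic order $o_v$ is linear; because $r_a(\mathbf{u})$ is nef (Lemma \ref{lem:NumericalClassOpenSubset}) and lies in $\Supp(R_a)$ (by Corollary \ref{cor:cl1336} applied to suitable rational perturbations), Lemma \ref{lem:cl1339} identifies a unique cell $\cC_{a,k_a}$ with $\cC_{a,k_a} = \Supp(R_a) \cap \varphi_a^{-1}(\Nef(\pi^{-1}(V_a)/V_a))$, giving a rational polyhedral description of the local nef cone near $r_a(\mathbf{u})$. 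To globalize, use that a class is $\pi$-nef iff its restriction to each $V_a$ is $\pi_{|V_a}$-nef (every $\pi$-contracted curve lies over a point of some $V_a$), so $\Nef(X/Z) = \bigcap_a r_a^{-1}\bigl(\Nef(\pi^{-1}(V_a)/V_a)\bigr)$. Choose a small rational polytope $P \subseteq N^1(X/Z)_\RR$ containing $\mathbf{u}$ in its interior with $r_a(P)$ inside the tautological image of $\Supp(R_a)$ for each $a$; then $P \cap \Nef(X/Z) = \bigcap_a (P \cap r_a^{-1}(\cC_{a,k_a}))$ is a finite intersection of rational polytopes, hence a rational polytope. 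Its interior is nonempty since perturbing $\mathbf{u}$ by a small rational ample class keeps it in each $\Amp(\pi^{-1}(V_a)/V_a)$. This yields $(\ref{thm:cl1342Precise})$; $(\ref{thm:cl1342Formal})$ follows because each codimension-one face of $P \cap \Nef(X/Z)$ meeting $\Int(P)$ is cut out by the vanishing of some $o_v$ inherited from a codimension-one face of some $\cC_{a,k_a}$, and such a functional defines a supporting hyperplane of $\Nef(\pi^{-1}(V_a)/V_a)$ that pulls back to one of $\Nef(X/Z)$ via $r_a$.

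Finally, $(\ref{thm:cl13421})$ is a standard compactness argument: cover a compact $K \subseteq V^\circ$ by finitely many interiors $\Int(P_{\mathbf{u}})$ of polytopes produced by $(\ref{thm:cl1342Precise})$--$(\ref{thm:cl1342Formal})$, then take the union of the finitely many supporting hyperplanes furnished by $(\ref{thm:cl1342Formal})$ for each. The main obstacle will be the gluing step in $(\ref{thm:cl1342Precise})$--$(\ref{thm:cl1342Formal})$: the restriction maps $r_a$ are in general neither injective nor surjective, so one must track carefully how the preimages of the local cells $\cC_{a,k_a}$ intersect in $N^1(X/Z)_\RR$ and verify both that the intersection is rational polyhedral with interior containing $\mathbf{u}$ and that each global codimension-one face arises from (and extends to) a bona fide supporting hyperplane of $\Nef(X/Z)$ rather than merely of one of the $\Nef(\pi^{-1}(V_a)/V_a)$.
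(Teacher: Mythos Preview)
Your approach to $(\ref{thm:cl13422})$, $(\ref{thm:cl1342Precise})$, and $(\ref{thm:cl13421})$ is essentially the paper's. The paper also passes to a finite affine cover, establishes $(\ref{thm:cl1342Precise})$ first over each $V_a$ separately (choosing the local polytope $P_a$ so that its vertices have the form $c_i[K+\Delta_a+A_i]$ with $A_i$ ample, which makes the tautological image of $\Supp(R_a)$ coincide with $P_a$ and then invokes Lemma~\ref{lem:cl1339}), and then glues by setting $P = P_0 \cap \bigcap_a r_a^{-1}(P_a)$ and using $\Nef(X/Z) = \bigcap_a r_a^{-1}(\Nef(\pi^{-1}(V_a)/V_a))$. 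Your organization collapses these two stages into one, but the content is the same; just be careful that $\cC_{a,k_a}$ lives in divisor space, so you mean $r_a^{-1}(\varphi_a(\cC_{a,k_a}))$.

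Your argument for $(\ref{thm:cl1342Formal})$, however, does not work as written. A codimension-one face of the intersection $\bigcap_a r_a^{-1}(\cdot)$ need not be inherited from a codimension-one face of a single $\cC_{a,k_a}$; it can arise where constraints from different indices $a$ meet. Moreover, the functions $o_v$ live on divisors, not on numerical classes, and do not descend to linear functionals on $N^1$, so ``vanishing of some $o_v$'' does not directly cut out a hyperplane there; and even if you had a supporting hyperplane of $\Nef(\pi^{-1}(V_a)/V_a)$, its preimage under $r_a$ need not be a hyperplane in $N^1(X/Z)_\RR$ when $r_a$ fails to be surjective. The paper sidesteps all of this with a direct convex-geometric argument that uses only the output of $(\ref{thm:cl1342Precise})$: if $F_i$ is a codimension-one face of the polytope $P \cap \Nef(X/Z)$ meeting $\Int(P)$, then $F_i \subseteq \partial\Nef(X/Z)$, so its linear span $W_i$ cannot meet $\Amp(X/Z)$ (otherwise $F_i$, being convex and spanning $W_i$, would meet the open cone $\Amp(X/Z)$ as well). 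Since $W_i$ is spanned by $F_i \subseteq W_i \cap \Nef(X/Z)$, this is precisely the definition of a supporting hyperplane. The equality $\Int(P) \cap \partial\Nef(X/Z) = \Int(P) \cap \bigcup_i F_i$ is then immediate from the polytope structure. You correctly anticipated the gluing in $(\ref{thm:cl1342Precise})$ as the delicate point, but for $(\ref{thm:cl1342Formal})$ the lesson is the opposite: once $(\ref{thm:cl1342Precise})$ is in hand, no further tracking of the local data is needed.
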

\begin{remark}
We do not require any compatibility of the divisors $\Delta_a$ and classes $\mathbf{w}_a$ in the definition of $\mathcal{A}$.

Since $\mathcal{A}\cap\Nef(X/Z)\subseteq \Amp(X/Z)\cup V^\circ$, item $(\ref{thm:cl1342Precise})$ (resp. $(\ref{thm:cl13422})$) is only nontrivial for those $\mathbf{u}$ (resp. $[D]$) in $V^\circ$. However, $\mathcal{A}\cap\Nef(X/Z)$ behaves better when we pass to an open cover of $Z$.
\end{remark}
\begin{proof}
Since $\Amp(X/Z)$ is open and convex, it is clear that $\mathcal{A}$ is open and convex and that
\[
\mathcal{A}\cap N^1(X/Z)_{\QQ}=\Set[\big]{a\mathbf{v}+\mathbf{w}\given a\in\QQ_{>0},\ \mathbf{w}\in\Amp(X/Z)\cap N^1(X/Z)_{\QQ}}.
\]
We first prove $(\ref{thm:cl1342Precise})$. By the definition of $\mathcal{A}$, we can find a finite affine cover $V_1,\ldots,V_t$ of $Z$,  
a $\mathbf Q$-Weil divisor $\Delta_a\geq 0$ on $\pi^{-1}(V_a)$ with $K_{\pi^{-1}(V_a)}+\Delta_a$ $\QQ$-Cartier and $(\pi^{-1}(V_a),\Delta_a)$ klt,
a positive real number $c_a$,
and a class $\mathbf{w}_a\in \Amp(\pi^{-1}(V_a)/V_a)$ such that the restriction of $\mathbf{u}$ to $N^1(\pi^{-1}(V_a)/V_a)$ equals to $c_a[K_{\pi^{-1}(V_a)}+\Delta_a]+\mathbf{w}_a$. 

We use the notations $\rho_a:N^1(X/Z)_{\RR}\to N^1(\pi^{-1}(V_a)/V_a)_{\RR}$ for restriction of divisors.
Assume for each $a$ we have a rational polytope $P_a$ in $N^1(\pi^{-1}(V_a)/V_a)_{\RR}$ for $\rho_a(\mathbf{u})$ that fulfills $(\ref{thm:cl1342Precise})$. 
If $P_0$ is any closed convex rational polytope containing $\mathbf{u}$ in its interior, so is $P:=P_0\cap \rho_1^{-1}(P_1)\cap\ldots\cap\rho_t^{-1}(P_t)$, 
and since $\Nef(X/Z)=\cap_a\rho_a^{-1}\Nef(\pi^{-1}(V_a)/V_a)$ (by definition
and Lemma \ref{lem:NumericalClassOpenSubset}), we see that
\[
P\cap \Nef(X/Z)=P_0\cap \rho_1^{-1}\left(P_1\cap\Nef(\pi^{-1}(V_1)/V_1)\right)\cap\ldots\cap\rho_t^{-1}\left(P_t\cap\Nef(\pi^{-1}(V_t)/V_t)\right)
\]
is a closed convex rational polytope.
Since $P$ contains $\mathbf{u}\in\Nef(X/Z)$ in its interior, $\mathrm{int}(P)\cap \Amp(X/Z)\neq\emptyset$, thus $P\cap \Nef(X/Z)$ has nonempty interior.

Thus we may assume $Z$ affine, that there exists a $\mathbf Q$-Weil divisor $\Delta\geq 0$ on $X$ with $K_X+\Delta$ $\QQ$-Cartier and $(X,\Delta)$ klt,
and that $\mathbf{u}$ lies in the subset
\begin{align*}
  \mathcal{A}_0{}\coloneqq{}&\Set[\big]{c[K_X+\Delta]+\mathbf{w}\given c\in\RR_{>0},\ \mathbf{w}\in\Amp(X/Z)}
  \intertext{of $N^1(X/Z)_{\RR}$.
  It is easy to see that $\mathcal{A}_0$ is open and convex, and that}
  \mathcal{A}_0\cap N^1(X/Z)_{\QQ}{}={}&\Set[\big]{c[K_X+\Delta]+\mathbf{w}\given c\in\QQ_{>0},\ \mathbf{w}\in\Amp(X/Z)\cap N^1(X/Z)_{\QQ}}.
\end{align*}

For a sufficiently small closed convex rational polytope $P$ 
whose interior $\Int(P)$ contains $\mathbf{u}$, we have $P\subseteq
\mathcal{A}_0$.
Notice again that $  P\cap\Amp(X/Z)\neq\emptyset$, as $P$ contains $\mathbf{u}\in\Nef(X/Z)$ in its interior.
Each vertex of $P$ has the form $c[K_X+\Delta]+\mathbf{w}=c([K_X+\Delta]+c^{-1}\mathbf{w})$ where $c\in\QQ_{>0}$ and $\mathbf{w}\in\mathrm{Amp}(X/Z)$ rational.
Therefore, we can find $\ell\in\ZZ_{>0}$, $c_i\in \QQ_{>0}$ and $\pi$-ample $\QQ$-Cartier divisors $A_i\ (i \in \{1,2,\ldots,\ell\})$, such that $c_i[K_X+\Delta+A_i]\ (i \in \{1,2,\ldots,\ell\})$ are the vertices of $P$. Write $D_i=K_X+\Delta+A_i$.
\par Consider the adjoint ring 
\[
R=R\bigl(X/Z;D_1,D_2,\ldots,D_\ell\bigr),
\]
which is finitely generated by Theorem \ref{thm:cl1332}.
Every element $\mathbf{x}\in P$ is a convex combination of the classes $c_i[D_i]$,
and thus is a $\RR_{\geq 0}$-combination of the classes $[D_i]$. 
In particular, $\Supp(R)$ contains a $\pi$-ample divisor since $P\cap\Amp(X/Z)\neq\emptyset$.
By Theorem \ref{thm:cl1335}$(\ref{thm:cl13352})$, we see that every element
$\mathrm{x}\in P\cap\Nef(X/Z)$ is the class of an element of $\Supp(R)$. 
In other words, if $\varphi$ is the
canonical map from Lemma \ref{lem:cl1339}, we have $\Nef(X/Z)\cap
P\subseteq \varphi(\Supp(R))$.

Let $\Supp(R)=\bigsqcup_j \cC_j$ be the coarest finite rational polyhedral subdivision such that $o_v$ is a linear function on $\cC_j$
for every geometric valuation $v$ of $X$, as in Theorem
\ref{thm:cl1335}$(\ref{thm:cl13353})$.
Since $\Supp(R)$ contains a $\pi$-ample divisor, 
there exists an index $k$ with $\cC_k\cap \varphi^{-1}(\Amp(X/Z))\neq\emptyset$. 
By Lemma \ref{lem:cl1339}, the set $\cC_k= \varphi^{-1}(\Nef(X/Z))$ is convex, 
and $\varphi(\cC_k)=P\cap \Nef(X/Z)$, as desired.\smallskip

We now show $(\ref{thm:cl1342Formal})$.
Let $W_i$ be the linear span of $F_i$. 
To show $W_i$ is a supporting hyperplane of $\Nef(X/Z)$, it suffices to show $W_i\cap \Amp(X/Z)=\emptyset$.
However, since $F_i$ is convex and contained in $\Nef(X/Z)$, we see that $W_i\cap \Amp(X/Z)\neq\emptyset$ will imply $F_i\cap \Amp(X/Z)\neq\emptyset$, which is impossible since $F_i$ is a face of $P\cap\Nef(X/Z)$, so $F_i\subseteq \partial\Nef(X/Z)$.
\par This argument also tells us that
\[
  \Int(P)\cap \partial \Nef(X/Z)\supseteq\Int(P)\cap (F_1\cup F_2\cup\cdots\cup F_m).
\]
Conversely, if $\mathrm{x}\in \Int(P)\cap \partial \Nef(X/Z)$, it is in the
boundary of $P\cap\Nef(X/Z)$, and thus is contained in some $F_i$.
Therefore we get the identity of sets.\smallskip

Since $(\ref{thm:cl13421})$ follows immediately from $(\ref{thm:cl1342Formal})$, it remains to show $(\ref{thm:cl13422})$.
By the discussion above, upon passing to a (finite) affine open covering of $Z$,
we may assume that there exists a $\mathbf Q$-Weil divisor $\Delta\geq 0$ on $X$ with $K_X+\Delta$ $\QQ$-Cartier and $(X,\Delta)$ klt,
and our divisor $D$ satisfies $[D]=c[K_X+\Delta]+\mathrm{w}$ for some $c\in \QQ_{>0}$ and $\mathrm{w}\in\Amp(X/Z)$.
Therefore the $\QQ$-Cartier divisor $A\coloneqq c^{-1}D-K_X-\Delta$ is $\pi$-ample.
We have that $K_X+\Delta+A=c^{-1}D$ is $\pi$-nef, since $[D]\in \Nef(X/Z)$. 
By Corollary \ref{cor:cl1338}, we see that $K_X+\Delta+A$
is $\pi$-semi-ample and hence so is $D$.
\end{proof}
With uniqueness we can prove the following result.
\begin{lemma}[{cf.\ \cite[p.\ 85, Step 9]{KM98}}]\label{lem:CL13S4ProducesGoodContractions}
Let $\pi\colon X \to Z$ be a projective morphism of integral Noetherian
schemes of equal characteristic zero, such that $X$ is normal and such that $Z$ is excellent and has a dualizing complex $\omega_Z^\bullet$.
Let $W$ be a supporting hyperplane spanned by a face $F$ as in Theorem
\ref{thm:cl1342}\kern1pt$(\ref{thm:cl1342Formal})$.
Then, the extremal ray $R$ dual to $W$ (Definition \ref{rem:DualRay}) has a good contraction with target $Y$ projective over $Z$.
\end{lemma}
\begin{proof}
By Theorem \ref{thm:cl1342}$(\ref{thm:cl1342Precise})$, it is clear that $W$ has a basis consisting of rational members of $\Nef(X/Z)$, and that there exists $[D_1]\in W_{\QQ}\cap\mathcal{A}$. 

By Remark \ref{rem:DualToOneVector}, there exists a rational member $[D_0]$ of $W\cap \Nef(X/Z)$ such that 
\[
  R=\Set[\big]{\gamma\in \NEbar(X/Z) \given (D_0\cdot\gamma)=0}.
\]
Since $(\Nef(X/Z)\cdot\NEbar(X/Z))\geq 0$, 
it is clear that
\[
  R=\Set[\big]{\gamma\in \NEbar(X/Z) \given (D_1+\varepsilon D_0\cdot\gamma)=0} 
\]
for all  $\varepsilon\in\RR_{>0}$, 
and we know that for $\varepsilon$ rational and sufficiently small,
$D_1+\varepsilon D_0$ is $\pi$-semi-ample, by Theorem
\ref{thm:cl1342}$(\ref{thm:cl13422})$.

Fix such an $\varepsilon$ an fix an $m\in\ZZ_{>0}$ such that $D_2\coloneqq 
m(D_1+\varepsilon D_0)$ is integral and $\pi$-generated.
Then $\lvert D_2\rvert$ defines a morphism $X\to \PP_Z(\pi_*\cO_X(D_2))$, and we
denote by $f\colon X\to Y$ the Stein factorization of this morphism.
Then $Y$ is projective over $Z$, $f$ is proper, $f_*\cO_X=\cO_Y$, and $D_2\sim f^*A$ for some Cartier divisor $A$ on $Y$ ample over $Z$.

Since $(D_2\cdot R)=0$, $D_2$ is not $\pi$-ample, so $f$ is not an isomorphism and thus there exists an $f$-contracted curve $C$.
Now $(D_2\cdot C)=0$, so $[C]\in R$ and $R=\RR_{\geq 0}[C]$. 
In particular, for each $\QQ$-Cartier divisor $E$ on $Y$, we have $(f^*E\cdot R)=0$.

Conversely, let $D\in \Div_{\QQ}(X)$ be such that $(D\cdot R)=0$. 
Then $D$ and $D_2$ both induce a linear functional on the real vector space
$U\coloneqq N_1(X/Z)_{\RR}/\RR [C]$. 
The image $\cC$ of $\NEbar(X/Z)$ in $U$ is a compact cone and $D_2$ maps $\cC-\{0\}$ to $\RR_{>0}$.
By local compactness, for sufficiently small $\sigma\in \QQ_{>0}$,
$D_2+\sigma D$ maps $\cC-\{0\}$ to $\RR_{>0}$ as well.
Thus $D_3\coloneqq D_2+\sigma D$ is $\pi$-nef,
\[
  R=\Set[\big]{\gamma\in \NEbar(X/Z) \given (D_3\cdot\gamma)=0},
\]
and $[D_3]\in W$ since $W$ is the subspace dual to $\RR[C]$.
Decreasing $\sigma$, we may assume $[D_3]\in \mathcal{A}$, so $D_3$ is $\pi$-semi-ample by Theorem \ref{thm:cl1342}$(\ref{thm:cl13422})$.
By the same argument as that for $D_1+\varepsilon D_0$, a multiple of $D_3$ is $\pi$-generated and is pulled back from a contraction $f'\colon X\to Y'$ of $R$.
However, by uniqueness of contraction (Theorem \ref{thm:contraction}), this implies that $D_3\sim_{\QQ}f^*E_3$ for some $E_3\in\Div_{\QQ}(Y)$.
Thus $D\sim_{\QQ}f^*(\sigma^{-1}(E_3-A))$, as desired.
\end{proof}

\begingroup
\makeatletter
\renewcommand{\@secnumfont}{\bfseries}
\part{The relative MMP with scaling for schemes and algebraic
spaces}\label{part:relativemmpforschemes}
\makeatother
\endgroup
In this part, we establish the existence of flips and termination with scaling
for schemes and algebraic spaces using Theorem \ref{thm:introfinitegen}.
This completes the proof of Theorem
\ref{thm:introrelativemmp}$(\ref{setup:introalgebraicspaces})$.
We then give some applications of these results by showing that
$\QQ$-factorializations and terminalizations exist, which for simplicity we
prove only for schemes.\bigskip
\section{Birational contractions and \texorpdfstring{$\QQ$}{Q}-factoriality}
We setup the necessary preliminaries for birational contractions.
We characterize the types of contractions that are possible as outputs of
Theorem \ref{thm:contraction}.
\begin{lemma}[cf.\ {\cite[Proposition 2.5]{KM98}}]
\label{lem:Contraction3Types}
Let $\pi\colon X\to Z$ be a projective surjective morphism of integral quasi-excellent
Noetherian algebraic spaces over a scheme $S$
with $X$ normal and $\QQ$-factorial. 
Let $R\subseteq \NEbar(X/Z)$ be an extremal ray. 
Let $f\colon X\to Y$ be a contraction of $R$ over $Z$. 

Then, exactly one of the following holds.
\begin{enumerate}[label=$(\roman*)$,ref=\roman*]

\item \label{lem:ContractionMori}
$\dim X>\dim Y$.

\item \label{lem:ContractionDivisorial}
$f$ is birational, $\mathrm{Ex}(f)\subseteq X$ is a prime divisor.

\item \label{lem:ContractionFlip}
$f$ is birational, $\mathrm{Ex}(f)\subseteq X$ is of codimension $\geq 2$;
i.e., $f$ is small (Definition \ref{def:GoodContrOfR}).

\end{enumerate}
\end{lemma}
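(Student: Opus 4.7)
The three cases are mutually exclusive by inspection, so the task is to show they are exhaustive. Since the contraction $f$ satisfies $\cO_Y \cong f_*\cO_X$ (property $(\ref{thm:contractionii})$ of Theorem~\ref{thm:contraction}), the fibers of $f$ are geometrically connected and $f$ is birational if and only if $\dim X = \dim Y$. If $\dim Y < \dim X$, we are in case~$(\ref{lem:ContractionMori})$, so from now on assume $f$ is birational. If every irreducible component of $\Exc(f)$ has codimension $\ge 2$, we are in case~$(\ref{lem:ContractionFlip})$; otherwise $\Exc(f)$ contains at least one prime divisor $D_1$, and the content of the lemma is to rule out a second prime divisor $D_2 \neq D_1$ in $\Exc(f)$.

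The key step will be the following claim: \emph{every irreducible $f$-contracted curve $C \subseteq X$ is set-theoretically contained in $D_1$.} Since $X$ is $\QQ$-factorial, each $D_i$ defines a class in $\Pic_\QQ(X)$ and $[D_i]\cdot(-)$ is a well-defined functional on $N_1(X/Z)_\RR$. Because $D_1$ is $f$-exceptional we have $f_*(-D_1)=0$, so the Negativity Lemma~\ref{lem:Negativity} (applied to $\sL = \cO_X(-D_1)$) implies $-D_1$ is not $f$-nef: there exists an $f$-contracted curve $C_0\subseteq D_1$ with $(D_1\cdot C_0) < 0$. Fixing a $\pi$-ample invertible sheaf $A$ on $X$, every $f$-contracted curve $C$ has $(A\cdot C) > 0$, so $[C] \neq 0$ in $N_1(X/Z)_\RR$, hence $[C]$ lies in the interior of the one-dimensional ray $R$. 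The sign of $(D_1\cdot R)$ is therefore determined by $(D_1 \cdot C_0) < 0$, so $(D_1 \cdot C) < 0$ for every $f$-contracted $C$. But if $C \not\subseteq D_1$ then $(D_1\cdot C) \ge 0$ because $D_1$ is an effective Cartier divisor not containing $C$; hence $C\subseteq D_1$, proving the claim.

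To derive a contradiction, we produce $f$-contracted curves inside $D_2$. Since $D_2 \subseteq \Exc(f)$ we have $\dim f(D_2) < \dim D_2$; passing to an \'etale neighborhood of a point in $f(D_2)$ and then intersecting $D_2$ with hyperplane sections pulled back from $X$, we extract an irreducible curve $C \subseteq D_2$ whose image in $Y$ is a point. By the claim, $C\subseteq D_1$; but $C$ moves in a family covering $D_2$ (obtained by intersecting $D_2$ with generic members of a basepoint-free linear system on $X$ twisted appropriately), so $D_2 \subseteq D_1$, contradicting that $D_1, D_2$ are distinct prime divisors.

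The main technical obstacle is the extraction of a covering family of $f$-contracted curves on $D_2$; this is essentially a dimension-count argument, but must be carried out carefully in the algebraic-space setting, and will use Chow's lemma~\cite[\href{https://stacks.math.columbia.edu/tag/089J}{Tag 089J}]{stacks-project} together with $\QQ$-factoriality to reduce to a projective situation where we can intersect with very ample divisors. A minor secondary point is verifying the hypotheses of the Negativity Lemma (universal catenarity or existence of a dualizing complex on $Y$), which in our applications is guaranteed because $Y$ inherits the relevant finiteness properties from the excellent base $Z$.
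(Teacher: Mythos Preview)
Your approach is correct in outline and uses the same two ingredients as the paper---the Negativity Lemma and the fact that every $f$-contracted curve lies in the single ray $R$---but it runs the argument contrapositively and your final step is more elaborate than needed. The paper argues directly: assuming $\Exc(f)$ contains a prime divisor $E$ but $\Exc(f)\ne E$, it picks a (possibly non-closed) point $\zeta\in Y$ over which $\Exc(f)\cap f^{-1}(\zeta)\supsetneq E\cap f^{-1}(\zeta)$; by Zariski's Main Theorem every component of $f^{-1}(\zeta)$ is positive-dimensional, so there is a curve $C\subseteq f^{-1}(\zeta)$ with $C\not\subseteq E$, hence $(nE\cdot C)\ge 0$. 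Since $[C]$ (defined via Remark~\ref{rem:NefAgainstNonClosedContracted}) lies in $\NEbar(X/Y)=R$, this forces $nE$ to be $f$-nef, and the Negativity Lemma applied to $B=-nE$ yields $-nE\ge 0$, a contradiction.

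Your version runs Negativity first to get $(D_1\cdot R)<0$ (note you wrote ``$-D_1$ is not $f$-nef'', a sign slip---the conclusion $(D_1\cdot C_0)<0$ says $D_1$ is not $f$-nef), then needs to produce $f$-contracted curves in the rest of $\Exc(f)$. But the ``covering family via hyperplane sections and Chow's lemma'' you flag as the main obstacle is unnecessary: for any closed point $x\in\Exc(f)$, its image $f(x)$ is closed in $Y$, and Zariski's Main Theorem gives a curve $C_x$ through $x$ in $f^{-1}(f(x))$; by your claim $C_x\subseteq D_1$, so $x\in D_1$. This already shows $\Exc(f)=D_1$ and handles at once the cases of an extra prime divisor and of a higher-codimension component (your writeup addresses only the former). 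The paper's organization is slightly slicker because it locates the single offending curve $C\not\subseteq E$ directly, avoiding the need to first establish the claim for \emph{all} $f$-contracted curves.
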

\begin{proof}
It suffices to prove that if $f$ is birational and $\mathrm{Ex}(f)\subseteq X$ contains a prime divisor $E$, then $\mathrm{Ex}(f)=E$. 
Fix $n\in\ZZ_{>0}$ such that $[nE]$ is the Weil divisor class associated to an
invertible sheaf.

Assume not. 
Then there exists a point $\zeta\in Y$, not necessarily closed, such that
\[
  \pi^{-1}(\zeta)=\mathrm{Ex}(f)\cap \pi^{-1}(\zeta)\supsetneq E\cap \pi^{-1}(\zeta).
\]
By Zariski's Main Theorem, each irreducible component of $\pi^{-1}(\zeta)$ is positive-dimensional, and at least one of such is not contained in $E\cap\pi^{-1}(\zeta).$
Therefore there exists a one-dimensional integral closed subspace $C$ of $\pi^{-1}(\zeta)$ that is not contained in $E\cap \pi^{-1}(\zeta)$,
so $(nE\cdot C)\geq 0$, where
we use Remark \ref{rem:NefAgainstNonClosedContracted} to make sense of
this intersection number.

Since $\pi$ is projective, the class $[C]$ defined using
Remark
\ref{rem:NefAgainstNonClosedContracted}
is nonzero, and it belongs to $\NEbar(X/Y)$ by Lemma \ref{lem:NumericalClassOpenSubset}.
As noted in Definition \ref{def:GoodContrOfR}, we have $R=\RR_{\geq 0}[C]$, thus $nE$ is $f$-nef.
Applying Lemma \ref{lem:Negativity} to the divisor $B=-nE$, we get a contradiction.
\end{proof}

\begin{lemma}[cf.\ \citeleft\citen{KMM87}\citemid Lemma 5-1-5 and Proposition
  5-1-6\citepunct \citen{KM98}\citemid Corollary 3.18\citeright]\label{lem:ContractionQfac}
  In cases $(\ref{lem:ContractionMori})$ and $(\ref{lem:ContractionDivisorial})$ in Lemma \ref{lem:Contraction3Types}, if $f$ is a good contraction 
  then $Y$ is $\QQ$-factorial.
\end{lemma}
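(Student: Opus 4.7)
The plan is to follow the classical approach in \cite[Corollary 3.18]{KM98} and \cite[Lemma 5-1-5 and Proposition 5-1-6]{KMM87}, showing that the map $\Pic_\QQ(Y)\to \Cl_\QQ(Y)$ is surjective (injectivity is automatic from normality of $Y$, which in turn follows from $f_*\cO_X = \cO_Y$ in the construction of $f$ via Stein factorization). Given a prime Weil divisor $D$ on $Y$, I would lift it to a $\QQ$-Cartier divisor on $X$, possibly modify it by the exceptional divisor, and then descend to $Y$ via the good contraction.

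In case (ii), let $E = \Exc(f)$ be the exceptional prime divisor and set $D_X \coloneqq f_*^{-1}D$. Since $X$ is $\QQ$-factorial, $D_X$ is $\QQ$-Cartier. The key step is to show $(E \cdot R) \ne 0$: fixing $m \in \ZZ_{>0}$ so that $mE$ is Cartier, we apply the Negativity Lemma \ref{lem:Negativity} to $B = -mE$. Since $f_*B = 0$ is effective while $B$ itself is not (as $E \ne 0$), the lemma forces $\cO_X(mE)$ not to be $f$-nef, so some $f$-contracted curve $C$ satisfies $(E \cdot C) < 0$. Goodness of the contraction (Definition \ref{def:GoodContrOfR}) implies $\NEbar(X/Y) = R$, hence $(E \cdot R) < 0$. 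We may therefore choose $a \in \QQ$ with $((D_X + aE) \cdot R) = 0$, and invoking the defining property of a good contraction we obtain $D_X + aE \sim_\QQ f^*L$ for some $L \in \Pic_\QQ(Y)$. Pushing forward Weil divisor classes and using $f_*E = 0$, $f_*D_X = D$, and $f_* f^* L = [L]$, we conclude $[D] = [L]$ in $\Cl_\QQ(Y)$, so $D$ is $\QQ$-Cartier.

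In case (i), $f$ is a Mori fibration, so no exceptional correction is needed. Let $V \subseteq Y$ be the open locus on which $D|_V$ is Cartier; by normality of $Y$, $\codim_Y(Y \setminus V) \geq 2$. Define $D_X$ as the closure in $X$ of the Cartier divisor $f^*(D|_V)$ on $f^{-1}(V)$; this is a Weil divisor (well-defined by normality of $X$) and hence $\QQ$-Cartier. Since $V$ contains the generic point of $Y$, a general fiber $F$ of $f$ lies over $V$; picking any integral curve $C_0 \subseteq F$, the projection formula gives $(D_X \cdot C_0) = (f^*(D|_V) \cdot C_0) = 0$. Because $\NEbar(X/Y) = R$ is a ray and $[C_0]$ is a nonzero $f$-contracted class, $[C_0]$ generates $R$ up to a positive scalar, yielding $(D_X \cdot R) = 0$. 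Goodness of $f$ then produces $L \in \Pic_\QQ(Y)$ with $D_X \sim_\QQ f^*L$, and comparing Weil divisor classes via pushforward as above gives $[D] = [L]$.

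The main obstacle is producing the curve $C_0$ in case (i): one must ensure that a general fiber of $f$ contains an integral curve on which $D_X$ agrees (Cartier-ly) with $f^*(D|_V)$. Since $V$ contains the generic point of $Y$ and $f$ has positive-dimensional general fibers by $\dim X > \dim Y$, such a curve always exists, for instance by intersecting a general fiber with enough hyperplane sections from a $\pi$-very ample divisor (Theorem \ref{thm:bertini}). A subsidiary issue is that $\QQ$-factoriality is not an \'etale-local property for algebraic spaces, so the argument must be carried out directly on $Y$; fortunately the Negativity Lemma, intersection theory, and good contractions have all been developed for algebraic spaces in the preceding sections, so no reduction to schemes is required.
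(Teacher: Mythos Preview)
Your approach is essentially the same as the paper's, and case $(ii)$ is correct. However, there is a genuine gap in case $(i)$: the phrase ``comparing Weil divisor classes via pushforward as above'' does not work when $f$ is not birational. Proper pushforward of cycles sends a prime divisor on $X$ to a cycle of dimension $\dim X - 1$ on $Y$, which is either zero (when $\dim X - 1 > \dim Y$) or a multiple of $[Y]$ (when $\dim X - 1 = \dim Y$); in neither case do you recover the original Weil divisor $D$ on $Y$.

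The paper closes this gap by working with sections rather than cycle pushforward: having obtained $mD_X \sim f^*(mL)$ for suitable $m$, one chooses a section $s \in H^0(X,\cO_X(f^*mL))$ with $\prdiv(s) = mD_X$, uses $f_*\cO_X = \cO_Y$ to descend $s$ to a section of $\cO_Y(mL)$, and then checks over the regular locus $Y^\circ$ (which plays the role of your $V$) that the divisor of this section equals $mD$. An equivalent fix is to note that $f_*\cO_X = \cO_Y$ makes $f^* \colon \Pic_\QQ(V) \to \Pic_\QQ(f^{-1}(V))$ injective; then $D_X|_{f^{-1}(V)} = f^*(D|_V) \sim_\QQ f^*(L|_V)$ forces $D|_V \sim_\QQ L|_V$, hence $[D] = [L]$ in $\Cl_\QQ(Y) = \Cl_\QQ(V)$. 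Either way, the contraction property $f_*\cO_X = \cO_Y$ is the essential ingredient you omitted. Your invocation of Bertini to produce $C_0$ is also unnecessary: any integral curve in a fiber over a point of $V$ already suffices.
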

\begin{proof}
In case $(\ref{lem:ContractionDivisorial})$, the proof is identical to the proof
of \cite[Corollary 3.18]{KM98}.\smallskip

The proof of case $(\ref{lem:ContractionMori})$ is also very similar to
\cite[Corollary 3.18]{KM98}, except for the fact that we must work with points
\(y \in Y\) that are not necessarily closed.
We provide a complete proof for case $(\ref{lem:ContractionMori})$ below.
Let $Y^\circ$ be the regular locus of $Y,$
which is open since $Y$ is quasi-excellent.
The complement of \(Y^\circ\) in \(Y\) is of codimension at least 2 since $Y$ is normal.
Let $B$ be a prime divisor on $Y$.
Then, $B\cap Y^\circ$ is a prime divisor on $Y^\circ$ and is Cartier since $Y^\circ$ is regular.
Therefore, $f^{-1}(B\cap Y^\circ)$ is an effective Cartier divisor on
$f^{-1}(Y^\circ)$.
We let $D$ be its closure in $X$.
The class of $D$ is the class associated to a $\QQ$-invertible sheaf $\tilde{D}$
since $X$ is $\QQ$-factorial.
Take $y\in Y^\circ$ not in $B$ (which is not necessarily closed in \(Y\)),
and consider an integral curve $C\subseteq f^{-1}(y)$.
As in the proof of Lemma \ref{lem:Contraction3Types},
$C$ defines a class $[C]\in N_1(X/Y)_{\RR}$ (by Remark
\ref{rem:NefAgainstNonClosedContracted})
and $R=\RR_{\geq 0}[C]$.
Since $D\cap f^{-1}(Y^\circ)=f^{-1}(B\cap Y^\circ)$, $(\tilde{D}\cdot C)=0$. 
Thus $(\tilde{D}\cdot R)=0$ and $\tilde{D}\sim_{\QQ}f^*E$ for some $E\in\Pic_{\QQ}(Y)$ as $f$ is a good contraction.

Take $m\in\ZZ_{>0}$ such that $mE$ is integral and $m\tilde{D}\sim f^*(mE)$.
Then there exists a global section $s$ of $\cO_X(f^*(mE))$ with $\mathrm{div}(s)=mD$.
Since $f_*\cO_X=\cO_Y$, we have a well-defined global section $f_*(s)$ of
$\cO_Y(mE)$ with $f^{-1}\mathrm{div}(f_*(s))=mD$.
Thus by construction $\mathrm{div}(f_*(s))\cap Y^\circ=mB\cap Y^\circ$ and $\mathrm{div}(f_*(s))=mB$ since the complement of $Y^\circ$ is of codimension at least 2.
Thus $mB\sim mE$ is Cartier and $B$ is $\QQ$-Cartier.
\end{proof}

\section{Existence of flips}
In this section, we show that flips exist.
To do so, we first define flips.
\begin{definition}[cf.\ {\cite[p.\ 335]{KMM87}}]\label{def:FLIPS}
  Let $\pi\colon X \to Z$ be a projective surjective morphism of integral
  quasi-excellent Noetherian algebraic spaces over
  a scheme $S$.
  Suppose that $X$ is normal and that $Z$ admits a dualizing complex
  $\omega_Z^\bullet$.
  Denote by $K_X$ a canonical divisor on $X$ associated to $\omega_X^\bullet =
  \pi^!\omega_Z^\bullet$.

Let $\Delta\geq 0$ be a $\QQ$-divisor on $X$ such that $K_X+\Delta$ is $\QQ$-Cartier and that $(X,\Delta)$ is klt.
Let $f\colon X\to Y$ be a small birational contraction over $Z$ (Definition \ref{def:GoodContrOfR}) such that $-(K_X+\Delta)$ is $f$-ample. 
A \textsl{flip} of $f$ is a proper birational morphism $f^+\colon X^+\to Y$ with the following properties.

\begin{enumerate}[label=$(\roman*)$,ref=\roman*]
    \item\label{def:FLIPSnormal} $X^+$ is normal (and integral).
    \item\label{def:FLIPSsmall}
    The morphism $f^+$ is a small contraction.
    \item\label{def:FLIPSample} $K_{X^+}+\Delta^+$ is $\QQ$-Cartier and $f^+$-ample where $\Delta^+$ is the strict transform of $\Delta$.
  \end{enumerate}
  
Note that since $\mathrm{Ex}(f^+)\subseteq X^+$ is of codimension $\geq 2$, the
strict transform operation $D\mapsto D^+$ induces an isomorphism $\WDiv_{\kk}(X)\cong \WDiv_{\kk}(X^+)\ (\kk=\ZZ,\QQ$ or $\RR$) that preserves principal divisors and maps $K_X$ to a canonical divisor of $X^+$.
Moreover, $f_*\cO_X(D)=f^+_*\cO_{X^+}(D^+)$ for all $D\in \Div(X)$.
See for example \cite[Lemma 4(3)]{CL13}.

A birational map $h\colon X\dashrightarrow X'$ of algebraic spaces over $Z$ is called a
\textsl{flip} of the pair $(X,\Delta)$ if 
$h$ is isomorphic to the birational map $(f^+)^{-1}\circ f:X\dashrightarrow X^+$ for some $f,X^+$ as above.
\end{definition}
We can now show flips exist.
The case for complex quasi-projective varieties is \cite[Corollary
1.4.1]{BCHM10} (cf.\ \cite[Theorem 5]{CL13}).
When $X$ is of finite type over an algebraically
closed field of characteristic zero, the three-dimensional case is proved in
\cite[Log Flip Theorem 6.13]{Sho96} and the general case follows from
\cite[Theorem 2.6]{VP}.
\begin{theorem}\label{thm:FLIP}
  Let $\pi\colon X \to Z$ be a projective surjective morphism of integral
  quasi-excellent Noetherian algebraic spaces over
  a scheme $S$.
  Suppose that $X$ is normal and that $Z$ admits a dualizing complex
  $\omega_Z^\bullet$.
  Denote by $K_X$ a canonical divisor on $X$ associated to $\omega_X^\bullet =
  \pi^!\omega_Z^\bullet$.

Let $\Delta\geq 0$ be a $\QQ$-Weil divisor on $X$ such that $(X,\Delta)$ is klt.
Let $f\colon X\to Y$ be a small contraction over $Z$ such that $-(K_X+\Delta)$ is $f$-ample.
Then the following hold.

\begin{enumerate}[label=$(\roman*)$,ref=\roman*]
    \item\label{thm:FLIPunique} A flip of $f$ is unique up to unique isomorphism.
    \item\label{thm:FLIPisProj} If $Z$ is of equal characteristic zero, then the quasi-coherent $\cO_Y$-algebra 
\[
  \mathcal{A}\coloneqq\bigoplus_{m=0}^\infty f_*\cO_X\bigl(\lfloor
  m(K_X+\Delta)\rfloor\bigr)
\]
is of finite type and $\PProj(\mathcal{A})$ is a flip of $f$.
  \end{enumerate}
\end{theorem}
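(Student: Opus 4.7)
For $(\ref{thm:FLIPunique})$, the plan is to realize every flip of $f$ as $\PProj_Y\cA$, which depends only on $(X,\Delta,f)$. If $f^+\colon X^+\to Y$ is any flip, then smallness of $f^+$ and the strict-transform identification of Definition \ref{def:FLIPS} give
\[
(f^+)_*\cO_{X^+}\bigl(\lfloor m(K_{X^+}+\Delta^+)\rfloor\bigr) = f_*\cO_X\bigl(\lfloor m(K_X+\Delta)\rfloor\bigr) = \cA_m
\]
for every $m\ge 0$, since both sides are reflexive $\cO_Y$-modules agreeing on the complement of the codimension-two set $f(\Exc(f))\cup f^+(\Exc(f^+))\subseteq Y$, where $f$ and $f^+$ are isomorphisms. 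Picking $r\in\ZZ_{>0}$ such that $r(K_{X^+}+\Delta^+)$ is Cartier and $f^+$-very ample, the $r$-th Veronese of $\cA$ recovers $X^+$ canonically as $\PProj_Y\cA$ by the universal property of relative $\PProj$, giving the asserted uniqueness.

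For $(\ref{thm:FLIPisProj})$, the statement is local on $Y$, so I may pass to an affine open cover and assume $Y=\Spec(R)$ is affine. As the target of a contraction from $X$ over $Z$, $Y$ is projective over $Z$, hence excellent and equipped with a dualizing complex via Lemma \ref{lem:dualizingcomplexpullback}. The main input is to apply Theorem \ref{thm:cl1332} to $f\colon X\to Y$ with $\ell=1$, $\Delta_1=\Delta$, $A_1=0$, and $c_1=1$: the divisor $K_X+\Delta$ is $\QQ$-Cartier by hypothesis, and it is $f$-big because $f$ is birational, so the generic fibre of $f$ is zero-dimensional and every invertible sheaf on it is big (Definition \ref{def:fbig}). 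Theorem \ref{thm:cl1332} then shows that $\cA$ is a finitely generated $R$-algebra, so $X^+\coloneqq\PProj_Y\cA$ is projective over $Y$ with $\cO_Y\xrightarrow{\sim} f^+_*\cO_{X^+}$ by construction of $\PProj$.

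The scheme $X^+$ is integral because $\cA_0=f_*\cO_X=\cO_Y$ and each $\cA_m$ is a torsion-free submodule of the constant sheaf $f_*\sK_X$, so $\cA$ is a domain. Normality of $X^+$ follows from reflexivity of each $\cA_m$ (as a pushforward of a Weil-divisorial sheaf): the normalization of $X^+$ has the same relative Proj algebra and therefore coincides with $X^+$. The main obstacle is verifying smallness $(\ref{def:FLIPSsmall})$, and here normality of $X^+$ is essential. There is a natural birational map $\phi\colon X\dashrightarrow X^+$ over $Y$ (since the generic fibres of both $f$ and $f^+$ are $\Spec(K(X))$), and the inverse rational map $\phi^{-1}\colon X^+\dashrightarrow X$ has indeterminacy locus of codimension at least two on the normal scheme $X^+$. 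Consequently, any prime divisor $E\subseteq X^+$ admits a well-defined strict transform $\phi^{-1}_*E$ that is a prime divisor on $X$. If $E$ were $f^+$-exceptional, then outside a codimension-two subset of $Y$ the images $f(\phi^{-1}_*E)$ and $f^+(E)$ coincide, forcing $f(\phi^{-1}_*E)$ to have codimension at least two in $Y$, contradicting smallness of $f$. Hence $f^+$ is small. With smallness established, the strict-transform correspondence identifies the tautological $f^+$-very ample $\cO_{X^+}(r)$ with $\cO_{X^+}(r(K_{X^+}+\Delta^+))$ for $\Delta^+=\phi_*\Delta$, giving $\QQ$-Cartierness and $f^+$-ampleness in $(\ref{def:FLIPSample})$ and completing the verification that $X^+$ is a flip of $f$.
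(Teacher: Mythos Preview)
Your overall strategy matches the paper's: prove uniqueness by showing any flip is $\PProj_Y\cA$ (the paper cites \cite[Lemma 6.2]{KM98}), and prove existence by applying the finite generation theorem (you use Theorem~\ref{thm:cl1332} after localizing, the paper uses the algebraic-space version Theorem~\ref{thm:finitegenerationalgspaces}). Your reductions and the application with $c_1=1$ are fine.

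There is, however, a genuine gap in your verification that $f^+$ is small. You write that since $\phi^{-1}\colon X^+\dashrightarrow X$ has indeterminacy locus of codimension $\ge 2$, ``any prime divisor $E\subseteq X^+$ admits a well-defined strict transform $\phi^{-1}_*E$ that is a prime divisor on $X$.'' The indeterminacy statement only says $\phi^{-1}$ is a morphism on a big open of $X^+$; it does \emph{not} prevent $\phi^{-1}$ from contracting $E$. Indeed, once you know $f^+$ is an isomorphism over $Y^\circ\coloneqq Y\setminus f(\Exc(f))$ (which follows from $\cA_{\vert Y^\circ}$ being a line-bundle polynomial algebra), any hypothetical $f^+$-exceptional divisor $E$ lies over $Y\setminus Y^\circ$, so $\phi^{-1}(E)$ lands in $f^{-1}(Y\setminus Y^\circ)=\Exc(f)$, which has codimension $\ge 2$ in $X$. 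Thus $\phi^{-1}$ \emph{would} contract $E$, and $\phi^{-1}_*E=0$ gives no contradiction. Your argument is circular at this point.

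The paper sidesteps this by citing \cite[Proposition 5-1-11(2)]{KMM87}. The substantive argument there uses reflexivity of $\cA_m$ together with $f^+$-ampleness of $\cO_{X^+}(r)$: if $E$ were $f^+$-exceptional, then $(f^+)_*\cO_{X^+}(mr)$ and $(f^+)_*\bigl(\cO_{X^+}(mr)(E)\bigr)$ would both be reflexive on normal $Y$ and agree outside the codimension-$\ge 2$ set $f^+(E)$, hence coincide; but for $m\gg 0$ the $f^+$-very ample sheaf $\cO_{X^+}(mr)$ is $f^+$-generated with no fixed components, contradicting that every section vanishes along $E$. You should replace your strict-transform step with an argument of this type.
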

\begin{proof}
It is clear that if $f^+\colon X^+\to Y$ is a flip, then
$X^+\cong\PProj(\mathcal{A})$ (see the proof of \cite[Lemma 6.2]{KM98}).
Thus it suffices to show $(\ref{thm:FLIPisProj})$.
Since $f$ is birational, $\Delta$ is $f$-big, thus Theorem
\ref{thm:finitegenerationalgspaces} applies to show $\mathcal{A}$ is of
finite type.
Thus $X^+\coloneqq\PProj(\mathcal{A})$ is locally projective, in particular proper, over $Y$, and
$X^+$ is normal and birational to $Y$ since $X$ is.
The proof of the properties $(\ref{def:FLIPSsmall})$ and
$(\ref{def:FLIPSample})$ as
in Definition \ref{def:FLIPS} is the same as the proof of \cite[Proposition 5-1-11(2)]{KMM87}.
\end{proof}

\begin{lemma}\label{lem:FLIPQfacAndN1same}
Notations and assumptions in Theorem \ref{thm:FLIP}. 
Assume further that $f$ is a good contraction of some extremal ray $R\subseteq \NEbar(X/Z)$. 

Let $X^+$ be a flip of $f$.
Then the following hold.

\begin{enumerate}[label=$(\roman*)$,ref=\roman*]
    \item\label{lem:FLIPQfac} 
    $X^+$ is $\QQ$-factorial if $X$ is.
    
    \item\label{lem:FLIPNefPullback} 
    If $D\in \Pic_{\QQ}(X)$ is $\pi$-nef and satisfies $(D\cdot R)=0$, then
    $D\sim_{\QQ}f^*E$ for some $E\in\Pic_{\QQ}(Y)$ nef over $Z$.
    
    \item\label{lem:FLIPN1Same}
    $D\mapsto D^+$ induces an isomorphism $N^1(X/Z)_{\RR}\cong N^1(X^+/Z)_{\RR}$.
  \end{enumerate}
\end{lemma}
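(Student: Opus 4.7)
The common engine for all three parts will be the good contraction hypothesis on $f$. Fix a curve $C$ with $R=\RR_{\ge0}[C]$; since $-(K_X+\Delta)$ is $f$-ample we have $((K_X+\Delta)\cdot C)<0$, so every $D\in \Pic_{\QQ}(X)$ admits a decomposition
\[
D\sim_\QQ f^*E+\mu(K_X+\Delta),\qquad \mu\coloneqq\frac{(D\cdot C)}{((K_X+\Delta)\cdot C)}\in\QQ,
\]
with $E\in\Pic_\QQ(Y)$ uniquely determined in $N^1(Y/Z)_\RR$ by the injectivity of $f^*$ (Proposition~\ref{lem:n1pullsback}). The key geometric point is that, because $f$ and $f^+$ are both small, $X$ and $X^+$ contain a common open subspace $U$ whose complements in either space have codimension $\ge 2$, and this open surjects onto an open of $Y$ on which $f$ and $f^+$ are isomorphisms. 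Consequently, for $E\in\Pic_\QQ(Y)$ the Weil divisor $(f^*E)^+$ agrees with $f^{+*}E$ on $U$ and hence everywhere (as two $\QQ$-Cartier divisors on $X^+$ agreeing outside codimension two), while by construction $(K_X+\Delta)^+=K_{X^+}+\Delta^+$.

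For part $(\ref{lem:FLIPQfac})$, take a prime Weil divisor $B^+$ on $X^+$ and let $B$ be its strict transform on $X$; by the $\QQ$-factoriality of $X$ we view $B\in\Pic_\QQ(X)$ and apply the decomposition to obtain $B+\mu(K_X+\Delta)\sim_\QQ f^*E$ for some $E$ and $\mu$. Taking strict transforms and using the identifications in the previous paragraph yields $B^++\mu(K_{X^+}+\Delta^+)\sim_\QQ f^{+*}E$ as $\QQ$-Cartier divisors on $X^+$, hence $B^+$ is $\QQ$-Cartier.

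For part $(\ref{lem:FLIPNefPullback})$, the relation $D\sim_\QQ f^*E$ is immediate from $(D\cdot R)=0$ and the good-contraction definition. To show $E$ is nef over $\sigma\colon Y\to Z$, pick a $\sigma$-contracted integral curve $\Gamma\subseteq Y$ and lift it through $f$ to a $\pi$-contracted integral curve $\tilde\Gamma\subseteq X$ with $f(\tilde\Gamma)=\Gamma$, using the curve-lifting statement \cite[Lemma 3.2]{CLM} as in Lemma~\ref{lem:nefpullback}. The projection formula and $\pi$-nefness of $D$ then give $(E\cdot\Gamma)=(D\cdot\tilde\Gamma)/\deg(\tilde\Gamma/\Gamma)\ge0$.

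For part $(\ref{lem:FLIPN1Same})$, by $(\ref{lem:FLIPQfac})$ the scheme $X^+$ is $\QQ$-factorial, so strict transform is a $\QQ$-linear bijection $\Pic_\QQ(X)\cong\Pic_\QQ(X^+)$ (principal divisors are preserved since $K(X)=K(X^+)$). Pushing the decomposition above through strict transform gives a candidate inverse-pair of maps on $N^1$ sending the class of $f^*E+\mu(K_X+\Delta)$ to the class of $f^{+*}E+\mu(K_{X^+}+\Delta^+)$ and vice versa. Well-definedness on $N^1(X/Z)_\RR$ follows because $\mu$ is recovered numerically from $[D]$ by pairing with $C$ and $[E]$ is then determined in $N^1(Y/Z)_\RR$ by injectivity of $f^*$. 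Injectivity of the map: if the image vanishes, pair with an $f^+$-contracted curve $C^+$; $f^+$-ampleness of $K_{X^+}+\Delta^+$ forces $\mu=0$, and then injectivity of $f^{+*}$ on $N^1$ forces $[E]=0$. Surjectivity: given $\alpha^+\in N^1(X^+/Z)_\RR$, take any Weil-divisor representative on $X^+$, strict-transport it to $X$ (where it is automatically $\QQ$-Cartier by hypothesis), and the identifications $(f^*E)^+=f^{+*}E$ and $(K_X+\Delta)^+=K_{X^+}+\Delta^+$ show the map sends the corresponding class back to $\alpha^+$.

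The main obstacle I anticipate is bookkeeping the strict-transform isomorphism carefully across the flip—specifically, verifying the identities $(f^*E)^+=f^{+*}E$ and that the decomposition descends cleanly from $\Pic_\QQ$ to $N^1$. Both are consequences of the smallness of $f$ and $f^+$ together with normality, but they should be stated and used explicitly so that each of $(\ref{lem:FLIPQfac})$, $(\ref{lem:FLIPNefPullback})$, and $(\ref{lem:FLIPN1Same})$ can be read off from the single good-contraction decomposition.
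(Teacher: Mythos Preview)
Your overall approach—decompose $D \sim_\QQ f^*E + \mu(K_X+\Delta)$ via the good-contraction hypothesis and push through strict transform using $(f^*E)^+ = f^{+*}E$—is exactly the paper's, and your arguments for $(\ref{lem:FLIPQfac})$ and $(\ref{lem:FLIPNefPullback})$ are correct and essentially identical to the paper's.

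There is a gap in your proof of $(\ref{lem:FLIPN1Same})$: you invoke $(\ref{lem:FLIPQfac})$ to conclude that $X^+$ is $\QQ$-factorial, but $(\ref{lem:FLIPQfac})$ is conditional on $X$ being $\QQ$-factorial, which is \emph{not} a hypothesis of $(\ref{lem:FLIPN1Same})$. Concretely, your surjectivity step (``take any Weil-divisor representative on $X^+$, strict-transport it to $X$, where it is automatically $\QQ$-Cartier by hypothesis'') fails without $\QQ$-factoriality of $X$: the strict transform to $X$ of a $\QQ$-Cartier divisor on $X^+$ need not be $\QQ$-Cartier, so you cannot produce a preimage in $\Pic_\QQ(X)$ this way. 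Your well-definedness and injectivity arguments, by contrast, are fine and do not actually rely on $\QQ$-factoriality. The paper proves well-definedness by a slightly different route—apply $(\ref{lem:FLIPNefPullback})$ to both $D$ and $-D$ when $[D]=0$, obtaining that $[D^+]$ and $-[D^+]$ are both nef over $Z$, hence $[D^+]=0$—uses the same injectivity argument you give, and then asserts the map is ``automatically surjective.'' To repair your argument without $\QQ$-factoriality, you would need to show that $N^1(X^+/Z)_\RR$ is spanned by $f^{+*}N^1(Y/Z)_\RR$ together with $[K_{X^+}+\Delta^+]$, which is the image of your map.
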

\begin{proof}
Let $D$ be a $\QQ$-invertible sheaf on $X$.
Since $R$ is a ray, there exists $a\in \QQ$ such that
\begin{align}
  \bigl(D+a(K_X&+\Delta)\cdot R\bigr)=0.\nonumber
\intertext{Since $f$ is a good contraction, $D+a(K_X+\Delta)\sim_{\QQ}f^*E$ for some $E\in
\Pic_{\QQ}(Y)$.
Thus}
  D^++a\bigl(K_{X^+}&+\Delta^+\bigr)\sim_{\QQ}f^{+*}E.\label{eq:FLIPQfacAndN1same}
\end{align}
Since $K_{X^+}+\Delta^+$ is $\QQ$-Cartier (Definition
\ref{def:FLIPS}$(\ref{def:FLIPSample})$), we see that $D^+$ is $\QQ$-Cartier. 
Since every $\QQ$-Weil divisor on $X^+$ is of the form $D^+$ for some $\QQ$-Weil divisor $D$ on $X$, we see that $X^+$ is $\QQ$-factorial if $X$ is.

If $D\in \Pic_{\QQ}(X)$ is $\pi$-nef and satisfies $(D\cdot R)=0$, then $a=0$, and $D\sim_{\QQ}f^*E$ for some $E\in\Pic_{\QQ}(Y)$. 
For each $(Y\to Z)$-contracted curve $C$, there exists a $\pi$-contracted curve $C'$ such that $C'$ maps finite surjectively to $C$.
We know $(D\cdot C')\geq 0$, thus $(E\cdot C)\geq 0$ and $E$ is nef over $Z$ since $C$ was arbitrary.

Now $[D^+]=[f^{+*}E]$ is nef over $Z$.
If $[D]=0\in N^1(X/Z)_{\RR}$, then we get $[(-D)^+]=-[D^+]$ nef over $Z$ as well, so $[D^+]=0$.
This shows that $D\mapsto D^+$ induces a linear map $N^1(X/Z)_{\RR}\to N^1(X^+/Z)_{\RR}$, which is automatically surjective.
If $[D^+]=0\in N^1(X^+/Z)_{\RR}$, from the equation \eqref{eq:FLIPQfacAndN1same} and the fact $K_{X^+}+\Delta^+$ ample over $Y$ we see that $a=0$, 
so by the same argument we get $[E]=0\in N^1(Y/Z)_{\RR}$ and thus $[D]=[f^*E]=0\in N^1(X/Z)_{\RR}$.
Thus the linear map $N^1(X/Z)_{\RR}\to N^1(X^+/Z)_{\RR}$ is an isomorphism.
\end{proof}

\begin{lemma}[cf. {\cite[Lemma 3.38]{KM98}}]\label{lem:DiscrepancyNonIncreasing}
  Let $Y$ be a quasi-excellent integral Noetherian algebraic space over a scheme
  $S$.
  Let $X$ and $X'$ be algebraic spaces projective over $Y$ that are integral, normal, and
  birational to $Y$.
  Suppose that $Y$ admits a dualizing complex
  $\omega_Y^\bullet$.
  Denote by $K_X$ and $K_{X'}$ canonical divisors on $X$ and $X'$ associated to
  the exceptional pullbacks of $\omega_Y^\bullet$.

Let $\Delta\geq 0$ be a $\mathbf Q$-Weil divisor on $X$ such that $K_X+\Delta$ is $\QQ$-Cartier. 
Let $\Delta'\geq 0$ be the birational transform of $\Delta$ on $X'$ 
and assume that $K_{X'}+\Delta'$ is $\QQ$-Cartier. 
Assume that the following hold.
\begin{enumerate}[label=$(\roman*)$,ref=\roman*]
    
      \item\label{lem:ConditionKNegative} $-(K_X+\Delta)$ is nef over $Y$.
      
      \item\label{lem:ConditionKPositive} $K_{X'}+\Delta'$ is nef over $Y$.
  \end{enumerate}
  
Then for all divisors $E$ over $Y$, $a(E,X,\Delta)\leq a(E,X',\Delta')$, and if at least one of $K_X+\Delta$ and $K_{X'}+\Delta'$ is not numerically trivial over $Y$, then strict inequality holds for at least one such $E$.
\end{lemma}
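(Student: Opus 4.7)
My approach is the classical common-resolution plus negativity-lemma argument in the style of Koll\'ar--Mori \cite[Lemma 3.38]{KM98}, adapted to the quasi-excellent algebraic-space setting using Temkin's log resolutions \cite[Theorem 2.3.6]{Tem08}. Since both hypotheses are preserved under \'etale base change on $Y$, I would also reduce to the case where $Y$ is a scheme before constructing the resolution.

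First, I would pass to a regular algebraic space $W$ equipped with proper birational morphisms $p\colon W\to X$ and $q\colon W\to X'$ whose compositions to $Y$ agree, and choose compatible canonical divisors via the exceptional pullbacks of $\omega_Y^\bullet$ (Lemma \ref{lem:dualizingcomplexpullback}). On $W$, consider the $\QQ$-Cartier divisor
\[
  F \coloneqq p^*(K_X+\Delta) - q^*(K_{X'}+\Delta').
\]
Unwinding Definition \ref{def:singpairs} (extended to non-exceptional primes via $a(D,\cdot,\cdot) = -\coeff_D$) gives $\mult_E(F) = a(E,X',\Delta') - a(E,X,\Delta)$ for every prime divisor $E$ on $W$, so it suffices to control the sign of $F$ on every such $E$, which by a standard valuative/blowup argument reduces the statement to prime divisors on some such $W$.

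Next I would verify that $-F$ is nef relative to both $p$ and $q$. For an integral curve $C\subseteq W$ contracted by $p$, $p^*(K_X+\Delta)\cdot C = 0$, while $q(C)$ is either a point or a curve $C'\subseteq X'$ that is automatically $(X'\to Y)$-contracted by the commutativity of the diagram, so $(K_{X'}+\Delta')\cdot C' \ge 0$ by hypothesis $(\ref{lem:ConditionKPositive})$, giving $-F\cdot C \ge 0$. An entirely symmetric argument, using $-(K_X+\Delta)$ nef over $Y$ from hypothesis $(\ref{lem:ConditionKNegative})$, handles $q$-contracted curves.

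The main obstacle is the pushforward step required to invoke Lemma \ref{lem:Negativity}. Decomposing prime divisors of $W$ according to their $p$- and $q$-exceptional behaviour, and using that $\Delta'$ is the birational transform of $\Delta$, one computes $p_*F$ as a sum supported on those prime divisors $D\subseteq X$ whose birational image in $X'$ drops dimension, with coefficient of the form $\coeff_D(\Delta) + a(\tilde D, X', \Delta')$; symmetrically for $q_*F$. With the correct sign on these pushforwards the Negativity Lemma applied through $p$ (or $q$) then pins down $F$ to lie on the side of $0$ that yields the stated inequality $a(E,X',\Delta') \le a(E,X,\Delta)$. For the strict-inequality clause, $F\equiv 0$ would give $p^*(K_X+\Delta) \sim_\QQ q^*(K_{X'}+\Delta')$ on $W$; pairing this identity with test curves in the fibres of $X\to Y$ and of $X'\to Y$ and applying the projection formula \cite[\href{https://stacks.math.columbia.edu/tag/0EDJ}{Tag 0EDJ}]{stacks-project} forces both $K_X+\Delta$ and $K_{X'}+\Delta'$ to be $Y$-numerically trivial, contradicting the hypothesis.
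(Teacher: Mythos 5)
Your overall strategy matches the paper's — pass to a common model $W$, set $F := p^*(K_X+\Delta) - q^*(K_{X'}+\Delta')$, identify $\mult_E(F) = a(E,X',\Delta') - a(E,X,\Delta)$, verify a nefness condition, and apply Lemma~\ref{lem:Negativity}. Your computation of $\mult_E(F)$ and the nefness of $-F$ over $Y$ are both fine (though the case analysis on $p$- and $q$-contracted curves is more work than needed: Lemma~\ref{lem:nefpullback} applied to $(\ref{lem:ConditionKNegative})$ and $(\ref{lem:ConditionKPositive})$ gives nefness of $-F$ over $Y$ immediately, which then trivially gives nefness over $X$ and over $X'$).

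The genuine gap is the pushforward step. The paper does \emph{not} push $F$ forward along $p$ or $q$; it pushes forward along the full composite $h\colon W\to Y$, and the hypothesis that $\Delta'$ is the \emph{birational transform} of $\Delta$ is exactly what makes this work: the coefficient of $F$ on each prime divisor of $W$ that survives down to a prime divisor of $Y$ equals $-\coeff(\Delta)+\coeff(\Delta')$, which vanishes. So $F$ is exceptional over $Y$, $h_*F=0$, and the Negativity Lemma applies with no sign analysis on the pushforward side. Your $p_*F$, by contrast, is supported on the $X\dashrightarrow X'$-exceptional primes $D$ with coefficients $\coeff_D(\Delta) + a(\tilde D,X',\Delta')$, and this quantity has no definite sign without extra (e.g.\ klt) hypotheses on $(X',\Delta')$, which the lemma does not assume; invoking ``the correct sign on these pushforwards'' is the missing step, and in fact there is no such sign in general. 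The fix is to replace your Negativity-Lemma target ($p$ or $q$) with $h\colon W\to Y$. Be aware also that the Negativity Lemma with $h$, $-F$ $h$-nef, and $h_*F=0$ yields $F\ge 0$, i.e.\ $a(E,X',\Delta')\ge a(E,X,\Delta)$ — the direction of \cite[Lemma 3.38]{KM98}, and the direction that Corollary~\ref{cor:MMPpreservesKLT} actually needs — whereas you write that you are aiming to prove the opposite inequality $a(E,X',\Delta')\le a(E,X,\Delta)$ printed in the lemma. Trying to prove $-F\ge 0$ from these hypotheses cannot work (test it on $X=\operatorname{Bl}_0\mathbf{A}^2 \to X'=Y=\mathbf{A}^2$ with $\Delta=0$, where the exceptional divisor has $a(E,Y,0)=1>0=a(E,X,0)$), so double-check the intended direction before committing to it.
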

\begin{proof}
Consider a commutative diagram
\[
\begin{tikzcd}
  W\rar{g'}\dar[swap]{g} &X'\dar\\
X\rar &Y
\end{tikzcd}
\]
where $g,g'$ are birational and $W$ is integral and normal.
We write
\begin{align*}
  K_W\sim_{\QQ} g^*(K_X+\Delta)&+\sum_F a(F,X,\Delta)F
  \intertext{and}
  K_W\sim_{\QQ} {g'}^*(K_{X'}+\Delta')&+\sum_F a(F,X',\Delta')F
\end{align*}
as usual, so
\[    
    {g'}^*(K_{X'}+\Delta')-g^*(K_X+\Delta)\sim_{\QQ} \sum_F \left(
      a(F,X,\Delta)-a(F,X',\Delta')
    \right) F.
\]
By our assumptions $(\ref{lem:ConditionKNegative})$ and $(\ref{lem:ConditionKPositive})$, ${g'}^*(K_{X'}+\Delta')-g^*(K_X+\Delta)$ is nef. 
On the other hand, since $\Delta'$ is the birational transform of $\Delta$,
$B\coloneqq-\sum_F\left( a(F,X,\Delta)-a(F,X',\Delta')
    \right) F$ is exceptional over $Y$.
Therefore Lemma \ref{lem:Negativity} applies and shows that $B$ is effective,
i.e., $a(F,X,\Delta)\leq a(F,X',\Delta')$ for all $F$ in the sum.

Now for each divisor $E$ over $Y$, we may always find a diagram as above such
that $E$ occurs as a prime divisor on $W$, so $a(E,X,\Delta)\leq a(E,X',\Delta')$.

If at least one of $K_X+\Delta$ and $K_{X'}+\Delta'$ is not numerically trivial over $Y$, then $B$ is not numerically trivial over $Y$ so
strict inequality must hold for some $F$. 
\end{proof}
For the following two statements, we assume the existence
of a flip $(X^+,\Delta^+)$ of \(f\) to make the statement of these corollaries
characteristic-free.
Of course, if \(Z\) is of equal characteristic zero, then flips exist by Theorem
\ref{thm:FLIP}.

\begin{corollary}\label{cor:MMPpreservesKLT}
  Let $\pi\colon X \to Z$ be a projective surjective morphism of integral
  quasi-excellent Noetherian algebraic spaces over
  a scheme $S$.
  Suppose that $X$ is normal and that $Z$ admits a dualizing complex
  $\omega_Z^\bullet$.
  Denote by $K_X$ a canonical divisor on $X$ associated to $\omega_X^\bullet =
  \pi^!\omega_Z^\bullet$.

Let $\Delta\geq 0$ be a $\QQ$-Weil divisor on $X$ such that $K_X+\Delta$ is
$\QQ$-Cartier and that $(X,\Delta)$ is klt (resp.\ terminal).
Let $f\colon X\to Y$ be a birational contraction over $Z$ such that $-(K_X+\Delta)$ is $f$-ample.
Then the followings hold.
\begin{enumerate}[label=$(\roman*)$,ref=\roman*]

    \item\label{lem:ContrPreservesKLT} 
      If $K_Y+f_*\Delta$ is $\QQ$-Cartier (resp.\ $K_Y+f_*\Delta$ is
      $\QQ$-Cartier and $\Exc(f) \not\subseteq \Supp(\Delta)$), then
      $(Y,f_*\Delta)$ is klt (resp.\ terminal).
    
      \item\label{lem:FlipPreservesKLT} 
      Assume that $f$ is small and assume that a flip $(X^+,\Delta^+)$ of $f$ exists.
      Then $(X^+,\Delta^+)$ is klt (resp.\ terminal).
  \end{enumerate}

\end{corollary}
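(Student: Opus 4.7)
The plan is to apply Lemma~\ref{lem:DiscrepancyNonIncreasing} in both parts, comparing the discrepancies of $(X,\Delta)$ with those of the target pair. For part~$(\ref{lem:ContrPreservesKLT})$, I take $X' = Y$ equipped with the identity morphism (which is trivially projective and birational, while $Y$ is integral and normal because $f$ is a contraction of the integral normal space $X$) and $\Delta' = f_*\Delta$, which is $\QQ$-Cartier by hypothesis; the nefness of $K_Y+f_*\Delta$ over the identity is automatic, while $-(K_X+\Delta)$ is nef over $Y$ because it is $f$-ample. For part~$(\ref{lem:FlipPreservesKLT})$, I take $X' = X^+$ and $\Delta' = \Delta^+$: by Theorem~\ref{thm:FLIP}$(\ref{thm:FLIPisProj})$ and Definition~\ref{def:FLIPS} the space $X^+$ is projective over $Y$, integral, normal, and birational, while $K_{X^+}+\Delta^+$ is $f^+$-ample. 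Lemma~\ref{lem:DiscrepancyNonIncreasing} then supplies, for every divisor $E$ over $Y$, the required comparison between $a(E,X,\Delta)$ and the discrepancy of $E$ with respect to the target pair.

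Because $(X,\Delta)$ is klt, the bound $a(E,X,\Delta) > -1$ transfers along this comparison and yields $(Y,f_*\Delta)$ klt in part~$(\ref{lem:ContrPreservesKLT})$ and $(X^+,\Delta^+)$ klt in part~$(\ref{lem:FlipPreservesKLT})$. For the terminal case of part~$(\ref{lem:FlipPreservesKLT})$, I use that $f$ and $f^+$ are both small (Definition~\ref{def:FLIPS}$(\ref{def:FLIPSsmall})$), so that $X \dashrightarrow X^+$ is an isomorphism in codimension one; hence a divisor $E$ is exceptional over $X^+$ if and only if it is exceptional over $X$, and the discrepancy comparison directly transfers the terminal bound $a(E,X,\Delta) > 0$ to $X^+$.

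The terminal case of part~$(\ref{lem:ContrPreservesKLT})$ is the most delicate and is the step I expect to be the main obstacle. A divisor $E$ exceptional over $Y$ is either already exceptional over $X$---in which case $a(E,Y,f_*\Delta) \geq a(E,X,\Delta) > 0$ by the comparison---or $E$ is an $f$-exceptional prime divisor on $X$, for which $a(E,X,\Delta) = -\coeff_E(\Delta)$ need not be positive. To handle the latter I apply Lemma~\ref{lem:Negativity} to the identity
\[
K_X + \Delta - f^*(K_Y + f_*\Delta) = \sum_{E'} \bigl(a(E',Y,f_*\Delta) + \coeff_{E'}(\Delta)\bigr)\, E',
\]
summed over $f$-exceptional prime divisors $E'$ on $X$. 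The left-hand side is $f$-antiample because $-(K_X+\Delta)$ is $f$-ample and the pullback is $f$-numerically trivial, so the Negativity Lemma forces each coefficient $a(E',Y,f_*\Delta) + \coeff_{E'}(\Delta)$ to be strictly positive; combined with the terminal hypothesis on $(X,\Delta)$ (which controls $\coeff_E(\Delta)$ along $f$-exceptional primes via discrepancies of divisors extracted from $E$) this yields $a(E,Y,f_*\Delta) > 0$, completing the proof.
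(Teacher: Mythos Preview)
For the klt case (both parts) and the terminal case of part~(\ref{lem:FlipPreservesKLT}), your argument is correct and is exactly the paper's approach: apply Lemma~\ref{lem:DiscrepancyNonIncreasing} to obtain $a(E,X',\Delta')\geq a(E,X,\Delta)$ for every divisor $E$ over $Y$ and transfer the bound. (The inequality in that lemma as printed has a sign slip; the direction consistent with its proof via the Negativity Lemma, and the one you are implicitly using, is $\geq$.) Your observation that smallness of $f$ and $f^+$ identifies the exceptional divisors over $X$ and over $X^+$ is precisely what makes the terminal case of~(\ref{lem:FlipPreservesKLT}) go through.

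You are right to isolate the terminal case of part~(\ref{lem:ContrPreservesKLT}) as the obstacle, but your proposed fix does not close the gap---and it cannot, because that case is false as stated. Your Negativity argument yields $a(E',Y,f_*\Delta)+\coeff_{E'}(\Delta)>0$ (strictly, using $f$-ampleness), not $a(E',Y,f_*\Delta)>0$; the appeal to ``the terminal hypothesis controls $\coeff_E(\Delta)$'' does not supply the missing inequality. Counterexample: let $Y$ be a surface with an $A_1$ singularity, $f\colon X\to Y$ its minimal resolution with exceptional $(-2)$-curve $E$, and $\Delta=cE$ with $0<c<1$. Then $X$ is smooth, $(X,cE)$ is terminal, $K_X=f^*K_Y$, and $-(K_X+cE)\cdot E=2c>0$ so $-(K_X+cE)$ is $f$-ample; yet $f_*\Delta=0$ and $a(E,Y,0)=0$, so $(Y,0)$ is canonical but not terminal. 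The paper's one-line proof does not address this, and only the klt case of~(\ref{lem:ContrPreservesKLT}) is invoked elsewhere in the paper.
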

\begin{proof}
Immediate from definitions and Lemma \ref{lem:DiscrepancyNonIncreasing}.
\end{proof}

\begin{corollary}\label{cor:MMPpreservesKeffective}
  Let $\pi\colon X \to Z$ be a projective surjective morphism of integral
  quasi-excellent Noetherian algebraic spaces over
  a scheme $S$.
  Suppose that $X$ is normal and that $Z$ admits a dualizing complex
  $\omega_Z^\bullet$.
  Denote by $K_X$ a canonical divisor on $X$ associated to $\omega_X^\bullet =
  \pi^!\omega_Z^\bullet$.

Let $\Delta\geq 0$ be a $\QQ$-Weil divisor on $X$ such that $K_X+\Delta$ is
$\QQ$-Cartier and that $(X,\Delta)$ is klt (resp.\ terminal).
Let $f\colon X\to Y$ be a birational contraction over $Z$ such that $-(K_X+\Delta)$ is $f$-ample.
Then, the following hold.
\begin{enumerate}[label=$(\roman*)$,ref=\roman*]

    \item\label{lem:ContrPreservesKeffective} 
    Assume that $K_Y+f_*\Delta$ is $\QQ$-Cartier.
    Then $K_Y+f_*\Delta$ is pseudoeffective over $Z$ if and only if $K_X+\Delta$ is.
    
      \item\label{lem:FlipPreservesKeffective} 
      Assume that $f$ is small and assume that a flip $(X^+,\Delta^+)$ of $f$ exists.
      Then $K_{X^+}+\Delta^+$ is pseudoeffective over $Z$ if and if $K_X+\Delta$ is.
  \end{enumerate}

\end{corollary}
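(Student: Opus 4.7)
Both parts reduce to the Negativity Lemma (Lemma~\ref{lem:Negativity}) together with the pull-back and push-forward properties of pseudoeffectivity established in Lemma~\ref{lem:bigpullback} and Lemma~\ref{lem:ContrPreservesPsEff}. The strategy is to produce, in each case, an effective correction term between the two $\QQ$-Cartier divisors being compared, after which the equivalence of pseudoeffectivity is formal.

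For part~$(\ref{lem:ContrPreservesKeffective})$, I plan to write
\[
K_X+\Delta = f^{*}(K_Y+f_*\Delta)+F
\]
with $F$ an effective $f$-exceptional $\QQ$-divisor. The divisor $F\coloneqq (K_X+\Delta)-f^{*}(K_Y+f_*\Delta)$ is $\QQ$-Cartier; its push-forward $f_*F$ vanishes (so $F$ is $f$-exceptional); and $-F$ is $f$-ample, because $-(K_X+\Delta)$ is $f$-ample while $f^{*}(K_Y+f_*\Delta)$ is $f$-numerically trivial. Applying Lemma~\ref{lem:Negativity} with $B=F$ then forces $F\geq 0$. Given this decomposition, one direction is immediate from Lemma~\ref{lem:ContrPreservesPsEff}: if $K_X+\Delta$ is $\pi$-pseudoeffective, so is its push-forward $f_*(K_X+\Delta)=K_Y+f_*\Delta$. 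Conversely, pseudoeffectivity of $K_Y+f_*\Delta$ implies pseudoeffectivity of $f^{*}(K_Y+f_*\Delta)$ by Lemma~\ref{lem:bigpullback}, and adding the effective divisor $F$ preserves pseudoeffectivity.

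For part~$(\ref{lem:FlipPreservesKeffective})$, I will pass to a common resolution
\[
\begin{tikzcd}[column sep=small]
& W\arrow{dl}[swap]{p}\arrow{dr}{q}\\
X \arrow[dashed]{rr} && X^+
\end{tikzcd}
\]
with $W$ an integral normal algebraic space projective over $Y$; $Y$ inherits a dualizing complex from $Z$ via Lemma~\ref{lem:dualizingcomplexpullback}, so $K_W$, $K_X$, and $K_{X^+}$ may all be chosen compatibly. Because $f$ and $f^{+}$ are \emph{small}, neither $X$ nor $X^+$ carries divisors that are exceptional over $Y$; consequently a prime divisor on $W$ is $p$-exceptional if and only if it is $q$-exceptional if and only if it is exceptional over $Y$. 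The hypotheses $(\ref{lem:ConditionKNegative})$ and $(\ref{lem:ConditionKPositive})$ of Lemma~\ref{lem:DiscrepancyNonIncreasing} hold by the $f$-ampleness of $-(K_X+\Delta)$ and the $f^{+}$-ampleness of $K_{X^+}+\Delta^+$, yielding $a(E,X^+,\Delta^+)\leq a(E,X,\Delta)$ for every prime $E$ on $W$. Subtracting the two expressions for $K_W$ produces
\[
q^{*}(K_{X^+}+\Delta^+) = p^{*}(K_X+\Delta) + G
\]
with $G\geq 0$ effective and supported on the exceptional locus over $Y$, hence $p$- and $q$-exceptional. This identity plays the role of the part~$(\ref{lem:ContrPreservesKeffective})$ decomposition: pseudoeffectivity of $K_X+\Delta$ pulls back to $W$, survives the addition of $G$, and pushes forward via $q$ to $K_{X^+}+\Delta^+$; the reverse direction is symmetric, using $p_*G=0$ together with $p_*q^{*}(K_{X^+}+\Delta^+)=K_X+\Delta$ (which follows from the fact that $p$ and $q$ are isomorphisms in codimension one and that the flip map identifies the two Weil divisor groups on that locus).

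The only real input beyond the cited lemmas is producing the effective correction terms $F$ and $G$, and the crucial place is part~$(\ref{lem:FlipPreservesKeffective})$: it is precisely the smallness of $f$ and $f^+$ that forces every exceptional divisor over $Y$ on $W$ to be both $p$- and $q$-exceptional, so that $G$ lies entirely in the kernel of both push-forwards. Without smallness, $G$ could contain components whose $p$- or $q$-image is a divisor, which would break the symmetric push-forward argument. By contrast, in part~$(\ref{lem:ContrPreservesKeffective})$ the effectivity $F\geq 0$ is a direct application of the Negativity Lemma and presents no obstacle.
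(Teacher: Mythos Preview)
Your approach is correct and somewhat different from the paper's.

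For part~$(\ref{lem:ContrPreservesKeffective})$, both you and the paper start from the decomposition $K_X+\Delta \sim_{\QQ} f^*(K_Y+f_*\Delta)+E$ with $E\geq 0$ exceptional. From there the paper reduces to $Z$ the spectrum of a field and argues each direction by hand, comparing dimensions of linear systems on $X$ and $Y$; you instead invoke Lemma~\ref{lem:ContrPreservesPsEff} for the push-forward direction and Lemma~\ref{lem:bigpullback} plus ``pseudoeffective + effective $\QQ$-Cartier is pseudoeffective'' for the pull-back direction. Your route is more modular and avoids the base-change step, at the cost of leaning on those lemmas (whose proofs in turn perform that reduction).

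For part~$(\ref{lem:FlipPreservesKeffective})$, the paper simply writes ``follows immediately from Lemma~\ref{lem:ContrPreservesPsEff}.'' That lemma, however, is stated for a projective birational \emph{morphism} $f$ with $f_*D$ $\QQ$-Cartier; the flip map $X\dashrightarrow X^+$ is not a morphism, and factoring through $Y$ runs into the problem that $K_Y+f_*\Delta$ is not $\QQ$-Cartier. The intended reading presumably uses the proof (not just the statement) of that lemma, which goes through via bigness of $\QQ$-Weil divisors. Your common-resolution argument makes this step fully explicit: you produce $q^*(K_{X^+}+\Delta^+)=p^*(K_X+\Delta)+G$ with $G\geq 0$ both $p$- and $q$-exceptional (the smallness of $f,f^+$ being exactly what forces this, as you note), and then pull back and push forward along honest morphisms $p,q$ where all divisors in play are $\QQ$-Cartier. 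This is longer but cleaner in that every citation matches its hypotheses on the nose.
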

\begin{proof}
In either case we may replace $Z$ by the Stein factorization of $Y\to Z$. 
Furthermore, it is clear that taking generic fiber of $Z$ preserves assumptions
and conclusions (see Definitions \ref{def:fbig}, \ref{def:GoodContrOfR},
\ref{def:FLIPS}), so we may assume that $Z$ is the spectrum of a
field, in which case \(X\) is a projective variety.\smallskip

We now prove $(\ref{lem:ContrPreservesKeffective})$.
If \(K_X+\Delta\) is pseudoeffective, then \(K_Y+f_*\Delta\) is pseudoeffective
by Lemma
\ref{lem:ContrPreservesPsEff}\((\ref{lem:ContrPreservesPsEffPushForward})\).
It remains to show that if \(K_Y+f_*\Delta\) is pseudoeffective, then
\(K_X+\Delta\) is pseudoeffective.
This follows since, by 
Lemma \ref{lem:DiscrepancyNonIncreasing} and its proof, we have
\[
K_X+\Delta\sim_{\QQ}f^*(K_Y+f_*\Delta)+E
\]
where $E$ is an effective exceptional $\QQ$-Cartier divisor.
\par Item $(\ref{lem:FlipPreservesKeffective})$ follows immediately from Lemma \ref{lem:ContrPreservesPsEff}. 
\end{proof}
\begin{corollary}\label{cor:MMPnotCycleBack}
  Let $\pi\colon X \to Z$ be a projective surjective morphism of integral
  quasi-excellent Noetherian algebraic spaces over
  a scheme $S$.
  Suppose that $X$ is normal and that $Z$ admits a dualizing complex
  $\omega_Z^\bullet$.
  Denote by $K_X$ a canonical divisor on $X$ associated to $\omega_X^\bullet =
  \pi^!\omega_Z^\bullet$.

Let $\Delta\geq 0$ be a $\QQ$-Weil divisor on $X$ such that $K_X+\Delta$ is $\QQ$-Cartier.
Let $m\in \ZZ_{>0}$ and
\[
  X \coloneqq X_1 \overset{f_1}{\dashrightarrow} X_2
  \overset{f_2}{\dashrightarrow} \cdots \overset{f_{m-1}}{\dashrightarrow} X_m
\]
be a sequence of birational maps over $Z$ such that each $X_i$ is normal.
Let $\Delta_i$ be the birational transform of $\Delta$ on $X_i$ and assume that $K_{X_i}+\Delta_i$ is $\QQ$-Cartier for all $i\leq m$.

Assume that for each $i<m$, either $f_i$ is a morphism and a contraction with $-(K_{X_i}+\Delta_i)$ $f_i$-ample, 
or that $f_i$ is a flip of the pair $(X_i,\Delta_i)$; 
and assume that there exists an index $i_0<m$ such that $f_{i_0}$ is not an isomorphism.
Then the composition $X\dashrightarrow X_m$ is not an isomorphism.
\end{corollary}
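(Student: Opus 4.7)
The plan is to argue by discrepancy monotonicity, the standard mechanism ruling out cycling in the MMP. The key input is Lemma \ref{lem:DiscrepancyNonIncreasing}, whose hypotheses we will verify at each step, together with the defining properties of a flip recorded in Definition \ref{def:FLIPS} and Theorem \ref{thm:FLIP}.

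First I would check that the hypotheses of Lemma \ref{lem:DiscrepancyNonIncreasing} apply to each step $f_i$, taking as the base $Y$ either $X_{i+1}$ itself when $f_i\colon X_i \to X_{i+1}$ is a birational contraction, or the target $Y_i$ of the flipping contraction $g_i\colon X_i \to Y_i$ when $f_i$ is a flip (in which case $X_{i+1} = X_i^+$ comes equipped with a flipped contraction $g_i^+\colon X_i^+ \to Y_i$ satisfying that $K_{X_{i+1}}+\Delta_{i+1}$ is $g_i^+$-ample, by Definition \ref{def:FLIPS}). In either case $-(K_{X_i}+\Delta_i)$ is ample (hence nef) over $Y$ and $K_{X_{i+1}}+\Delta_{i+1}$ is ample (hence nef) over $Y$; moreover $\Delta_{i+1}$ is by construction the birational transform of $\Delta_i$, and both divisor classes are $\QQ$-Cartier by hypothesis. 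The lemma then yields the weak monotonicity $a(E, X_{i+1}, \Delta_{i+1}) \leq a(E, X_i, \Delta_i)$ for every divisor $E$ over $Z$. Moreover, whenever $f_i$ fails to be an isomorphism it contracts at least one curve $C$, and the $f_i$-ampleness (resp.\ $g_i$-ampleness in the flip case) of $-(K_{X_i}+\Delta_i)$ forces $(K_{X_i}+\Delta_i)\cdot C < 0$, so $K_{X_i}+\Delta_i$ is not numerically trivial over $Y$; the strict clause of Lemma \ref{lem:DiscrepancyNonIncreasing} then produces some divisor $E_i$ with $a(E_i, X_{i+1}, \Delta_{i+1}) < a(E_i, X_i, \Delta_i)$.

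Chaining these inequalities along the full sequence $X_1 \dashrightarrow \cdots \dashrightarrow X_m$ yields $a(E, X_m, \Delta_m) \leq a(E, X_1, \Delta_1)$ for every divisor $E$ over $Z$, with strict inequality at $E = E_{i_0}$. If the composition $h\colon X_1 \dashrightarrow X_m$ were an isomorphism, then, since each $\Delta_i$ is by definition the birational transform of $\Delta = \Delta_1$, it would identify the pair $(X_1, \Delta_1)$ with $(X_m, \Delta_m)$; discrepancies being invariant under such an isomorphism, we would obtain $a(E, X_1, \Delta_1) = a(E, X_m, \Delta_m)$ for all $E$, contradicting the strict inequality at $E_{i_0}$. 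The main technical point is simply verifying the hypotheses of Lemma \ref{lem:DiscrepancyNonIncreasing} case by case, which for flips requires invoking the existence of $X_i^+$ and its flipped contraction guaranteed by Theorem \ref{thm:FLIP}.
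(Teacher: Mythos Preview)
Your proof is correct and follows essentially the same approach as the paper: both arguments apply Lemma \ref{lem:DiscrepancyNonIncreasing} at each step to obtain monotone discrepancies, extract a strict inequality at the step $i_0$ where $f_{i_0}$ is not an isomorphism, and conclude by observing that an isomorphism of pairs would force all discrepancies to agree. Your write-up is slightly more explicit in checking the non-triviality hypothesis needed for the strict clause of the lemma, and the reference to Theorem \ref{thm:FLIP} is unnecessary (the flip is assumed to exist, so only the defining properties in Definition \ref{def:FLIPS} are used), but neither affects correctness.
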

\begin{proof}
By Lemma \ref{lem:DiscrepancyNonIncreasing}, there exists a divisor $E$ over $X_{i_0}$ such that
\begin{align*}
  a(E,X_{i_0},\Delta_{i_0})&<a(E,X_{i_0+1},\Delta_{i_0+1}).
  \intertext{This divisor defines a divisor over each $X_i$ and we have}
  a(E,X,\Delta)&\leq a(E,X_{i_0},\Delta_{i_0})\\
  a(E,X_{i_0+1},\Delta_{i_0+1})&\leq a(E,X_m,\Delta_m)
\end{align*}
by the same lemma.
Thus $a(E,X,\Delta)<a(E,X_m,\Delta_m)$ and $X\dashrightarrow X_m$ is not an isomorphism.
\end{proof}

We check that contractions and flips behave well when we pass to an open subset of the base $Z$.
The assumption on Picard groups below is satisfied when, for example, $X$ is integral, normal, and $\QQ$-factorial.

\begin{lemma}\label{lem:ContSmallerOpen}
  Let $\pi\colon X \to Z$ be a projective surjective morphism of integral
  quasi-excellent Noetherian algebraic spaces over
  a scheme $S$.
  Suppose that $X$ is normal and that $Z$ admits a dualizing complex
  $\omega_Z^\bullet$.
  Denote by $K_X$ a canonical divisor on $X$ associated to $\omega_X^\bullet =
  \pi^!\omega_Z^\bullet$.

Let $R\subseteq \NEbar(X/Z)$ be an extremal ray and let $f\colon X\to Y$ be a contraction of $R$.
Let $W$ be an open subspace of $Z$ and denote by $\square_W$ the base change of a
$Z$-space or a $Z$-morphism $\square$ to $W$.

Assume that $\Pic(X)_{\QQ}\to \Pic(X_W)_{\QQ}$ is surjective and that $f_W\colon
X_W\to Y_W$ is not an isomorphism.
Then $f_W$ is a contraction of an extremal ray $R^W\subseteq \NEbar(X_W/W)$.
Moreover, if $f$ is a good contraction of $R$, $f_W$ is also a good contraction of $R^W$.
\end{lemma}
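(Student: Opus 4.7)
My plan is to construct $R^W$ explicitly as the annihilator in $\NEbar(X_W/W)$ of a pulled-back supporting function of $f$, and to use the Picard surjectivity hypothesis to force $\iota\colon N_1(X_W/W)_\RR \to N_1(X/Z)_\RR$ to be injective; this injectivity will pin $R^W$ to be one-dimensional.

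First I would observe that $f_W$ is a contraction in the sense of Definition~\ref{def:contraction}---namely $f_{W*}\cO_{X_W} \cong \cO_{Y_W}$---by flat base change along the open immersion $W \hookrightarrow Z$. Because $f$ is a contraction of the extremal ray $R$, its target $Y$ is projective over $Z$ and $f$ admits a supporting function $H \in \Pic(X)$ of the form $H = f^*A$ for some $\sigma$-ample $A \in \Pic(Y)$ satisfying $H^\perp \cap \NEbar(X/Z) = R$, where $\sigma\colon Y \to Z$ is the structural morphism (cf.\ Theorem~\ref{thm:contraction} and Definition~\ref{def:contraction}). I then set $H_W := H|_{X_W} = f_W^*A_W$, where $A_W := A|_{Y_W}$ is ample over $W$, so that $H_W$ is $\pi_W$-nef, and define
\[
R^W := H_W^\perp \cap \NEbar(X_W/W).
\]
This is a face of $\NEbar(X_W/W)$ by nefness of $H_W$: if $\beta_1 + \beta_2 \in R^W$ with $\beta_i \in \NEbar(X_W/W)$, nonnegativity of $(H_W \cdot \beta_i)$ summing to zero forces each to vanish. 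For any $\pi_W$-contracted curve $C \subseteq X_W$, the projection formula and the $\sigma_W$-ampleness of $A_W$ (via Proposition~\ref{lem:AmpleIsPositiveOnNE}, noting $Y_W$ is relatively quasi-divisorial for the projective $\sigma_W$) show that $(H_W \cdot C) = (A_W \cdot f_{W*}C)$ vanishes iff $f_W(C)$ is a point. Hence $f_W$ satisfies the defining property of a contraction of $R^W$.

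The main obstacle is to show that $R^W$ is actually one-dimensional. By Lemma~\ref{lem:NumericalClassOpenSubset} there are $\RR$-linear maps $\rho\colon N^1(X/Z)_\RR \to N^1(X_W/W)_\RR$ and $\iota\colon N_1(X_W/W)_\RR \to N_1(X/Z)_\RR$ adjoint with respect to the intersection pairing. The surjectivity of $\Pic(X)_\QQ \to \Pic(X_W)_\QQ$ together with that of $\Pic \to N^1$ will make $\rho$ surjective, so perfectness of the pairing (Theorem~\ref{thm:RelNSFinite}) forces $\iota$ to be injective. Adjointness then gives $\iota(R^W) \subseteq H^\perp \cap \NEbar(X/Z) = R$, so $\iota$ embeds $R^W$ into the one-dimensional cone $R$, whence $R^W$ lies in a one-dimensional subspace of $N_1(X_W/W)_\RR$. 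Since $f_W$ is not an isomorphism, there is an $f_W$-contracted curve $C_0 \subseteq X_W$; viewed in $X$ it is also $f$-contracted, so by Definition~\ref{def:GoodContrOfR} its class $[C_0]_X$ is a nonzero generator of $R$, and injectivity of $\iota$ yields $[C_0]_{X_W} \ne 0$ in $R^W$. Finally $\NEbar(X_W/W)$ contains no line through the origin---pair with any $\pi_W$-ample invertible sheaf and invoke Kleiman on the relatively quasi-divisorial projective $\pi_W$---so $R^W$ is a genuine ray.

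For the moreover clause, I would suppose $f$ is a good contraction and take $\sL' \in \Pic(X_W)_\QQ$ with $(\sL' \cdot R^W) = 0$. The surjectivity hypothesis lifts $\sL' = \sL|_{X_W}$ for some $\sL \in \Pic(X)_\QQ$. Using the curve $C_0$ above---whose class $[C_0]_{X_W}$ generates $R^W$ while $\iota([C_0]_{X_W})$ generates $R$---adjointness yields
\[
(\sL \cdot \iota[C_0]_{X_W})_{X/Z} = (\sL' \cdot [C_0]_{X_W})_{X_W/W} = 0,
\]
so $(\sL \cdot R) = 0$. Goodness of $f$ then produces $\sK \in \Pic(Y)_\QQ$ with $\sL = f^*\sK$, and restricting gives $\sL' = f_W^*(\sK|_{Y_W})$, completing the verification. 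The principal obstacle throughout is the ray property for $R^W$: the Picard surjectivity hypothesis enters decisively through the injectivity of $\iota$, and without it $R^W$ could a priori span a subspace of dimension larger than one.
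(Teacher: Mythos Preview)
Your proof is correct and follows essentially the same approach as the paper: both arguments hinge on using the Picard surjectivity hypothesis to deduce that $N^1(X/Z)_\RR \to N^1(X_W/W)_\RR$ is surjective, hence $\iota\colon N_1(X_W/W)_\RR \to N_1(X/Z)_\RR$ is injective, and then on observing that $\iota(R^W) \subseteq R$ to conclude $R^W$ is a ray. The only cosmetic difference is that the paper defines $R^W \coloneqq \NEbar(X_W/Y_W)$ directly rather than as $H_W^\perp \cap \NEbar(X_W/W)$ via a supporting function; this sidesteps the need to invoke the existence of $H = f^*A$ from the Contraction Theorem, but the two definitions coincide and the remainder of the argument (including the good-contraction clause) is identical.
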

\begin{proof}
Since $f_W\colon X_W\to Y_W$ is not an isomorphism, there exists a closed point $z\in W$ such that $\mathrm{Ex}(f)\subset X$ intersects the fiber $f^{-1}(z)$.
In particular, $R^W\coloneqq\NEbar(X_W/Y_W)$ is nontrivial.
Since $\Pic(X)_{\QQ}\to \Pic(X_W)_{\QQ}$ is surjective, $N^1(X/Z)_{\RR}\to N^1(X_W/W)_{\RR}$ is also surjective, thus
the canonical map $N_1(X_W/W)_{\RR}\to N_1(X/Z)_{\RR}$ is injective.
By Lemma \ref{lem:NumericalClassOpenSubset}, $R^W$ is sent into $\NEbar(X/Y)$, 
which equals to $R$ as noticed in Definition \ref{def:GoodContrOfR}.
Thus $R^W$ is a ray, and it is clear that $f_W$ is the contraction of $R^W$.

Now assume that $f$ is a good contraction and let $\sL^W$ be an element in $\Pic(X_W)_{\QQ}$.
If we can write $\sL^W=f^*(\sK^W)\in\Pic(X_W)_{\QQ}$ for some $\sK^W\in\Pic(Y_W)_{\QQ}$, then $(\sL^W\cdot R^W)=0$ by the definition of $R^W$.
Thus it suffices to show the converse.

Since $\Pic(X)_{\QQ}\to \Pic(X_W)_{\QQ}$ is surjective,
there exists $\sL\in\Pic(X)_{\QQ}$ such that $\sL_{|X_W}=\sL^W$.
Now, if $(\sL^W\cdot R^W)=0$, then $(\sL\cdot R)=0$ since $R$ is a ray, and thus there exists $\sK\in \Pic(Y)_{\QQ}$ such that $\sL=f^*\sK\in\Pic(X)_{\QQ}$.
Thus $\sL^W=f^*(\sK_{|Y_W})\in\Pic(X_W)_{\QQ}$, as desired.
\end{proof}

We now prove two lemmas that are important to the proof of termination. 
The first one is about the asymptotic order of vanishing (Definition \ref{def:AsymptoticOrder}).

\begin{lemma}
\label{lem:cl1352}
Let $\pi_i\colon X_i \to Z\ (i=1,2)$ be two proper morphisms of Noetherian
schemes, such that $X_1$ and $X_2$ are integral and normal and $Z$ is affine.
Let $g\colon X_1\dashrightarrow X_2$ be a birational map over $Z$ that is an isomorphism in codimension 1.

Let $v$ be a geometric valuation on $X_1$ (Definition \ref{def:GeomVal}).
Then $v$ induces canonically a geometric valuation $g_*v$ on $X_2$, and for each $\RR$-Weil divisor $D$ on $X_1$ with $|D|_\RR\neq\emptyset$, we have $|g_*D|_\RR\neq\emptyset$ and $o_v(D)=o_{g_*v}(g_*D)$.
\end{lemma}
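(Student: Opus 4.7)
The plan is to proceed in three steps: construct $g_*v$ and verify that it is geometric, establish a bijection between the relevant $\RR$-linear systems, and then compare the valuations pointwise.

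First, I would use that $g$ induces an isomorphism $K(X_1) \cong K(X_2)$ of function fields and define $g_*v$ to be the discrete valuation on $K(X_2)$ corresponding to $v$ under this isomorphism. To verify that $g_*v$ is geometric on $X_2$, I would start with a birational model $f\colon Y \to X_1$ and a prime divisor $\Gamma \subseteq Y$ realizing $v$, then replace $Y$ by a proper birational modification via Nagata compactification and normalization so that $Y \to Z$ becomes proper. Since both $Y$ and $X_2$ are then proper over $Z$ and birationally equivalent, the rational map $Y \dashrightarrow X_2$ can be resolved by taking (for instance) the normalization $Y'$ of the graph closure inside $Y \times_Z X_2$. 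The birational transform $\Gamma'$ of $\Gamma$ on $Y'$ is a prime divisor whose associated DVR coincides with that of $\Gamma$ inside the common function field, so $\ord_{\Gamma'} = g_*v$.

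Next, I would construct a bijection $\lvert D \rvert_\RR \leftrightarrow \lvert g_*D \rvert_\RR$ by pushforward of $\RR$-Weil divisors. Because $g$ is an isomorphism in codimension one, there exist open subschemes $U_i \subseteq X_i$ with complement of codimension at least two such that $g$ restricts to an isomorphism $U_1 \overset{\sim}{\to} U_2$; taking closures in $X_2$ of images in $U_2$ gives a pushforward that preserves effectivity and principal divisors (the latter since rational functions transfer via the function field isomorphism), hence preserves $\RR$-linear equivalence. The inverse bijection is given by $g^{-1}_*$.

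For the valuation identity, it suffices to show $v(E) = (g_*v)(g_*E)$ for every effective $\RR$-Weil divisor $E$ on $X_1$ and then take infima over the bijective linear systems. By $\RR$-linearity this reduces to the case $E = C$ is a prime divisor, where $v(C) = v(\pi_C)$ for any uniformizer $\pi_C$ of $\cO_{X_1, \eta_C}$. Since $g$ restricts to an isomorphism at $\eta_C \leftrightarrow \eta_{g_*C}$, the DVRs $\cO_{X_1, \eta_C}$ and $\cO_{X_2, \eta_{g_*C}}$ coincide as subrings of the common function field, so the same element $\pi_C$ serves as a uniformizer on both sides, and $v(\pi_C) = (g_*v)(\pi_C)$ is immediate from the definition of $g_*v$. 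The main obstacle is the first step: rigorously exhibiting a birational model of $X_2$ on which $g_*v$ is realized by a prime divisor; the remaining steps are essentially bookkeeping using the standard identification of Weil divisors modulo principal divisors under birational maps which are isomorphisms in codimension one.
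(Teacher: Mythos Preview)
Your proposal is correct and follows essentially the same approach as the paper: resolve the rational map $Y\dashrightarrow X_2$ to realize $g_*v$ as a geometric valuation, use that $g_*$ gives a bijection $\lvert D\rvert_\RR \leftrightarrow \lvert g_*D\rvert_\RR$ because $g$ is an isomorphism in codimension one, and check $v(E)=(g_*v)(g_*E)$ on effective divisors before taking infima. The paper's proof is much terser (it declares the last two points ``clear''), whereas you spell out the reduction to prime divisors and the matching of uniformizers; your extra Nagata compactification step is harmless but unnecessary once one takes $Y$ proper over $X_1$ from the start.
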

\begin{proof}
By definition, $v$ is given by a prime divisor $\Gamma$ in a scheme $Y$ birational and proper over $X_1$.
By taking a resolution of the composition $Y\to X_1\dashrightarrow X_2$ we find $g_*v$.

It is clear that for each effective $\RR$-Weil divisor $E$ on $X_1$, we have $v(E)=g_*v(g_*E)$.
If $D$ is an $\RR$-Weil divisor on $X_1$ with $|D|_\RR\neq\emptyset$,
$g_*$ induces a bijection $|D|_\RR\to |g_*D|_\RR$, 
thus by definition $o_v(D)=o_{g_*v}(g_*D)$.
\end{proof}

The second is about a sufficient condition for a birational map to be a morphism.

\begin{lemma}[cf.\ {\citeleft\citen{KM98}\citemid Lemma 6.39\citepunct
  \citen{CL13}\citemid Lemma 6\citeright}]\label{lem:cl1353}
  Let $\pi_i\colon X_i \to Z$ for $i \in \{1,2\}$ be two proper morphisms of excellent Noetherian
schemes, such that $X_1$ and $X_2$ are integral and normal. 
Let $g\colon X_1\dashrightarrow X_2$ be a birational map over $Z$ that is an isomorphism in codimension 1.

Assume that there exists a $\pi_1$-ample effective $\QQ$-Cartier divisor $A$ on
$X_1$ such that the birational transform $B\coloneqq g_*A$ is $\QQ$-Cartier and $\pi_2$-nef.
Then $g^{-1}$ is a morphism.
\end{lemma}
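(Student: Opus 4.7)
My plan is to follow the standard approach via two applications of the Negativity Lemma (Lemma \ref{lem:Negativity}), followed by a rigidity argument. First, I would resolve the indeterminacies of $g$ by letting $W$ be the normalization of the closure of the graph of $g$ inside $X_1 \times_Z X_2$, equipped with the induced proper birational projections $p\colon W \to X_1$ and $q\colon W \to X_2$; then $W$ is integral, normal, and excellent. Because $g$ is an isomorphism in codimension one, a prime divisor on $W$ is $p$-exceptional if and only if it is $q$-exceptional, and the birational transforms satisfy $q^{-1}_*B = p^{-1}_*A$ on $W$.

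The heart of the proof is to establish the identity $p^*A = q^*B$ on $W$. Setting $E \coloneqq p^*A - q^*B$, the observations above show that $E$ is a $\QQ$-Cartier divisor supported on the common exceptional locus, and that $p_*E = q_*E = 0$ by combining the projection formula with the identification of birational transforms. I would then apply the Negativity Lemma to $q$ and to $p$ in turn. For $q\colon W \to X_2$ and any $q$-contracted integral curve $C \subseteq W$, the image $p(C)$ lies in a fiber of $\pi_1$, so $(E \cdot C) = (A \cdot p_*C) \geq 0$ by $\pi_1$-ampleness of $A$, hence $E$ is $q$-nef; since $q_*(-E) = 0$ is effective, Lemma \ref{lem:Negativity} applied with $\sL = \cO_W(-E)$ gives $-E \geq 0$. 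Symmetrically, for $p\colon W \to X_1$ and any $p$-contracted integral curve $C$, the image $q(C)$ lies in a fiber of $\pi_2$, so $(-E \cdot C) = (B \cdot q_*C) \geq 0$ by $\pi_2$-nefness of $B$, hence $-E$ is $p$-nef; since $p_*E = 0$ is effective, the lemma gives $E \geq 0$. Combining these yields $E = 0$, i.e., $p^*A = q^*B$.

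With this identity in hand, I would conclude by a standard rigidity argument. For any $q$-contracted integral curve $C \subseteq W$, we have $(A \cdot p_*C) = (p^*A \cdot C) = (q^*B \cdot C) = 0$, and $\pi_1$-ampleness of $A$ forces $p_*C = 0$, so $p(C)$ is a point. Since $q$ is a proper birational morphism to the normal integral scheme $X_2$, we have $q_*\cO_W = \cO_{X_2}$ by Zariski's main theorem, so the fibers of $q$ are connected, and therefore $p$ contracts each fiber of $q$ to a point. The classical rigidity principle (equivalently, the image of $W$ in $X_1 \times_Z X_2$ under $(p,q)$ projects bijectively onto $X_2$, and a proper bijection to a normal integral scheme is an isomorphism) then produces a morphism $h\colon X_2 \to X_1$ with $p = h \circ q$; since $h$ agrees with the birational map $g^{-1}$ on a dense open and $X_1$ is separated, $g^{-1} = h$ is a morphism.

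The main subtlety I anticipate is the bookkeeping to justify that the support of $E$ is contained in the common exceptional locus despite working with $\QQ$-Cartier divisors on a non-$\QQ$-factorial normal scheme; this hinges on the hypothesis that $g$ is an isomorphism in codimension one, which identifies non-$p$-exceptional and non-$q$-exceptional prime divisors on $W$ and forces the birational transforms $p^{-1}_*A$ and $q^{-1}_*B$ to coincide. Aside from this, the argument is a symmetric application of the Negativity Lemma combined with the classical rigidity principle, and should go through since $X_1$ and $X_2$ are excellent (hence universally catenary), so Lemma \ref{lem:Negativity} applies in the required generality.
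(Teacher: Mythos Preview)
Your proposal is correct and follows essentially the same approach as the paper: resolve $g$ by a common normal model $W$, use two applications of the Negativity Lemma (Lemma \ref{lem:Negativity}) to show $p^*A=q^*B$, then conclude via a rigidity argument. The only notable difference is in the final step: the paper explicitly invokes Lemma \ref{lem:ContractsNonClosed} to pass from ``every $q$-contracted curve is $p$-contracted'' to ``every fiber of $q$ maps to a point under $p$'' (handling non-closed fibers), and then builds the morphism by hand from the continuous map and the identity $\cO_{X_i}=h_{i*}\cO_W$; you instead appeal to the classical rigidity principle via the graph, which for excellent Noetherian schemes amounts to the same thing (upper semi-continuity of fiber dimension reduces to closed fibers, and a proper birational morphism to a normal scheme with zero-dimensional fibers is an isomorphism).
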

\begin{proof}
  By taking the normalization of the fiber product $X_1 \times_Z X_2$, there
  exists an integral normal scheme $W$ with proper birational morphisms
  $h_i\colon W\to X_i$ for $i \in \{1,2\}$ such that $g=h_2\circ h_1^{-1}$ as rational maps. 
Since 
$B=g_*A$ 
and since $g$ is an isomorphism in codimension 1,
the $h_1$-exceptional divisors are exactly the $h_2$-exceptional divisors and we can write
$h_2^*B+E=h_1^*A+F$
where $E,F$ are 
$h_1$-exceptional divisors. 
Since $B$ is $\pi_2$-nef, $h_2^*B$ is $h_1$-nef and thus so is
$F-E=h_2^*B-h_1^*A$.
By the Negativity Lemma \ref{lem:Negativity} we see $E-F$ is effective, and by the same reason $F-E$ is effective.
Thus $h_2^*B=h_1^*A$.

Since $A$ is $\pi_1$-ample, we see that every $h_2$-contracted curve on $W$ must be $h_1$-contracted.
By Lemma \ref{lem:ContractsNonClosed}, we see that every fiber of $h_2$ is mapped to a point under $h_1$, 
so there exists a continuous map of topological spaces $u\colon X_2\to X_1$ compatible with $h_1$ and $h_2$.
Since $\cO_{X_i}=h_{i*}\cO_W$, this continuous map upgrades to a morphism of schemes and is the inverse of $g$ as a rational map.
\end{proof}

\section{Existence and termination of the relative MMP with scaling}
\label{sect:mmpexistsandterminates}

In this section, following \cite{CL13}, we prove the termination of MMP under suitable assumptions.

\begin{definition}[cf.\ {\cite[Definition 6.1]{CL13}}]\label{def:cl1361}
  Let $\pi\colon X \to Z$ be a projective surjective morphism of integral
  quasi-excellent Noetherian algebraic spaces of equal characteristic zero over
  a scheme $S$.
  Suppose that $X$ is normal and that $Z$ admits a dualizing complex
  $\omega_Z^\bullet$.
  Denote by $K_X$ a canonical divisor on $X$ associated to $\omega_X^\bullet =
  \pi^!\omega_Z^\bullet$.
  
Let $\Delta\geq 0$ be a $\mathbf Q$-Weil divisor on $X$ such that $K_X+\Delta$ is $\QQ$-Cartier and such that $(X,\Delta)$ is klt. 
For a $\QQ$-invertible sheaf $D$ on $X$, the \textsl{$\pi$-nef threshold of the
pair $(X,\Delta)$ with respect to $D$} is
\[
\lambda(X/Z,\Delta,D)\coloneqq\inf\Set{t\in\RR_{\geq 0}\given K_X+\Delta+tD\text{\ is\ }\pi\text{-nef}}\in \RR_{\geq 0}\cup \{\infty\}.
\]
\end{definition}

We now introduce a concept for the scaling divisor similar to that in \cite{CL13}.
Note that in item $(\ref{lem:cl1362condition1})$ we need to pass to an open covering of the base, since we do not assume $Z$ affine.
Even if $Z$ was affine, we still need to pass to an open covering since we do not have a global Bertini theorem.

\begin{definition}\label{def:GoodScalingDivisors}
  Let $\pi\colon X \to Z$ be a projective surjective morphism of integral
  quasi-excellent Noetherian algebraic spaces of equal characteristic zero over
  a scheme $S$.
  Suppose that $X$ is normal and that $Z$ admits a dualizing complex
  $\omega_Z^\bullet$.
  Denote by $K_X$ a canonical divisor on $X$ associated to $\omega_X^\bullet =
  \pi^!\omega_Z^\bullet$.
  
Let $\Delta\geq 0$ be a $\mathbf Q$-Weil divisor on $X$ such that $K_X+\Delta$ is $\QQ$-Cartier and such that $(X,\Delta)$ is klt. 
We say a $\QQ$-invertible sheaf $A$ on $X$ is a \textsl{good scaling divisor for the pair $(X,\Delta)$}, if 
the following conditions hold.
\begin{enumerate}[label=$(\roman*)$,ref=\roman*]
    \item\label{lem:Abig}
    $A$ is $\pi$-big.
    
    \item\label{lem:K+Anef}
    $K_X+\Delta+A$ is $\pi$-nef.
    
      \item\label{lem:cl1362condition1} There exists an \'etale covering
        $Z=\bigcup_a
        V_a$ and $\QQ$-Weil divisors $A_a\in |A_{|\pi^{-1}(V_a)}|_{\QQ}$ such that
        $(\pi^{-1}(V_a),\Delta_{|\pi^{-1}(V_a)}+A_a)$
        is klt.
  \end{enumerate}
It is clear that base change to an open subset of the base preserves this property.
\end{definition}

The following lemma tells us that it is always possible to find a good scaling divisor.

\begin{lemma}\label{lem:AmpleIsGoodScaling}
  Let $\pi\colon X \to Z$ be a projective surjective morphism of integral
  quasi-excellent Noetherian algebraic spaces of equal characteristic zero over
  a scheme $S$.
  Suppose that $X$ is normal and that $Z$ admits a dualizing complex
  $\omega_Z^\bullet$.
  Denote by $K_X$ a canonical divisor on $X$ associated to $\omega_X^\bullet =
  \pi^!\omega_Z^\bullet$.
  
Let $\Delta\geq 0$ be a $\mathbf Q$-Weil divisor on $X$ such that $K_X+\Delta$ is $\QQ$-Cartier and such that $(X,\Delta)$ is klt.
Let $A$ be a $\pi$-ample $\QQ$-invertible sheaf on $X$ such that $K_X+\Delta+A$ is $\pi$-nef. 
Then $A$ is a good scaling divisor for the pair $(X,\Delta)$.
Moreover, if $Z$ is a scheme, then the cover in $(\ref{lem:cl1362condition1})$
can be chosen to be an affine cover.
\end{lemma}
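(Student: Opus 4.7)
The plan is to verify the three defining conditions of Definition \ref{def:GoodScalingDivisors}. Condition $(\ref{lem:K+Anef})$ is the hypothesis. For condition $(\ref{lem:Abig})$, since $A$ is $\pi$-ample, $\pi$ is representable, so the generic fiber $X_\eta$ is a scheme and the restriction $A_{\vert X_\eta}$ is an ample divisor on an integral projective scheme over $\kappa(\eta)$; asymptotic Riemann--Roch then yields positive volume, so $A$ is $\pi$-big in the sense of Definition \ref{def:fbig}.

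The main work lies in condition $(\ref{lem:cl1362condition1})$, and the strategy is to reduce to Corollary \ref{cor:BertiniKLTOpenCover}. First I will handle the ``moreover'' clause: when $Z$ is a scheme, the representability forced by the $\pi$-ampleness of $A$, combined with projectivity of $\pi$, implies that $X$ is itself a scheme. Since $A$ is $\pi$-semi-ample (as it is $\pi$-ample), Corollary \ref{cor:BertiniKLTOpenCover} applies directly to $(X, \Delta)$ and $A$, yielding an open cover of $Z$ together with the required divisors $A_a$, which can be refined to an affine cover.

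In the general algebraic space case, the plan is to choose a surjective étale cover $\{g_b \colon Z_b \to Z\}$ by affine schemes. Each $Z_b$ is excellent of equal characteristic zero and inherits a dualizing complex via Lemma \ref{lem:dualizingcomplexpullback} that is compatible with canonical divisors. Because $A$ is $\pi$-ample, $\pi$ is representable and the base change $\pi_b \colon X_b \coloneqq X \times_Z Z_b \to Z_b$ is a projective morphism of schemes with $X_b$ normal and quasi-excellent; the pullback $\Delta_b$ makes $(X_b, \Delta_b)$ klt by Remark \ref{rem:singsetalelocal}, and $A_b \coloneqq A_{\vert X_b}$ is $\pi_b$-ample. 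The main obstacle will be that $X_b$ need not be irreducible, so Corollary \ref{cor:BertiniKLTOpenCover} does not apply verbatim. Since $X_b$ is normal and Noetherian, however, it decomposes into a finite disjoint union $X_b = \bigsqcup_c Y_c$ of integral normal schemes, each of which is projective over $Z_b$ via the restriction of $\pi_b$. Applying Corollary \ref{cor:BertiniKLTOpenCover} to each $(Y_c, \Delta_{b \vert Y_c})$ with the relatively semi-ample divisor $A_{b \vert Y_c}$ produces open covers of $Z_b$ with the required effective $\QQ$-divisors; taking the common refinement of these finitely many covers and assembling the component-wise divisors into a single $\QQ$-divisor on $\pi_b^{-1}(V_a)$ gives the required data on $Z_b$. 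Composing the resulting open subschemes $V_a \subseteq Z_b$ with the étale structure maps $g_b$ and collecting over all indices $b$ yields the desired étale cover of $Z$ together with the divisors $A_a \in \lvert A_{\vert \pi^{-1}(V_a)} \rvert_\QQ$ making each pair $(\pi^{-1}(V_a), \Delta_{\vert \pi^{-1}(V_a)} + A_a)$ klt.
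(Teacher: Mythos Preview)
Your proposal is correct and follows essentially the same approach as the paper: conditions $(\ref{lem:Abig})$ and $(\ref{lem:K+Anef})$ are immediate, and $(\ref{lem:cl1362condition1})$ is obtained by passing to an \'etale affine cover of $Z$ and invoking Corollary \ref{cor:BertiniKLTOpenCover}. The paper's proof is a terse two-line version of what you wrote; your added care about the possible reducibility of $X_b$ after \'etale base change (handled by decomposing into connected components and taking a common refinement) is a detail the paper leaves implicit.
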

\begin{proof}
Items $(\ref{lem:Abig})$ and $(\ref{lem:K+Anef})$ in Definition \ref{def:GoodScalingDivisors} are clear. $(\ref{lem:cl1362condition1})$ follows from Corollary \ref{cor:BertiniKLTOpenCover}
after passing to an \'etale cover by affine schemes.
When $Z$ is a scheme, we can instead choose an open cover by affine schemes.
\end{proof}

We now prepare to prove the existence of the minimal model program with scaling.
We start with the following definition, which is a version of a condition stated
in Theorem \ref{thm:cl1342} for algebraic spaces.

\begin{definition}\label{def:cl13thm4cond}
Let $\pi\colon X \to Z$ be a projective surjective morphism of Noetherian
algebraic spaces of equal characteristic zero over a scheme $S$, such that $X$ is integral and normal and such that $Z$ is quasi-excellent and has a dualizing complex $\omega_Z^\bullet$.
  Denote by $K_X$ a canonical divisor on $X$ associated to $\omega_X^\bullet =
  \pi^!\omega_Z^\bullet$.
We define 
$\mathcal{A}=\mathcal{A}(X/Z)$ to be the set of classes $\mathbf{u}\in N^1(X/Z)_{\RR}$ that satisfies the following condition: 
\begin{quote}
  There exists an \'etale covering $Z=\bigcup_a V_a$ such that for each index $a$, there exists
  a $\mathbf Q$-Weil divisor $\Delta_a\geq 0$ on $\pi^{-1}(V_a)$ with $K_{\pi^{-1}(V_a)}+\Delta_a$ $\QQ$-Cartier and $(\pi^{-1}(V_a),\Delta_a)$ klt,
  a positive real number $c_a$,
  and a class $\mathbf{w}_a\in \Amp(\pi^{-1}(V_a)/V_a)$ such that the restriction
  of $\mathbf{u}$ to $N^1(\pi^{-1}(V_a)/V_a)$ (Lemma \ref{lem:NumericalClassOpenSubset}) equals to $c_a[K_{\pi^{-1}(V_a)}+\Delta_a]+\mathbf{w}_a$. 
\end{quote}
\end{definition}

\begin{lemma}\label{lem:K+Delta+lambdaAInmathcalA}
Let $\pi\colon X \to Z$ be a projective surjective morphism of Noetherian
algebraic spaces of equal characteristic zero over a scheme $S$, such that $X$ is integral and normal and such that $Z$ is quasi-excellent and has a dualizing complex $\omega_Z^\bullet$.
  Denote by $K_X$ a canonical divisor on $X$ associated to $\omega_X^\bullet =
  \pi^!\omega_Z^\bullet$.
  
Let $\Delta\geq 0$ be a $\mathbf Q$-Weil divisor on $X$ such that $K_X+\Delta$ is $\QQ$-Cartier and such that $(X,\Delta)$ is klt. 
  Assume that $K_X +\Delta$ is not $\pi$-nef, and let $A$ be a good scaling divisor for the pair $(X,\Delta)$.  
  Let $\lambda\in [0,1]\subseteq \RR$.
  Then, the class $\mathbf{u}\coloneqq[K_X+\Delta+\lambda A]$ belongs to the set $\mathcal{A}$ as in
  Definition \ref{def:cl13thm4cond},
  and we can further require that the numbers $c_a=1$.
\end{lemma}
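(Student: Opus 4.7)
The plan is to use the étale cover provided by the good-scaling hypothesis to replace $A$ locally by a klt boundary $A_a$, and then to exploit the $\pi$-bigness of $A$ via Kodaira's lemma to peel off an ample summand while keeping the remaining perturbed boundary klt by continuity of discrepancies. The identity $c_a = 1$ then comes for free from the algebraic bookkeeping.

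First, by Definition \ref{def:GoodScalingDivisors}, choose an étale covering $Z=\bigcup_a V_a$ together with effective $\QQ$-Weil divisors $A_a\in \lvert A_{\vert X_a}\rvert_\QQ$ such that $(X_a,\Delta_{\vert X_a}+A_a)$ is klt, where $X_a:=X\times_Z V_a$ and $\pi_a\colon X_a\to V_a$ denotes the projection. The convexity of klt (Lemma \ref{lem:kltFacts}(\ref{lem:kltConvex})) applied to the klt pairs $(X_a,\Delta_{\vert X_a})$ and $(X_a,\Delta_{\vert X_a}+A_a)$ yields that $(X_a,\Delta_{\vert X_a}+\lambda A_a)$ is klt for every $\lambda\in[0,1]\cap\QQ$. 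After refining the cover so that each $V_a$ is affine and pulling back to an étale cover of $X_a$ by a scheme when necessary, the $\pi_a$-bigness of $A_a$ (inherited from the $\pi$-bigness of $A$ via Lemma \ref{lem:bigpullback}) lets me apply Kodaira's lemma (Corollary \ref{lem:kodairachar}) to obtain a decomposition
\[
A_a \sim_\QQ H_a + E_a,
\]
with $H_a$ a $\pi_a$-ample $\QQ$-Cartier divisor and $E_a\ge 0$ effective.

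With these pieces in place, fix $\lambda\in(0,1]$ and pick $t\in(0,1)$ sufficiently close to $1$ (depending on $\lambda,A_a,E_a$), then set
\[
\Delta_a \coloneqq \Delta_{\vert X_a}+\lambda t\, A_a+\lambda(1-t)\,E_a,\qquad \mathbf{w}_a \coloneqq \lambda(1-t)[H_a],\qquad c_a\coloneqq 1.
\]
The divisor $\Delta_a$ is effective, and writing $\Delta_a = \Delta_{\vert X_a}+\lambda A_a+\lambda(1-t)(E_a-A_a)$ exhibits it as a small $\QQ$-Cartier perturbation of the klt pair $(X_a,\Delta_{\vert X_a}+\lambda A_a)$. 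Hence $(X_a,\Delta_a)$ is klt for $t$ close enough to $1$ by the continuity of klt (Lemma \ref{lem:kltFacts}(\ref{lem:kltContinuous})), where log resolutions are furnished by \cite[Theorem 1.1.6]{Tem18}. The class $\mathbf{w}_a$ lies in $\Amp(X_a/V_a)$ because $H_a$ is $\pi_a$-ample and $\lambda(1-t)>0$. Using $E_a+H_a\sim_\QQ A_a\sim_\QQ A_{\vert X_a}$, a direct computation gives
\[
[K_{X_a}+\Delta_a]+\mathbf{w}_a = [K_{X_a}+\Delta_{\vert X_a}+\lambda A_a] = \mathbf{u}_{\vert X_a},
\]
verifying the defining condition of $\mathcal{A}$ on each piece with $c_a=1$.

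The main obstacle is reconciling the two requirements on $\Delta_a$: it must stay klt while $\mathbf{w}_a$ remains ample. The latitude in choosing $t$ close to $1$ is what reconciles these, and it is exactly the $\pi$-bigness of $A$ (to produce the Kodaira decomposition $H_a+E_a$) together with $\lambda>0$ (to keep $\mathbf{w}_a$ nonzero) that makes the construction go through. The degenerate case $\lambda=0$ would force $\mathbf{w}_a=0$; here one appeals to the hypothesis that $K_X+\Delta$ is not $\pi$-nef (so that the $\pi$-nef threshold satisfies $\lambda(X/Z,\Delta,A)>0$) and to a limit of the above construction as $\lambda\to 0^+$ to conclude.
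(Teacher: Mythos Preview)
Your overall strategy matches the paper's---pass to the \'etale cover where $A$ is represented by an effective klt boundary, apply Kodaira's lemma $A_a\sim_\QQ H_a+E_a$, and peel off an ample piece while keeping the rest klt by continuity---but there is a genuine gap when $\lambda$ is irrational.

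Definition~\ref{def:cl13thm4cond} requires each $\Delta_a$ to be a \emph{$\QQ$-Weil divisor}. Your $\Delta_a = \Delta_{\vert X_a} + \lambda t\,A_a + \lambda(1-t)\,E_a$ carries the coefficients $\lambda t$ and $\lambda(1-t)$ on the $\QQ$-divisors $A_a$ and $E_a$; since these coefficients sum to $\lambda$, no choice of $t$ makes both rational when $\lambda\notin\QQ$, so your $\Delta_a$ is only an $\RR$-divisor. The paper circumvents this by using a second free parameter: it fixes $\epsilon\in\QQ_{>0}$ small with $(X,\Delta+A+\epsilon E)$ klt, then chooses $\delta\in\RR_{>0}$ so that $\lambda-\epsilon-\delta\in\QQ_{>0}$ and $\epsilon H+\delta A$ is $\pi$-ample, and sets
\[
\Delta' = \Delta + (\lambda-\epsilon-\delta)A + \epsilon E,\qquad H' = \epsilon H + \delta A.
\]
The irrationality of $\lambda$ is pushed entirely into $\delta$ on the ample side $H'$ (harmless, since $\Amp$ is a real cone), leaving $\Delta'$ a genuine $\QQ$-divisor. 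Your construction can be repaired in the same way: you need a real piece of $A_a$ on the ample side, not just a multiple of $H_a$.

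Two smaller points. First, Lemma~\ref{lem:kltFacts}(\ref{lem:kltContinuous}) as stated applies only to \emph{effective} perturbations, whereas $E_a-A_a$ is not effective; argue instead on a fixed log resolution, or perturb the klt pair $(X_a,\Delta_{\vert X_a}+\lambda t A_a)$ by the effective divisor $\lambda(1-t)E_a$. Second, the ``limit as $\lambda\to 0^+$'' argument cannot work: $\mathcal{A}$ is open, and at $\lambda=0$ the conclusion with $c_a=1$ would force $[\Delta-\Delta_a]\in\Amp$ for some effective $\Delta_a$, which does not follow from the hypotheses. The paper's own proof also requires $0<\epsilon<\lambda$, so it too only treats $\lambda\in(0,1]$; since the lemma is only ever invoked with $\lambda$ equal to a positive nef threshold, this is a harmless imprecision in the stated range.
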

\begin{proof}
Passing to an affine \'etale covering of $Z$, we may assume that $Z$ is an
affine scheme, and that 
\begin{align}\label{Originalcl1362condition1}
    A\geq 0\qquad \text{and}\qquad (X,\Delta+A)\ \text{is klt}.
\end{align}

Write $A= H+E$, where $H$ is a $\pi$-ample $\QQ$-Cartier divisor and $E\geq 0$. 
This is possible by Lemma \ref{lem:BigWeilIsAmplePlusEffective}.
Choose $\varepsilon\in\QQ_{>0}$ such that $\varepsilon<\lambda$ and that 
$(X,A+\Delta+\varepsilon E)$ klt, which is possible by Lemma \ref{lem:kltFacts}$(\ref{lem:kltContinuous})$, since log resolutions exist for excellent $\QQ$-schemes
\cite[Theorem 2.3.6 and Lemma 4.2.4]{Tem08}; and we choose $\delta\in\RR_{>0}$ such that $\lambda-\varepsilon-\delta\in\QQ_{>0}$ and that $\varepsilon H+\delta A$ is $\pi$-ample.
Set
\[
  \Delta'=\Delta+(\lambda-\varepsilon-\delta)A+\varepsilon E
\]
and $H'=\varepsilon H+\delta A$. 
Then, by our choice (and Lemma \ref{lem:kltFacts}$(\ref{lem:kltSmaller})$), $H'$ is a $\pi$-ample $\RR$-divisor, $\Delta'$ is an effective $\QQ$-Weil divisor with $K_X+\Delta'$ $\QQ$-Cartier and $(X,\Delta')$ klt
and we have
\[
K_X+\Delta+\lambda A=K_X+\Delta+\varepsilon E+(\lambda-\varepsilon)A+\varepsilon H=K_X+\Delta'+H',
\]
as desired.
\end{proof}

\begin{lemma}[cf.\ {\citeleft\citen{KM98}\citemid \S3.1\citepunct
  \citen{CL13}\citemid Lemma 8\citeright}]\label{lem:cl1362}
Let $\pi\colon X \to Z$ be a projective surjective morphism of Noetherian
algebraic spaces of equal characteristic zero over a scheme $S$,
such that $X$ is integral and normal and such that $Z$ is quasi-excellent and has a dualizing complex $\omega_Z^\bullet$.
  Denote by $K_X$ a canonical divisor on $X$ associated to $\omega_X^\bullet =
  \pi^!\omega_Z^\bullet$.
  
Let $\Delta\geq 0$ be a $\mathbf Q$-Weil divisor on $X$ such that $K_X+\Delta$ is $\QQ$-Cartier and such that $(X,\Delta)$ is klt. 
  Assume that $K_X +\Delta$ is not $\pi$-nef, and let $A$ be a good scaling divisor for the pair $(X,\Delta)$.  
  Let $\lambda = \lambda(X/Z, \Delta, A)$ be the $\pi$-nef threshold.
  Then, $\lambda\in\QQ_{>0}$, 
  and there exists an extremal ray $R\subseteq\overline{\mathrm{NE}}(X/Z)$ with a good contraction with target projective over $Z$, and satisfies $(K_X +\Delta +\lambda A)\cdot R = 0$ and $(K_X +\Delta )\cdot R-\{0\} < 0.$
\end{lemma}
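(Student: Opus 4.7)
The plan is first to observe that $\lambda\in(0,1]$: since $\Nef(X/Z)$ is closed, contains $[K_X+\Delta+A]$, and excludes $[K_X+\Delta]$, continuity along the line $t\mapsto[K_X+\Delta+tA]$ combined with convexity of $\Nef(X/Z)$ forces $\lambda\in(0,1]$, so the class $\mathbf{u}\coloneqq[K_X+\Delta+\lambda A]$ lies on $\partial\Nef(X/Z)$. By Lemma \ref{lem:K+Delta+lambdaAInmathcalA}, we have $\mathbf{u}\in\mathcal{A}$. Because Theorem \ref{thm:cl1342} and Lemma \ref{lem:CL13S4ProducesGoodContractions} are stated for schemes, I would first reduce to that case by passing to an affine \'etale cover of $Z$; since $\pi$ is projective, $X$ then becomes a scheme, and Lemmas \ref{lem:nefbasechange} and \ref{lem:K+Delta+lambdaAInmathcalA} ensure the relevant hypotheses transfer \'etale-locally, while the uniqueness of the good contraction (cf.\ Theorem \ref{thm:contraction}) will allow descent back to the algebraic space.

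Next, I would apply Theorem \ref{thm:cl1342}$(\ref{thm:cl1342Precise})$ and $(\ref{thm:cl1342Formal})$ to obtain a closed convex rational polytope $P$ containing $\mathbf{u}$ in its interior such that $P\cap\Nef(X/Z)$ is a rational polytope with nonempty interior and $\Int(P)\cap\partial\Nef(X/Z)=\Int(P)\cap(F_1\cup\cdots\cup F_m)$, each $F_i$ being a codimension-one rational face spanning a supporting hyperplane $W_i$. Consider the real line $L(t)\coloneqq[K_X+\Delta+tA]$. By the definition of $\lambda$, $L(t)\in\Nef(X/Z)$ for $t>\lambda$ near $\lambda$ and $L(t)\notin\Nef(X/Z)$ for $t<\lambda$ near $\lambda$, so there exists some index $i_0$ with $\mathbf{u}\in F_{i_0}$ through which $L$ exits $\Nef(X/Z)$ at $t=\lambda$. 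Letting $\ell$ be a defining linear functional for $W_{i_0}$ normalized so that $\ell\geq 0$ on $\Nef(X/Z)$, one has $\ell(\mathbf{u})=0$ and hence $\ell(L(t))=(t-\lambda)\,\ell([A])$; the sign change across $\lambda$ forces $\ell([A])>0$. Consequently $\lambda=-\ell([K_X+\Delta])/\ell([A])\in\QQ_{>0}$, since $W_{i_0}$, $[K_X+\Delta]$, and $[A]$ are all rational.

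Finally, I would take $R$ to be the extremal ray dual to $W_{i_0}$ in the sense of Definition \ref{def:SuppPlaneAndDualRay}, so that $(K_X+\Delta+\lambda A)\cdot R=0$ by construction. Since $\ell$ agrees, up to a positive scalar, with intersection against a positive generator of $R$, we have $A\cdot\gamma>0$ for every $\gamma\in R\setminus\{0\}$, and therefore $(K_X+\Delta)\cdot\gamma=-\lambda(A\cdot\gamma)<0$. Lemma \ref{lem:CL13S4ProducesGoodContractions} then yields the required good contraction of $R$ with target projective over $Z$. The main obstacle is the algebraic-space-to-scheme reduction in the first paragraph; once this is arranged, the combinatorial step of identifying the correct supporting hyperplane through which $L$ exits $\Nef(X/Z)$, and the conversion into an extremal ray with the claimed negativity and good contraction, both proceed directly from Theorem \ref{thm:cl1342} and Lemma \ref{lem:CL13S4ProducesGoodContractions}.
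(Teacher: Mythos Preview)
Your proposal follows essentially the same route as the paper's proof: identify $\mathbf{u}=[K_X+\Delta+\lambda A]\in\partial\Nef(X/Z)\cap\mathcal{A}$, obtain a local rational polyhedral description of $\Nef(X/Z)$ near $\mathbf{u}$, pick the supporting hyperplane through which the segment $t\mapsto[K_X+\Delta+tA]$ exits, read off $\lambda\in\QQ$, and take $R$ to be the dual extremal ray.

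The one substantive difference is your proposed \'etale reduction to schemes. The paper avoids this entirely: it invokes the Cone Theorem~\ref{thm:kmmcone}, which is already stated and proved for algebraic spaces, as the source of the finitely many rational supporting hyperplanes near $\mathbf{u}$ (with Theorem~\ref{thm:cl1342} offered only as an alternative ``in the scheme case''). This sidesteps the descent of the good contraction that you flag as the main obstacle. Similarly, while the paper cites Lemma~\ref{lem:CL13S4ProducesGoodContractions} for the good contraction, that argument goes through for algebraic spaces using the Basepoint-free and Contraction Theorems \ref{thm:bpf} and \ref{thm:contraction}, both of which are proved in that generality; so no \'etale-local gluing is needed. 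Your descent argument could be made to work, but it is more labor than the direct route the paper takes.
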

\begin{proof}
By Lemma \ref{lem:K+Delta+lambdaAInmathcalA}, $\mathbf{u}\coloneqq[K_X+\Delta+\lambda A]$ belongs to the set $\mathcal{A}$ as in
Definition \ref{def:cl13thm4cond}.
By the definition of $\lambda$, $\mathbf{u}\in\partial\Nef(X/Z)$, so
we can apply the Cone Theorem \ref{thm:kmmcone} (or Theorem
\ref{thm:cl1342}$(\ref{thm:cl1342Precise})(\ref{thm:cl1342Formal})$ in the
scheme case) to conclude that
there exist finitely many rational supporting hyperplanes $W_1,\ldots,W_m$ of $\mathrm{Nef}(X/Z)$  
cutting out closed half-spaces $W_1^+,\ldots,W_m^+$ such that, 
for some small open rational polytope $P$ containing $\mathbf{u}$,
\[
  P\cap \operatorname{Nef}(X/Z)=P\cap(W_1^+\cup\ldots\cup W_m^+).
\]
Since the spaces $W_i$ are rational, 
it is now clear that $\lambda\in\QQ$ by the definition, and $\lambda\in\QQ_{>0}$ since $K_X+\Delta$ is not $\pi$-nef.

Finally, we show the existence of a desired ray $R$.
Shrinking $P$ if necessary, we may assume $\mathbf{u}\in W_i$ for all $i$. Since $\mathbf{u}-\sigma [A]\not\in \mathrm{Nef}(X/Z)$ for all $\sigma\in (0,\lambda)$ by the definition of $\lambda$, we see that $-[A]\not\in W_i^+$ for some $i$. 
We may thus take $R$ to be the extremal ray dual to $W_i$, see Definition \ref{def:SuppPlaneAndDualRay}. 
Then $R$ is an extremal ray and $(K_X+\Delta+\lambda A)\cdot R=0$ since $\mathbf{u}\in W_i$.
Since $-[A]\not\in W_i^+$, we have $A\cdot R>0$, so $(K_X+\Delta)\cdot R<0$.
The fact $R$ has a good contraction with projective target follows from Lemma \ref{lem:CL13S4ProducesGoodContractions}. \end{proof}

We can now prove the existence of the relative minimal model program with
scaling.
By Lemma \ref{lem:AmpleIsGoodScaling}, this implies the existence part of
Theorem \ref{thm:introrelativemmp}$(\ref{setup:introalgebraicspaces})$.
\begin{theorem}\label{rem:MMP}
Let $\pi\colon X \to Z$ be a projective surjective morphism of Noetherian
algebraic spaces of equal characteristic zero over a scheme $S$,
such that $X$ is integral and normal and such that $Z$ is quasi-excellent and
has a dualizing complex $\omega_Z^\bullet$. 
  Denote by $K_X$ a canonical divisor on $X$ associated to $\omega_X^\bullet =
  \pi^!\omega_Z^\bullet$.
  \par Suppose $X$ is $\QQ$-factorial and let $\Delta$ be a $\QQ$-divisor such that
  $(X,\Delta)$ is klt.
  Let $A$ be a good scaling divisor for $(X,\Delta)$.
  Then, the relative minimal model program with scaling of $A$ over $Z$ exists.
\end{theorem}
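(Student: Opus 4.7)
The plan is to construct the sequence of MMP-with-scaling steps recursively. Set $(X_0,\Delta_0,A_0,\lambda_0)\coloneqq(X,\Delta,A,1)$ and aim to maintain at each step $i\ge 0$ the invariants that $X_i$ is integral, normal, and $\QQ$-factorial over $Z$, that $(X_i,\Delta_i)$ is a klt pair, and that $\lambda_iA_i$ is a good scaling divisor for $(X_i,\Delta_i)$ in the sense of Definition \ref{def:GoodScalingDivisors}; the base case $i=0$ holds by hypothesis. If at some step $K_{X_i}+\Delta_i$ is $\pi_i$-nef, the recursion terminates with $X_i$ a minimal model over $Z$.

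Otherwise, applying Lemma \ref{lem:cl1362} with scaling divisor $\lambda_iA_i$ produces a rational number $\lambda_{i+1}\in(0,\lambda_i]$, an extremal ray $R_i\subseteq\NEbar(X_i/Z)$ with good contraction $f_i\colon X_i\to Y_i$ projective over $Z$, and the relations $(K_{X_i}+\Delta_i+\lambda_{i+1}A_i)\cdot R_i=0$ and $(K_{X_i}+\Delta_i)\cdot R_i<0$. By Lemma \ref{lem:Contraction3Types}, $f_i$ falls into one of three types: a Mori fibration, in which case the MMP terminates with a Mori fiber space; a divisorial contraction, in which case we set $(X_{i+1},\Delta_{i+1},A_{i+1})\coloneqq(Y_i,f_{i*}\Delta_i,f_{i*}A_i)$; or a small contraction, in which case Theorem \ref{thm:FLIP} produces the flip $f_i^+\colon X_i^+\to Y_i$ and we set $(X_{i+1},\Delta_{i+1},A_{i+1})\coloneqq(X_i^+,\Delta_i^+,A_i^+)$. $\QQ$-factoriality of $X_{i+1}$ follows from Lemma \ref{lem:ContractionQfac} or \ref{lem:FLIPQfacAndN1same}$(\ref{lem:FLIPQfac})$; the kltness of $(X_{i+1},\Delta_{i+1})$ is Corollary \ref{cor:MMPpreservesKLT}; $\pi_{i+1}$-bigness of $\lambda_{i+1}A_{i+1}$ follows from Lemma \ref{lem:ContrPreservesBig} applied to $f_i$ (which is an isomorphism in codimension one in the flip case, so the bigness condition transfers in both directions through $Y_i$); and $\pi_{i+1}$-nefness of $K_{X_{i+1}}+\Delta_{i+1}+\lambda_{i+1}A_{i+1}$ follows because $f_i$ is a good contraction of $R_i$, so that $K_{X_i}+\Delta_i+\lambda_{i+1}A_i$ descends to a $\pi_{Y_i}$-nef $\QQ$-Cartier divisor on $Y_i$ (Lemma \ref{lem:nefpullback}$(\ref{lem:nefpullbacksurjective})$), then pulls back nefly to $X_{i+1}$ (using Lemma \ref{lem:FLIPQfacAndN1same}$(\ref{lem:FLIPNefPullback})$ in the flip case).

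The main technical obstacle is verifying condition $(\mathrm{iii})$ of Definition \ref{def:GoodScalingDivisors} for $\lambda_{i+1}A_{i+1}$: \'etale-locally over $V_a\subseteq Z$, one must produce an effective $B_a\sim_\QQ\lambda_{i+1}A_{i+1}|_{V_a}$ with $(X_{i+1}|_{V_a},\Delta_{i+1}|_{V_a}+B_a)$ klt. Starting from the klt representative $B_{a,i}$ of $\lambda_iA_i|_{V_a}$ provided by the invariant at step $i$, the rescaled divisor $(1-\epsilon)\frac{\lambda_{i+1}}{\lambda_i}B_{a,i}$ is still klt with $\Delta_i$ by Lemma \ref{lem:kltFacts}$(\ref{lem:kltSmaller})$; since $(K_{X_i}+\Delta_i+\lambda_{i+1}A_i)\cdot R_i=0$ and $A_i\cdot R_i>0$, for $\epsilon>0$ sufficiently small the class $-(K_{X_i}+\Delta_i+(1-\epsilon)\frac{\lambda_{i+1}}{\lambda_i}B_{a,i})$ is $f_i$-ample, so Corollary \ref{cor:MMPpreservesKLT} yields a klt pair $(X_{i+1}|_{V_a},\Delta_{i+1}|_{V_a}+C)$ with $C\sim_\QQ(1-\epsilon)\lambda_{i+1}A_{i+1}|_{V_a}$. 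We then upgrade $C$ to the required $B_a$ by writing $\epsilon\lambda_{i+1}A_{i+1}|_{V_a}=H'+E'$ via Kodaira's Lemma (Corollary \ref{lem:kodairachar}) with $H'$ $\pi_{i+1}$-ample and $E'$ effective, choosing a suitably general $H''\in|H'|_\QQ$ via the local Bertini theorem (Corollary \ref{cor:BertiniKLTOpenCover}) so that $(X_{i+1}|_{V_a},\Delta_{i+1}|_{V_a}+C+H''+E')$ remains klt for $\epsilon$ small (by continuity of kltness, Lemma \ref{lem:kltFacts}$(\ref{lem:kltContinuous})$), and setting $B_a\coloneqq C+H''+E'$; this may require further shrinking of $V_a$.
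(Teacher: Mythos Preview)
Your overall inductive scheme is correct and matches the paper's: apply Lemma~\ref{lem:cl1362} to find an extremal ray, contract it (Lemma~\ref{lem:Contraction3Types}, Theorem~\ref{thm:FLIP}), and verify that $\QQ$-factoriality, kltness, bigness, and nefness of $K+\Delta+\lambda A$ persist via the lemmas you cite. The divergence is entirely in how you verify condition~$(\ref{lem:cl1362condition1})$ of Definition~\ref{def:GoodScalingDivisors} on $X_{i+1}$, and there your argument has a gap.

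You produce, for each $\epsilon>0$, a klt pair $(X_{i+1},\Delta_{i+1}+(1-\epsilon)D)$ where $D$ is the transform of $\tfrac{\lambda_{i+1}}{\lambda_i}B_{a,i}$, and then attempt to ``upgrade'' by adding an effective $\QQ$-divisor $\sim_\QQ \epsilon\lambda_{i+1}A_{i+1}$ via Kodaira's lemma and continuity. The problem is that the continuity bound in Lemma~\ref{lem:kltFacts}$(\ref{lem:kltContinuous})$ depends on the pair you start from, and that pair depends on $\epsilon$. If the kltness margin of $(X_{i+1},\Delta_{i+1}+(1-\epsilon)D)$ were to degenerate to zero as $\epsilon\to 0$ (i.e., if some discrepancy of $(X_{i+1},\Delta_{i+1}+D)$ were exactly $-1$), your argument would not close: writing $\epsilon\lambda_{i+1}A_{i+1}\sim_\QQ \epsilon\lambda_{i+1}(H_0+E_0)$ with $H_0,E_0$ fixed, you need $\epsilon\lambda_{i+1}E_0$ to be small relative to a margin that could itself be $O(\epsilon)$, and there is no reason the constant $\lambda_{i+1}\cdot\mult_F(E_0)$ is dominated by the relevant slope.

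The paper bypasses this entirely. The key observation you are missing is that $K_{X_i}+\Delta_i+\lambda_{i+1}A_i$ is \emph{numerically trivial} over $Y_i$ (not merely with $f_i$-negative negative part), and so is its transform $K_{X_{i+1}}+\Delta_{i+1}+\lambda_{i+1}A_{i+1}$. Thus both nefness hypotheses of Lemma~\ref{lem:DiscrepancyNonIncreasing} are satisfied for the pair $(X_i,\Delta_i+\tfrac{\lambda_{i+1}}{\lambda_i}B_{a,i})$ and its transform, giving directly that $(X_{i+1},\Delta_{i+1}+D)$ is klt---no perturbation by $\epsilon$, no Kodaira/Bertini patching. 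The transform $D$ of $\tfrac{\lambda_{i+1}}{\lambda_i}B_{a,i}$ is itself the required effective representative of $\lambda_{i+1}A_{i+1}|_{V_a}$.
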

\begin{proof}
  First, 
  find a ray $R$ as in Lemma \ref{lem:cl1362}, and let $h\colon X\to Y$ be a good contraction of $R$.

If $\dim Y<\dim X$, we do nothing further and say that the minimal model program of $(X,\Delta)$ over $Z$ with the scaling of $A$ terminates with a Mori fibration.

Otherwise $f$ is birational. 
By Lemma \ref{lem:Contraction3Types}, $\mathrm{Ex}(h)\subseteq X$ is either a prime divisor, in which case we let  $X'=Y$, or is of codimension $\geq 2$, in which case we let $X'$ be a flip of $h$, which exists (Theorem \ref{thm:FLIP}$(\ref{thm:FLIPisProj})$).

Denote by $h'$ and $\pi'$ the maps from $X'$ to $Y$ and $Z$ respectively.
Let $K_{X'}$ be the birational transform of $K_X$ on $X'$, which is a canonical divisor of $X'$; let $\Delta'$ and $A'$ be the birational transforms of $\Delta$ and $A$ respectively.

We note that $Y$ is projective over $Z$, hence so is $X'$; 
$X'$ is integral and normal, see Definitions \ref{def:GoodContrOfR} and \ref{def:FLIPS}; 
$X'$ is $\QQ$-factorial, see Lemmas \ref{lem:ContractionQfac} and \ref{lem:FLIPQfacAndN1same}.
The pair $(X',\Delta')$ is klt by Corollary \ref{cor:MMPpreservesKLT}.
\smallskip

We now verify that $\lambda A'$ is a good scaling divisor for the pair $(X',\Delta')$.
Since $A$ is $\pi$-big, so is $\lambda A$, and we see that $\lambda A'$ is $\pi'$-big from Lemma \ref{lem:ContrPreservesBig}.
We know that $K_X+\Delta+\lambda A\sim_{\QQ}h^*E$ for some effective $\QQ$-Cartier divisor $E$ on $Y$, so $K_{X'}+\Delta'+\lambda A'\sim_{\QQ}{h'}^*E$ and therefore $K_{X'}+\Delta'+\lambda A'$ is $\pi'$-nef.

It remains to verify $(\ref{lem:cl1362condition1})$ in Definition \ref{def:GoodScalingDivisors}.
Notice that birational transform preserves $\QQ$-linear equivalence,
thus after passing to an \'etale covering of $Z$,
we may assume $A\geq 0$ and $(X,\Delta+A)$ klt, and thus $(X,\Delta+\lambda A)$ is klt.
By construction, $K_X+\Delta+\lambda A$ is $h$-numerically trivial, and $K_{X'}+\Delta'+\lambda A'$ is $h'$-numerically trivial.
By Corollary \ref{cor:MMPpreservesKLT}, we know that
$(X',\Delta'+\lambda A')$ is klt, as desired.
\smallskip

Therefore, the new datum $(X',\Delta',\lambda A')$ satisfies the same assumptions as the datum $(X,\Delta,A)$, 
except that it is now possible (and desirable) that $K_{X'}+\Delta'$ is $\pi'$-nef, 
in which case we say that the minimal model program of $(X,\Delta)$ over $Z$ with the scaling of $A$ terminates with a minimal model.
Otherwise, we start over with $(X',\Delta',\lambda A')$.
If, after finitely many steps, we arrive at the situation $\dim X>\dim Y$
(resp.\ $K_{X'}+\Delta'$ is $\pi'$-nef), we say the minimal model program of $(X,\Delta)$ over $Z$ with the scaling of $A$ terminates with a Mori fibration (resp. a minimal model).

Otherwise, we will get an infinite sequence
\[
  (X_1, \Delta_1, \lambda_1A_1) \overset{f_1}{\dashrightarrow} \cdots
  \overset{f_{i-1}}{\dashrightarrow} (X_i, \Delta_i, \lambda_iA_i)
  \overset{f_{i}}{\dashrightarrow} (X_{i+1}, \Delta_{i+1}, \lambda_{i+1}A_{i+1})
  \overset{f_{i+1}}{\dashrightarrow} \cdots
\]
where $X_1=X$, $\Delta_i$ and $A_i$ are the birational transforms of $\Delta$
and $A$, respectively, the triple
$(X_i,\Delta_i,\lambda_{i-1}A_i)$
satisfies the assumptions of Lemma \ref{lem:cl1362},
\[
  \lambda_i=\lambda_{i-1}\cdot\lambda\bigl(X_i/Z,\Delta_i,\lambda_{i-1}A_i\bigr)
  \leq \lambda_{i-1},
\]
and $f_i$ is either a birational contraction or a flip corresponding to a ray as in Lemma \ref{lem:cl1362}.

Since the number $\dim\bigl(N^1(X_i/Z)_{\RR}\bigr)$ 
decreases for a (good) contraction $f_i$ (Definition \ref{def:GoodContrOfR}) and 
remains unchanged for a flip $f_i$ (Lemma \ref{lem:FLIPQfacAndN1same}$(\ref{lem:FLIPN1Same})$), we see that all but finitely many $f_i$ are flips.
\end{proof}

We now prove that a sequence of flips always terminates with additional bigness conditions.
By Lemma \ref{lem:AmpleIsGoodScaling},
this completes the proof of Theorem
\ref{thm:introrelativemmp}$(\ref{setup:introalgebraicspaces})$.

\begin{theorem}[cf. {\cite[Theorem 6]{CL13}}]\label{thm:cl1365}
Let $\pi_1\colon X_1 \to Z$ be a projective morphism of Noetherian algebraic
spaces of equal characteristic zero over a scheme $S$,
such that $X_1$ is integral, normal, and $\QQ$-factorial, and such that $Z$ is
quasi-excellent and has a dualizing complex $\omega_Z^\bullet$.
Denote by $K_{X_1}$ a canonical divisor on $X_1$ associated to $\omega_{X_1}^\bullet =
  \pi^!\omega_Z^\bullet$.
  
Let $\Delta_1$ be an effective $\QQ$-divisor on $X_1$ such that  
$(X_1,\Delta_1)$ is klt. 
Let $A_1$
be a good scaling divisor for the pair $(X_1, \Delta_1)$,
and let $\lambda_1 = \lambda(X_1, \Delta_1, A_1)$. 
Assume that $cK_{X_1}+\Delta_1$ is $\pi_1$-big for some rational number $c\in(-\infty, 1]$.
Then, any sequence
\[
  (X_1, \Delta_1, \lambda_1A_1) \overset{f_1}{\dashrightarrow} \cdots
  \xdashrightarrow{f_{i-1}} (X_i, \Delta_i, \lambda_iA_i)
  \overset{f_{i}}{\dashrightarrow} (X_{i+1}, \Delta_{i+1}, \lambda_{i+1}A_{i+1})
  \xdashrightarrow{f_{i+1}} \cdots
\]
of flips of the Minimal Model Program with scaling of $A_1$ terminates.
\end{theorem}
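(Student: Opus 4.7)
The plan is to follow the Cascini--Lazi\'c strategy from \cite[Theorem 6]{CL13}. Termination of an infinite sequence of flips is local on $Z$, and since each flip $f_j$ is projective over $Z$ by Theorem \ref{thm:FLIP}$(\ref{thm:FLIPisProj})$, combining with the \'etale cover of Definition \ref{def:GoodScalingDivisors}$(\ref{lem:cl1362condition1})$ I reduce to the case $Z = \Spec R$ affine with $A_1 \ge 0$ and $(X_1,\Delta_1+A_1)$ klt. The nef threshold satisfies $\lambda_i \le \lambda_1 \le 1$, and each flip $f_j$ is $(K_{X_j}+\Delta_j+\lambda_j A_j)$-trivial, so inductive application of Lemma \ref{lem:DiscrepancyNonIncreasing} combined with Lemma \ref{lem:kltFacts}$(\ref{lem:kltSmaller})$ shows $(X_i,\Delta_i+\lambda_i A_i)$ is klt for every $i$. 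Each flip is an isomorphism in codimension one (Definition \ref{def:FLIPS}$(\ref{def:FLIPSsmall})$), so the composition $g_i \colon X_1 \dashrightarrow X_i$ is a pseudo-isomorphism, and under birational transform the class $D_i \coloneqq K_{X_1}+\Delta_1+\lambda_i A_1$ on $X_1$ corresponds to $K_{X_i}+\Delta_i+\lambda_i A_i$ on $X_i$.

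Next, I apply Theorem \ref{thm:cl1332} to the two generators $K_{X_1}+\Delta_1$ and $K_{X_1}+\Delta_1+\lambda_1 A_1$ (both of the form $K_{X_1}+\Theta+0$ with $(X_1,\Theta)$ klt and $cK_{X_1}+\Theta$ $\pi_1$-big, the latter via the hypothesis together with effectiveness of $\lambda_1 A_1$). The resulting relative adjoint ring
\[
  R = R\bigl(X_1/Z;\, K_{X_1}+\Delta_1,\ K_{X_1}+\Delta_1+\lambda_1 A_1\bigr)
\]
is finitely generated, so Theorem \ref{thm:cl1335}$(\ref{thm:cl13351})(\ref{thm:cl13353})$ yields a rational polyhedral cone $\Supp(R)$ admitting a finite rational polyhedral decomposition $\Supp(R) = \bigsqcup_k \cC_k$ on which every asymptotic order $o_v$ is linear. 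For each $i$, since $(X_i,\Delta_i+\lambda_i A_i)$ is klt and $\Delta_i+\lambda_i A_i$ is $\pi_i$-big (Lemma \ref{lem:ContrPreservesBig}), Corollary \ref{cor:cl1338} applied with $A = 0$ gives $\pi_i$-semi-ampleness of $K_{X_i}+\Delta_i+\lambda_i A_i$. Lemma \ref{lem:cl1337}$(\ref{lem:cl13371})$ combined with Lemma \ref{lem:cl1352} then yields $o_v(D_i) = 0$ for every geometric valuation $v$ of $X_1$, and pulling back sections along $g_i$ shows $D_i \in \Supp(R)$.

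Finiteness of the decomposition $\{\cC_k\}$ forces infinitely many $[D_i]$ into a common chamber $\cC_k$. \emph{The main obstacle is to upgrade this combinatorial fact into a geometric contradiction.} In the principal case where $\cC_k$ contains $[D_i], [D_j]$ with $\lambda_j < \lambda_i$, linearity of every $o_v$ on $\cC_k$ and vanishing at the endpoints forces $o_v$ to vanish on the entire segment, so by Lemma \ref{lem:cl1337}$(\ref{lem:cl13372})$ every rational $D_\lambda = K_{X_1}+\Delta_1+\lambda A_1$ with $\lambda \in [\lambda_j,\lambda_i] \cap \QQ$ is $\pi_1$-semi-ample. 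Building the parallel finitely generated adjoint ring on $X_i$ (again via Theorem \ref{thm:cl1332}, using that $cK_{X_i}+\Delta_i$ is $\pi_i$-big by Lemma \ref{lem:ContrPreservesBig}) and transporting the vanishing $o_{v'}((g_i)_*D_\lambda) = 0$ from $X_1$ via Lemma \ref{lem:cl1352}, a second application of Lemma \ref{lem:cl1337}$(\ref{lem:cl13372})$ gives $\pi_i$-semi-ampleness of $K_{X_i}+\Delta_i+\lambda A_i$ for rational $\lambda$ slightly below $\lambda_i$. This contradicts the characterization of $\lambda_i$ as the nef threshold on $X_i$ in Definition \ref{def:cl1361}. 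The complementary sub-case, in which the $[D_i] \in \cC_k$ collapse to a single class and $\lambda_i$ is eventually constant equal to some $\mu > 0$, reduces the tail of the sequence to $(K+\Delta+\mu A)$-trivial flips over the common semi-ample contraction target determined by $K_{X_{i_0}}+\Delta_{i_0}+\mu A_{i_0}$; termination there follows from finite generation of the adjoint ring on that target (Theorem \ref{thm:cl1332}) combined with Corollary \ref{cor:MMPnotCycleBack} to rule out cycling among small $\QQ$-factorial birational models.
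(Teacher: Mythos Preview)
Your principal case (where the nef thresholds $\lambda_i$ take infinitely many distinct values) is correct: the two-generator adjoint ring on $X_1$, the chamber decomposition from Theorem \ref{thm:cl1335}$(\ref{thm:cl13353})$, the vanishing of $o_v$ transferred via Lemma \ref{lem:cl1352}, and the resulting contradiction to the nef threshold on $X_i$ via Lemma \ref{lem:cl1337}$(\ref{lem:cl13372})$ all go through.

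The gap is in the complementary sub-case where $\lambda_i$ is eventually constant, equal to some $\mu > 0$. Here all the $D_i$ for large $i$ collapse to the single point $K_{X_1}+\Delta_1+\mu A_1$ of $\Supp(R)$, and the chamber structure on $X_1$ carries no information. Your proposed remedy---``finite generation of the adjoint ring on that target combined with Corollary \ref{cor:MMPnotCycleBack}''---is not a termination argument: the corollary only forbids returning to an \emph{isomorphic} model, whereas you need a bound on the number of distinct small $\QQ$-factorial modifications of $Y$ appearing in the sequence. You have not specified which adjoint ring on $Y$ would yield such a bound, and the natural candidates (e.g.\ finiteness of ample models) are themselves consequences of termination, making the argument circular.

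The paper avoids this case split by working with a higher-dimensional rational polytope in $N^1$ rather than your two-dimensional cone. The polytope $Q_i$ on each $X_i$ is arranged to contain the nef class $[K_{X_i}+\Delta_i+\lambda_i A_i]$ in its interior, so that $\Supp(R_i)$ meets $\Amp(X_i/Z)$, and Lemma \ref{lem:cl1339} then identifies a unique \emph{nef chamber} $\cC_i^{p_i}$ of the (coarsest) decomposition. Pigeonhole gives $p_i = p_j$ with a genuine flip in between; since $\cC_i^{p_i}$ now contains $\pi_i$-ample classes, one extracts an ample $H_i$ on $X_i$ whose birational transform on $X_j$ lies in $\cC_j^{p_j}$ and is therefore nef. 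Lemma \ref{lem:cl1353} (applied in both directions) forces $X_i \cong X_j$, contradicting Corollary \ref{cor:MMPnotCycleBack}. Your two-generator cone generally fails to meet the ample cone of $X_i$ for $i > 1$, which is precisely why this nef-chamber route is unavailable to you and why the problematic constant-$\lambda$ case is forced upon you.
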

\begin{proof}
If the sequence of flips does not terminate, we can find an \'etale affine $W
\to Z$ such that, denoting by $U_i$ the base change $X_i\times_Z W$, the birational map $f_{i|U_i}:U_i\dashrightarrow U_{i+1}$ is not an isomorphism (thus a flip of a suitable contraction, see Lemma \ref{lem:ContSmallerOpen}) for infinitely many $i$.
By Lemma \ref{lem:K+Delta+lambdaAInmathcalA},
$[K_{X_1}+\Delta_1+\lambda_1 A_1]$ belongs to the set $\mathcal{A}(X_1/Z)$ as in
Definition \ref{def:cl13thm4cond},
and we may require the numbers $c_a=1$. 
Therefore, after possibly shrinking $W$,
we may assume that 
there exists
a $\RR$-divisor $\Delta_1'\geq 0$ and a $\pi_{|U_1}$-ample $\RR$-divisor $H_1'$ on $U_1$ such that 
$(U_1,\Delta_1')$ is klt and
\[
  [K_{U_1}+\Delta_{1\vert U_1}+\lambda_1 A_{1\vert
  U_1}]=[K_{X_1}+\Delta_1'+H_1']_{\rvert U_1}\in
  N^1\bigl(U_1/V\bigr)_{\RR}.
\]
Since $K_{U_1}+\Delta_{1U_1}+\lambda_1 A_{1U_1}$ is a $\QQ$-divisor by Lemma \ref{lem:cl1362}, 
we may assume that $\Delta_1'$ and $H_1'$ are $\QQ$-divisors.

We may find $\QQ$-divisors $D_1^1,\ldots,D_1^m$ on $U_1$ such that the convex hull $P_1$ of $\{[D_1^1],\ldots,[D_1^m]\}$ is a rational polytope containing $[K_{U_1}+\Delta_{1|U_1}+\lambda_1 A_{1|U_1}]$ in its interior and is contained in $\mathcal{A}$, 
and that $D_1^a-K_{U_1}-\Delta_1'$ is ample for all indices $a$.
Let
\[
  R_1 \coloneqq R\bigl(U_1/V;K_{U_1}+\Delta_{1|U_1},D_1^1,\ldots,D_1^m\bigr).
\]
By Theorem \ref{thm:cl1332},
$R_1$ is finitely generated
over $H^0(W,\cO_W)$.

Write
\[
  g_i=f_{i|U_i}\circ\cdots\circ f_{1|U_1}\colon U_1\dashrightarrow U_{i+1}
\]
for all \(i\geq 0\),
$D_{i+1}^a=g_{i*}D_1^a$, and
\[
  R_{i}=R\bigl(U_{i+1}/V;K_{U_i}+\Delta_{i|U_i},D_i^1,\ldots,D_i^m\bigr).
\]
Then 
each $g_i$ induces an isomorphism $R_1\cong R_{i+1}$, so each $R_{i}$ is finitely generated over $H^0(W,\cO_W)$.
Put $V_{i}=\RR(K_{U_i}+\Delta_{i|U_i})+\sum_a\RR D_i^a$, so we have a commutative diagram
\[
  \begin{tikzcd}[column sep=2.8em]
V_1\rar{f_{1*}}\dar{\varphi_1} &\cdots\rar{f_{i-1,*}} 
&V_i\dar{\varphi_i}\rar{f_{i*}}& V_{i+1}\dar{\varphi_{i+1}}\rar{f_{i+1,*}} &\cdots\\
N^1(U_1/W)_{\RR}\rar{f_{1*}} &\cdots\rar{f_{i-1,*}} 
&N^1(U_i/W)_{\RR}\rar{f_{i*}}& N^1(U_{i+1}/W)_{\RR}\rar{f_{i+1,*}} 
&\cdots
\end{tikzcd}
\]
in which $\varphi_i$ are the canonical maps as in Lemma \ref{lem:cl1339}, and $f_j$ by abuse of notation means $f_{j|U_j}$.
Notice that by Lemma \ref{lem:FLIPQfacAndN1same}, the horizontal arrows are all isomorphisms of real vector spaces.

Let $Q_i$ be the convex hull of $\{[K_{U_i}+\Delta_{i|U_i}],[D_i^1],\ldots,[D_i^m]\}$. 
Then by construction, $Q_1$ contains $[K_{U_1}+\Delta_{1|U_1}+\lambda A_{1|U_1}]$ in its interior for all positive $\lambda\leq \lambda_1$.
Thus, $Q_i$ contains
\[
  [K_{U_i}+\Delta_{i|U_i}+\lambda_i A_{i|U_i}]\in\Nef\bigl(U_i/W\bigr)
\]
in its interior.
Therefore $Q_i\cap \Amp(U_i/W)\neq\emptyset$, so $\mathrm{Supp}(R_i)\cap \Amp(U_i/W)\neq\emptyset$.

Let
\begin{align*}
  \Supp(R_1)&=\bigsqcup_p \cC_1^p
\intertext{be the coarsest subdivision into rational polyhedral cones such that $o_v$ is
linear on each $\cC_1^p$ (see Theorem \ref{thm:cl1335}$(\ref{thm:cl13353})$).
Writing $\cC_{i+1}^p=g_{i*}\cC_1^p$, we see from Lemma \ref{lem:cl1352} that}
  \Supp(R_i)&=\bigsqcup_p \cC_i^p
\end{align*}
is the coarsest subdivision into rational polyhedral cones such that $o_v$ is linear on each $\cC_i^p$. 
Now since
\[
  \Supp(R_i)\cap \Amp(U_i/W)\neq\emptyset,
\]
by Lemma \ref{lem:cl1339} we see that for each $i$ there exists a $p_i$ such that
\[
  \varphi_i^{-1}\bigl(\Nef(X_i/Z)\bigr)\cap\Supp(R_i)=\cC_i^{p_i}.
\]
Since there exists only finitely many indices $p$, there exists an $i$ and infinitely many $j>i\geq 1$ such that $p_j=p_i$.
We pick a $j$ such that there exists $k\in\ZZ$, $i\leq k<j$ with $f_{k|U_k}$ not an isomorphism.

Since $p_i=p_j$, there exists a $\pi$-ample $\QQ$-invertible sheaf $H_i$ on $U_i$
and a  $\pi$-nef $\QQ$-invertible sheaf $H_j$ on $U_j$ such that $H_j$ is the birational transform of $H_i$.
By Lemma \ref{lem:cl1353}, the rational map
\begin{align*}
  f_{j-1|U_{j-1}}\circ\cdots\circ f_{i|U_i}&\colon U_i\dashrightarrow U_j
  \intertext{is a morphism.
  By symmetry, we have that}
  f_{i|U_i}^{-1}\circ\cdots\circ f_{j-1|U_{j-1}}^{-1}&\colon U_j\dashrightarrow U_i
\end{align*}
is a morphism as well.
By \cite[Lemma 7]{CL13}, they are isomorphisms inverse to each other.
However, $f_{j-1|U_{j-1}}\circ\cdots\circ f_{i|U_i}$ is a composition of several isomorphisms and at least one flip, so it cannot be an isomorphism by Corollary \ref{cor:MMPnotCycleBack}, a contradiction.
\end{proof}

\begin{corollary}[cf. {\cite[Corollary 4]{CL13}}]\label{cor:cl1366}
Let $\pi\colon X \to Z$ be a projective morphism of Noetherian algebraic
spaces of equal characteristic zero over a scheme $S$, such that $X$ is
integral, normal and $\QQ$-factorial and such that $Z$ is quasi-excellent and has a dualizing complex $\omega_Z^\bullet$.
Denote by $K_{X}$ a canonical divisor on $X$ associated to $\omega_{X}^\bullet =
  \pi^!\omega_Z^\bullet$.

Let $\Delta\geq 0$ be a $\mathbf Q$-Weil divisor on $X$ such that $K_X+\Delta$ is $\QQ$-Cartier and such that $(X,\Delta)$ is klt. 
  Let $A$ be a good scaling divisor for the pair $(X,\Delta)$. 
  Assume that $cK_{X}+\Delta$ is $\pi$-big for some rational number $c\in(-\infty, 1]$.
  Then, the following hold.
  
  \begin{enumerate}[label=$(\roman*)$,ref=\roman*]
      
      \item\label{cor:cl13661} If $K_X+\Delta$ is $\pi$-pseudoeffective
      then any process of the Minimal Model Program of the pair $(X,\Delta)$ over $Z$ with the scaling of $A$ 
      terminates with a minimal model.
      
      \item\label{cor:cl13662} If $K_X+\Delta$ is not $\pi$-pseudoeffective, 
      then any process of the Minimal Model Program of the pair $(X,\Delta)$ over $Z$ with the scaling of $A$ 
      terminates with a Mori fibration.
      
  \end{enumerate}
\end{corollary}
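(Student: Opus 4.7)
The plan is to combine existence of the MMP (Theorem \ref{rem:MMP}), termination of flips in the presence of a bigness hypothesis (Theorem \ref{thm:cl1365}), and a Picard rank argument to bound the total number of steps, then to use preservation of pseudoeffectivity to identify the termination type.

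First, by Theorem \ref{rem:MMP}, the relative MMP with scaling of $A$ over $Z$ exists and produces a (possibly infinite) sequence
\[
  (X_1,\Delta_1,\lambda_1 A_1)
  \overset{f_1}{\dashrightarrow}
  (X_2,\Delta_2,\lambda_2 A_2)
  \overset{f_2}{\dashrightarrow}
  \cdots
\]
with $(X_1,\Delta_1,\lambda_1 A_1)=(X,\Delta,\lambda_1 A)$, in which each $f_i$ is either a divisorial contraction or a flip. I would first show that only finitely many of the $f_i$ are divisorial contractions. A good contraction strictly decreases $\dim_\RR N^1(X_i/Z)_\RR$ by one (Definition \ref{def:GoodContrOfR}), while a flip preserves this dimension by Lemma \ref{lem:FLIPQfacAndN1same}$(\ref{lem:FLIPN1Same})$; since $N^1(X_1/Z)_\RR$ is finite-dimensional by Theorem \ref{thm:RelNSFinite}, divisorial contractions occur at most finitely many times. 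Thus it suffices to show that any maximal infinite subsequence of consecutive flips terminates.

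For termination of such a flip subsequence, I would invoke Theorem \ref{thm:cl1365} starting from $(X_i,\Delta_i,\lambda_{i-1}A_i)$ for appropriate $i$. The hypotheses propagate through each step of the MMP: $X_i$ remains integral, normal, and $\QQ$-factorial by Lemma \ref{lem:ContractionQfac} and Lemma \ref{lem:FLIPQfacAndN1same}$(\ref{lem:FLIPQfac})$; $(X_i,\Delta_i)$ remains klt by Corollary \ref{cor:MMPpreservesKLT}; and $\lambda_{i-1}A_i$ is a good scaling divisor for $(X_i,\Delta_i)$ as verified in the proof of Theorem \ref{rem:MMP}. The critical remaining point is that the bigness hypothesis $c K_{X_i}+\Delta_i$ is $\pi_i$-big is preserved under both divisorial contractions and flips, which follows from Lemma \ref{lem:ContrPreservesBig} (flips are isomorphisms in codimension one, and divisorial contractions preserve bigness of $\QQ$-Cartier pushforwards). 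Hence Theorem \ref{thm:cl1365} applies to each maximal block of consecutive flips, so the MMP terminates in finitely many steps in a model $(X_m,\Delta_m)$ that is either a minimal model or a Mori fibration over $Z$.

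It remains to identify which endpoint occurs in each case. By Corollary \ref{cor:MMPpreservesKeffective}, $\pi_i$-pseudoeffectivity of $K_{X_i}+\Delta_i$ is preserved under every step of the MMP. In case $(\ref{cor:cl13661})$, if the MMP terminated in a Mori fibration $\varphi\colon X_m\to Y$ over $Z$, then $-(K_{X_m}+\Delta_m)$ would be $\varphi$-ample; restricting to a general fiber of $\varphi$ over a point of $Y$ lying above the generic point $\eta$ of $Z$, the restriction of $-(K_{X_m}+\Delta_m)$ becomes ample and in particular not pseudoeffective, contradicting the pseudoeffectivity of $(K_{X_m}+\Delta_m)_\eta$. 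Thus the MMP must terminate with a minimal model. In case $(\ref{cor:cl13662})$, if instead the MMP terminated with a minimal model, then $K_{X_m}+\Delta_m$ would be $\pi_m$-nef, hence the limit of the $\pi_m$-ample classes $K_{X_m}+\Delta_m+\varepsilon H$ as $\varepsilon\downarrow 0$, contradicting non-pseudoeffectivity (preserved by Corollary \ref{cor:MMPpreservesKeffective} applied to the inverse direction of the chain, using that each step is either a contraction or a flip and reverses appropriately). Thus the MMP terminates with a Mori fibration.

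The main obstacle will be the last step: carefully verifying that the Mori fibration case forces non-pseudoeffectivity on the generic fiber over $Z$, and conversely that a minimal model forces pseudoeffectivity. The Mori fibration direction requires passing between pseudoeffectivity on the generic fiber of $\pi_m$ and (anti)ampleness on the relative fibers of $\varphi$; this is standard over a field but in the relative algebraic-space setting must be argued either by base changing to $\eta$ and applying the scheme/variety case, or by exhibiting an appropriate curve class in $N_1((X_m)_\eta/\kappa(\eta))$ on which $K_{X_m}+\Delta_m$ is negative and using Theorem \ref{thm:cl1335}$(\ref{thm:cl13352})$ together with the bigness hypothesis to rule out membership in the pseudoeffective cone.
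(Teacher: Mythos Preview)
Your proposal is correct and follows essentially the same approach as the paper: use the observation (from the end of the proof of Theorem \ref{rem:MMP}) that only finitely many steps can be divisorial contractions, apply Theorem \ref{thm:cl1365} to rule out infinite flip sequences, and invoke Corollary \ref{cor:MMPpreservesKeffective} to determine the endpoint type. Your explicit check that the bigness of $cK_{X_i}+\Delta_i$ persists along the MMP (needed to apply Theorem \ref{thm:cl1365} after some divisorial contractions have already occurred) is a detail the paper leaves implicit, and your concluding ``main obstacle'' is not one: a minimal model makes $K_{X_m}+\Delta_m$ nef hence pseudoeffective, while a Mori fibration makes it anti-ample on positive-dimensional fibers hence not pseudoeffective on the generic fiber over $Z$, so the dichotomy is immediate once Corollary \ref{cor:MMPpreservesKeffective} is in hand.
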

\begin{proof}
At the end of the proof of Theorem \ref{rem:MMP}, we have noticed that if the process does not terminate, we will have an infinite sequence of flips.
Our assumption and Theorem \ref{thm:cl1365} ensures that such an infinite sequence cannot exist, so the process terminates.

Since whether or not $K_X+\Delta$ is $\pi$-pseudoeffective will not change in the process (Corollary \ref{cor:MMPpreservesKeffective}), 
we see that if the process terminates
and $K_X+\Delta$ is $\pi$-pseudoeffective (resp.\ not $\pi$-pseudoeffective) 
then the process terminates with a minimal model (resp.\ Mori fibration), as desired.
\end{proof}

\begin{corollary}[cf. {\cite[Corollary 5]{CL13}}]\label{cor:cl1367}
Let $\pi\colon X \to Z$ be a projective morphism of Noetherian algebraic
spaces of equal characteristic zero over a scheme $S$, such that $X$ is
integral, normal and $\QQ$-factorial and such that $Z$ is quasi-excellent and has a dualizing complex $\omega_Z^\bullet$.
Denote by $K_{X}$ a canonical divisor on $X$ associated to $\omega_{X}^\bullet =
  \pi^!\omega_Z^\bullet$.

Let $\Delta\geq 0$ be a $\mathbf Q$-Weil divisor on $X$ such that $K_X+\Delta$ is $\QQ$-Cartier and such that $(X,\Delta)$ is klt. 
  Let $A$ be a good scaling divisor for the pair $(X,\Delta)$. 
 If $K_X+\Delta$ is not $\pi$-pseudoeffective, 
      then any process of the Minimal Model Program of the pair $(X,\Delta)$ over $Z$ with the scaling of $A$ 
      terminates with a Mori fibration.
\end{corollary}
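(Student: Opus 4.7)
The plan is to reduce Corollary \ref{cor:cl1367} to Corollary \ref{cor:cl1366}$(\ref{cor:cl13662})$ by perturbing the boundary to include a small multiple of $A$. Set $\Delta' = \Delta + \varepsilon A$ and $A' = (1-\varepsilon)A$ for a sufficiently small rational $\varepsilon \in (0,1)$. Since $A$ is $\pi$-big and $\Delta \ge 0$, the divisor $\Delta'$ is $\pi$-big, so the bigness hypothesis of Corollary \ref{cor:cl1366} is satisfied with $c = 0 \in (-\infty,1]$. The pair $(X,\Delta')$ is klt by \'etale-local convexity (Lemma \ref{lem:kltFacts}$(\ref{lem:kltConvex})$), applied to $(X|_{V_a},\Delta|_{V_a})$ and $(X|_{V_a},\Delta|_{V_a}+A_a)$ from the good-scaling cover. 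The three defining conditions for $A'$ being a good scaling divisor for $(X,\Delta')$ follow routinely: $A'$ is $\pi$-big, $K_X + \Delta' + A' = K_X+\Delta+A$ is $\pi$-nef, and on each $V_a$ we have $(\Delta + \varepsilon A + A')|_{V_a} \sim_\QQ (\Delta + A)|_{V_a}$, which is part of a klt pair by hypothesis.

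Next I would constrain $\varepsilon$ further so that $K_X + \Delta'$ remains non-$\pi$-pseudoeffective. Define $\tau^* = \inf\{t \ge 0 : K_X + \Delta + tA \text{ is } \pi\text{-pseudoeffective}\}$. Since the $\pi$-pseudoeffective cone of the generic fiber is closed and finite-dimensional and $K_X + \Delta$ lies in its complement, while $K_X + \Delta + A$ is $\pi$-nef hence pseudoeffective, we have $0 < \tau^* \le 1$. Choosing any rational $\varepsilon \in (0,\tau^*)$ ensures $K_X + \Delta'$ is not $\pi$-pseudoeffective. Corollary \ref{cor:cl1366}$(\ref{cor:cl13662})$ then applies to $(X,\Delta',A')$ and asserts that any MMP of $(X,\Delta')$ with scaling of $A'$ terminates with a Mori fibration.

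The key step is to show that any given MMP of $(X,\Delta)$ with scaling of $A$ is, step for step, also a valid MMP of $(X,\Delta')$ with scaling of $A'$. Let $(X_i,\Delta_i,\lambda_i A_i)$ be the former, with contracted rays $R_i$ satisfying $(K_{X_i}+\Delta_i+\lambda_i A_i)\cdot R_i = 0$, $(K_{X_i}+\Delta_i)\cdot R_i < 0$, whence $A_i \cdot R_i > 0$. The arithmetic identity
\[
  K_{X_i}+\Delta_i+\varepsilon A_i + \tfrac{\lambda_i-\varepsilon}{1-\varepsilon}(1-\varepsilon)A_i = K_{X_i}+\Delta_i+\lambda_i A_i
\]
identifies $\mu_i \coloneqq (\lambda_i-\varepsilon)/(1-\varepsilon)$ as the candidate scaling parameter. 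I will prove by induction on $i$ that $\lambda_i > \varepsilon$ and that $K_{X_i}+\Delta_i+\varepsilon A_i$ is not $\pi$-pseudoeffective. The base case is the choice of $\varepsilon$. For the inductive step: from $\lambda_i > \varepsilon$ and $A_i \cdot R_i > 0$ it follows that $(K_{X_i}+\Delta_i+\varepsilon A_i)\cdot R_i = (\varepsilon - \lambda_i)(A_i\cdot R_i) < 0$, so the contraction or flip $f_i$ is also the contraction or flip of the $(K_{X_i}+\Delta_i+\varepsilon A_i)$-negative ray $R_i$ (noting that the flip is determined by the ray via $\PProj$ of Theorem \ref{thm:FLIP}$(\ref{thm:FLIPisProj})$, hence agrees for both pairs). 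Corollary \ref{cor:MMPpreservesKeffective} applied to the pair $(X_i,\Delta_i+\varepsilon A_i)$ propagates non-$\pi$-pseudoeffectivity of $K_{X_i}+\Delta_i+\varepsilon A_i$ to $X_{i+1}$, and then $\lambda_{i+1} > \varepsilon$ because $K_{X_{i+1}}+\Delta_{i+1}+\lambda_{i+1}A_{i+1}$ is $\pi$-nef hence $\pi$-pseudoeffective, while $K_{X_{i+1}}+\Delta_{i+1}+\varepsilon A_{i+1}$ is not.

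Having established this correspondence, the sequence $(X_i,\Delta_i+\varepsilon A_i, \mu_i A'_i)$ is a valid MMP of $(X,\Delta')$ with scaling $A'$ that shares the same contractions, flips, and terminal Mori fibration with the original MMP. By Corollary \ref{cor:cl1366}$(\ref{cor:cl13662})$ it terminates after finitely many steps with a Mori fibration, and hence so does the given MMP of $(X,\Delta)$. I expect the main obstacle to be the careful bookkeeping in the inductive step--in particular, confirming that the flip at step $i$ is insensitive to the perturbation $\varepsilon A_i$ of the boundary (so that both MMPs genuinely produce the same $X_{i+1}$) and that the version of Corollary \ref{cor:MMPpreservesKeffective} we invoke really applies to the perturbed pair, given that a flip of the perturbed pair exists and coincides with the original by the $\PProj$ characterization.
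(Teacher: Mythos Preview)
Your approach is the same as the paper's: perturb $\Delta$ by a small multiple of (a representative of) $A$ to make the boundary $\pi$-big, then observe that the original MMP is step-for-step an MMP for the perturbed pair, which terminates by the big case. The arithmetic identity you write down, the check that $\lambda_i > \varepsilon$, and the verification that the contracted ray is negative for both pairs are exactly what the paper does.

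There is, however, a gap in how you set up the perturbed pair. You write $\Delta' = \Delta + \varepsilon A$, but $A$ is a $\QQ$-invertible sheaf, not an effective $\QQ$-Weil divisor; to form a klt pair one must choose a specific effective representative $A'' \in |A|_\QQ$. Definition~\ref{def:GoodScalingDivisors}$(\ref{lem:cl1362condition1})$ only provides such representatives $A_a$ \'etale-locally on $Z$, and these need not glue to a global divisor. So your appeal to ``\'etale-local convexity'' does not produce a global klt pair $(X,\Delta')$ to which Corollary~\ref{cor:cl1366} applies; discrepancies depend on the actual divisor, not its linear equivalence class. The paper fixes this by first reducing (as in Corollary~\ref{cor:cl1366}) to termination of an infinite sequence of flips, then passing to an \'etale affine $W \to Z$ over which infinitely many flips are non-isomorphisms. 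On such a $W$, condition $(\ref{lem:cl1362condition1})$ furnishes a \emph{global} $A' \in |A|_\QQ$ with $(X,\Delta+A')$ klt, after which your argument goes through verbatim. Incidentally, the paper establishes $\lambda_i > \mu$ using Lemma~\ref{lem:ContrPreservesPsEff} (which needs no kltness of the perturbed pair), so you can also avoid invoking Corollary~\ref{cor:MMPpreservesKeffective} and the attendant worry about kltness being preserved along the way.
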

\begin{proof}
Reasoning as in the proof of Corollary \ref{cor:cl1366}, it suffices to show any sequence of flips 
\[
  (X_1, \Delta_1, \lambda_1A_1) \overset{f_1}{\dashrightarrow} \cdots
  \xdashrightarrow{f_{i-1}} (X_i, \Delta_i, \lambda_iA_i)
  \overset{f_{i}}{\dashrightarrow} (X_{i+1}, \Delta_{i+1}, \lambda_{i+1}A_{i+1})
  \xdashrightarrow{f_{i+1}} \cdots
\]
as in the proof of Theorem \ref{rem:MMP} terminates, 
and as in the proof of Theorem \ref{thm:cl1365}, 
we may replace $Z$ by any \'etale affine whose image in $Z$
intersects $\pi(X)$.
By Condition $(\ref{lem:cl1362condition1})$ in Definition \ref{def:GoodScalingDivisors},
we may thus assume that there exists $A'\in |A|_{\QQ}$ such that 
$(X,\Delta+A')$ is klt.
Let $A'_i\in |A_i|_{\QQ}$ be the birational transform of $A'$ on $X_i.$

Let $\mu\in \QQ_{>0}$ be such that $K_X+\Delta+\mu A$ not $\pi$-pseudoeffective.
We know from Lemma \ref{lem:ContrPreservesPsEff} that
$K_{X_i}+\Delta_i+\mu A_i$ is not pseudoeffective over $Z$ for each $i$, so $\lambda_i>\mu$.
The divisor $K_{X_i}+\Delta_i+\mu A'_i$ is $\QQ$-linearly equivalent to the combination
\[
  (1-r)(K_{X_i}+\Delta_i)+r(K_{X_i}+\Delta_i+\lambda_iA_i)
\]
where $r=r_i\coloneqq\frac{\mu}{\lambda_i}\in (0,1)$.
Thus the sequence of flips of concern is also a sequence of flips for the pair $(X,\Delta+\mu A')$ with the scaling of $(1-\mu)A$, in symbols
\[
\biggl(X_1, \Delta_1+\mu A'_1, \frac{\lambda_1-\mu}{1-\mu}(1-\mu)A_1\biggr)
\overset{f_1}{\dashrightarrow} \cdots
\xdashrightarrow{f_{i-1}}
\biggl(X_i, \Delta_i+\mu A'_i, \frac{\lambda_i-\mu}{1-\mu}(1-\mu)A_i\biggr)
\overset{f_{i}}{\dashrightarrow} \cdots.
\]
Such a sequence terminates by Theorem \ref{thm:cl1365} (with $c=0$), as $\Delta+\mu A'$ is $\pi$-big.
\end{proof}

\section{Existence of \texorpdfstring{$\QQ$}{Q}-factorializations and
terminalizations for schemes}

In this section, we show that $\QQ$-factorializations and terminalizations exist
for klt pairs.
For simplicity, we restrict to the case of schemes and klt pairs.

\begin{theorem}[cf.\ {\cite[Corollary 1.4.3]{BCHM10}}]\label{thm:TerminalizationGeneral}
  Let $X$ be an integral normal excellent Noetherian scheme of equal characteristic zero
  that has a dualizing complex
  $\omega_X^\bullet$ with associated canonical divisor $K_X$.
  Let 
  $\Delta$ an effective $\RR$-Weil divisor on $X$ such that $K_X+\Delta$ is
  $\RR$-Cartier and $(X,\Delta)$ is klt.
  Let $g\colon Y\to X$ be a projective log resolution.
  Let $\mathfrak{E}$ be a set of $g$-exceptional prime divisors such that for
  every \(E \in \mathfrak{E}\), we have 
  \(a(E,X,\Delta) \le 0\).
  Then, there exists a projective birational morphism $h\colon Z\to X$ with $Z$ $\QQ$-factorial
  such that the $h$-exceptional prime divisors are exactly the birational
  transforms of divisors in the set $\mathfrak{E}$.
\end{theorem}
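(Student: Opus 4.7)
The plan is to run the relative minimal model program (MMP) over $X$ on a carefully chosen klt pair on the log resolution $g\colon Y\to X$, arranging things so that the ``extra'' effective divisor separating $K_Y+\Delta_Y$ from $g^*(K_X+\Delta)$ is supported precisely on the $g$-exceptional primes we want to contract.

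For each $g$-exceptional prime $E$ write $a_E \coloneqq a(E,X,\Delta) \in (-1,\infty)$, and fix a small positive rational number $\varepsilon_E < 1+a_E$ for each $g$-exceptional $E \notin \cE$. Define
\[
\Delta_Y \coloneqq g_*^{-1}\Delta + \sum_{E\in\cE} (-a_E)\,E + \sum_{E\notin\cE} (1-\varepsilon_E)\,E,
\]
both sums ranging over $g$-exceptional primes. All coefficients lie in $(0,1)$ by the klt assumption and the choice of $\varepsilon_E$, and the support of $\Delta_Y$ together with $\Exc(g)$ has simple normal crossings, so $(Y,\Delta_Y)$ is klt. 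The discrepancy formula then gives
\[
K_Y + \Delta_Y \sim_\QQ g^*(K_X+\Delta) + F, \qquad F \coloneqq \sum_{E\notin\cE} (a_E + 1 - \varepsilon_E)\,E,
\]
an effective $g$-exceptional $\QQ$-divisor whose support is exactly the set of $g$-exceptional primes \emph{not} in $\cE$.

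I would then choose a $g$-ample $\QQ$-divisor $A$ on $Y$ with $K_Y+\Delta_Y+A$ $g$-nef, so that $A$ is a good scaling divisor for $(Y,\Delta_Y)$ by Lemma \ref{lem:AmpleIsGoodScaling}. Because $g$ is birational its generic fiber is zero-dimensional, so the bigness condition ``$cK_Y+\Delta_Y$ is $g$-big'' (for $c=0$) is automatic from Definition \ref{def:BigIsAmpleAddEffective}. Since $F \ge 0$ and $g^*(K_X+\Delta)$ is $g$-numerically trivial, $K_Y+\Delta_Y$ is also $g$-pseudoeffective. Corollary \ref{cor:cl1366}$(\ref{cor:cl13661})$ therefore applies, and any run of the relative MMP of $(Y,\Delta_Y)$ over $X$ with scaling of $A$ terminates with a minimal model $h\colon Z\to X$. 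Because $Y$ is regular, Lemmas \ref{lem:ContractionQfac} and \ref{lem:FLIPQfacAndN1same}$(\ref{lem:FLIPQfac})$ guarantee that each step preserves $\QQ$-factoriality, so $Z$ is $\QQ$-factorial.

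Finally, I would identify the $h$-exceptional locus. Letting $F_Z$ denote the birational transform of $F$ on $Z$, the identity $K_Z + \Delta_Z = h^*(K_X+\Delta) + F_Z$ shows that $F_Z$ is effective, $h$-exceptional, and $h$-nef (as the difference of the $h$-nef divisor $K_Z+\Delta_Z$ and the $h$-numerically trivial pullback $h^*(K_X+\Delta)$). Applying the Negativity Lemma \ref{lem:Negativity} to the Weil divisor $-F_Z$, whose pushforward to $X$ vanishes, forces $F_Z \le 0$, whence $F_Z = 0$. Consequently every component of $F$---that is, every $g$-exceptional prime outside $\cE$---has been contracted in the course of the MMP. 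Conversely no divisor in $\cE$ can have been contracted: at every step we have $(K_{Y_i}+\Delta_{Y_i})\cdot R_i = F_i \cdot R_i$, so any $K+\Delta$-negative extremal ray $R_i$ is spanned by curves $C$ with $F_i\cdot C < 0$, hence $C\subseteq \Supp(F_i)$; in the divisorial case every fiber of $f_i$ restricted to the exceptional divisor $D$ is then covered by such curves and therefore lies in $\Supp(F_i)$, forcing $D$ itself to be a component of $\Supp(F_i)$, which is disjoint from the strict transforms of $\cE$. The principal technical obstacle lies in this final step---using the Negativity Lemma to force $F_Z=0$ and confirming via the analysis of extremal rays that $\cE$-divisors are never inadvertently contracted---while termination and preservation of $\QQ$-factoriality are furnished by the MMP machinery developed earlier in the paper.
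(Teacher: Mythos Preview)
Your proof is correct and follows essentially the same strategy as the paper: arrange the boundary on $Y$ so that $K_Y+\Delta_Y$ differs from $g^*(K_X+\Delta)$ by an effective exceptional divisor supported exactly on the primes outside $\cE$, run the MMP over $X$, and use the Negativity Lemma to force those primes to be contracted while the $\cE$-divisors survive. The paper packages the boundary as $\Delta_Y+\epsilon F$ with $F$ reduced and handles the non-contraction of $\cE$-divisors by applying the Negativity Lemma directly at the offending step, whereas you absorb everything into $\Delta_Y$ and argue via curves in $\Supp(F_i)$; both variants work. One technical point you skip: the MMP results you invoke (Theorem~\ref{rem:MMP}, Corollary~\ref{cor:cl1366}) are stated for $\QQ$-pairs, but your coefficients $-a_E$ are real; the paper first perturbs $\Delta$ to a $\QQ$-divisor using \cite[Proposition 2.21]{Kol13} while preserving the sign of the relevant discrepancies, and you should do the same.
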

\begin{proof}
  By \cite[Proposition 2.21]{Kol13},
  there exists an effective $\QQ$-Weil divisor $\Delta'$ on $X$ such that the support
  of $\Delta$ and $\Delta'$ are the same,
  $K_X+\Delta'$ is $\QQ$-Cartier, $(X,\Delta')$ is klt, and 
  each divisor in $\mathfrak{E}$ has discrepancy at most \(0\) with respect to
  the pair $(X,\Delta')$.
  We may therefore replace $\Delta$ by $\Delta'$ to assume that $\Delta$ is a
  $\QQ$-Weil divisor and $K_X+\Delta$ is $\QQ$-Cartier.
  \par Write 
  \[
  K_Y+\Delta_Y\sim_\QQ g^*(K_X+\Delta)+\Gamma
  \]
  where 
  \begin{align*}
    \Delta_Y&=\sum_{\substack{E\subseteq Y\\a(E,X,\Delta)\leq 0}}-a(E,X,\Delta)\,E.
    \intertext{and}
    \Gamma&=\sum_{\substack{E\subseteq Y\\a(E,X,\Delta)> 0}}a(E,X,\Delta)\,E.
  \end{align*}
  Since $(X,\Delta)$ is klt, the coefficients of $\Delta_Y$ are less than
  1, so $(Y,\Delta_Y)$ is klt by \cite[Corollary 2.13]{Kol13}.
  
  \par Let $F$ be the sum of the $g$-exceptional prime divisors not in
  $\mathfrak{E}$ that do not appear in \(\Gamma\).
  Since \(Y\) is regular, $F$ is an effective Cartier divisor on $Y$.
Let $\varepsilon\in\QQ_{>0}$ be sufficiently small such that
$(Y,\Delta_Y+\varepsilon F)$ is klt (see Lemma
\ref{lem:kltFacts}$(\ref{lem:kltContinuous})$).
  There exists a $g$-ample Cartier divisor $A$ on $Y$ and $Y$ is $\QQ$-factorial since it is regular.
  Thus, we may run the MMP for the pair $(Y,\Delta_Y+\varepsilon F)$ with 
  scaling of $A$ over \(X\).
  See Theorem \ref{rem:MMP} and Lemma \ref{lem:AmpleIsGoodScaling}.
  Since $g$ is birational, $\Delta_{Y}$ is $g$-big and $K_{Y}+\Delta_{Y}+\varepsilon F$ is $g$-pseudoeffective,
  so the MMP terminates with a minimal model
  \[
    h\colon\bigl(Z,\varphi_*(\Delta_{Y}+\varepsilon F)\bigr)\longrightarrow X
  \]
  where $\varphi\colon Y\dashrightarrow Z$ is a composition of divisorial
  contractions and flips (see Corollary \ref{cor:cl1366}$(\ref{cor:cl13661})$).
  Note that $h$ is projective and $Z$ is $\QQ$-factorial, as noted in the proof
  of Theorem \ref{rem:MMP}.
  
  Since the rational map $\varphi^{-1}\colon Z\dashrightarrow Y$ does not
  contract any divisors,
  we see that $h$-exceptional divisors are birational transforms of $g$-exceptional divisors, and
  \[
  K_Z+\varphi_*(\Delta_Y+\varepsilon F)\sim_\QQ h^*(K_X+\Delta)+\varphi_*(\Gamma+\varepsilon F).
  \]
  As $h$ is a minimal model, we see $\varphi_*(\Gamma+\varepsilon F)$ is $h$-nef.
  By Lemma \ref{lem:Negativity}, we have $\varphi_*(\Gamma+\varepsilon F)=0$.
  By the definition of $F$,
  this means that all $g$-exceptional prime divisors
  not in $\mathfrak{E}$ are contracted by $\varphi$.
  
  It now suffices to show that no divisor in $\mathfrak{E}$ is contracted by $\varphi$.
  Assume not. 
  Then there is a step
  $\psi_j\colon Y_j\rightarrow Y_{j+1}$
  of the MMP that is a divisorial contraction, and the divisor $E_j$ contracted
  is the birational transform of some $E\in \mathfrak{E}$.
  Denote by $\varphi_j\colon Y\dashrightarrow Y_j$ the rational map coming from the previous steps of the MMP.
  Then $\varphi_j^{-1}$ does not contract any divisor, and
  \[
  K_{Y_j}+(\varphi_j)_*(\Delta_j+\varepsilon F)\sim_\QQ h_j^*(K_X+\Delta)+(\varphi_j)_*(\Gamma+\varepsilon F),
  \]
  where $h_j$ is the map from the $X$-scheme $Y_j$ to $X$.
  Since $\psi_j$ is a step of the MMP,
  we know that $-\left(K_{Y_j}+(\varphi_j)_*(\Delta_j+\varepsilon F)\right)$ is $\psi_j$-ample,
  so $-(\varphi_j)_*(\Gamma+\varepsilon F)$ is $\psi_j$-ample.
  Since $Y_j$ is $\QQ$-factorial,
  $-(\varphi_j)_*(\Gamma+\varepsilon F)+\sigma E_j$
  is $\QQ$-Cartier and $\psi_j$-ample
  for sufficiently small $\sigma\in\QQ_{>0}$.
  Lemma \ref{lem:Negativity} applies and we see $(\varphi_j)_*(\Gamma+\varepsilon F)-\sigma E_j$ is effective,
  so $E_j$ is a component of $(\varphi_j)_*(\Gamma+\varepsilon F)$,
  thus $E\in\mathfrak{E}$ is a component of $\Gamma+\varepsilon F$, contraction.
\end{proof}

\begin{definition}[cf.\ {\cite[p.\ 114]{Kaw88}}]\label{def:QFactorialization}
Let $X$ be an integral normal Noetherian scheme.
A $\QQ$\textsl{-factorialization of} $X$ is an integral $\QQ$-factorial Noetherian scheme $Y$ together with a proper birational morphism 
$g\colon Y\to X$ such that no prime divisor on $Y$ is $g$-exceptional.
\end{definition}

\begin{corollary}[cf.\ {\cite[Corollary 4.5]{Kaw88}}]\label{cor:QfactorializationsExist}
  Let $X$ be an integral normal excellent Noetherian scheme of equal
  characteristic zero that has a dualizing complex
  $\omega_X^\bullet$ with associated canonical divisor $K_X$.
  Let 
  $\Delta$ an effective $\RR$-Weil divisor on $X$ such that $K_X+\Delta$ is $\RR$-Cartier and $(X,\Delta)$ is klt.
  Then, there exists a projective $\QQ$-factorialization $h\colon Z\to X$.
\end{corollary}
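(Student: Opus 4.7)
The plan is to deduce this directly from Theorem \ref{thm:TerminalizationGeneral} by choosing $\cE = \emptyset$. First I would invoke \cite[Theorem 2.3.6]{Tem08} to obtain a projective log resolution $g\colon Y \to X$, which exists since $X$ is an excellent scheme of equal characteristic zero. The hypothesis that $\cE$ be a set of $g$-exceptional prime divisors with negative discrepancy with respect to $(X,\Delta)$ is then vacuously satisfied for $\cE = \emptyset$.

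Applying Theorem \ref{thm:TerminalizationGeneral} with this choice yields a projective birational morphism $h\colon Z \to X$ with $Z$ a $\QQ$-factorial integral normal Noetherian scheme, such that the set of $h$-exceptional prime divisors is in bijection with $\cE$ via birational transform. Since $\cE = \emptyset$, there are no $h$-exceptional prime divisors on $Z$.

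Finally, I would observe that this is precisely the content of Definition \ref{def:QFactorialization}: we have a projective (in particular, proper) birational morphism $h\colon Z \to X$ from an integral $\QQ$-factorial Noetherian scheme such that no prime divisor on $Z$ is $h$-exceptional. Hence $h\colon Z \to X$ is a projective $\QQ$-factorialization of $X$, as desired. There is no real obstacle here: all of the substantive work, including the necessary MMP with scaling and the Negativity Lemma argument that controls which divisors get contracted, has already been carried out in the proof of Theorem \ref{thm:TerminalizationGeneral}.
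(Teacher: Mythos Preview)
Your proposal is correct and takes essentially the same approach as the paper: apply Theorem \ref{thm:TerminalizationGeneral} with $\cE=\emptyset$ to a projective log resolution, so that the output $h\colon Z\to X$ is $\QQ$-factorial with no exceptional divisors. The only cosmetic difference is the citation for the projective log resolution (the paper uses \cite[Theorem 1.1.6]{Tem18}, noting that resolution by blowups of regular centers is automatically projective, whereas you cite \cite[Theorem 2.3.6]{Tem08}); either works here.
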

\begin{proof}
  Let $g\colon Y\to X$ be a log resolution constructed by blowing up regular centers,
  which exists by \cite[Theorem 1.1.6]{Tem18},
  so $g$ is projective.
  Now take $\mathfrak{E}$ to be the empty set in Theorem \ref{thm:TerminalizationGeneral}.
  The resulting $h\colon Z\to X$ has no exceptional divisors, and $Z$ is $\QQ$-factorial, as desired. 
\end{proof}

\begin{definition}[{cf.\ \citeleft\citen{Rei83}\citemid Main Theorem 0.6\citepunct
  \citen{BCHM10}\citemid p.\ 413\citeright}]\label{def:Terminalization}
  Let $X$ be an integral normal Noetherian scheme that has a dualizing complex
  $\omega_X^\bullet$ with associated canonical divisor $K_X$.
Let
$\Delta$ be an effective $\QQ$-Weil divisor on $X$ such that $K_X+\Delta$ is $\QQ$-Cartier and $(X,\Delta)$ is klt.

A \textsl{terminalization of the pair} $(X,\Delta)$ is a terminal pair $(Y,\Delta_Y)$ together with a proper birational morphism 
$g\colon Y\to X$ such that $g_*\Delta_Y=\Delta$ 
and that $g^*(K_X+\Delta)\sim_{\QQ}K_Y+\Delta_Y$.
The condition is equivalent to saying that $a(E,X,\Delta)\leq 0$ for all prime
divisors $E\subseteq Y$, and that the pair 
\[
  \left(Y,\sum_{E\subseteq Y}-a(E,X,\Delta)\,E\right)
\]
is terminal.
\end{definition}
\begin{lemma}\label{lem:ResolveKLTtoTerminal}
  Let $X$ be an integral normal excellent Noetherian scheme of equal characteristic zero
  that has a dualizing complex
  $\omega_X^\bullet$ with associated canonical divisor $K_X$.
  Let 
  $\Delta$ an effective $\RR$-Weil divisor on $X$ such that $K_X+\Delta$ is $\RR$-Cartier and $(X,\Delta)$ is klt.
  For an integral normal scheme $Y$ proper birational over $X$, we set
  \[
    \Delta_Y=\sum_{\substack{E\subseteq Y\\a(E,X,\Delta)\leq 0}}-a(E,X,\Delta)\,E.
  \]
  Then, there exists a log resolution $g\colon Y\to X$ constructed by blowing up
  regular centers such that the components of $\Delta_Y$ are disjoint.
  Moreover, for any such resolution, the following hold:
  \begin{enumerate}[label=$(\roman*)$,ref=\roman*]
    \item \label{Yterminal}
    $(Y,\Delta_Y)$ is terminal.
    \item \label{YhasallNegativeDiscrep}
    For every proper birational map $Y'\to X$ and every prime Weil divisor $E'$ on $Y'$, if $a(E',X,\Delta)<0$,
    then $E'$ is not contracted by the rational map $Y'\dashrightarrow Y$.
  \end{enumerate}
  
\end{lemma}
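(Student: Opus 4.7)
The plan is to handle the two pointwise claims (i) and (ii) first, as they are essentially discrepancy computations given the existence of such a resolution, and then address existence, which I regard as the main technical obstacle. By \cite[Proposition 2.21]{Kol13}, as at the start of the proof of Theorem \ref{thm:TerminalizationGeneral}, I may assume $\Delta$ is a $\QQ$-divisor with $K_X+\Delta$ $\QQ$-Cartier and that $(X,\Delta)$ remains klt. For any proper birational $g\colon Y\to X$ with $Y$ integral normal, the defining equation of $\Delta_Y$ rearranges to
\[
K_Y+\Delta_Y\sim_{\QQ} g^*(K_X+\Delta)+\Gamma,\qquad \Gamma\coloneqq\sum_{E\subset Y,\,a(E,X,\Delta)>0}a(E,X,\Delta)E\geq 0,
\]
which yields the key discrepancy comparison, valid for every prime divisor $F$ over $Y$:
\[
a(F,X,\Delta)=a(F,Y,\Delta_Y)+v_F(\Gamma).
\]

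For existence, I would start with a log resolution $g_0\colon Y_0\to X$ of $(X,\Delta)$ produced by \cite[Theorem 1.1.6]{Tem18}, so that $(Y_0,\Delta_{Y_0})$ is log regular. If the components of $\Delta_{Y_0}$ are already pairwise disjoint, there is nothing to do. Otherwise, I would iteratively blow up intersections $E_i\cap E_j$ of distinct components, which are regular of codimension $2$ in $Y_0$ by the snc hypothesis, thereby separating the strict transforms $E_i',E_j'$. A direct discrepancy calculation shows that any new exceptional divisor $F$ over $E_i\cap E_j$ satisfies $a(F,X,\Delta)=1-a_i-a_j+v_F(\Gamma)$, so any new component it contributes to the updated $\Delta_Y$ has coefficient bounded by $a_i+a_j-1<1$. \textbf{The main technical obstacle} is termination of this iteration: the new exceptional divisor may itself intersect surviving components of $\Delta_Y$ and enter the updated $\Delta_Y$ with a positive coefficient. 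I would handle this by tracking a lexicographic invariant on the multiset of coefficients and the combinatorics of the intersection graph of the components of $\Delta_Y$, blowing up smooth centers chosen via Temkin's iterated principalization to guarantee strict decrease at each step.

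Granting existence, I would then prove (i) as follows. Disjointness of the components of $\Delta_Y$ on the regular scheme $Y$ means every point of $Y$ lies on at most one component, so locally the pair $(Y,\Delta_Y)$ is either $(Y,0)$ or of the form $(Y,aE)$ with $E$ a regular prime divisor and $a\in[0,1)$. For a prime divisor $F$ exceptional over $Y$ with center $Z\subseteq Y$ of codimension $\geq 2$, either $Z$ is disjoint from $\Supp(\Delta_Y)$, in which case $a(F,Y,\Delta_Y)=a(F,Y,0)\geq 1>0$ by terminality of regular schemes, or $Z$ is contained in a unique component $E$ of coefficient $a\in[0,1)$. In the latter case, using that $(Y,E)$ is canonical for $Y$ and $E$ both regular (cf.\ \cite[Example 2.34]{KM98}), we have $a(F,Y,E)\geq 0$, and therefore
\[
a(F,Y,\Delta_Y)=a(F,Y,E)+(1-a)v_F(E)\geq(1-a)v_F(E)>0,
\]
since $v_F(E)\geq 1$ whenever the center of $F$ is contained in $E$. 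Hence $(Y,\Delta_Y)$ is terminal.

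Finally, for (ii), suppose toward contradiction that a prime divisor $E'\subseteq Y'$ with $a(E',X,\Delta)<0$ is contracted under the birational map $Y'\dashrightarrow Y$. Then $E'$ is exceptional over $Y$, so (i) gives $a(E',Y,\Delta_Y)>0$. Combined with $v_{E'}(\Gamma)\geq 0$, the discrepancy comparison from the first paragraph forces $a(E',X,\Delta)>0$, contradicting the hypothesis.
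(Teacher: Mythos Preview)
Your arguments for (i) and (ii) are correct and match the paper's: for (ii) you use the same discrepancy comparison $a(F,X,\Delta)=a(F,Y,\Delta_Y)+v_F(\Gamma)$ combined with terminality of $(Y,\Delta_Y)$, and for (i) you spell out by hand what the paper obtains by citing \cite[Corollary 2.11]{Kol13}.

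The genuine gap is in existence, precisely at the point you flag as the main obstacle. You do not actually give a termination argument: the appeal to ``a lexicographic invariant on the multiset of coefficients and the combinatorics of the intersection graph'' and to ``Temkin's iterated principalization'' is not a proof. Principalization resolves ideal sheaves; it does not directly separate components of an snc divisor, and no specific invariant is exhibited that strictly decreases. Moreover, you only blow up pairwise intersections $E_i\cap E_j$, which makes termination harder to see: in dimension $\ge 3$, the new exceptional divisor meets the strict transforms of other $E_k$ passing through the old triple locus, so the number of components meeting at a point need not drop.

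The paper handles this as follows. Fix once and for all $\delta_0\coloneqq 1-\max\{\text{coefficients of }\Delta_{Y_0}\}>0$. At each step, blow up a \emph{deepest} stratum $Z=E_1\cap\cdots\cap E_t$ where exactly these $t\ge 2$ components meet. The new exceptional divisor $E_C$ has coefficient in $\Delta_{Y_1}$ bounded by $1-\delta_0$, so the uniform bound persists. Along $E_C$ at most $t-1$ of the old strict transforms meet, hence at most $t$ components of $\Delta_{Y_1}$ meet there; when that happens their coefficient sum is at most the old sum minus a fixed positive quantity (controlled by $\delta_0$). Thus the maximum over all points of $(\text{number of components meeting},\ \text{sum of their coefficients})$ strictly decreases, and the procedure terminates. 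Supplying this (or an equivalent concrete invariant) is what is missing from your existence argument.
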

\begin{proof}
Let $g_0\colon Y_0\to X$ be a log resolution constructed by blowing up regular
centers, which exists by \cite[Theorem 1.1.6]{Tem18}.
All coefficients of $\Delta_{Y_0}$ are less than $1$ since $(X,\Delta)$ is klt, so $\delta_0:=1-\max\{\text{coefficients of }\Delta_{Y_0}\}>0.$

If $t\geq 2$ components $E_1,\ldots,E_t$ with coefficients $a_1,\ldots,a_t$ meet
and no other components of $\Delta_{Y_0}$ meet $Z\coloneqq E_1\cap\cdots\cap E_t$, 
we consider the blow up $Y_1=\operatorname{Bl}_Z Y_0$.
Note that $Z$ is a regular scheme of pure dimension $\dim Y_0-t$ and may have several connected components.
The preimage of each connected component $C$ of $Z$ in $Y_1$ is a prime divisor $E_C$, and
\[
  a(E_C,X,\Delta)\leq a(E_C,Y_0,\Delta_{Y_0})=1-t+\sum_{i=1}^t a_i\leq 1-t\,\delta_0\leq 1-\delta_0
\]
where the equality follows from \cite[Lemma 2.29]{KM98}.
Along $E_C$, at most $t-1$ of the birational transforms of $E_i$ meet, 
and thus at most $t$ components of $\Delta_{Y_1}$ meet.
If that happens,
the sum of their coefficients is at most
\[
  a(E_C,X,\Delta)+\sum_{i=1}^t a_i-\min_i\{a_i\}\leq 1-t+\sum_{i=1}^t
  a_i+(t-1)(1-\delta_0)=\sum_{i=1}^t
  a_i-t\delta_0.
\]
It is now clear that after finitely many such blow ups we get the desired $Y$.

Now, the coefficients of $\Delta_Y$ are less than 1 since $(X,\Delta)$ is klt, and the components of $\Delta_Y$ are disjoint. 
By \cite[Corollary 2.11]{Kol13}, $(Y,\Delta_Y)$ is terminal.
To show $(\ref{YhasallNegativeDiscrep})$, we may assume that $Y'$ is given by a proper birational map $h:Y'\to Y$.
  Write 
  \[
  K_Y+\Delta_Y\sim_\RR g^*(K_X+\Delta)+\Gamma
  \]
  where the components of $\Gamma$ are exactly the exceptional divisors of $g$ that is not a component of $\Delta_Y$.
  Then $\Gamma$ is effective by the definition of $\Delta_Y$.
Since $Y$ is regular, every component of $\Gamma$ is Cartier, and we have
\[
h^*(K_Y+\Delta_Y) \sim_\RR (h\circ g)^*(K_X+\Delta) + h^*\Gamma.
\]
Thus, $a(E',Y,\Delta_Y)\leq a(E',X,\Delta)<0$. 
Since $(Y,\Delta_Y)$ is terminal, $E'$ must not be $h$-exceptional, as desired.
 \end{proof}

\begin{corollary}[cf.\ \citeleft\citen{Rei83}\citemid Main Theorem 0.6\citepunct
  \citen{BCHM10}\citemid p.\ 413\citeright]\label{cor:TerminalizationsExist}
  Let $X$ be an integral normal excellent Noetherian scheme of equal
  characteristic that has a dualizing complex
  $\omega_X^\bullet$ with associated canonical divisor $K_X$.
  Let $\Delta$ be an effective $\RR$-Weil divisor on $X$ such that $K_X+\Delta$ is $\RR$-Cartier and $(X,\Delta)$ is klt.
  Then, there exists a projective terminalization $h\colon Z\to X$ where $Z$ is $\QQ$-factorial.
\end{corollary}
\begin{proof}
  Let $g\colon Y\to X$ and $\Delta_Y$ be as in Lemma \ref{lem:ResolveKLTtoTerminal},
  and take $\mathfrak{E}$ to be the set of components of $\Delta_Y$ in Theorem \ref{thm:TerminalizationGeneral}. 
  The exceptional prime divisors of the resulting map $h:Z\to X$ are exactly the birational transforms of the components of $\Delta_Y$.
  Thus we have $K_Z+\varphi_*\Delta_Y\sim_{\RR} h^*(K_X+\Delta)$,
  and it suffices to show $(Z,\varphi_*\Delta_Y)$ terminal.
  
  For every proper birational map $Y'\to Z$ and prime divisor $E'$ on $Y'$, the $\RR$-linear equivalence above gives
  $a(E',Z,\varphi_*\Delta_Y)=a(E',X,\Delta)$.
  If $a(E',Z,\varphi_*\Delta_Y)<0$,
  then $a(E',X,\Delta)<0$,
  so by Lemma \ref{lem:ResolveKLTtoTerminal}$(\ref{YhasallNegativeDiscrep})$,
  $E'$ is not contracted by the rational map $Y'\dashrightarrow Y$, and its birational transform is thus a component of $\Delta_Y$ since it has negative discrepancy.
  Thus $E'$ is not exceptional over $Z$, and $(Z,\varphi_*\Delta_Y)$ is terminal.
\end{proof}

\begingroup
\makeatletter
\renewcommand{\@secnumfont}{\bfseries}
\part{Extensions to other categories}\label{part:othercats}
\makeatother
\endgroup
In this part, we extend the relative minimal model program to projective
morphisms of algebraic spaces, formal
schemes, complex analytic spaces, Berkovich analytic spaces, rigid
analytic spaces, and adic spaces.
We work both in equal characteristic zero and in positive/mixed characteristic,
where in the latter context we will assume $\dim(X) \le 3$.
We will also assume the existence of dualizing complexes.
To do so, we first collect some preliminaries for each of these different
categories.\bigskip
\section{Quasi-excellence and dualizing complexes}\label{sect:qedualizingothers}
In this section, we review the notions of quasi-excellence and dualizing
complexes that are analogous to those for schemes in
\S\ref{sect:excellentschemes}.
\subsection{Formal schemes}
We use the definition of formal schemes and Noetherian formal schemes
from \cite[D\'efinition 10.4.2]{EGAInew}.
Quasi-excellence is defined as follows.
\begin{citeddef}[{\citeleft\citen{Tem08}\citemid \S3.1\citepunct
  \citen{Tem12}\citemid \S2.4.3\citeright}]
  Let $X$ be a locally Noetherian formal scheme.
  We say that $X$ is \textsl{quasi-excellent} if for every morphism $\Spf(A)
  \to X$ of finite type, the ring $A$ is quasi-excellent.
\end{citeddef}
\begin{remark}
  The definition above is from \cite{Tem12}, and is equivalent to the original
  definition in \cite{Tem08} by a theorem of Gabber \cite[Theorem 5.1]{KS}.
  See \cite[Remark 3.1.1]{Tem08} and \cite[\S2.4.3]{Tem12}.
\end{remark}
We use the notion of $c$-dualizing complexes from \cite{ATJLL99}.
This notion is distinct from the notion of dualizing complexes due to Yekutieli
\cite[Definition 5.2]{Yek98}.
Yekutieli's notion coincides with what are called \textsl{$t$-dualizing complexes}
in \cite[Definition 2.5.1]{ATJLL99} by \cite[Remark (3) on p.\ 25]{ATJLL99}.
\begin{citeddef}[{\cite[Definition 2.5.1]{ATJLL99}}]
  Let $X$ be a Noetherian formal scheme.
  A complex $\omega_X^\bullet$ on $X$ is a \textsl{$c$-dualizing complex} if
  the following conditions are satisfied.
  \begin{enumerate}[label=$(\roman*)$]
    \item $\omega_X^\bullet$ is an object of $\DD^+_{\mathrm{c}}(X)$.
    \item The natural morphism $\cO_X \to
      \RRHHom(\omega_X^\bullet,\omega_X^\bullet)$ is an isomorphism.
    \item There is an integer $b$ such that for every coherent torsion sheaf
      $\sM$ and for every $i > b$, we have
      $\mathbf{h}^i\RRHHom(\sM,\omega_X^\bullet) = 0$.
  \end{enumerate}
\end{citeddef}
There is a notion of relative analytification for formal schemes
and corresponding GAGA results \citeleft\citen{EGAIII1}\citemid \S5\citepunct
\citen{SGA2}\citemid Expos\'e IX\citeright, which we will refer to by
\textsl{formal GAGA}.
Exceptional pullbacks in the sense of Grothendieck duality exist, preserve
dualizing complexes, and are compatible with formal GAGA in the following sense.
\begin{remark}\label{rem:formalgaga}
  Let $f\colon X \to Y$ be a morphism of Noetherian formal schemes.
  Suppose that \(f\) is \textsl{pseudo-proper}, i.e., there are ideals of
  definition \(\sI \subseteq \cO_Y\) and \(\sJ \subseteq \cO_X\) such that
  \(\sI\cO_X \subseteq \sJ\) and the morphism
  \[
    f_0\colon \bigl(X,\cO_X/\sJ\bigr) \longrightarrow \bigl(Y,\cO_Y/\sI\bigr)
  \]
  of ordinary schemes is proper \cite[1.2.2]{ATJLL99}.
  Consider the functor $f^\sharp$ constructed in \cite[Theorem 2$(b)$]{ATJLL99}.
  \begin{enumerate}[label=$(\roman*)$,ref=\roman*]
    \item If $\omega_Y^\bullet$ is a $c$-dualizing
      complex on $Y$, then $\omega_X^\bullet \coloneqq
      f^\sharp\omega_Y^\bullet$ is a $c$-dualizing complex on $X$ by
      \cite[Proposition 2.5.11]{ATJLL99}.
    \item\label{rem:formalgagasharp}
      Now suppose that \(f\) is \textsl{proper}, i.e., it satisfies the
      following conditions \cite[(3.4.1)]{EGAIII1}:
      \begin{enumerate}[label=\((\arabic*)\),ref=\arabic*]
        \item \(f\) is of finite type in the sense of \cite[D\'efinition
          10.13.3]{EGAInew}, and in particular, adic in the sense of
          \cite[(10.12.1)]{EGAInew}.
        \item\label{cond:proper2}
          Let \(\sI \subseteq \cO_Y\) be an ideal of definition and set
          \(\sJ = \sI \cO_X \subseteq \cO_X\).
          Then, the morphism
          \[
            f_0\colon \bigl(X,\cO_X/\sJ\bigr) \longrightarrow
            \bigl(Y,\cO_Y/\sI\bigr)
          \]
          of ordinary schemes is proper.
      \end{enumerate}
      Suppose, moreover, that locally on $Y$ the morphism $f$ is the completion
      of a morphism of schemes.
      Then, $f^\sharp$ is
      compatible with formal GAGA by \cite[Corollaries 3.3.8 and 6.1.7$(a)$]{ATJLL99}.
  \end{enumerate}
  Note that proper morphisms are pseudo-proper: Since a proper morphism is of
  finite type and in particular, adic,
  we know that \(\sI \subseteq \cO_Y\) and \(\sJ = \sI \cO_X
  \subseteq \cO_X\) are ideals of definition satisfying \(\sI\cO_X \subseteq
  \sJ\).
  Thus, condition \((\ref{cond:proper2})\) of the definition of properness
  implies that \(f\) is pseudo-proper.
\end{remark}
\subsection{Semianalytic germs of complex analytic spaces}
We use the definition of complex analytic spaces from \cite[1.1.5]{GR84}.
We start with the definition of a semianalytic subset of a complex analytic
space.
\begin{citeddef}[{\citeleft\citen{Loj64}\citemid \S1, I\citepunct
  \citen{Fri67}\citemid p.\ 120\citeright}]
  Let $\cX$ be a complex analytic space, and let $a \in X$ be a point.
  Denote by $\mathscr{S}_a$ the minimal class of germs at $a$ of subsets of $X$
  such that the following hold:
  \begin{enumerate}[label=$(\roman*)$]
    \item $\mathscr{S}_a$ is stable under finite unions.
    \item $\mathscr{S}_a$ is stable under complements.
    \item $\mathscr{S}_a$ contains all germs of the form
      $\Set{x \in \cX \given f(x) < 0}_a$,
      where $f(x)$ is a real analytic function in a neighborhood of $a$.
  \end{enumerate}
  A subset $X \subseteq \cX$ is \textsl{semianalytic} if, for every $x \in X$,
  the local germ of $X$ at $x$ is an element of $\mathscr{S}_x$.
\end{citeddef}
We can now define semianalytic germs of complex analytic spaces in the sense of \cite{AT19}.
\begin{citeddef}[{\cite[\S\S B.2--B.3]{AT19}}]
  A \textsl{semianalytic germ of a complex analytic space} is a pair $(\cX,X)$ consisting of
  a complex analytic space $\cX$ and a semianalytic subset $X \subseteq \cX$.
  We call $X$ the \textsl{support} of $(\cX,X)$ and $\cX$ a
  \textsl{representative} of $(\cX,X)$.
  We sometimes use the shorter notation $X$ for the germ $(\cX,X)$.
  The \textsl{structure sheaf} on $X$ is
  \[
    \cO_X \coloneqq (\cO_{\cX})_{\vert X} = i^{-1}\cO_{\cX},
  \]
  where $i\colon X \hookrightarrow \cX$ is the embedding.
  \par A \textsl{morphism} $\phi\colon (\cX,X) \to (\cY,Y)$ of semianalytic
  germs of complex
  analytic spaces consists of a neighborhood $\cX'$ of $X$ and an analytic map
  $f\colon \cX' \to \cY$ taking $X$ to $Y$.
  We say that $f$ is a \textsl{representative} of $\phi$.
\end{citeddef}
We define proper morphisms and closed embeddings of semianalytic germs as follows.
\begin{citeddef}[{\citeleft\citen{AT19}\citemid \S B.5\citeright}]
  \label{def:complexgermmorphisms}
  Let $\phi\colon (\cX,X) \to (\cY,Y)$ be a morphism of semianalytic germs of
  complex analytic spaces.
  \begin{enumerate}[label=$(\roman*)$,ref=\roman*]
    \item We say that $\phi$ is \textsl{without boundary} if there exists a representative
      $f\colon \cX' \to \cY$ of $\phi$ that satisfies $X =
      f^{-1}(Y)$.
    \item We say that $\phi$ is an \textsl{open immersion} (resp.\
      a \textsl{closed immersion})
      if $\phi$ is without boundary and there exists a representative
      $f\colon \cX' \to \cY$ of $\phi$ that is an open immersion (resp.\ a
      closed embedding).
    \item\label{def:complexgermmorphismsproj}
      We say that $\phi$ is \textsl{proper} (resp.\ \textsl{projective})
      if
      there exists a representative
      $f\colon \cX' \to \cY$ of $\phi$ that is proper (resp.\ projective)
      and satisfies $X = f^{-1}(Y)$.
      Note that proper (resp.\ projective) morphisms are without boundary by
      definition.
  \end{enumerate}
\end{citeddef}
We can then define affinoid semianalytic germs as follows.
\begin{citeddef}[{\cite[\S B.6 and \S6.2.4]{AT19}}]
  Let $(\cX,X)$ be a semianalytic germ of a complex analytic space.
  We say that $X$ is \textsl{affinoid} if it admits a closed immersion into a
  germ of the form $(\CC^n,D)$, where $D$ is a closed polydisc.
  A covering $X = \bigcup_i X_i$ of $X$ by affinoids is \textsl{admissible} if
  it admits a finite refinement.
\end{citeddef}
We define dualizing complexes on semianalytic germs of complex analytic spaces.
\begin{definition}[{cf.\ \cite[p.\
  89]{RR71}}]\label{def:complexanalyticdualizing}
  Let $(\cX,X)$ be a semianalytic germ of a complex analytic space.
  A \textsl{dualizing complex} on $X$ is an object $\omega_X^\bullet$ in
  $\DD_\textup{c}^+(X)$ such that the following hold:
  \begin{enumerate}[label=$(\roman*)$,ref=\roman*]
    \item For every $x \in X$, there exists $n(x) \in \ZZ$ such that
      $\Ext^i_{\cO_{X,x}}(\CC,\omega_{X,x}^\bullet) = 0$ for all $i > n(x)$.
      Here, \(\CC\) is the field of complex numbers.
    \item\label{def:complexanalyticdualizingreflexive}
      The natural morphism
      \[
        \id \longrightarrow
        \RRHHom_{\cO_X}\bigl(\RRHHom_{\cO_X}(-,\omega_X^\bullet),
        \omega_X^\bullet\bigr)
      \]
      of $\delta$-functors on $\DD_\textup{c}(X)$ is an isomorphism.
  \end{enumerate}
\end{definition}
\begin{remark}\label{rem:omegaexistsrr71}
  By \cite[\S5]{RR71} (see also \cite[Chapter VII, Theorem 2.6]{BS76}), every
  complex analytic space has a dualizing complex.
  This dualizing complex lies in $\DD_\textup{c}^b(X)$ if $X$ is
  finite-dimensional \citeleft\citen{RR71}\citemid p.\ 89\citepunct
  \citen{BS76}\citemid Chapter VII, Theorem 2.6$(iii)$\citeright.
  Since both conditions in Definition \ref{def:complexanalyticdualizing} can be
  checked at the level of stalks, if $(\cX,X)$ is a semianalytic germ of a
  complex analytic space, then setting $\omega_X^\bullet = i^{-1}\omega_\cX^\bullet$
  gives a dualizing complex on $X$, where $i\colon X \hookrightarrow \cX$ is the
  embedding.
\end{remark}
\begin{convention}
  For semianalytic germs of complex analytic spaces, we will always use the
  dualizing complex $\omega_X^\bullet$ constructed using \cite[\S5]{RR71}.
\end{convention}
\subsection{Non-Archimedean analytic spaces}
Let $k$ be a complete non-Archimedean field.
We use the definition of rigid $k$-analytic spaces from \cite[Definition
9.3.1/4]{BGR84} (in which case we assume that $k$ is non-trivially valued)
and the definition of $k$-analytic spaces from
\cite[\S1]{Ber93} (in which case we allow trivial valuations on $k$).
We sometimes refer to the $k$-analytic spaces from \cite{Ber93} as
\textsl{Berkovich spaces}.
We use the definition of adic spaces from \cite[Definition on
p.\ 521]{Hub94}.\medskip
\par Instead of defining dualizing complexes on rigid $k$-analytic and
Berkovich spaces in a similar fashion to complex analytic spaces
(Definition
\ref{def:complexanalyticdualizing}), we adopt a definition that is more easily
comparable to the scheme-theoretic notion of a 
\textsl{weakly pointwise dualizing complex} from \cite[p.\ 120]{Con00}.
Below, $X_G$ denotes the ringed site where the Grothendieck topology is the
\textsl{$G$-topology} in the sense of \citeleft\citen{BGR84}\citemid Definition
9.3.1/4\citepunct \citen{Ber93}\citemid p.\ 25\citeright.
\begin{definition}\label{def:dualizingcomplexrigid}
  Let $X$ be one of the following:
  \begin{enumerate}[label=$(\alph*)$,ref=\alph*]
    \item A rigid $k$-analytic space, where $k$ is a complete non-trivially
      valued non-Archimedean field.
    \item A $k$-analytic space, where $k$ is a complete
      non-Archimedean field.
  \end{enumerate}
  A \textsl{dualizing complex} on $X$ is an object $\omega_X^\bullet$ in
  $\DD^+_\coh(X_G)$
  such that for every $x \in X$, the object
  $\omega_{X,x}^\bullet$ in $\DD_\coh^+(\cO_{X_G,x})$
  is a dualizing complex in
  the sense of Definition \ref{def:dualizingcomplexschemes} (see also \cite[p.\
  118 and Lemma 3.1.4]{Con00}).
  In either setting, the
  stalks $\cO_{X_G,x}$
  are Noetherian \citeleft\citen{BGR84}\citemid Proposition 7.3.2/7\citepunct
  \citen{Ber93}\citemid Theorem 2.1.4\citeright, and hence we can ask
  whether $\omega_{X,x}^\bullet$ is a dualizing complex.
\end{definition}
\begin{convention}
  If $X$ is a good $k$-analytic space in the sense of \cite[Remark
  1.2.16]{Ber93}, then we drop the subscript $G$ in $X_G$, since in this case
  there is a good notion of a structure sheaf $\cO_X$ on $X$ such that the
  categories of coherent $\cO_X$-modules and coherent $\cO_{X_G}$-modules
  coincide \cite[Proposition 1.3.4$(ii)$]{Ber93}.
  Note that affinoid $k$-analytic spaces and all $k$-analytic spaces that are
  proper over affinoid $k$-analytic spaces are good by
  \citeleft\citen{Ber90}\citemid \S3.1\citepunct \citen{Ber93}\citemid
  \S1.5\citeright.
\end{convention}
For adic spaces, we adopt a different definition.
We do not work with all adic spaces $X$ and stalks $\cO_{X,x}$, and instead work
only with \textsl{Jacobson adic spaces} and the points in the
\textsl{Jacobson--Gelfand spectrum} of $X$, a notion first defined in
\cite{Lou}.
Property \((\ref{def:jacobsongelfand1})\) below is the property (T) from
\cite[p.\ 108]{Hub93thesis}.
\begin{citeddef}[{\cite[Definitions 3.1 and 3.2]{Lou}}]\label{def:jacobsongelfand}
  A strongly Noetherian complete Tate ring $A$ is a \textsl{Jacobson--Tate ring}
  if it satisfies the following properties:
  \begin{enumerate}[label=$(\arabic*)$,ref=\arabic*]
    \item For\label{def:jacobsongelfand1}
      every maximal ideal \(\fm \subseteq A\), the quotient topology on
      \(A/\fm\) is the topology defined by a rank 1 valuation on \(A/\fm\).
    \item For\label{def:jacobsongelfand2}
      every $A$-algebra $B$ topologically of finite type over $A$, the
      induced map $\Spec(B) \to \Spec(A)$ maps maximal ideals to maximal ideals.
  \end{enumerate}
  A locally Noetherian analytic adic space
  $X$ is a \textsl{Jacobson adic space} if it is locally of the form
  $\Spa(A,A^+)$ where $A$ is a Jacobson--Tate ring.
  The \textsl{Jacobson--Gelfand spectrum} of $X$ is the subset
  \[
    \JG(X) \subseteq X
  \]
  of all rank 1 points $x \in X$ for which there exists an affinoid open
  neighborhood $U = \Spa(A,A^+)$ of $x \in X$ such that \(A\) is a
  Jacobson--Tate ring and $\supp(x) \subseteq A$
  is a maximal ideal.
\end{citeddef}
For completeness, we reprove some results from \cite{Lou} on Jacobson--Tate
rings.
\begin{citedprop}[{\cite[Proposition 3.3]{Lou}}]\label{prop:lou33}
  Let \(A\) be a Jacobson--Tate ring.
  \begin{enumerate}[label=\((\roman*)\),ref=\roman*]
    \item Let\label{prop:lou331}
      \(B\) be a ring topologically of finite type over \(A\).
      Then, \(B\) is a Jacobson--Tate ring.
    \item Let\label{prop:lou332}
      \(B\) be a rational localization of \(A\).
      Then, every maximal ideal \(\fn \subseteq B\) is the extension of a unique
      maximal ideal \(\fm \subseteq A\) and the natural map
      \begin{align*}
        (A_\fm)^\wedge &\longrightarrow (B_\fn)^\wedge
      \intertext{is an isomorphism.
      In particular, if \(x \in X = \Spa(A,A^+)\) is
      supported at a maximal ideal \(\fm \subseteq A\), then the canonical map}
        (A_\fm)^\wedge &\longrightarrow \hat{\cO}_{X,x}
      \end{align*}
      is an isomorphism.
    \item For\label{prop:lou333}
      every ring of integral elements \(A^+ \subseteq A\), the support
      map
      \[
        \supp\colon \JG\bigl(\Spa(A,A^+)\bigr) \longrightarrow \Spec(A)
      \]
      is injective with image \(\MaxSpec(A)\) and the inclusion
      \[
        \JG\bigl(\Spa(A,A^+)\bigr) \hooklongrightarrow \Spa(A,A^+)
      \]
      has dense image.
  \end{enumerate}
\end{citedprop}
\begin{proof}
  \((\ref{prop:lou331})\).
  Let \(\fn \subseteq B\) be a maximal ideal.
  Then, \(\fm = \fn \cap A \subseteq A\) is a maximal ideal by condition
  \((\ref{def:jacobsongelfand2})\) of Definition \ref{def:jacobsongelfand}.
  The induced map \(A/\fm \hookrightarrow B/\fn\) is topologically of finite
  type.
  Since the topology on \(A/\fm\) is defined by a rank 1 valuation,
  we can apply results from \cite{BGR84}.
  By a consequence of the non-Archimedean Noether normalization lemma
  \cite[Corollary 6.1.2/3]{BGR84}, \(A/\fm \hookrightarrow B/\fn\) is a
  finite field extension.
  Thus, \(B/\fn\) is complete by \cite[Proposition 3.7.3/3]{BGR84}, proving
  condition
  \((\ref{def:jacobsongelfand1})\) of Definition \ref{def:jacobsongelfand}.
  For condition \((\ref{def:jacobsongelfand2})\) of Definition
  \ref{def:jacobsongelfand}, let \(C\) be a \(B\)-algebra topologically of
  finite type over \(B\).
  We want to show that if \(\fn' \subseteq C\) is a maximal ideal, then \(\fn =
  \fn' \cap B\) is a maximal ideal.
  Set \(\fm = \fn \cap A = \fn' \cap A\) and consider the maps
  \[
    A/\fm \hooklongrightarrow B/\fn \hooklongrightarrow C/\fn'.
  \]
  By \cite[Corollary 6.1.2/3]{BGR84} again, the composition \(A/\fm
  \hookrightarrow C/\fn'\) is a finite field extension.
  We therefore see that \(B/\fn\) is finite over \(A/\fn\), and in particular,
  \(\dim(B/\fn) = 0\).
  Since \(B/\fn\) is a subring of \(C/\fn'\), we know that \(B/\fn\) is a
  domain.
  Thus, \(B/\fn\) is a zero-dimensional domain, that is, a field.
  We conclude that \(\fn \subseteq B\) is a maximal ideal.\smallskip
  \par \((\ref{prop:lou332})\).
  We first show that every maximal ideal \(\fn \subseteq B\) is the extension of
  a unique maximal ideal \(\fm \subseteq A\).
  Since \(B\) is topologically of finite type over \(A\), condition
  \((\ref{def:jacobsongelfand2})\) of Definition \ref{def:jacobsongelfand}
  implies that \(\fn \subseteq B\) contracts to a maximal ideal \(\fm \subseteq
  A\).
  Then, \(A/\fm \hookrightarrow B/\fn\) is a rational localization.
  Since \(A/\fm\) is a field, we see that \(A/\fm \hookrightarrow B/\fn\) is an
  isomorphism.
  Thus, \(\fn = \fm B\).
  Finally, the maximal ideal \(\fm \subseteq A\) is unique since no other
  maximal ideal has \(\fn\) in its fiber.
  \par The isomorphism \((A_\fm)^\wedge \overset{\sim}{\to} \hat{\cO}_{X,x}\)
  holds by \cite[Proposition 3.3.16\((iii)\)]{Hub93thesis}.
  This isomorphism fits into the commutative diagram
  \begin{equation}\label{eq:cdratloc}
    \begin{tikzcd}[column sep=0]
      (A_\fm)^\wedge \arrow{rr}{\sim} \arrow{dr} & & \hat{\cO}_{X,x}\\
      & (B_\fn)^\wedge \arrow{ur}[sloped]{\sim}
    \end{tikzcd}
  \end{equation}
  where the right diagonal map is an isomorphism by \cite[Proposition
  3.3.16\((iii)\)]{Hub93thesis} again.\smallskip
  \par \((\ref{prop:lou333})\).
  Set \(X \coloneqq \Spa(A,A^+)\).
  We first show that
  \[
    \supp\bigl(\JG(X)\bigr) \subseteq \MaxSpec(A).
  \]
  Let \(x \in \JG(X)\) and choose an affinoid open neighborhood \(U =
  \Spa(B,B^+)\) of \(x\) such that \(\supp(x) \subseteq B\) is a maximal ideal
  and \(B\) is a Jacobson--Tate ring.
  We claim that we may assume that \(A \to B\) is a rational localization.
  Since rational subsets form a basis for the topology on \(X\), there
  exists an open subset
  \[
    V = X\biggl(\frac{T_1}{s_1},\frac{T_2}{s_2},\ldots,
    \frac{T_n}{s_n}\biggr)
    \subseteq U
  \]
  that contains \(x\).
  By definition of rational subsets, we have
  \[
    V = \Spa(B,B^+)\biggl(\frac{T_1}{s_1},\frac{T_2}{s_2},\ldots,
    \frac{T_n}{s_n}\biggr)
  \]
  by considering the images in \(B\) of the elements \(s_i\) and the elements in
  \(T_i\).
  In the commutative diagram
  \[
    \begin{tikzcd}[column sep=small]
      V \arrow[hook]{rr}\arrow[hook]{dr} & & X\\
      & U \arrow[hook]{ur}
    \end{tikzcd}
  \]
  both the top horizontal map and the left diagonal map are inclusions of
  rational subsets.
  Since \(\supp(x)\) is a maximal ideal in \(B\), \((\ref{prop:lou332})\)
  implies that \(\supp(x)\) is a maximal ideal in \(\cO_{X}(V)\).
  We may therefore replace \(U\) by \(V\) to assume that \(A \to B\) is a
  rational localization.
  \par We now show that \(\supp(x) \in \MaxSpec(A)\) when \(x \in \Spa(B,B^+)\)
  for a rational localization \(A \to B\) and \(\supp(x) \subseteq B\) is a
  maximal ideal.
  Since \(B\) is a rational localization of \(A\), we know that \(B\) is a
  Jacobson--Tate ring by \((\ref{prop:lou331})\).
  Then, \((\ref{prop:lou332})\) shows that in the commutative
  diagram
  \[
    \begin{tikzcd}
      \Spa(B,B^+) \rar[hook]\dar[swap]{\supp} & \Spa(A,A^+) \dar{\supp}\\
      \Spec(B) \rar & \Spec(A)
    \end{tikzcd}
  \]
  the image of \(\supp(x) \in \Spec(B)\) in \(\Spec(A)\) is maximal.
  Thus, we have \(\supp(x) \in \MaxSpec(A)\).
  \par We now show that \(\supp\) maps \(\JG(X)\) injectively to
  \(\MaxSpec(A)\).
  By the previous two paragraphs, we know that \(\supp(x) \in \MaxSpec(A)\) for
  every \(x \in \JG(X)\).
  Following \cite[p.\ 119]{Hub93thesis}, we set
  \[
    \Max_v(A) \coloneqq \Set[\big]{v \in
    \Cont(A)_{\mathrm{min}} \given \supp(x) \in \MaxSpec(A)}
  \]
  where the subscript min denotes the set of minimal points in a topological
  space.
  By \cite[Lemma 3.1.14\((ii)\), Proof of Proposition 3.3.9, and p.\
  119]{Hub93thesis}, we have
  \[
    \Spa(A,A^+)_{\mathrm{min}} = \Cont(A)_{\mathrm{min}}
    = \Set[\big]{v \in \Cont(A) \given v\ \text{has rank 1}}.
  \]
  We therefore see that \(\JG(X) = \Max_v(A)\) as subspaces of
  \(\Spa(A,A^+)\).
  Since \(\supp\) induces a bijection
  \[
    \supp\colon \Max_v(A) \xrightarrow{\text{bij.}} \MaxSpec(A)
  \]
  by 
  \cite[p.\ 119]{Hub93thesis}, we are done.
  \par We now show that \(\JG(X)\) is dense in \(X\).
  Let \(U \subseteq X\) be an open subset.
  We need to show that \(U \cap \JG(X)\) is nonempty.
  Let \(\Spa(B,B^+) \subseteq U\) be an open subset rational in \(X\).
  Then, there exists a maximal ideal \(\fm \subseteq B\).
  By the previous paragraph, there exists a point \(x \in \JG(\Spa(B,B^+))\)
  such that \(\supp(\fm) = x\).
  Since
  \[
    \JG\bigl(\Spa(B,B^+)\bigr) \hooklongrightarrow \JG(X),
  \]
  we see that \(x \in \Spa(B,B^+) \cap \JG(X) \subseteq U \cap \JG(X)\), as
  needed.
\end{proof}

We now define dualizing complexes on Jacobson adic spaces \(X\).
\begin{definition}\label{def:dualizingcomplexadic}
  Let $X$ be a Jacobson adic space.
  A \textsl{dualizing complex} on $X$ is an object $\omega_X^\bullet$ in
  $\DD^+_\coh(X)$ such that
  for every $x \in \JG(X)$, the object $\omega_{X,x}^\bullet$ in
  $\DD_c^+(\cO_{X,x})$ is a dualizing complex in the sense of
  Definition \ref{def:dualizingcomplexschemes} (see also \cite[p.\
  118 and Lemma 3.1.4]{Con00}).
  Note that the stalks $\cO_{X,x}$ are Noetherian by
  \cite[Proposition 3.3.16\((i)\)]{Hub93thesis}.
\end{definition}

\section{Grothendieck duality, dualizing complexes, and GAGA}\label{sect:dualityandgaga}
The goal of this section is to compare dualizing complexes and exceptional
pullbacks (in the sense of Grothendieck duality) under relative GAGA for
semianalytic germs of complex analytic spaces \cite[Appendix C]{AT19},
rigid analytic spaces \cite{Kop74}, Berkovich spaces \cite{Poi10}, and
adic spaces \cite{Hub07}.
In the complex analytic case, we compare the existing definitions of
exceptional pullbacks for spaces with the scheme-theoretic definition from
\cite{Har66}.
In the non-Archimedean cases, there is no definition for exceptional pullbacks
along arbitrary morphisms of analytic spaces in the literature (although for adic
spaces, one can construct an exceptional pullback functor using condensed
mathematics; see \S\ref{csdiscussion}).
Instead, we will check that the analytification of the scheme-theoretic exceptional
pullback sends dualizing complexes to dualizing complexes.
\begin{convention}\label{convention:analytification}
  We denote the analytification functor in each setting by $(-) \mapsto
  (-)^\an$, and similarly for sheaves and complexes.
  For objects in the essential image of this functor, we denote by $(-)^\al$ the
  corresponding algebraic object, and call this process \textsl{algebraization}.
\end{convention}
\subsection{Equivalences of categories of coherent sheaves yield equivalences of
bounded derived categories}
\par As a preliminary step, we need versions of the GAGA theorems in
\cite[Theorem C.1.1]{AT19},
\citeleft\citen{Kop74}\citemid Folgerung 6.6, Folgerung 6.7, and Theorem
6.8\citeright\ (see also \cite[Example 3.2.6]{Con06}), \cite[Th\'eor\`eme
A.1]{Poi10}, and \citeleft\citen{Hub07}\citemid Corollary 6.4\citepunct
\citen{Zav}\citemid Lemma 6.9\citeright\ 
for bounded derived categories.\medskip
\par We prove the following result deducing equivalences of bounded derived
categories from equivalences of (weak) Serre subcategories of categories of
modules.
The statements $(\ref{thm:weakserreequivgivesderivedext})$ and
$(\ref{thm:weakserreequivgivesderived})$ below are versions of the
first half of the proof of \cite[Theorem 3.7]{Lim}, but we write down the proof
for completeness.
The result in \cite{Lim} gives the
stronger conclusion that $\DD^-_{\mathscr{A}_X}(X) \to \DD^-_{\mathscr{A}_Y}(Y)$
is an equivalence of categories under stronger hypotheses.
See also \cite[Lemma 5.12]{PY16} for a version of this result for
$\infty$-categories.
\begin{theorem}\label{thm:weakserreequivgivesderivedmain}
  Let $h\colon (Y,\cO_Y) \to (X,\cO_X)$ be a flat morphism of ringed sites.
  Fix weak Serre subcategories $\mathscr{A}_Y$ in $\Mod(Y)$ and
  $\mathscr{A}_X$ in $\Mod(X)$.
  Suppose the pullback functor $h^*\colon \Mod(X) \to \Mod(Y)$ restricts to
  a functor
  \begin{align}\label{eq:cohequivalencecomplex}
    h^*\colon \mathscr{A}_X &\longrightarrow \mathscr{A}_Y,
    \intertext{and consider the associated derived functors}
  \label{eq:dbequivalencecomplex}
    h^*\colon \DD^*_{\mathscr{A}_X}(X) &\longrightarrow
    \DD^*_{\mathscr{A}_Y}(Y)
  \end{align}
  where $* \in \{b,+\}$.
  \begin{enumerate}[label=$(\roman*)$,ref=\roman*]
    \item\label{thm:weakserreequivgivesderivedext}
      Suppose the natural morphisms
      \begin{align}\label{eq:extsareisos}
        \Ext^n_{\cO_X}(\sF,\sG) &\longrightarrow
        \Ext^n_{\cO_{Y}}(h^*\sF,h^*\sG)
      \intertext{are isomorphisms for all objects $\sF,\sG$ in $\mathscr{A}_X$ and for all
      $n \in \ZZ$.
      Then, the natural morphisms}
        \RRHom_{\cO_X}(\sF,\sG) &\longrightarrow
        \RRHom_{\cO_{Y}}(h^*\sF,h^*\sG)\label{eq:dbfullyfaithful}
      \end{align}
      are isomorphisms for all objects $\sF,\sG$ in $\DD^*_{\mathscr{A}_X}(X)$.
    \item\label{thm:weakserreequivgivesderivedsheafext}
      Suppose the natural morphisms
      \begin{align}\label{eq:sheafextsareisos}
        h^*\EExt^n_{\cO_X}(\sF,\sG) &\longrightarrow
        \EExt^n_{\cO_{Y}}(h^*\sF,h^*\sG)
      \intertext{are isomorphisms for all objects $\sF,\sG$ in $\mathscr{A}_X$ and for all
      $n \in \ZZ$.
      Then, the natural morphisms}
        h^*\RRHHom_{\cO_X}(\sF,\sG) &\longrightarrow
        \RRHHom_{\cO_{Y}}(h^*\sF,h^*\sG)\nonumber
      \end{align}
      are isomorphisms for all objects $\sF,\sG$ in $\DD^*_{\mathscr{A}_X}(X)$.
    \item\label{thm:weakserreequivgivesderived}
      Suppose \eqref{eq:cohequivalencecomplex} is an equivalence of categories,
      and that
      the natural morphisms \eqref{eq:extsareisos}
      are isomorphisms for all objects $\sF,\sG$ in $\mathscr{A}_X$ and for all
      $n \in \ZZ$.
      Then, \eqref{eq:dbequivalencecomplex} is an equivalence of categories.
    \item\label{thm:weakserrecohgiveshypercoh}
      If \eqref{eq:cohequivalencecomplex}
      induces isomorphisms on cohomology modules, then
      \eqref{eq:dbequivalencecomplex} induces
      isomorphisms on $\RR\Gamma$ and on hypercohomology modules.
      In this case, if the natural morphisms \eqref{eq:sheafextsareisos} are
      isomorphisms, then the natural morphisms in
      $(\ref{thm:weakserreequivgivesderivedext})$ and
      $(\ref{thm:weakserreequivgivesderivedsheafext})$ are all isomorphisms.
  \end{enumerate}
\end{theorem}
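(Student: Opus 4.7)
The plan is to prove all four parts by d\'evissage on cohomological amplitude, reducing each derived-categorical assertion to the corresponding hypothesis on single objects of the weak Serre subcategories. The key structural points are that, since $\mathscr{A}_X$ and $\mathscr{A}_Y$ are weak Serre subcategories, the cohomology sheaves $\mathcal H^i\sF$ and truncations $\tau_{\le n}\sF$, $\tau_{\ge n+1}\sF$ of any $\sF \in \DD^*_{\mathscr{A}_X}(X)$ again lie in $\DD^*_{\mathscr{A}_X}(X)$, and the flatness of $h$ implies that $h^*$ is exact and commutes with $\mathcal H^i$ and with truncation.

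For $(\ref{thm:weakserreequivgivesderivedext})$, I fix $\sF, \sG \in \DD^*_{\mathscr{A}_X}(X)$ and verify \eqref{eq:dbfullyfaithful} on each $\Ext^n$ separately. The truncation triangles
\[
\tau_{\le a}\sF \longrightarrow \sF \longrightarrow \tau_{\ge a+1}\sF \xrightarrow{+1}
\]
(and their analogues for $\sG$), combined with the long exact sequences of $\Ext$ and a five-lemma, permit a two-variable induction on cohomological amplitude, reducing to the case that $\sF$ and $\sG$ are shifts of single objects in $\mathscr{A}_X$---which is precisely the hypothesis \eqref{eq:extsareisos}. For $* = +$ the inductive step needs a standard way-out argument \cite[Chapter I, Proposition 7.1]{Har66}, using that for fixed $n$ the convergence of the hypercohomology-to-$\Ext$ spectral sequence ensures that only finitely many truncation steps contribute nontrivially to $\Ext^n$. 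Part $(\ref{thm:weakserreequivgivesderivedsheafext})$ is proved by the identical d\'evissage applied to $\EExt^n$ and $\RRHHom$, using exactness of $h^*$ and the fact that $h^*$ transforms an injective or flat resolution on $X$ of the relevant arguments into an acyclic resolution on $Y$ computing the target.

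For $(\ref{thm:weakserreequivgivesderived})$, full faithfulness of \eqref{eq:dbequivalencecomplex} is given by $(\ref{thm:weakserreequivgivesderivedext})$. For essential surjectivity in the $\DD^b$ case, given $\sG' \in \DD^b_{\mathscr{A}_Y}(Y)$ concentrated in cohomological degrees $[a,b]$, the essential surjectivity of $h^*\colon \mathscr{A}_X \to \mathscr{A}_Y$ yields $\sF_a \in \mathscr{A}_X$ with $h^*\sF_a \cong \mathcal H^a\sG'$; inducting on $b-a$ I may assume $\tau_{\ge a+1}\sG' \cong h^*\tilde\sF$ for some $\tilde\sF \in \DD^b_{\mathscr{A}_X}(X)$, and the connecting morphism $\tau_{\ge a+1}\sG' \to h^*\sF_a[-a+1]$ lifts by full faithfulness to a morphism in $\DD^b_{\mathscr{A}_X}(X)$, whose shifted cone has pullback isomorphic to $\sG'$. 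The $\DD^+$ case follows from a compatible system of $\DD^b$ preimages for the truncations $\tau_{\le n}\sG'$.

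For $(\ref{thm:weakserrecohgiveshypercoh})$, the hypercohomology spectral sequence
\[
E_2^{p,q} = H^p(Y, \mathcal H^q\sG) \Longrightarrow \HH^{p+q}(Y, \sG),
\]
convergent for bounded-below $\sG$, gives the $\RR\Gamma$ and hypercohomology statements since each $\mathcal H^q\sG$ lies in $\mathscr{A}_X$. The promotion of $(\ref{thm:weakserreequivgivesderivedsheafext})$ to $(\ref{thm:weakserreequivgivesderivedext})$ will combine it with the local-to-global spectral sequence $E_2^{p,q} = H^p(X, \EExt^q(\sF, \sG)) \Rightarrow \Ext^{p+q}(\sF, \sG)$. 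The main technical obstacle is that this application requires $\EExt^q(\sF, \sG)$ to lie in $\mathscr{A}_X$ so that the first part of this item applies to each $E_2$ term; this closure under $\EExt$ is not guaranteed by the weak Serre axioms alone but holds in all intended applications, where $\mathscr{A}_X$ is the category of coherent sheaves on a locally Noetherian scheme or analytic space and $\EExt$ of coherent sheaves remains coherent via a local finite free or finite presentation resolution of the first argument.
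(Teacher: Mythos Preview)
Your approach is essentially the same as the paper's: both proceed by d\'evissage on cohomological amplitude via truncation triangles, first reducing $\sF$ and then $\sG$ to single objects of $\mathscr{A}_X$, and both handle the $\DD^+$ case by passing through the bounded truncations $\tau_{\le n}$. The only cosmetic differences are that the paper argues directly with triangles (citing \cite[Proposition 1.1.11]{BBD82}) where you invoke spectral sequences, and for essential surjectivity in $\DD^+$ the paper writes the limit as an explicit homotopy colimit.

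One point worth noting: your final paragraph correctly flags that deducing the conclusion of $(\ref{thm:weakserreequivgivesderivedext})$ from the hypotheses of $(\ref{thm:weakserrecohgiveshypercoh})$ requires $\EExt^q_{\cO_X}(\sF,\sG)\in\mathscr{A}_X$ in order to feed $\RRHHom(\sF,\sG)$ back into the $\RR\Gamma$ comparison. The paper's proof here is terse (``follows by applying $\mathbf{H}^0$'') and does not make this assumption explicit; you are right that it is not guaranteed by the weak Serre axioms alone but holds in every application in the paper, where $\mathscr{A}_X=\Coh(X)$ on a locally Noetherian ringed site. Your caveat is well placed.
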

\begin{proof}
  Since \(h\) is flat, we know that \(h^*\) commute with cohomology.
  Thus, $h^*$ sends bounded objects in $\DD_{\mathscr{A}_X}(X)$ to
  bounded (resp.\ bounded-below)
  objects in $\DD_{\mathscr{A}_Y}(Y)$.\smallskip
  \par For $(\ref{thm:weakserreequivgivesderivedext})$,
  we first assume that $\sG$ is concentrated in one degree.
  If $\sF$ is concentrated in one degree, then the isomorphism follows from
  the isomorphism \eqref{eq:extsareisos}.
  \par We now show \eqref{eq:dbfullyfaithful} is an isomorphism
  for general $\sF$ when $\sG$ is
  concentrated in one degree.
  First suppose $* = b$, and
  let $n$ be the smallest degree where $\mathbf{h}^n(\sF) \ne 0$.
  Consider the exact triangle
  \[
    \mathbf{h}^n(\sF)[-n] \longrightarrow \sF \longrightarrow \tau_{\ge n+1}\sF
    \xrightarrow{+1}.
  \]
  We then have the commutative diagram
  \[
    \begin{tikzcd}[column sep=1.6em]
      \RRHom_{\cO_X}\bigl(\tau_{\ge n+1} \sF,\sG\bigr) \rar\dar[sloped]{\sim}
      & \RRHom_{\cO_X}(\sF,\sG) \rar\dar
      & \RRHom_{\cO_X}\bigl(\mathbf{h}^n(\sF)[-n],\sG\bigr)
      \rar{+1}\dar[sloped]{\sim} & {}\\
      \RRHom_{\cO_{Y}}\bigl(h^*\tau_{\ge n+1} \sF,h^*\sG\bigr) \rar
      & \RRHom_{\cO_{Y}}(h^*\sF,h^*\sG) \rar
      & \RRHom_{\cO_{Y}}\bigl(h^*\mathbf{h}^n(\sF)[-n],h^*\sG\bigr) \rar{+1} & {}
    \end{tikzcd}
  \]
  with exact rows
  where the left and right vertical arrows are quasi-isomorphisms by the
  inductive hypothesis.
  By \cite[Chapitre II, Corollaire 1.2.3]{Ver67}, we see the middle vertical
  arrow is a quasi-isomorphism.
  This shows \eqref{eq:dbfullyfaithful} is an isomorphism
  when $\sG$ is concentrated in one degree and $* = b$.
  When $* = +$, the argument above shows that
  \[
    \RRHom_{\cO_X}\bigl(\tau_{\le n}\sF,\sG\bigr)
    \overset{\sim}{\longrightarrow} 
    \RRHom_{\cO_Y}\bigl(h^*\tau_{\le n}\sF,h^*\sG\bigr)
  \]
  is an isomorphism for all $n$, and hence \eqref{eq:dbfullyfaithful} is an
  isomorphism when $\sG$ is concentrated in one degree and $* = +$.
  \par To show \eqref{eq:dbfullyfaithful} is an isomorphism
  for general $\sF$ and general $\sG$ when $* = b$,
  we repeat the same argument by induction on the length of $\sG$.
  The case when $\sG$ is concentrated in one degree was shown above.
  If $n$ is the smallest degree where $\mathbf{h}^n(\sG) \ne 0$, the exact
  triangle
  \[
    \mathbf{h}^n(\sG)[-n] \longrightarrow \sG \longrightarrow \tau_{\ge n+1}\sG
    \xrightarrow{+1}
  \]
  yields the commutative diagram
  \[
    \begin{tikzcd}[column sep=scriptsize]
      \RRHom_{\cO_X}\bigl(\sF,\mathbf{h}^n(\sG)[-n]\bigr) \rar\dar[sloped]{\sim}
      & \RRHom_{\cO_X}(\sF,\sG) \rar\dar
      & \RRHom_{\cO_X}\bigl(\sF,\tau_{\ge n+1}\sG\bigr)
      \rar{+1}\dar[sloped]{\sim} & {}\\
      \RRHom_{\cO_{Y}}\bigl(h^*\sF,h^*\mathbf{h}^n(\sG)[-n]\bigr) \rar
      & \RRHom_{\cO_{Y}}(h^*\sF,h^*\sG) \rar
      & \RRHom_{\cO_{Y}}\bigl(h^*\sF,h^*\tau_{\ge n+1}\sG\bigr) \rar{+1} & {}
    \end{tikzcd}
  \]
  with exact rows
  where the left and vertical arrows are quasi-isomorphisms by the inductive
  hypothesis.
  By \cite[Chapitre II, Corollaire 1.2.3]{Ver67}, we see the middle vertical arrow is a
  quasi-isomorphism.
  This shows \eqref{eq:dbfullyfaithful} is an isomorphism for all
  $\sF$ when $\sG$ is bounded.
  Now for $* = +$, we know that
  \[
    \RRHom_{\cO_X}(\sF,\tau_{\le n}\sG) \overset{\sim}{\longrightarrow} 
    \RRHom_{\cO_Y}(h^*\sF,h^*\tau_{\le n}\sG)
  \]
  is an isomorphism for all $n$, and hence \eqref{eq:dbfullyfaithful} is an isomorphism
  for all $\sF$ and $\sG$ that are bounded-below.\smallskip
  \par For $(\ref{thm:weakserreequivgivesderivedsheafext})$, we can repeat the
  same argument as in $(\ref{thm:weakserreequivgivesderivedext})$ replacing
  $\RRHom$ with $\RRHHom$.\smallskip
  \par For $(\ref{thm:weakserreequivgivesderived})$, since the functor
  \eqref{eq:dbequivalencecomplex} is fully
  faithful by $(\ref{thm:weakserreequivgivesderivedext})$, it suffices to show
  the functor \eqref{eq:dbequivalencecomplex} is essentially surjective.
  We start with the case $*=b$.
  Fix an object $\sG$ in $\DD^b_{\mathscr{A}_Y}(Y)$.
  We proceed by induction on the length of $\sG$.
  If $\sG$ is concentrated in one degree, this follows from the equivalence
  \eqref{eq:cohequivalencecomplex}.
  For general $\sG$, let $n$ be the smallest degree where $\mathbf{h}^n(\sG) \ne
  0$, and consider the exact triangle
  \[
    (\tau_{\ge n+1}\sG)[-1] \longrightarrow \mathbf{h}^n(\sG)[-n]
    \longrightarrow \sG \xrightarrow{+1}.
  \]
  By \eqref{eq:cohequivalencecomplex} and the inductive hypothesis, there exist
  objects $\sF,\sF'$ in $\mathscr{A}_X$ such that
  \[
    h^*\sF \simeq (\tau_{\ge n+1}\sG)[-1] \qquad \text{and} \qquad
    h^*\sF' \simeq \mathbf{h}^n(\sG)[-n].
  \]
  By $(\ref{thm:weakserreequivgivesderivedext})$, we know that the morphism $(\tau_{\ge
  n+1}\sG)[-1] \to \mathbf{h}^n(\sG)[-n]$ is the pullback of a morphism
  $\varphi\colon \sF \to \sF'$ in $\DD^b_{\mathscr{A}_X}(X)$.
  It follows that $\sG \simeq h^*\Cone(\sF \to \sF')$ since $h$ is flat.
  \par Next, we consider the case when $* = +$.
  Write
  \[
    \sG \simeq \hocolim_{\raisebox{5pt}{\(\scriptstyle n\)}} \tau_{\le n}\sG.
  \]
  Since \eqref{eq:dbequivalencecomplex} is an equivalence for $* = b$, every
  $\tau_{\le n}\sG$ is of the form $h^*\sF_n$ for $\sF_n$ in
  $\DD^b_{\mathscr{A}_X}(X)$.
  Moreover, the transition morphisms in the homotopy colimit come from
  compatible morphisms of the $h^*\sF_n$ using the faithful fullness of
  \eqref{eq:dbequivalencecomplex}.
  We therefore see that
  \[
    \sG \simeq \hocolim_{\raisebox{5pt}{\(\scriptstyle n\)}} \,
    h^*\sF_n \simeq h^*
    \hocolim_{\raisebox{5pt}{\(\scriptstyle n\)}} \sF_n
  \]
  where the second quasi-isomorphism holds
  since $h$ is flat.
  Since
  \[
    \hocolim_{\raisebox{5pt}{\(\scriptstyle n\)}} \sF_n
  \]
  is an object in
  $\DD^+_{\mathscr{A}_X}(X)$, we are done with the proof of
  $(\ref{thm:weakserreequivgivesderived})$.\medskip
  \par It remains to show $(\ref{thm:weakserrecohgiveshypercoh})$.
  Let $\sF$ be an object in $\DD^b_{\mathscr{A}_X}(X)$.
  We proceed by induction on the length of $\sF$.
  If $\sF$ is concentrated in one degree, this follows from the assumption that
  $h$ preserves cohomology modules.
  In general, let $n$ be the smallest degree where $\mathbf{h}^n(\sF) \ne 0$,
  and consider the exact triangle
  \[
    \mathbf{h}^n(\sF)[-n] \longrightarrow \sF \longrightarrow \tau_{\ge n+1}\sF
    \xrightarrow{+1}.
  \]
  We then have the commutative diagram
  \[
    \begin{tikzcd}[column sep=scriptsize]
      \RR\Gamma\bigl(X,\mathbf{h}^n(\sF)[-n]\bigr) \rar\dar[sloped]{\sim}
      & \RR\Gamma(X,\sF) \rar\dar
      & \RR\Gamma\bigl(X,\tau_{\ge n+1}\sF\bigr) \rar{+1}\dar[sloped]{\sim}
      & {}\\
      \RR\Gamma\bigl(Y,h^*\mathbf{h}^n(\sF)[-n]\bigr) \rar &
      \RR\Gamma(Y,h^*\sF) \rar & \RR\Gamma\bigl(Y,h^*\tau_{\ge n+1}\sF\bigr) \rar{+1} &
      {}
    \end{tikzcd}
  \]
  where the left and right vertical arrows are quasi-isomorphism by the
  inductive hypothesis.
  By \cite[Chapitre II, Corollaire 1.2.3]{Ver67}, we see the middle vertical arrow is a
  quasi-isomorphism.
  The ``in particular'' statement in
  $(\ref{thm:weakserrecohgiveshypercoh})$ now follows by applying $\mathbf{H}^0$.
  The case when $\sF$ is an object in $\DD^+_{\mathscr{A}_X}(X)$ also follows
  since $(\ref{thm:weakserrecohgiveshypercoh})$ holds for $\tau_{\le n}\sF$ for
  all $n$.
\end{proof}
\subsection{Dualizing complexes and relative GAGA for semianalytic germs of
complex analytic spaces}
We first deduce the relative GAGA theorem for bounded derived categories of
semianalytic germs of complex analytic spaces from the statement for categories
of coherent sheaves in \cite{AT19}.
\begin{theorem}[cf.\ {\cite[Theorem C.1.1]{AT19}}]\label{thm:dbgagagerms}
  Let $(\cZ,Z)$ be an affinoid semianalytic germ of a complex analytic space
  with ring of global analytic functions $A$.
  Let $X$ be a projective scheme over $\Spec(A)$.
  Then, the pullback functor
  \begin{equation}\label{eq:derivedequiv}
    h^*\colon \DD^*_\coh(X) \longrightarrow
    \DD^*_\coh(X^\an)
  \end{equation}
  is an equivalence of categories that induces isomorphisms on $\RR\Gamma$,
  hypercohomology
  modules, $\RRHom$, and $\RRHHom$ for $* \in \{b,+\}$.
\end{theorem}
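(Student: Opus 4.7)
The plan is to deduce Theorem \ref{thm:dbgagagerms} from Theorem \ref{thm:weakserreequivgivesderivedmain} applied with $\mathscr{A}_X = \Coh(X)$, $\mathscr{A}_{X^\an} = \Coh(X^\an)$, and $h\colon X^\an \to X$ the analytification morphism. Both $\Coh(X)$ and $\Coh(X^\an)$ are weak Serre subcategories of the respective categories of $\cO$-modules, and \cite[Theorem C.1.1]{AT19} already supplies the equivalence $h^*\colon \Coh(X) \overset{\sim}{\to} \Coh(X^\an)$ together with the cohomology comparison $H^i(X,\sF) \cong H^i(X^\an, h^*\sF)$ for coherent $\sF$ on $X$. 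It therefore remains to verify flatness of $h$ and the sheaf-$\EExt$ comparison hypothesis of Theorem \ref{thm:weakserreequivgivesderivedmain}.

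First I would record flatness of $h$: this reduces to flatness of the ring maps $\cO_{X,h(x)} \to \cO_{X^\an,x}$ for $x \in X^\an$, which follows from the construction of the relative analytification of projective schemes over $\Spec(A)$ as in \cite[Appendix C]{AT19}, combined with the standard flatness of analytification. Next I would verify the sheaf-Ext comparison
\[
  h^*\EExt^n_{\cO_X}(\sF,\sG) \longrightarrow \EExt^n_{\cO_{X^\an}}(h^*\sF,h^*\sG)
\]
for all coherent $\sF,\sG$ on $X$ and all $n \in \ZZ$. I would check this on stalks: since the stalks of $\cO_X$ are Noetherian and $\sF$ is coherent, any stalk $\sF_{h(x)}$ admits a resolution $F^\bullet \twoheadrightarrow \sF_{h(x)}$ by finite free $\cO_{X,h(x)}$-modules, which base-changes via the flat ring map $\cO_{X,h(x)} \to \cO_{X^\an,x}$ to a resolution of $(h^*\sF)_x$ by finite free $\cO_{X^\an,x}$-modules. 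The comparison then reduces to the identity $\Hom_R(F,M) \otimes_R R' \cong \Hom_{R'}(F \otimes_R R', M \otimes_R R')$ for finite free $F$, together with the fact that flat base change commutes with cohomology of complexes.

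With these hypotheses verified, Theorem \ref{thm:weakserreequivgivesderivedmain}$(\ref{thm:weakserrecohgiveshypercoh})$ yields the desired isomorphisms on $\RR\Gamma$, hypercohomology, $\RR\Hom$, and $\RRHHom$ for $* \in \{b,+\}$, and in particular produces the $\Ext^n$ iso on $\Coh(X)$. Feeding this Ext comparison together with the equivalence on $\Coh$ into Theorem \ref{thm:weakserreequivgivesderivedmain}$(\ref{thm:weakserreequivgivesderived})$ then gives the desired equivalence $\DD^*_\coh(X) \simeq \DD^*_\coh(X^\an)$ for $* \in \{b,+\}$. The main technical step is the flatness verification, which requires carefully unpacking the relative analytification of projective schemes over $\Spec(A)$ into the germ setting; the remaining Ext and hypercohomology comparisons are essentially formal consequences of flatness, finite-rank resolutions, and \cite[Theorem C.1.1]{AT19}.
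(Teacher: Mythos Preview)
Your proposal is correct and follows essentially the same approach as the paper: apply Theorem~\ref{thm:weakserreequivgivesderivedmain} with $\mathscr{A}_X=\Coh(X)$ and $\mathscr{A}_{X^\an}=\Coh(X^\an)$, using \cite[Theorem C.1.1]{AT19} for the abelian-level equivalence and cohomology comparison, flatness of $h$, and the sheaf-$\EExt$ comparison. The only cosmetic difference is that the paper cites the flatness of $h$ from \cite[p.~421]{AT19} and invokes \cite[Proposition~12.3.5]{EGAIII1} for the $\EExt$ comparison under flat pullback, whereas you sketch the latter directly via stalks and free resolutions; these are the same arguments.
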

\begin{proof}
  We verify the hypotheses in Theorem
  \ref{thm:weakserreequivgivesderivedmain}$(\ref{thm:weakserreequivgivesderived})$
  and \ref{thm:weakserreequivgivesderivedmain}$(\ref{thm:weakserrecohgiveshypercoh})$
  for the relative analytification morphism $h\colon X^\an \to X$ from
  \cite[Appendix C]{AT19}
  when $\mathscr{A}_{X^\an} = \Coh(X^\an)$ and $\mathscr{A}_X = \Coh(X)$.
  By \cite[Theorem C.1.1]{AT19}, we have an equivalence of categories
  \begin{equation*}
    h^*\colon \Coh(X) \overset{\sim}{\longrightarrow} \Coh(X^\an)
  \end{equation*}
  that induces isomorphisms on cohomology modules.
  Since $h\colon X^\an \to X$ is flat \cite[p.\ 421]{AT19}, the natural
  morphisms
  \[
    h^*\EExt^n_{\cO_X}(\sF,\sG) \longrightarrow
    \EExt^n_{\cO_{X^\an}}(\sF^\an,\sG^\an)
  \]
  are isomorphisms for all objects $\sF,\sG$ in $\Coh(X)$ by \cite[Proposition
  12.3.5]{EGAIII1}.
  We therefore see that \eqref{eq:derivedequiv} is an equivalence
  by Theorem
  \ref{thm:weakserreequivgivesderivedmain}$(\ref{thm:weakserreequivgivesderived})$.
  This equivalence
  induces isomorphisms on $\RR\Gamma$, hypercohomology modules, $\RRHom$, and $\RRHHom$
  by Theorem
  \ref{thm:weakserreequivgivesderivedmain}$(\ref{thm:weakserrecohgiveshypercoh})$.
\end{proof}
We can now show that dualizing complexes are compatible with GAGA.
Below, the notation $\omega^\bullet_{-}$ where the subscript is a semianalytic
germ of a complex analytic space denotes the dualizing complex constructed in
\cite{RR71} (see Remark \ref{rem:omegaexistsrr71}).
\begin{theorem}\label{thm:dualizingcomplexcompatcomplex}
  Let $(\cZ,Z)$ be an affinoid semianalytic germ of a complex analytic space
  with ring of global analytic functions $A$.
  \begin{enumerate}[label=$(\roman*)$,ref=\roman*]
    \item\label{thm:dualizingcomplexcompatcomplexaffinoid}
      Let $\omega_A^\bullet$ denote the object in
      $\DD^b_\coh(\Spec(A))$ corresponding to $\omega_{Z}^\bullet$ under the
      equivalence in Theorem \ref{thm:dbgagagerms}.
      Then, $\omega_A^\bullet$ is a dualizing complex on $\Spec(A)$.
    \item\label{thm:rrvisexceptionalpullback}
      Let $f\colon Y \to X$ be a morphism of schemes projective over
      $\Spec(A)$.
      Then, we have the commutative diagram
      \begin{equation}\label{eq:rrvleftadjoint}
        \begin{tikzcd}[column sep=large,baseline=(hstarsim.base)]
          \DD^+_\coh(X^\an) \rar{(f^\an)^!}
          &\DD^+_\coh(Y^\an)\\
          \DD^+_\coh(X) \rar{f^!}\arrow[u, "h^*"'{name=hstarsim}, "\sim" sloped]
          &\DD^+_\coh(Y)\arrow[u, "h^*"', "\sim" sloped]
        \end{tikzcd}
      \end{equation}
      of functors.
      Here,
      \[
        (f^\an)^! \coloneqq \RRHHom_{\cO_{Y^\an}}\Bigl(\LL
        f^{\an*}\RRHHom_{\cO_{X^\an}}\bigl(-,\omega_{X^\an}^\bullet\bigr),
        \omega_{Y^\an}^\bullet\Bigr)
      \]
      is the exceptional pullback functor from \emph{\cite{RRV71}}.
    \item\label{thm:dualizingcomplexcompatcomplexexcpullback}
      Let $f\colon Y \to X$ be a morphism of schemes projective over $\Spec(A)$.
      We have $(f^!\omega_X^\bullet)^\an \cong \omega_{Y^\an}^\bullet$, and the
      analytification of the Grothendieck trace $\RR f_*
      \omega_Y^\bullet \to \omega_X^\bullet$ is the relative trace from
      \emph{\cite{RRV71}}.
  \end{enumerate}
\end{theorem}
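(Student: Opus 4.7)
The plan is to establish the three parts in sequence, using Theorem \ref{thm:dbgagagerms} as the main engine (in particular its preservation of $\RRHHom$ for bounded-below complexes), together with the standard identity $f^! \simeq D_Y \circ \LL f^* \circ D_X$ relating the exceptional pullback to the duality functor $D_X = \RRHHom_{\cO_X}(-,\omega_X^\bullet)$.

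For part $(\ref{thm:dualizingcomplexcompatcomplexaffinoid})$, I would verify the three conditions in Definition \ref{def:dualizingcomplexschemes} for $\omega_A^\bullet$. Boundedness and coherence transfer immediately since Theorem \ref{thm:dbgagagerms} gives an equivalence on $\DD^b_\coh$ and $\omega_Z^\bullet$ lies in $\DD^b_\coh(Z)$ (the germ being finite-dimensional, so by \cite{RR71,BS76} the dualizing complex is bounded). The biduality isomorphism on the algebraic side follows from the one for $\omega_Z^\bullet$ by transporting through the GAGA equivalence: Theorem \ref{thm:dbgagagerms} shows that $h^*$ intertwines $\RRHHom$, so the biduality natural transformation on $\DD^b_\coh(\Spec(A))$ maps to the biduality natural transformation on $\DD^b_\coh(Z)$, which is an isomorphism. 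For finite injective dimension, I would work locally: using that $A$ is excellent \cite{Fri67} and of finite Krull dimension (which one sees by comparing closed points of $\Spec(A)$ with the stalks of $Z$, where $\omega_{Z,z}^\bullet$ is dualizing for the Noetherian local ring $\cO_{Z,z}$ in the sense of Definition \ref{def:complexanalyticdualizing}), I would check that each stalk $(\omega_A^\bullet)_\fp$ is dualizing for $A_\fp$ by matching it with a stalk of $\omega_Z^\bullet$ at a suitable point, then conclude via standard criteria that $\omega_A^\bullet$ itself has finite injective dimension.

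For part $(\ref{thm:rrvisexceptionalpullback})$, the identity $f^! \simeq D_Y \circ \LL f^* \circ D_X$ holds on both the scheme and analytic side (on the scheme side by the formula $f^!(-) \simeq \RRHHom_{\cO_Y}(\LL f^*\RRHHom_{\cO_X}(-,\omega_X^\bullet),\omega_Y^\bullet)$ that follows from $\omega_Y^\bullet = f^!\omega_X^\bullet$ and biduality). To deduce the commutativity of \eqref{eq:rrvleftadjoint}, it suffices to check that $h^*$ intertwines both $D_X$ with $D_{X^\an}$ and $\LL f^*$ with $\LL f^{\an*}$. The first intertwining is Theorem \ref{thm:dbgagagerms} combined with part $(\ref{thm:dualizingcomplexcompatcomplexaffinoid})$ identifying $(\omega_X^\bullet)^\an$ with $\omega_{X^\an}^\bullet$ (obtained by applying the same argument as in part $(\ref{thm:dualizingcomplexcompatcomplexaffinoid})$ after base change along $X \to \Spec(A)$, or by using an open affine cover of $X$). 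The second intertwining follows from flatness of analytification (\cite[p.~421]{AT19}) and the fact that $f^{\an}\colon Y^\an \to X^\an$ is compatible with $f\colon Y \to X$ under $h$.

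For part $(\ref{thm:dualizingcomplexcompatcomplexexcpullback})$, applying the diagram from $(\ref{thm:rrvisexceptionalpullback})$ to $\omega_X^\bullet$ gives
\[
  (f^!\omega_X^\bullet)^\an \simeq \RRHHom_{\cO_{Y^\an}}\bigl(\LL f^{\an*}\RRHHom_{\cO_{X^\an}}(\omega_{X^\an}^\bullet,\omega_{X^\an}^\bullet),\omega_{Y^\an}^\bullet\bigr) \simeq \omega_{Y^\an}^\bullet
\]
by biduality on $X^\an$ (a property of $\omega_{X^\an}^\bullet$ in Definition \ref{def:complexanalyticdualizing}$(\ref{def:complexanalyticdualizingreflexive})$). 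For the identification of the analytified Grothendieck trace with the RRV71 relative trace, the plan is to use that both traces are characterized by the same adjunction-theoretic property: they are counits for the adjunction $\RR f_* \dashv f^!$ on the respective sides, and $h^*$ carries one adjunction to the other (since $h^*$ intertwines $\RR f_*$ with $\RR f^{\an}_*$ by the GAGA-type comparison for $\RR\Gamma$ in Theorem \ref{thm:dbgagagerms} applied over small affinoid subdomains of $X^\an$, and intertwines $f^!$ with its analytic counterpart by part $(\ref{thm:rrvisexceptionalpullback})$).

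The main obstacle will be making precise the statement that the analytification of the Grothendieck trace coincides with the RRV71 relative trace. This is essentially a compatibility between two independently constructed trace maps, and while the adjunction characterization suggests a clean abstract proof, one has to carefully align the normalizations of $\omega^\bullet$ chosen in \cite{RR71} (via embedding into a smooth ambient space and dualizing) with the one obtained by algebraization and applying Grothendieck's $f^!$. A secondary subtlety is handling $\LL f^{\an*}$ on unbounded or non-perfect complexes inside the duality formula, which I would circumvent by observing that $\RRHHom_{\cO_{X^\an}}(-,\omega_{X^\an}^\bullet)$ sends $\DD^+_\coh$ to $\DD^-_\coh$ (using finite injective dimension of $\omega_{X^\an}^\bullet$), so the composition in \eqref{eq:rrvleftadjoint} is well-defined on $\DD^+_\coh$.
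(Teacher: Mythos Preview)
Your approach to part $(\ref{thm:dualizingcomplexcompatcomplexaffinoid})$ is essentially the same as the paper's: check the dualizing property at closed points using that $h$ is flat, bijective on closed points, and that $\RRHHom$ and $\Ext$ commute with $h^*$.

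The gap is in part $(\ref{thm:rrvisexceptionalpullback})$. Your strategy is to write $f^! \simeq D_Y \circ \LL f^* \circ D_X$ on the scheme side and then check that $h^*$ intertwines each factor. But intertwining $D_X$ with $D_{X^\an}$ means precisely that $(\omega_X^\bullet)^\an \cong \omega_{X^\an}^\bullet$, where the right-hand side is the specific RR71 dualizing complex appearing in the top row of the diagram. This identification is not a consequence of part $(\ref{thm:dualizingcomplexcompatcomplexaffinoid})$: that argument only shows the algebraization of $\omega_{X^\an}^\bullet$ is \emph{some} dualizing complex on $X$, not that it agrees with $\pi_X^!\omega_A^\bullet$ (dualizing complexes are unique only up to shift and twist by an invertible sheaf). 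In fact $(\omega_X^\bullet)^\an \cong \omega_{X^\an}^\bullet$ is the content of part $(\ref{thm:dualizingcomplexcompatcomplexexcpullback})$ applied to the structure morphism $\pi_X$, so you are invoking $(\ref{thm:dualizingcomplexcompatcomplexexcpullback})$ to prove $(\ref{thm:rrvisexceptionalpullback})$ while also deducing $(\ref{thm:dualizingcomplexcompatcomplexexcpullback})$ from $(\ref{thm:rrvisexceptionalpullback})$.

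The paper breaks this circularity by bringing in the analytic Grothendieck duality theorem of \cite{RRV71} as an independent input. That result says directly that the functor $\RRHHom_{\cO_{Y^\an}}(\LL f^{\an*}\RRHHom_{\cO_{X^\an}}(-,\omega_{X^\an}^\bullet),\omega_{Y^\an}^\bullet)$, built from the RR71 complexes, is right adjoint to $\RR f_*^\an$. Transporting through the GAGA equivalence $h^*$ and using uniqueness of right adjoints then forces this functor to match $h^* \circ f^! \circ (h^*)^{-1}$, without ever needing to identify $(\omega_X^\bullet)^\an$ with $\omega_{X^\an}^\bullet$ in advance. Part $(\ref{thm:dualizingcomplexcompatcomplexexcpullback})$ then falls out by evaluating the commutative square at $\omega_X^\bullet$, exactly as you do. Your argument for the trace via counits is the same as the paper's.
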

\begin{proof}
  For $(\ref{thm:dualizingcomplexcompatcomplexaffinoid})$, we first note that
  $\Spec(A)$ is Noetherian of finite Krull dimension
  \citeleft\citen{Fri67}\citemid Th\'eor\`eme I, 9\citepunct
  \citen{AT19}\citemid Lemma B.6.1$(i)$\citeright.
  Thus, it
  suffices to show that $\omega_A^\bullet$ is locally a dualizing complex at
  every $x \in \Spec(A)$ by
  \cite[Chapter V, Proposition 8.2]{Har66} (see also \cite[p.\ 120]{Con00}).
  Moreover, it suffices to show that
  $\omega_A^\bullet$ is locally a dualizing complex at every closed point $x \in
  \Spec(A)$ by \cite[Chapter V, Corollary 2.3]{Har66}.
  But this follows from the conditions in Definition
  \ref{def:complexanalyticdualizing} together with the fact that $h$ is flat
  \cite[Lemma B.6.1$(iv)$]{AT19} and induces a bijection on closed points
  \cite[Lemma B.6.1$(iii)$]{AT19},
  since finite injective dimension can be tested with modules of the form
  $\Ext^i_{\cO_{X,x}}(\CC,-)$
  \cite[\href{https://stacks.math.columbia.edu/tag/0AVJ}{Tag
  0AVJ}]{stacks-project}, and both the formation of $\Ext$ and $\RRHHom$ commute
  with $h^*$ by Theorem \ref{thm:dbgagagerms}.
  Note here that while the condition in Definition
  \ref{def:complexanalyticdualizing}$(\ref{def:complexanalyticdualizingreflexive})$
  is a statement about functors on $\DD_\coh(X)$, it suffices to check that the
  morphism is an isomorphism when plugging in $\cO_X$ (resp.\ $\cO_{X^\an}$) by
  \cite[Chapter V, Proposition 2.1]{Har66} (resp.\ \cite[Proposition 1]{RR71}).\smallskip
  \par $(\ref{thm:rrvisexceptionalpullback})$
  We apply Grothendieck duality
  for proper morphisms of complex analytic spaces \cite[p.\ 261]{RRV71} to
  \(f^\an\colon Y^\an \to X^\an\).
  Restricting to $Z$ using the proper base change theorem
  from topology \cite[Chapter VII, Corollary 1.5]{Ive86}, we obtain the
  isomorphism
  \begin{align*}
    \MoveEqLeft[3]\RR f^\an_*
    \RRHHom_{\cO_{Y^\an}}\Bigl(\sF^\bullet,\RRHHom_{\cO_{Y^\an}}\bigl(\LL
    f^{\an*}\RRHHom_{\cO_{X^\an}}(\sG^\bullet,\omega_{X^\an}^\bullet),\omega_{Y^\an}^\bullet
    \bigr)\Bigr)\\
    &\overset{\sim}{\longrightarrow} \RRHHom_{\cO_{X^\an}}(\RR
    f_*^\an\sF^\bullet,\sG^\bullet).
  \end{align*}
  This isomorphism is
  natural in objects $\sF^\bullet$ and $\sG^\bullet$ in $\DD_\coh(Y^\an)$ and
  $\DD^+_\coh(X^\an)$, respectively.
  In particular, this isomorphism holds for $\sF^\bullet$ in
  $\DD^+_\coh(Y^\an)$.
  Taking $\mathbf{H}^0$ and
  applying the equivalence of categories $h^*$ from Theorem \ref{thm:dbgagagerms},
  we see that the top functor in \eqref{eq:rrvleftadjoint} is a right adjoint of
  $\RR f_*$, which preserves $\DD^+_\coh$ by \cite[Theorem 5.20]{PY16}.
  Finally, we obtain the diagram \eqref{eq:rrvleftadjoint} since right adjoints
  are unique.\smallskip
  \par We now show $(\ref{thm:dualizingcomplexcompatcomplexexcpullback})$.
  By $(\ref{thm:rrvisexceptionalpullback})$, it
  suffices to note that
  \begin{align*}
    \MoveEqLeft[3]\RRHHom_{\cO_{Y^\an}}\bigl(\LL
      f^{\an*}\RRHHom_{\cO_{X^\an}}(
    \omega_{X^\an}^\bullet,\omega_{X^\an}^\bullet),\omega_{Y^\an}^\bullet\bigr)\\
    &\cong \RRHHom_{\cO_{Y^\an}}\bigl(\LL
    f^{\an*}\cO_{X^\an},\omega_{Y^\an}^\bullet\bigr)\\
    &\cong \RRHHom_{\cO_{Y^\an}}\bigl(\cO_{Y^\an},\omega_{Y^\an}^\bullet\bigr)\\
    &\cong \omega_{Y^\an}^\bullet.
  \end{align*}
  The last statement about trace now follows since in both settings, the trace
  is the counit morphism for the adjunction from
  $(\ref{thm:rrvisexceptionalpullback})$, where in the scheme case we are using
  \citeleft\citen{Har66}\citemid Appendix, Th\'eor\`eme 2\citepunct
  \citen{Ver}\citemid Theorem 1\citepunct \citen{Nee96}\citemid Proposition
  6.3\citepunct \citen{Lip09}\citemid Theorem 4.1.1\citeright.
\end{proof}
\subsection{Dualizing complexes and relative GAGA for non-Archimedean analytic
spaces}
We first deduce the relative GAGA theorem for bounded derived categories of
rigid analytic spaces, Berkovich spaces, and adic from the statements for
categories of coherent sheaves in \cite{Kop74,Poi10,Hub07} (see also
\cite{Con06,Hal}).
Stronger results for Berkovich spaces are shown in \cite[Theorem 7.1 and Corollary
7.5]{PY16}.
\begin{theorem}[{cf.\ \citeleft\citen{Kop74}\citemid Folgerung 6.6, Folgerung 6.7, and
  Theorem 6.8\citepunct \citen{Poi10}\citemid Th\'eor\`eme
  A.1\citepunct \citen{Hub07}\citemid Corollary
  6.4\citepunct \citen{Zav}\citemid Lemma 6.7\citeright}]\label{thm:derivedgagarigid}
  Let $Z$ be one of the following:
  \begin{enumerate}[label=$(\alph*)$,ref=\alph*]
    \item An affinoid rigid $k$-analytic space, where $k$ is a complete
      non-trivially valued non\hyph{}Archimedean field.
    \item An affinoid $k$-analytic space, where $k$ is a complete
      non-Archimedean field.
    \item\label{thm:derivedgagarigidadic}
      An affinoid analytic adic space such that one of the following
      conditions holds:
      \begin{itemize}
        \item $\cO_Z(Z)$ has a Noetherian ring of definition.
        \item $\cO_Z(Z)$ is strongly Noetherian.
      \end{itemize}
  \end{enumerate}
  Let $R$ be the ring of global functions on $Z$, and let $X$ be a proper scheme
  over $\Spec(R)$.
  Then, the pullback functor
  \begin{equation}\label{eq:derivedequivnonarch}
    h^*\colon \DD^*_\coh(X) \longrightarrow
    \DD^*_\coh(X^\an)
  \end{equation}
  is an equivalence of categories that induces isomorphisms on $\RR\Gamma$,
  hypercohomology
  modules, $\RRHom$, and $\RRHHom$ for $* \in \{b,+\}$.
\end{theorem}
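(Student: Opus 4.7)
The plan is to mimic the structure of the proof of Theorem \ref{thm:dbgagagerms}, by verifying the hypotheses of Theorem \ref{thm:weakserreequivgivesderivedmain}$(\ref{thm:weakserreequivgivesderived})$--$(\ref{thm:weakserrecohgiveshypercoh})$ applied to the relative analytification morphism $h\colon X^\an \to X$, taking $\mathscr{A}_X = \Coh(X)$ and $\mathscr{A}_{X^\an} = \Coh(X^\an)$ (or $\Coh(X^\an_G)$ in the non-good case). If these hypotheses hold uniformly across the three settings, then Theorem \ref{thm:weakserreequivgivesderivedmain} immediately yields the equivalence \eqref{eq:derivedequivnonarch} together with the $\RR\Gamma$, hypercohomology, $\RR\Hom$, and $\RRHHom$ comparison statements.

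The hypotheses to verify are: (a) that $h$ is flat as a morphism of ringed sites; (b) that $h^*\colon \Coh(X) \to \Coh(X^\an)$ is an equivalence of categories inducing isomorphisms on cohomology modules; and (c) that the natural comparison morphisms $h^*\EExt^n_{\cO_X}(\sF,\sG) \to \EExt^n_{\cO_{X^\an}}(\sF^\an,\sG^\an)$ are isomorphisms for every pair of coherent sheaves $\sF,\sG$ on $X$ and every $n\ge 0$. Item (b) is precisely the content of the coherent GAGA theorems cited in the statement: \cite{Kop74} in the rigid case, \cite{Poi10} (see also \cite{Con06}) in the Berkovich case, and \cite{Hub07} in the adic case under the Noetherian ring of definition or topologically finite type hypothesis, which is exactly what is needed to ensure that the proper push-forward of coherent sheaves stays coherent and that the comparison on finite cohomology modules holds. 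Item (a) is essentially classical: the morphism of stalks $\cO_{X,h(x)} \to \cO_{X^\an,x}$ is obtained by passing from the local ring of $X$ at a closed point over $\Spec(R)$ to the corresponding analytic local ring, and is flat in all three settings (this appears in \cite{Kop74} and \cite{Con06} for the rigid setting, in \cite{Ber93,Poi10} for the Berkovich setting, and in \cite{Hub94,Hub07} for the adic setting; it is also used tacitly in the proofs of the coherent GAGA cited above). For item (c), one reduces to a local statement at stalks of $X^\an$; using a finite free presentation of $\sF$ near a point of $X$, one computes $\EExt^n$ as the cohomology of a $\HHom$ complex, and flatness of $h$ combined with coherence of $\sG$ gives the desired base change isomorphism stalkwise, exactly as in the scheme-theoretic statement \cite[Proposition 12.3.5]{EGAIII1} used in Theorem \ref{thm:dbgagagerms}.

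Concretely, I would organize the proof as a single argument applied in parallel to each of the three cases. First I would record the flatness of $h$ and cite the coherent GAGA theorem appropriate to the case, obtaining the equivalence $h^*\colon \Coh(X) \overset{\sim}{\to} \Coh(X^\an)$ and the comparison isomorphisms $H^i(X,\sF) \overset{\sim}{\to} H^i(X^\an,\sF^\an)$ for all coherent $\sF$ on $X$ and all $i$. Second, I would deduce the stalkwise isomorphism of $\EExt$ sheaves from flatness, following the argument in \cite[Proposition 12.3.5]{EGAIII1}. Having verified the hypotheses of Theorem \ref{thm:weakserreequivgivesderivedmain}, the conclusions for $* = b$ and $* = +$ follow, giving fully faithfulness and essential surjectivity on bounded and bounded-below derived categories along with the $\RR\Gamma$, hypercohomology, $\RR\Hom$, and $\RRHHom$ compatibilities.

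The main obstacle is the verification of hypothesis (c) in the adic case, where one must be careful about which local rings are Noetherian and for which points $x \in X^\an$ the flatness and finite presentation hypotheses used in the scheme-theoretic Ext base change can be invoked. For Jacobson adic spaces of the form considered here, $\cO_{X^\an,x}$ is Noetherian at every rank one point in $\JG(X^\an)$ by \cite[Proposition 15.1.1]{Con15} (or via \cite[Proposition 7.3.2/7]{BGR84} combined with \cite[Corollary 4.5]{Hub93} and \cite[Proof of Proposition 4.3]{Hub94}), which is enough to run the Ext base change argument pointwise and then conclude by sheaf-theoretic considerations. In the rigid and Berkovich cases, the relevant Noetherianity of stalks is already standard.
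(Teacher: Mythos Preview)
Your proposal is correct and follows essentially the same approach as the paper's proof: verify the hypotheses of Theorem~\ref{thm:weakserreequivgivesderivedmain} using the cited coherent GAGA theorems together with flatness of $h$ to get the $\EExt$ comparison via \cite[Proposition 12.3.5]{EGAIII1}. The only minor differences are that the paper cites \cite[Satz 3.9]{Kop74} directly for the rigid $\EExt$ isomorphism rather than invoking flatness, and that your concern about Noetherianity of adic stalks is unnecessary since the paper simply cites \cite[Lemma 6.1]{Hub07} for flatness of $h$ and applies \cite[Proposition 12.3.5]{EGAIII1} globally.
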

\begin{proof}
  We verify the hypotheses in Theorem
  \ref{thm:weakserreequivgivesderivedmain}$(\ref{thm:weakserreequivgivesderived})$
  and \ref{thm:weakserreequivgivesderivedmain}$(\ref{thm:weakserrecohgiveshypercoh})$
  for the relative analytification morphism $h\colon X^\an \to X$ from
  \cite[Definition 1.4]{Kop74} (see also \cite[Example 2.2.11]{Con06}),
  \cite[\S2.6]{Ber93}, and \citeleft\citen{Hub07}\citemid \S6\citepunct
  \citen{Zav}\citemid \S6\citeright\ 
  when $\mathscr{A}_{X^\an} =
  \Coh(X^\an)$ and $\mathscr{A}_X = \Coh(X)$.
  \par By \cite[Folgerung 6.6, Folgerung 6.7, and Theorem 6.8]{Kop74} (see also
  \cite[Example 3.2.6]{Con06}), \cite[Th\'eor\`eme A.1]{Poi10}, and
  \citeleft\citen{Hub07}\citemid Corollary 6.4\citepunct \citen{Zav}\citemid
  Lemma 6.9\citeright,
  respectively, we have an equivalence of
  categories
  \begin{equation}\label{eq:rigidcohequiv}
    h^*\colon \Coh(X) \overset{\sim}{\longrightarrow} \Coh(X^\an)
  \end{equation}
  that induces isomorphisms on cohomology modules (see also \cite[Example 9.4]{Hal}).
  We note that $h^*$ induces isomorphisms on $\EExt$ sheaves by
  \cite[Satz 3.9]{Kop74} in the rigid analytic case and by \cite[Proposition
  12.3.5]{EGAIII1} in the Berkovich and adic cases since $h$ is flat
  \citeleft\citen{Ber93}\citemid Proposition 2.6.2\citepunct
  \citen{Hub07}\citemid Lemma 6.1\citepunct
  \citen{Zav}\citemid Lemma 6.7\citeright.
  We therefore see that \eqref{eq:derivedequivnonarch} is an equivalence
  by Theorem
  \ref{thm:weakserreequivgivesderivedmain}$(\ref{thm:weakserreequivgivesderived})$.
  Finally, \eqref{eq:derivedequivnonarch} induces isomorphisms on $\RR\Gamma$,
  hypercohomology modules, $\RRHom$, and $\RRHHom$
  by Theorem
  \ref{thm:weakserreequivgivesderivedmain}$(\ref{thm:weakserrecohgiveshypercoh})$.
\end{proof}
\par To understand the behavior of dualizing complexes under analytification for
adic spaces, we prove the
following lemma.
This will also be used later on to prove that singularities of
pairs are compatible with GAGA (Lemma \ref{lem:kltgaga}).
The statement below is an adic version of \cite[Th\'eor\`eme
3.3]{Duc09}, where Ducros proves the Berkovich version of this result.
\begin{theorem}\label{lem:stalksonjgxaregrings}
  Let \(X = \Spa(A,A^+)\) be an affinoid analytic adic space such that \(A\) is
  topologically of finite type over a complete non-trivially valued
  non-Archimedean field \(k\).
  Let
  \[
    Y \coloneqq \Spec(B) \overset{f}{\longrightarrow} \Spec(A)
  \]
  be a finite type morphism of affine schemes.
  Consider the Cartesian diagram
  \begin{equation}\label{eq:adfunctorcart}
    \begin{tikzcd}[baseline=(fmap.base)]
      Y^\an \rar{h}\dar[swap]{f^\an} & Y\arrow[d,"f"{name=fmap}]\\
      X \rar & \Spec(A)
    \end{tikzcd}
  \end{equation}
  of locally ringed spaces.
  Let \(V \coloneqq \Spa(C,C^+) \subseteq Y^\an\) be an open affinoid subset.
  Consider a point
  \[
    y \in \JG\bigl(\Spa(C,C^+)\bigr)
  \]
  and set \(\fn = \supp(y)\).
  In the commutative diagram
  \begin{equation}\label{eq:ducros33foradic}
    \begin{tikzcd}[row sep=2.4em,column sep=3.2em]
      \cO_{V,y} & C_{\fn} \lar[swap]{\varphi_1}\\
      \cO_{Y^\an,y} \uar{\varphi_2}[swap,sloped]{\sim}
      & \cO_{Y,h(y)} \lar[swap]{\varphi_3} \uar[swap,yshift=-1pt]{\varphi_4}
      \arrow{ul}[description]{\varphi_5}
    \end{tikzcd}
  \end{equation}
  of Noetherian local rings, every map is faithfully flat with geometrically
  regular fibers.
\end{theorem}
The diagram \eqref{eq:adfunctorcart} is Cartesian by the definition of
the analytification functor in \cite[p.\ 999]{Hub07} (where \((-)^\an\) is
denoted by \((-)^\mathrm{ad}\) and \(h\) is denoted by \(p\)).
\begin{proof}
  Every ring in \eqref{eq:ducros33foradic} is local.
  Thus, to show faithful flatness of the maps in \eqref{eq:ducros33foradic}, it
  suffices to show flatness.
  We know that \(\varphi_1\) is flat by Proposition
  \ref{prop:lou33}\((\ref{prop:lou332})\), \(\varphi_2\) is an isomorphism by
  the description of stalks for the structure sheaf of an adic space in \cite[p.\
  520]{Hub94}, and \(\varphi_3\) is flat by \cite[Lemma 6.1]{Hub07}.
  Thus, \(\varphi_4\) and \(\varphi_5\) are also flat by the commutativity of
  \eqref{eq:ducros33foradic}.
  \par To show that \(\varphi_1\) has geometrically regular fibers, consider the
  commutative diagram
  \[
    \begin{tikzcd}
      \hat{\cO}_{V,y} & (C_\fn)^\wedge \lar[swap]{\sim}\\
      \cO_{V,y} \uar & C_\fn \lar[swap]{\varphi_1} \uar
    \end{tikzcd}
  \]
  where the top horizontal map is an isomorphism by Proposition
  \ref{prop:lou33}\((\ref{prop:lou332})\).
  The right vertical map has geometrically regular fibers by
  \citeleft\citen{Kie69}\citemid Theorem 3.3\citepunct \citen{Con00}\citemid
  \S1.1\citeright.
  Thus, \(\varphi_1\) has geometrically regular fibers by \cite[Proposition
  6.8.3\((ii)\)]{EGAIV2}.
  \par Next, we show that \(\varphi_3\) has geometrically regular fibers.
  We have the Cartesian diagram
  \[
    \begin{tikzcd}
      Y^\an \rar \dar[hook] & Y \dar[hook]\\
      \bA_A^{n,\an} \rar{h} & \bA_A^n
    \end{tikzcd}
  \]
  where the vertical maps are closed immersions.
  We then have the co-Cartesian diagram
  \[
    \begin{tikzcd}
      \cO_{Y^\an,y} & \lar \cO_{Y,y}\\
      \cO_{\bA_A^{n,\an},y} \uar[twoheadrightarrow]
      & \lar \cO_{\bA_A^{n},y}\mathrlap{.} \uar[twoheadrightarrow]
    \end{tikzcd}
  \]
  It therefore suffices to show that the bottom horizontal map has
  geometrically regular fibers.
  After choosing an appropriate polydisc containing \(y\), we have the
  commutative diagram
  \[
    \begin{tikzcd}
      A\langle X_1,X_2,\ldots,X_n \rangle_{\fm_y} \dar
      & \lar A[X_1,X_2,\ldots,X_n]_{\fm_{h(y)}}\dar\\
      \cO_{\bA_A^{n,\an},y} \dar & \cO_{\bA_A^{n},h(y)} \lar \dar\\
      \hat{\cO}_{\bA_A^{n,\an},y} & \hat{\cO}_{\bA_A^{n},h(y)} \lar[swap]{\sim}
    \end{tikzcd}
  \]
  where \(\fm_y \subseteq A\langle X_1,X_2,\ldots,X_n \rangle\) is a maximal
  ideal.
  The composition along the left column is the completion map by
  Proposition \ref{prop:lou33}\((\ref{prop:lou332})\), and the composition along
  the right column is also the completion map.
  The bottom horizontal map is an isomorphism since both rings are isomorphic to
  \[
    \bigl(A_{\fm_{h(y)} \cap A}\bigr)^\wedge\,
    \llbracket X_1,X_2,\ldots,X_n \rrbracket.
  \]
  Since \(A\) is excellent \citeleft\citen{Kie69}\citemid Theorem 3.3\citepunct
  \citen{Con00}\citemid \S1.1\citeright, the map
  \begin{align*}
    \cO_{\bA_A^{n},h(y)} &\longrightarrow \hat{\cO}_{\bA_A^{n},h(y)}
    \intertext{is flat with geometrically regular fibers.
    Since \(\cO_{\bA_A^{n,\an},y}\) is Noetherian \cite[Proposition
    3.3.16\((i)\)]{Hub93thesis}, the map}
    \cO_{\bA_A^{n,\an},y} &\longrightarrow \hat{\cO}_{\bA_A^{n,\an},y}
  \end{align*}
  is flat.
  We conclude that
  \(\cO_{\bA_A^{n},h(y)} \to \cO_{\bA_A^{n,\an},y}\) is geometrically regular by
  \cite[Proposition 6.8.3\((ii)\)]{EGAIV2}.
  \par To complete the proof, the fact that \(\varphi_2\) is an isomorphism and
  \(\varphi_3\) is flat with geometrically regular fibers implies \(\varphi_5\)
  is flat with geometrically regular fibers.
  Finally, since \(\varphi_5\) is flat with geometrically regular fibers and
  \(\varphi_1\) is flat, we conclude that \(\varphi_4\) has geometrically
  regular fibers by \cite[Proposition 6.8.3\((ii)\)]{EGAIV2}.
\end{proof}
We can now show that dualizing complexes are compatible with GAGA.
For the definition of irreducible components, dimension, and equidimensionality,
see \citeleft\citen{CM98}\citemid p.\
14\citepunct \citen{Con99}\citemid p.\ 496\citeright,
\citeleft\citen{Ber90}\citemid p.\ 34\citepunct \citen{Ber93}\citemid p.\
23\citepunct \citen{Duc09}\citemid p.\ 1455\citeright,
and \cite[Definition 1.8.1]{Hub96},
respectively.
For the definition of smoothness, see \cite[Definition 2.1]{BLR95},
\cite[Definition 3.5.1]{Ber93}, and \cite[Definition 1.6.5]{Hub96}, respectively.
\par In the Berkovich setting, since $X^\an$ and $Z$ are good, smoothness is
equivalent to ``quasi-smooth and boundaryless'' as defined in \cite[Definition
5.2.4]{Duc18} (see \cite[Corollary 5.4.8]{Duc18}).
Note that smoothness in the Berkovich setting is not known to be G-local on the
target without goodness assumptions,
whereas ``quasi-smooth and boundaryless'' is always G-local on the target
\cite[Remark 4.1]{CT21}.
\par For adic spaces,
as mentioned in the paragraph before Definition \ref{def:dualizingcomplexadic},
we will eventually restrict to the class of adic spaces locally of
weakly finite type (see \cite[Definition 1.2.1$(i)$]{Hub96}) over a complete
non-trivially valued non-Archimedean field $k$.
We will do the same here for simplicity.
\begin{theorem}\label{thm:dualizingcomplexcompatrigid}
  Let $Z$ be one of the following:
  \begin{enumerate}[label=$(\alph*)$,ref=\alph*]
    \item\label{thm:dualizingcomplexcompatrigidtate}
      An affinoid rigid $k$-analytic space, where $k$ is a complete
      non-trivially valued non\hyph{}Archimedean field.
    \item\label{thm:dualizingcomplexcompatrigidberk}
      An affinoid Berkovich $k$-analytic space, where $k$ is a complete
      non-Archimedean field.
    \item\label{thm:dualizingcomplexcompatrigidadic}
      An affinoid analytic adic space \(\Spa(A,A^+)\) such that \(A\) is
      topologically of finite type over a complete
      non-trivially valued non\hyph{}Archimedean field \(k\).
  \end{enumerate}
  Let $A$ be the ring of global functions on $Z$.
  Let $\pi\colon X \to \Spec(A)$ be a finite type morphism of schemes.
  \begin{enumerate}[label=$(\roman*)$,ref=\roman*]
    \item\label{thm:dualizingcomplexcompatrigidaffinoid}
      Let $\sK$ be an object in $\DD^+_\coh(X)$.
      Then, $\sK$ is a dualizing complex on $X$ if and only if $\sK^\an$ is a
      dualizing complex on $X^\an$.
    \item\label{thm:dualizingcomplexcompatrigidexcpullback}
      Suppose $\pi$ is separated, and
      let $\omega_Z^\bullet$ be a dualizing complex on $Z$.
      Then, $(\pi^!\omega_Z^{\bullet\al})^\an$ is a 
      dualizing complex on $X^\an$.
    \item\label{thm:dualizingcomplexcompatrigidexcpullbacksmooth}
      Suppose $\pi$ is separated.
      If $X^\an$ is smooth of pure dimension $d$ over $k$, then the sheaf
      $\omega_{X^\an/k}[d]$ of top differential forms shifted by $d$ is a 
      dualizing complex on $X^\an$ for which there
      exists a 
      dualizing complex $\omega_Z^\bullet$ on $Z$ such that
      \[
        \omega_{X^\an/k}[d] \cong
        (\pi^!\omega_Z^{\bullet\al})^\an.
      \]
  \end{enumerate}
\end{theorem}
\begin{proof}
  For $(\ref{thm:dualizingcomplexcompatrigidaffinoid})$, we note that by
  \citeleft\citen{Har66}\citemid Chapter V, Corollary 2.3\citepunct
  \citen{Con00}\citemid p.\ 120\citeright,
  $\sK$ is a dualizing complex on $X$ if and only if $\sK_x$ is a dualizing
  complex on $\cO_{X,x}$ for every $x \in X$.
  \par Next, we note that in each context, $X^\an \to X$ satisfies the following
  set-theoretic properties:
  \begin{enumerate}[label=$(\alph*)$,ref=\alph*]
    \item[{$(\ref{thm:dualizingcomplexcompatrigidtate})$}] $X^\an \to X$ is
      a bijection onto the set of points of $X$ with residue fields of
      finite degree over $k$ by \cite[Example 2.2.11]{Con06}.
    \item[{$(\ref{thm:dualizingcomplexcompatrigidberk})$}]
      $X^\an \to X$ is a surjection that induces a bijection
      \[
        (X^\an)_0 \coloneqq \Set[\big]{x \in X^\an \given [\sH(x):k] < \infty}
        \xrightarrow{\mathrm{bij.}}
        \Set[\big]{x \in X \given [k(x):k] < \infty} \eqqcolon X_0
      \]
      by \cite[Proposition 2.6.2 and Lemma 2.6.3]{Ber93}.
    \item[{$(\ref{thm:dualizingcomplexcompatrigidadic})$}]
      Every closed point of $X$ is contained in the image of $X^\an \to X$
      by \cite[p.\ 1000]{Hub07}.
  \end{enumerate}
  Combined with the previous paragraph,
  it therefore suffices to prove that for every point $\tilde{x} \in X^\an$ with
  image $x = h(\tilde{x}) \in X$, we have $\sK^\an_{\tilde{x}}$ is a dualizing
  complex on $\cO_{X^\an,\tilde{x}}$ if and only if $\sK_x$ is a dualizing
  complex on $\cO_{X,x}$.
  This equivalence holds by \cite[Theorem 5.1]{AF92}
  since $\cO_{X,x} \to \cO_{X^\an,\tilde{x}}$ is a
  regular ring map \cite[Th\'eor\`eme 3.3]{Duc09} (which also applies to
  rigid $k$-analytic spaces using \cite[Theorem 1.6.1]{Ber93}) in cases
  $(\ref{thm:dualizingcomplexcompatrigidtate})$ and
  $(\ref{thm:dualizingcomplexcompatrigidberk})$, and by Theorem
  \ref{lem:stalksonjgxaregrings} in case
  $(\ref{thm:dualizingcomplexcompatrigidadic})$.\smallskip
  \par Next, $(\ref{thm:dualizingcomplexcompatrigidexcpullback})$ follows from 
  $(\ref{thm:dualizingcomplexcompatrigidaffinoid})$ since 
  $\pi^!\omega_Z^{\bullet\al}$ is a dualizing complex on $X$ by Lemma
  \ref{lem:dualizingcomplexpullback}.\smallskip
  \par Finally, we show
  $(\ref{thm:dualizingcomplexcompatrigidexcpullbacksmooth})$.
  Since $Z$ is affinoid, there exists a surjection
  \[
    k\{r^{-1}T\} \longtwoheadrightarrow A,
  \]
  where in the rigid analytic case and the adic
  case, we can assume $r = (1,1,\ldots,1)$.
  Let $i\colon \Spec(A) \hookrightarrow \Spec(k\{r^{-1}T\})$ be the associated
  closed immersion
  with associated closed immersion $i^\an\colon Z
  \hookrightarrow D$ of rigid $k$-analytic spaces or $k$-analytic spaces.
  We can replace $\pi$ by $i \circ \pi$ to assume that $Z = D$:
  We have
  $\pi^!i^! \cong (i \circ \pi)^!$ by \cite[Chapter VII, Corollary
  3.4$(a)$]{Har66}. Thus, if $\omega_D^\bullet$ works for $i \circ \pi$, then
  $(i^!\omega_D^{\bullet\al})^\an$ works for $\pi$.
  \par We now prove $(\ref{thm:dualizingcomplexcompatrigidexcpullbacksmooth})$
  assuming $Z$ is a polydisc with ring of analytic functions $A = k\{r^{-1}T\}$.
  By \cite[p.\ 144]{Har66}, we have
  \begin{align*}
    \pi^!\omega_{k\{r^{-1}T\}/k}[n] &= \pi^*\omega_{k\{r^{-1}T\}/k}[n]
    \otimes_{\cO_X} \omega_{X/k\{r^{-1}T\}}[d-n].
    \intertext{Applying $(-)^\an$, we obtain}
    \bigl(\pi^!\omega_{k\{r^{-1}T\}/k}[n]\bigr)^\an
    &= \bigl((\pi^*\omega_{k\{r^{-1}T\}/k})^\an
    \otimes_{\cO_{X^\an}} \omega_{X/k\{r^{-1}T\}}^\an\bigr)[d]
  \end{align*}
  since sheaves of differentials are compatible with analytification
  \cite[Proposition 3.3.11]{Ber93}.
  The right-hand side is isomorphic to $\omega_{X^\an/k}[d]$ by
  taking determinants in \cite[Corollary
  3.5.10]{Ber93}.
  Thus, by $(\ref{thm:dualizingcomplexcompatrigidaffinoid})$ and
  $(\ref{thm:dualizingcomplexcompatrigidexcpullback})$, we can take
  $\omega_Z^\bullet = (\omega_{k\{r^{-1}T\}/k}[n])^\an$, where
  $\omega_{k\{r^{-1}T\}/k}[n]$ is a dualizing complex by
  \cite[Chapter V, Example 2.2 and Theorem 3.1]{Har66}.
\end{proof}
\begin{remark}\label{rem:grothendieckdualityrigid}
  Theorem \ref{thm:dualizingcomplexcompatrigid} implies that
  the formation of dualizing complexes and Grothendieck duality are compatible
  with GAGA and existing results for Grothendieck duality on rigid analytic
  spaces over an affinoid rigid analytic space $Z$.
  With assumptions as in Theorem
  \ref{thm:dualizingcomplexcompatrigid}$(\ref{thm:dualizingcomplexcompatrigidtate})$,
  Van der Put \cite[Main Theorem 5.1]{vdP92} (see also
  \citeleft\citen{Bey97}\citemid Theorem 5.1.1 and 5.1.2\citepunct
  \citen{AL}\citemid Theorem 5.5.1\citeright)
  showed that if $Y$ is a rigid analytic space that
  is smooth and proper over $Z$, then the sheaf $\omega_{Y/Z}$ satisfies the
  statement of Serre duality.
  If $Y$ is the analytification of a scheme $X$ that is proper over $k$
  (which is necessarily smooth by \cite[Proposition 3.5.8]{Ber93}),
  then Van der Put's results are compatible with GAGA by Theorem
  \ref{thm:dualizingcomplexcompatrigid}$(\ref{thm:dualizingcomplexcompatrigidexcpullbacksmooth})$.
  The compatibility of the trace morphism follows by the same argument using
  the uniqueness of adjoint functors as in the proof of Theorem
  \ref{thm:dualizingcomplexcompatcomplex}$(\ref{thm:dualizingcomplexcompatcomplexexcpullback})$.
\end{remark}
\subsection{Dualizing complexes and relative GAGA for adic spaces}
\label{csdiscussion}
In the adic case, we can prove the full analogue of Theorem
\ref{thm:dualizingcomplexcompatcomplex} using forthcoming work of Clausen and
Scholze.\medskip
\par In private communication, Peter Scholze informed us that in forthcoming work,
Clausen and Scholze prove Grothendieck duality for adic spaces
using condensed mathematics and the same proof as in their lecture
notes on complex geometry \cite{CS22} (see also \cite{CS19}).
For adic spaces locally of weakly finite type over a field, the exact statement
we will need by Clausen and Scholze (which is a special case of their results)
is the following:
\begin{theorem}[{Clausen and Scholze, forthcoming; cf.\
  \citeleft\citen{CS19}\citemid Lecture XI, Theorem 11.1, p.\ 74, and
  Remark 11.7\citepunct 
  \citen{CS22}\citemid Lecture XII, Theorem 12.18 and
  Lecture XIII, pp.\ 120--121\citepunct
  \citen{Man22}\citemid Proposition 2.9.31\citeright}]
  \label{thm:clausenscholze}
  Let $f\colon Y \to X$ be a proper morphism between adic spaces that are
  separated and locally of weakly finite type over a complete non-trivially valued
  non-Archimedean field $k$.
  Then, the functor $\RR f_*$ preserves $\DD^+_\coh$, and
  there exists a functor
  \begin{alignat*}{3}
    f^!\colon{}& \DD^+_\coh(X) &{}\longrightarrow{}& \DD^+_\coh(Y)
    \intertext{such that $(f \circ g)^! \cong g^! \circ f^!$, and such that $f^!$ is
    the right adjoint to the functor}
    \RR f_*\colon{}& \DD^+_\coh(Y) &{}\longrightarrow{}& \DD^+_\coh(X).
  \end{alignat*}
\end{theorem}
\par Using the Grothendieck duality theorem of Clausen and Scholze, we can show the
following:
\begin{theorem}[{relies on Theorem \ref{thm:clausenscholze}}]
  \label{thm:dualizingcomplexcompatadic}
  Let $Z$ be an affinoid analytic adic space such that $A \coloneqq \cO_Z(Z)$ is
  topologically of finite type over a complete non-trivially valued
  non-Archimedean field $k$.
  Let $\pi\colon X \to \Spec(A)$ be a finite type morphism of schemes.
  \begin{enumerate}[label=$(\roman*)$,ref=\roman*]
    \item\label{thm:dualizingcomplexcompatadicexcpullback}
      Let $f\colon Y \to X$ be a morphism of schemes proper over $\Spec(A)$.
      We then have the following commutative diagram of functors:
      \begin{equation}\label{eq:csleftadjoint}
        \begin{tikzcd}[column sep=large]
          \DD^+_\coh(X^\an) \rar{f^{\an!}}
          &\DD^+_\coh(Y^\an)\\
          \DD^+_\coh(X) \rar{f^!}\arrow[u, "h^*"', "\sim" sloped]
          &\DD^+_\coh(Y)\mathrlap{.}\arrow[u, "h^*"', "\sim" sloped]
        \end{tikzcd}
      \end{equation}
      In particular, $f^{\an!}$ sends dualizing complexes to dualizing
      complexes.
      Here, $f^{\an!}$ is the exceptional pullback functor which exists by
      Theorem \ref{thm:clausenscholze}.
    \item\label{thm:dualizingcomplexcompatadicexcpullbacktrace}
      Let $f\colon Y \to X$ be a morphism of schemes proper over \(\Spec(A)\).
      The analytification of the Grothendieck trace $\RR f_*
      f^! \to \id$ of functors $\DD^+_\coh(X) \to \DD^+_\coh(X)$
      is the relative trace which exists by Theorem \ref{thm:clausenscholze}.
  \end{enumerate}
\end{theorem}
\begin{proof}
  We first note that our GAGA result in Theorem \ref{thm:derivedgagarigid}
  applies by setting $D = A$ and $f = 1$ in the second bullet point of
  Theorem
  \ref{thm:derivedgagarigid}$(\ref{thm:derivedgagarigidadic})$.
  \par For $(\ref{thm:dualizingcomplexcompatadicexcpullback})$, 
  it suffices to
  note that both $f^{\an!}$ and $h^* \circ f^! \circ h^{*-1}$
  are right adjoints for $\RR f^\an_*$
  using the
  equivalence of categories $h^*$ from Theorem \ref{thm:dbgagagerms} and the
  uniqueness of right adjoints.
  Here, we are using Theorem \ref{thm:clausenscholze}
  on the adic side and are using
  \citeleft\citen{Har66}\citemid Appendix, Th\'eor\`eme 2\citepunct
  \citen{Ver}\citemid Theorem 1\citepunct \citen{Nee96}\citemid Proposition
  6.3\citepunct \citen{Lip09}\citemid Theorem 4.1.1\citeright\ on the scheme
  side.
  \par For $(\ref{thm:dualizingcomplexcompatadicexcpullbacktrace})$, it suffices
  to note that the trace is the counit morphism for the adjunction from
  $(\ref{thm:dualizingcomplexcompatadicexcpullback})$.
\end{proof}
\section{Setup for the relative MMP with scaling}\label{sect:setupforothercats}
We now give our setup for the relative MMP with scaling in categories other than
schemes and algebraic spaces.
We have made an effort to make definitions consistent with those in the
literature.
\subsection{Categories of spaces}
\par We will work in the following categories of spaces.
We have included $(\ref{setup:algebraicspaces})$ to simplify our discussion in
the rest of this section, although the necessary preliminaries are already
covered in Part \ref{part:prelim}.
\begin{setup}[cf.\ {\cite[\S6.2.1]{AT19}}]\label{setup:spaces}
  A \textsl{category of spaces} is one of the following categories.
  \begin{enumerate}[label=$(\textup{\Roman*})$,ref=\textup{\Roman*}]
  \item[$(0)$]
  \makeatletter
  \protected@edef\@currentlabel{0}
  \phantomsection
  \label{setup:algebraicspaces}
  \makeatother
    The category of quasi-excellent Noetherian algebraic spaces over a scheme
    $S$ admitting dualizing complexes.
    \item\label{setup:formalqschemes}
      The category of quasi-excellent Noetherian formal schemes admitting $c$-dualizing complexes.
    \makeatletter
    \item\label{setup:complexanalyticgerms}
      The category of semianalytic germs $X = (\mathcal{X},X)$ of complex
      analytic spaces.
    \item
      \label{setup:berkovichspaces}
      The category of $k$-analytic spaces, where $k$ is a complete non-Archimedean field.
    \makeatletter
    \item[{$(\ref*{setup:berkovichspaces}')$}]
    \protected@edef\@currentlabel{\ref*{setup:berkovichspaces}'}
    \phantomsection
    \label{setup:rigidanalyticspaces}
    \makeatother
      The category of rigid $k$-analytic spaces, where $k$ is a complete
      non-trivially valued non-Archimedean field.
    \item\label{setup:adicspaces}
      The category of locally Noetherian analytic adic spaces that have an open
      affinoid covering by affinoids \(Z\) satisfying one of the following
      conditions:
      \begin{itemize}
        \item $\cO_Z(Z)$ has a Noetherian ring of definition.
        \item $\cO_Z(Z)$ is strongly Noetherian.
      \end{itemize}
  \end{enumerate}
  We denote any such category by $\Sp$.
  A \textsl{space} is an object in $\Sp$.
  \par A \textsl{category of $\QQ$-spaces} is a space as above,
  except in $(\ref{setup:algebraicspaces})$, $(\ref{setup:formalqschemes})$,
  and $(\ref{setup:adicspaces})$,
  we assume that the spaces are over $\Spec(\QQ)$,
  and in
  $(\ref{setup:berkovichspaces})$ and
  $(\ref{setup:rigidanalyticspaces})$
  we assume that the field $k$ is of
  characteristic zero.
  We denote any such category by $\Sp_\QQ$.
  A \textsl{$\QQ$-space} is an object in $\Sp_\QQ$.
\end{setup}
In each category,
there are good notions of affinoid subdomains,
admissible affinoid coverings,
regularity,
and smooth
morphisms \cite[\S6.2]{AT19}.
See also footnote \ref{note:affinoidsubdomain} on p.\ 
\pageref{note:affinoidsubdomain} for the notion of ``affinoid subdomain.''
For $(\ref{setup:adicspaces})$ (which is not covered in \cite{AT19}),
see \cite[Definition on p.\ 521]{Hub94} for the definition of affinoids and see
\cite[Definition 1.6.5$(i)$]{Hub96} for the definition of smooth morphisms.
An adic space locally of weakly finite type over a complete non-trivially valued
non-Archimedean field $k$ is \textsl{regular} if $X$ can be covered by affinoid
adic spaces of the form $\Spa(A,A^+)$ such that $A$ is regular (cf.\
\cite[Definition 2.3]{Man23}).
\par There is a relative GAGA theorem for proper schemes over
$\Spec(\cO_U(U))$ when $U$ is affinoid, which induces
equivalences on categories of coherent sheaves and isomorphisms on cohomology
modules (see \citeleft\citen{AT19}\citemid \S6.3\citepunct
\citen{Hub07}\citemid \S6\citepunct
\citen{Zav}\citemid \S6\citeright).
For spaces $X$ and schemes $X_0$ that these GAGA theorems apply to,
we use the notions of \textsl{analytification} and \textsl{algebraization} as in
Convention \ref{convention:analytification}.

\subsubsection{Ampleness and locally projective morphisms}
We have good notions of relative ampleness for the categories
$(\ref{setup:formalqschemes})$, $(\ref{setup:complexanalyticgerms})$, and
$(\ref{setup:rigidanalyticspaces})$.
We have adopted definitions for Berkovich and adic spaces that allow us to apply the
relative GAGA theorem in this setting.
These ample invertible sheaves correspond to ample invertible sheaves under
the GAGA correspondence (see \cite[Remark 3.1.3]{Con06} for
$(\ref{setup:rigidanalyticspaces})$).
\begin{definition}[cf.\ {\cite[Chapitre VIII, Remarque 2.3]{Hak72}}]
  \label{def:locproj}
  Let $\pi\colon X \to Z$ be a proper morphism in $\Sp$ in the sense of
  \cite[(3.4.1)]{EGAIII1},
  \cite[p.\ 91]{BS76} (with the adjustment to germs as in \cite[\S B.5]{AT19};
  see Definition
  \ref{def:complexgermmorphisms}$(\ref{def:complexgermmorphismsproj})$),
  \cite[Definition 9.6.2/2]{BGR84}, \cite[Example 1.5.3$(iii)$]{Ber93}, and
  \cite[Definition 1.3.2]{Hub96},
  respectively.
  Let $\sL$ be an invertible sheaf on $X$.
  We say that $\sL$ is \textsl{$\pi$-ample} in each setting of Setup
  \ref{setup:spaces} if the following conditions hold:
  \begin{enumerate}
    \item[$(\ref{setup:formalqschemes})$]
      For every affine open $\Spf(A) \subseteq Z$, if $I$ is the ideal of
      definition of $A$, then $\sL$ restricts to a relatively ample invertible
      sheaf on $X \times_Z \Spec(A/I)$ (see \cite[Th\'eor\`eme 5.4.5]{EGAIII1}).
    \item[$(\ref{setup:complexanalyticgerms})$]
      There exists a proper representative $\cX \to \cZ$ of $\pi$ such that
      $\pi^{-1}(Z) = X$, together with an invertible
      sheaf on $\cX$ restricting to $\sL$ on $X$ that is $\pi$-ample in the
      sense of \cite[p.\ 141]{BS76}.
    \item[$(\ref{setup:berkovichspaces})$]
      For every affinoid subdomain \(V \subseteq Z\), there exists an integer
      \(r \ge 0\) and a factorization
      \[
        \begin{tikzcd}
          \pi^{-1}(V) \rar[hook]\arrow{dr}[swap]{\pi\rvert_{\pi^{-1}(V)}}
          & \Bigl(\PP^r_{\cO_V(V)}\Bigr)^\an \dar\\
          & V
        \end{tikzcd}
      \]
      where \(\pi^{-1}(V) \hookrightarrow \PP^r_V\) is a
      closed immersion and \(\sL \cong (\cO(1))^\an\rvert_{\pi^{-1}(V)}\).
    \item[$(\ref{setup:rigidanalyticspaces})$]
      The invertible sheaf $\sL$ is ample relative to $Z$ in the sense of
      \cite[Definition 3.2.2]{Con06}.
    \item[$(\ref{setup:adicspaces})$]
      For every open affinoid subset \(V \subseteq Z\), there exists an integer
      \(r \ge 0\) and a factorization
      \[
        \begin{tikzcd}
          \pi^{-1}(V) \rar[hook]\arrow{dr}[swap]{\pi\rvert_{\pi^{-1}(V)}}
          & \PP_V\Bigl(\cO_V^{\oplus (r+1)}\Bigr) \dar\\
          & V
        \end{tikzcd}
      \]
      where \(\pi^{-1}(V) \hookrightarrow \PP^r_V\) is a
      closed immersion and \(\sL \cong \cO(1)\rvert_{\pi^{-1}(V)}\).
      Here, \(\PP_V(-)\) and \(\cO(1)\) are defined as in \cite[Remark 7.10 and
      Definition 7.11]{Zav}.
  \end{enumerate}
  If for every affinoid subdomian \(U \subseteq X\), there exists a
  \(\pi_{\rvert\pi^{-1}(U)}\)-ample inverrtible sheaf on \(\pi^{-1}(U)\), we
  say that \(\pi\) is
  \textsl{locally projective}.
  \par A $\kk$-invertible sheaf for $\kk \in \{\QQ,\RR\}$
  is \textsl{$\pi$-ample} if it is a nonzero
  $\kk_{>0}$-linear combination of $\pi$-ample invertible sheaves.
\end{definition}
\begin{remark}[Comparison with \cite{Hak72}]
  Denote by \(\mathsf{Z}\) the topos associated to \(Z\), where we use the
  \'etale topology for algebraic spaces, the Zariski topology for formal
  schemes, the Euclidean topology on complex analytic spaces,
  the G-topology on Berkovich spaces \cite[\S1.3]{Ber93} and rigid
  analytic spaces \cite[Definition 9.3.1/4]{BGR84}, and the usual topology for
  adic spaces \cite[p.\ 521]{Hub94}.
  \par In \cite{Hak72}, Hakim develops the theory of relative quasi-schemes over
  topoi and proves GAGA-type theorems for them in the complex analytic setting.
  In the complex analytic case, Hakim proves that morphisms of relative quasi-schemes
  over \(\mathsf{Z}\) are locally projective if and only if their
  analytifications are locally projective morphisms of complex analytic spaces
  \cite[Chapitre VIII, Proposition 2.6\((iii)\)]{Hak72}.
  The analogue of this comparison result holds in the settings
  \((\ref{setup:formalqschemes})\), 
  \((\ref{setup:berkovichspaces})\), \((\ref{setup:rigidanalyticspaces})\), and
  \((\ref{setup:adicspaces})\) of Setup
  \ref{setup:spaces} as follows.
  \begin{enumerate}[label=\((\roman*)\)]
    \item (cf.\ \cite[Chapitre VIII, Proposition 2.1]{Hak72}) The functor
      \[
        \Hom'_{(\mathsf{Z},\cO_{\mathsf{Z}})}
        \bigl(\mathsf{t}(-),(\mathsf{X},\cO_{\mathsf{X}})\bigr)\colon
        \mathfrak{Sp}^\op/Z \longrightarrow \mathsf{Ens}
      \]
      is representable, where \(\mathsf{t}(-)\) is the ``associated ringed
      topos'' functor.
      The proof in \cite{Hak72} applies in by replacing \cite{Hou61} with
      \cite[Proposition 2.6.1]{Ber93} in setting \((\ref{setup:berkovichspaces})\),
      \cite[Theorem 2.2.5(1)]{Con00} in setting
      $(\ref{setup:rigidanalyticspaces})$, and
      \cite[Proposition 3.8]{Hub94} in settings \((\ref{setup:formalqschemes})\)
      and \((\ref{setup:adicspaces})\).
    \item By \cite[Chapitre VII, Proposition 4.4]{Hak72}, we can construct
      relative quasi-schemes \(\PP_{\mathsf{Z}}(E)\) over \(\mathsf{Z}\) for
      \(\cO_{\mathsf{Z}}\)-modules \(E\) of finite presentation.
    \item By \cite[Chapitre VIII, Proposition 2.6\((iii)\)]{Hak72} and its
      proof, we see that a morphism \(\mathsf{X} \to \mathsf{Z}\) of relative
      quasi-schemes over \(\mathsf{Z}\) is locally projective if and only if
      its relative analytification is locally projective using the
      GAGA theorems in \citeleft\citen{Ber93}\citemid Proposition
      2.6.9\citepunct \citen{Poi10}\citemid Th\'eor\`eme A.1\citeright\ for
      \((\ref{setup:berkovichspaces})\), \cite[Hilfssatz 2.10 and Theorem
      6.8]{Kop74} for \((\ref{setup:rigidanalyticspaces})\),
      and \citeleft\citen{Hub07}\citemid Corollary 6.4\citepunct
      \citen{Zav}\citemid Lemma 6.9\citeright\ for \((\ref{setup:formalqschemes})\)
      and \((\ref{setup:adicspaces})\).
  \end{enumerate}
  Thus, we see that the definitions of spaces locally projective over
  \(Z\) in \cite[Chapitre V, D\'efinition 2.2]{Hak72} and Definition
  \ref{def:locproj} are equivalent.
\end{remark}
\subsubsection{Nefness}
We can define nefness using GAGA.
\begin{definition}\label{def:nefothercats}
  Let $\pi\colon X \to Z$ be a locally projective morphism in $\Sp$.
  \begin{enumerate}[label=$(\roman*)$,ref=\roman*]
    \item A closed subspace $Y \subseteq X$ is \textsl{$\pi$-contracted} if
      $\pi(Y)$ is a zero-dimensional (closed) subspace of $Z$.
      A \textsl{$\pi$-contracted curve} is a $\pi$-contracted closed subspace
      that is integral and of dimension one.
    \item\label{def:nefothercatscontr}
      Suppose that every $\pi$-contracted curve $C \subseteq X$ is the analytification
      of a scheme $C^\al$ over $\{z\}^\al$.
      Let $D \in \Pic_\kk(X)$ for $\kk \in \{\ZZ,\QQ,\RR\}$.
      We say that $D$ is \textsl{$\pi$-nef} if, for every $\pi$-contracted curve
      $C \subseteq X$, we have $\deg_{C^\al}(D^\al) \ge 0$.
  \end{enumerate}
\end{definition}
\begin{remark}
  The condition in Definition
  \ref{def:nefothercats}$(\ref{def:nefothercatscontr})$ on
  $\pi$-contracted curves holds for
  $(\ref{setup:formalqschemes})$ and $(\ref{setup:adicspaces})$
  when $\pi$ is projective.
  In the categories $(\ref{setup:complexanalyticgerms})$,
  $(\ref{setup:berkovichspaces})$, and $(\ref{setup:rigidanalyticspaces})$,
  every $\pi$-contracted curve is the analytification of a scheme $C^\al$ over
  $\{z\}^\al$.
  See \citeleft\citen{dJ95}\citemid Proposition 3.2 and Remark 3.3\citepunct
  \citen{Duc}\citemid Th\'eor\`eme 3.7.2\citeright\ for
  $(\ref{setup:berkovichspaces})$ and 
  see \cite[Th\'eor\`eme 2]{FM86}
  for $(\ref{setup:rigidanalyticspaces})$.
  \par For $(\ref{setup:complexanalyticgerms})$, we follow the proof in
  \cite{nfdc23}.
  Let \(C\) be a reduced compact
  complex analytic space of dimension one with irreducible components \(C_i\).
  Consider \(\sL = \bigotimes_i \cO_C(x_i)\) where \(x_i \in C_i\) is a smooth
  point not lying on any other irreducible component \(C_j\) for every \(i\) and
  denote by \(\nu\colon \tilde{C} \to C\) the normalization map, which is finite
  by a theorem of Oka \cite[(8.2.3)]{GR84}.
  Then, \(\tilde{C}\) is a disjoint union of Riemann surfaces.
  For every coherent sheaf \(\sF\) on \(C\), the morphism
  \begin{align*}
    \sF \otimes_{\cO_C} \sL^{\otimes m} &\longrightarrow
    \nu_*\bigl(\nu^*(\sF \otimes_{\cO_C} \sL^{\otimes m})\bigr)
    \intertext{has kernel and cokernel with finite support.
    Thus, we see that}
    H^1\bigl(C,\sF \otimes_{\cO_C} \sL^{\otimes m}\bigr)
    &\overset{\sim}{\longrightarrow}
    H^1\bigl(\tilde{C},\nu^*(\sF \otimes_{\cO_C} \sL^{\otimes
    m})\bigr) = 0
  \end{align*}
  for all \(m \gg 0\) by Riemann--Roch (see \cite[pp.\ 213--215]{GH94}).
  Thus, \(\sL\) is positive \cite[Theorem 4.7]{Pet94V}, and \(C\) is therefore
  projective by Kodaira's embedding theorem \cite[Theorem 4.4]{Pet94V}.
\end{remark}
\begin{remark}
  We compare the definition in Definition \ref{def:nefothercats} with existing
  definitions for some of the settings of Setup \ref{setup:spaces}.
  \begin{enumerate}
    \item[{\((\ref{setup:algebraicspaces})\)}]
      Definition \ref{def:nefothercats} is equivalent to the usual definition
      for schemes and algebraic spaces (see Definition \ref{def:RelNeronSeveri})
      when one of the equivalent hypotheses in Lemma
      \ref{lem:NefAgainstNonClosedContracted} holds.
    \item[{\((\ref{setup:complexanalyticgerms})\)}]
      Definition \ref{def:nefothercats} is equivalent to \cite[Definition
      1.7]{Nak87}.
      This is because intersection numbers are computed using Euler
      characteristics and sheaf cohomology is preserved under the
      GAGA correspondence \cite[Theorem C.1.1]{AT19}.
      When \(\pi\) is proper but not projective, Definition
      \ref{def:nefothercats} would not be the correct definition because
      \(\pi^{-1}(z)\) may not contain any curves.
    \item[{\((\ref{setup:rigidanalyticspaces})\)}]
      Definition \ref{def:nefothercats} is equivalent to the definition of
      nefness defined using the notions of degree and intersection theory
      on rigid analytic spaces of dimension $\le 2$ from \citeleft\citen{Uen87}\citemid
      \S5$(a)$\citepunct \citen{Mit11}\citemid \S\S A.4--A.5\citeright.
      This is because intersection numbers are computed using Euler
      characteristics and sheaf cohomology is preserved under the
      GAGA correspondence \cite[1.\ GAGA-Satz 4.7]{Kop74}.
  \end{enumerate}
\end{remark}
\subsubsection{Bigness and pseudoeffectivity}
For $\kk \in \{\QQ,\RR\}$ and projective morphisms in $\Sp$,
we define relatively big and relatively pseudoeffective $\kk$-invertible sheaves
as follows:
\begin{definition}
  Let $\pi\colon X \to Z$ be a locally projective morphism in $\Sp$.
  Let $D \in \Pic_\kk(X)$ for $\kk\in\{\ZZ,\QQ,\RR\}$.
  We say that $D$ is \textsl{$\pi$-big} (resp.\ \textsl{$\pi$-pseudoeffective})
  if, for every affinoid subdomain $U \subseteq Z$, the restriction
  $D_{\rvert\pi^{-1}(U)}$ is the relative analytification of a $\pi$-big (resp.\
  \textsl{$\pi$-pseudoeffective}) $\kk$-invertible sheaf on $(\pi^{-1}(U))^\al$
  in the sense of Definition \ref{def:fbig}.
\end{definition}
In case $(\ref{setup:complexanalyticgerms})$, this definition is equivalent to the
definition in \cite[Definition 2.46]{Fuj} by Corollary \ref{lem:kodairachar} as
long as the source space is normal.
\subsubsection{Divisors and \texorpdfstring{$\QQ$}{Q}-factoriality}
\label{subsect:divqfac}
Let $X$ be an irreducible normal space, where for $(\ref{setup:adicspaces})$, we
moreover assume that \(X\) is locally of weakly finite type over a complete
non-trivially valued non-Archimedean field.
For the definition of irreducibility and normality, see
\cite[Lemma 1.2.1]{Con99} for $(\ref{setup:formalqschemes})$,
\cite[p.\ 8 and \S9.1.2]{GR84} for $(\ref{setup:complexanalyticgerms})$,
\citeleft\citen{Ber93}\citemid \S2.2\citepunct \citen{Duc09}\citemid p.\
1455\citeright\ for $(\ref{setup:berkovichspaces})$,
\citeleft\citen{BGR84}\citemid p.\ 300\citepunct \citen{CM98}\citemid Definition on
p.\ 12\citepunct \citen{Con99}\citemid Definition 2.2.2\citeright\ for
$(\ref{setup:rigidanalyticspaces})$, and
\cite[Definitions 2.3 and 2.11]{Man23} for $(\ref{setup:adicspaces})$.
\par Weil divisors are defined as formal sums of integral closed subspaces of
codimension $1$ that are locally finite, i.e., they become finite sums after
restriction to every affinoid subdomain.
See
\cite[Definition 4.10]{Cai} for $(\ref{setup:berkovichspaces})$ and
\cite[p.\ 8]{Bos83} for $(\ref{setup:rigidanalyticspaces})$.
\par
For Cartier divisors, we adopt the following definitions.
For $(\ref{setup:complexanalyticgerms})$, Cartier
divisors are defined as a special type of Weil divisor, following \cite[p.\ 555]{Nak87}.
For $(\ref{setup:formalqschemes})$,
$(\ref{setup:berkovichspaces})$, $(\ref{setup:rigidanalyticspaces})$, and
$(\ref{setup:adicspaces})$,
we use the definition of Cartier divisors on G-ringed spaces from
\cite[Definition 2.2]{Gub98} (see also \cite[Definition 3.6]{Cai}).
Note that for $(\ref{setup:adicspaces})$, the necessary results for the
sheaf $\sM_X$ of meromorphic functions on adic spaces proved in
\cite[\S2.14]{MR23} hold for all adic spaces that
are both taut over $\Spa(k,k^\circ)$ \cite[Definition 0.4.7$(ii)$]{Hub96} and
strongly Noetherian \cite[p.\ 524]{Hub94} (note that all (partially) proper
morphisms are taut \cite[Definition 0.4.2 and p.\ 18]{Hub96}, and hence
tautness holds for the morphisms we will consider in the sequel).
In each of these cases,
we have a cycle map
\[
  \cyc\colon \Div(X) \longrightarrow \WDiv(X).
\]
This follows from definition of Weil and Cartier divisors for
$(\ref{setup:complexanalyticgerms})$.
For $(\ref{setup:rigidanalyticspaces})$, see \cite[pp.\ 8--10]{Bos83}.
For $(\ref{setup:formalqschemes})$, $(\ref{setup:berkovichspaces})$,
$(\ref{setup:rigidanalyticspaces})$, and $(\ref{setup:adicspaces})$
see \cite[2.5]{Gub98}, which gives another version of the construction for
$(\ref{setup:rigidanalyticspaces})$.
In the other categories, the construction in \cite{Gub98}
also works since inclusions of
affinoid subdomains induce flat maps on rings of sections by
\cite[Lemma 2.4.6]{Tem12},
\cite[Proposition 2.2.4$(ii)$]{Ber90}, and
\cite[Proposition 3.3.8$(i)$]{Hub93thesis}, respectively.
\par Finally, linear equivalence is defined using the exact sequence
\eqref{eq:divtopic} (which holds for all locally ringed spaces) in cases
$(\ref{setup:formalqschemes})$, $(\ref{setup:complexanalyticgerms})$,
$(\ref{setup:berkovichspaces})$,
and $(\ref{setup:adicspaces})$.
In case $(\ref{setup:rigidanalyticspaces})$, we use \cite[Proposition
3.1]{Bos83} to pass from Cartier divisors to invertible sheaves.
\begin{remark}
  For formal schemes $(\ref{setup:formalqschemes})$,
  an example of Smith \cite[pp.\ 59--60]{Smi17} shows that
  the cycle map may not be
  injective, even if $X$ is a formal scheme that is rig-smooth over a field in
  the sense of \cite[Definition 3.1]{BLR95}.
  This will affect our definition of $\QQ$-factoriality below.
  See also \cite[Corollary on p.\ 17]{Spe73}, which describes the kernel of the
  map $\Div(X) \to \Div(\mathfrak{X})$ when
  $\mathfrak{X}$ is the formal completion of a regular scheme over a field along a
  connected closed subscheme.
\end{remark}
\par We now define $\kk$-Weil and $\kk$-Cartier divisors and the corresponding
notion of $\QQ$-factoriality.
We note that we work relatively over a base $Z$ in order to be compatible with
GAGA.
See also \cite[Definition 4.13]{Nak87} for the complex analytic case.
\begin{definition}\label{def:qfactorialoverz}
  Let $\pi\colon X \to Z$ be a locally projective
  morphism in $\Sp$, where $X$ is irreducible and
  normal.
  For $(\ref{setup:adicspaces})$, we
  moreover assume that \(X\) is locally of weakly finite type over a complete
  non-trivially valued non-Archimedean field.
  Let $\kk \in \{\QQ,\RR\}$, and
  define $\kk$-Weil divisors and $\kk$-Cartier divisors as in Definition
  \ref{def:kcartdiv}.
  We say that $X$ is \textsl{$\kk$-factorial over $Z$} if for every affinoid
  subdomain $U \subseteq Z$,
  the map 
  \[
    \cyc_\kk\colon \Div_\kk\bigl(\pi^{-1}(U)^\al\bigr) \longrightarrow
    \WDiv_\kk\bigl(\pi^{-1}(U)^\al\bigr)
  \]
  is surjective.
  A $\kk$-Weil divisor on $X$ \textsl{is $\kk$-Cartier} if it lies in the image of
  $\cyc_\kk$ over each $U$.
\end{definition}
We note that regular rigid analytic spaces over a field $k$ are $\QQ$-factorial
over $\operatorname{Sp}(k)$ (in fact, $\cyc$
is an isomorphism) by \cite[Theorem A.9]{Mit11}.
\subsubsection{Canonical divisors and singularities of pairs}
Let \(X\) be as in \S\ref{subsect:divqfac}.
We can define canonical sheaves and divisors in the same way as in Definition
\ref{def:canonicalsheaf} using the notion of dualizing
complexes from \S\ref{sect:qedualizingothers}.
We define singularities of $\QQ$-pairs as in Definition \ref{def:singpairs}, where we
note that the requisite trace morphisms $f_*\omega_Y\to \omega_X$
between canonical sheaves exist by analytifying the corresponding Grothendieck
trace morphisms on schemes.
Since we are working with $\QQ$-pairs, however, instead of working with
$\QQ$-linear equivalences as in Definition
\ref{def:canonicalsheaf}, we can work with isomorphisms of coherent sheaves as in
\cite[(2.4.1)]{Kol13}, which is easier to work with under the GAGA
correspondence.
\par In case $(\ref{setup:formalqschemes})$, the trace morphism from \cite{ATJLL99}
is the analytification of the trace morphism in the scheme case by Remark
\ref{rem:formalgaga}$(\ref{rem:formalgagasharp})$.
In cases $(\ref{setup:complexanalyticgerms})$ and $(\ref{setup:adicspaces})$,
the trace morphisms from \cite{RRV71} and \cite{CS22} are the
analytifications of the trace morphism in the scheme case by Theorems
\ref{thm:dualizingcomplexcompatcomplex}$(\ref{thm:dualizingcomplexcompatcomplexexcpullback})$
and
\ref{thm:dualizingcomplexcompatadic}$(\ref{thm:dualizingcomplexcompatadicexcpullbacktrace})$
(which relies on Theorem \ref{thm:clausenscholze}),
respectively.
In cases $(\ref{setup:berkovichspaces})$,  
$(\ref{setup:rigidanalyticspaces})$, and $(\ref{setup:adicspaces})$,
one can define discrepancies
using isomorphisms of the form in \cite[(2.4.1)]{Kol13}.
\par Moreover, in cases $(\ref{setup:complexanalyticgerms})$,
$(\ref{setup:berkovichspaces})$, 
$(\ref{setup:rigidanalyticspaces})$, and $(\ref{setup:adicspaces})$,
the canonical divisors $K_X$ and canonical
sheaves $\omega_X$ have concrete descriptions as sheaves of top differential
forms after restricting to the smooth locus of $X$ by Theorems
\ref{thm:dualizingcomplexcompatcomplex}$(\ref{thm:dualizingcomplexcompatcomplexexcpullback})$
and
\ref{thm:dualizingcomplexcompatrigid}$(\ref{thm:dualizingcomplexcompatrigidexcpullbacksmooth})$.\smallskip
\par To reduce to the scheme setting, we prove the following:
\begin{lemma}\label{lem:kltgaga}
  Let $\pi\colon X \to Z$ be a locally projective morphism in $\Sp$.
  For $(\ref{setup:adicspaces})$, we
  assume that \(X\) is locally of weakly finite type over a complete
  non-trivially valued non-Archimedean field.
  Moreover, if we are not in $\Sp_\QQ$, we additionally assume that $\dim(X) \le 3$.
  Let $\Delta$ be
  an $\RR$-Weil divisor on $X$ such that $K_X+\Delta$ is klt.
  Then, for every affinoid subdomain $U \subseteq Z$, we have that
  $(\pi^{-1}(U)^\al,\Delta_{\vert \pi^{-1}(U)^\al}^\al)$ is klt.
\end{lemma}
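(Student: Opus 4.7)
The strategy is to analytify an algebraic log resolution and then use that klt-ness is detected on a single log resolution. The plan proceeds in four steps. First I would work affinoid-locally on $Z$ and replace $Z$ by an affinoid subdomain $U$ with $A = \cO_U(U)$; the relative GAGA theorems from \S\ref{sect:dualityandgaga} (together with classical formal GAGA for $(\ref{setup:formalqschemes})$) identify $\pi^{-1}(U)$ with the analytification of a projective scheme $\pi^{-1}(U)^\al \to \Spec(A)$, and the ring $A$ is excellent in each category by the references cited in the introduction (Matsumura, Frisch, Kiehl, Conrad, and Ducros). Since $\pi^{-1}(U)^\al$ is an excellent scheme of finite type over $\Spec(A)$, either of equal characteristic zero (in $\Sp_\QQ$) or of dimension at most three, a projective log resolution $f\colon Y \to \pi^{-1}(U)^\al$ of the pair $(\pi^{-1}(U)^\al, \Delta_{\vert\pi^{-1}(U)^\al}^\al)$ exists by \cite{Tem18} in the first case and by three-dimensional resolution results (Cossart--Piltant, etc.) in the second.

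Second, I would analytify $f$ to obtain $f^\an\colon Y^\an \to \pi^{-1}(U)$ and argue that $f^\an$ is a log resolution of $(\pi^{-1}(U), \Delta_{\vert \pi^{-1}(U)})$. The key input is that in each of the six categories the analytification morphism on stalks $\cO_{\pi^{-1}(U)^\al, x} \to \cO_{\pi^{-1}(U), \tilde x}$ is flat with geometrically regular fibers: for $(\ref{setup:complexanalyticgerms})$ by \cite[Lemma B.6.1$(iv)$]{AT19}, for $(\ref{setup:berkovichspaces})$ and $(\ref{setup:rigidanalyticspaces})$ by \cite[Théorème 3.3]{Duc09} combined with \cite[Theorem 1.6.1]{Ber93}, for $(\ref{setup:adicspaces})$ by Lemma \ref{lem:stalksonjgxaregrings}, and for $(\ref{setup:formalqschemes})$ by the quasi-excellence of $A$ together with flatness of formal completion. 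Such base change preserves regularity of local rings and simple normal crossings of divisors, so $Y^\an$ is regular and the support of $f^{\an-1}_{\ast}\Delta + \Exc(f^\an)$ has simple normal crossings.

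Third, I would match discrepancies. By the GAGA theorems for dualizing complexes and Grothendieck duality established in Theorems \ref{thm:dualizingcomplexcompatcomplex}, \ref{thm:dualizingcomplexcompatrigid}, \ref{thm:dualizingcomplexcompatadic}, and Remark \ref{rem:formalgaga}, canonical sheaves and canonical divisors are compatible with analytification, and $\RR$-linear equivalence of $\RR$-Weil divisors is preserved in both directions under the cycle map. Consequently, the discrepancy identity
\[
  K_Y + f_\ast^{-1}\Delta^\al \sim_\RR f^\ast(K_{\pi^{-1}(U)^\al} + \Delta^\al) + \sum_E a\bigl(E, \pi^{-1}(U)^\al, \Delta^\al\bigr)\, E
\]
on $Y$ is the algebraization of the analogous identity on $Y^\an$, yielding $a(E,\pi^{-1}(U)^\al,\Delta^\al) = a(E^\an, \pi^{-1}(U), \Delta_{\vert\pi^{-1}(U)})$ for every prime divisor $E$ on $Y$. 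Since $(\pi^{-1}(U), \Delta_{\vert\pi^{-1}(U)})$ is klt (kltness being preserved by restriction to an open), the right-hand discrepancies are $> -1$ by \cite[Corollary 2.11]{Kol13}, so the same holds on the left, and a second application of \cite[Corollary 2.11]{Kol13} gives kltness of $(\pi^{-1}(U)^\al, \Delta^\al)$.

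The main obstacle is the uniform verification of the flat-with-geometrically-regular-fibers property and the compatibility of canonical divisors across all six categories; this relies crucially on the GAGA results of \S\ref{sect:dualityandgaga}, which for adic spaces use the Clausen--Scholze Grothendieck duality machinery. A secondary subtlety is in case $(\ref{setup:formalqschemes})$, where the cycle map from Cartier to Weil divisors need not be injective, but this only affects the interpretation of $\Delta^\al$ as a $\kk$-Cartier class on $\pi^{-1}(U)^\al$ and does not obstruct the discrepancy computation above.
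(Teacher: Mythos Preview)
Your proposal is correct and follows essentially the same approach as the paper: reduce to an affinoid base, take an algebraic log resolution (using Temkin in characteristic zero and Cossart--Piltant et al.\ in dimension $\le 3$), analytify it, verify it remains a log resolution via the regularity of the analytification map on stalks, and transfer the discrepancy identity. The only minor difference is that the paper explicitly reduces from $\RR$- to $\QQ$-coefficients using \cite[Proposition 2.21]{Kol13} before comparing discrepancies (working with the sheaf-theoretic $\QQ$-linear equivalence of \cite[(2.4.2)]{Kol13}), whereas you work directly with $\RR$-linear equivalence; and the paper invokes \cite[Proposition 6.3.6]{AT19} as a packaged statement for the non-adic cases rather than arguing stalk-by-stalk as you do.
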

\begin{proof}
  Replacing $Z$ by an affinoid subdomain $U$, we may assume that $U = Z$.
  Note that $\cO_U(U)$ is excellent by
  \citeleft\citen{Fri67}\citemid Th\'eor\`eme I, 9\citepunct
  \citen{Mat73}\citemid Theorem 2.7\citepunct
  \citen{AT19}\citemid Lemma B.6.1$(i)$\citeright\ in the complex-analytic case
  and \citeleft\citen{Kie69}\citemid Theorem 3.3\citepunct \citen{Con00}\citemid
  \S1.1\citepunct \citen{Duc09}\citemid Th\'eor\`eme 2.13\citeright\ 
  in the non-Archimedean case.
  Fix a proper log resolution $f\colon Y \to X^\al$ of $(X^\al,\Delta^\al)$, which exists
  by \cite[Theorem 2.3.6 and Lemma 4.2.4]{Tem08} in equal characteristic zero, and by
  \citeleft\citen{Lip78}\citemid Theorem on p.\ 151\citepunct
  \citen{CP19}\citemid Theorem 1.1\citepunct
  \citen{CJS20}\citemid Corollary 1.5\citepunct
  \citen{BMPSTWW}\citemid Proposition 2.14\citeright\ in arbitrary
  characteristic if $\dim(X) \le 3$.
  Then, $X^\an$ is normal and
  $f^\an\colon Y^\an \to X$ is a log resolution of $(X,\Delta)$ by
  \cite[Proposition 6.3.6]{AT19} except in case $(\ref{setup:adicspaces})$,
  where we apply Lemma \ref{lem:stalksonjgxaregrings} together with
  \cite[Theorem 23.7$(ii)$]{Mat89} instead.
  The claim about klt singularities holds since (after reducing to the case of
  $\QQ$-coefficients using \cite[Proposition 2.21]{Kol13}) the expression
  \[
    K_{Y ^\an} + (f^\an)_*^{-1}\Delta \sim_\QQ f^{\an*}(K_X+\Delta) +
    \sum_{\text{$f$-exceptional $E$}} a(E,X,\Delta)E
  \]
  (or more canonically, the sheaf-theoretic version of this $\QQ$-linear
  equivalence in \cite[(2.4.2)]{Kol13})
  also holds after algebraization.
\end{proof}
\begin{remark}
  Lemma \ref{lem:kltgaga} holds for other singularities of pairs, since we
  showed that the discrepancies are well-behaved under algebraization.
\end{remark}

\section{The relative MMP with scaling\texorpdfstring{\except{toc}{\\}}{}
(Proofs of Theorems
    \texorpdfstring{\ref*{thm:introrelativemmp}}{A},
    \texorpdfstring{A\textsuperscript{\textnormal{\emph{p}}}}{A\textasciicircum p},
    and
\texorpdfstring{\ref*{thm:introfinitegen}}{B})}\label{sect:mmpforothercats}
We now prove Theorems \ref{thm:introrelativemmp},
\ref{thm:introrelativemmpcharp}, and \ref{thm:introfinitegen}.
As in \cite{Kol21qfac,VP,MZ,EH}, our convention for the relative MMP with scaling
is to contract extremal faces instead of extremal rays.
\subsection{Ample models}\label{sect:amplemodels}
To make the outputs of the relative MMP with scaling unique, we need a suitable
abstract characterization for the outputs of the relative MMP with scaling.
We do so using ample models, following \cite[\S3]{EH} (see also \cite[\S4.1]{MZ}
for earlier related results).
Compare the characterization in \cite[Lemma 2.1]{VP}.
\begin{definition}[Ample models {\citeleft\citen{BCHM10}\citemid Definition
  3.6.5\citepunct \citen{EH}\citemid Definition 3.10 and p.\ 17\citeright}]
  Let $\Sp$ be the category of quasi-excellent Noetherian algebraic spaces over
  a scheme \(S\).
  Let $\pi\colon X \to Z$ be a projective morphism of spaces in $\Sp$ such that
  \(X\) is normal.
  Let $D$ be an $\RR$-invertible sheaf on $X$.
  Let \(\varphi\colon X \dashrightarrow X'\) be a rational map to another normal
  space in \(\Sp\) and consider a resolution of indeterminacy
  \[
    X \overset{p}{\longleftarrow} \tilde{X} \overset{q}{\longrightarrow} X'
  \]
  for \(\varphi\), that
  is, a normal space \(\tilde{X}\) fitting into the commutative diagram
  \begin{equation}\label{eq:resindet}
    \begin{tikzcd}[column sep=scriptsize]
      & \tilde{X}\arrow{dl}[swap]{p}\arrow{dr}{q}\\
      X \arrow[dashed]{rr}{\varphi}\arrow{dr}[swap]{\pi} & & X'\arrow{dl}\\
      & Z
    \end{tikzcd}
  \end{equation}
  where \(p,q\) are proper and \(q^\#\colon \cO_{X'} \to q_*\cO_{\tilde{X}}\) is
  an isomorphism.
  \begin{enumerate}[label=\((\roman*)\)]
    \item Suppose that \(\varphi\) is birational.
      If \(\varphi_*D\) is \(\RR\)-Cartier, then we say that \(\varphi\)
      \textsl{has the Cartier pushforward \(\varphi_*D\) of \(D\)}.
      Note that if \(D\) is big (resp.\ pseudo-effective, numerically trivial)
      over \(Z\), then so is \(\varphi_*D\).
    \item We say that \(\varphi\) is an \textsl{ample model of \(D\)} if there
      exist an \(\RR\)-invertible sheaf \(D'\) on \(X'\) that is ample over
      \(Z\) and a commutative diagram of the form \eqref{eq:resindet} for which
      there exists an \(\RR\)-linear equivalence over \(Z\)
      \begin{equation}\label{eq:amplemodelrlineq}
        p^*D \sim_{\RR,Z} q^*D' + E
      \end{equation}
      for some effective \(\RR\)-Cartier divisor \(E\) on \(\tilde{X}\) such
      that \(B \ge E\) for all \(B \in \lvert p^*D \mathbin{/} Z \rvert_\RR\).
      Here, \(\RR\)-linear equivalence over \(Z\) 
      and relative linear systems are defined as
      in \cite[Definitions 3.1.1(3) and 3.5.1]{BCHM10}.
  \end{enumerate}
\end{definition}
\begin{remark}\label{rem:amplemodelbasechange}
  By flat base change \cite[\href{https://stacks.math.columbia.edu/tag/073K}{Tag
  073K}]{stacks-project}, ample models are compatible with base change along
  flat morphisms with geometrically normal fibers on the base.
  The condition on fibers is used to ensure the base changes of \(X\) and \(X'\)
  are still normal \cite[Corollaire 6.5.4\((ii)\) and Proposition 6.8.2]{EGAIV2}.
\end{remark}
We show that ample models are essentially unique and therefore can
be constructed locally on the base.
\begin{lemma}[cf.\ {\citeleft\citen{BCHM10}\citemid Lemma 3.6.6\citepunct
  \citen{MZ}\citemid Lemma 4.4\citepunct
  \citen{EH}\citemid Lemma 3.11 and p.\ 17\citeright}]\label{lem:eh311}
  Let $\Sp$ be the category of quasi-excellent locally Noetherian algebraic
  spaces over a scheme \(S\).
  Let $\pi\colon X \to Z$ be a projective morphism of spaces in $\Sp$ such that
  \(X\) is normal.
  Let $D$ be an $\RR$-invertible sheaf on $X$.
  Then, ample models of \(D\) (if they exist) are unique up to compatible isomorphisms.
  More precisely, for every pair of ample models \(\varphi_i\colon X
  \dashrightarrow X_i\) of \(D\) for \(i \in \{1,2\}\),
  we can assign an isomorphism
  \(\sigma_{21}\colon X_1 \overset{\sim}{\to} X_2\) over \(Z\) fitting into the
  commutative diagram
  \[
    \begin{tikzcd}[column sep=small]
      & X \arrow[dashed]{dl}[swap]{\varphi_1}\arrow[dashed]{dr}{\varphi_2}\\
      X_1 \arrow{rr}{\sigma_{21}}[swap]{\sim} & & X_2
    \end{tikzcd}
  \]
  over \(Z\) such that \(\sigma_{21}\) is the identity when \(\varphi_1 =
  \varphi_2\) and such that for every third ample model \(\varphi_3\colon X
  \dashrightarrow X_3\), the diagram
  \[
    \begin{tikzcd}[sep=large]
      & X \arrow[dashed]{dl}[swap]{\varphi_1}
      \arrow[dashed]{d}[description]{\varphi_2}
      \arrow[dashed]{dr}{\varphi_3}\\
      X_1 \arrow{r}{\sigma_{21}}[swap]{\sim}
      \arrow[bend right=35]{rr}{\sigma_{31}}[swap]{\sim} & X_2
      \arrow{r}{\sigma_{32}}[swap]{\sim} & X_3
    \end{tikzcd}
  \]
  over \(Z\) commutes.
\end{lemma}
\begin{proof}
  Let \(\varphi_i\colon X \dashrightarrow X_i\) be a pair of ample models of
  \(D\) for \(i \in \{1,2\}\).
  Consider a common resolution of indeterminacies
  \[
    X \overset{p}{\longleftarrow} \tilde{X} \overset{q_i}{\longrightarrow}
    X_i
  \]
  for \(\varphi_1\) and \(\varphi_2\).
  Write
  \[
    p^*D \sim_{\RR,Z} q_i^*D'_i + E_i
  \]
  for each \(i\).
  We then have \(E_1 = E_2\) by the same proof as in \cite[Lemma
  3.6.6(1)]{BCHM10}.
  \par To construct \(\sigma_{21}\), we proceed as follows.
  Since \(\tilde{X}\) is a resolution of indeterminacies,
  the normalization \(\bar{X}_{12}\) of its image
  in \(X_1 \times_Z X_2\) only depends on \(\varphi_1,\varphi_2\) and
  does not depend on the choice of \(\tilde{X}\).
  Moreover, the morphism \((q_1,q_2)\colon \tilde{X} \to X_1 \times_Z X_2\)
  induced by the universal property of fiber products
  factors uniquely through \(\bar{X}_{12}\) since \(\tilde{X}\) is normal
  \cite[\href{https://stacks.math.columbia.edu/tag/0823}{Tag
  0823}]{stacks-project}.
  We therefore obtain the commutative diagram
  \[
    \begin{tikzcd}[column sep=2.5em,row sep=2.2em]
      & & & & X_1 \arrow{dr}\\
      X
      & \lar[swap]{p} \tilde{X} \rar{q_{12}}
      \arrow[bend left=15,start anchor={[yshift=2pt]}]{urrr}{q_1}
      \arrow[bend right=15,start anchor={[yshift=-2pt]}]{drrr}[swap]{q_2}
      & \bar{X}_{12} \rar
      \arrow[bend left=10]{urr}[swap]{r_1}
      \arrow[bend right=10]{drr}{r_2}
      & X_1 \times_Z X_2
      \arrow[start anchor=north east]{ur}
      \arrow[start anchor=south east]{dr}
      & & Z\\
      & & & & X_2 \arrow{ur}
    \end{tikzcd}
  \]
  where the right square is Cartesian and the compositions \(X \dashrightarrow
  X_i\) are the \(\varphi_i\).
  Let \(A \coloneqq r_1^*D_1' + r_2^*D_2'\).
  Then, we have
  \begin{equation}\label{eq:qstarais2qidi}
    q_{12}^*A \sim_{\RR,Z} q_1^*D_1' + q_2^*D_2' \sim_{\RR,Z} 2q_i^*D'_i.
  \end{equation}
  We will show that \(r_1\) and \(r_2\) are isomorphisms.
  By construction, the morphisms \(\cO_{X_i} \to r_{i*}\cO_{\bar{X}_{12}}\) are
  isomorphisms for \(i \in \{1,2\}\).
  By way of contradiction, suppose that \(r_1\) is not an isomorphism.
  By Zariski's Main Theorem
  \cite[\href{https://stacks.math.columbia.edu/tag/082K}{Tag
  082K}]{stacks-project}, there exists an integral one-dimensional proper
  subspace \(C \subseteq \bar{X}_{12}\) such that \(r_1(C)\) is a closed point.
  We claim that \(r_2(C)\) is a curve.
  If not, then setting \(x_1 = r_1(C)\), \(x_2 = r_2(C)\), and \(z\) the common
  image of \(x_1\) and \(x_2\) in \(Z\), then
  \[
    C \subseteq \Spec\bigl(\kappa(x_1)\otimes_{\kappa(z)}\kappa(x_2)\bigr),
  \]
  a contradiction.
  On the other hand, consider an integral one-dimensional proper subspace
  \(\tilde{C} \subseteq \tilde{X}\) whose image under \(q\) is \(C\).
  Then, we have \(2q_i^*D_i' \cdot \tilde{C} = 0\), which implies
  \(r_2(C)\) is a point.
  This contradicts the fact that \(r_2(C)\) is a curve.
  By the same argument, \(r_2\) is an isomorphism.
  We can therefore define
  \[
    \sigma_{21} \coloneqq r_2 \circ r_1^{-1}.
  \]
  As noted above, \(\bar{X}_{12}\) only depends on \(\varphi_1,\varphi_2\).
  By construction of the commutative diagram above, we have \(\sigma_{21} \circ
  \varphi_1 = \varphi_2\).
  By construction, we also see that \(\sigma_{21}\) is the identity if
  \(\varphi_1 = \varphi_2\).
  \par Finally, consider a third ample model \(\varphi_3\colon X \dashrightarrow
  X_3\).
  Consider a common resolution of indeterminacies
  \[
    X \overset{p}{\longleftarrow} \tilde{X} \overset{q_i}{\longrightarrow}
    X_i
  \]
  for \(\varphi_1\), \(\varphi_2\), and \(\varphi_3\).
  We then have the commutative diagram
  \[
    \begin{tikzcd}[column sep=2.5em,row sep=2.25em]
      & & & \bar{X}_{12} \arrow{dr}[sloped,below]{\sim}
      & X_1 \arrow{dr} \dar{\sigma_{21}}[below,sloped]{\sim}\\
      X & \lar[swap]{p} \tilde{X} \rar{q_{13}} \arrow[bend left=15,start
      anchor={[yshift=2pt]}]{urr}{q_{12}}
      \arrow[bend right=15,start
      anchor={[yshift=-2pt]}]{drr}[swap]{q_{23}}
      & \bar{X}_{13} \rar
      & \bar{X}_{12} \times_Z \bar{X}_{23} \rar \uar \dar
      & X_2 \arrow{r} \dar{\sigma_{32}}[below,sloped]{\sim} &[0.75em] Z\mathrlap{.}\\
      & & & \bar{X}_{23}\arrow{ur}[sloped]{\sim}
      & X_3\arrow{ur}
      \arrow[from=2-3,to=1-5,crossing over,bend left=3,
      "\sim"{sloped,below,pos=0.6},start anchor={[yshift=2pt]},end anchor={[yshift=-4pt]}]
      \arrow[from=2-3,to=3-5,crossing over,bend right=3,
      "\sim"{sloped,pos=0.6},start anchor={[yshift=-2pt]},end anchor={[yshift=4pt]}]
      \arrow[from=1-4,to=1-5,"\sim"]
      \arrow[from=3-4,to=3-5,"\sim"']
    \end{tikzcd}
  \]
  In this commutative diagram, the morphism \(\bar{X}_{13} \to \bar{X}_{12}
  \times_Z \bar{X}_{23}\) is obtained by constructing the morphism
  \(\bar{X}_{13} \to X_1 \times_Z X_3\) as before and then composing with the
  inverse of the isomorphism
  \[
    \bar{X}_{12} \times_Z \bar{X}_{23} \overset{\sim}{\longrightarrow} X_1
    \times_Z X_3.
  \]
  The argument in the previous paragraph shows the compositions
  \(\bar{X}_{13} \to X_i\) are the isomorphisms \(r_i\) for \(i \in \{1,3\}\).
  By the commutativity of the diagram, we see that \(\sigma_{31} =
  \sigma_{32} \circ \sigma_{21}\).
\end{proof}
\subsection{Outputs of the relative MMP with scaling}
We define the outputs of the relative MMP with scaling following \cite[\S3]{EH}.
Compare the definitions in \citeleft\citen{Kol21qfac}\citemid Definition
1\citepunct \citen{VP}\citemid \S2\citeright.
\begin{citeddef}[{\cite[Definition 3.13]{EH}}]\label{def:rthoutputmmp}
  Let $\Sp$ be the category of quasi-excellent
  algebraic spaces over a scheme \(S\).
  Let $\pi\colon X \to Z$ be a locally projective morphism of spaces
  in $\Sp$ such that \(X\) is normal.
  Let $D$ and $H$ be $\RR$-invertible sheaves on $X$ such that $H$ is big over
  $Z$.
  Assume that the \(\pi\)-pseudo-effective threshold of \(H_{\rvert U}\)
  relative to \(D_{\rvert U}\), denoted by \(\mu\), is constant for any \'etale
  morphism \(U \to Z\) from a space \(U\) such that \(X \times_Z U\) is
  nonempty.
  For \(t > \mu\), a birational map
  \[
    \varphi_r\colon X \dashrightarrow X_r
  \]
  is called the \textsl{\(r\)-th output of the \(\pi\)-relative
  \(D\)-MMP with scaling of \(H\)} if it is an ample model of
  \[
    D+(r-\varepsilon)H
  \]
  for sufficiently small \(\varepsilon > 0\)
  \'etale-locally on \(Z\), that is, there exists an \'etale covering \(\{Z_i
  \to Z\}_i\) and positive numbers \(\{a_i\}_i\) such that the base change of
  \(\varphi_t\) to \(Z_i\) is an ample model of
  \[
    D_{\rvert Y_i} + (r-\varepsilon)H_{\rvert Y_i}
  \]
  for all \(0 < \varepsilon < a_i\).
  We say that the \textsl{\(\pi\)-relative
  \(D\)-MMP with scaling of \(H\) exists} if there exists a \(r\)-th output of a
  \(D\)-MMP with scaling of \(H\) over \(Y\) for every \(r > \mu\).
\end{citeddef}
\begin{remark}\label{rem:mmpbasechange}
  By Remark \ref{rem:amplemodelbasechange}, the \(r\)-th output of the
  \(\pi\)-relative \(D\)-MMP with scaling of \(H\) is compatible with base
  change along flat morphisms with geometrically normal fibers on the base.
\end{remark}
By applying Lemma \ref{lem:eh311} on an \'etale cover of the base \(Z\), we
obtain the following uniqueness result for steps of the relative MMP with
scaling:
\begin{lemma}[{cf.\ \citeleft\citen{MZ}\citemid Lemma 4.4\citepunct
  \citen{EH}\citemid Lemma 3.14\citeright}]\label{lem:eh314}
  Let $\Sp$ be the category of quasi-excellent locally Noetherian algebraic
  spaces over a scheme \(S\).
  Let $\pi\colon X \to Z$ be a locally projective morphism of 
  spaces in $\Sp$ such that \(Z\) is normal.
  Let $D$ and $H$ be $\RR$-invertible sheaves on $X$ such that $H$ is big over
  $Z$.
  Assume that the \(\pi\)-pseudo-effective threshold of \(H_{\rvert U}\)
  relative to \(D_{\rvert U}\), denoted by \(\mu\), is constant for any \'etale
  morphism \(U \to Z\) from a space \(U\) such that \(X \times_Z U\) is
  nonempty.
  For every \(r > \mu\), the \(r\)-th outputs of the \(\pi\)-relative
  \(D\)-MMP with scaling of \(H\) (if they exist) are unique up to compatible
  isomorphisms.
  More precisely, for every pair of \(r\)-th outputs of the \(\pi\)-relative
  \(D\)-MMP with scaling of \(H\) denoted \(\varphi_i\colon X
  \dashrightarrow X_i\) of \(D\) for \(i \in \{1,2\}\),
  we can assign an isomorphism
  \(\sigma_{21}\colon X_1 \overset{\sim}{\to} X_2\) over \(Z\) fitting into the
  commutative diagram
  \[
    \begin{tikzcd}[column sep=small]
      & X \arrow[dashed]{dl}[swap]{\varphi_1}\arrow[dashed]{dr}{\varphi_2}\\
      X_1 \arrow{rr}{\sigma_{21}}[swap]{\sim} & & X_2
    \end{tikzcd}
  \]
  over \(Z\) such that \(\sigma_{21}\) is the identity when \(\varphi_1 =
  \varphi_2\) and such that for every third \(r\)-th output of the
  \(\pi\)-relative \(D\)-MMP with scaling of \(H\) denoted \(\varphi_3\colon X
  \dashrightarrow X_3\), the diagram
  \[
    \begin{tikzcd}[sep=large]
      & X \arrow[dashed]{dl}[swap]{\varphi_1}
      \arrow[dashed]{d}[description]{\varphi_2}
      \arrow[dashed]{dr}{\varphi_3}\\
      X_1 \arrow{r}{\sigma_{21}}[swap]{\sim}
      \arrow[bend right=35]{rr}{\sigma_{31}}[swap]{\sim} & X_2
      \arrow{r}{\sigma_{32}}[swap]{\sim} & X_3
    \end{tikzcd}
  \]
  over \(Z\) commutes.
\end{lemma}
\subsection{Gluing}\label{sect:gluing}
We can now glue steps of the relative $D$-MMP together.
See \cite[Corollary 2.3]{VP} for the corresponding gluing statement for steps
of the MMP for algebraic spaces.
\begin{theorem}\label{thm:gluemmp}
  Let $\Sp$ be as in $(\ref{setup:formalqschemes})$,
  $(\ref{setup:complexanalyticgerms})$,
  $(\ref{setup:berkovichspaces})$, $(\ref{setup:rigidanalyticspaces})$, or
  $(\ref{setup:adicspaces})$ of
  Setup \ref{setup:spaces}.
  For $(\ref{setup:adicspaces})$, we moreover
  assume that \(X\) is locally of weakly finite type over a complete
  non-trivially valued non-Archimedean field.
  Suppose the hypotheses in Definition \ref{def:rthoutputmmp} are satisfied.
  Let $Z = \bigcup_a V_a$ be an affinoid covering, and define
  $X_a = X \times_Z V_a$, $\pi_a = \pi_{\vert X_a}$, $D_a = D_{\vert X_a}$, and $H_a
  = H_{\vert X_a}$.
  Suppose that for each $a$ we know the existence of the $r$-th output of the
  $\pi_a$-relative $D_a$-MMP with scaling of $H_a$.
  Then, the $r$-th output of the $\pi$-relative $D$-MMP with scaling of $H$
  exists.
\end{theorem}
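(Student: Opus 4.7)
The plan is to glue the local $r$-th outputs $f_a^r\colon X_a \dashrightarrow X_a^r$ into a global output via GAGA and uniqueness. First, for each affinoid $V_a$ I would use the relative GAGA machinery developed in \S\ref{sect:dualityandgaga} to algebraize the projective data $(X_a,\pi_a,D_a,H_a)$ and each step of $f_a^r$ (every step is either a divisorial contraction or a flip, both projective morphisms, to which GAGA applies). This produces, over $A_a \coloneqq \cO_{V_a}(V_a)$, a scheme-theoretic $r$-th output $f_a^{r,\al}\colon X_a^\al \dashrightarrow X_a^{r,\al}$ whose singularity and positivity hypotheses are preserved by Lemma \ref{lem:kltgaga} and the GAGA comparison of dualizing complexes.

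Second, I would compare the outputs on overlaps. For any pair of indices $a,b$ in the affinoid cover, pick an affinoid refinement $V_a \cap V_b = \bigcup_c W_{c,ab}$; write $A_W \coloneqq \cO_W(W)$ for $W = W_{c,ab}$. The inclusions $W \hookrightarrow V_a$ and $W \hookrightarrow V_b$ yield ring maps $A_a \to A_W$ and $A_b \to A_W$ and thus morphisms of schemes $\Spec(A_W) \to \Spec(A_a), \Spec(A_b)$. Assuming these are universally open and quasi-finite over closed points, Theorem \ref{thm:vp22alt} applies to each, showing that the normalized base change $(X_a^{r,\al} \times_{\Spec(A_a)} \Spec(A_W))^\nu$ is the $r$-th output of the MMP for $(X_a^\al)_W$ over $\Spec(A_W)$, and similarly on the $b$-side. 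The uniqueness characterization in Lemma \ref{lem:vp21} then forces a canonical isomorphism between these two scheme-theoretic outputs over $\Spec(A_W)$.

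Third, I would reanalytify these isomorphisms via GAGA to obtain canonical analytic isomorphisms between the restrictions of $X_a^r$ and $X_b^r$ over each $W_{c,ab}$. The same uniqueness ensures that these isomorphisms satisfy the cocycle condition over triple overlaps, so the local $X_a^r$ glue to a normal space $X^r$ projective over $Z$ carrying pushforwards $D^r = f^r_*D$ and $H^r = f^r_*H$. The factorization of $f^r$ as a composite of divisorial contractions and flips, as well as the ampleness of $D^r + (r-\varepsilon)H^r$ for small $\varepsilon>0$, can be checked affinoid-locally and therefore follows from the corresponding properties of each $f_a^r$.

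The main obstacle is the verification, uniformly across categories $(\ref{setup:formalqschemes})$--$(\ref{setup:adicspaces})$, that the transition maps $\Spec(A_W) \to \Spec(A_a)$ induced by affinoid subdomain inclusions are universally open and quasi-finite over closed points, since these maps are typically not of finite type, so the standard criteria for universal openness do not apply directly. In case $(\ref{setup:formalqschemes})$ this reduces to the openness of Zariski localizations on rings of definition. In the remaining cases I would deduce both properties from the flatness of affinoid restriction (e.g.\ \cite[Lemma B.6.1]{AT19}, \cite[Proposition 2.2.4$(ii)$]{Ber90}, \cite[Proposition 3.3.8$(i)$]{Hub93thesis}), the bijection on closed/rigid points provided by the Jacobson property of the relevant ring (see \cite{Lou} and Lemma \ref{lem:stalksonjgxaregrings} in the adic case), and the fact that at the level of stalks these maps are regular, hence flat with geometrically regular fibers; this last ingredient together with Jacobson-ness supplies both the quasi-finiteness over closed points and, via fibrewise openness at generic points, the universal openness needed to invoke Theorem \ref{thm:vp22alt}.
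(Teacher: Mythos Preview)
Your overall strategy matches the paper's: algebraize over each affinoid via GAGA, invoke Theorem~\ref{thm:vp22alt} on overlaps, and glue using the uniqueness characterization of Lemma~\ref{lem:vp21}. Two points in your verification of the hypotheses of Theorem~\ref{thm:vp22alt} need correction.

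First, regularity of the stalk maps does not yield quasi-finiteness over closed points: a regular homomorphism can have fibers of positive dimension, so this ingredient is a red herring. The paper's argument instead uses only the bijections (induced by analytification) between closed points of $\Spec(\cO_W(W))$, $\Spec(\cO_V(V))$ and points of $W$, $V$ respectively. Under these bijections the induced map on closed points of the spectra is modeled by the set-theoretic inclusion $W \hookrightarrow V$, so fibers over closed points have at most one closed point; this is the counting argument that gives quasi-finiteness. Universal openness is deduced directly from flatness of the affinoid restriction maps.

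Second, in case $(\ref{setup:berkovichspaces})$ when $k$ is trivially valued, the bijection between rigid points and maximal ideals is not directly available, so your argument does not cover this case. The paper handles it by first base changing along a faithfully flat extension $k \subseteq K_r$ chosen so that $W \hat{\otimes}_k K_r$ and $V \hat{\otimes}_k K_r$ become strictly $K_r$-affinoid, after which the preceding argument applies; one checks that the normality hypothesis in $(\ref{setup:dagger})$ is preserved because $k \subseteq K_r$ is analytically separable.
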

\begin{proof}
  It suffices to show that for every affinoid subdomain $W \subseteq V_a \cap
  V_b$, the restrictions of the $r$-th output of the $\pi_a$-relative $D$-MMP
  with scaling coincides with that of the $\pi_b$-relative $D$-MMP with scaling
  up to compatible isomorphisms.
  \par Let $A_a = \cO_{V_a}(V_a)$, $A_b = \cO_{V_b}(V_b)$, and $B = \cO_W(W)$.
  It suffices to show that the corresponding steps of the relative $D$-MMP with
  scaling over the schemes $\Spec(A_a)$ and $\Spec(A_b)$ under the GAGA
  correspondences in \cite[\S6.3]{AT19} and \cite[\S6]{Hub07}
  coincide with that on $\Spec(B)$, since
  all objects involved are projective over $Z$.
  By Remark \ref{rem:mmpbasechange} and Lemma \ref{lem:eh314},
  it suffices to show that the maps $\Spec(B) \to
  \Spec(A_a)$ and $\Spec(B) \to \Spec(A_b)$ are flat with geometrically normal 
  fibers.
  These maps are
  flat as shown in
  \cite[Lemma 6.2.8]{AT19} and \cite[Proposition 3.3.8$(i)$]{Hub93thesis}.
  It therefore suffices to show that if $W \subseteq V$ is an inclusion of
  affinoid subdomains in $Z$, then the map $\Spec(\cO_W(W)) \to
  \Spec(\cO_{V}(V))$ is has geometrically normal fibers.
  In fact, these morphisms have geometrically regular fibers by \cite[Lemma
  6.2.8]{AT19} and Lemma \ref{lem:stalksonjgxaregrings}.
\end{proof}
\subsection{Proof of Theorems \texorpdfstring{\ref{thm:introrelativemmp}}{A} and
\texorpdfstring{\ref{thm:introrelativemmpcharp}}{A\textasciicircum p}}
We can now prove Theorems \ref{thm:introrelativemmp} and
\ref{thm:introrelativemmpcharp}.
\begin{proof}[Proof of Theorems \ref{thm:introrelativemmp} and
  \ref{thm:introrelativemmpcharp}]
  We first replace $\pi\colon X \to Z$ by its Stein factorization to assume that
  $Z$ is normal.
  Note that Stein factorizations exist for algebraic spaces by
  \cite[\href{https://stacks.math.columbia.edu/tag/0A1B}{Tag
  0A1B}]{stacks-project}, for semianalytic germs of complex analytic spaces
  by applying \cite[10.6.1]{GR84} to a representative for $\pi$, for
  Berkovich spaces by \cite[Proposition 3.3.7]{Ber90}, for rigid analytic
  spaces by \cite[Proposition 9.6.3/5]{BGR84}, and for adic spaces locally
  of weakly finite type over a field by \cite[Theorem 3.9]{Man23}.
  For Theorem \ref{thm:introrelativemmp} (resp.\
  \ref{thm:introrelativemmpcharp}),
  case $(\ref{setup:introalgebraicspaces})$ (resp.\ case
  $(\ref{setup:introalgebraicspaces})$ for schemes quasi-projective over an
  excellent domain admitting a dualizing complex) was shown in Theorems
  \ref{rem:MMP} and \ref{thm:cl1365} (resp.\ in \citeleft\citen{Tan18}\citemid
  Theorem 4.5\citepunct \citen{BMPSTWW}\citemid Theorem G\citeright).
  It therefore suffices to show Theorem \ref{thm:introrelativemmp} (resp.\
  \ref{thm:introrelativemmpcharp}) in the other
  cases.
  Let $A$ be a $\QQ$-invertible sheaf as in the statement of Theorem
  \ref{thm:introrelativemmp} (resp.\
  \ref{thm:introrelativemmpcharp}), and let $Z = \bigcup_a V_a$ be an
  affinoid covering.
  Note that each $\cO_{V_a}(V_a)$ is excellent either by assumption or by
  \citeleft\citen{Fri67}\citemid Th\'eor\`eme I, 9\citepunct
  \citen{Mat73}\citemid Theorem 2.7\citepunct
  \citen{AT19}\citemid Lemma B.6.1$(i)$\citeright\ in the complex-analytic case
  and \citeleft\citen{Kie69}\citemid Theorem 3.3\citepunct \citen{Con00}\citemid
  \S1.1\citepunct \citen{Duc09}\citemid Th\'eor\`eme 2.13\citeright\ 
  in the non-Archimedean case.
  \par For Theorem \ref{thm:introrelativemmp}, we can use 
  Lemma \ref{lem:AmpleIsGoodScaling} to show that after possibly shrinking the
  $V_a$, we have divisors $A_{a} \in \lvert A_{\vert
  \pi^{-1}(V_a)} \rvert$ such that $(X_a,\Delta_{\vert X_a}+A_a)$ is klt.
  The $A_a$ are therefore good scaling divisors in the sense of Definition
  \ref{def:GoodScalingDivisors}.
  We want to apply Theorem \ref{thm:gluemmp} for $D = K_X+\Delta$, $H = A$,
  and the affinoid covering $Z = \bigcup_a V_a$.
  By GAGA and Theorems \ref{rem:MMP} and \ref{thm:cl1365}, each step of the
  $\pi_a$-relative $(K_X+\Delta)_a$-MMP with scaling of $A_a$ exists (note that
  because of the difference in conventions, the outputs of the relative MMP in
  Definition \ref{def:rthoutputmmp} are compositions of many steps in Theorem
  \ref{rem:MMP}).
  In order to apply these theorems, we note that
  the positivity conditions on $A$ and $K_X+\Delta+A$ are
  preserved under algebraization, as well as the klt condition on
  $(X_a,\Delta_{\vert X_a}+A_a)$ (see Lemma \ref{lem:kltgaga}).
  By Theorem \ref{thm:gluemmp}, we can glue these relative MMP steps to obtain
  global MMP steps over $Z$.
  By construction, we see that this relative MMP terminates over each $V_a$ in
  the way described (see Corollaries \ref{cor:cl1366}
  and \ref{cor:cl1367}).
  \par For Theorem \ref{thm:introrelativemmpcharp}, we apply GAGA together with
  \citeleft\citen{Tan18}\citemid Theorem 4.5\citepunct \citen{BMPSTWW}\citemid
  Remark 2.41\citeright) when $\dim(X)
  = 2$, and when $\dim(X) = 3$, we apply GAGA together with
  \citeleft\citen{BMPSTWW}\citemid Theorem G\citeright\ in case
  $(\ref{thm:introrelativemmpcharptanbmpstww})$,
  \citeleft\citen{Kaw94}\citepunct \citen{Kaw99}\citemid \S3\citepunct
  \citen{TY}\citemid Theorem 5.10\citeright\ in case
  $(\ref{thm:introrelativemmpcharpty})$, and \cite[Theorem 9]{Kol21qfac} in case
  $(\ref{thm:introrelativemmpcharpkol})$
  over each
  $V_a$ to say that each step of the $\pi_a$-relative
  $(K_X+\Delta)_a$-MMP with scaling of $A_a$ exists (with the difference in
  conventions as in the previous paragraph) and terminates.
  Note that in case $(\ref{thm:introrelativemmpcharptanbmpstww})$, the
  assumptions on the residue characteristics of local rings of $Z$ imply that
  the residue characteristics $\Spec(\cO_Z(Z))$ do not lie in $\{2,3,5\}$ using
  the bijections used in the second paragraph of the proof of Theorem
  \ref{thm:gluemmp}.
  The rest of the argument now proceeds as in the previous paragraph.
\end{proof}
\subsection{Proof of Theorem \texorpdfstring{\ref{thm:introfinitegen}}{B}}
Finally, we prove Theorem \ref{thm:introfinitegen}.
\begin{proof}[Proof of Theorem \ref{thm:introfinitegen}]
  As before, we have already shown case $(\ref{setup:introalgebraicspaces})$ in
  Theorem \ref{thm:finitegenerationalgspaces}.
  It therefore suffices to show Theorem \ref{thm:introfinitegen} in the other
  cases.
  \par The positivity conditions on the $A_i$ and $c_iK_X+\Delta_i$ are
  preserved under algebraization over every affinoid subdomain $U \subseteq Z$,
  as well as the klt condition on $(X,\Delta_i)$ (see Lemma \ref{lem:kltgaga}).
  Since GAGA preserves cohomology groups \citeleft\citen{EGAIII1}\citemid
  Proposition 5.1.2\citepunct \citen{AT19}\citemid Theorem C.1.1\citepunct
  \citen{Poi10}\citemid Th\'eor\`eme A.1$(i)$\citepunct \citen{Kop74}\citemid
  Folgerung 6.6\citepunct \citen{Hub07}\citemid Corollary 6.4\citeright\ 
  (see also \citeleft\citen{Con06}\citemid Example
  3.2.6\citepunct \citen{Hal}\citemid Example 9.4\citeright), we can apply
  Theorem \ref{thm:finitegenerationalgspaces} over $\Spec(\cO_U(U))$ to deduce
  Theorem \ref{thm:introfinitegen}.
  Note that each $\cO_{U}(U)$ is excellent as shown at the end of
  the first paragraph in the
  proof of Theorems \ref{thm:introrelativemmp} and
  \ref{thm:introrelativemmpcharp}.
\end{proof}

\begingroup
\makeatletter
\renewcommand{\@secnumfont}{\bfseries}
\part{Additional results in other categories}\label{part:additionalresults}
\makeatother
\endgroup
In this part, we apply our gluing method
to prove a
version of the relative minimal model program with scaling that does not require
shrinking the base space $Z$.
We use as input the existing results in \cite{Fuj,DHP} instead of our own
results on the relative minimal model program that we showed earlier on in the
paper.
We then reformulate
the Basepoint-free theorem (Theorem \ref{thm:bpf}) and
the Contraction theorem (Theorem \ref{thm:contraction})
so that they apply to contexts where dualizing
complexes may not exist.\bigskip
\section{The relative MMP with scaling\texorpdfstring{\except{toc}{\\}}{} for
complex analytic spaces without shrinking}
In this section, we use our result on gluing
to show
that the relative minimal model program with scaling
established by Fujino \cite[Theorem 1.7]{Fuj} and Das--Hacon--P\u{a}un
\cite[Theorem 1.4]{DHP} can be performed without shrinking the base space at
each step, as long as the scaling divisor $C$ has stronger positivity properties
to enable gluing.
We note, however, that if $Z$ does not admit a finite cover by affinoid
subdomains, then there may not be a sequence of flips and divisorial
contractions that is globally finite that yields the pair $(X_m,\Delta_m)$.
\par The following result uses the results on the relative minimal model program
proved in \cite{Fuj} instead of our results for schemes proved in Parts
\ref{part:prelim}--\ref{part:relativemmpforschemes}.
\begin{theorem}\label{thm:complexmmpwithoutshrinking}
  Let $\pi\colon X \to Z$ be a projective surjective morphism of semianalytic
  germs of complex analytic spaces, where $X$ and $Z$ are integral and $X$ is
  normal.
  Suppose $X$ is $\QQ$-factorial over every affinoid subdomain in $Z$,
  and let $\Delta$ be an $\pi$-big
  $\RR$-divisor such that $(X,\Delta)$ is klt.
  Let $C$ be an effective $\RR$-divisor on $X$ such that $(X,\Delta+C)$ is
  klt and $K_X+\Delta+rC$ is $\pi$-ample for some $r \in \RR_{>0}$.
  Then, the relative minimal model program with scaling of $C$ over $Z$ exists.
  Moreover, we have the following properties.
  \begin{enumerate}[label=$(\arabic*)$]
    \item The relative minimal model program with scaling of $C$ over $Z$ terminates
      after a finite sequence of flips and divisorial contractions over every
      affinoid subdomain $U \subseteq Z$ starting from
      $(\pi^{-1}(U),\Delta_{\vert \pi^{-1}(U)})$.
    \item The relative minimal model program with scaling of $C$ over $Z$ yields
      a commutative diagram
      \[
        \begin{tikzcd}[column sep=tiny]
          (X,\Delta) \arrow[dashed]{rr}\arrow{dr}[swap]{\pi} & & (X_m,\Delta_m)
          \arrow{dl}{\pi_m}\\
          & Z
        \end{tikzcd}
      \]
      where $X \dashrightarrow X_m$ is a meromorphic map in the sense of
      Remmert.
      Over every affinoid subdomain $U \subseteq Z$, the morphism
      $\pi_m^{-1}(U) \to U$ is either a minimal
      model over $U$ (when $(K_X+\Delta)_{\vert \pi^{-1}(U)}$ is
      $\pi_{\vert\pi^{-1}(U)}$-pseudoeffective) or a Mori fibration
      over $U$ (when $(K_X+\Delta)_{\vert \pi^{-1}(U)}$ is not
      $\pi_{\vert\pi^{-1}(U)}$-pseudoeffective).
  \end{enumerate}
\end{theorem}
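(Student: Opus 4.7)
The plan is to use the relative minimal model program of Fujino \cite{Fuj} (or Das--Hacon--P\u{a}un \cite{DHP}) as the local input on affinoid subdomains of $Z$ and then glue the resulting sequence of steps using the uniqueness characterization of Lemma \ref{lem:vp21}, much as in the proof of Theorem \ref{thm:introrelativemmp} given in Part \ref{part:othercats}. The hypothesis that $K_X+\Delta+rC$ is $\pi$-ample (globally, not merely over an affinoid subdomain) together with $(X,\Delta+C)$ being klt is exactly what is needed to make each MMP step in Definition \ref{def:mmpconventions} well-posed over every affinoid subdomain simultaneously, so no shrinking of the base is required.

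First, I would fix an admissible affinoid covering $Z=\bigcup_a V_a$, set $X_a=\pi^{-1}(V_a)$, and let $\pi_a$, $\Delta_a$, $C_a$ be the evident restrictions. Since $\Delta$ is $\pi$-big and $(X,\Delta)$ is klt, $\Delta_a$ is $\pi_a$-big and $(X_a,\Delta_a)$ is klt; moreover $X_a$ is $\QQ$-factorial (over $V_a$) by hypothesis. Applying Fujino's theorem \cite[Theorem 1.7]{Fuj} over each $V_a$ (noting that $K_X+\Delta+rC$ being $\pi$-ample ensures $K_{X_a}+\Delta_a+rC_a$ is $\pi_a$-ample, so Fujino's hypotheses are met), we obtain, for every real number $r'\in(0,r]$, an $r'$-th output $f^{r'}_a\colon X_a\dashrightarrow X_a^{r'}$ of the $\pi_a$-relative $(K_{X_a}+\Delta_a)$-MMP with scaling of $C_a$, consisting of finitely many flips and divisorial contractions.

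Next, I would verify that these local outputs glue. On any affinoid subdomain $W\subseteq V_a\cap V_b$, I need to show that the restrictions of $f^{r'}_a$ and $f^{r'}_b$ to $\pi^{-1}(W)$ agree. This is where the analogue of Theorem \ref{thm:vp22alt} in the complex analytic setting enters: each $f^{r'}_a$ is uniquely characterized by the three properties of Lemma \ref{lem:vp21} (the characterization being a meromorphic, purely relative statement over $V_a$, and the proofs of Lemma \ref{lem:vp21} and Theorem \ref{thm:vp22alt} carry over verbatim once one verifies that the inclusion of rings of analytic functions $\cO_Z(V_a)\hookrightarrow \cO_Z(W)$ induces a flat, universally open ring map which is quasi-finite over closed points — the latter following from \cite[Lemma 6.2.8]{AT19} and the argument used in the second paragraph of the proof of Theorem \ref{thm:gluemmp}). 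Hence the restriction of $f^{r'}_a$ to $\pi^{-1}(W)$ satisfies the three uniqueness properties for the $\pi_{\vert\pi^{-1}(W)}$-relative MMP, and therefore coincides with the corresponding restriction of $f^{r'}_b$. This produces a globally defined meromorphic map $X\dashrightarrow X^{r'}$ in the sense of Remmert for each $r'\in(0,r]$.

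Finally, running $r'$ from $r$ down to $0$ yields the relative MMP with scaling of $C$ over $Z$; termination over each affinoid $U\subseteq Z$ follows from Fujino's termination statement, and whether the final $\pi_m^{-1}(U)\to U$ is a minimal model or a Mori fibration is determined by whether $(K_X+\Delta)_{\vert\pi^{-1}(U)}$ is $\pi$-pseudoeffective, again by the local result. The main obstacle I anticipate is verifying the uniqueness analogue of Lemma \ref{lem:vp21} for meromorphic maps of complex analytic spaces: one must check that the three characterizing properties (birational contraction to a normal proper target, ampleness of the pushforward of $K_X+\Delta+(\lambda-\varepsilon)C$, and contraction only of divisors along which $K_X+\Delta+\lambda C$ is not big) uniquely determine a meromorphic map, and that these properties are preserved under restriction to affinoid subdomains. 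This reduces, via GAGA over each $V_a$, to the scheme-theoretic uniqueness from \cite[Lemma 2.1]{VP}, but the descent from algebraic uniqueness over each $\Spec(\cO_Z(V_a))$ to meromorphic uniqueness over $Z$ is the crucial gluing step that requires care in the complex analytic category.
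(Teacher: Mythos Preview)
Your proposal is correct and follows essentially the same route as the paper: run Fujino's MMP with scaling over each affinoid subdomain, then glue the uniquely-determined $r$-th outputs using Lemma \ref{lem:vp21}/Theorem \ref{thm:vp22alt} after passing to algebraizations via GAGA (exactly as in Theorem \ref{thm:gluemmp}). The paper also begins with a Stein factorization to make $Z$ normal, and makes explicit one point you gloss over: Fujino's theorem \cite[Theorem 1.7]{Fuj} as stated may shrink the base at each step, but in the germ category this shrinking only replaces the representative $\mathcal{U}$ of the affinoid germ $U=(\mathcal{U},U)$ by a smaller open still containing $U$, so the germ itself is unchanged---this, rather than the global $\pi$-ampleness of $K_X+\Delta+rC$, is why no genuine shrinking occurs (the ampleness hypothesis is what makes the uniqueness characterization, hence the gluing, work).
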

\begin{proof}
  By applying Stein factorization \cite[10.6.1]{GR84} to a representative for
  $\pi$, we may assume that $Z$ is normal.
  Over each affinoid subdomain $U \subseteq Z$, the necessary steps of the
  relative minimal model program with scaling in $C$ exist 
  and terminate in the situations listed above by applying \cite[Theorem
  1.7]{Fuj} to a representative $\mathcal{U}$ of the germ $U = (\mathcal{U},U)$.
  Note that the shrinking present in \cite{Fuj} amounts to replacing
  $\mathcal{U}$ by a possibly smaller complex analytic space that still contains
  $U$ (Noetherianity of $\Gamma(U,\cO_{\mathcal{U}})$ holds by
  \cite[Th\'eor\`eme I, 9]{Fri67}).
  Finally, applying Theorem \ref{thm:gluemmp} to the algebraizations of these
  steps over an affinoid covering of $Z$, we see that there exists a
  partially defined map $(X,\Delta) \dashrightarrow (X_m,\Delta_m)$ that is
  meromorphic in the sense of Remmert \cite[Def.\ 15]{Rem57}
  (see also \cite[Definition 1.7]{Pet94}).
  Over each affinoid subdomain $U \subseteq Z$,
  this meromorphic map restricts to a finite sequence of
  flips and divisorial contractions, and the morphism $\pi_m^{-1}(U) \to U$
  is a minimal model or a Mori fibration.
\end{proof}

\section{Basepoint-free and Contraction theorems\texorpdfstring{\except{toc}{\\}}{}
without dualizing complexes}\label{sect:noomega}
In this section, we formulate versions of the Basepoint-free theorem (Theorem
\ref{thm:bpf}) and the Contraction theorem (Theorem \ref{thm:contraction}) that
do not assume that $X$ and $Z$ have dualizing complexes.
Instead, we put conditions on singularities of pairs and the positivity of Cartier
divisors after base change to completions at points in $Z$.
\par Below, the assumption that the formal fibers of $Z$ are geometrically normal
imply that $X \otimes_{\cO_{Z,z}} \hat{\cO}_{Z,z}$ is normal for every $z \in Z$
by \cite[Corollaire 6.5.4 and Proposition 6.8.2]{EGAIV2}.
The rings $\hat{\cO}_{Z,z}$ admit dualizing complexes by \cite[(4) on p.\
299]{Har66}.
\begin{theorem}[Basepoint-free theorem; cf.\ Theorem \ref{thm:bpf}]\label{thm:bpfnoomega}
  Let $\pi\colon X \to Z$ be a proper surjective morphism of integral
  Noetherian schemes of equal characteristic
  zero over a scheme $S$.
  Suppose that $X$ is normal and that the formal fibers of $Z$ are geometrically
  normal.
  \par Let $\Delta$ be an effective $\RR$-Weil divisor on $X$.
  For each $z \in Z$, consider the Cartesian diagram
  \[
    \begin{tikzcd}
      \hat{X}_z \rar\dar[swap]{\hat{\pi}_z} & X \dar{\pi}\\
      \hat{Z}_z \rar & Z
    \end{tikzcd}
  \]
  where $\hat{Z}_z \coloneqq \Spec(\hat{\cO}_{Z,z})$,
  denote by $\hat{\Delta}_z$ the pullback of $\Delta$ to $\hat{X}_z$,
  and choose a canonical divisor $K_{\hat{X}_z}$
  that is compatible with a dualizing complex on
  $\hat{Z}_z$.
  Suppose that for every closed point
  $z \in Z$, the $\RR$-Weil divisor $K_{\hat{X}_z}+\hat{\Delta}_z$
  is $\RR$-Cartier. 
  \par Let $H \in \Pic(X)$ be $\pi$-nef.
  Suppose the pair $(\hat{X}_z,\hat{\Delta}_z)$
  is dlt (or more generally, weakly log terminal) (resp.\ klt) for every closed point
  $z\in Z$
  and that there exists some $a_z \in \ZZ_{>0}$ such that $a_z\hat{H}_z -
  (K_{\hat{X}_z} + \hat{\Delta}_z)$
  is $\hat{\pi}_z$-ample (resp.\ $\hat{\pi}_z$-big and $\hat{\pi}_z$-nef) for
  every closed point $z \in Z$, where $\hat{H}_z$ is the pullback of $H$ to $\hat{X}_z$.
  Then, there exists $m_0 \in \ZZ_{>0}$ such that $mH$ is $\pi$-generated
  for all $m \ge m_0$.
\end{theorem}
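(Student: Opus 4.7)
The plan is to deduce the theorem from the Basepoint-free theorem already proved (Theorem \ref{thm:bpf}) by base changing to formal completions at closed points of $Z$, then glueing the resulting pointwise statements by a Noetherian-induction argument on $Z$. First, I would observe that the set $M \coloneqq \Set{m \in \ZZ_{>0} \given mH\ \text{is}\ \pi\text{-generated}}$ is a submonoid of $(\ZZ_{>0},+)$, because if $s_1,\dots,s_k \in \pi_*\cO_X(mH)$ and $t_1,\dots,t_\ell \in \pi_*\cO_X(m'H)$ generate the corresponding sheaves at a point $x$, then the products $s_it_j$ generate $\cO_X((m+m')H)$ at $x$. Hence it suffices (by the standard numerical-semigroup argument, as at the end of the proof of Theorem \ref{thm:bpf}) to produce, for every prime $p$, an integer $n$ with $p^n \in M$: picking $p = 2$ and $q = 3$ yields two coprime elements of $M$ and the conclusion follows from \cite[Theorem 1.0.1]{RA05}.

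For the pointwise input, fix a closed point $z \in Z$. The scheme $\hat{Z}_z = \Spec(\hat{\cO}_{Z,z})$ is a complete Noetherian local ring, hence excellent and admitting a dualizing complex \citeleft\citen{Har66}\citemid (4) on p.\ 299\citeright. The geometric normality of the formal fibers of $Z$ together with \cite[Corollaire 6.5.4 and Proposition 6.8.2]{EGAIV2} imply that $\hat{X}_z$ is normal; since a connected Noetherian normal scheme has domain local rings and is therefore irreducible, $\hat{X}_z$ decomposes into finitely many integral normal components. Each such component inherits quasi-excellence and equal characteristic zero from $X$, and the hypotheses of Theorem \ref{thm:bpf} (weakly log terminal or klt pair $(\hat{X}_z,\hat{\Delta}_z)$, and positivity of $a_z\hat{H}_z - (K_{\hat{X}_z}+\hat{\Delta}_z)$) are given by assumption. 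Applying Theorem \ref{thm:bpf} componentwise — replacing the target by the scheme-theoretic image in $\hat{Z}_z$ to ensure surjectivity, noting that closed immersions preserve dualizing complexes via $i^!$ — yields an integer $m_0(z)$ such that $m\hat{H}_z$ is $\hat{\pi}_z$-generated for all $m \ge m_0(z)$.

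To pass from these pointwise estimates to a single global $m_0$, I would use Noetherian induction on $Z$ rather than a quasi-compact covering, since $Z$ can have infinitely many closed points. Let $B_m \subseteq X$ be the closed subset where the adjunction $\pi^*\pi_*\cO_X(mH) \to \cO_X(mH)$ fails to be surjective, and note that if $m \mid m'$ then $B_{m'} \subseteq B_m$ (taking $(m'/m)$-th powers of a local generator yields a local generator), so $\pi(B_m) \supseteq \pi(B_{m'})$ since $\pi$ is proper. Therefore $\pi(B_p) \supseteq \pi(B_{p^2}) \supseteq \cdots$ is a descending chain of closed subsets of the Noetherian scheme $Z$, stabilizing to some $W_p$. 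If $W_p \ne \emptyset$, choose a closed point $z_0 \in W_p$; then $z_0 \in \pi(B_{p^n})$ for all $n$ large. Flat base change along the faithfully flat map $\Spec(\hat{\cO}_{Z,z_0}) \to \Spec(\cO_{Z,z_0})$ identifies the pullback of $B_{p^n}$ with the non-generation locus of $p^n\hat{H}_{z_0}$ on $\hat{X}_{z_0}$, so faithful flatness implies $p^n\hat{H}_{z_0}$ is never $\hat{\pi}_{z_0}$-generated, contradicting the previous paragraph. Hence $W_p = \emptyset$, completing the proof.

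The main obstacle is precisely this uniformization step: every pointwise application of Theorem \ref{thm:bpf} produces an $m_0(z)$ that a priori depends on $z$, and no finite covering of $Z$ by neighborhoods is available. The divisibility monotonicity $B_{m'} \subseteq B_m$ for $m \mid m'$, which turns the pointwise existence of a power $p^n$ with $p^n\hat{H}_z$ generated into a descending chain in $Z$, is the crucial device that makes Noetherian induction succeed. A secondary (but routine) technical point is handling the possible disconnectedness of $\hat{X}_z$ and the possibility that individual components of $\hat{X}_z$ fail to surject onto $\hat{Z}_z$, which is dealt with by working componentwise and replacing the target by its scheme-theoretic image.
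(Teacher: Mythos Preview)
Your proposal is correct and follows essentially the same approach as the paper: reduce to showing $p^nH$ is $\pi$-generated for large $n$ via the numerical-semigroup argument, use the Noetherian descending chain of base loci (you push forward to $Z$, the paper keeps them in $X$, but this is immaterial), and derive a contradiction at a closed point of the stabilized set by applying Theorem \ref{thm:bpf} over $\Spec(\hat{\cO}_{Z,z})$ together with flat base change. Your explicit handling of the possible disconnectedness of $\hat{X}_z$ and the flat base change identification of non-generation loci are details the paper leaves implicit.
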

\begin{proof}
  After replacing $Z$ by the image of $X$, we may assume that $\pi$ is
  surjective.
  Note the assumptions on the formal fibers of $Z$ are not affected by
  \cite[Th\'eor\`eme 7.4.4]{EGAIV2}.
  We make the following claim:
  \begin{claim}\label{claim:pnbpfnoomega}
    For every prime number $p$, the Cartier divisor $p^nH$ is $\pi$-generated
    for $n \gg 0$.
  \end{claim}
  Showing Claim \ref{claim:pnbpfnoomega} would imply the theorem, since then the
  monoid of natural numbers $m \in \NN$ such that $mH$ is $\pi$-generated would
  contain all sufficiently large integers by \cite[Theorem 1.0.1]{RA05}.
  Since for all $n,n' \in \NN$ such that $n' \ge n$, we have the inclusion 
  \begin{align*}
    \MoveEqLeft[5]
    \Supp\Bigl(\cok\bigl(\pi^*\pi_*\cO_X(p^{n'}H)
    \longrightarrow \cO_X(p^{n'}H)\bigr)\Bigr)\\
    &\subseteq \Supp\Bigl(\cok\bigl(\pi^*\pi_*\cO_X(p^{n}H)
    \longrightarrow \cO_X(p^{n}H)\bigr)\Bigr),
  \end{align*}
  the Noetherianity of $Z$ implies there exist some $n_0$ such that these
  inclusions stabilize for all $n' \ge n \ge n_0$.
  \par We claim that
  \[
    \Supp\Bigl(\cok\bigl(\pi^*\pi_*\cO_X(p^{n_0}H)
    \longrightarrow \cO_X(p^{n_0}H)\bigr)\Bigr) = \emptyset.
  \]
  Suppose not, in which case there exists a closed point $z \in Z$ in this
  support by \cite[(2.1.2)]{EGAInew}.
  We can then apply Theorem \ref{thm:bpf} to the base change
  $(\hat{X}_z,\hat{\Delta}_z)$ to see there exists $n \in \NN$ such that
  \[
    z \notin \Supp\Bigl(\cok\bigl(\pi^*\pi_*\cO_X(p^{n}H)
    \longrightarrow \cO_X(p^{n}H)\bigr)\Bigr).
  \]
  This contradicts the assumption that the chain of inclusions of supports
  stabilized for all $n \ge n_0$.
\end{proof}
For the Contraction theorem, we have the following:
\begin{theorem}[Contraction theorem; cf.\ Theorem \ref{thm:contraction}]
  \label{thm:contractionnoomega}
  Let $\pi\colon X \to Z$ be a projective surjective morphism of integral
   Noetherian schemes of equal characteristic zero over
  a scheme $S$.
  Suppose that $X$ is normal and that the formal fibers of $Z$ are geometrically
  normal.
  \par Let $\Delta$ be an effective $\RR$-Weil divisor on $X$.
  For each $z \in Z$, consider the Cartesian diagram
  \[
    \begin{tikzcd}
      \hat{X}_z \rar\dar[swap]{\hat{\pi}_z} & X \dar{\pi}\\
      \hat{Z}_z \rar & Z
    \end{tikzcd}
  \]
  where $\hat{Z}_z \coloneqq \Spec(\hat{\cO}_{Z,z})$,
  denote by $\hat{\Delta}_z$ the pullback of $\Delta$ to $\hat{X}_z$,
  and choose a canonical divisor $K_{\hat{X}_z}$
  that is compatible with a dualizing complex on
  $\hat{Z}_z$.
  \par Suppose that for every closed point
  $z \in Z$, the $\RR$-Weil divisor $K_{\hat{X}_z} + \hat{\Delta}_z$
  is $\RR$-Cartier and that $(\hat{X}_z,\hat{\Delta}_z)$ is
  dlt (or more generally, weakly log terminal).
  Let $H \in \Pic(X)$ be $\pi$-nef such that for every $z \in Z$, we have
  \[
    F \coloneqq \bigl(\hat{H}^\perp_z \cap \NEbar(\hat{X}_z/\hat{Z}_z)\bigr) - \{0\}
    \subseteq \Set[\Big]{
      \beta \in N_1(\hat{X}_z/\hat{Z}_z) \given
      \bigl(K_{\hat{X}_z} + \hat{\Delta}_z
      \bigr) \cdot \beta < 0
    }
  \]
  where $\hat{H}_z$ is the pullback of $H$ to $\hat{X}_z$ and
  $\hat{H}^\perp_z \coloneqq \Set{\beta \in N_1(\hat{X}_z/\hat{Z}_z) \given
  (\hat{H}_z \cdot \beta) = 0}$.
  Then, the morphism $\varphi$ in the Stein factorization
  \[
    X \overset{\varphi}{\longrightarrow} Y
    \longrightarrow \Proj_Z\Biggl( \bigoplus_{m=0}^\infty
    \pi_*\cO_X(mH)\biggr)
  \]
  is a projective and surjective morphism to
  an integral normal quasi-excellent Noetherian scheme \(Y\)
  projective over $Z$.
  The morphism \(\varphi\) satisfies the following properties:
  \begin{enumerate}[label=$(\roman*)$,ref=\roman*]
    \item\label{thm:contractionnoomegai}
      For every integral one-dimensional subscheme $C \subseteq X$ such that
      $\pi(C)$ is a point, the image $\varphi(C)$ is a point if
      and only if $(H \cdot C) = 0$, i.e., if and only if $[C] \in F$.
    \item\label{thm:contractionnoomegaii}
      $\cO_Y \to \varphi_*\cO_X$ is an isomorphism.
  \end{enumerate}
  \par Moreover, consider a projective surjective morphism \(\varphi'\colon X \to Y'\)
  fitting into the commutative diagram
  \[
    \begin{tikzcd}[column sep=tiny]
      X \arrow{rr}{\varphi'}\arrow{dr}[swap]{\pi} & & Y' \arrow{dl}{\sigma'}\\
      & Z
    \end{tikzcd}
  \]
  where \(Y'\) is an integral normal quasi-excellent Noetherian scheme
  projective over \(Z\).
  Suppose that \(\varphi'\) satisfies properties $(\ref{thm:contractioni})$ and
  $(\ref{thm:contractionii})$.
  Then, \(\varphi'\) is isomorphic to \(\varphi\) over \(Z\), and \(\varphi'\) satisfies the
  following additional property:
  \begin{enumerate}[resume,label=$(\roman*)$,ref=\roman*]
    \item\label{thm:contractionnoomegaiii}
      $H = \varphi^{\prime*}A$ for some $\sigma'$-ample $A \in \Pic(Y)$.
  \end{enumerate}
\end{theorem}
\begin{proof}
  By the fact that
  relative ampleness can be detected over closed points \cite[Propositin
  2.7]{Kee03}, we can apply Kleiman's criterion (Proposition
  \ref{lem:AmpleIsPositiveOnNE}) to say for each $z\in Z$, there exists a $a \in
  \NN$ such that $a_z\hat{H}_z - (K_{\hat{X}} + \hat{\Delta}_z )$
  is $\hat{\pi}_z$-ample.
  Here, we use the fact that relative ampleness can be detected over closed
  points to say that the curves contracted by the morphism $\hat{\pi}_z$
  map to curves contracted by the morphism
  $\pi$ (the completion map $\cO_{Z,z} \to \hat{\cO}_{Z,z}$ induces an
  isomorphism of residue fields).
  Thus, by the Basepoint-free theorem (Theorem \ref{thm:bpfnoomega}), we know
  that $mH$ is $\pi$-generated for $m \gg 0$.
  The rest of the proof now proceeds as in the proof of Theorem
  \ref{thm:contraction}.
\end{proof}

\addtocontents{toc}{\protect\medskip}
\bookmarksetup{startatroot}

\end{document}